\newtheorem{theorem}{Theorem}[section]
\newtheorem{proposition}[theorem]{Proposition}
\newtheorem{corollary}[theorem]{Corollary}
\newtheorem{lemma}[theorem]{Lemma}
\newtheorem*{theorem*}{Theorem}
\newtheorem*{proposition*}{Proposition}
\newtheorem*{corollary*}{Corollary}
\newtheorem*{lemma*}{Lemma}
\newtheorem*{conjecture*}{Conjecture}
\theoremstyle{definition}
\newtheorem{definition}[theorem]{Definition}
\newtheorem*{definition*}{Definition}
\theoremstyle{remark}
\newtheorem{example}[theorem]{Example}
\newtheorem{examples}[theorem]{Examples}
\newtheorem{remark}[theorem]{Remark}
\newtheorem{remarks}[theorem]{Remarks}
\newtheorem*{example*}{Example}
\newtheorem*{examples*}{Examples}
\newtheorem*{remark*}{Remark}
\newtheorem*{remarks*}{Remarks}
\newtheorem*{exercise*}{Exercise}
\newtheorem*{property*}{Property}
\newtheorem*{properties*}{Properties}
\newcommand\da{\!\downarrow\!}
\newcommand\ra{\rightarrow}
\newcommand\la{\leftarrow}
\newcommand\lra{\longrightarrow}
\newcommand\id{\mathrm{id}}
\newcommand\ten{\otimes}
\newcommand\hten{\hat{\otimes}}
\newcommand\vareps{\varepsilon}
\newcommand\eps{\epsilon}
\newcommand\CC{\mathrm{C}}
\newcommand\OO{\mathrm{O}}
\newcommand\RR{\mathrm{R}}
\newcommand\Ru{\mathrm{R_u}}
\newcommand\Th{\mathrm{Th}\,}
\newcommand\CCC{\mathrm{CC}}
\renewcommand\H{\mathrm{H}}
\newcommand\z{\mathrm{Z}}
\newcommand\N{\mathbb{N}}
\newcommand\Z{\mathbb{Z}}
\newcommand\Q{\mathbb{Q}}
\newcommand\R{\mathbb{R}}
\newcommand\Cx{\mathbb{C}}
\newcommand\vv{\mathbb{V}}
\newcommand\ww{\mathbb{W}}
\newcommand\Bu{\mathbb{B}}
\newcommand\bA{\mathbb{A}}
\newcommand\bF{\mathbb{F}}
\newcommand\bG{\mathbb{G}}
\newcommand\bH{\mathbb{H}}
\newcommand\bL{\mathbb{L}}
\newcommand\bO{\mathbb{O}}
\newcommand\bP{\mathbb{P}}
\newcommand\bS{\mathbb{S}}
\newcommand\bT{\mathbb{T}}
\newcommand\C{\mathcal{C}}
\newcommand\cA{\mathcal{A}}
\newcommand\cB{\mathcal{B}}
\newcommand\cD{\mathcal{D}}
\newcommand\cE{\mathcal{E}}
\newcommand\cF{\mathcal{F}}
\newcommand\cH{\mathcal{H}}
\newcommand\cM{\mathcal{M}}
\newcommand\cN{\mathcal{N}}
\newcommand\cO{\mathcal{O}}
\newcommand\cP{\mathcal{P}}
\newcommand\cS{\mathcal{S}}
\newcommand\cT{\mathcal{T}}
\newcommand\cU{\mathcal{U}}
\newcommand\cW{\mathcal{W}}
\newcommand\cZ{\mathcal{Z}}
\newcommand\A{\mathscr{A}}
\newcommand\D{\mathcal{D}}
\newcommand\F{\mathscr{F}}
\renewcommand\O{\mathscr{O}}
\newcommand\sA{\mathscr{A}}
\newcommand\sB{\mathscr{B}}
\newcommand\sC{\mathscr{C}}
\newcommand\sE{\mathscr{E}}
\newcommand\sF{\mathscr{F}}
\newcommand\sG{\mathscr{G}}
\newcommand\sH{\mathscr{H}}
\newcommand\sK{\mathscr{K}}
\newcommand\sL{\mathscr{L}}
\newcommand\sM{\mathscr{M}}
\newcommand\sT{\mathscr{T}}
\newcommand\sV{\mathscr{V}}
\newcommand\Def{\mathfrak{Def}}
\newcommand\Del{\mathfrak{Del}}
\newcommand\OBJ{\mathfrak{OBJ}}
\newcommand\fB{\mathfrak{B}}
\newcommand\fM{\mathfrak{M}}
\newcommand\fR{\mathfrak{R}}
\newcommand\fX{\mathfrak{X}}
\renewcommand\L{\Lambda}
\newcommand\m{\mathfrak{m}}
\newcommand\g{\mathfrak{g}}
\newcommand\fh{\mathfrak{h}}
\newcommand\fk{\mathfrak{k}}
\newcommand\fu{\mathfrak{u}}
\newcommand\cHom{\mathcal{H}\!\mathit{om}}
\newcommand\cDer{\mathcal{D}\!\mathit{er}}
\newcommand\der{\mathscr{D}\!\mathit{er}}
\newcommand\bEnd{\mathbb{E}\mathrm{nd}}
\newcommand\Ho{\mathrm{Ho}}
\newcommand\Alg{\mathrm{Alg}}
\newcommand\Mod{\mathrm{Mod}}
\newcommand\Hom{\mathrm{Hom}}
\newcommand\Map{\mathrm{Map}}
\newcommand\HHom{\underline{\mathrm{Hom}}}
\newcommand\Ext{\mathrm{Ext}}
\newcommand\EExt{\mathbb{E}\mathrm{xt}}
\newcommand\End{\mathrm{End}}
\newcommand\Der{\mathrm{Der}}
\newcommand\Aut{\mathrm{Aut}}
\newcommand\VHS{\mathrm{VHS}}
\newcommand\Iso{\mathrm{Iso}}
\newcommand\cone{\mathrm{cone}}
\newcommand\Gal{\mathrm{Gal}}
\newcommand\coker{\mathrm{coker\,}}
\newcommand\im{\mathrm{Im\,}}
\newcommand\Ob{\mathrm{Ob}\,}
\newcommand\CoLie{\mathrm{CoLie}}
\newcommand\Top{\mathrm{Top}}
\newcommand\Gp{\mathrm{Gp}}
\newcommand\agp{\mathrm{AGp}}
\newcommand\mal{\mathrm{Mal}}
\newcommand\Spec{\mathrm{Spec}\,}
\newcommand\Dec{\mathrm{Dec}\,}
\newcommand\Spf{\mathrm{Spf}\,}
\newcommand\Cat{\mathrm{Cat}}
\newcommand\Aff{\mathrm{Aff}}
\newcommand\Sp{\mathrm{Sp}}
\newcommand\Sing{\mathrm{Sing}}
\newcommand\ad{\mathrm{ad}}
\newcommand\norm{\mathrm{norm}}
\newcommand\Lim{\varprojlim}
\newcommand\LLim{\varinjlim}
\DeclareMathOperator*{\holim}{holim}
\newcommand\ho{\mathrm{ho}\!}
\newcommand\into{\hookrightarrow}
\newcommand\onto{\twoheadrightarrow}
\newcommand\abuts{\implies}
\newcommand\xra{\xrightarrow}
\newcommand\xla{\xleftarrow}
\newcommand\pr{\mathrm{pr}}
\newcommand\alg{\mathrm{alg}}
\newcommand\dmd{\diamond}
\newcommand\bt{\bullet}
\newcommand\by{\times}
\newcommand\mc{\mathrm{MC}}
\newcommand\Gg{\mathrm{Gg}}
\newcommand\Res{\mathrm{Res}}
\newcommand\Symm{\mathrm{Symm}}
\newcommand\SL{\mathrm{SL}}
\newcommand\GL{\mathrm{GL}}
\newcommand\SU{\mathrm{SU}}
\newcommand\gl{\mathfrak{gl}}
\newcommand\Mat{\mathrm{Mat}}
\newcommand\et{\acute{\mathrm{e}}\mathrm{t}}
\newcommand\an{\mathrm{an}}
\newcommand\hol{\mathrm{hol}}
\newcommand\Tot{\mathrm{Tot}\,}
\newcommand\diag{\mathrm{diag}\,}
\newcommand\ind{\mathrm{ind}}
\newcommand\pro{\mathrm{pro}}
\newcommand\pd{\partial}
\newcommand\dc{d^{\mathrm{c}}}
\newcommand\tD{\tilde{D}}
\newcommand\tDc{\tilde{D^c}}
\newcommand\half{\frac{1}{2}}
\newcommand\cris{\mathrm{cris}}
\newcommand\st{\mathrm{st}}
\newcommand\MHS{\mathrm{MHS}}
\newcommand\MTS{\mathrm{MTS}}
\newcommand\SHS{\mathrm{SHS}}
\newcommand\STS{\mathrm{STS}}
\newcommand\gr{\mathrm{gr}}
\newcommand\ugr{\underline{\mathrm{gr}}}
\newcommand\ab{\mathrm{ab}}
\newcommand\cts{\mathrm{cts}}
\newcommand\gp{\mathrm{Gp}}
\newcommand\Zar{\mathrm{Zar}}
\newcommand\Fil{\mathrm{Fil}}
\renewcommand\alg{\mathrm{alg}}
\newcommand\red{\mathrm{red}}
\newcommand\Fr{\mathrm{Fr}}
\newcommand\Lie{\mathrm{Lie}}
\newcommand\row{\mathrm{row}}
\newcommand\dR{\mathrm{dR}}
\newcommand\op{\mathrm{opp}}
\newcommand\co{\colon\thinspace}
\newcommand\oR{\mathbf{R}}
\newcommand\oL{\mathbf{L}}
\newcommand\oSpec{\mathbf{Spec}\,}
\newcommand\uleft\underleftarrow
\newcommand\uline\underline
\newcommand\uright\underrightarrow
\newcommand\HOM{\mathrm{HOM}}
\begin{document}
\title[Non-abelian Hodge structures for  quasi-projective varieties]{Real non-abelian mixed Hodge structures for quasi-projective varieties: formality and splitting}
\subjclass[2010]{14C30; 14F35, 32S35, 55P62}
\author{J.P.Pridham}
\thanks{This work was supported by Trinity College, Cambridge; and by the Engineering and Physical Sciences Research Council [grant numbers  EP/F043570/1 and EP/I004130/2].}

\begin{abstract}
We define and construct mixed Hodge structures on real schematic homotopy types of complex quasi-projective varieties, giving mixed Hodge structures on their homotopy groups and pro-algebraic fundamental groups. We also show that these split on tensoring with the ring $\R[x]$ equipped with the Hodge filtration given by powers of $(x-i)$, giving new results even for simply connected varieties. The mixed Hodge structures can thus be recovered from the Gysin spectral sequence of cohomology groups of local systems, together with the monodromy action at the Archimedean place. As the basepoint varies, these structures all become real variations of mixed Hodge structure.
\end{abstract}

\maketitle

\section*{Introduction}

The main aims of this paper are to construct  mixed Hodge structures on the real relative Malcev homotopy types of  complex varieties, and to investigate how far these can be recovered from the structures on cohomology groups of local systems, and in particular from the Gysin spectral sequence. 

In \cite{Morgan}, Morgan established the existence of natural mixed Hodge structures on the minimal model of the rational homotopy type of a   smooth variety $X$, and used this to define natural mixed Hodge structures on the  rational homotopy groups $\pi_*(X \ten \Q)$ of $X$. This construction was extended to singular varieties by Hain in \cite{Hainhodge}, using an alternative approach based on Chen's reduced bar construction.

 When $X$ is  also projective, \cite{DGMS} showed that its rational homotopy type is formal; in particular, this means that the rational homotopy groups can be recovered from the cohomology ring $\H^*(X,\Q)$. However, in \cite{mhsnonsplit}, examples were given to show that the mixed Hodge structure on homotopy groups could not be recovered from that on integral cohomology. In this article, we will describe how formality interacts with the mixed Hodge structure, showing the extent to which the mixed Hodge structure on $\pi_*(X \ten \R,x_0)$ can be recovered from the pure Hodge structure on $\H^*(X,\R)$. 

This problem  was suggested to the author by Carlos Simpson, who asked what happens when we vary the formality quasi-isomorphism. \cite{DGMS} proved formality by using the $d\dc$ Lemma (giving real quasi-isomorphisms), while most subsequent work has used the $\pd\bar{\pd}$  Lemma (giving Hodge-filtered quasi-isomorphisms). 
The answer (Theorem \ref{morgansplitsa}) is that, if we define the ring $\cS:= \R[x]$ to be pure of weight $0$, with the Hodge filtration on $\cS\ten_{\R}\Cx$ given by powers of $(x-i)$, then there is an $\cS$-linear isomorphism
$$
\pi_*(X \ten \R,x_0)\ten_{\R}\cS \cong \pi_*(\H^*(X,\R))\ten_{\R} \cS,
$$
preserving the Hodge and weight filtrations, where the homotopy groups $\pi_*(\H^*(X,\R))$ are given the  Hodge structure coming from  the Hodge structure on the cohomology ring  $\H^*(X,\R)$, regarded as a  real homotopy type. 

This is proved by replacing $\dc$ with $\dc+xd$ in the proof of \cite{DGMS}, so $x \in \cS$ is the parameter for varying formality quasi-isomorphisms. In several respects, $\cS\ten_{\R}\Cx$ behaves like Fontaine's ring $B_{\st}$ of semi-stable periods, and the MHS can be recovered from a pro-nilpotent  operator on the real homotopy type $\H^*(X,\R)$, which we regard as monodromy at the Archimedean place. The isomorphism above says that the MHS on $\pi_*(X \ten \R,x_0)$ has an  $\cS$-splitting, and by Proposition \ref{cSubiq}, this is true for all mixed Hodge structures. However, the special feature here is that the splitting is canonical, so preserves the additional structure (such as Whitehead brackets).

For non-nilpotent topological spaces, the rational homotopy type is too crude an invariant to recover much information, so schematic homotopy types were introduced in \cite{chaff}, based on ideas from \cite{pursuingstacks}. \cite{htpy} showed how to recover  the groups  $\pi_n(X)\ten_{\Z}\R$ from schematic homotopy types for very general topological spaces, and also introduced the intermediate notion of relative Malcev homotopy type, simultaneously generalising both rational and schematic homotopy types, and giving the higher homotopical generalisation of Hain's relative Malcev fundamental groups from \cite{hainrelative}. In  Corollary \ref{unipfibs} we will see how relative Malcev homotopy types govern the variation of rational homotopy types in a fibration.

Since their inception, one of the main goals of schematic homotopy types has been to define and construct mixed Hodge structures. This programme was initiated in \cite{KPS}, and continued in \cite{KTP}. Although the structures in \cite{KTP} have important consequences, such as proving that the image of the Hurewicz map is a sub-Hodge structure, they are too weak to give rise to mixed Hodge structures on the homotopy groups, and disagree with the weight filtration on rational homotopy groups defined in \cite{Morgan} (see Remark \ref{ktpwgt}). 

In this paper, we take an alternative approach, giving a new notion of mixed Hodge structures on  schematic (and relative Malcev) homotopy types which is compatible with \cite{Hainhodge} and \cite{Morgan}. 
These often  yield mixed Hodge structures on the full  homotopy groups $\pi_n(X,x_0)$  (rather than just on rational homotopy groups). In 
Corollaries \ref{kmhspin} and \ref{kmhspinanal} we show not only that the homotopy types of compact K\"ahler manifolds naturally carry such mixed Hodge structures, but also that they also split and become formal on tensoring with $\cS$. 
The structure in \cite{KTP} can then be understood as an invariant of the $\cS$-splitting, rather than of the MHS itself (Remark \ref{ktpcf}). Corollary \ref{kvmhspin} shows that these MHS become variations of mixed Hodge structure as the basepoint varies.

When studying topological invariants of smooth varieties, it is often too much to expect a mixed Hodge structure to appear. For this reason, Simpson introduced mixed twistor structures in \cite{MTS} as a weaker notion. Essentially, these arise because not every semisimple local system underlies a variation of Hodge structures. We use a slight refinement of Simpson's notion (Definition \ref{realMTSdef}), utilising real rather than complex structures.

We then 
 construct mixed Hodge and mixed twistor structures for relative Malcev homotopy types of  quasi-projective varieties (Theorems \ref{qmts} and \ref{qmhs}),  but only when the monodromy around the divisor is trivial. Theorem  \ref{unitmhs} addresses a more general case, allowing  unitary monodromy around the divisor.

Whereas the $\cS$-splittings for projective varieties  
are realised concretely using the principle of two types, the last part of the paper 
establishes abstract existence  results for $\cS$-splittings of general mixed Hodge and mixed twistor structures (Corollary \ref{allsplit}). These latter results are then used  to construct  mixed Hodge and mixed twistor structures on relative Malcev homotopy groups of quasi-projective varieties (Corollaries \ref{mhspin2} and \ref{unitsingmtspin}). 

\subsection*{Structure of the paper}

Section \ref{real1} introduces the main idea of the paper, as specialised to real homotopy types of compact K\"ahler manifolds $X$. The key observation (Proposition \ref{gl2prop}) is that  the  principle of two types (or the $d\dc$-lemma) holds for any pair $ud+v\dc, xd+y\dc$ of operators, provided  $\left(\begin{smallmatrix} u &  v \\ x & y\end{smallmatrix}\right) \in \GL_2$. For the real Sullivan homotopy type $A^{\bt}(X, \R)$ and the ring $\cS= \R[x]$ above, Theorem \ref{formalityS}  uses the principle of two types to show that $A^{\bt}(X) \ten_{\R} \cS$ is quasi-isomorphic to  $\H^*(X,\R)\ten_{\R} \cS$, compatibly with the CDGA structure and the Hodge and weight filtrations. Transferring these quasi-isomorphisms  to Quillen's real homotopy type via the reduced bar construction, Theorem  \ref{morgansplitsa} then shows that the mixed Hodge structure on the real homotopy groups splits canonically on tensoring with $\cS$. 
Hodge structures on rational homotopy types and groups. 

In Section \ref{nonabfil}, we introduce our non-abelian notions of algebraic mixed Hodge and twistor structures.
If we define $C^* = (\prod_{\Cx/\R} \bA^1)-\{0\} \cong \bA^2_{\R}-\{0\}$ and the Deligne torus $S= \prod_{\Cx/\R} \bG_m$ by Weil restriction of scalars, then our first major observation (Corollary \ref{flathfil}) is that real vector spaces $V$ equipped with filtrations $F$ on $V\ten \Cx$ correspond to flat quasi-coherent modules on the stack $[C^*/S]$, via a Rees module construction, with $V$ being the pullback along $1 \in C^*$. This motivates us to define an algebraic Hodge filtration on a real object $Z$ as an extension of $Z$ over the base stack $[ C^*/ S]$.  This is similar to the approach taken by Kapranov to define  mixed Hodge structures in \cite{kapranovMHS}; see Remark \ref{kapranov} for details. There is an $S$-equivariant morphism $\row_1\co\SL_2\to C^*$ given by projection of the first row, which 
corresponds to the Hodge filtration on the ring $\cS$ above, and has important universal properties (Lemma \ref{sluniv}). 

Similarly, filtered vector spaces correspond to flat quasi-coherent modules on the stack $[\bA^1/\bG_m]$, so we define an algebraic mixed Hodge structure on $Z$ to consist of an extension $Z_{\MHS}$ over $[\bA^1/\bG_m]\by [C^*/S]$, with additional data corresponding to an opposedness condition (Definition \ref{mhsdef}). This gives rise to non-abelian mixed Hodge structures in the sense of \cite{KPS}, as explained in Remark \ref{cfkps}. In some cases, a mixed Hodge structure is too much to expect, and we then give an extension over  $[\bA^1/\bG_m]\by [C^*/\bG_m]$: an algebraic  mixed twistor structure. For vector bundles, algebraic mixed Hodge and twistor structures coincide with the classical definitions (Propositions \ref{flatmhs} and \ref{flatmts}). 
\S \ref{real} reprises the results for real homotopy types in this setting. All of the structures split on pulling back along $\row_1$, and these pullbacks can be recovered from cohomology of local systems. The pullback along  $\row_1$ corresponds to tensoring with the algebra $\cS$ described above. 

In Section \ref{cohosn}, the cohomology groups associated to the various non-abelian structures are considered. \S\S \ref{Bei1} and \ref{Bei2} show how algebraic Hodge filtrations and algebraic mixed Hodge structures determine Beilinson's weak Hodge and absolute Hodge cohomology, respectively. For real homotopy types, Proposition \ref{cfdeligne}, Corollary \ref{consani} and Proposition \ref{cfdeninger} show how the algebraic mixed Hodge structure recovers real Deligne cohomology, Consani's 
Archimedean cohomology and Deninger's $\Gamma$-factor of $X$ at the Archimedean place. Proposition \ref{steenkey} shows how the pullback to $\SL_2$ can be regarded as an analogue of a limit mixed Hodge structure.

Section \ref{relmalsect} is essentially a review of the relative Malcev homotopy types introduced in \cite{htpy}, generalising both schematic and real homotopy types.  However, much of the material is adapted to pointed homotopy types, and a greater emphasis is placed on Narkawicz' variant of the reduced bar construction from \cite{narkawicz}, together with some new material  in \S \ref{relhtpy} on families of homotopy types. Major new results are Theorem \ref{fibrations} and Corollary \ref{unipfibs}, which show how relative Malcev homotopy types arise naturally in the study of fibrations. Propositions \ref{eqhtpy} and  \ref{propforms} recall the most accessible manifestations of these homotopy types, in the form of equivariant cochains and the equivariant de Rham complex. Propositions \ref{barGRprop} and \ref{repbar} show how  Narkawicz' bar construction can be used to relate these explicitly to the group-based formulations of relative Malcev homotopy types, and
Theorem \ref{bigequiv} adapts the main comparison result of \cite{htpy} to the case of fixed basepoints.

In Section \ref{malstr}, the constructions of Section \ref{nonabfil} are then extended to homotopy types.  The main result is
  Theorem \ref{mhspin}, showing how  non-abelian algebraic mixed Hodge and twistor structures on relative Malcev homotopy types give rise to such structures on homotopy groups.
  
In the next two  sections, we establish the existence of algebraic  mixed Hodge structures on various relative Malcev homotopy types of compact K\"ahler manifolds, giving more information than rational homotopy types, especially when $X$ is not nilpotent (Corollaries \ref{kmhspin} and \ref{kmhspinanal}). The starting point is the Hodge structure defined on the reductive complex pro-algebraic fundamental group $\varpi_1(X,x_0)^{\red}_{\Cx}$ in  \cite{Simpson}, in the form of a discrete $\Cx^{\by}$-action. We only make use of the induced action of $S^1 \subset \Cx^{\by}$, since this preserves the real form $\varpi_1(X,x_0)^{\red}_{\R}$ and respects the harmonic metric.
We regard this as a kind of pure weight zero Hodge structure on $\varpi_1(X,x_0)^{\red}_{\R}$, since a 
pure weight zero Hodge structure is the same as an algebraic $S^1$-action. 
We then extend this to a mixed Hodge structure on the schematic (or relative Malcev) homotopy type (Theorem \ref{mhsmal} and Proposition \ref{redenrich}).

In some contexts, the unitary action is incompatible with the relative Malcev representation. In these cases, we instead only have mixed twistor structures (as defined in \cite{MTS}) on the homotopy type (Theorem \ref{mtsmal}) and homotopy groups (Corollary \ref{kmtspin}). 

Section \ref{vmhssn} shows how representations of  $\varpi_1(X,x_0)$ in the category of mixed Hodge structures correspond to variations of mixed Hodge structure (VMHS) on $X$ (Theorem \ref{kvmhsrep}), with similar results for mixed twistor structures. This implies (Corollary \ref{kvmtspin})   that the relative Malcev homotopy groups become VMHS as the basepoint varies. Taking the case of $\pi_1$, this proves \cite{arapurapi1} Conjecture 5.5 (see Remarks \ref{arapuraconj} and \ref{arapuraconj2} for details).
\S \ref{hodgeHTsn} then introduces absolute Hodge and absolute twistor homotopy types, giving an explicit description (Lemma \ref{hodgeHTlemma}) for compact K\"ahler manifolds. Local systems for these homotopy types are just VMHS and VMTS respectively (Proposition \ref{pi1Hprop}),  and these can be described in terms of $\cS$-splittings (Remark \ref{cSubiq2}).

Section \ref{archmon} is dedicated to describing the mixed Hodge structure on homotopy types in terms of a pro-nilpotent derivation on the split Hodge structure over $\SL_2$. It provides an explicit description of this derivation in terms of Green's operators on the complex of $\C^{\infty}$ forms on $X$, and in particular shows that the real Hodge structure on $\pi_3(X)\ten \R$ is split whenever $X$ is simply connected (Examples \ref{archmonegs}.2). 

In Section \ref{singsn}, we extend the results of Sections \ref{snkmhs} and \ref{snkmts} to simplicial compact K\"ahler manifolds, and hence to singular proper complex varieties.

Section \ref{quprojsn} then deals with the Malcev homotopy type $(Y,y)^{\rho, \mal}$ of a quasi-projective variety $Y=X-D$ with respect to a reductive  Zariski-dense representation $\rho\co \pi_1(X,y) \to R(\R)$. 
When $Y$ is smooth, Theorem \ref{qmts} establishes a non-positively weighted MTS on $(Y,y)^{\rho, \mal}$, with the associated graded object $\gr^W(Y,y)^{\rho, \mal}$ corresponding to the $R$-equivariant CDGA 
\[
 (\bigoplus_{a,b}\H^{a-b} ( X, \oR^b j_*j^{-1}O(\Bu_{\rho}))[-a], d_2),   
\]
 where  $O(\Bu_{\rho})$ is the local system given by $\rho(\pi_1(X,x))$ acting by multiplication on the structure sheaf $O(R)$, while  $d_2\co \H^{a-b} (X, \oR^b j_*j^{-1}O(\Bu_{\rho})) \to \H^{a-b+2}( X, \oR^{b-1} j_*j^{-1}O(\Bu_{\rho}))$ is  the  differential on the $E_2$ sheet of the Leray spectral sequence for $j: Y \to X$, and $\H^{a-b} ( X, \oR^b j_*j^{-1}O(\Bu_{\rho}))$ has weight $a+b$. 
Theorem \ref{qmhs} shows that if $R$-representations underlie variations of Hodge structure, then the MTS above  extends to   a non-positively weighted MHS on $(Y,y)^{\rho, \mal}$. 
Theorem \ref{singmhsmal2} gives the corresponding results for singular quasi-projective varieties $Y$, with  $\gr^W(Y,y)^{\rho, \mal}$ now characterised in terms of cohomology of a smooth simplicial resolution of $Y$.

In Section \ref{nontrivsn}, these results are extended to Zariski-dense representations $\rho\co \pi_1(Y,y) \to R(\R)$ with unitary monodromy around local components of the divisor. The construction of MHS and MTS in these cases is much trickier than for trivial monodromy. The idea behind Theorem  \ref{unitmhs}, inspired by \cite{Morgan}, is to construct the Hodge filtration on the complexified homotopy type, and then to use  homotopy limits of diagrams to glue this to give a  real form. When $R$-representations underlie variations of Hodge structure on $Y$, this gives a  non-positively weighted MHS on $(Y,y)^{\rho, \mal}$, with  $\gr^W(Y,y)^{\rho, \mal}$ corresponding to the $R$-equivariant CDGA
\[
 (\bigoplus_{a,b}\H^{a-b} ( X, \oR^b j_*O(\Bu_{\rho}))[-a], d_2),   
\]
regarded as a Hodge structure via the VHS structure on $O(\Bu_{\rho})$. For more general $R$, Theorem \ref{unitmts} gives a  non-positively weighted MTS, with the construction based on homotopy gluing over an affine cover of the analytic space $\bP^1(\Cx)$. Simplicial resolutions then extend these results to singular varieties in  
Theorems \ref{singunitmts} and
\ref{singunitmhs}. \S \ref{generalmonodromy} discusses possible extensions to more general monodromy.  

Section \ref{splitsn} is concerned with splittings of MHS and MTS on finite-dimensional vector spaces. Every mixed Hodge structure $V$ splits on tensoring with the ring $\cS$ defined above, giving an $\cS$-linear isomorphism $V\ten \cS\cong (\gr^WV)\ten \cS$ preserving the Hodge filtration $F$. Differentiating with respect to $V$, this gives a map $\beta\co (\gr^WV)\to  (\gr^WV)\ten \Omega(\cS/\R)$ from which $V$ can be recovered.  Theorem \ref{SHSequiv} shows that the $\cS$-splitting can be chosen canonically, corresponding to imposing certain restrictions on $\beta$, and this gives an equivalence of categories. In Remark \ref{cfRMHS}, $\beta$ is explicitly related to Deligne's complex splitting of \cite{RMHS}. Theorem \ref{STSequiv} then gives the corresponding results for mixed twistor structures. \S \ref{hodgeHTsn2} adapts Goncharov's construction to give reduced versions of absolute Hodge and twistor homotopy types, with consequences for canonical  $\cS$-splittings and VMHS/VMTS. 

The main result in Section \ref{nonabsplitsn} is Theorem \ref{strictmodel}, which shows that every  non-positively weighted MHS or MTS on a real relative Malcev homotopy type admits a strictification, in the sense that it is represented by an $R$-equivariant CDGA in ind-MHS or ind-MTS. Corollary \ref{allsplit} then applies the results of Section \ref{splitsn} to give canonical $\cS$-splittings for such MHS or MTS, while  Corollary \ref{qformalthing} shows that the splittings give equivalences $(Y,y)^{\rho, \mal} \simeq\gr^W(Y,y)^{\rho, \mal}$. Corollary \ref{mhspin2} shows that they give rise to  MHS or MTS on homotopy groups, and this is applied to quasi-projective varieties in  Corollary \ref{unitsingmtspin}. There are various consequences for  deformations of representations (Proposition \ref{defprop}). Finally, 
Theorem \ref{cfksplit} shows that for projective varieties, the canonical $\cS$-splittings coincide with the explicit Green's operator $\cS$-splittings established in Theorems \ref{mhsmal} and \ref{mtsmal}.

\subsubsection*{Acknowledgements}

I would like to thank Carlos Simpson for drawing my attention to the questions addressed in this paper, and for helpful discussions, especially concerning the likely generality for the results in \S \ref{nontrivsn}. I would also like to thank Jack Morava for suggesting that non-abelian mixed Hodge structures should be related to Archimedean $\Gamma$-factors, and
Tony Pantev for alerting me to  \cite{RMHS}. Finally, I would like to thank the anonymous referee who greatly simplified key proofs by  suggesting I use the reduced bar construction, and made many other  suggestions greatly improving the exposition.

\subsection*{Notation}

For any affine scheme $Y$, write $O(Y):= \Gamma(Y, \O_Y)$. Given a group $G$ acting on sets $X ,Y$, we will adopt the convention of writing $X\by^GY$ for the quotient
\[
 X\by^GY:= (X \by Y)/G;
\]
we use a superscript instead of the more usual subscript to avoid possible confusion with fibre products.


\tableofcontents

\section{Splittings for MHS on real homotopy types}\label{real1}

In \cite[Theorem 9.1]{Morgan} and \cite[Theorem 1]{Hainhodge},
a   mixed  Hodge structure was given on the rational  homotopy groups of a smooth complex variety $X$. Here, we study the consequences of formality quasi-isomorphisms for this mixed Hodge structure when  $X$ is a connected compact K\"ahler manifold.

Let $A^{\bt}(X)$ be the differential graded algebra of real $\C^{\infty}$ forms on $X$. As in \cite{DGMS}, this is the real (nilpotent) homotopy type of $X$. If we write $J$ for the operator on $A^{\bt}(X)$ coming from the complex structure on the cotangent bundle, then there is  a differential $\dc:=J^{-1}dJ$ on the underlying graded algebra  $A^*(X)$. Note that $d\dc+\dc d=0$.

\begin{definition}
 Define a cochain  algebra (or CDGA) to be  a  cochain complex $A=\bigoplus_{i \in \Z} A^i$ over $k$, equipped with a graded-commutative associative  product $A^i \by A^j\ra A^{i+j}$, and unit $1 \in A^0$; it is said to be non-negatively graded when $A^i=0$ for all $i<0$.
\end{definition}

Thus $A^{\bt}(X)$ is a real non-negatively graded cochain algebra, and hence a real homotopy type in the sense of \cite{sullivan}.

\begin{definition}\label{quasiMHS}
Define a (real) quasi-MHS to be a real vector space $V$, equipped with  an exhaustive  (i.e. $V= \bigcup_n W_nV$) increasing   filtration $W$ on $V$, and  an exhaustive decreasing filtration $F$ on $V\ten \Cx$.
 
We adopt the convention that a (real) MHS is a finite-dimensional  quasi-MHS on which $W$ is Hausdorff (i.e. $\bigcap_n W_nV=0$), satisfying the opposedness condition
$$
\gr_F^i\gr_{\bar{F}}^j\gr^W_n(V\ten \Cx) =0
$$
for $i+j\ne n$.

Define a (real)  ind-MHS to be a filtered colimit
 of MHS. Equivalently, an ind-MHS  is a quasi-MHS $V$ which is equal to the union of all MHS $U \subset V$.  Say that an ind-MHS $V$ is bounded below if $W_NV=0$ for $N \ll 0$. 
\end{definition}

A quasi-MHS on a CDGA $A$ is a quasi-MHS on the underlying cochain complex, in such a way that the multiplication map $A\ten A \to A$ is a morphism of quasi-MHS. As in \cite{Morgan}, there is a natural quasi-MHS on $A^{\bt}(X)$. The Hodge filtration on $ A^{\bt}(X)\ten_{\R} \Cx$ is $F^p(A^{\bt}(X)\ten_{\R} \Cx)= \bigoplus_q\bigoplus_{p'\ge p} A^{p'q}(X, \Cx)$, and the  weight filtration is given by the good truncation $W_iA^{\bt}(X) =\tau^{\le i}A^{\bt}(X)$. Explicitly,
\[
 (\tau^{\le i}V)^j= \left\{\begin{matrix} V^j & j<i, \\ \z^j(V) & j=i, \\ 0 & j>i. \end{matrix}\right.
\]

\subsection{The family of formality quasi-isomorphisms}

\begin{lemma}\label{gl2lemma}
Given a graded module $V^*$ over a ring $B$, equipped with operators $d_1, d_2$ of degree
$1$ such that $[d_1,d_2]=(d_1)^2=(d_2)^2=0$, then for 
$
\left(\begin{smallmatrix} u &  v \\ x & y\end{smallmatrix}\right) \in \GL_2(B),
$
\begin{eqnarray*}
  \ker d_1 \cap \ker d_2 &=&\ker (ud_1 + vd_2) \cap \ker (xd_1+yd_2) ,\\
 \im ( ud_1 + vd_2)+ \im(xd_1+yd_2)&=& \im d_1 + \im d_2,\\
\im ( ud_1 + vd_2)(xd_1+yd_2)&=&\im d_1d_2.
\end{eqnarray*}
\end{lemma}
\begin{proof}
Observe that if we take any matrix, the corresponding inequalities   (with $\le$ replacing $=$) all hold. For invertible matrices, we may express $d_1,d_2$ in terms of $( ud_1 + vd_2),(xd_1+yd_2)$ to give the reverse inequalities.
\end{proof}

\begin{proposition}\label{gl2prop}
If the pair $(d_1,d_2)$ of Lemma \ref{gl2lemma} satisfies the principle of two types, then so does $( ud_1 + vd_2),(xd_1+yd_2)$ whenever $
\left(\begin{smallmatrix} u &  v \\ x & y\end{smallmatrix}\right) \in \GL_2(B),
$.
\end{proposition}
\begin{proof}
The principle of two types states that
$$
\ker d_1 \cap \ker d_2 \cap (\im d_1 + \im d_2)=\im d_1d_2,
$$
so Lemma \ref{gl2lemma} gives precisely the result we require.
\end{proof}

\begin{definition}\label{cSdef}
 Define $\cS$ to be the ring 
\[
 \cS:=\R[x],
\]
with filtration $F^p (\cS\ten \Cx):= (x-i)^p\Cx[x]$ on $\cS\ten_{\R}\Cx$. We also give it a weight  filtration $W$, setting $W_0\cS=\cS$, $W_{-1}\cS=0$. 
\end{definition}
Observe that $\gr^W_0\cS=\cS$, and that $F^1(\cS\ten \Cx)+ \bar{F}^1(\cS\ten \Cx)= \cS\ten \Cx$, which together with multiplicative properties of $F$ imply that 
\[
 \gr_F^p\gr_{\bar{F}}^q(\cS\ten \Cx)=
\gr_F^p\gr_{\bar{F}}^q\gr^W_0(\cS\ten \Cx)=0
\]
 for all $p,q$.
Thus $\cS$ is a quasi-MHS satisfying the opposedness condition, but cannot be an  ind-MHS, since  $\gr^W(\cS\ten \Cx) $ is not isomorphic to $\gr_F\gr_{\bar{F}}\gr^W(\cS\ten \Cx)$.

\begin{definition}
 A morphism $(A,W) \to (B,W)$ of filtered CDGAs is said to be a filtered quasi-isomorphism if it induces quasi-isomorphisms $W_nA \to W_nB$ for all $n$. A morphism $(A,W,F) \to (B,W,F)$ of bifiltered CDGAs is said to be a bifiltered quasi-isomorphism if it induces quasi-isomorphisms $W_nF^pA \to W_nF^pB$ for all $n,p$.
\end{definition}

We now look at the CDGA $A^{\bt}(X) \ten_{\R} \cS $, with  quasi-MHS structure given by extending $W,F$ to tensor products by the usual conventions. These induce filtrations, and hence quasi-MHS structures, on cohomology with respect to any differential respecting the structures. In particular,  there is an isomorphism $\H^*(A^{\bt}(X) \ten_{\R} \cS) \cong \H^*(X,\R)\ten_{\R} \cS$ of quasi-MHS, where 
 $ \H^*(X,\R)$ is endowed with its usual Hodge structure.

The principle of two types now gives us a family of quasi-isomorphisms:
\begin{theorem}\label{formalityS}
We have the following  $W$-filtered quasi-isomorphisms of CDGAs 
$$
A^{\bt}(X) \ten_{\R} \cS \xla{i'} \ker(\dc+ xd) \xra{p'}  \H^*_{\dc+xd}(A^*(X) \ten \cS) \cong \H^*(X,\R)\ten_{\R} \cS,
$$
where $\ker(\dc+xd):=\ker(\dc+xd)\cap (A^{\bt}(X) \ten \cS)$, with differential $d$. 
Moreover, on tensoring with $\Cx$, these become $(W,F)$-bifiltered quasi-isomorphisms. 
\end{theorem}
\begin{proof}
Since $W$ is defined as good truncation, any quasi-isomorphism is automatically $W$-filtered, so it suffices to show that we have 
$F$-filtered quasi-isomorphisms. 

We  introduce a formal variable $w$, and write $\xi(V,F):= \bigoplus_p F^pV w^{-p} \subset V[w, w^{-1}]$ for the Rees module of a filtered complex, noting that this has the structure of a $\Z[w]$-module. Because this is a direct sum of the filtered pieces, filtered quasi-isomorphisms are those morphisms inducing quasi-isomorphisms of Rees modules. 

Writing $\cS_{\Cx}:=\cS\ten_{\R}\Cx$, $A^{\bt}(X, \cS_{\Cx}):=A^{\bt}(X)\ten_{\R}\cS_{\Cx}$ and $\H^*(X,\cS_{\Cx})\cong \H^*(X,\Cx)\ten_{\Cx} \cS_{\Cx}$, it will suffice to show that we have quasi-isomorphisms
$$
 \xi(A^{\bt}(X, \cS_{\Cx}),F) \xla{i'} \xi(\ker(\dc+ xd)_{\Cx},F) \xra{p'}  \xi(\H^*_{\dc+xd}(A^*(X,\cS_{\Cx}),F) \cong  \xi(\H^*(X,\cS_{\Cx}),F)
$$
of $\Cx[w]$-CDGAs.

In order to be able to apply Proposition \ref{gl2prop}, we note that $\xi(\cS_{\Cx},F)= \Cx[w^{-1}(x-i),w]$ and that
\[
\xi(A^n(X, \cS_{\Cx}),F)= \bigoplus_{p+q=n} w^{-p}A^{pq}(X, \Cx)[w^{-1}(x-i),w], 
\]
so multiplying $A^{pq}$ by $w^p$ gives an isomorphism
\[
 \uline{w}\co \xi(A^*(X, \cS_{\Cx}),F)\to A^*(X,\Cx)[w^{-1}(x-i),w]
\]
which leaves $\bar{\pd}$ unchanged and sends $\pd$ to $w\pd$. Writing $d_w:= w\pd+ \bar{\pd}$, we thus have an isomorphism
\[
 \uline{w}\co \xi(A^{\bt}(X, \cS_{\Cx}),F)\to (A^*(X,\Cx)[w^{-1}(x-i),w], d_w)
\]
of $\Cx[w^{-1}(x-i),w]$-CDGAs.

Now, the key observation to make is that $\dc+xd= (x+i)\pd + (x-i)\bar{\pd}$, with
\begin{eqnarray*}
 (x+i)\pd \co (F^a A^n(X, \Cx))\ten_{\Cx}(F^b \cS_{\Cx})&\to&  (F^{a+1} A^{n+1}(X, \Cx))\ten_{\Cx}F^{b}\cS_{\Cx}\\
(x-i)\bar{\pd}\co (F^a A^n(X, \Cx))\ten_{\Cx}(F^b \cS_{\Cx})&\to& (F^a A^{n+1}(X, \Cx))\ten_{\Cx}F^{b+1} \cS_{\Cx},
\end{eqnarray*}
and hence
\[
\dc+xd\co F^pA^n(X, \cS_{\Cx}) \to F^{p+1}A^{n+1}(X, \cS_{\Cx}),
\]
which induces a differential
\[
 w^{-1}(\dc+xd)\co \xi(A^n(X, \cS_{\Cx}),F)\to \xi(A^{n+1}(X, \cS_{\Cx}),F).
\]
On applying $\uline{w}$, this gives a differential $\dc_{w,x}:=(x+i)\pd + w^{-1}(x-i)\bar{\pd}$ on $A^*(X,\Cx)[w^{-1}(x-i),w]$

We now appeal to Proposition \ref{gl2prop}, taking $B= \Cx[w^{-1}(x-i),w]$, $V^*= A^*(X,\Cx)$, $d_1= \pd$ and $d_2=\bar{\pd}$. Since the matrix $\left(\begin{smallmatrix} w, & 1 \\ (x+i), & w^{-1} (x-i) \end{smallmatrix}\right)$ is invertible in $B$, the differentials  $d_w=w\pd+ \bar{\pd}$ and  $\dc_{w,x}=(x+i)\pd + w^{-1}(x-i)\bar{\pd} $ satisfy the principle of two types. 

In particular, this means that inclusion and projection give 
quasi-isomorphisms
\[
 (A^*(X,\Cx)[w^{-1}(x-i),w], d_w) \xla{i'} (\ker \dc_{w,x}, d_w) \xra{p'} \H^*_{\dc_{w,x}}( A^*(X,\Cx)[w^{-1}(x-i),w]),
\]
with differential $0$ on the final complex. It also means that the isomorphism $\H^*A^{\bt}(X,\Cx)\cong \frac{\ker \pd \cap \ker \bar{\pd}}{\im \pd\bar{\pd}}$ then gives us an isomorphism
\[
 \H^*_{\dc_{w,x}}(A^*(X,\Cx)[w^{-1}(x-i),w]) \cong \H^*(X, \Cx)[w^{-1}(x-i),w].
\]

On applying the isomorphism $\uline{w}^{-1}$, these quasi-isomorphisms yield quasi-isomorphisms
\[
  \ker(\dc+xd) \cap F^pA^{\bt}(X, \cS_{\Cx}) \xra{i'}F^pA^{\bt}(X, \cS_{\Cx}) 
\]
for all $p$, and
\[
 \ker(\dc+xd) \cap F^pA^{\bt}(X, \cS_{\Cx})\xra{p'} \frac{\ker(\dc+xd) \cap F^pA^{\bt}(X, \cS_{\Cx}) }{ (\dc+xd)F^{p-1}A^{\bt}(X, \cS_{\Cx})},
\]
together with an isomorphism of the last group with $F^p\H^*(X, \cS_{\Cx})$. 
\end{proof}

\begin{remarks}
Note that we cannot deduce Theorem \ref{formalityS} directly from Lemma \ref{gl2lemma} for the pair $d, \dc+xd$, since that would only establish that $i',p'$ are quasi-isomorphisms preserving the filtrations, rather than filtered quasi-isomorphisms.

Evaluating at $x=0$ recovers the real formality quasi-isomorphism of \cite{DGMS}, while taking $x=i$ gives the complex filtered quasi-isomorphism used in  \cite{Morgan}. 

If the proof of Theorem \ref{formalityS} looks like a string of coincidences, that is because it is the translation of a conceptual geometric proof into purely algebraic language. The original proof will be given in Theorem \ref{formalitysl}, where it arises from the geometry governing Hodge filtrations. 
\end{remarks}

Since $A^*(X,\R)\ten \cS$ is not a mixed Hodge structure in the classical sense (as  $F$ is not bounded on $\cS$), we cannot now apply the theory of mixed Hodge structures on real homotopy types from \cite{Morgan} to infer consequences for Hodge structures on homotopy groups. Instead, we follow \cite{Hainhodge} and use the reduced bar construction.

\subsection{The reduced bar construction}

\begin{definition}\label{CCdef}
Given a  flat dg associative algebra $A$ over a commutative ring $k$ and an $A$-bimodule $P$ in cochain complexes of flat $k$-modules,
recall that the Hochschild homology complex 
\[
 \uline{\CCC}_{\bt}(A/k, P)      
\]
(a semi-simplicial diagram of cochain complexes) is given by 
\[
 \uline{\CCC}_n(A/k,P):=   A^{\ten_k n} \ten_kP,   
\]
with face maps
\[
 \pd_i(a_1\ten\ldots a_n \ten p)= \left\{ \begin{matrix}   a_2\ten\ldots a_n \ten (pa_1) & i=0 \\  a_1\ten\ldots a_{i-1} \ten (a_ia_{i+1}) \ten a_{i+2} \ten \ldots\ten a_n \ten p & 0<i <n \\
  a_1\ten\ldots a_{n-1} \ten (a_np) & i=n.                                              
                                          \end{matrix}\right.
 \]

This has an associated chain cochain complex on the same bigraded module, with chain differential $\sum (-1)^i\pd_i$, and we write  $\CCC_{\bt}(A/k, P)$ for the direct sum total complex.
\end{definition}

\begin{definition}\label{barAdef}
 Given a  non-negatively graded  DGA  $A$ over a commutative ring $k$ with $\H^0A=k$, define the complex $\bar{A}^{\bt}$ by
\[
 \bar{A}^n := \left\{ \begin{matrix} A^n & n>1, \\ A^1/dA^0 & n=1, \\ 0 & n \le 0. \end{matrix} \right.  
\]
\end{definition}

\begin{definition}\label{barBdef}
Assume we have a commutative ring $k$, a non-negatively graded  $k$-DGA $A$, a right $A$-module $M$ and a left $A$-module $N$, with $\bar{A},M,N$ all flat over $k$. We then follow \cite[Definition 1.2.1]{Hainhodge} in defining the reduced  bar complex
\[
 \bar{B}_k(M,A,N)
\]
to be the quotient of the total homological Hochschild  complex $\CCC_{\bt}(A/k, N\ten_kM)$   by the subcomplex generated by the kernel of $\CCC_{\bt}(A, N\ten_kM)\to  \CCC_{\bt}(A^+, M\ten_kN)$, where $A^+=A^{>0}$. Equivalently, we kill all subspaces 
\[
 M \ten A^{\ten r} \ten A^0 \ten A^{\ten s} \ten N, \quad d(M \ten A^{\ten r} \ten A^0 \ten A^{\ten s} \ten N ).
\]
Note that Hain writes $T(M,A,N)$ for the complex  $\CCC_{\bt}(A^+, M\ten_kN)$.
\end{definition}

\begin{lemma}\label{barQIM}
The functor $\bar{B}_k$ preserves quasi-isomorphisms.
\end{lemma}
\begin{proof}
When $k$ is a field, this is proved in \cite{chenReducedBar} and  \cite[Corollary 1.2.3]{Hainhodge}, and the same proof carries over. We look at the bar filtration of $\bar{B}_k(M,A,N)$ by tensor powers of $A$, and obtain a convergent spectral sequence (the Eilenberg--Moore spectral sequence)
\[
 E_0^{-s,t}= (M\ten_k \bar{A}^{\ten_k s} \ten_k N)^t \abuts \bar{B}_k(M,A,N)^{t-s}.
\]
 
Since $M, \bar{A},N$ are flat over $k$, any quasi-isomorphism $(M,A,N) \to (M',A',N')$ induces a quasi-isomorphism on $E_0$ and hence on $\bar{B}_k$.
\end{proof}

Given compatible filtrations $F$ on $M,A,N$, we will write $F$ for the filtration induced on $\bar{B}_k(M,A,N)$ by the usual convention
\[
 F^p(M\ten A^{\ten s}\ten N):= \bigoplus_{a+ b_1 + \ldots +b_s +c=p} F^aM \ten F^{b_1}A\ten \ldots F^{b_s}A\ten F^cN.
\]

When $A$, $M$ and $N$  are all CDGAs, the shuffle product gives   $\bar{B}_k(M,A,N)$  a natural CDGA structure, by \cite[1.2.4]{Hainhodge}. Given a $k$-algebra homomorphism $A^0 \to k$,  \cite[1.2.5]{Hainhodge} defines a coassociative  comultiplication
\[
\Delta\co  \bar{B}_k(k,A,k) \to \bar{B}_k(k,A,k)\ten_k \bar{B}_k(k,A,k),
\]
together with co-unit and antipode making $\bar{B}_k(k,A,k)$ into a DG Hopf algebra. 

\begin{definition}\label{barHopfdef}
For any augmentation $A^0\to k$,  write $\bar{B}_k(A \to k)$ for the DG  algebra $\bar{B}_k(k,A,k)$ equipped with its  DG Hopf algebra structure.
\end{definition}

\begin{definition}\label{barg0}
Given a  flat non-negatively graded  DGA $A$ over a commutative ring $k$ with $\H^0A=k$ and an augmentation $A \to k$, define
 $\bar{G}^{\vee}_k(A\to k)$ to be  the ind-conilpotent DG Lie coalgebra given by the cotangent space
\[
 \bar{G}^{\vee}_k(A\to k):= \cot_1\bar{B}_k(A \to k)
\]
at $1$  of the DG Hopf algebra $\bar{B}_k(A \to k)$, where 
\[
 \cot_1\bar{B}_k(A \to k)= \bar{B}_k(A \to k)^+/(\bar{B}_k(A \to k)^+)^2
\]
for $\bar{B}_k(A \to k)^+$ the kernel of the counit  $\bar{B}_k(A \to k)\to k$. 

The dual $\bar{G}^k(A\to k) $ of $\bar{G}^{\vee}_k(A\to k) $ is then an inverse limit of nilpotent finite-dimensional dg Lie algebras, and we can recover $\bar{B}_k(A \to k)$ as the continuous dual of the universal pro-finite-dimensional enveloping algebra of  $\bar{G}(A\to k)$.
\end{definition}

When $A^0=k$, there is a unique augmentation $A \to k$ and we write $B_k(A):=\bar{B}_k(A \to k)$ and $G^k(A):= \bar{G}^k(A \to k)$. Note that $B_k(A)$ is the cofree ind-conilpotent graded coalgebra
\[
 B_k(A) \cong \bigoplus_{n\ge 0} (A^+[1])^{\ten_k n} 
\]
on cogenerators $A^+[1]$, with differential $d$ defined on cogenerators by $(A^+\ten_kA^+)[1] \oplus A^+ \xra{(\mu,d_A)} A^+[1]$, where $\mu$ denotes multiplication. Taking the quotient by shuffle permutations then gives  $\bar{G}^{\vee}_k(A) $, which is  cofree as an ind-conilpotent graded Lie coalgebra. Poincar\'e--Birkhoff--Witt gives a ring isomorphism
\[
  B_k(A) \cong \Symm_k(G_k^{\vee}(A)).
\]

By \cite[Corollary 1.23]{Hainhodge}, the reduced bar construction preserves quasi-isomorphisms, so the same is true of $\bar{G}$.

\subsection{The mixed Hodge structure on Quillen's real homotopy type and on homotopy groups}

\begin{definition}
We define the real homotopy groups $\pi_*(X\ten \R,x_0) $ of $X$  to be the pro-finite-dimensional real  vector spaces
\[
 \pi_n(X\ten \R,x_0):= \H_{n-1}\bar{G}^{\R}(A(X) \xra{x_0^*} \R).
\]
Note that for $n>1$, these agree with Sullivan's real homotopy groups by \cite[Remark \ref{htpy-deligne}]{htpy}, while $\pi_1(X\ten \R,x_0)$ is the Lie algebra of the real pro-unipotent completion of $\pi_1(X,x_0)$.
When $X$ is nilpotent, we have canonical isomorphisms
\[
 \pi_*(X\ten \R,x_0)\cong \pi_*(X,x_0)\ten_{\Z} \R,
\]
by \cite[Theorem 6.2]{Hainhodge}, where the graded vector space $\pi_*(X\ten \R,x_0) $ is denoted by $\g_{\bt}(X,x_0)\ten_{\Q}\R$.

Similarly, we write 
\[
 \pi_n(\H^*(X,\R)):= \H_{n-1}G^{\R}(\H*(X,\R))
\]
for the real homotopy groups of the formal homotopy type $\H^*(X,\R)$.
\end{definition}

As in \cite[Theorem 4.2.1]{Hainhodge}, the reduced bar construction transfers the real quasi-MHS on $A^{\bt}(X)$ to a real quasi-MHS on $G^{\vee}(x_0^*\co A(X) \to \R)$ and makes  $\pi_n(X\ten \R,x_0)$ a real pro-MHS (i.e. an inverse limit of finite-dimensional MHS). Similarly $\pi_n(H^*(X, \R))$ is an inverse system of real Hodge structures.


\begin{theorem}\label{morgansplitsa}
For $x_0 \in X$, and for all $n$, there are $\cS$-linear isomorphisms
$$
\pi_*(X\ten \R,x_0)\ten_{\R}\cS \cong \pi_*(H^*(X, \R))\ten_{\R}\cS,
$$
of inverse systems of quasi-MHS, compatible with Whitehead brackets and Hurewicz maps. The associated graded map from the weight filtration is just the pullback of the standard isomorphism $\gr_W\pi_*(X\ten \R,x_0)\cong\pi_*(H^*(X, \R))$ coming from degeneration of the 
spectral sequence associated to the weight filtration $W$ on $\bar{G}(x_0^*\co A(X) \to \R)$.
\end{theorem}
\begin{proof}
Observe that $A^n(X,\cS)= A^n(X,\R)\ten_{\R}\cS$, so is flat over $\cS$. Moreover, 
\[
 A^1(X,\cS)/dA^0(X,\cS)= (A^1(X,\R)/dA^0(X,\R))\ten_{\R}\cS,
\]
which is also flat over $\cS$.

We may therefore apply the reduced bar construction $\bar{B}_{\cS}$ to the augmented CDGA $x_0^* \co A^{\bt}(X,\cS)\to \cS$,  giving a dg Hopf algebra $\bar{B}_{\cS}(x_0^*\co A^{\bt}(X,\cS)\to\cS)= \bar{B}_{\R}(x_0^*\co A^{\bt}(X)\to \R)\ten_{\R}\cS$ over $\cS$.
 By \cite[Corollary 1.23]{Hainhodge}, the reduced bar construction preserves quasi-isomorphisms, so Theorem \ref{formalityS} gives quasi-isomorphisms
\[
 \bar{B}_{\R}( A^{\bt}(X)\xra{x_0^*} \R)\ten_{\R}\cS \xla{i'} \bar{B}_{\cS}(\ker(\dc+ xd) \xra{x_0^*} \cS)\xra{p'}  B_{\cS}(\H^*_{\dc+xd}(A^*(X) \ten \cS)
\]
 and an isomorphism between the last and $B_{\R}(\H^*(X,\R))\ten_{\R} \cS$.

The filtration $F$ on a flat augmented  $\cS_{\Cx}$-CDGA $A\to \cS_{\Cx}$ extends naturally to $\bar{B}_{\cS_{\Cx}}(A\to \cS_{\Cx}) $ by setting
\[
 F^p( A^{\ten_{\cS_{\Cx}} n}):= \sum_{p_1 + \ldots +p_n=p} (F^{p_1} A)\ten_{\cS_{\Cx}} \ldots \ten_{\cS_{\Cx}} (F^{p_n} A).
\]
 By the results of \cite[\S 3.2]{Hainhodge}, the reduced bar construction preserves filtered quasi-isomorphisms, so the maps of dg Hopf algebras above become $F$-filtered quasi-isomorphisms on tensoring with $\Cx$. 

Alternatively, we could observe that the description of $\xi(A^n(X, \cS_{\Cx}),F)$ from the proof of Theorem \ref{formalityS} shows it is flat over $\xi(\cS_{\Cx},F)= \Cx[w^{-1}(x-i),w]$, as is
\begin{align*}
& \xi(A^1(X, \cS_{\Cx}),F)/d\xi(A^0(X, \cS_{\Cx}),F)=  \\ 
&w^{-1}A^{10}(X, \Cx)[w^{-1}(x-i)]\oplus (A^1(X, \Cx)/dA^0(X,\Cx))[w^{-1}(x-i),w],
\end{align*}
the morphism $A^{10}(X, \Cx)\to A^1(X, \Cx)/dA^0(X,\Cx)$ being injective.
Thus we may apply the bar construction
\[
 \xi(\bar{B}_{\cS}(x_0^*\co A^{\bt}(X,\cS)\to\cS)_{\Cx}, F)= \bar{B}_{\xi(\cS_{\Cx},F)}(x_0^* \co \xi(A^{\bt}(X,\cS_{\Cx}),F)\to \xi(\cS_{\Cx},F)),
\]
and use the quasi-isomorphisms featuring in the proof of Theorem \ref{formalityS}.

The filtration $W$ on a flat augmented  $\cS$-CDGA $A\to \cS$ extends naturally to $\bar{B}_{\cS}(A\to \cS) $ by setting
\[
 W_s( A^{\ten_{\cS} n}):= \sum_{s_1 + \ldots +s_n=s} (W_{s_1} A)\ten_{\cS} \ldots \ten_{\cS} (W_{s_n} A),
\]
and the same argument shows that the morphisms of dg Hopf algebras above are $W$-filtered quasi-isomorphisms, or $(W,F)$-bifiltered quasi-isomorphisms on tensoring with $\Cx$. 

On taking cotangent spaces, we then have  quasi-isomorphisms of quasi-MHS
\[
 \bar{G}_{\R}^{\vee}( A^{\bt}(X)\xra{x_0^*} \R)\ten_{\R}\cS \xla{i'} \bar{G}^{\vee}_{\cS}(\ker(\dc+ xd) \xra{x^*} \cS)\xra{p'}  
G^{\vee}_{\R}(\H^*(X,\R))\ten_{\R} \cS,
\]
giving the required isomorphisms on $\H_*$.
\end{proof}

In \S \ref{vmhssn}, we will see how these MHS become variations of mixed Hodge structure as the basepoint $x_0 \in X$ varies.

\begin{remark}\label{trueweightrmk0}
 Beware that in \cite{Hainhodge}, the weight filtration on $A^{\bt}$ is defined to be the unshifted weight filtration, which is trivial in this case, and from which our weight filtration $W$ is obtained by d\'ecalage. Our convention is required to allow comparison with the weight filtration on cohomology, and by deeper considerations which we will discuss in Remark \ref{trueweightrmk}. 

One immediate simplification following from our convention is that our weight filtration  on the reduced bar complex is given by the natural extension to tensor powers. By contrast, the weight filtration from \cite{Hainhodge} in this case is defined to be the bar filtration $\fB$ instead. Note that inclusion gives quasi-isomorphisms $W \to \Dec \fB$, so  our weight filtration on the bar complex is again quasi-isomorphic to the  d\'ecalage of Hain's weight filtration. 
\end{remark}

\begin{definition}\label{gammadef}
Given a quasi-MHS $V$, define the decreasing filtration $\gamma^*$ on $V$ by $\gamma^p V= V\cap F^p(V\ten \Cx)$.
\end{definition}

\begin{remark}\label{gadata1}
Given the Hodge structure on the cohomology ring $\H^*(X,\R)$, Theorem \ref{morgansplitsa} leads us to ask what additional data are required to describe the mixed Hodge structure on real homotopy groups. Writing $N \co \cS \to \Omega(\cS/\R)= \cS dx$ for the canonical derivation, observe that $N$ is surjective, so gives a resolution of $\R$. Moreover,  $\cS dx $ carries a natural quasi-MHS inherited from $\cS$, with $dx = d(x-i) \in F^1$.

The derivation $N$ naturally extends to an $(\cS, N)$-derivation $N_{\pi_*}\co \pi_*(X\ten \R)^{\vee}\ten_{\R}\cS\to \pi_*(X\ten \R)^{\vee}\ten_{\R}\cS dx$, with $ \pi_*(X\ten \R)^{\vee}=\ker N_{\pi_*}$.
In order to recover the Hodge structure on $\pi_*(X\ten \R)$, it therefore suffices to determine the corresponding $(\cS, N)$-derivation of $\pi_*( H^*(X, \R))^{\vee}\ten_{\R}\cS$, or equivalently its restriction to generators. Since the derivation must be trivial on $ \gr_W\pi_*(X\ten \R)^{\vee}\ten_{\R}\cS$, this gives us an element of  
\begin{align*}
& W_{-1}\gamma^0\Hom_{\R}(\pi_*( H^*(X, \R))^{\vee},\pi_*( H^*(X, \R))^{\vee}\ten_{\R}\cS dt )) \\
&\cong W_{-1}\gamma^{-1}\Hom_{\R}(\pi_*( H^*(X, \R))^{\vee},\pi_*( H^*(X, \R))^{\vee}\ten_{\R}\cS)), 
\end{align*}
which is the datum we require to recover the mixed Hodge structure on $\pi_*(X\ten \R)$.  

We can  also ask what additional data are required to describe the mixed Hodge structure on the rational homotopy type. The proof of Theorem  \ref{morgansplitsa} gives a quasi-isomorphism
\[
 \bar{G}^{\R}(x_0^*\co A^{\bt}(X,\R)\to \R)\ten_{\R}\cS \simeq G^{\R}( H^*(X, \R))\ten_{\R}\cS
\]
of pro-nilpotent dg Lie $\cS$-algebras in quasi-MHS, so applying $N$ on the left gives rise to a homotopy class of Lie 
derivations
\[
 N_A\co G^{\R}( H^*(X, \R))\ten_{\R}\cS \to G^{\R}( H^*(X, \R))\ten_{\R}\cS dt.
\]

Since the algebraic mixed Hodge structure is formally defined from the algebraic Hodge filtration, we need no further data, but there is an additional restriction --- the derivation must be $0$ on $\gr^W$, corresponding to the  isomorphism $\gr^WA^{\bt}(X) \simeq \H^*(X,\R)$. Thus mixed Hodge structures correspond to choices of 
$$
N_A \in \H^0(W_{-1}\gamma^{-1}(\Der_{\R}(G^{\R}\H^*(X,\R), G^{\R}\H^*(X,\R)) \ten_{\R}\cS)).
$$

In \S \ref{archmon} we will show how to calculate $N_A$, and hence the MHS, explicitly   from the formality quasi-isomorphisms.
\end{remark}

Lacking a suitable reference, we now verify that our mixed Hodge structure on homotopy groups agrees with the mixed Hodge structure given in \cite{Morgan}.

\begin{proposition}\label{morganhodge}
The mixed Hodge structures on homotopy groups given in Theorem \ref{morgansplitsa} and \cite[Theorem 9.1]{Morgan} agree.
\end{proposition}
\begin{proof}
In \cite[\S 6]{Morgan}, a minimal model $\cM$ was constructed for $\A^{\bt}(X,\Cx)$, equipped with a bigrading (i.e. a $\bG_m\by \bG_m$-action). The associated quasi-isomorphism $\psi: \cM \to \A^{\bt}(X,\Cx)$ satisfies
$\psi(\cM^{pq}) \subset \tau^{\le p+q} F^p\A^{\bt}(X,\Cx) $. Thus $\psi$ is a map of bifiltered CDGAs. It is also a quasi-isomorphism of CDGAs, but we need to show that it is a quasi-isomorphism of bifiltered CDGAs. By \cite[Lemma 6.2b]{Morgan}, $\psi$  maps $\H^*(\cM^{pq})$ isomorphically to $\H^{pq}(X, \Cx)$, so the associated Rees algebras are quasi-isomorphic.

The bar construction then gives a $(W,F)$-filtered quasi-isomorphism
\[
 G^{\Cx}(\cM) \to \bar{G}^{\Cx}( x_0^*\co \A^{\bt}(X,\Cx) \to \Cx).
\]
Under the equivalences of \cite[Theorem \ref{htpy-bigequiv}]{htpy}, $\H_{n-1}(G^{\Cx}(\cM))^{\vee}= \H^n(\bL^{\cM/\R}\ten^{\oL}_{\cM}\R)$ where $\bL$ denotes the cotangent complex. Since $\cM$ is cofibrant, this is just $\H^n(\Omega(\cM/\R)\ten_{\cM}\R)$.  Finally, $\cM$ is minimal, so the complex $\Omega(\cM/\R)\ten_{\cM}\R$ is isomorphic to the  indecomposables $I$ of $\cM$, with trivial differential. This means that $\H_{n-1}(G^{\Cx}(\cM))^{\vee}\cong I^n$, and
$$
\xi(\varpi_n(X\ten \Cx,x)^{\vee}; W,F)= p^*\xi(\varpi_n(X\ten \R, x)^{\vee}, \MHS) \cong \xi(I^n;W,F),
$$
so the Hodge and weight filtrations from Theorem \ref{morgansplitsa} and \cite{Morgan} agree.
\end{proof}

\section{Non-abelian structures}\label{nonabfil}

\subsection{Hodge filtrations}\label{hodgefil}
In this section, we will define algebraic Hodge filtrations on real affine schemes. This construction is essentially that of \cite[\S 5]{Simfil}, with the difference that we are working over $\R$ rather than $\Cx$.

\subsubsection{Abelian Hodge filtrations revisited}

\begin{lemma}\label{flatfiltrn}
The category of flat quasi-coherent $\bG_m$-equivariant sheaves on $\bA^1$ is equivalent to the category of  exhaustive (i.e. $V= \bigcup_n F_nV$) filtered vector spaces, where $\bG_m$ acts on $\bA^1$ via the standard embedding $\bG_m \into \bA^1$. 
\end{lemma}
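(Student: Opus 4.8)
The plan is to realise this as the standard Rees-module correspondence. Write $\bA^1 = \Spec \R[t]$; the standard embedding $\bG_m \into \bA^1$ makes $t$ homogeneous of degree $1$, so a $\bG_m$-equivariant quasi-coherent sheaf on $\bA^1$ is the same datum as a $\Z$-graded $\R[t]$-module $M = \bigoplus_{n\in\Z} M_n$, multiplication by $t$ furnishing maps $t\co M_n \to M_{n+1}$. The first step is the elementary fact that, $\R[t]$ being a principal ideal domain, such a module is flat precisely when it is torsion-free, which in the graded setting says exactly that each $t\co M_n \to M_{n+1}$ is injective.

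Next I would introduce the two functors. To a flat $\bG_m$-equivariant sheaf $M$ (all transition maps injective) I associate $V := \LLim_n M_n$, the filtered colimit over $n \in \Z$ along the multiplication-by-$t$ maps; injectivity of those maps forces each $M_n \to V$ to be injective, and I set $F_n V := \im(M_n \to V)$, which is an exhaustive increasing filtration since $V$ is the colimit. Conversely, to an exhaustive filtered vector space $(V, F_\bullet)$ I associate the Rees module $\xi(V) := \bigoplus_n F_n V$, with $t\co F_n V \to F_{n+1} V$ the inclusion; this is torsion-free, hence flat. Both assignments are visibly functorial: a morphism of $\Z$-graded $\R[t]$-modules is a family of maps $M_n \to M_n'$ commuting with $t$, which on passing to the colimit is precisely a linear map $V \to V'$ sending $F_n V$ into $F_n V'$, and conversely a filtered linear map gives back such a family. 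Finally I would check the two composites are naturally the identity: $\LLim_n F_n V = \bigcup_n F_n V = V$ by exhaustiveness, and this recovers $F$; while injectivity of $M_n \to V$ gives $\im(M_n \to V) \cong M_n$ compatibly with the $t$-action, so $\bigoplus_n F_n V \cong M$ as graded modules, naturally in $M$.

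I do not expect a serious obstacle; the one point meriting genuine care rather than bookkeeping is the equivalence ``flat $\Leftrightarrow$ torsion-free'' over $\R[t]$, since it is exactly the resulting injectivity of the maps $t \co M_n \to M_{n+1}$ that makes the colimit construction lose no information --- without it, non-isomorphic graded modules could share a colimit and the correspondence would collapse. I would also be careful to align the hypotheses precisely: no separatedness ($\bigcap_n F_n V = 0$) is assumed or obtained, which matches the choice of all flat quasi-coherent sheaves rather than, say, coherent ones.
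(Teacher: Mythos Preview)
Your proposal is correct and takes essentially the same approach as the paper: the Rees module correspondence, with $\xi(V,F)=\bigoplus_n F_nV$ and $t$ acting by the inclusions $F_nV\hookrightarrow F_{n+1}V$. The paper's own proof is simply a citation together with this Rees construction; you have supplied the details it omits, in particular the inverse functor via the colimit and the identification of flatness with injectivity of multiplication by $t$.
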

\begin{proof}
Let $t$ be the co-ordinate on $\bA^1$, and $M$ the space of global sections of a $\bG_m$-equivariant  sheaf on $\bA^1$.
Since $M$ is flat, $0 \to M \xra{t} M \to M\ten_{k[t],0}k \to 0$ is exact, so $t$ is an injective endomorphism. The $\bG_m$-action is equivalent to giving  a decomposition $M = \bigoplus M_n$, and  we have $t:M_n \into M_{n+1}$. Thus the images of $\{M_n\}_{n \in \Z}$ give  an exhaustive filtration on $M \ten_{k[t],1}k$.

Conversely, set $M$ to be the Rees module $\xi(V,F):=\bigoplus F_nV$, with $\bG_m$-action given by setting $F_nV$ to be of weight $n$, and the $k[t]$-module structure determined by letting $t$ be the inclusion $F_nV \into F_{n+1}V$. If $I$ is a $k[t]$-ideal, then $I=(f)$, for some $f \in k[t]$, since $k[t]$ is a principal ideal domain. The map $M\ten I \to M$ is thus isomorphic to $f:M\to M$. Writing $f= \sum a_n t^n$, we see that it is injective on $M=\bigoplus F_nV$. Thus $M\ten I \to M$ is injective, so  $M$ is flat by \cite[Theorem 7.7]{Mat}.
\end{proof}

Note that a  quasi-coherent $\bG_m$-equivariant sheaf on $\bA^1$ is the same as a quasi-coherent sheaf on the stack $[\bA^1/\bG_m]$.

\begin{remark}\label{nonflatfil}
We might also ask what happens if we relax the condition that the  filtration be flat, since non-flat structures might sometimes arise as quotients. 

An arbitrary quasi-coherent $\bG_m$-equivariant sheaf $M$ on $\bA^1$ with $M \ten_{k[t],1}k=V$   corresponds to  a system  $M_r$ of vector spaces with (not necessarily injective) linear maps $t:M_r \to M_{r+1}$, such that $\varinjlim_{r \to \infty} W_r \cong V$.
\end{remark}

\begin{definition}\label{Cdef}
Define $C$ to be the real affine scheme $\prod_{\Cx/\R}\bA^1$ obtained from $\bA^1_{\Cx}$ by restriction of scalars, so for any real commutative algebra $A$, $C(A)= \bA^1_{\Cx}(A\ten_{\R}\Cx)\cong A\ten_{\R}\Cx$. Choosing $i \in \Cx$ gives an isomorphism $C \cong \bA^2_{\R}$, and we let $C^*$ be the quasi-affine scheme $C - \{0\}$.

Define $S$ to be the Deligne torus. This is the real algebraic group  $S=\prod_{\Cx/\R} \bG_m$ obtained as in \cite[2.1.2]{Hodge2}  from $\bG_{m,\Cx}$ by restriction of scalars. Explicitly, for any real commutative  algebra $A$, $S(A)= (A\ten_{\R}\Cx)^{\by}$.

There is a canonical inclusion $\bG_m \into S$ given by $A^{\by} \into (A\ten_{\R}\Cx)^{\by}$, and that   $S$ acts on $C$ and $C^*$
by inverse multiplication, i.e.
\begin{eqnarray*}
S(A) \by C(A) &\to& C(A)\\
(\lambda, c) &\mapsto& (\lambda^{-1}c).
\end{eqnarray*}
 \end{definition}

\begin{remarks}\label{Sactionrmks}
By \cite[Definition 2.1.4]{Hodge2}, a real Hodge structure is a finite-dimensional real vector space $V$ equipped with an algebraic action of $S$.  This is equivalent to giving a bigraded decomposition $V\ten_{\R}\Cx = \bigoplus_{pq} V^{pq}$ with $\overline{V^{pq}}= V^{qp}$. For $\lambda \in S(\R) \cong \Cx^{\by}$, the action on $V^{pq}$ is multiplication by $\lambda^p \bar{\lambda}^q$. Note that the equivalence extends to infinite-dimensional $S$-representations.

A more standard $S$-action on $C$ is given by the inclusion $S \into  C $ coming from restriction of scalars applied to $\bG_{m,\Cx} \into \bA^1_{\Cx}$. However, we wish  $C$ to be of weight $-1$ rather than $+1$.
\end{remarks}

\begin{remark}\label{Ccoords}
Fix a choice of $i \in \Cx$, and thus define an isomorphism $(u,v) \co C \to \bA^2_{\R}$ given by $u+iv \mapsto (u,v)$ for $u+iv \in A\ten_{\R}\Cx$. Thus the algebra $O(C)$ of functions on  $C$ is the polynomial ring $\R[u,v]$.  Note that $(u,v)$ induces an isomorphism $(u,v) \co C^* \to \bA^2_{\R}-\{0\}$.
Similarly, define an isomorphism $(x,y) \co S \to \bA^2_{\R}-\{(x,y)\,:\, x^2+y^2=0\} $ by $x+iy \mapsto (x,y)$ for $x+iy \in (A\ten_{\R}\Cx)^{\by}$.

On $C_{\Cx}$, we have alternative co-ordinates $w=u+iv$ and $\bar{w}=u-iv$. On $S_{\Cx}$, the alternative co-ordinates $z=x+iy$ and $\bar{z}=x-iy$  give the   isomorphism $(z, \bar{z})\co S_{\Cx}\to \bG_{m,\Cx} \by \bG_{m,\Cx} $ of \cite[\S 2.1]{Hodge2}.
Given  a real Hodge structure $V$, the action of $S$ on $V^{pq}$ is thus given by $z^p\bar{z}^q$. 
 
Note that for the $S$-action of Definition \ref{Cdef}, the composition $S \by C \to C \xra{(w, \bar{w})} \bA^2_{\Cx}$ is given by $(z^{-1}w, \bar{z}^{-1}\bar{w})$. Thus the co-ordinates  $w$ and $\bar{w}$
on $C$  are of types $(-1,0)$ and  $(0,-1)$ respectively. 
\end{remark}

\begin{corollary}\label{flathfil}
The category of flat quasi-coherent $S$-equivariant sheaves on $C^*$  is equivalent to the category of pairs $(V, F)$, where $V$ is  a real vector space and $F$ an exhaustive  decreasing filtration on $V\ten_{\R}\Cx$. 
\end{corollary}
\begin{proof}
Take a  flat quasi-coherent $S$-equivariant sheaf $\sF$ on $C$. Regard $\sF_{\Cx}$ as a sheaf on $\bA^2_{\Cx}-\{0\}$, via the isomorphism $(w, \bar{w}) \co C^* \to \bA^2_{\Cx}-\{0\}$. Writing $\bA^2_{\Cx}-\{0\}= (\bA^1\by \bG_m) \cup (\bG_m \by \bA^1)$, it follows from Lemma \ref{flatfiltrn} that $\sF$ gives two exhaustively filtered complex vector spaces $(V_{\Cx},F)$ and $(V'_{\Cx},F')$, with
\[
 \Gamma( \bA^1\by \bG_m,\sF) \cong (\bigoplus_n F^nV w^{-n})[\bar{w}, \bar{w}^{-1}], \quad  \Gamma( \bG_m \by \bA^1,\sF) \cong  (\bigoplus_n F_n'V' \bar{w}^{-n})[w, w^{-1}].
\]
The gluing data then give an isomorphism $V_{\Cx} \cong V'_{\Cx}$, and the descent datum from $C_{\Cx}^*$ to $C^*$ gives a real vector space $V$ with $V\ten_{\R}\Cx \cong V_{\Cx}$, such that $\bar{F}=F'$.

For the inverse construction,  write $\xi(V\ten \Cx; F, \bar{F})$ for the Rees module
\[
 \bigoplus_{p,q} F^p \bar{F}^qV_{\Cx}w^{-p}\bar{w}^{-q},
\]
regarded as a complex $S$-equivariant $O(C)$-module.
Letting $M\subset \xi(V\ten \Cx; F, \bar{F})$  be the real elements gives a real $S$-equivariant $O(C)$-module, and we define $\xi(V, \bF)$ to be the pullback of $M$ to $C^*$.
\end{proof}

\begin{remark}\label{kapranov}
Although the direct image functor from $C^*$ to $C$ gives an equivalence of the categories of flat quasi-coherent sheaves, we do not follow \cite{kapranovMHS} in working over $C$, since the derived category of $C$ has too few quasi-isomorphisms.
A morphism $U \to V$ of complexes with Hodge filtrations gives rise to a quasi-isomorphism $\xi(U,\bF) \to \xi(V,\bF)$ of sheaves on $C^*$ if and only if the maps $F^pU_{\Cx} \to F^pV_{\Cx}$ are all quasi-isomorphisms. However, to get quasi-isomorphism on $C$ we would need the maps $F^p\bar{F}^qU_{\Cx} \to F^p\bar{F}^qV_{\Cx} $ to be quasi-isomorphisms.

The motivating example comes from the embedding $\cH^* \to A^{\bt}$ of real  harmonic forms into the real de Rham algebra of a compact K\"ahler manifold. This gives a quasi-isomorphism of the associated complexes on $C^*$, since the maps $F^p(\cH^*\ten \Cx) \to F^p(A^{\bt}\ten \Cx)$ are quasi-isomorphisms. However, the associated map on $C$ is not a quasi-isomorphism, as this would force the maps $\cH^{pq} \to A^{pq}$ to be isomorphisms.

However, our approach has the disadvantage that we cannot simply describe the bigraded vector space $\gr_F\gr_{\bar{F}}V$, which would otherwise be given by pulling back along $0 \to C$. 
\end{remark}

\begin{remark}\label{nonflathfil}
We might also ask what happens if we relax the condition that the Hodge filtration be flat.

An arbitrary algebraic  Hodge filtration on a real vector space $V$ is a system  $F^p$ of complex vector spaces with (not necessarily injective) linear maps $s:F^{p} \to F^{p-1}$, such that $\varinjlim_{p \to -\infty} F^{p} \cong V\ten \Cx$.
\end{remark}

\subsubsection{Non-abelian Hodge filtrations}

\begin{definition}\label{hfildef}
Given an affine scheme $X$ over $\R$, we define an algebraic  Hodge filtration $X_{\bF}$ on $X$ to consist of the following data:
\begin{enumerate}
\item an $S$-equivariant  affine morphism $ X_{\bF} \to   C^*$,
\item an isomorphism $X \cong X_{\bF, 1}:= X_{\bF}\by_{C^*, 1}\Spec \R$, where $1 \co \Spec \R \to C$ denotes inclusion of the point $u=1, v=0$ in the co-ordinates of Remark \ref{Ccoords}.
\end{enumerate}
\end{definition}

\begin{definition}\label{splithfil}
A real splitting of the  Hodge filtration $X_{\bF}$ consists of an $S$-action on $X$, and an $S$-equivariant isomorphism
$$
X \by C^* \cong X_{\bF}
$$
 over $ C^* $.
\end{definition}

\begin{remark}\label{KPSrmk}
Note that giving $X_{\bF}$ as above is equivalent to giving the affine morphism $[ X_{\bF}/ S] \to [ C^*/ S]$ of stacks.
This fits in with the idea in \cite{KPS} that if $\OBJ$ is  an $\infty$-stack parametrising some $\infty$-groupoid of objects, then the groupoid of  non-abelian filtrations of this object is  $\cHom([\bA^1/\bG_m], \OBJ)$, since by analogy the groupoid of non-abelian Hodge filtrations would then be  $\cHom([C^*/S], \OBJ)$, replacing Lemma \ref{flatfiltrn} with Corollary \ref{flathfil}.
\end{remark}

\begin{definition}\label{tildeC} 
Let $\widetilde{C^*}\to C^*$ be the \'etale covering of $C^*$ given by cutting out the divisor $\{\bar{w}=0\}$ from $C^*\ten_{\R}\Cx$, for co-ordinate $\bar{w}$ as in Remark \ref{Ccoords}. Explicitly, 
\[
 \widetilde{C^*}= \Spec \Cx[u,v, (u-iv)^{-1}],
\]
with the morphism $\widetilde{C^*} \to C$ given on functions by the inclusion $\R[u,v]\to \Cx[u,v, (u-iv)^{-1}]$.
\end{definition}

\begin{lemma}\label{tildemods}
There is an equivalence of categories between flat $S$-equivariant quasi-coherent sheaves on $\widetilde{C^*}$, and exhaustive   filtrations on complex vector spaces.
\end{lemma}
\begin{proof}
The isomorphism $(w, \bar{w}) \co C_{\Cx} \to \bA^2_{\Cx}$ restricts to an isomorphism $\widetilde{C^*} \cong \bA^1_{\Cx} \by \bG_{m, \Cx}$. The $S$-action on $X$ from Definition \ref{Cdef} induces an action on $\widetilde{C^*}$, and for  the isomorphism $(z,\bar{z}) \co S_{\Cx} \to \bG_{m, \Cx}\by \bG_{m, \Cx}$ of Remark \ref{Ccoords}, this action is given by $(w, \bar{w}) \mapsto (z^{-1}w,\bar{z}^{-1}\bar{w})$.

 Thus $S$-equivariant quasi-coherent sheaves on $\widetilde{C^*}$ are equivalent to $\bG_{m, \Cx} \by 1$-equivariant quasi-coherent sheaves on the scheme $\bA^1_{\Cx}\subset  \widetilde{C^*}$ given by $\bar{w}=1$. Now apply Lemma \ref{flatfiltrn}.
\end{proof}

\subsubsection{$\SL_2$}\label{slfirst}
 
\begin{definition}
Define maps $\row_1, \row_2 \co \GL_2 \to \bA^2$ by projecting onto the first and second rows, respectively. If we make the identification $C=\bA^2$ of Definition \ref{Cdef}, then these are equivariant with respect to the right $S$-action $\GL_2 \by S \to \GL_2$, given by 
$$(A, x+iy) \mapsto A\left(
\begin{matrix} x & y  \\ -y & x \end{matrix} \right)^{-1},$$
for co-ordinates $x,y$ as in Definition \ref{Ccoords}. 

This follows  because
\[
 \left(\begin{matrix} a & b \end{matrix}\right ) \left( \begin{matrix} x & y  \\ -y & x \end{matrix} \right)= \left(\begin{matrix} ax -by & bx+ay \end{matrix}\right ),
\]
and $(ax -by)+i(bx+ay)= (a+ib)(x+iy)$.
\end{definition}

\begin{definition}\label{rowdef}
Define an $S$-action on  $\SL_2$   by    
$$
(\lambda, A) \mapsto\left(
\begin{matrix}  1 & 0  \\ 0 & \lambda\bar{\lambda}  \end{matrix} \right)A  \left(
\begin{matrix} \Re  \lambda & \Im\lambda  \\ -\Im \lambda & \Re \lambda \end{matrix} \right)^{-1}=
 \left(
\begin{matrix}  \lambda\bar{\lambda} & 0  \\ 0 & 1  \end{matrix} \right)^{-1}A  \left(
\begin{matrix} \Re  \lambda & -\Im\lambda  \\ \Im \lambda & \Re \lambda \end{matrix} \right).
$$ 
Let $\row_1 :\SL_2 \to C^*$ be the  map given by projection onto the first row. Observe that this map is  $S$-equivariant because the formulae for the $S$-actions on $\SL_2$ and on $\GL_2$ only differ in the second row.
\end{definition}

Without further comment, we will use the map $\row_1$ to regard $\SL_2$ as a scheme over $C^*$.

\begin{remark}\label{sltrivia}
Observe that, as an $S$-equivariant scheme over $C^*$, we may decompose $\GL_2$  as $\GL_2=  \left(\begin{smallmatrix} 1 & 0  \\ 0 & \bG_m \end{smallmatrix} \right)\by \SL_2 $, where the $S$-action on $\bG_m$ has $\lambda$ acting as multiplication by $(\lambda\bar{\lambda})^{-1}$. 

We may also write $C^*=[\SL_2/\bG_a]$, where $\bG_a$ acts on $\SL_2$ as left multiplication by $ \left(\begin{smallmatrix} 1 & 0  \\ \bG_a & 1 \end{smallmatrix} \right)$, where the $S$-action on $\bG_a$ has $\lambda$ acting as multiplication by $\lambda\bar{\lambda}$.
\end{remark}

\begin{lemma}\label{sluniv}
The morphism $\row_1:\SL_2 \to C^*$ is weakly final in the category of $S$-equivariant affine schemes over $C^*$.
\end{lemma}
\begin{proof}
We need to show that for any affine scheme $U$ equipped with an $S$-equivariant morphism $f:U \to C^*$, there exists a (not necessarily unique) $S$-equivariant morphism $g:U \to \SL_2$ such that $f= \row_1 \circ g$.

If $U=\Spec A$, then $A$ is an $O(C)=\R[u,v]$-algebra, with the ideal $(u,v)_A$ equalling $A$, so there exist $a,b \in A$ with $ua-vb=1$. Thus the map factors through $\row_1: \SL_2 \to C^*$. Complexifying gives an expression $\alpha w +\beta \bar{w}=1$, for $w, \bar{w}$ as in Remark \ref{Ccoords} and suitable $\alpha, \beta \in A\ten_{\R}\Cx$.

 Now splitting $\alpha, \beta$ into types $\alpha= \sum_{pq} \alpha^{pq}, \beta= \sum_{pq} \beta^{pq}$ as in Remarks \ref{Sactionrmks}, we have $\alpha^{10}w+ \beta^{01}\bar{w}=1$, since $1$ is $S$-invariant. On conjugating and averaging, this gives 
\[
 \half(\alpha^{10}+ \overline{\beta^{01}})w+ \half(\beta^{01}+ \overline{\alpha^{10}})\bar{w}=1.
\]
 Write this as $\alpha'w +\beta'\bar{w}=1$. Finally, note that $y:=\alpha'+\beta', -x:=i\alpha'-i\beta'$ are both real, giving $uy-vx=1$, with $x,y$ having the appropriate $S$-action to regard $A$ as an $O(\SL_2)$-algebra when $\SL_2$ has co-ordinates $\left(\begin{smallmatrix} u & v  \\ x & y \end{smallmatrix} \right) $.
\end{proof}

\begin{remark}
Observe that for our action of $\bG_m\subset S$ (corresponding to left multiplication by diagonal matrices) on $\SL_2$, the stack $[\SL_2/\bG_m]$ is just the affine  scheme $\bP^1\by \bP^1- \Delta(\bP^1)$. Here, $\Delta$ is the diagonal embedding, and the projections to $\bP^1$ correspond to the maps  $\row_1,\row_2: [\SL_2/\bG_m] \to [(\bA^2-\{0\})/\bG_m]$ (noting that for $\row_2$ this means taking the inverse of our usual $\bG_m$-action on $C^*$). Lemma \ref{sluniv} can then be reformulated to say that $\bP^1\by \bP^1- \Delta(\bP^1) $ is weakly final in the category of $S^1$-equivariant  affine schemes over $\bP^1$.
\end{remark}

\begin{lemma}\label{slhodge}
The affine scheme $ \SL_2 \xra{\row_1} C^*$ is a flat algebraic   Hodge filtration, corresponding to the algebra $\cS$ of Definition \ref{cSdef}.
\end{lemma}
\begin{proof}
Since $\row_1$ is flat and equivariant for the inverse right $S$-action, 
we know by Corollary \ref{flathfil} that we have a filtration on the ring of functions of  $\SL_2\by_{\row_1, C^*, 1}\Spec \R$. This scheme  consists of invertible matrices 
$\left(
\begin{smallmatrix} 1 & 0 \\ x & 1\end{smallmatrix} \right),
$
so the ring of functions is just $\R[x]$.

To describe the filtration, we use Lemma \ref{tildemods}, considering the pullback of $\row_1$ along  $\widetilde{C^*}\to C^*$. The scheme $\widetilde{\SL_2}:=\SL_2\by_{\row_1, C}\widetilde{C^*} $ is isomorphic to $\widetilde{C^*}\by \bA^1$, with projection onto $\bA^1_{\Cx}$ given by 
$\left(\begin{smallmatrix} u &  v \\ x & y\end{smallmatrix}\right) \mapsto x-iy$. This isomorphism is moreover $S_{\Cx}$-equivariant over $\widetilde{C^*}$, when we set the co-ordinates of $\bA^1$ to be of type $(1,0)$.

The filtration $F$ on $\cS\ten \Cx$  then just comes from the decomposition on $\Cx[x-iy] $ associated to the action of $\bG_{m,\Cx} \by \{1\} \subset S_{\Cx}$, giving
$$
F^p \Cx[x-iy]= \bigoplus_{p' \ge p}(x-iy)^{p'}\Cx.
$$
The filtration on $\R[x]\ten \Cx$ is then given by evaluating this at $y=1$, giving $F^p (\R[x]\ten \Cx)= (x-i)^p\Cx[x]$, as required.

For an explicit inverse construction, the complex  Rees module $\bigoplus_{p,q \in \Z} w^{-p}\bar{w}^{-q} F^p\bar{F}^q\cS$ associated to $\cS$ is the $\Cx[w,\bar{w}]$-subalgebra of $(\cS \ten \Cx)[w,w^{-1}, \bar{w}, \bar{w}^{-1}]$ generated by $\bar{z}:= w^{-1}(x-i)$ and $z:= \bar{w}^{-1}(x+i)$. These satisfy the sole relation $w\bar{z}-\bar{w}z=-2i$, giving $\left(\begin{smallmatrix} u &  v \\ \xi & \eta\end{smallmatrix}\right) \in \SL_2$, where $z=\xi+i\eta, \, \bar{z}=\xi -i\eta$.
\end{proof}

\begin{remark}
We may now reinterpret Lemma \ref{sluniv} in terms of Hodge filtrations. An $S$-equivariant affine scheme, flat over $C^*$, is equivalent to a real commutative algebra $A$, equipped with an exhaustive decreasing filtration $F$ on $A\ten_{\R}\Cx$, such that $\gr_F\gr_{\bar{F}}(  A\ten_{\R}\Cx)=0$. This last condition is equivalent to saying that $1 \in F^1+\bar{F}^1$, or even that there exists $\alpha \in F^1(A\ten_{\R}\Cx)$ with $\Re \alpha =1$. We then define a homomorphism $f:\cS \to A$ by setting $f(x) = \Im \alpha$, noting that $f(1+ix)= \alpha \in F^1(A\ten_{\R}\Cx)$,  so $f$ respects the Hodge filtration.
\end{remark}

\begin{proposition}\label{cSubiq}
Every (finite-dimensional abelian) MHS $V$ admits an $\cS$-splitting, i.e. an $\cS$-linear isomorphism  
$$
V\ten \cS \cong (\gr^W V)\ten \cS,
$$
of quasi-MHS, inducing the identity on the grading associated to $W$.  The set of such splittings is a torsor for the group $\id + W_{-1}\gamma^0\End( (\gr^W V)\ten \cS)$, for $\gamma$ as in Definition \ref{gammadef}
\end{proposition}
\begin{proof}
We proceed by induction on the weight filtration. $\cS$-linear extensions $0 \to W_{n-1}V\ten \cS \to W_nV \ten \cS \to \gr^W_nV \ten \cS \to 0$ of quasi-MHS are parametrised by 
$$
\Ext^1_{\bA^1\by \SL_2}( \gr^W V \ten O(\bA^1) \ten O(\SL_2), \xi(W_{n-1}V, W,F, \bar{F}) \ten_{O(C)} O(\SL_2) )^{\bG_m \by S},  
$$
since $\bG_m \by S$ is (linearly) reductive. Now, $\gr^W V \ten O(\bA^1) \ten O(\SL_2)$ is a projective $O(\bA^1) \ten O(\SL_2)$-module, so its higher $\Ext$s are all $0$, and all $\cS$-linear quasi-MHS extensions of $\gr^W_nV \ten \cS $ by $W_{n-1}V\ten \cS$ are isomorphic, so $W_nV\ten \cS \cong W_{n-1}V\ten \cS \oplus \gr^W_nV \ten \cS$.

Finally, observe that any two splittings differ by a unique automorphism of $(\gr^W V)\ten \cS$, preserving the  quasi-MHS structure, and inducing the identity on taking $\gr^W$. This group is just $\id + W_{-1}\gamma^0\End( (\gr^W V)\ten \cS)$, as required. 
\end{proof}

We may make use of the covering $\row_1:\SL_2 \to C^*$ to give an explicit description of the derived direct image $\oR j_*\O_{C^*}$ as a commutative DG algebra on $C$, for $j:C^* \to C$,  as follows. 

\begin{definition}\label{Ndef}
The $\bG_a$-action on $\SL_2$ of Remark \ref{sltrivia} gives rise to an action of the associated Lie algebra $\g_a \cong \R$ on $O(\SL_2)$. Explicitly, define the standard generator $N \in \g_a$ to act as the derivation with $Nx=u, Ny=v, Nu=Nv=0$, for co-ordinates  $\left(\begin{smallmatrix} u &  v \\ x & y\end{smallmatrix}\right)$ on $\SL_2$. 

This is equivalent to the $O(\SL_2)$-linear isomorphism $\Omega(\SL_2/C) \to O(\SL_2)$ given by $dx\mapsto u$, $dy \mapsto v$. This is not $S$-equivariant, but has type $(-1,-1)$, so we write $\Omega(\SL_2/C) \cong O(\SL_2)(-1)$. Note that the inverse map  $O(\SL_2)(-1)\to \Omega(\SL_2/C)$ is multiplication by $ydx-xdy$. 
\end{definition}

The DG algebra $O(\SL_2) \xra{N\eps} O(\SL_2)(-1)\eps$, for $\eps$ of degree $1$, is an algebra over $O(C)=\R[u,v]$, so we may consider the DG algebra $j^{-1}O(\SL_2) \xra{N\eps} j^{-1}O(\SL_2)(-1)\eps$ on $C^*$, for $j:C^* \to C$. This is an acyclic resolution of the structure sheaf $\O_{C^*}$, so 
$$
\oR j_*\O_{C^*} \simeq j_*(j^{-1}O(\SL_2) \xra{N\eps} j^{-1}O(\SL_2)(-1)\eps)= (O(\SL_2) \xra{N\eps} O(\SL_2)(-1)\eps),
$$
regarded as an $O(C)$-algebra. This construction is moreover  $S$-equivariant.

\begin{definition}\label{rjc}
From now on, we will  denote the DG algebra $O(\SL_2) \xra{N\eps} O(\SL_2)(-1)\eps$  by $\oR O(C^*)$, thereby making a canonical choice of representative in the equivalence class $\oR j_*\O_{C^*}$.
 \end{definition}

\subsection{Twistor filtrations}

There are some topological invariants which are known not to carry Hodge filtrations and mixed Hodge structures. However, as explained in \cite{MTS}, in these cases we should expect twistor filtrations and mixed twistor structures instead. We now develop these in the generality we need, the essential idea being to replace the Deligne torus $S$ with the natural copy of $\bG_m$ inside it. 

\begin{definition}\label{tfildef}
Given an affine scheme $X$ over $\R$, we define an algebraic (real)  twistor filtration $X_{\bT}$  on $X$ to consist of the following data:
\begin{enumerate}

\item a $\bG_m$-equivariant affine morphism $\bT: X_{\bT} \to  C^*$,

\item an isomorphism $X \cong X_{\bT, 1}:= X_{\bT}\by_{C^*, 1}\Spec \R$.

\end{enumerate}
\end{definition}

\begin{remark}\label{tfilqcohrmk}
 Note that the category of quasi-coherent  sheaves $\sF$ on a stack $\fX$ is contravariantly equivalent to the category of  affine cogroups  over $\fX$, sending $\sF$ to the cogroup $\mathbf{\Spec}(\O_{\fX} \oplus \sF)$, where the multiplication on $\sF$ is zero. Uncoiling the definition above, it then follows that an algebraic twistor filtration on a real vector space $V$ consists of a $\bG_m$-equivariant quasi-coherent sheaf $\sV_{\bT}$ on $C^*$, equipped with an isomorphism $1^*\sV_{\bT} \cong V$.
\end{remark}

\begin{definition}\label{splittfil}
A real splitting of the  twistor filtration $X_{\bT}$ consists of a $\bG_m$-action on $X$, and an $\bG_m$-equivariant isomorphism
$$
X \by C^* \cong X_{\bT}
$$
 over $ C^* $.
\end{definition}

\begin{definition}\label{realMTSdef}
Adapting \cite[\S 1]{MTS}  from complex to real structures, say that a twistor structure on a real vector space $V$  consists of a  vector bundle $\sE$ on $\bP^1_{\R}$, 
with
an isomorphism $V \cong \sE_1$, the fibre of $\sE$ over $1 \in \bP^1$.
\end{definition}

\begin{proposition}\label{flattfil}
The category of finite flat algebraic  twistor filtrations on real vector spaces is equivalent to the category of  twistor structures.
\end{proposition}
\begin{proof}
The flat algebraic twistor filtration is a flat $\bG_m$-equivariant quasi-coherent sheaf $M$ on $C^*$, with $M|_{1}=V$. Taking the quotient by the right $\bG_m$-action, 
$M$ corresponds to a flat quasi-coherent sheaf $M_{\bG_m}$ on $[C^*/\bG_m]$. Now, $[C^*/\bG_m] \cong [(\bA^2-\{0\})/\bG_m] = \bP^1$, so Lemma \ref{flatfiltrn} implies that  $M_{\bG_m}$ corresponds to a flat quasi-coherent sheaf $\sE$ on $\bP^1$. Note that $\sE_1= (M|_{\bG_m})_{\bG_m} \cong M_1 \cong V$, as required.
\end{proof}

\begin{definition}\label{U1def}
Define the real algebraic group $S^1$ to be the circle group, whose $A$-valued points are given by $\{(a,b) \in  A^2\,:\, a^2+b^2=1\}$. Note that $S^1 \into S$, and that $S/\bG_m \cong S^1$. This latter $S$-action gives $S^1$ a split Hodge filtration.
\end{definition}

\begin{lemma}\label{tfilen}
There is an equivalence of categories between algebraic twistor filtrations $X_{\bT}$ on a real  affine scheme $X$, and morphisms $\breve{X} \to S^1$ with $X=\breve{X}\by_{S^1, 1} \Spec \R$, equipped with  algebraic Hodge filtrations $\breve{X}_{\bF}$ compatible with the standard Hodge filtration on $S^1$.
\end{lemma}
\begin{proof}
Given an algebraic Hodge filtration $\breve{X}_{\bF}$ over $ S^1 \by C^*$, take 
$$
X_{\bT}:=\breve{X}_{\bF}\by_{S^1, 1}\Spec \R,
$$ 
and observe that this satisfies the axioms of an algebraic twistor filtration. Conversely, given an algebraic twistor filtration  $X_{\bT}$ (over $C^*$),  set 
$$
\breve{X}_{\bF}=(X_{\bT}\by S^1)/(-1,-1),
$$ 
with projection $\pi(x,t)= (\pr(x)t^{-1}, t^2) \in  C^*\by S^1$.
\end{proof}

\begin{corollary}\label{twistormeans}
A flat  algebraic twistor filtration $\sV_{\bT}$ on a real vector space $V$ is equivalent to the data of a flat $O(S^1)$-module $\tilde{V}^{{S^1}}$ with $\tilde{V}^{{S^1}}\ten_{O(S^1)}\R=V$, together with 
an exhaustive  decreasing filtration $F$ on $(\tilde{V}^{{S^1}})\ten \Cx$, with the  morphism $O(S^1)\ten_{\R}\tilde{V}^{{S^1}} \to \tilde{V}^{{S^1}}$ respecting the filtrations (for the standard Hodge filtration on $O(S^1)\ten \Cx$). In particular, the filtration is given by  $F^p(\tilde{V}^{S^1}\ten \Cx)= (a+ib)^pF^0(\tilde{V}^{S^1}\ten \Cx)$.
\end{corollary}
\begin{proof}
Passing between quasi-coherent sheaves and affine cogroups as in Remark \ref{tfilqcohrmk}, we set $X_{\bT}:= \oSpec_{C^*}(\O_{C^*} \oplus\sV_{\bT}) $
and $\breve{X}:= \Spec (O(S^1) \oplus \tilde{V}^{{S^1}})$. Then combining Lemmas \ref{flathfil} and \ref{tfilen},  the correspondence  is given explicitly by 
\[
 \sV_{\bT}=  \xi(\tilde{V}^{{S^1}} , \bF)\ten_{O(S^1)}\R, \quad  \xi(\tilde{V}^{{S^1}} , \bF)= (\sV_{\bT}\ten O(S^1))^{(-1,-1)}.
\]
\end{proof}

\begin{definition}\label{grt}
Given a flat algebraic twistor filtration on a real vector space $V$ as above, define $\gr_{\bF}\tilde{V}^{S^1}$ to be the real part of $\gr_F\gr_{\bar{F}}(\tilde{V}^{{S^1}}\ten \Cx)$. Note that this is an $O(S^1)$-module,  and define $\gr_{\bT}V:= (\gr_{\bF}\tilde{V}^{{S^1}})\ten_{O(S^1)}\R$.
\end{definition}

These results have the following trivial converse.
\begin{lemma}\label{tfilenrich}
An algebraic Hodge filtration $X_{\bF}\to C^*$ on $X$ is equivalent to an algebraic twistor filtration $\bT:X_{\bT}\to C^*$ on $X$, together with a $S^1$-action on $X_{\bT}$ with
the properties that
\begin{enumerate}
 \item the $S^1$-action and $\bG_m$-actions  on $X_{\bT}$ commute,
\item 
$\bT$ is $S^1$-equivariant, and
\item  $-1 \in S^1$ acts as $-1 \in \bG_m$.
\end{enumerate}
\end{lemma}
\begin{proof}
The subgroups $S^1$ and $\bG_m$ of $S$ satisfy $(\bG_m \by S^1)/(-1,-1) \cong S$.
\end{proof}

\subsection{Mixed Hodge structures}\label{nmhs} 

We now define algebraic mixed Hodge structures on real affine schemes. 

\begin{definition}\label{mhsdef}
Given an affine scheme $X$ over $\R$, we define an algebraic mixed Hodge structure  $X_{\MHS}$ on $X$ to consist of the following data:
\begin{enumerate}
\item a $\bG_m \by S$-equivariant  affine morphism $ X_{\MHS} \to   \bA^1 \by C^*$,
\item a real affine scheme $\ugr X_{\MHS}$ equipped with an $S$-action,
\item an isomorphism $X \cong  X_{\MHS}\by_{(\bA^1\by C^*), (1,1)}\Spec \R$,
\item a $\bG_m \by S$-equivariant  isomorphism $\ugr X_{\MHS} \by C^* \cong  X_{\MHS}\by_{\bA^1, 0}\Spec \R$, where  $\bG_m$ acts on $\ugr X_{\MHS}$ via the inclusion $\bG_m \into S$. This is called the opposedness isomorphism.
\end{enumerate}
\end{definition}

\begin{definition}
Given an algebraic mixed Hodge structure $X_{\MHS}$ on $X$, define $\gr^WX_{\MHS}:= X_{\MHS}\by_{\bA^1, 0}\Spec \R$, noting that this is isomorphic to $\ugr X_{\MHS} \by C^*$. We also define $X_{\bF}:= X_{\MHS}\by_{\bA^1, 1}\Spec \R$, noting that this is a Hodge filtration on $X$. 
\end{definition}

\begin{definition}\label{splitmhs}
A real splitting of the mixed  Hodge structure $X_{\MHS}$ is a $\bG_m \by S$-equivariant isomorphism
$$
\bA^1 \by \ugr X_{\MHS} \by C^* \cong X_{\MHS},
$$
giving  the opposedness isomorphism on pulling back along $\{0\} \to \bA^1$.
\end{definition}

\begin{remarks}\label{cfkps}
\begin{enumerate}
\item
Note that giving $X_{\MHS}$ as above is equivalent to giving the affine morphisms $[ X_{\MHS}/ \bG_m \by S] \to [\bA^1/\bG_m] \by [ C^*/ S]$ and $\ugr X_{\MHS} \to BS$ of stacks, satisfying an opposedness condition. 

\item To compare this with the non-abelian mixed Hodge structures postulated in \cite{KPS}, note that pulling back along the morphism $\widetilde{C^*}\to C^*$ gives an object over $[\bA^1/\bG_m]\by [\widetilde{C^*}/S_{\Cx}]\cong [\bA^1/\bG_m]\by [\bA^1/\bG_m]_{\Cx}$; this is essentially the stack $X_{\dR}$ of \cite{KPS}. The stack $X_{B,\R}$ of \cite{KPS} corresponds to pulling back along $1: \Spec \R \to C^*$. Thus our algebraic mixed Hodge structures give rise to pre-non-abelian mixed Hodge structures (pre-NAMHS) in the sense of \cite{KPS}. Our treatment of the opposedness condition is also similar to the linearisation condition for a pre-NAMHS, since both introduce additional data corresponding to the associated graded object. 
\end{enumerate}
\end{remarks}

 As for Hodge filtrations, this gives us a notion of an algebraic  mixed Hodge structure on a real vector space. We now show how this is equivalent to the standard definition.

\begin{proposition}\label{flatmhs}
The category of flat $\bG_m \by S$-equivariant quasi-coherent sheaves $M$ on $\bA^1 \by C^*$ is equivalent to the category of quasi-MHS.

Under this equivalence,  bounded below ind-MHS $(V,  W, F)$ correspond to flat algebraic mixed Hodge structures $M$ on  $V$
 whose weights with respect to the $\bG_m \by 1$-action are bounded below.

A real splitting of the Hodge filtration is equivalent to giving a (real) Hodge structure on $V$ (i.e. an $S$-action).
\end{proposition}
\begin{proof}
 Adapting Corollary \ref{flathfil}, we see that a flat $\bG_m \by S$-equivariant module $M$ on $\bA^1 \by C^*$   corresponds to giving exhaustive  filtrations $W$ on $V=M|_{(1,1)}$ and $F$ on $V \ten \Cx$, i.e. a quasi-MHS on $V$. Write $\xi(V,\MHS)$ for the $\bG_m \by S$-equivariant  quasi-coherent sheaf on $\bA^1 \by C^*$ associated to a quasi-MHS $(V,W,F)$.

A flat algebraic mixed Hodge structure is a flat $\bG_m \by S$-equivariant module $M$ on $\bA^1 \by C^*$, with $M|_{(1,1)}=V$, together with a $\bG_m \by S$-equivariant splitting of the algebraic Hodge filtration  $M|_{\{0\}\by C^*}$. Under the equivalence above, this gives a quasi-MHS $(V,  W, F)$, with $W$ bounded below, satisfying the 
split opposedness condition 
$$
(\gr^W_nV)\ten \Cx= \bigoplus_{p+q=n} F^p(\gr^W_nV\ten \Cx) \cap \bar{F}^q(\gr^W_nV\ten \Cx).
$$

When the weights of $M$ are bounded below, we need to express this as a filtered direct limit of MHS. Since $W$ is exhaustive, it will suffice to prove  that each $W_rV$ is an ind-MHS. Now  $W_{N}V=0$ for some $N\ll 0$, so split opposedness means that $W_{N+1}V$ is a direct sum of pure Hodge structures (i.e. an $S$-representation), hence an ind-MHS. Assume inductively that $W_{r-1}V$ is an ind-MHS, and consider the exact sequence
$$
0 \to W_{r-1}V \to W_rV \to \gr^W_rV \to 0.
$$   
of quasi-MHS. Again, split opposedness shows that $\gr^W_rV$ is an ind-MHS, so we may express it as $\gr^W_rV= \LLim_{\alpha} U_{\alpha}$, with each $U_{\alpha}$ a MHS. Thus $W_rV= \LLim_{\alpha} W_rV\by_{\gr^W_rV}U_{\alpha}$, so we may assume that $\gr^W_rV$ is finite-dimensional (replacing $W_r$ with $ W_rV\by_{\gr^W_rV}U_{\alpha}$). Then quasi-MHS extensions of $ \gr^W_rV$  by $ W_{r-1}V$ are parametrised by
$$
\Ext^1_{\bA^1 \by C^*}(\xi(\gr^W_rV, \MHS), \xi(W_{r-1}V, \MHS))^{\bG_m \by S}.
$$
Express $W_{r-1}V$ as a filtered direct limit $\LLim_{\beta} T_{\beta}$ of MHS, and note that 
\begin{align*}
 &\Ext^1_{\bA^1 \by C^*}(\xi(\gr^W_rV, \MHS), \xi(W_{r-1}V, \MHS))^{\bG_m \by S}\\
= \LLim_{\beta}&\Ext^1_{\bA^1 \by C^*}(\xi(\gr^W_rV, \MHS), \xi(T_{\beta}, \MHS))^{\bG_m \by S},
\end{align*}
since $\xi(\gr^W_rV, \MHS)$ is finite and locally free. Thus the extension $W_rV \to \gr^W_rV$ is a pushout of an extension 
$$
0 \to T_{\beta} \to E \to  \gr^W_rV \to 0
$$
for some $\beta$, so $W_rV$
can be expressed as  the ind-MHS
$
W_rV= \varinjlim_{\beta' >\beta}E\oplus_{T_{\beta}}T_{\beta'}.
$

Conversely, any MHS  $V$ satisfies the split opposedness condition by \cite[Proposition 1.2.5]{Hodge2}, so the same holds for any ind-MHS. Thus every ind-MHS corresponds to a flat algebraic MHS under the equivalence above.

Finally, note that the split opposedness condition determines the data of any real splitting.
\end{proof}

\begin{remark}
Note that  the proof of \cite[Proposition 1.2.5]{Hodge2} does not adapt  to infinite filtrations. For instance, the quasi-MHS $\cS$ of Definition \ref{cSdef} satisfies the opposedness condition, but does not give an ind-MHS. Geometrically, this is because the fibre over $\{0\} \in C$ is empty. Algebraically, it is because the Hodge filtration on the ring $\cS=\gr^W_0\cS$ is not split, but $\gr_F\gr_{\bar{F}}(\cS \ten \Cx)=0$, which is a pure Hodge structure of weight $0$. 
\end{remark}

\subsection{Mixed twistor structures}\label{nmts}

\begin{definition}\label{mtsdef}
Given an affine scheme $X$ over $\R$, we define an algebraic mixed twistor structure  $X_{\MTS}$ on $X$ to consist of the following data:
\begin{enumerate}
\item a $\bG_m \by \bG_m$-equivariant  affine morphism $ X_{\MTS} \to   \bA^1 \by C^*$,
\item a real affine scheme $\ugr X_{\MTS}$ equipped with a $\bG_m$-action,
\item an isomorphism $X \cong  X_{\MTS}\by_{(\bA^1\by C^*), (1,1)}\Spec \R$,
\item a $\bG_m \by \bG_m$-equivariant  isomorphism $\ugr X_{\MTS} \by C^* \cong  X_{\MTS}\by_{\bA^1, 0}\Spec \R$.
This is called the opposedness isomorphism.
\end{enumerate}
\end{definition}

\begin{definition}
Given an algebraic mixed twistor structure $X_{\MTS}$ on $X$, define $\gr^WX_{\MTS}:= X_{\MTS}\by_{\bA^1, 0}\Spec \R$, noting that this is isomorphic to $\ugr X_{\MTS} \by C^*$. We also define $X_{\bT}:= X_{\MTS}\by_{\bA^1, 1}\Spec \R$, noting that this is a twistor filtration on $X$. 
\end{definition}

\begin{definition}\label{splitmts}
A real splitting of the mixed  twistor structure $X_{\MTS}$ is a $\bG_m \by \bG_m$-equivariant isomorphism
$$
\bA^1 \by \ugr X_{\MTS} \by C^* \cong X_{\MTS},
$$
giving  the opposedness isomorphism on pulling back along $\{0\} \to \bA^1$.
\end{definition}

\begin{remark}
Note that giving $X_{\MTS}$ as above is equivalent to giving the affine morphism $[ X_{\MTS}/ \bG_m \by \bG_m] \to [\bA^1/\bG_m] \by [ C^*/ \bG_m]$ of stacks, satisfying a split opposedness condition. 
\end{remark}

\begin{definition}\label{abmtsdef}
Adapting \cite[\S 1]{MTS}  from complex to real structures, say that a (real) mixed twistor structure (real MTS) on a real vector space $V$  consists of a vector bundle $\sE$ on $\bP^1_{\R}$, equipped with an exhaustive Hausdorff increasing filtration by   sub-bundles $W_i\sE$, such that for all $i$ the graded bundle $\gr^W_i\sE$ is semistable of slope $i$ (i.e. a direct sum of copies of $\O_{\bP^1}(i)$). We also require an isomorphism $V \cong \sE_1$, the fibre of $\sE$ over $1 \in \bP^1$.

Define  a quasi-MTS on $V$ to be a flat  quasi-coherent sheaf $\sE$ on $\bP^1_{\R}$, equipped with an exhaustive increasing filtration by quasi-coherent subsheaves  $W_i\sE$, together with an isomorphism $V \cong \sE_1$.  Define an ind-MTS to be a filtered direct limit of real MTS, and say that an ind-MTS $\sE$ on $V$ is bounded below if $W_N\sE=0$ for $N \ll 0$.
\end{definition}

Applying Corollary \ref{twistormeans} gives the following result.
\begin{lemma}\label{mtsdef2}
A flat algebraic   mixed twistor structure on a real vector space $V$ is equivalent to giving an $O(S^1)$-module $V'$, equipped with a  mixed Hodge structure (compatible with the weight $0$ real Hodge structure on $O(S^1)$), together with an isomorphism $V'\ten_{O(S^1)}\R\cong V$.
\end{lemma}

\begin{proposition}\label{flatmts}
The category of flat $\bG_m \by \bG_m$-equivariant quasi-coherent sheaves on $\bA^1 \by C^*$ is equivalent to the category of quasi-MTS.

Under this equivalence,  bounded below ind-MTS on $V$ correspond to flat algebraic mixed twistor structures $\xi(V, \MTS)$ on  $V$
 whose weights with respect to the $\bG_m \by 1$-action are bounded below.
\end{proposition}
\begin{proof}
The first statement follows by combining Lemma \ref{flattfil} with Lemma \ref{flatfiltrn}.

Now, given a flat algebraic mixed twistor structure $\xi(V, \MTS)$ on  $V$
 whose weights with respect to the $\bG_m \by 1$-action are bounded below, the proof of Proposition \ref{flatmhs} adapts (replacing $S$ with $\bG_m$) to show that $\xi(V, \MTS)$ is a filtered direct limit of finite flat algebraic mixed twistor structures.
 It therefore suffices to show that finite flat algebraic mixed twistor structures correspond to MTS.

A finite flat algebraic mixed twistor structure is a  finite locally free $\bG_m \by \bG_m$-equivariant module $M$ on $\bA^1 \by C^*$, with $M|_{(1,1)}=V$, together with a $\bG_m \by \bG_m$-equivariant splitting of the algebraic twistor filtration  $M|_{\{0\}\by C^*}$. 
Taking the quotient by the right $\bG_m$-action, 
$M$ corresponds to a  finite locally free $\bG_m$-equivariant module $M_{\bG_m}$ on $\bA^1 \by [C^*/\bG_m]$. Note that $[C^*/\bG_m] \cong [(\bA^2-\{0\})/\bG_m] = \bP^1$, so Lemma \ref{flatfiltrn} implies that  $M_{\bG_m}$ corresponds to a  finite locally free  module on $\sE$ on $\bP^1$, equipped with a finite  filtration $W$. 

Now, $\ugr X_{\MTS}$ corresponds to a  $\bG_m$-representation $V$, or equivalently a  
graded vector space $V=\bigoplus V^n$. If $\pi$ denotes the projection $\pi: C^* \to \bP^1$, then the opposedness isomorphism is equivalent to  a $\bG_m$-equivariant isomorphism
$$
\gr^W\sE \cong V\ten^{\bG_m}(\pi_*\O_{C^*})=  \bigoplus_n V^n \ten_{\R} \O_{\bP^1}(n),
$$
so $\gr^W_n\sE \cong   V^n \ten_{\R} \O_{\bP^1}(n)$, as required.
\end{proof}

\begin{remark}\label{mhstomts}
Note that every MHS $(V,W,F)$ has an underlying MTS $\sE$ on $V$, given by forming the $S$-equivariant  Rees module $\xi(V, \bF)$ on $C^*$ as in Corollary \ref{flathfil},  and setting $\sE$ to be the quotient $\xi(V, \bF)_{\bG_m}$ by the action of $\bG_m \subset S$. 

Beware that if $\sE$ is the MTS underlying $V$, then the sheaf $\sE\ten_{\O_{\bP^1}}\O_{\bP^1}(-2n)$ is the MTS underlying the MHS $V(n)$. 
\end{remark}

\subsection{Real homotopy types revisited}\label{real}

We now illustrate how non-abelian mixed Hodge structures and $\SL_2$-splittings arise for real homotopy types.   Fix a compact connected  K\"ahler manifold $X$ as in \S \ref{real1}.

\subsubsection{The mixed Hodge structure}

\begin{definition}\label{Atildedef}
Define the CDGA $\tilde{A}^{\bt}(X)$ on $C$ by
$$
\tilde{A}^{\bt}(X)= (A^*(X) \ten_{\R} O(C), ud + v\dc),
$$
for co-ordinates $u,v$ as in Remark \ref{Ccoords}. We denote the differential by $\tilde{d}:=ud + v\dc$.
Note that $\tilde{d}$ is indeed flat:
$$
\tilde{d}^2= u^2d^2+ uv(d\dc+\dc d)+v^2(\dc)^2=0.
$$
\end{definition}

\begin{definition}\label{dmd}
There is an action of $S$ on $A^*(X)$, which we will denote by $a \mapsto \lambda \dmd a$, for $\lambda \in \Cx^{\by} = S(\R)$. For $a \in (A^*(X)\ten \Cx)^{pq}$, it is given by
$$
\lambda \dmd a := \lambda^p\bar{\lambda}^qa.
$$
\end{definition}

\begin{lemma}
There is a natural algebraic $S$-action on $\tilde{A}^{\bt}(X)$ over $C$.
\end{lemma}
\begin{proof}
For $\lambda \in S(\R)= \Cx^{\by}$, this action is given on $A^*(X)$ by $a \mapsto \lambda \dmd a$, extending to $\tilde{A}^{\bt}(X)$ by tensoring with the action on $C$ from Definition \ref{Cdef}. We need to verify that this action respects the differential $\tilde{d}$.

 Taking the co-ordinates $(u,v)$ on $C$ from Remark \ref{Ccoords}, we will consider the co-ordinates $w=u+iv, \bar{w}=u-iv$ on $C_{\Cx}$. Now, we may decompose $d$ and $\dc$ into types (over $\Cx$) as $d=\pd +\bar{\pd}$ and $\dc= i\pd -i\bar{\pd}$. Thus $\tilde{d}= w\pd +\bar{w}\bar{\pd}$, so
$$
\tilde{d}: (A^*(X)\ten \Cx)^{p,q}\to w (A^*(X)\ten \Cx)^{p+1,q}\oplus \bar{w} (A^*(X)\ten \Cx)^{p,q+1}.
$$
Because $w$ is of type $(-1,0)$, this map is equivariant under the $S$-action given, with $\lambda$ acting as multiplication by $\lambda^{p}\bar{\lambda}^{q}$ on both sides. 
\end{proof}

\begin{lemma}\label{hodgenice}
The $S$-equivariant $C^*$-bundle $j^*\tilde{A}^{\bt}(X)$ corresponds under Corollary \ref{flathfil} to the  Hodge filtration on $A^{\bt}(X,\Cx)$.
\end{lemma}
\begin{proof}
 We just need to verify that $\tilde{A}^{\bt}(X)\ten \Cx$ is isomorphic to the Rees algebra $\xi(A^{\bt}(X), F, \bar{F})$  (for $F$ the Hodge filtration), with the same complex conjugation.

Now,
$$
\xi(A^{\bt}(X), F, \bar{F})= \bigoplus_{pq} F^p\cap \bar{F}^q,
$$
with $\lambda \in S(\R)\cong \Cx^{\by}$ acting as $\lambda^{p}\bar{\lambda}^{q}$ on $F^p\cap \bar{F}^q$, and inclusion $F^p \to F^{p-1}$ corresponding to multiplication by $w=u+iv$. We therefore define an $O(C)$-linear map $f:\tilde{A}^*(X)\to \xi(A^{\bt}(X), F, \bar{F})$ by mapping $(A(X)\ten \Cx)^{pq}$ to $F^p\cap \bar{F}^q$. It only remains to check that this respects the differentials.

For $a \in (A(X)\ten \Cx)^{pq}$,
$$
f(\tilde{d} a)= f(w\pd a +\bar{w}\bar{\pd}a)= w(\pd a) + \bar{w}(\bar{\pd}a) \in w(F^{p+1}\cap \bar{F}^q)+ \bar{w}(F^{p}\cap \bar{F}^{q+1}).
$$
But $w(F^{p+1}\cap \bar{F}^q)=F^{p}\cap \bar{F}^q = \bar{w}(F^{p}\cap \bar{F}^{q+1})$, so this is just 
$\pd a +\bar{\pd}a = da$  in $F^p\cap \bar{F}^q$, which is just $df(a)$, as required.
\end{proof}

Combining this with the weight filtration means that  the bundle  $\xi(A^{\bt}(X),\MHS  )$ on $[\bA^1/\bG_m]\by [C^*/S]$  associated to the quasi-MHS $A^{\bt}(X)$ is just  the Rees algebra $\xi(j^*\tilde{A}^{\bt}(X),W)$, regarded as a $\bG_m \by S$-equivariant $\bA^1 \by C^*$-bundle.

\subsubsection{The family of formality quasi-isomorphisms}

We will now give a more conceptual reformulation of Theorem \ref{morgansplitsa}.

\begin{definition}
Define a differential $\tilde{d}^c$ on $A^*(X)\ten O(\SL_2)$ by $\tilde{d}^c:= xd+y\dc$.
\end{definition}

The principle of two types now gives us a family of quasi-isomorphisms:
\begin{theorem}\label{formalitysl}
We have the following $S$-equivariant quasi-isomorphisms of CDGAs over $\SL_2$, with notation from Definition \ref{rowdef}:
$$
\row_1^*j^*\tilde{A}^{\bt}(X) \xla{i} \ker(\tilde{d}^c) \xra{p}  \row_2^* \H^*(j^*\tilde{A}^{\bt}(X)) \cong \H^*(A^{\bt}(X))\ten_{\R} O(\SL_2),
$$
where $\ker(\tilde{d}^c)$ means $\ker(\tilde{d}^c)\cap \row_1^*j^*\tilde{A}^{\bt}(X)$, with differential $\tilde{d}$.
\end{theorem}
\begin{proof}
Because $\left(\begin{smallmatrix}  u & v \\ x &y  \end{smallmatrix}\right) \in \GL_2(O(\SL_2))$, the operators satisfy the principle of two types by Proposition \ref{gl2prop}. Thus $i$ is a quasi-isomorphism, and  we may define $p$ as projection onto $\H^*_{\tilde{d}^c}(A^*(X)\ten O(\SL_2) )$, on which the differential $\tilde{d}$ is $0$. 
The final isomorphism now follows from the description
$\H^*(A^{\bt}(X))\cong \frac{\ker d \cap \ker \dc}{\im d\dc}$, which clearly maps to $\H^*(j^*\tilde{A}^{\bt}(X))$, the principle of two types showing it to be an isomorphism.
\end{proof}

Since the weight filtration is just defined in terms of good truncation, this also implies that
$$
\xi(\row_1^*j^*\tilde{A}^{\bt}(X), W) \simeq \xi(\H^*(X, \R), W)\ten \O_{\SL_2}
$$ 
as $\bG_m \by S$-equivariant dg algebras over $\bA^1\by \SL_2$.

\begin{remark}
Under the equivalence of  Lemma \ref{slhodge}, $\Spec \cS$ corresponds to  $\left(\begin{smallmatrix} 1 & 0 \\ \bA^1 & 1\end{smallmatrix} \right) \subset \SL_2,$ equipped with a Hodge filtration. Thus
Theorem \ref{formalitysl} is equivalent to Theorem \ref{formalityS}.
\end{remark}

\section{Structures on cohomology}\label{cohosn}

As we saw in Remark \ref{kapranov}, our definitions are chosen so that they give the desired quasi-isomorphisms. We now investigate the various cohomology groups associated to our structures, and relate them to existing constructions. These will include Beilinson's weak and absolute Hodge cohomologies, real Deligne cohomology and Consani's Archimedean cohomology.

\subsection{Cohomology of Hodge filtrations}\label{Bei1} 

Given a complex $\sF^{\bt}$  of  algebraic Hodge filtrations, we now show how to calculate hypercohomology $\bH^*([C^*/S], \sF^{\bt})$, and compare this with Beilinson's weak Hodge cohomology.

For the \'etale cover  $\widetilde{C^*} \to C^*$ Definition \ref{tildeC}, we have an \'etale pushout     $ C^*= \widetilde{C^*}\cup_{S_{\Cx}}S$ of affine schemes. Thus $\oR \Gamma(C^*, \sF^{\bt})$  is the cone of the morphism
$$
  \oR \Gamma(\widetilde{C^*}, \sF^{\bt}) \oplus  \oR \Gamma(S, \sF^{\bt})  \to \oR \Gamma(S_{\Cx}, \sF^{\bt}).
$$

If $\sF^{\bt}$ is a flat complex, it corresponds under Corollary \ref{flathfil} to a complex $V^{\bt}$ of real vector spaces, equipped with an exhaustive filtration $F$ of $V^{\bt}_{\Cx}:= V^{\bt}\ten \Cx$. The expression above then becomes
$$
(\bigoplus_{n \in \Z}  F^{n}(V^{\bt}_{\Cx})w^{-n})[\bar{w}, \bar{w}^{-1}] \oplus    V^{\bt}_{\R}[u,v, (u^2+v^2)^{-1}] \to V^{\bt}_{\Cx}[w,w^{-1}, \bar{w}, \bar{w}^{-1}],
$$
for co-ordinates $u,v$ and $w, \bar{w}$ on $C^*$ as in Remark \ref{Ccoords}.

Since $S$ is a reductive group, taking $S$-invariants is an exact functor, so $\oR \Gamma([C^*/S], \sF^{\bt})$  is the cone of the morphism
$$
\oR \Gamma(\widetilde{C^*}, \sF^{\bt})^S \oplus  \oR \Gamma(S, \sF^{\bt})^S  \to \oR \Gamma(S_{\Cx}, \sF^{\bt})^S
$$
which is just
$$
F^0(V^{\bt}_{\Cx}) \oplus V^{\bt}_{\R} \to V^{\bt}_{\Cx},
$$
which is just the functor $\oR \Gamma_{\cH w}$ from \cite{beilinson}.

Therefore
$$
\oR \Gamma([C^*/S], \sF^{\bt}) \simeq \oR \Gamma_{\cH w} (V^{\bt}),
$$

Likewise, if $\sE^{\bt}$ is another such complex, coming from a complex $U^{\bt}$ of real vector spaces with complex filtrations,  then
$$
\oR \Hom_{[C^*/S]}(\sE^{\bt}, \sF^{\bt}) \simeq \oR\Hom_{\cH w}(U^{\bt},V^{\bt}).
$$

\begin{remark}\label{BeiS}
For $\cS$ as in Lemma \ref{slhodge}, and a complex $V^{\bt}$ of $\cS$-modules, with compatible filtration $F$ on $V^{\bt}\ten \Cx$, let $\sF^{\bt}$ be the associated bundle on $[C^*/S]$. By Lemma \ref{slhodge}, this is a $\row_{1*}\O_{[\SL_2/S]}$-module, so $\sF^{\bt}= \row_{1*}\sE^{\bt}$, for some quasi-coherent complex $\sE^{\bt}$ on $[\SL_2/S]$, and 
\begin{eqnarray*}
\oR \Gamma([C^*/S], \sF^{\bt}) &=& \oR \Gamma([C^*/S], \row_{1*}\sE^{\bt})\\  
&\simeq& \oR \Gamma([\SL_2/S], \sE^{\bt})\\
&=& \Gamma([\SL_2/S], \sE^{\bt})\\
&=& \Gamma([C^*/S, \sF^{\bt}),
\end{eqnarray*}
since $\SL_2$ and $\row_1$ are both affine. 

In other words,
$$
\oR \Gamma_{\cH w} (V^{\bt}) \simeq \gamma^0V^{\bt},
$$
for $\gamma$ as in Definition \ref{gammadef},
which is equivalent to saying that $V \oplus F^0(V\ten \Cx) \to V\ten\Cx$ is necessarily surjective for all $\cS$-modules $V$.
\end{remark}


\subsection{Cohomology of MHS}\label{Bei2}

Given a complex $\sF^{\bt}$  of  algebraic MHS, we now show how to calculate hypercohomology $\bH^*([C^*/S]\by [\bA^1/\bG_m], \sF^{\bt})$, and compare this with Beilinson's absolute Hodge cohomology. By Proposition \ref{flatmhs}, $\sF^{\bt}$ gives rise  to a complex $V^{\bt}$ of quasi-MHS. 

Since $\bA^1$ is affine and $\bG_m$ reductive,  $\oR \pr_*=\pr_*$ for the  projection $\pr: [C^*/S]\by [\bA^1/\bG_m] \to [C^*/S]$. Thus
$$
\oR \Gamma([C^*/S]\by [\bA^1/\bG_m], \sF^{\bt}) \simeq \oR\Gamma([C^*/S], \pr_* \sF^{\bt}),
$$
and $\pr_*\sF^{\bt}$ just corresponds under Corollary \ref{flathfil} to the complex $W_0 V^{\bt}_{\R}$ with filtration $F$ on $W_0 V^{\bt}_{\Cx}$. 

Hence \S \ref{Bei1} implies that  $\oR \Gamma([C^*/S]\by [\bA^1/\bG_m], \sF^{\bt})$ is just the cone of 
$$
W_0F^0(V^{\bt}_{\Cx}) \oplus W_0V^{\bt}_{\R} \to W_0V^{\bt}_{\Cx},
$$
which is just the absolute Hodge functor $\oR \Gamma_{\cH}$ from \cite{beilinson}.  

Therefore
$$
\oR \Gamma([C^*/S]\by [\bA^1/\bG_m], \sF^{\bt}) \simeq \oR \Gamma_{\cH} (V^{\bt}),
$$

Likewise, if $\sE^{\bt}$ is another such complex, coming from a complex $(U^{\bt},W,F)$,  then
$$
\oR \Hom_{[C^*/S]\by [\bA^1/\bG_m]}(\sE^{\bt}, \sF^{\bt}) \simeq \oR\Hom_{\cH}(U^{\bt},V^{\bt}).
$$

\subsection{Real Deligne cohomology}

Given a compact K\"ahler manifold $X$, the algebraic Hodge filtration on $A^{\bt}(X)$ is given by the complex $j^*\tilde{A}^{\bt}(X)$, where $\tilde{A}^{\bt}(X)$ is the $S$-equivariant complex of sheaves on $C$ from Definition \ref{Atildedef}, and $j \co C^* \to C$ is the inclusion of Definition \ref{Cdef}.
 
We now consider the derived direct image of $j^*\tilde{A}^{\bt}(X)$ under  the morphism $q:[C^*/S] \to [\bA^1/\bG_m]$ given by $u, v \mapsto u^2+v^2$. This is equivalent to $(\oR j_* j^*\tilde{A}^{\bt}(X))^{S^1}$, since $S^1$ is reductive, $\bG_m= S/S^1$ and $\bA^1= C^*/S^1$.

\begin{proposition}\label{cfdeligne}
There are canonical isomorphisms
$$
(\oR^m j_*j^*\tilde{A}^{\bt}(X) )^{S^1} \cong (\bigoplus_{a< 0} \H^m(X, \R)) \oplus  (\bigoplus_{a\ge 0} (2\pi i)^{-a} \H^m_{\cD}(X, \R(a))),
$$
where $a$ is the weight under the action of $S/S^1 \cong \bG_m$, and $\H^m_{\cD}(X, \R(a))$ is real Deligne cohomology.
\end{proposition}
\begin{proof}
The isomorphism $\bG_m= S/S^1$ allows us to regard $O(\bG_m)$ as an $S$-representation, and
$$
(\oR q_* j^*\tilde{A}^{\bt}(X))^{S^1} \simeq \oR \Gamma([C^*/S], j^*\tilde{A}^{\bt}(X)\ten O(\bG_m)).
$$ 
Now, $O(\bG_m)= \R[s,s^{-1}]$, with $s$ of type $(-1,-1)$, so $O(\bG_m)\cong \bigoplus_a (2\pi i)^{-a}\R(a)$, giving (by \S \ref{Bei1})
$$
(\oR q_* j^*\tilde{A}^{\bt}(X))^{S^1} \simeq \bigoplus_a (2\pi i)^{-a} \oR \Gamma_{\cH w} (A^{\bt}(X)(a)),
$$
which is just real Deligne cohomology by \cite{beilinson}.
\end{proof}

We may also compare these cohomology groups with the groups considered in \cite{deninger} and \cite{deninger2} for defining $\Gamma$-factors of smooth projective varieties at Archimedean places.

\begin{proposition}\label{cfdeninger}
The torsion-free quotient of the $\bG_m$-equivariant $\bA^1$-module $(\oR^m j_* j^*\tilde{A}^{\bt}(X))^{S^1}$ is the Rees module of $\H^m(X,\R)$ with respect to the filtration $\gamma$.
\end{proposition}
\begin{proof}
The results of \S \ref{Bei1} give  a long exact sequence
$$
\ldots 
\to (\oR^m j_*j^*\tilde{A}^{\bt}(X))^{S^1} \to \bigoplus_{a \in \Z} (F^a\H^m(X, \Cx)\oplus \H^m(X, \R)) \to \bigoplus_{a \in \Z}\H^m(X, \Cx) \to \ldots ,
$$
and hence
$$
0 \to \bigoplus_{a \in \Z}\frac{\H^{m-1}(X, \Cx)}{F^a\H^{m-1}(X, \Cx)+ \H^{m-1}(X, \R)} \to (\oR^m j_* j^*\tilde{A}^{\bt}(X))^{S^1} \to \bigoplus_{a \in \Z} \gamma^a\H^m(X, \R)\to 0.
$$
Since  multiplication by the standard co-ordinate of $\bA^1$ corresponds to the embedding $F^{a+1} \into F^a$, the left-hand module is torsion, giving the required result.
\end{proof}

\begin{remark}\label{realrk}
In \cite{deninger} and \cite{consani}, $\Gamma$-factors of real varieties were also considered. If we  let $\sigma$ denote the  de Rham conjugation of the associated complex variety, then we may replace $S$ throughout this paper  by $S \rtimes \langle \sigma \rangle$, with $\sigma$ acting on $S(\R)$ by $\lambda \mapsto \bar{\lambda}$, and on $\SL_2$ by $\left(\begin{smallmatrix} u & v  \\ x & y \end{smallmatrix} \right) \mapsto \left(\begin{smallmatrix} u & -v  \\ -x & y \end{smallmatrix} \right) $ (i.e. conjugation by  $\left(\begin{smallmatrix} 1 & 0  \\ 0 & -1 \end{smallmatrix} \right)$, noting that $\sigma(\dc)= -\dc$. In that case, the cohomology group considered in \cite{deninger} is the  torsion-free quotient of $(\oR^m j_* j^*\tilde{A}^{\bt}(X)  )^{S^1\rtimes \langle \sigma \rangle }$.
\end{remark}

\begin{lemma}\label{splitrjc}
There is a canonical $S$-equivariant quasi-isomorphism
$$
\oR j_* j^*\tilde{A}^{\bt}(X) \simeq \H^*(X,\R) \ten_{\R}\oR O(C^*)
$$
of $C$-modules, where $\H^*(X,\R)$ is equipped with its standard $S$-action (the real Hodge structure), and $\oR O(C^*)$ is from Definition \ref{rjc}.
\end{lemma}
\begin{proof}
The natural inclusion $\cH^*\ten O(C) \to \tilde{A}^{\bt}$ of real harmonic forms gives rise to a morphism 
$$
\cH^*\ten \O(C^*) \to j^*\tilde{A}^{\bt}
$$
of $S$-equivariant cochain complexes over $C^*$, which is a quasi-isomorphism by Lemma \ref{gl2lemma}, and hence
$$
\cH^* \ten \oR O(C^*)\simeq \oR j_*j^*\tilde{A}^{\bt},
$$
 as required.
\end{proof}

\begin{corollary}\label{splitdeligne}
As an $S$-representation, the summand of $\bH^n (C^*,j^*\tilde{A}^{\bt})\ten_{\R}\Cx$ of type $(p,q)$ is given by
$$
\bigoplus_{\substack{p' \ge p\\q'\ge q\\ p'+q'=n}} \cH^{p'q'} \oplus \bigoplus_{\substack{p' < p\\q'< q\\ p'+q'=n-1}}\cH^{p'q'}.
$$
In particular, this describes  Deligne cohomology by taking invariants under complex conjugation when $p=q$. 
 \end{corollary}
\begin{proof}
 This follows from Lemma \ref{splitrjc}, since  $\H^*(C, \O_{C^*}) \cong \bigoplus_n \H^*(\bP^1, \O_{\bP^1}(n))$.
\end{proof}

\subsection{Analogies with limit Hodge structures}\label{analogies}

If $\Delta$ is the open unit disc, and $f: X \to \Delta$ a proper surjective morphism of complex K\"ahler manifolds, smooth over the punctured disc $\Delta^*$, then Steenbrink (\cite{steenbrink}) defined a limit mixed Hodge structure at $0$. Take the universal covering space $\widetilde{\Delta^*}$ of $\Delta^*$, and let $\widetilde{X^*}:=X\by_{\Delta}\widetilde{\Delta^*}$. Then the limit Hodge structure is defined as a Hodge structure on 
$$
\lim_{\substack{t \to 0} }\H^*(X_t):= \H^*(\widetilde{X^*})
$$
\cite[(2.19)]{steenbrink}  gives an exact sequence
$$
\ldots \to \H^n(X^*) \to \H^n(\widetilde{X^*}) \xra{N} \H^n(\widetilde{X^*})(-1) \to \ldots, 
$$
where $N$ is the monodromy operator associated to the deck transformation of $ \widetilde{\Delta^*}$. 


Since we are working with quasi-coherent sheaves, connected affine schemes replace contractible topological spaces, and Lemma \ref{sluniv} implies that we may then regard $\SL_2$ as the universal cover of $C^*$, with deck transformations $\bG_a$. We then substitute $C$ for  $\Delta$, $C^*$ for $\Delta^*$  and $\SL_2$ for $\widetilde{\Delta^*}$. We also replace  $\O_{X^*}$ with $j^*\tilde{A}^{\bt}(X)$, so $\O_{\widetilde{X^*}}$ becomes $\row_1^*j^*\tilde{A}^{\bt}(X)$. This suggests that we should think of $\row_1^*j^*\tilde{A}^{\bt}(X)$ (with its natural $S$-action)  as the limit mixed Hodge structure at the Archimedean special fibre. 

The derivation $N$ of Definition \ref{Ndef} then acts as the monodromy transformation. Since $N$ is of type $(-1,-1)$ with respect to the $S$-action, the weight decomposition given by the action of $\bG_m \subset S$ splits the monodromy-weight filtration. The following result allows us to regard $\row_1^*j^*\tilde{A}^{\bt}(X)$ as the limit Hodge structure at the special fibre corresponding to the Archimedean place.

\begin{proposition}\label{steenkey}
The complex 
$
\oR \Gamma(C^*, j^*\tilde{A}^{\bt}(X)) 
$
is naturally isomorphic to the cone complex of the diagram $ \row_1^*j^*\tilde{A}^{\bt}(X) \xra{N} \row_1^*j^*\tilde{A}^{\bt}(X) (-1)$, where $N$ is the locally nilpotent derivation given by differentiating the $\bG_a$-action on $\SL_2$.
\end{proposition}
\begin{proof}
This follows immediately from the description of $\oR O(C^*)$ in \S \ref{slfirst}.
\end{proof}

\subsection{Archimedean cohomology}\label{arch}

As in \S \ref{analogies}, the  $S$-action gives a real (split)  Hodge structure on the cohomology groups $\H^q(\row_1^* j^*\tilde{A}^{\bt}(X))$. In order to avoid confusion with the weight filtration on $j^*\tilde{A}^{\bt}(X)$, we will denote the associated weight filtration by $M_*$.

\begin{corollary}\label{grmcoho}
There are canonical isomorphisms
$$
\gr^M_{q+r}\H^q( \row_1^*j^*\tilde{A}^{\bt}(X))\cong \H^q(X,\R )\ten \gr^M_rO(\SL_2)
$$
\end{corollary}
\begin{proof}
This is an immediate consequence of the splitting in Theorem \ref{formalitysl}.
\end{proof}

\begin{lemma}\label{kercokern}
For the derivation $N$ of Proposition \ref{steenkey}, the kernel and cokernel of the induced map
\[
 N \co \H^q(\row_1^*j^*\tilde{A}^{\bt}(X) \to \H^q(\row_1^*j^*\tilde{A}^{\bt}(X)(-1))
\]
on cohomology are given by
\[
 H^q(X,\R)\ten \R[u,v] \quad \text{ and }\quad  \H^q(X,\R)\ten \R[x,y](-1)
\]
respectively.
\end{lemma}
\begin{proof}
This is a direct consequence of Corollary \ref{grmcoho}, since $ \R[u,v]= \ker N|_{O(\SL_2)}$ and the map $ \R[x,y]\to \coker N|_{O(\SL_2)}$ is an isomorphism.
\end{proof}

\begin{corollary}\label{consani}
The $S^1$-invariant subspace $\H^q(\row_1^*j^*\tilde{A}^{\bt}(X))^{S^1}$ is canonically isomorphic as an $N$-representation to the Archimedean cohomology group $\H^q(\tilde{X}^*)$ defined in \cite{consani}.
\end{corollary}
\begin{proof}
First observe that $N$ acts on $O(\SL_2)$ as the derivation $\left(\begin{smallmatrix} 0 & 0  \\ 1 & 0 \end{smallmatrix} \right) \in {\mathfrak{sl}_2}$ acting on the left, and that differentiating the action of $\bG_m \subset S$ on $O(\SL_2)$ gives the derivation $\left(\begin{smallmatrix} -1 & 0  \\ 0 & 1 \end{smallmatrix} \right) \in {\mathfrak{sl}_2}$, also acting on the left. Therefore decomposition by the weights of the $\bG_m$-action gives a splitting of  the filtration $M$ associated to the locally nilpotent operator $N$.

By Proposition \ref{cfdeligne} and \cite[Proposition  4.1]{consani}, we know that Deligne cohomology arises as the cone of $N: H^* \to H^*(-1)$, for both cohomology theories $H^*$.  

It now follows from Corollary \ref{grmcoho} and Lemma \ref{kercokern} that  the graded $N$-module $\H^q(\row_1^*j^*\tilde{A}^{\bt}(X))^{S^1}$ shares all the properties  of \cite[ Corollary 4.4, Proposition 4.8 and Corollary 4.10]{consani}, which combined are sufficient to determine the  graded $N$-module $\H^q(\tilde{X}^*)$ up to isomorphism.
\end{proof}

Note that under the formality isomorphism of Theorem \ref{formalitysl}, this isomorphism becomes $\H^q(\tilde{X}^*)\cong \H^q(X,\R)\ten^{S^1} O(\SL_2)$.

\subsubsection{Archimedean periods}

We can construct the ring $\cS$ without choosing co-ordinates  as follows. Given any $\R$-algebra $A$, let $UA$ be the underlying $\R$-module. Then
$$
\cS= \R[U\Cx]\ten_{\R[U\R]}\R,
$$
 For an explicit comparison, write $U\Cx= \R x_1 \oplus \R x_i$, so the right-hand side is $\R[x_1, x_i]\ten_{\R[x_1]}\R= \R[x_i]$. 

The filtration $F$ is then given by powers of the augmentation ideal of the canonical map $\cS\ten_{\R}\Cx \to \Cx$, since the ideal is  $(x_i-i)$. The derivation $N$ (from Definition \ref{Ndef}) is differentiation $\cS \to \Omega(\cS/\R)$, and $\Omega(\cS/\R)\cong \cS \ten_{\R} (\Cx/\R)$. 
There is also an action of $\Gal(\Cx/\R)$ on $\cS$, determined by the action on $U\Cx$, which corresponds to the generator $\sigma \in \Gal(\Cx/\R)$  acting $\Cx$-linearly as $\sigma(x)=-x$.

For $K= \R, \Cx$, we can then define $B(K):= \cS \ten_{\R} \Cx $, with Frobenius $\phi$ acting as complex conjugation, and the Hodge filtration, $\Gal(\Cx/K)$-action and $N$ defined as above. However,  
beware that $B(K)$  differs from the ring $B_{\mathrm{ar}}$ from \cite{deninger}.

We think of  $B(K)$  as analogous to the ring $B_{\st}$ of semi-stable periods (see e.g. \cite{Bcrisillusie}) used in crystalline cohomology. For a $p$-adic field $K$, recall that $B_{\st}(K)$ is a $\Q_p$-algebra equipped with a $\Gal(\bar{K}/K)$-action, a Frobenius-linear automorphism $\phi$, a decreasing filtration $F^i B_{\st}(K)$, and a nilpotent derivation
$$
N: B_{\st}(K) \to B_{\st}(K)(-1).
$$

Thus we think of $X\ten \R$ as being of semi-stable reduction at $\infty$, with nilpotent monodromy operator $N$ on the Archimedean fibre $X\ten \cS$. The comparison with $B_{\st}$ is further justified by comparison with \cite{weiln}, where the crystalline comparison from \cite{olssonhodge} is used to show that for a variety of good reduction, the $p$-adic \'etale homotopy type $(X_{\et}\ten \Q_p)\ten_{\Q_p} B_{\st}^{\phi}$ is formal as Galois representation in homotopy types, and that formality preserves $N$ (since good reduction means that $N$ acts trivially on $(X_{\et}\ten \Q_p)$, while $B_{\st}^{\phi} \cap \ker N= B_{\cris}^{\phi}$). 

In our case, $ B(K)^{\phi}=\cS$, so $(X\ten \R)\ten_{\R} B(K)^{\phi}$ is formal as a $\Gal(\bar{K}/K)$-representation in non-abelian MHS. However, formality does not preserve $N$ since we only have semi-stable, not good, reduction at $\infty$. 

In keeping with the philosophy of Arakelov theory, there should be a norm $\langle -,- \rangle$ on $B(K)$ to compensate for the finiteness of $\Gal(\Cx/K)$. In order to ensure that $\tilde{d}^*= -[\L,\tilde{d}^c]$, we define a semilinear involution  $*$ on $O(\SL_2)\ten \Cx$ by $u^*=y, v^*=-x$. This corresponds to the involution $A \mapsto (A^{\dagger})^{-1}$ on $\SL_2(\Cx)$, so the most natural metric on $O(\SL_2)$ comes via Haar measure on $\SU_2(\Cx)$ (the unit quaternions). However, the ring homomorphism $O(\SL_2) \to B(K)$ (corresponding to $\left(\begin{smallmatrix} 1 &  0 \\ \bG_a & 1\end{smallmatrix}\right) \le \SL_2$) is not then bounded for any possible norm on $B(K)$, suggesting that we should think of $\SL_2$ and even $\SU_2$ as being more fundamental than $\cS$. See \cite[\S \ref{mhsDS-splittingtwistor}]{mhsDS} for related phenomena.

\begin{remark}
If we wanted to work with $k$-MHS for a subfield $k \subset \R$, we could replace $\cS$ with the ring 
$$
\cS_k= k[U_k\Cx]\ten_{k[U_kk]}k,
$$
where $U_kB$ is the $k$-module underlying a $k$-algebra $B$. The results of \S\S \ref{Bei1}, \ref{Bei2} then carry over, including Remark \ref{BeiS}. 

We can use this to find the analogue of Definitions \ref{hfildef} and \ref{mhsdef} for $k$-MHS. First, note that $S$-equivariant $\SL_2$-modules are quasi-coherent sheaves on $[\SL_2/S]$, and that $[\SL_2/S] = [\mathrm{SSym}_2/\bG_m]$, where $\mathrm{SSym}_2 \subset \SL_2$ consist of symmetric matrices, and the identification $\SL_2/S^1= \mathrm{SSym}_2$ is given by $A \mapsto AA^t$ (noting that $S^1$ acts on $\SL_2$ as right multiplication by $\OO_2$). The action of $\sigma \in \Gal(\Cx/\R)$ on $\mathrm{SSym}_2 $ is conjugation by $\left(\begin{smallmatrix} 1 & 0  \\ 0 & -1 \end{smallmatrix} \right)$, while the involution $*$ is given by $B \mapsto \bar{B}^{-1}$.

Note that $\mathrm{SSym}_2= \Spec \xi(\cS,\gamma)$, for $\xi(\cS,\gamma)$ the Rees algebra with respect to the filtration $\gamma$.
%

Now,  algebraic  Hodge filtrations on real complexes  correspond to $S$-equivariant $\oR O(C^*)$-complexes (for $\oR O(C^*)$ as in Definition \ref{rjc}). The identifications above and  Remark \ref{BeiS} ensure that these are equivalent to $\bG_m$-equivariant $\oR O(C^*)^{S^1}$-complexes, where $\oR O(C^*)^{S^1}$ is the cone of  $\xi(\cS,\gamma) \xra{N} \xi(\Omega(\cS/\R),\gamma)$. 

Therefore we could define algebraic  Hodge filtrations on $k$-complexes to be $\bG_m$-equivariant $\xi(\Omega^{\bt}(\cS_k/k), \gamma)$-complexes, where $\gamma^pV= V\cap F^p(V\ten_k\Cx)$.  
\end{remark}

To complete the analogy with \'etale and crystalline homotopy types, there should be something like a graded  homotopy type  $\xi(X_{\st}, \gamma)$  over the generalised ring (in the sense of \cite{haran}) $\xi(B, \gamma)^{\langle -, -\rangle, \Gal(\Cx/K)}$ of norm $1$ Galois-invariant  elements in the Rees algebra $\xi(B, \gamma)= O(\mathrm{SSym}_2)$, equipped with a monodromy operator $N$ and complex conjugation $\phi$. 

The generalised tensor product $\xi(X_{\st}, \gamma)\ten_{\xi(B, \gamma)^{\langle -,- \rangle}} \xi(B, \gamma)$ should then be equivalent to $\xi((X\ten \R)\ten_{\R}B, \gamma)$, and then we should recover the rational homotopy type as the subalgebra 
$$
(X\ten \R)= \xi( X_{\st}\ten B, \gamma)^{\bG_m, \phi=1,N=0}= F^0(X_{\st}\ten B)^{\phi=1,N=0}.
$$
 By comparison with \'etale cohomology, the existence of a Hodge filtration on $X\ten \R$ seems anomalous, but it survives this process because (unlike the crystalline case) $F^0B=B$. 

See \cite[\S \ref{mhsDS-splittingtwistor}]{mhsDS} for related phenomena, showing that  the MHS on a homotopy type in terms of an $S^1$-equivariant, Galois-equivariant sheaf of $\C^{\infty}$ DG Fr\'echet algebras over $\bP^1(\Cx)$, equipped with a flat $\bar{\pd}$-connection $N$.

\section{Relative Malcev homotopy types}\label{relmalsect}

Rational homotopy types have the disadvantage that for non-nilpotent spaces, they destroy a lot of information, leaving no possibility of recovering the homotopy groups.  For instance, say we have a fibration $F \to X \to Y$ of topological spaces. Unless the action of $\pi_1(Y)$ on $\H^*(F, \Q)$ is nilpotent, we will not be able to relate the rational homotopy types of $F$, $X$ and $Y$, even if all three spaces are nilpotent. 

In \cite{hainrelative}, Hain addressed this problem on the level of fundamental groups by introducing relative Malcev fundamental groups, which are certain completions of fundamental groups with respect to reductive representations. A far more abstract approach to this problem was given by To\"en's schematic homotopy types in \cite{chaff}.

In \cite{htpy}, the notion of relative Malcev homotopy types was introduced, extending Hain's relative Malcev fundamental groups from \cite{hainrelative}. These generalise of Sullivan's and Quillen's rational homotopy theories in a way that gives more information  for non-nilpotent spaces, for instance recovering the cohomology of  finite-dimensional local systems. The schematic homotopy types of \cite{chaff} also arise as a special case of a relative Malcev homotopy type.

Given a reductive pro-algebraic group $R$, a topological space  $X$, and a Zariski-dense  morphism $\rho:\pi_1(X,x) \to R(k)$, there is a Malcev homotopy type $X^{\rho, \mal}$ of $X$ relative to $\rho$. If $R=1$ and $k=\Q$ (resp. $k=\R$),  then this is just the rational (resp. real) homotopy type of $X$. If $R$ is the reductive pro-algebraic fundamental group of $X$, then $X^{\rho, \mal}$ is the schematic homotopy type of $X$.

There is in fact a whole hierarchy of relative Malcev homotopy types associated to $X$, each corresponding to a tensor category of semisimple local systems. At one extreme is the rational homotopy type and  at the other extreme is To\"en's schematic homotopy type, but in between are often tractable homotopy types which are just large enough to detect information killed by the rational homotopy type.

\subsection{Review of pro-algebraic homotopy types}\label{review}
Here we give a summary of the results from \cite{htpy} which will be needed in this paper. Fix a field $k$ of characteristic zero.

\begin{definition}
Given a pro-algebraic group $G$ (i.e. an affine group scheme over $k$), define the reductive quotient $G^{\red}$ of $G$ by 
$$
G^{\red}=G/\Ru(G),
$$
where $\Ru(G)$ is the pro-unipotent radical of $G$.  Observe that $G^{\red}$ is then a reductive pro-algebraic group, and that representations of $G^{\red}$ correspond to semisimple representations of $G$.
\end{definition}

\begin{proposition}
For any pro-algebraic group $G$, there is a Levi decomposition $G=G^{\red} \ltimes \Ru(G)$, unique up to conjugation by $\Ru(G)$.
\end{proposition}
\begin{proof}
See \cite{Levi}.
\end{proof}

\subsubsection{The pointed pro-algebraic homotopy type of a topological space}
We now recall the results from \cite[\S \ref{htpy-ptd}]{htpy}. 

\begin{definition}
Let $\bS$ be the category of simplicial sets, let $\bS_0$ be the category of reduced simplicial sets, i.e. simplicial sets with one vertex, and let $s\Gp$ be the category of simplicial groups. Let $\Top_0$ denote the category of pointed connected compactly generated Hausdorff topological spaces.
\end{definition}

Note that there is a functor from $\Top_0$ to $\bS_0$ which sends $(X,x)$ to the simplicial set
$$
\Sing(X,x)_n:=\{ f \in \Hom_{\Top}(|\Delta^n|, X) : f(v)=x \quad \forall v \in \Delta^n_0\}.
$$
this is a right Quillen equivalence, the corresponding left equivalence being geometric realisation. For the rest of this section, we will therefore restrict our attention to reduced simplicial sets.

As in \cite[Ch. V]{sht}, there is a classifying space  functor $\bar{W}\co s\Gp \to \bS_0$. This has a left adjoint $G\co \bS_0 \to s\Gp$, Kan's loop group functor (\cite{loopgp}), and these give a Quillen equivalence of model categories. In particular, $\pi_i(G(X))=\pi_{i+1}(X)$. This allows us to study simplicial groups instead of pointed topological spaces.

\begin{definition}
Given a simplicial object $G_{\bullet}$ in the category of pro-algebraic groups, define $\pi_0(G_{\bullet})$ to be the coequaliser 
$$
\xymatrix@1{G_1 \ar@<1ex>[r]^{\pd_1} \ar@<-1ex>[r]_{\pd_0}& G_0 \ar[r] &\pi_0(G) }
$$ 
in the category of pro-algebraic groups.
\end{definition}

\begin{definition}
Define a pro-algebraic simplicial  group  to consist of a simplicial diagram $G_{\bullet}$ of pro-algebraic groups, such that the maps $G_n \to \pi_0(G)$ are pro-unipotent extensions of pro-algebraic groups, i.e. $\ker(G_n \to \pi_0(G))$ is pro-unipotent.  We denote the category of pro-algebraic simplicial groups by $s\agp$. 
\end{definition}

There is a forgetful functor $(k)\co s\agp \to s\gp$, given by sending $G_{\bt}$ to $G_{\bt}(k)$. This functor clearly commutes with all limits, so has a left adjoint $G_{\bt} \mapsto (G_{\bt})^{\alg}$. We can describe $(G_{\bt})^{\alg}$ explicitly. First let $(\pi_0(G))^{\alg}$ be the pro-algebraic completion of the abstract group $\pi_0(G)$, then let $(G^{\alg})_n$ be the relative Malcev completion  (in the sense of \cite{hainrelative}) of the morphism
$$
G_n \to (\pi_0(G))^{\alg}.
$$
In other words, $G_n \to (G^{\alg})_n \xra{f} (\pi_0(G))^{\alg}$ is the universal diagram with $f$ a pro-unipotent extension.

\begin{examples}\label{proalgegs}
The pro-algebraic completion $G^{\alg}$ of an abstract group $G$ has the property that its linear representations correspond to the finite-dimensional representations of $G$, and this is often used to define $G^{\alg}$ via Tannaka duality. It is rare for $G^{\alg}$ to be finite-dimensional --- when this happens, $G$ is called super rigid (cf. \cite{BassLubotzkyMagidMozes}). Examples of super rigid groups include $\SL_n(\Z)$ for $n \ge 3$ --- in these cases, $ \SL_n(\Z)^{\alg}= \SL_n$.

In general, the pro-algebraic completion is large and unwieldy. For commutative groups $G$, the affine group scheme $G^{\alg}$ has a fairly simple description, at least when $k$ is algebraically closed. In this case, $G^{\alg}= \Spec k[\Hom_{\gp}(G,k^{\by})]$, so for instance $\Z^{\alg}= \Spec k[k^{\by}]$. Given a $k$-algebra $A$, an $A$-valued point of $G^{\alg}$ is then a group homomorphism $\Hom_{\gp}(G,k^{\by}) \to A^{\by}$. In particular, $\Z^{\alg}(A)$ consists of group homomorphisms $k^{\by}\to A^{\by}$.  
\end{examples}

\begin{proposition}\label{detectweak}
 For a morphism $f \co G \to K$ in $s\agp$, the following conditions are equivalent:
\begin{enumerate}
\item the maps $\pi_n(f)\co \pi_n(G) \to \pi_n(K)$ are isomorphisms for all $n$;

 \item the pro-unipotent radicals satisfy $f(\Ru(G))\le \Ru(K)$,  with the quotient map
$$
G^{\red} \to K^{\red}
$$
 an isomorphism on pro-reductive quotients, and for all (finite-dimensional) irreducible $\pi_0K$-representations $V$, the maps
$$
\H^i(f)\co \H^i(K,V)\to \H^i(G,f^*V)  
$$
are isomorphisms for all $i>0$.

\item the maps $\H^iO(f)\co \H^iO(K) \to \H^iO(G)$ are isomorphisms for all $i$. 
\end{enumerate}

\end{proposition}
\begin{proof}
 This combines  \cite[Lemma \ref{htpy-OG} and Corollary \ref{htpy-detectweak}]{htpy}.
\end{proof}

\begin{definition}
A map satisfying the conditions of Proposition \ref{detectweak} is referred to as a \emph{weak equivalence} or \emph{quasi-isomorphism}, and we denote the localisation of $s\agp$ at weak equivalences by $\Ho(s\agp)$.
\end{definition}

\begin{proposition}\label{algq}
The functors $(k)$ and ${}^{\alg}$ give rise to a pair of adjoint functors 
$$
\xymatrix@1{\Ho(s\gp) \ar@<1ex>[r]^{\oL^{\alg}} & \Ho(s\agp) \ar@<1ex>[l]^{(k)}_{\bot} }
$$
on the homotopy categories, with $\oL^{\alg}G(X)=G(X)^{\alg}$, for any $X \in \bS_0$.
\end{proposition} 
\begin{proof}
As in \cite[Proposition \ref{htpy-algq}]{htpy}, it suffices to observe that $(k)$ preserves fibrations and trivial fibrations, so is a right Quillen functor. 
\end{proof}

\begin{definition}\label{varpidef}
Given a reduced simplicial set (or equivalently a pointed, connected topological space) $(X,x)$, define the pro-algebraic homotopy type $(X,x)^{\alg}$ of $(X,x)$ over $k$ to be the  object
$$
G(X,x)^{\alg}
$$ 
in $s\agp$. Given a pointed, connected topological space $(X,x)$, set
\[
 G(X,x)^{\alg}:= G(\Sing(X,x))^{\alg}.
\]

Define the pro-algebraic fundamental group by $\varpi_1(X,x):=\pi_0(G(X,x)^{\alg})$. Note that  $\pi_0(G^{\alg})$ is the pro-algebraic completion of the  group $\pi_0(G)$.

We then define the higher  pro-algebraic homotopy groups $\varpi_n(X,x)$ by 
$$
\varpi_n(X,x):=\pi_{n-1}(G(X,x)^{\alg}).
$$
\end{definition}

By \cite[Corollary \ref{htpy-eqtoen}]{htpy}, pro-algebraic homotopy types are equivalent to To\"en's schematic homotopy types from \cite{chaff}.
When $X$ is a nilpotent space and $k=\Q$, note that the pro-algebraic homotopy type is just Quillen's rational homotopy type from \cite{QRat}.

\subsubsection{Pointed relative Malcev homotopy types}\label{malcev}

\begin{definition}\label{malcevdef}
Assume we have an abstract group $G$, a reductive pro-algebraic group $R$, and a representation $\rho:G \to R(k)$ which is  Zariski-dense on morphisms. Define the Malcev completion $(G,\rho)^{\mal}$ (or $G^{\rho, \mal}$, or $G^{R, \mal}$)   of $G$ relative to $\rho$  to be the universal diagram
$$
G \to (G,\rho)^{\mal} \xra{p} R,
$$
with $p$  a pro-unipotent extension, and the composition equal to $\rho$. 
\end{definition}

Note that finite-dimensional representations of $(G,\rho)^{\mal}$ correspond to $G$-representations which are Artinian extensions of $R$-representations.

\begin{definition}
Given a reduced simplicial set $(X,x)$ and  a Zariski-dense morphism $\rho:\pi_1(X,x) \to R(k)$, let the Malcev completion $G(X,x)^{\rho,\mal}$ (or $G(X,x)^{R, \mal}$) of $(X,x)$ relative to $\rho$ be the pro-algebraic simplicial group $(G(X,x), \rho)^{\mal}$.   

Let $\varpi_1(X^{\rho,\mal},x)=\pi_0(G(X,x),\rho)^{\mal}$ and $\varpi_n(X^{\rho,\mal},x)=\pi_{n-1}(G(X,x),\rho)^{\mal}$. Observe that $\varpi_1(X^{\rho,\mal},x)$ is just the relative Malcev completion $\varpi_1(X,x)^{\rho,\mal}$ of $\rho:\pi_1(X,x) \to R(k)$.  
\end{definition} 

We therefore have a whole family of homotopy types associated to $X$, each corresponding to a quotient group scheme of $(\pi_1(X,x))^{\red}$. At one extreme, taking $R=1$ gives the rational homotopy type $G(X,x)^{1, \mal}$. At the other extreme, observe that the Malcev completion of $(X,x)$ relative to $(\pi_1(X,x))^{\red}:=(\pi_1(X,x)^{\alg})^{\red} $ is just the pro-algebraic homotopy type $G(X,x)^{\alg}$. This diversity of homotopy types has the advantage that we can often choose $R$ to be just large enough to detect information killed by the rational homotopy type, without having to involve the huge objects which tend to arise from pro-algebraic completion.

Note that for any cosimplicial  $G(X,x)^{R, \mal}$-representation (i.e. $O(  G(X,x)^{R, \mal})$-comodule, and in particular any $\varpi_1(X^{\rho,\mal},x)$-representation) $V$, the canonical map $\bH^*(G(X,x)^{\rho,\mal}, V)\to \bH^*(X, V) $ from group cohomology to singular cohomology is an isomorphism.

\begin{definition}
Recall that the ring $O(R)$ of algebraic functions on $R$ has the natural structure of an $R\by R$-representation,  with the $R$-actions given by left and right multiplication. We will usually just regard $O(R)$ as an $R$-representation via the right action.
\end{definition}

Note that for any $R$-representation $V$, we  have
\[
 \Hom_R(V, O(R)) \cong V^{\vee},
\]
with the $R$-action on $V^{\vee}$ then coming from the left $R$-action on $O(R)$.

\begin{remark}\label{irreprmk}
If 
$\{V_{\alpha}\}_{\alpha}$ a set of representatives of the isomorphism classes of irreducible $R$-representations, 
then we have an isomorphism
\[
O(R) \cong \bigoplus_{\alpha} V_{\alpha}^{\vee}\ten_{\End_R(V_{\alpha})}V_{\alpha},
\]
with the left and right $R$-actions coming from the actions on $V_{\alpha}^{\vee}$ and $V_{\alpha}$.
When $k$ is algebraically closed, note that the division rings $\End_R(V_{\alpha}) $ are all isomorphic to $k$.
\end{remark}

\begin{theorem}\label{fibrations}
Take a fibration $f:(X,x) \to (Y,y)$ (of pointed connected  topological spaces) with connected fibres, and set $F:= f^{-1}(y)$. Take a Zariski-dense representation $\rho: \pi_1(X,x) \to R(k)$ to a  reductive pro-algebraic group $R$, let $K$ be the closure of $\rho(\pi_1(F,x))$, and $T:= R/K$. If the monodromy action of $\pi_1(Y,y)$ on $\H^*(F, V)$ factors through $\varpi_1(Y,y)^{T, \mal}$ for all $K$-representations $V$, then $G(F,x)^{K,\mal}$ is the homotopy fibre of $ G(X,x)^{R, \mal} \to G(Y,y)^{T, \mal}$.

In particular, there is a long exact sequence
\begin{eqnarray*}
\ldots \to \varpi_n(F,x)^{K,\mal} \to \varpi_n(X,x)^{R,\mal}\to \varpi_n(Y,y)^{T,\mal} \to \varpi_{n-1}(F,x)^{K,\mal}\to \\
\ldots \to \varpi_1(F,x)^{K,\mal} \to \varpi_1(X,x)^{R,\mal}\to \varpi_1(Y,y)^{T,\mal} \to 1.
\end{eqnarray*}
\end{theorem}
\begin{proof}
First observe that  $\rho(\pi_1(F,x))$ is normal in $\pi_1(X,x)$, so $K$ is normal in $R$, and $T$ is therefore a reductive pro-algebraic group, so $(Y,y)^{T, \mal}$ is well-defined. Next, observe that since $K$ is normal in $R$, 
 $\Ru(K)$ is also normal in $R$, and is therefore $1$, ensuring that $K$ is reductive, so $(F,x)^{K,\mal}$ is also well-defined. 

Consider the complex $O(R\by_TG(Y)^{T, \mal})=O(R)\ten_{O(T)}O(G(Y)^{T, \mal})$ of $ G(X,x)^{R, \mal}$-representations, regarded as a complex of sheaves on $X$. The Leray spectral sequence for $f$ with coefficients in this complex is 
 $$
E_2^{i,j}=\bH^i(Y, \H^j(F, O(R))\ten_{O(T)}O(G(Y)^{T, \mal}))\abuts \bH^{i+j}(X,O(R\by_TG(Y)^{T, \mal}) ).
$$
 
Regarding $O(R)$ as a $K$-representation,   $\H^*(F, O(R))$ is a $\varpi_1(Y,y)^{T, \mal}$-representation by hypothesis. Hence
$
 \H^*(F, O(R))\ten_{O(T)}O(G(Y)^{T, \mal})
$
is a cosimplicial $G(Y)^{T, \mal}$-representation, so
\begin{align*}
&\bH^i(Y, \H^j(F, O(R))\ten_{O(T)}O(G(Y)^{T, \mal}))\\
&\cong \bH^i(G(Y)^{T, \mal},  \H^j(F, O(R))\ten_{O(T)}O(G(Y)^{T, \mal})).
\end{align*}

Now, $\H^*(F, O(R))\ten_{O(T)}O(G(Y)^{T, \mal})$ is a fibrant cosimplicial $G(Y)^{T, \mal}$-representation, so
\begin{eqnarray*}
&&\bH^i(G(Y)^{T, \mal},  \H^j(F, O(R))\ten_{O(T)}O(G(Y)^{T, \mal}))\\
&&\cong  \H^i\Gamma(G(Y)^{T, \mal},  \H^j(F, O(R))\ten_{O(T)}O(G(Y)^{T, \mal}))\\
&&=  \left\{ \begin{matrix} \H^j(F, O(R))\ten_{O(T)}k = \H^j(F, O(K))& i=0 \\ 0 & i \ne 0, \end{matrix} \right.\\
\end{eqnarray*}
so
$$
\bH^j(X, O(R\by_TG(Y)^{T, \mal}))\cong \H^j(F, O(K)).
$$

Now, let $\cF$ be the homotopy fibre of $ G(X,x)^{R, \mal} \to G(Y,y)^{T, \mal}$, noting that there is a natural map $G(F,x)^{K,\mal} \to \cF$. We have 
$$
\bH^j(X, O(R\by_TG(Y)^{T, \mal}))=\bH^j(G(X)^{R, \mal}, O(R\by_TG(Y)^{T, \mal})), 
$$
and a Leray-Serre spectral sequence
$$
\bH^i(G(Y)^{T, \mal}, \H^j(\cF, O(R))\ten_{O(T)}O(G(Y)^{T, \mal})) \abuts \bH^{i+j}(G(X)^{R, \mal}, O(R\by_TG(Y)^{T, \mal})).
$$
The reasoning above adapts to show that this spectral sequence also collapses, yielding
$$
\H^j(\cF, O(K)) = \bH^j(X, O(R\by_TG(Y)^{T, \mal})).
$$

We have therefore shown that the map $G(F,x)^{K,\mal} \to \cF $ gives an isomorphism
$$
\H^*(\cF, O(K)) \to \H^*(G(F,x)^{K,\mal}, O(K)),
$$
and hence isomorphisms $\H^*(\cF, V) \to \H^*(G(F,x)^{K,\mal}, V) $ for all $K$-representations $V$.
Since this is a morphism of simplicial pro-unipotent extensions of $K$, Proposition \ref{detectweak} implies that $G(F,x)^{K,\mal} \to \cF$ is a weak equivalence.
\end{proof}

A special case of Theorem \ref{fibrations} has appeared in \cite[Proposition 4.20]{schematicv2}, when $F$ is simply connected and of finite type, and $T= \varpi_1(Y,y)^{\red}$. Note that the theorem allows us to study rational homotopy types of homotopy fibres:

\begin{corollary}\label{unipfibs}
Given a fibration $f:(X,x)\to (Y,y)$  with connected fibres, assume that the fibre $F:=f^{-1}(y)$ has finite-dimensional cohomology groups $\H^i(F,k)$ and  let $R$ be the reductive quotient of the Zariski closure of the homomorphism $\pi_1(Y,y) \to \prod_i \GL(H^i(F, k))$. Then the Malcev homotopy type $(F\ten k, x)$ is the homotopy fibre of
$$
G(X,x)^{R, \mal} \to G(Y,y)^{R, \mal}.
$$ 
\end{corollary}
\begin{proof}
This is just Theorem \ref{fibrations}, with $R=T$ and $K=1$.
\end{proof}

Note that for a morphism $f:(X,x)\to (Y,y)$ which is not a fibration, we can apply Theorem \ref{fibrations} to a weakly equivalent fibration, replacing $F$ with the homotopy fibre of $f$ over $y$.

\begin{remark}
Beware that even when $Y$ is a $K(\pi,1)$, the relative completion $Y^{R, \mal}$ need not be so. For instance, \cite{haincontemp} and \cite{haintorelli} are concerned with studying the exact sequence $1 \to T_g \to \Gamma_g \to \Sp_g(\Z)\to 1$, where $\Gamma_g$ is the mapping class group and $T_g$ the Torelli group. Taking $R =\Sp_{g,\Q}$, we get $\H^1(\Sp_g(\Z), O(R))=0$, but $\H^2(\Sp_g(\Z), O(R))\cong \Q$. Therefore $\varpi_1(B\Sp_g(\Z))^{R, \mal}=R$, but the Hurewicz theorem gives $\varpi_2(B\Sp_g(\Z))^{R, \mal} =\Q$. Thus the long exact sequence for homotopy has final terms
$$
\Q \to T_g\ten \Q \to \Gamma_g^{R, \mal}\to \Sp_g(\Z)^{R, \mal} \to 1.
$$
This is consistent with \cite[ Proposition 7.1]{haincontemp} and \cite[Theorem 3.4]{haintorelli}, which show that  $T_g\ten \Q \to \Gamma_g^{R, \mal}$ is a central extension by $\Q$.
  \end{remark}

\begin{definition}\label{gooddef}
Define a group $\Gamma$ to be good with respect to a Zariski-dense representation $\rho: \Gamma \to R(k)$ to a reductive pro-algebraic group if the homotopy groups $\varpi_n(B\Gamma)^{R, \mal}$ are $0$ for all $n\ge 2$.
\end{definition}

 The fundamental group of a compact Riemann surface is  good with respect to all representations, as are finite groups, free groups and finitely generated nilpotent groups --- see \cite[Examples \ref{htpy-goodexamples}]{htpy}.

\begin{lemma}\label{goodchar}
A group $\Gamma$ is good relative to $\rho: \Gamma \to R(k)$ if and only if the map
$$
\H^n(\Gamma^{\rho, \mal}, V) \to \H^n(\Gamma, V)
$$  
is an isomorphism for all $n$ and all finite-dimensional $R$-representations $V$.
\end{lemma}
\begin{proof}
This follows by looking at the map $f:G(B\Gamma)^{R, \mal} \to \Gamma^{R, \mal}$ of simplicial pro-algebraic groups, which is a weak equivalence if and only if $\varpi_n(B\Gamma)^{R, \mal}=0$ for all $n \ge 2$. By Proposition \ref{detectweak}, $f$ is a weak equivalence if and only if the  morphisms
$$
\H^*(\Gamma^{R, \mal}, V) \to \H^*(G(B\Gamma)^{R, \mal}, V)
$$
are isomorphisms for all $R$-representations $V$. Since $\H^*(G(B\Gamma)^{R, \mal}, V)  = \H^*(B\Gamma, V)= \H^*(\Gamma, V)$, the result follows.
\end{proof}

\begin{lemma}\label{goodh}
%
A group  $\Gamma$ is good with relative to $\rho: \Gamma \to R(k)$ if and only if for any finite-dimensional $\Gamma^{\rho, \mal}$-representation $V$, and any $\alpha \in \H^n(\Gamma, V)$, there exists an injection $f:V \to W_{\alpha}$ of finite-dimensional $\Gamma^{\rho, \mal}$-representations, with $f(\alpha)=0 \in \H^n(\Gamma, W_{\alpha})$.
\end{lemma}
\begin{proof}
The proof of \cite[Lemma 4.15]{schematicv2} adapts to this generality --- see for instance \cite[\S \ref{heid-relgoodsn}]{heid}.
\end{proof}

\begin{definition}\label{relgood}
Say that a group $\Gamma$ is  $n$-\emph{good} with respect to a Zariski-dense representation $\rho\co  \Gamma \to R(k)$ to a reductive pro-algebraic group if for all finite-dimensional $\Gamma^{\rho, \mal}$-representations $V$, the map
$$
\H^i(\Gamma^{\rho, \mal}, V) \to \H^i(\Gamma, V)
$$  
is an isomorphism for all $i\le n$ and an inclusion for $i=n+1$. 
\end{definition}

The following is \cite[Theorem \ref{weiln-classicalpimal}]{weiln}, which strengthens \cite[Theorem \ref{htpy-classicalpimal}]{htpy}: 
\begin{theorem}\label{classicalpimal}
If $(X,x)$ is a pointed connected topological space with fundamental group $\Gamma$, equipped with a Zariski-dense representation $\rho\co  \Gamma \to R(k)$ to a reductive pro-algebraic group for which: 
\begin{enumerate}
\item $\Gamma$ is  $(N+1)$-good with respect to $\rho$,
\item $\pi_n(X,x)$ is of finite rank for all $1<n \le N$, and
\item  the $\Gamma$-representation  $\pi_n(X,x)\ten_{\Z}k$ is an extension of $R$-representations (i.e. a $\Gamma^{\rho, \mal}$-representation) for all $1<n \le N$,
\end{enumerate}
then the canonical map
$$
  \pi_n(X,x)\ten_{\Z} k \to \varpi_{n}(X^{\rho, \mal},x)(k) 
$$
is an isomorphism for all $1<n \le N$.
\end{theorem}

To see how to compare homotopy groups when the goodness hypotheses are not satisfied, apply Theorem \ref{fibrations} to the fibration $(X,x) \to B\Gamma$.

\subsection{Cosimplicial and DG Hopf algebras}

A relative Malcev homotopy type is a simplicial pro-algebraic group, or equivalently a cosimplicial Hopf algebra. Cosimplicial and simplicial objects are very difficult to compute with, because non-constant objects have generators in infinitely many levels. We now show how to relate them to DG Hopf algebras and to simplicial and chain Lie algebras.

\subsubsection{Definitions and basic properties}

\begin{definition}
Define $\cE(R)$ to be the full subcategory of pro-algebraic groups over $R$  consisting of those morphisms $\rho:G\to  R$ of  pro-algebraic groups which are pro-unipotent extensions. Similarly, define $s\cE(R)$ to consist of the pro-unipotent extensions in $s\agp\da R$, and $\Ho_*(s\cE(R))$ to be full subcategory of $\Ho(s\agp)$ on objects $s\cE(R)$.

Note that Malcev completions $G(X,x)^{R, \mal}$ relative to $R$ are objects of $s\cE(R)$. Also note that $\cE(R)$ is opposite to the category of Hopf algebras $H$ equipped with an injective map $O(R) \to H$ of Hopf algebras such that the comultiplication on the Hopf algebra $H\ten_{O(R)}k$ is ind-conilpotent.
\end{definition}

\begin{definition}\label{N^s}
Given a simplicial diagram $V_{\bt}$ in an abelian category, recall that the normalised chain complex $N^s(V)_{\bt}$ is given by  $N^s(V)_n:= V_n/\sum_i\sigma_i V_{n-1})$, with differential $\sum_j (-i)^j\pd_j$. The simplicial Dold--Kan correspondence says that $N^s$ gives an equivalence of categories between simplicial diagrams and non-negatively graded chain complexes in any abelian category. 
\end{definition}

\begin{definition}
Let $\cN(R)$ be the category of  $R$-representations in finite-dimensional nilpotent non-negatively graded Lie algebras over $k$. Write $\hat{\cN}(R)$ for the category $\pro(\cN(R))$ of pro-objects in $\cN(R)$, and $s\hat{\cN}(R)$ for the category of simplicial diagrams in $\hat{\cN}(R)$. We also write $s\cN(R)$ for the category consisting of those simplicial diagrams in $\cN(R)$ with finite-dimensional Dold--Kan normalisation.
\end{definition}

\begin{lemma}\label{expscNlemma}
The functor $R\ltimes \exp \co s\hat{\cN}(R) \to s\cE(R)$ is essentially surjective on objects. On morphisms, we have
\[
 \Hom_{s\cE(R)}(R\ltimes \exp(\g),R\ltimes \exp(\fh)) \cong \exp(\fh_0)\by^{\exp(\fh^R_0)}\Hom_{s\hat{\cN}(R)}(\g, \fh),
\]
where $\fh_0^R$ (the Lie subalgebra of $R$-invariants in $\fh_0$)  acts by conjugation on the set of homomorphisms.
Composition of morphisms is given by $(u,f) \circ (v,g)= (u\circ f(v),f\circ g)$.
\end{lemma}
\begin{proof}
 The functor $R \ltimes \exp :s\hat{\cN}(R) \to s\cE(R) $ maps $\g$ to the simplicial pro-algebraic group given in level $n$ by $R \ltimes \exp(\g_n)$. The existence of Levi decompositions ensures that for any $G \in s\cE(R)$, the map $G_0 \to R$ admits a section, so  $R \ltimes \exp(-)$ is essentially surjective. Since the section of $G_0 \to R$ is unique up to inner automorphism, it follows that   every morphism in $s\cE(R)$ is the composite of an inner automorphism and a morphism preserving Levi decompositions. Thus any  homomorphism
\[
 R\ltimes \exp(\g)\to R\ltimes \exp(\fh)
\]
factorises as  $\ad_u \circ (R\ltimes \exp(f))$, for $u \in \exp(\fh_0)$ and $f\co \g \to \fh$. The choice of $u$ is unique up to $R$-invariants $\exp(\fh_0)^R$, giving the description required.
\end{proof}

\begin{definition}\label{grouplike}
Given $A \in DG\Alg(R)$, define the category of $R$-equivariant  dg pro-algebraic   groups $G_{\bt}$ over $A$ to be opposite to the category of $R$-equivariant DG Hopf algebras over $A$. Explicitly, this consists of objects $Q \in DG_{\Z}\Alg_A(R)$ equipped with morphisms  $Q \to Q\ten_A Q$, 
(comultiplication), $Q \to A$ (coidentity) and $Q \to Q$ (coinverse), satisfying the usual axioms. Write $O(G)$ for the DG Hopf algebra associated to a dg pro-algebraic   group $G_{\bt}$, and $\Spec Q$ for the  dg pro-algebraic   group  associated to a DG Hopf algebra $Q$.

Given a scheme $Y$, a dg pro-algebraic   group $G_{\bt}$ over $Y$ is a sheaf of dg pro-algebraic   groups over the sheaf $\O_Y$, such that the associated sheaf $O(G)$ of DG Hopf $\O_Y$-algebras is quasi-coherent.

A morphism $f:G_{\bt} \to K_{\bt}$ of dg pro-algebraic   groups is said to be a quasi-isomorphism if it induces an isomorphism $\H^*O(K) \to \H^*O(G)$ on cohomology of the associated DG Hopf algebras. Say that $G_{\bt}$ in concentrated in non-negative degrees if the same is true of the DG Hopf algebra $O(G)$.

Say that a dg pro-algebraic   group $G_{\bt}$ is pro-unipotent if the comultiplication on the DG Hopf algebra $O(G)$ is ind-conilpotent.
\end{definition}

\begin{definition}\label{dgcEdef}
Define $dg\cE(R)$ to consist of dg pro-algebraic   groups $G_{\bt}$ over $k$ concentrated in non-negative degrees, and equipped with a surjective map $G_0 \to R$ such that the kernel 
\[
 G\by_R1 :=\Spec ( O(G)\ten_{O(R)}k)
\]
is pro-nilpotent. Write $\Ru G \in dg\cE(1) $ for the object  $G\by_R1 $.
  
Define $ \Ho_*(dg\cE(R))$ to be the category obtained by formally inverting quasi-isomorphisms of Hopf algebras in $dg\cE(R)$.
\end{definition}

\begin{definition}\label{semitendef}
Given an affine group scheme $G$ acting on a DG coalgebra $C$, we define their semidirect tensor product $G\ltimes C$ to be the DG coalgebra  with underlying cochain complex $O(G) \ten C$, with counit
\[
 \vareps \ten \vareps \co O(G) \ten C \to k,
\]
and comultiplication
\begin{align*}
 O(G) \ten C \xra{\Delta \ten \Delta} O(G)\ten O(G) \ten C\ten C \xra{\id \ten \id \ten \nu \ten \id} O(G)\ten O(G) \ten O(G)\ten C\ten C  \\
\xra{\id \ten m\ten \id \ten \id} O(G) \ten O(G)\ten C\ten C \xra{\tau_{23}}  O(G)\ten C\ten  O(G)\ten C,
\end{align*}
where $\nu \co C \to O(G) \ten C$ is the action of $G$ on $C$, $m \co O(G) \ten O(G)$ denotes multiplication, and $\tau_{23}$ transposes the second and third factors.

When $C$ is a DG Hopf algebra,  $G\ltimes C$ becomes a DG Hopf algebra with underlying ring  $O(G) \ten C $ and antipode 
\[
 O(G) \ten C \xra{\id \ten \nu} O(G)\ten O(G)\ten  C  \xra{m \ten \id} O(G)\ten C \xra{\iota \ten \iota}O(G)\ten C.
\]
Note that when $C$ is concentrated in degree $0$, $\Spec C$ is a group scheme and $\Spec (G\ltimes C) = G \ltimes (\Spec C)$.
\end{definition}

\begin{definition}
Define $dg\cN(R)$ to be the category of $R$-representations in finite-dimensional nilpotent non-negatively graded chain Lie algebras. Let $dg\hat{\cN}(R)$ be the category of pro-objects in the Artinian category $dg\cN(R)$.

Here, a chain Lie algebra is  a  chain complex $\g=\bigoplus_{i \in \N_0} \g_i$ over $k$, equipped with a bilinear Lie bracket $[,]\co \g_i \by \g_j\ra \g_{i+j}$,  satisfying:

\begin{enumerate}
\item $[a,b]+(-1)^{\bar{a}\bar{b}}[b,a]=0$,

\item $(-1)^{\bar{c}\bar{a}}[a,[b,c]]+ (-1)^{\bar{a}\bar{b}}[b,[c,a]]+ (-1)^{\bar{b}\bar{c}}[c,[a,b]]=0$,

\item $d[a,b] = [da,b] +(-1)^{\bar{a}}[a,db]$,
\end{enumerate}
where $\bar{a}$ denotes the degree of $a$, for $a$ homogeneous. 
\end{definition}

\begin{lemma}\label{expdgcNlemma}
If  $\cU$ denotes the  pro-finite-dimensional universal enveloping dg algebra functor and $(-)^{\vee}$ the continuous dual, then 
the functor $R \ltimes \exp \co dg\hat{\cN}(R) \to dg\cE(R) $ given by 
\[
 O(R \ltimes \exp(\g)):= R \ltimes \cU(\g)^{\vee}, 
\]
 is essentially surjective on objects. On morphisms, we have
\[
 \Hom_{dg\cE(R)}(R\ltimes \exp(\g),R\ltimes \exp(\fh)) \cong \exp(\fh_0)\by^{\exp(\fh^R_0)}\Hom_{dg\hat{\cN}(R)}(\g, \fh).
\]
\end{lemma}
\begin{proof}
 Given $G \in dg\cE(R)$, the existence of a Levi decomposition for $G_0:= \Spec O(G)^0$ ensures that the functor $R \ltimes \exp$ is essentially surjective.
The proof of Lemma \ref{expscNlemma} adapts to give the other results.
\end{proof}

Note that  $ \cU(\g)^{\vee}\cong \R[ \g^{\vee}]$, with comultiplication dual to the Campbell--Baker--Hausdorff formula. 

Given $G \in dg\cE(R)$ with corresponding DG Hopf algebra $O(G)$, we may regard $G$ as a functor from CDGAs to groups, by sending $A$ to $\Hom_{\mathrm{CDGA}}(O(G),A)$, which the coalgebra structures on $O(G)$ make into a group. The functor corresponding to  $(R \ltimes \exp(\g))$ is given   by
\[
 (R \ltimes \exp(\g))(A):= R(A^0) \ltimes \exp(\Hom_{\mathrm{DG}}(\g^{\vee},A)),
\]
for CDGAs $A$, because  the ind-conilpotent Hopf algebra $O(\exp(\g))$ representing $A \mapsto \exp(\Hom_{\mathrm{DG}}(\g^{\vee},A))$ is isomorphic to $\cU(\g)^{\vee}$, as in \cite[Theorem B.4.5]{QRat}. 

\begin{remark}\label{baseptslike}
It is sometimes natural to consider multiple basepoints on a space, so the results of  \cite{htpy} were formulated for groupoids rather than for groups. 

We can adapt Definition \ref{grouplike} in the same spirit  by defining an $R$-equivariant  dg pro-algebraic   groupoid $G$ over $A$ to consist of a set $\Ob G$ of objects, together with $O(G)(x,y) \in DG\Alg_A(R)$ for all $x,y \in \Ob $, equipped with morphisms  $O(G)(x,z) \to O(G)(x,y)\ten_A O(G)(y,z)$ 
(comultiplication), $O(G)(x,x) \to A$ (coidentity) and $O(G)(x,y)\to O(G)(y,x)$ (coinverse), satisfying the usual axioms.

Given a reductive pro-algebraic groupoid $R$ with an $S$-action, and  $\g \in dg\hat{\cN}(R \rtimes S)$, we then define the $S$-equivariant dg pro-algebraic group  $R \ltimes \exp(\g)$ to have objects $\Ob R$, with
$$
(R \ltimes \exp(\g))(x,y)= R(x,y) \by \exp(\g(y)),
$$
and multiplication as in \cite[Definition \ref{htpy-semidirect}]{htpy}.
\end{remark}

\subsubsection{Representability}

\begin{lemma}\label{nSch}
 A set-valued functor $F$ on  $s\cE(R)$ (resp. $dg\cE(R)$)   is representable  if and only if it satisfies the following conditions:
\begin{enumerate}
 \item $F(R)$ is a one-point set;
\item the map $F(G\by_HK) \to F(G)\by_{F(H)}F(K)$ is an isomorphism for all surjections $G \to H$ and all maps $K \to H$ in $s\cE(R)$ (resp. $dg\cE(R)$);
\item $F$ preserves filtered limits. 
\end{enumerate}
\end{lemma}
\begin{proof}
 For a functor of the form $F= \Hom(E,-)$ , these conditions are automatically satisfied, $R$ being the final object in both categories. 

For the converse, we first observe that the categories $s\cN(R)$ and $dg\cN(R)$ are Artinian, since all subobjects are of lower dimension as vector spaces. 
The proof of  \cite[Proposition \ref{ddt1-cSp}]{ddt1} (which dealt with commutative rings rather than Lie algebras) then shows that 
the functor $U \co  \pro(s\cN(R)) \to s\hat{\cN}(R)$ given by $(U\{\g(\alpha)\}_{\alpha})_n := \{\g(\alpha)_n\}_{\alpha}$ is an equivalence of categories.

If we now write $s\cE^{\flat}(R)\subset s\cE(R)$ (resp. $dg\cE^{\flat}(R)\subset dg\cE(R)$) for the essential image of $ s\cN(R)$  (resp. $dg\cN(R)$) under the functor $R \ltimes \exp$, then Lemmas \ref{expscNlemma} and \ref{expdgcNlemma} imply that the categories $s\cE^{\flat}(R)$ and $dg\cE^{\flat}(R)$ are Artinian, with equivalences 
\[
 U\co \pro(s\cE^{\flat}(R)) \to s\cE(R)\quad  U\co \pro(dg\cE^{\flat}(R)) \to dg\cE(R)
\]
 of categories.

Take a functor $F$ satisfying the conditions above; since $F$ preserves filtered limits, it is uniquely determined by its restriction to $s\cE^{\flat}(R) $ (resp. $dg\cE^{\flat}(R)$).  If $F$ is known to preserve all finite limits, then the result follows from \cite[Proposition A.3.1]{descent}. With our weaker hypotheses, the proof of \cite[Theorem \ref{higgs-nSch}]{higgs} (itself motivated by \cite[Theorem 2.11]{Sch}) adapts from $\cN$ to $
s\cE^{\flat}(R) $ and $dg\cE^{\flat}(R)$ to give the required result.
\end{proof}

\subsection{Equivalent formulations}\label{equivformsn}

We now introduce various equivalent models for pointed relative Malcev homotopy types, allowing us to interchange between pro-algebraic simplicial  groups (which generalise Quillen's rational homotopy types) and augmented equivariant CDGAs (which generalise Sullivan's rational homotopy types), as well as dg pro-algebraic groups and equivariant cosimplicial algebras.

In  Propositions \ref{eqhtpy} and \ref{propforms} we use these to give explicit models for the homotopy types of topological spaces an manifolds, in terms of equivariant cochains and the equivariant de Rham complex.

\subsubsection{Maurer--Cartan and reduced loops}

\begin{definition}
 Let $c\Alg(R)_*$ be the category of of $R$-representations in cosimplicial $k$-algebras, equipped with an augmentation to the structure sheaf $O(R)$ of $R$. A weak equivalence in $c\Alg(R)_*$ is a map which induces isomorphisms on cohomology groups.  Let $c\Alg(R)_{0*}$ be the full subcategory of $c\Alg(R)_*$ whose objects $A$ satisfy $\H^0(A)=k$.
 Denote the respective  opposite categories by $s\Aff(R)_*= R \da s\Aff(R)$ and  $c\Aff(R)_{0*}$.
\end{definition}

The main motivating example of an object of $c\Alg(R)_*$ will be given in Example \ref{Budef}. Roughly speaking, it is given by the cochains on a topological space with coefficients in a suitable local system of  $R$-equivariant $k$-algebras. Such local systems arise from Zariski-dense representations $\pi_1(X,x) \to R(k)$.

\begin{definition}
Define $DG\Alg(R)_*$ to be the category of $R$-representations in  non-negatively graded cochain $k$-algebras, equipped with an augmentation to $O(R)$. A weak equivalence in $DG\Alg(R)_*$ is a map which induces isomorphisms on cohomology groups. Let $DG\Alg(R)_{0*}$ be the full subcategory of $DG\Alg(R)_*$ whose objects $A$ satisfy $\H^0(A)=k$.
\end{definition}

The main motivating example of an object of $DA\Alg(R)_*$ will be given in Proposition \ref{propforms}, as the de Rham  complex on a smooth manifold with coefficients in a suitable local system of  $R$-equivariant $k$-algebras. 

\begin{definition}\label{mcDG}
Given a  DG Lie algebra $L^{\bt}$, define the Maurer--Cartan space $\mc(L)$ by
\[
 \mc(L):= \{\omega \in L^1 \,|\,d\omega+\half[\omega,\omega]=0\}.
\]

In particular, for a cochain algebra $A \in DG\Alg(R)$, and a chain Lie algebra $\g \in dg\hat{\cN}(R)$, we have 
$$
\mc(A\hat{\ten}^R\g):=\{\omega \in \prod_n A^{n+1}\hat{\ten}^R \g_n \,|\,d\omega+\half[\omega,\omega]=0\},
$$
where, for an inverse system $\{V_i\}$, $\{V_i\}\hat{\ten}A:= \Lim (V_i\ten A)$, and $ \{V_i\}\hat{\ten}^RA$ consists of $R$-invariants in this.
\end{definition}

Note that this is essentially the same as the functor of twisting cochains from \cite{QRat}.

\begin{definition}\label{dgdef}\label{dgdefgauge}
Given $A \in DG\Alg(R)$ and $\g \in dg\hat{\cN}(R)$, we define the gauge group by
$$
\Gg(A\hat{\ten}^R\g):= \exp(\prod_n A^n\hat{\ten}^R\g_n).
$$ 
Define a gauge action of $\Gg(A\hat{\ten}^R\g)$ on $\mc(A\hat{\ten}^R\g)$ by 
$$
g(\omega):= g\cdot \omega \cdot g^{-1} -(dg)\cdot g^{-1}.
$$
Here, $a\cdot b$ denotes multiplication in the universal enveloping algebra $\cU(A\hat{\ten}^R\g)$ of the differential graded Lie algebra (DGLA) $A\hat{\ten}^R\g$. That $g(\omega)$ lies in $ \mc(A\hat{\ten}^R\g)$ is a standard calculation (see \cite{Kon} or \cite{Man}).
\end{definition}

\begin{definition}\label{mcc}
Given a cosimplicial algebra $A \in c\Alg(R)$, and a simplicial Lie algebra $\g \in s\hat{\cN}(R)$, define the Maurer--Cartan space 
\[
 \mc(A\hat{\ten}^R\g) \subset \prod_{n \ge 0}\exp(A^{n+1}\hten^R\g_n)
\]
to consist of those
$\{\omega_n\}_{n\ge 0}$ 
satisfying
\begin{eqnarray*}
\pd_i\omega_n &=& \left\{\begin{matrix} \pd^{i+1}\omega_{n-1}  & i>0 \\ (\pd^1\omega_{n-1})\cdot(\pd^0\omega_{n-1})^{-1} & i=0,\end{matrix} \right.\\
\sigma_i\omega_n &=& \sigma^{i+1}\omega_{n+1},\\
\sigma^0\omega_n&=& 1.
\end{eqnarray*}
 \end{definition}

\begin{definition}\label{gauge}
Given a cosimplicial algebra $A \in c\Alg(R)$, and a simplicial Lie algebra $\g \in s\hat{\cN}(R)$, define the gauge group $\Gg(A\hat{\ten}^R\g )$ to  be the subgroup of  $\prod_n \exp(A^n\hat{\ten}^R\g_n)$ consisting of those $g$ satisfying
\begin{eqnarray*}
\pd_ig_n &=& \pd^{i}g_{n-1}  \quad \forall i>0, \\
\sigma_ig_n &=& \sigma^{i}g_{n+1} \quad \forall i.
\end{eqnarray*}
Note that $\exp(A^0\hten^R\g_0)$ can be regarded as a subgroup of $\Gg( A\hat{\ten}^R\g)$, setting $g_n= (\pd^1)^n(\sigma_0)^ng$, for $g\in \exp(A^0\hten^R\g_0)$.

The gauge group acts on the Maurer--Cartan space, with the action
 given by 
$$
g(\omega)_n= (\pd_0g_{n+1}) \cdot \omega_n \cdot (\pd^0g_n^{-1}) .
$$ 
\end{definition}

\begin{proposition}\label{barGRprop}
 Given $\phi \co A \to O(R)$   in $DG\Alg(R)_{0*}$ (resp. $c\Alg(R)_{0*}$), there is a functor 
\[
 R \ltimes \exp(\fu) \mapsto \exp(\fu_0)\by^{\exp(A^0\hten^R\fu_0)}\mc(A\hten^R\fu) 
\]
on $dg\cE(R)$ (resp. $s\cE(R)$) which is representable. We denote the representing object by $\bar{G}_R(A)$ or $\bar{G}_R(\Spec A)$.
\end{proposition}
\begin{proof}
First, we use Lemmas \ref{expdgcNlemma} and \ref{expscNlemma} to establish functoriality. Given $ u \in \exp(\fu_0)$, we send the automorphism $\ad_u $  of $R \ltimes \exp(\fu)$ to the map on  $\exp(\fu_0)\by^{\exp(A^0\hten^R\fu_0)}\mc(A\hten^R\fu)$ given by left multiplication by $u$. 

To see that this is well--defined, take $z \in \exp(\fu_0^R)$, and observe that it sends a pair $(g, \omega)$ to 
\[
 (zg, \omega)= ((\ad_zg) z, \omega).
\]
Now, the map $k \to A^0$ gives an embedding $\exp(\fu_0^R) \into \exp(A^0\ten^R\fu_0) $, and so regard $z$ as an element of the latter. Thus
\[
 (zg, \omega)= (\ad_zg, z(\omega)) \in \exp(\fu_0)\by^{\exp(A^0\ten^R\fu_0)}\mc(A\hten^R\fu). 
\]
Now, the definition of the gauge action ensures that $z(\omega) = \ad_z(\omega)$ for $ z \in \exp(\H^0A\ten^R\fu_0)$, so $z$ acts as the morphism $\ad_z \co \fu \to \fu$, and we have defined a functor. 

We need to show that  this functor satisfies the conditions of Lemma \ref{nSch}. The only non-trivial condition is compatibility with fibre products, 
so take morphisms $\g \onto \fh \la \fk$ in $dg\cN(R)$ (resp. $s\cN(R)$), and look at 
\[
 \exp((\g\by_{\fh}\fk)_0)\by^{\exp(A^0\ten^R(\g\by_{\fh}\fk)_0)}\mc(A\ten^R(\g\by_{\fh}\fk)),
\]
which is the quotient of
\[
 \exp(\g_0)\by_{\exp(\fh_0)}\exp(\fk_0) \by \mc(A\ten^R\g_0) \by_{\mc(A\ten^R\fh)}\mc(A\hten^R\fk)
\]
by the action of $\exp(A^0\ten^R\g_0)\by_{\exp(A^0\ten^R\fh)_0} \exp(A^0\ten^R\fk)_0) $. 

The key observations to make are that surjectivity of $\g \to \fh$ implies surjectivity of $ \exp(A^0\ten^R\g_0)\to \exp(A^0\ten^R\fh_0)$, and that the action is faithful because the map $(d, \phi) \co A^0 \to A^1 \by O(R)$ has trivial kernel. The proof of \cite[Corollary \ref{paper1-keyhgs}]{paper1} then adapts to give the required result.
\end{proof}

\subsubsection{Evaluation maps and equivariant cochains}


For a group $\Gamma$, let $\bS(\Gamma)$ denote the category of $\Gamma$-representations in  simplicial sets.

\begin{definition}
Given a  commutative $k$-algebra $A$ and $X \in \bS( R(A))$, define $\CC^{\bt}(X,\bO(R)\ten A)$  to be the cosimplicial $R$-equivariant commutative $A$-algebra given by 
$$
\CC^n(X,\bO(R)\ten A):= \Hom_{R(A)}(X_n, A \ten O(R)).
$$
\end{definition}

\begin{lemma}\label{evalata}
Given a  commutative $k$-algebra $A$, the functor $  s\Aff(R)\da \Spec A \to \bS(R(A))$ given by $Y \mapsto Y(A)$ is right Quillen, with left adjoint
$
X \mapsto \Spec \CC^{\bt}(X,\bO(R)\ten A).
$
\end{lemma}
\begin{proof}
This is essentially the same as \cite[Lemma 3.52]{htpy}, which takes the case $A=k$.
\end{proof}

Recall from \cite[Lemma  VI.4.6]{sht} that there is a right Quillen equivalence  $\mathrm{ho}\!\varinjlim_{{R(A)}}\co \bS(R(A)\to\bS \da BR(A)$, with left adjoint given by the covering system functor $X \mapsto \widetilde{X}$.

\begin{definition}\label{Budef}
Given $f:X\to BR(A)$, define
$$
\CC^{\bt}(X,O(\Bu_f)):=\CC^{\bt}(\widetilde{X},\bO(R)\ten A).
$$
\end {definition}

\begin{lemma}\label{relevalata}
Given a $k$-algebra $A$, the functor $s\Aff(R)\da \Spec A \to \bS\da BR(A)$  given by $Y \mapsto \mathrm{ho}\!\varinjlim_{{ R(A)}} Y(A)$ is right Quillen,  with left adjoint
$$
(X\xra{f} BR(A)) \mapsto \Spec \CC^{\bt}(X,O(\Bu_f)).
$$
\end{lemma}
\begin{proof}
The
 functor $s\Aff(R)\da \Spec A \to \bS(R(A))$ given by $Y \mapsto Y(A)$ is right Quillen, with left adjoint as in Lemma \ref{evalata}.  Composing this  right Quillen functor
  with $\mathrm{ho}\!\varinjlim_{{ R(A)}}$ gives the right Quillen functor required.
\end{proof}

\begin{proposition}\label{eqhtpy}
The  Malcev homotopy type $G(X,x)^{R, \mal} $ of a  pointed, connected topological space $(X,x)$ relative to a Zariski-dense morphism $\rho:\pi_1(X,x) \to R(k)$ is canonically isomorphic to
\[
 \bar{G}_R(\CC^{\bt}( \Sing(X,x),O(\Bu_{\rho}))).
\]
\end{proposition}
\begin{proof}
Because $\Sing(X,x)_0=\{x\}$, we have $\CC^0( \Sing(X,x),O(\Bu_{\rho})= O(R)$, so
\begin{align*}
 &\exp(\fu_0)\by^{\exp(\CC^0( \Sing(X,x),O(\Bu_{\rho}))\hten^R\fu_0)}\mc( \CC^{\bt}( \Sing(X,x),O(\Bu_{\rho})) \hten^R\fu)\\
&= \exp(\fu_0)\by^{\exp(\fu_0)}\mc( \CC^{\bt}( \Sing(X,x),O(\Bu_{\rho})) \hten^R\fu)\\
&=\mc( \CC^{\bt}( \Sing(X,x),O(\Bu_{\rho})) \hten^R\fu).
\end{align*}

The proof of  \cite[Lemma \ref{htpy-eqhtpy}]{htpy}  uses the Quillen adjunctions above to give an isomorphism
\[
\Hom_{s\gp\da R(k))}(G(\Sing(X,x)), (R\ltimes\exp(\fu))(k))\cong \mc( \CC^{\bt}( \Sing(X,x),O(\Bu_{\rho})) \hten^R\fu),
\]
and we then have
\[
 G(X,x)^{R, \mal} \cong \bar{G}_R(\CC^{\bt}( \Sing(X,x),O(\Bu_{\rho}))),
\]
since both objects represent the same functor.
 \end{proof}
 
Note that because $\Sing(X,x) \to \Sing(X)$ is a weak equivalence, and $\bar{G}_R$ preserves weak equivalences, this induces a weak equivalence
\[
 G(X,x)^{R, \mal}\to \bar{G}_R( x^*\co \CC^{\bt}(X,O(\Bu_{\rho}))\to O(R)).
\]

\subsubsection{The Dold--Kan correspondence}

\begin{definition}\label{DKD}
Recall that the cosimplicial Dold--Kan correspondence gives a denormalisation functor $D$ from cochain complexes to cosimplicial abelian groups by setting 
$$
D^n(V)=\bigoplus_{\begin{smallmatrix} m+s=n \\ 1 \le j_1 < \ldots < j_s \le n \end{smallmatrix}} \pd^{j_s}\ldots\pd^{j_1}V^m,
$$
where we define the $\pd^j$ and $\sigma^i$ using the simplicial identities, subject to the conditions that  for all $v \in V^n$,\, $dv = \sum_{i=0}^{n+1}(-1)^i \pd^i v$ and  $\sigma^i v =0$.
This functor is quasi-inverse to the cosimplicial normalisation functor
$$
N_c^n(V):= \{v \in V^n \,:\, \sigma^iv =0 \quad \forall i\},\quad d = \sum_{i=0}^{n+1}(-1)^i \pd^i.
$$
\end{definition}
In particular, note that the cosimplicial Dold--Kan correspondence  can be interpreted as the simplicial Dold--Kan correspondence of Definition \ref{N^s} applied to the opposite category to abelian groups.

\begin{definition}
Given a bicosimplicial abelian group $V$, the Eilenberg--Mac Lane shuffle product $\nabla\co N_c(\diag V) \to \Tot (N_cV) $ is a quasi-isomorphism of cochain complexes given by summing:
$$
\nabla^{pq}=\sum_{ (\mu, \nu) \in \mathrm{Sh}(p,q)}(-1)^{(\mu,\nu)} \sigma^{\nu_1}_h\ldots \sigma^{\nu_q}_h\sigma_v^{\mu_1}\ldots \sigma_v^{\mu_p}\co  N_c(V^{p+q,p+q}) \to (N_cV)^{pq}
$$
This is associative and commutative.

The Dold--Kan correspondence then gives, for any bi-cochain complex $W$, a quasi-isomorphism
$$
\nabla:=D\nabla N_c\co \diag(DW) \to D(\Tot W)
$$
of cosimplicial abelian groups.
\end{definition}

\begin{definition}
Given a bicosimplicial abelian group $V$, the Alexander--Whitney cup  product $\smile \co \Tot (N_cV)\to N_c(\diag V)  $, from the total complex of the binormalisation to the normalisation of the diagonal, is a quasi-isomorphism of cochain complexes given by summing:
\[
 (\pd^{p+1}_h)^q (\pd^0_v)^p \co (N_cV)^{pq} \to V^{p+q,p+q}.
\]
This is associative but not commutative, and is a right inverse to $\nabla$.

The Dold--Kan correspondence then gives, for any bi-cochain complex $W$, a quasi-isomorphism
$$
\smile:=D\smile N_c\co  D(\Tot W)\to \diag(DW)
$$
of cosimplicial complexes.
\end{definition}

\begin{definition}
 Denormalisation gives a functor $D \co DG\Alg(R) \to c\Alg(R)$, sending $A$ to the cosimplicial $R$-representation $DA \to O(R)$, with the multiplication on $DA$ given by the composition
\[
 \diag(DA \ten DA) \xra{\nabla}  D\Tot(A \ten A) \to DA
\]
of the shuffle product with multiplication on $A$.
\end{definition}

\begin{definition}\label{Daffdef}
 Let $\Spec D \co dg\Aff(R)_* \to s\Aff(R)_*$ be the functor given by  $(\Spec D)(Y):= \Spec( DO(Y))$. We also define $\Spec D \co dg\cE(R) \to s\cE(R)$ by the same formula, with the comultiplication on $DO(G)$ given as the composition
\[
 DO(G) \to D\Tot(O(G)\ten O(G)) \xra{\smile} \diag(DO(G)\ten D(O(G)))
\]
of the  Alexander--Whitney cup product with the comultiplication on $O(G)$.
\end{definition}

\begin{remark}\label{D*}
 The denormalisation functor $D$ on CDGAs  has a left adjoint $D^*$. In general, this is difficult to describe, but if $C$ is of the form $C=B[V]$, for $B$ a commutative $k$-algebra and $V$ a cosimplicial diagram of $k$-vector spaces, then $D^*C= B[N_cV]$.
\end{remark}

The functor $D$ is a right Quillen equivalence, so a homotopy inverse is given by composing $D^*$ with a cofibrant replacement functor. In general, however, there is a more convenient choice of homotopy inverse:

\begin{definition}\label{Th}
Recall that the Thom--Sullivan (or Thom--Whitney) functor $\Th$ from cosimplicial algebras to DG algebras is defined as follows. Let $\Omega(|\Delta^n|)$ be the DG algebra of rational polynomial forms on the $n$-simplex, so
$$
\Omega(|\Delta^n|)=\Q[t_0, \ldots, t_n, dt_0, \ldots, dt_n]/(1-\sum_i t_i),
$$ for $t_i$ of degree $0$. The usual face and degeneracy maps for simplices yield $\pd_i:   \Omega(|\Delta^n|) \to \Omega(|\Delta^{n-1}|)$ and  $\sigma_i:   \Omega(|\Delta^n|) \to \Omega(|\Delta^{n-1}|)$, giving a simplicial  CDGA. Given a cosimplicial $\Q$-algebra $A$, we then set
$$
\Th(A):= \{a \in  \prod_n A^n\ten_{\Q}\Omega(|\Delta^n|)\,:\, \pd^i_A a_n = \pd_ia_{n+1},\, \sigma^j_Aa_n= \sigma_ja_{n-1}\, \forall i,j\}.
$$

This functor is denoted by $D$ in \cite[\S 5.2]{Hainhodge}.
\end{definition}

By \cite[4.1]{HinSch} and the proof of \cite[Proposition \ref{stacks2-Dequiv}]{stacks2}, integration gives a natural quasi-isomorphism 
\[
 \int \co D\Th(A) \to A
\]
for all commutative cosimplicial $\Q$-algebras $A$. Since $\oL D^*$ is a homotopy inverse to $D$, it follows that $\Th$ is a model for $\oL D^*$.

 The Eilenberg--Zilber shuffle product and Alexander--Whitney cup product give maps
\[
 \nabla \co \Tot N^s W \to N^s\diag W \quad \smile \co  N^s\diag W \to \Tot N^s W
\]
for any bisimplicial diagram $W$ in an abelian category.

Combining Proposition \ref{eqhtpy}, with this construction leads us to make the following definition:
\begin{definition}\label{XRmaldef}
 Given a reduced simplicial set $(X,x)$ and  a Zariski-dense morphism $\rho:\pi_1(X,x) \to R(k)$, define the relative Malcev homotopy type $(X,x)^{\rho, \mal}$ of $(X,x)$ relative to $\rho$ by
\[
 (X,x)^{\rho, \mal}:= ( R \xra{x} \Spec \Th\CC^{\bt}( \Sing(X),O(\Bu_{\rho}))) \in  dg\Aff(R)_{0*}.
\]
\end{definition}
Note that $DO(X,x)^{\rho, \mal}$ is weakly equivalent to $x^* \co \CC^{\bt}( \Sing(X),O(\Bu_{\rho}))\to O(R)$, so Proposition \ref{eqhtpy} gives
\[
 G(X,x)^{R, \mal} \simeq \bar{G}_R(DO(X,x)^{\rho, \mal}).
\]

\begin{definition}\label{N^sg}
The simplicial  Dold--Kan normalisation of Definition \ref{N^s} gives a functor 
\[
N^s \co s\cN(R) \to dg\cN(R), 
\]
where the Lie bracket on $N^s\g$ comes from composing $\nabla$ with the bracket on $\g$. On passing to pro-categories, this gives a functor
\[
 N^s \co s\hat{\cN}(R) \to dg\hat{\cN}(R).
\]
\end{definition}

\begin{definition}
Given $G \in s\agp$, define the  dg pro-algebraic  group $N^sG$ over $k$ by setting $O(N^sG)= D^*O(G)$, for $D^*$ as in Remark \ref{D*}. The comultiplication on $O(N^sG)$ is then defined using the fact that $D^*$ preserves coproducts, so $D^*(O(G)\ten O(G))= O(N^sG)\ten O(N^sG)$, where $ (O(G)\ten O(G))^n= O(G)^n\ten O(G)^n$, but    $(O(N^sG)\ten O(N^sG))^n= \bigoplus_{i+j=n}O(N^sG)^i\ten O(N^sG)^j$.
\end{definition}

\begin{definition}
 We define $N^s \co s\cE(R) \to dg\cE(R)$ to be the functor $N^s G= \Spec (D^* O(G))$. 
\end{definition}

Every object of $s\cE(R) $ is of the form $G= R \ltimes \exp(\g) $ for $\g \in s\hat{\cN}(R)$, so we have an isomorphism $O(G) \cong O(R)[\g^{\vee}]$ of cosimplicial algebras. Observe that  as in Remark \ref{D*}, we then  have $D^*O(G)\cong O(R)[(N^s\g)^{\vee}]$. Analysis of the comultiplication then gives
\[
 D^*O( R \ltimes \exp(\g)) \cong R \ltimes \exp(N^s\g),
\]
which explains the notation.

\begin{lemma}\label{DequivcE}
 The functor $N^s \co s\cE(R) \to dg\cE(R)$  is right adjoint to $\Spec D$, and the unit and counit of the adjunction are both quasi-isomorphisms. 
\end{lemma}
\begin{proof}
 Since $D^* \dashv D$ form a pair of equivalences between cochain and cosimplicial commutative algebras, we know that the unit   $A \to DD^*A$ is a quasi-isomorphism whenever $A$ is cofibrant, and that the co-unit $D^*DB \to B$ is a quasi-isomorphism whenever $DB$ is cofibrant. The description of Lemma \ref{expscNlemma} ensures that for  all objects $G$ of $s\cE(R)$, the cosimplicial Hopf algebra $O(G)$ is cofibrant as a cosimplicial commutative algebra. 
\end{proof}

\subsubsection{Loops and cochains}

\begin{proposition}\label{DGRcommute}
There is a commutative diagram
\[
 \begin{CD}
  dg\Aff(R)_* @>{\Spec D}>> s\Aff(R)_*\\
@V{\bar{G}_R}VV @VV{\bar{G}_R}V\\
dg\cE(R) @>{\Spec D}>> s\cE(R) 
 \end{CD}
\]
of functors. 
\end{proposition}
\begin{proof}
By \cite[Theorem \ref{monad-cfexp}]{monad}, we have canonical isomorphisms
\[
 \exp(\fu_0)\by^{\exp(A^0\hten^R\fu_0)}\mc(A\hten^RN^s\fu)\cong \exp(\fu_0)\by^{\exp(A^0\hten^R\fu_0)}\mc(DA\hten^R\fu)
\]
for any $A \to O(R)$ in $DG\Alg(R)_*$ and any $\fu \in s\hat{\cN}(R)$. Thus
\[
 \Hom_{dg\cE(R)}(\bar{G}_R(A \to O(R)), N^s(R \ltimes \exp(\fu))) \cong  \Hom_{s\cE(R)}(\bar{G}_R(DA \to O(R)), R \ltimes \exp(\fu)),
\]
so by adjunction
\[
 \Hom_{s\cE(R)}( (\Spec D)\bar{G}_R(A \to O(R)), R \ltimes \exp(\fu)) \cong \Hom_{s\cE(R)}(\bar{G}_R(DA \to O(R)), R \ltimes \exp(\fu)),
\]
and we have shown that
\[
 (\Spec D)\bar{G}_R(A \to O(R)) \cong \bar{G}_R(DA \to O(R)).
\]
\end{proof}

\begin{remark}\label{GXRmalrmk}
 For $(X,x)^{\rho,\mal}$ as in Definition \ref{XRmaldef}, we have seen  that $\bar{G}_R(DO(X,x)^{\rho,\mal}) \simeq G(X,x)^{R,\mal}$, and Proposition \ref{DGRcommute} then gives 
\[
(\Spec D) \bar{G}_R(O(X,x)^{\rho,\mal})\simeq G(X,x)^{R,\mal}.
\]
\end{remark}

\begin{definition}
Given a manifold $X$, denote the sheaf  of real $\C^{\infty}$ $n$-forms on $X$ by $\sA^n$. Given a real sheaf $\sF$ on $X$, write
$$
A^n(X,\sF):=\Gamma(X,\sF\ten_{\R} \sA^n).
$$ 
\end{definition}

\begin{proposition}\label{propforms}
For any pointed, connected  manifold $(X,x)$, there is a canonical chain of quasi-isomorphisms between $G(X,x)^{R, \mal} $ and
\[
 (\Spec D)\bar{G}_R(x^* \co A^{\bt}(X, O(\Bu_{\rho}))\to O(R)),
\]
where $O(\Bu_{\rho}) $ is the local system on $X$ corresponding to the $\rho(\pi_1(X,x))$-representation $O(R)$.
\end{proposition}
\begin{proof}
By Proposition \ref{eqhtpy}, we have a quasi-isomorphism
\[
 G(X,x)^{R, \mal}\to \bar{G}_R( x^*\co \CC^{\bt}(X,O(\Bu_{\rho}))\to O(R)),
\]
and we now observe that as in the proof of \cite[Proposition \ref{htpy-propforms}]{htpy}, we have quasi-isomorphisms
$$
\CC^{\bt}(X,\bO(R))\to \diag \CC^{\bt}(X,\bO(R)\ten_{\R}D\sA^{\bt}) \la \Gamma(X,\bO(R)\ten_{\R}D\sA^{\bt})=DA^{\bt}(X, \bO(R))
$$
in $c\Alg(R)_*$. Applying the functor $\bar{G}_R$ and using the substitution of Proposition \ref{DGRcommute} then gives quasi-isomorphisms
\begin{align*}
& \bar{G}_R( x^*\co \CC^{\bt}(X,O(\Bu_{\rho}))\to O(R))\\
& \to \bar{G}_R( x^*\co \CC^{\bt}(X,\bO(R)\ten_{\R}D\sA^{\bt}) \to O(R))\\
& \la (\Spec D)\bar{G}_R(x^*\co A^{\bt}(X, O(\Bu_{\rho}))\to O(R)).
\end{align*}
\end{proof}

\subsection{The reduced bar construction}\label{redbarsn}

We now show that the functor $\bar{G}_R$ on $R$-equivariant CDGAs is just given by the bar construction (Proposition \ref{repbar} below).

\begin{definition}\label{barBRdef}
Given $A \in DG\Alg(R)_{0*}$,  we form  the reduced bar complex
\[
\bar{B}_R(A \to O(R)):= \bar{B}_k(k, A, O(R))
\]
as in Definition \ref{barBdef},
where the $A$-module structure on $O(R)$ is given by the morphism $\phi\co A \to O(R)$, and the $A$-module structure on $k$ is given by combining this with the co-unit $O(R) \to k$. The complex $\bar{B}_R(A \to O(R))$ has the natural structure of a CDGA, by \cite[1.2.4]{Hainhodge}, and in fact $\bar{B}_R(A \to O(R))$ is a DG Hopf algebra, by \cite[Theorem 7.5]{narkawicz}. 

To understand the coalgebra structure, observe that as a graded algebra,
$\bar{B}_k(k, A, O(R))$ is defined as a quotient of $C \ten O(R)$, where $C$ is  the free graded coalgebra on cogenerators $A^+[1]$. 
Noting that $C \ten O(R)$ is cogenerated as a coalgebra by $A^+[1]\oplus O(R)$,  the Hopf algebra  $\bar{B}_R(A \to O(R))$ is then defined to be the quotient of the semi-direct tensor product $C \rtimes R$ from Definition \ref{semitendef} cogenerated by
\[
 \coker((d,\phi) \co A^0 \to A^1\oplus O(R)).
\]
\end{definition}

\begin{definition}\label{barwg}
Define a  functor $\bar{W}: dg\hat{\cN}(R) \to dg\Aff(R)_{*}$ by 
$
O(\bar{W}\g):= \Symm(\g^{\vee}[-1])
$
the graded polynomial ring on generators $\g^{\vee}[-1]$, with derivation defined on generators by $d_{\g} +\Delta$, for $\Delta$ the Lie cobracket on $\g^{\vee}$. Since $O(\bar{W}\g)^0=k$, there is a unique augmentation to $O(R)$.

The functor $\bar{W}$ has a left adjoint $G$, given  by writing $\sigma A^{\vee}[1]$ for the brutal truncation (in non-negative degrees) of $A^{\vee}[1]$, and setting
$$
G(A)= \Lie(\sigma A^{\vee}[1]),
$$
the free graded Lie algebra, with differential similarly defined on generators by $d_A +\Delta$, with $\Delta$ here being the coproduct on $A^{\vee}$. 
\end{definition}

Note that
$$
\Hom_{dg\Aff(R)_*}(\Spec A, \bar{W}\g) \cong \mc(A\hat{\ten}^R\g).
$$

\begin{remark}
 When  $A^0=k$, observe that $G(A)$ is just the dg Lie algebra $\bar{G}^k(A \to k)$ of Definition \ref{barg0}, given by the tangent space of the reduced bar construction, together with its induced $R$-action. 
\end{remark}

\begin{lemma}\label{cfbarlemma}
 For $A \in DG\Alg(R)$ with $A^0=k$, there is a canonical isomorphism
\[
 \Spec \bar{B}_R (A) \cong \exp(G(A)) \rtimes R 
\]
in $dg\cE(R)$.
\end{lemma}
\begin{proof}
Because $A^0=k$, the morphism $A \to O(R)$ factors as $A \to k \to O(R)$ so   we have an isomorphism 
\[
 \bar{B}_R(A \to O(R))= \bar{B}_k(A)\ten_k O(R)
\]
of DG algebras. The coalgebra structure on the reduced bar construction is given by semidirect tensor product, so we have a DG Hopf algebra isomorphism
\[
 \bar{B}_R(A \to O(R))= \bar{B}_k(A)\rtimes R.
\]

Comparing Definitions \ref{barg0} and \ref{barwg} then gives an $R$-equivariant isomorphism
\[
 O(\exp(G(A)) \cong  \bar{B}_k(A)
\]
of DG Hopf algebras, as required.
\end{proof}

\begin{proposition}\label{repbar}
There is a canonical natural isomorphism
\[
 \bar{G}_R \cong \Spec \bar{B}_R
\]
of functors from $dg\Aff_{0*}$ to $dg\cE(R)$.
\end{proposition}
\begin{proof}
   When $A^0=k$,  Lemma \ref{cfbarlemma} and Lemma \ref{expdgcNlemma} combine with the adjunction $G \dashv \bar{W}$ to imply that for $\g \in dg\cN(R)$ we have 
\begin{eqnarray*}
  \Hom_{dg\cE(R)}( \bar{G}_R (A), \exp(\g)\rtimes R)&\cong& \exp(\g_0)\by^{\exp(\g_0^R)}\Hom_{dg\hat{\cN}(R)}(G(A), \g) ,\\
&\cong& \exp(\g_0)\by^{\exp(\g_0^R)}\Hom_{dg\Aff(R)_*}(\Spec A, \bar{W}\g) ,\\
&\cong& \exp(\g_0)\by^{\exp(\g_0^R)}\mc(A\ten^R\g), \\
&\cong& \Hom_{dg\cE(R)}(G(A)\rtimes R, \exp(\g)\rtimes R),\\
&\cong& \Hom_{dg\cE(R)}(\Spec \bar{B}_R(A), \exp(\g)\rtimes R)
\end{eqnarray*}

For general $\phi \co A\to O(R)$  in $DG\Alg(R)_{0*}$, choose a decomposition $A^1= dA^0\oplus B^1$, and set $B=k$ with $B^n=A^n$ for all $n>1$. Then $B \to A$ is a quasi-isomorphism, so gives isomorphisms $\bar{B}_R(B) \to \bar{B}(\phi\co A \to O(R))$. The map $B \to A$ factors through $k \oplus A^+[1]$, so the map
\[
 \Hom_{dg\cE(R)}(\Spec \bar{B}_R(k \oplus A^+), \exp(\g)\rtimes R) \to \Hom_{dg\cE(R)}(\Spec \bar{B}_R(A), \exp(\g)\rtimes R)
\]
admits a section and is hence surjective.

Now, Lemma \ref{expdgcNlemma} adapts to show that the set of graded Hopf algebra morphisms $ O(R\rtimes \exp(\g)) \to \bar{B}_R(k \oplus A^+)$ is given by
\[
 \exp(\g_0)\by^{\exp(\g_0^R)}(A\ten^R\g)^1.
\]
Since  $\bar{B}_R(\phi \co A \to O(R))$ is the quotient of  $\bar{B}_R(k \oplus A^+)$ cogenerated by 
\[
 \coker( (\phi,d)\co   A^0 \to O(R)\oplus A^+[1]),
\]
it follows that graded Hopf algebra morphisms $ O(R\rtimes \exp(\g)) \to \bar{B}_R(\phi \co A \to O(R))$
are given by the quotient
\[
 \exp(\g_0)\by^{\exp(A^0\ten^R\g_0)}(A\ten^R\g)^1.
\]

Thus $\Hom_{dg\cE(R)}(\Spec \bar{B}_R(A), \exp(\g)\rtimes R) $ is the image of
\[
 \Hom_{dg\cE(R)}(\Spec \bar{B}_R(k \oplus A^+), \exp(\g)\rtimes R)\to \exp(\g_0)\by^{\exp(A^0\ten^R\g_0)}(A\ten^R\g)^1;
\]
in other words, the image of
\[
 \exp(\g_0)\by^{\exp(\g_0^R)}\mc(A\ten^R\g) \to \exp(\g_0)\by^{\exp(A^0\ten^R\g_0)}(A\ten^R\g)^1.
\]
Since $(\phi,d)\co   A^0 \to O(R)\oplus A^+[1])$ is injective, the action of $ \exp(A^0\ten^R\g_0)$ on $(A\ten^R\g)^1 $ is faithful, so this image is just
\[
 \exp(\g_0)\by^{\exp(A^0\ten^R\g_0}\mc(A\ten^R\g),
\]
and we have shown that $\Spec \bar{B}_R (\phi \co A \to O(R))$ represents the functor of Proposition \ref{barGRprop}, so is canonically isomorphic to  $\bar{G}_R(\phi \co A \to O(R))$.
\end{proof}

\begin{remark}\label{barfiltrn}
The bar filtration on $ \bar{B}_k(k, A, O(R))$ is given by setting $\sB_r\bar{B}_R(A \to O(R))$ to be the image of $\bigoplus_{n \le r} (A^+[1])^{\ten n}\ten O(R)$ under the quotient map above, and then we have
\[
 \gr^{\sB}_r \bar{B}_R(A \to O(R)) \cong (\bar{A}[1])^{\ten r} \ten O(R),
\]
which gives rise to the Eilenberg--Moore spectral sequence in the proof of Lemma \ref{barQIM}. 

In fact, we can recover the bar filtration from the coalgebra structure on $\bar{B}:=\bar{B}_R(A \to O(R))$ by noting that 
\[
 \sB_r\bar{B}= \Delta_r^{-1}( \sum_{i+j=r-1} \bar{B}^{\ten i} \ten O(R) \ten \bar{B}^{\ten j} \subset \bar{B}^{\ten r}),
\]
where $\Delta_r$ is $r$-fold comultiplication.
In particular, $\bar{A}[1] \cong \gr^{\sB}_1 \bar{B}_R(A \to O(R))$ is just the abelianisation of the pro-unipotent radical 
\[
 \Ru \bar{G}_R( A \to O(R)).
\]
\end{remark}

\begin{example}\label{kformalex}
When $X$ is a compact K\"ahler manifold, the formality isomorphism of \cite[Theorem \ref{higgs-formal}]{higgs} (or see Remark \ref{cfhiggs} below) gives a quasi-isomorphism
\[
 A^{\bt}(X, O(\Bu_{\rho})) \simeq \H^*(X, O(\Bu_{\rho})).
\]
Because $\H^0(X, O(\Bu_{\rho}))=\R $, we then have
\[
 \bar{G}_R( \H^*(X, O(\Bu_{\rho})))\cong R \ltimes \exp G \H^*(X, O(\Bu_{\rho})),
\]
combining Lemma \ref{cfbarlemma} and Proposition \ref{repbar}.

Thus Proposition \ref{propforms} shows that the relative Malcev homotopy type of  $X$ is given by the bar construction
\[
G(X,x)^{R, \mal} \simeq   R \ltimes (\Spec  D\bar{B}_{\R}\H^*(X, O(\Bu_{\rho}))).
\]

Much of this paper will involve understanding how this quasi-isomorphism interacts with the mixed Hodge structures.
\end{example}

\subsection{Equivalences of homotopy categories}\label{equivsubsn}

We will now combine our equivalences so far and establish further equivalences on the level of homotopy categories, culminating in Theorem \ref{bigequiv} below, an unpointed version of which appeared as \cite[Theorem \ref{htpy-bigequiv}]{htpy}. 

\begin{proposition}\label{calchom}
For $A \in DG\Alg(R)_*$ and $\g \in dg\hat{\cN}(R)$,
$$
\Hom_{\Ho(dg\Aff(R)_*)}(\Spec A,  \bar{W}\g) \cong \exp(\H_0 \g)\by^{\Gg(A\hat{\ten}^R\g)}\mc(A\hat{\ten}^R \g) ,
$$
where $\bar{W}\g \in  dg\Aff_* $ is the composition $R \to\Spec k\to \bar{W}\g$, and the morphism $\Gg(A\hat{\ten}^R\g) \to \exp(\H_0 \g)$ factors through $\Gg(O(R)\hat{\ten}^R \g)= \g_0$.
\end{proposition}
\begin{proof}
The derived $\Hom$ space
$
\oR \HHom_{dg\Aff(R)_*}(\Spec A,  \bar{W}\g) 
$
is the homotopy fibre of 
$$
  \oR\HHom_{dg\Aff(R)}(\Spec A,  \bar{W}\g)\to { \oR\HHom_{dg\Aff(R)}(R,  \bar{W}\g)} ,
$$
 over  the unique element $0$  of $\mc(O(R)\hat{\ten}^R\g)$. For a morphism $f:X \to Y$ of simplicial sets (or topological spaces), path components $\pi_0F$  of the homotopy fibre over $0 \in Y$ are given by pairs $(x, \gamma)$, for $x \in X$ and $\gamma$ a homotopy class of paths from $0$ to $fx$, modulo the equivalence relation $(x,\gamma) \sim (x', \gamma')$ if there exists a path $\delta: x \to x'$ in $X$ with $\gamma * f\delta= \gamma'$. If $Y$ has a unique vertex $0$, this reduces to pairs $(x, \gamma)$, for $x \in X$ and $\gamma \in \pi_1(Y,0)$, with $\delta$ acting as before.

Now, we can define an object $V\g \in dg\Aff(R)$ by 
$$
\Hom_{dg\Aff(R)}(\Spec A, V\g) \cong \Gg(A\hat{\ten}^R\g), 
$$
and by \cite[Lemma \ref{htpy-defworks}]{htpy}, $V\g \by\bar{W}\g$ is  a path object  for $\bar{W}\g$ in  $dg\Aff(R)$  via the maps
$$
\xymatrix@1{ \bar{W}\g \ar[r]^-{(\id,1)} &\bar{W}\g \by V\g  \ar@<0.5ex>[r]^-{\pr_1} \ar@<-0.5ex>[r]_-{\phi} & \bar{W}\g,}
$$
where $\phi$ is the gauge action.

Thus the loop object $\Omega(\bar{W}\g,0)$ for $0 \in \mc(A\hat{\ten}^R\g)$) is given by 
$$
\Hom_{dg\Aff(R)}(\Spec A, \Omega(\bar{W}\g,0)) = \{g \in \Gg(A\hat{\ten}^R\g)\,:\, g(0)=0\} = \exp (\ker d \cap \prod_n A^n\hat{\ten}^R\g_n)
$$
Hence
$$
\pi_i\oR\HHom_{dg\Aff(R)}(\Spec A, \Omega(\bar{W}\g,0)) \cong \H^{-i}(\prod_n A^n\hat{\ten}^R\g_n),
$$
and in particular,
$$
\pi_1(\oR\HHom_{dg\Aff(R)}(R,  \bar{W}\g),0)= \pi_0\oR\HHom_{dg\Aff(R)}(\Spec A, \Omega(\bar{W}\g,0)) \cong \exp(\H_0\g).
$$

This gives us a description of
$$
\Hom_{\Ho( dg\Aff(R)_*) }(\Spec A,  \bar{W}\g)= \pi_0\oR \HHom_{dg\Aff(R)_*}(\Spec A,  \bar{W}\g)
$$
as consisting of pairs $(x, \gamma)$ for $x \in \mc(A\hat{\ten}^R \g)$ and $\gamma \in \exp(\H_0\g)$, modulo the equivalence $(x,\gamma) \sim (\delta(x), \delta*\gamma)$ for $\delta\in \Gg(A\hat{\ten}^R\g)$. In other words,
$$
\Hom_{\Ho( dg\Aff(R)_*}(\Spec A,  \bar{W}\g) \cong \mc(A\hat{\ten}^R \g) \by^{\Gg(A\hat{\ten}^R\g)}\exp(\H_0 \g),
$$
as required.
\end{proof}

\begin{corollary}\label{WRcor}
There is a  functor $\bar{W}_R:\Ho_*(dg\cE(R)) \to \Ho(dg\Aff(R)_*)$ given on objects by 
\[
 \bar{W}_R(R \ltimes \exp(\g))= \bar{W}\g.
\]
\end{corollary}
\begin{proof}
Given a morphism  $f:\g \to \fh$ in $dg\hat{\cN}(R)$  and $h \in \H_0\fh$, we can use Proposition \ref{calchom} to define an element $\bar{W}_R(h,f)$ of
$$
\Hom_{\Ho(dg\Aff(R)_*)}(\bar{W}\g,  \bar{W}\fh)
$$ 
by $[(\exp(h),\bar{W}f)] \in \exp(\H_0 \g)\by^{\Gg(A\hat{\ten}^R\g)}\Hom_{dg\Aff(R) }(\bar{W}\g,  \bar{W}\fh)$. Lemma \ref{expdgcNlemma} then ensures that this defines a functor
\[
 \bar{W}_R\co dg\cE(R) \to \Ho(dg\Aff(R)_*).
\]

If $f$ is a weak equivalence then  $\bar{W}(h,f)$ is a weak equivalence in $dg\Aff(R)_*$, which implies that $\bar{W}_R$ must descend to a functor
$$
\bar{W}: \Ho_*(dg\cE(R)) \to \Ho(dg\Aff(R)_*),
$$
since $\bar{W}_R(h,f)$ depends only on the homotopy class of $f$. 
\end{proof}

\begin{theorem}\label{bigequiv}
We have the following commutative diagram of equivalences of categories:
$$
\xymatrix{
\Ho(dg\Aff(R)_*)_0 \ar@<1ex>[r]^{\Spec D}  \ar@<1ex>[d]^{\bar{G}_R} & \Ho(s\Aff(R)_*)_0 \ar@<1ex>[l]^{\Spec \Th} \ar@<1ex>[d]^{\bar{G}_R}  \\
\Ho_*(dg\cE(R)) \ar@<1ex>[u]^{\bar{W}_R}\ar@<1ex>[r]^{\Spec D}  &  \Ho_*(s\cE(R)) \ar@<1ex>[l]^{N^s}. 
}
$$
\end{theorem}
\begin{proof}
First, \cite[Propositions  \ref{htpy-affequiv} and \ref{htpy-nequiv}]{htpy} ensure that $\Spec D$ and $N^s$ are equivalences, and  \cite[4.1]{HinSch}  shows that $D$ and $\Th$ are homotopy inverses. We now adapt the proof of \cite[Corollary \ref{htpy-bigequiv}]{htpy}.

By \cite[Proposition \ref{htpy-wequiv}]{htpy}, there is a canonical quasi-isomorphism $\bar{W}_R \bar{G}_R(X) \to X$, for all $X \in dg\Aff(R)$ with $X_0=\Spec k$. As in the proof of Proposition \ref{repbar}, for any $A \in DG\Alg(R)_{0*}$, there exists a  quasi-isomorphism $B \to A$ with $B^0 =k$, which means that 
$$
\bar{W}_R: \Ho_*(dg\cE(R)) \to \Ho(dg\Aff(R)_*)_0
$$
is essentially surjective, with $\bar{G}_R(Y)$ in the essential pre-image of $Y$.

To establish that $\bar{W}_R$ is full and faithful, it will suffice to show that for all $A \in DG\Alg(R)$ with $A^0=k$, the transformation
$$
\Hom_{\Ho_*(dg\cE(R)) }(\bar{G}_R(A), R \ltimes \exp(\fh)) \to \Hom_{\Ho(dg\Aff(R)_*)}(\Spec A,  \bar{W}\fh)
$$
is an isomorphism. For $A=k$, this is certainly true, since in both cases we get $\exp(\H_0\fh)/\exp(\H_0\fh)^R$ for both $\Hom$-sets (using Proposition \ref{calchom}). The morphism $k \to A$ gives surjective maps from both $\Hom$-sets above to $\exp(\H_0\fh)/\exp(\H_0\fh)^R$, and by  Proposition \ref{calchom}, the map on any fibre is just
\begin{align*}
& \Hom_{\Ho(dg\hat{\cN}(R))}(G(A), \fh)/\exp(\ker(\fh_0^R \to \H_0\fh^R)) \\
& \xra{\theta}\mc(A\hat{\ten}^R \fh)/\ker(\Gg(A\hat{\ten}^R \fh) \to \exp(\H_0 \fh^R)).
\end{align*}

Now, $G(A)$ is a hull for both functors on $dg\cN(R)$ (in the sense of \cite[Proposition \ref{htpy-schrep}]{htpy}), so by the argument of \cite[Proposition \ref{htpy-qwell}]{htpy}, it suffices to show that $\theta$ is an isomorphism whenever $\fh \in \cN(R)$ (i.e. whenever $\fh_i=0$ for all $i>0$). In that case,
$$
\Hom_{\Ho(dg\hat{\cN}(R))}(G(A), \fh)= \Hom_{dg\hat{\cN}(R)}(G(A), \fh)= \mc(A\hat{\ten}^R\fh),
$$
 and 
$$
\Gg(A\hat{\ten}^R\fh)= \exp(A^0\hat{\ten}^R \fh)= \exp(k \hat{\ten}^R\fh)= \exp(\fh^R)= \exp(\fh_0^R), 
$$
so $\theta$ is indeed an isomorphism. Hence  $\bar{W}_R$ is an equivalence, with quasi-inverse $\bar{G}_R$.

Commutativity of the diagram is then given by Proposition \ref{DGRcommute}.
\end{proof}

\begin{remark}\label{basepts}
If we write $|X|$ for the set of points of $X$,  take a subset $T\subset |X|$, then the groupoid 
\[
 \Gamma:=T\by_{|X|}\pi_fX
\]
 has objects $T$, with morphisms $\Gamma(x,y)$ corresponding to homotopy classes of paths from $x$ to $y$ in $X$. If $T=\{x\}$, note that $\Gamma$ is just $\pi_1(X,x)$. 

Take  a reductive pro-algebraic groupoid $R$ (as in \cite[\S \ref{htpy-unptd}]{htpy}) on objects $T$, and a Zariski-dense morphism $\rho: \Gamma \to R$ preserving $T$.  The relative Malcev completion $G(X; T)^{\rho, \mal}$ is then a pro-unipotent extension of $R$ (as a simplicial pro-algebraic groupoid --- see \cite[\S \ref{htpy-sagpdsn}]{htpy}). Then, for  
\[
 \varpi_n(X;T)^{\rho, \mal}:= \pi_{n-1}G(X; T)^{\rho, \mal},
\]
it follows that 
$\varpi_1(X;T)^{\rho, \mal}$ is a groupoid on objects $T$. Moreover  $ \varpi_n(X;T)^{\rho, \mal}$ is a $(\varpi_1(X;T)^{\rho, \mal})^2$-representation under left and right multiplication, with  
\[
  \varpi_n(X;T)^{\rho, \mal}(x,x)= \varpi_1(X,x)^{\rho_x, \mal}.
\]
 Here, $\rho_x: \pi_1(X,x)\to R(x,x)$ is defined by restricting $\rho$ to $x \in T$.

If we set $dg\Aff(R)_*:= (\coprod_{x \in T} R(x,-)) \da dg\Aff(R)$ and $s\Aff(R)_*:=(\coprod_{x \in T} R(x,-))\da s\Aff(R)$, where $R(x,-)$ is the $R$-representation $y \mapsto R(x,y)$, then 
Theorem \ref{bigequiv}  adapts to this setting, \emph{mutatis mutandis}, using the constructions of Remark \ref{basepts}.

Proposition \ref{eqhtpy} also  adapts to say that the relative Malcev homotopy type $G(X;T)^{\rho, \mal}$ corresponds to the complex
$$
(\CC^{\bt}(X,O(\Bu_{\rho})) \xra{\prod_{x \in T} x^*} \prod_{x \in T} O(R)(x,-)) \in c\Alg(R)_{0*},
$$
and Proposition \ref{propforms} adapts to show that $(X;T)^{\rho,\mal}$ is given  by 
$$
(A^{\bt}(X, O(\Bu_{\rho})) \xra{\prod_{x \in T} x^*} \prod_{x \in T} O(R)(x,-)) \in DG\Alg(R)_*. 
$$

For $A \to \prod_{x\in T} O(R)(x,-)$ in $DG\Alg(R)_{0*}$, we can also define a DG Hopf algebroid $\bar{B}_R(A\to \prod_{x\in T} O(R)(x,-))$ to have objects $T$, with 
\[
 \bar{B}_R(A \to \prod_{x\in T} O(R)(x,-))(x,y):= \bar{B}_k(k,A_y, O(R)(x,y)), 
\]
where the morphism $A_y \to k$ is given by composing $A_y \to O(R)(y,y)$ with the co-unit of the Hopf algebra $O(R)(y,y)$.

The comultiplication 
\[
 \bar{B}_k(k,A_y, O(R)(x,z))\to \bar{B}_k(k,A_y, O(R)(x,y))\ten \bar{B}_k(k,A_y, O(R)(y,z)),
\]
antipode
\[
 \bar{B}_k(k,A_y, O(R)(x,y))\to \bar{B}_k(k,A_y, O(R)(y,x)) 
\]
 and co-units $\bar{B}_k(k,A_y, O(R)(x,x))\to k$
are then defined by analogous formulae to those in \cite[\S 7.3]{narkawicz}. 
  When $T=\{x\}$, this recovers Definition \ref{barBRdef}.
\end{remark}

\subsection{Families of homotopy types}\label{relhtpy}

We will often need to consider families of homotopy types parametrised by affine schemes, so we now introduce the necessary concepts.

\begin{lemma}\label{modmodel}
For  an $R$-representation $A$ in commutative DG algebras, there is a cofibrantly generated model structure on the category  $DG_{\Z}\Mod_A(R)$ of $R$-representations in  $\Z$-graded cochain $A$-modules, in which fibrations are surjections, and weak equivalences are isomorphisms on cohomology. 
\end{lemma}
\begin{proof}
Let $S(n)$ denote the cochain complex    $A[-n]$. Let $D(n)$ be the cone complex of $\id:A[1-n]\to A[1-n]$, so the underlying graded vector space is just $A[1-n] \oplus A[-n]$. 

Define $I$ to be the set of canonical maps $S(n)\ten V \to D(n)\ten V $, for $n \in \Z$ and $V$ ranging over all finite-dimensional $R$-representations. Define $J$ to be the set of morphisms $0 \to D(n)\ten V $, for $n \in \Z$ and $V$ ranging over all finite-dimensional $R$-representations.  Then we have a cofibrantly generated model structure, with $I$ the generating cofibrations and $J$ the generating trivial cofibrations, by verifying the conditions of \cite[Theorem 2.1.19]{Hovey}.
\end{proof}

\begin{definition}
Let $DG_{\Z}\Alg(R)$ be the category of $R$-representations in  $\Z$-graded   cochain $k$-algebras. 
For  an $R$-representation $A$ in commutative algebras,   we define   $DG_{\Z}\Alg_A(R)$ to be the comma category  $A\da DG_{\Z}\Alg(R)$. Denote the opposite category  by $dg_{\Z}\Aff_A(R)$. We will also sometimes write this as  $dg_{\Z}\Aff_{\Spec A}(R)$.
\end{definition}

\begin{lemma}\label{algmodel}
There is a cofibrantly generated model structure on $DG_{\Z}\Alg_A(R)$, in which fibrations are surjections, and weak equivalences are quasi-isomorphisms.
\end{lemma}
\begin{proof}
This follows by applying \cite[Theorem 11.3.2]{Hirschhorn} to the forgetful functor $DG_{\Z}\Alg_A(R) \to DG_{\Z}\Mod_{\Q}(R)$, since the left adjoint preserves quasi-isomorphisms between cofibrant objects.
\end{proof}

\subsubsection{Derived pullbacks and base change}

\begin{definition}
Given a morphism $f:X \to Y$ in $dg\Aff(R)$, the pullback functor $f^*:  DG_{\Z}\Alg_Y(R) \to DG_{\Z}\Alg_X(R)$ 
is left Quillen, with right adjoint $f_*$. Denote the derived left Quillen functor by $\oL f^*: \Ho(DG_{\Z}\Alg_Y(R)) \to \Ho(DG_{\Z}\Alg_X(R))$. 
Observe that $f_*$ preserves weak equivalences, so the derived right Quillen functor is just $\oR f_*=f_*$.
Denote the functor opposite to $\oL f^*$ by $\by_{Y}^{\oR}X: \Ho(dg_{\Z}\Aff_Y(R))\to \Ho(dg_{\Z}\Aff_X(R))$.
\end{definition}

\begin{lemma}\label{flat}
If $f:\Spec B \to \Spec A$ is a flat morphism in $\Aff(R)$, then $\oL f^*=f^*$.
\end{lemma}
\begin{proof}
This is just the observation that flat pullback preserves weak equivalences. $\oL f^* C$ is defined to be $f^*\tilde{C}$, for $\tilde{C} \to C$ a cofibrant replacement, but we then have $f^*\tilde{C} \to f^*C$ a weak equivalence, so $\oL f^* C=f^*C$.
\end{proof}

\begin{proposition}\label{nicepullback}
If $S \in DG_{\Z}\Alg_A(R)$, and $f:A \to B$ is any morphism in  $DG\Alg(R)$, then cohomology of $\oL f^*S$ is given by the hypertor groups
$$
\H^i( \oL f^* S) = \mathbf{Tor}_{-i}^A(S,B).
$$ 
\end{proposition}
\begin{proof}

Take   a cofibrant replacement $C\to S$, so $\oL f^*S \cong f^*C$. Thus $A \to C$ is a retraction of a transfinite composition of pushouts of generating cofibrations. The generating cofibrations are  filtered direct limits of projective  bounded complexes, so $C$ is  a retraction of a filtered direct limit of projective bounded cochain complexes.  Since cohomology and hypertor both commute with filtered direct limits (the latter following since we may choose a Cartan-Eilenberg resolution of the colimit in such a way that it is a colimit of Cartan-Eilenberg resolutions of the direct system), we may apply \cite[Application 5.7.8]{W} to see that $C$ is a  resolution computing the hypertor groups of $S$. 
\end{proof}

\begin{proposition}\label{pullbackflatobj}
If $S \in DG_{\Z}\Alg_A(R)$ is flat, and $f:A \to B$ is any morphism in  $\Alg(R)$, with either $S$ bounded or $f$ of finite flat dimension, then
$$
\oL f^*S \simeq f^*S.
$$
\end{proposition}
\begin{proof}
If $S$ is bounded, then $\oL f^*S \simeq S\ten^{\oL}_AB$, which is just $S\ten_AB$ when $S$ is also flat. If instead $f$ is of finite flat dimension, then \cite[Corollary 10.5.11]{W} implies that $\H^*(S\ten_AB) = \mathbf{Tor}_{-*}^A(S,B)$, as required.
\end{proof}

\begin{definition}\label{hoc*}
Given an $R$-representation $Y$ in schemes,   define  $DG_{\Z}\Alg_{Y}(R)$ to be the category of $R$-equivariant quasi-coherent $\Z$-graded cochain algebras on $Y$. Define a weak equivalence in this category to be a map giving isomorphisms on cohomology sheaves (over $Y$), and define    $\Ho(DG_{\Z}\Alg_{Y}(R))$ to be the homotopy category obtained by localising at weak equivalences. Define the categories $dg_{\Z}\Aff_Y(R), \Ho(dg_{\Z}\Aff_Y(R))$ to be the respective opposite categories. Let $ DG\Alg_{Y}(R)\subset DG_{\Z}\Alg_{Y}(R) $ consist of cochain algebras $A$ concentrated in non-negative cochain degrees.
\end{definition}

\begin{definition}
Given a quasi-compact, quasi-affine scheme $X$, let $j:X \to \bar{X}$ be the open immersion $X \to \Spec \Gamma(X,\O_X)$. Take a resolution $\O_X \to \sC_X^{\bt}$ of $\O_X$ in $DG_{\Z}\Alg_{X}(R)$, flabby with respect to Zariski cohomology (for instance by applying the Thom-Sullivan functor $\Th$ to the cosimplicial algebra $\check{\sC}^{\bt}(\O_X)$ arising from a \v Cech resolution). Define $\oR j_*\O_X$ to be $j_*\sC_X^{\bt} \in  DG_{\Z}\Alg_{\bar{X}}(R)$. 
\end{definition}

\begin{proposition}\label{quaffworks1}\label{othermodelc*}
 The functor $ j^*: DG_{\Z}\Alg_{\oR j_*\O_X}(R) \to DG_{\Z}\Alg_{X}(R)$ induces an equivalence $\Ho(DG_{\Z}\Alg_{\oR j_*\O_X}(R)) \to \Ho(DG_{\Z}\Alg_{X}(R))$.

For any $R$-representation $B$ in algebras, this extends to an equivalence
$\Ho(DG_{\Z}\Alg_{\oR j_*\O_X}(R)\da \oR j_*\O_X\ten B) \to \Ho(DG_{\Z}\Alg_{X}(R)\da \O_X\ten B)$.
\end{proposition}
\begin{proof}
Since $j$ is flat, $j^*$ preserves quasi-isomorphisms, so $j^*$ descends to a morphism of homotopy categories. If $\sC_X^{\bt} = \Th\check{\sC}^{\bt}(\O_X)$, then a 
 quasi-inverse functor will be given by $\sA \mapsto j_*\Th\check{\sC}^{\bt}(\sA)$. The inclusion $\sA \to \Th\check{\sC}^{\bt}(\sA)$  is a quasi-isomorphism, as is the map $j^*j_*\Th\check{\sC}^{\bt}(\sA) \to \Th\check{\sC}^{\bt}(\sA)$, 
since
$$
\sH^i(j^*j_*\Th\check{\sC}^{\bt}(\sA))= j^*\oR^ij_*(\sA)= \sH^i(\sA),
$$
as $j^*\oR^ij_*\sF=0$ for $i>0$ and $\sF$ a quasi-coherent sheaf (concentrated in degree $0$), $X$ being quasi-affine. 

Now, the composite morphism 
$$
 \oR j_*\O_X\to j_*j^*(\oR j_*\O_X) \to j_*\Th\check{\sC}^{\bt}(j^*(\oR j_*\O_X))
$$
is a quasi-isomorphism, since $j^*(\oR j_*\O_X)\to \O_X$ is a quasi-isomorphism. Cofibrant objects $\sM \in DG_{\Z}\Mod_{\oR j_*\O_X}(R)$ are retracts of $I$-cells, which admit (ordinal-indexed) filtrations whose graded pieces are copies of $(\oR j_*\O_X)[i]$, so we deduce that for cofibrant modules $\sM$, the map
$$
\sM \to j_*\Th\check{\sC}^{\bt}(j^*\sM)
$$
is a quasi-isomorphism. Since cofibrant algebras are a fortiori cofibrant modules, $\sB \to j_*\Th\check{\sC}^{\bt}(j^*\sB)$ is a quasi-isomorphism for all cofibrant $\sB \in DG_{\Z}\Alg_{\oR j_*\O_X}(R)$, which completes the proof in the case when $\sC_X^{\bt} = \Th\check{\sC}^{\bt}(\O_X)$.

For the general case, note that we have quasi-isomorphisms $\Th\check{\sC}^{\bt}(\O_X) \to \Th\check{\sC}^{\bt}(\sC_X^{\bt}) \la \sC_X^{\bt}$, giving quasi-isomorphisms $j_*\Th\check{\sC}^{\bt}(\O_X) \to j_*\Th\check{\sC}^{\bt}(\sC_X^{\bt}) \la j_*\sC_X^{\bt}$, and hence right Quillen equivalences
$$
 DG_{\Z}\Alg_{j_*\Th\check{\sC}^{\bt}(\O_X)}(R) \la DG_{\Z}\Alg_{j_*\Th\check{\sC}^{\bt}(\sC_X^{\bt})}(R)  \to  DG_{\Z}\Alg_{j_*\sC_X^{\bt}}(R).  
$$
\end{proof}

\begin{lemma}\label{stackqcoh}
Let $G$ be an affine group scheme, with a reductive subgroup scheme $H$ acting on a reductive pro-algebraic group $R$. Then the  model categories $dg_{\Z}\Aff_{G}(R \rtimes H)$ and $dg_{\Z}\Aff_{G/H}(R)$ are equivalent.
\end{lemma}
\begin{proof}
This is essentially the observation that $H$-equivariant quasi-coherent sheaves on $G$ are equivalent to quasi-coherent sheaves on $G/H$. Explicitly, define $U:dg_{\Z}\Aff_{G/H}(R) \to dg_{\Z}\Aff_{G}(R \rtimes H)$ by $U(Z)= Z\by_{G/H}G$. This has a left adjoint $F(Y)=Y/H$. We need to show that the unit and co-unit of this adjunction are isomorphisms.

The co-unit is given on $Z \in dg_{\Z}\Aff_{G/H}(R)$ by
$$
Z \la FU(Z)= (Z\by_{G/H}G)/H \cong Z\by_{G/H}(G/H) \cong Z,
$$
so is an isomorphism.

The unit is
$
Y \to UF(Y)= (Y/H)\by_{G/H}G,
$
for $Y \in dg_{\Z}\Aff_{G}(R \rtimes H)$.
Now, there is an isomorphism $Y\by_{G/H}G \cong Y \by H$, given by $(y, \pi(y)\cdot h^{-1}) \mapsfrom (y,h)$, for $\pi:Y \to G$. This map is $H$-equivariant for the left $H$-action on  $Y\by_{G/H}G$, and the diagonal $H$-action on $Y \by H$.
Thus
$$
UF(Y)= (Y\by_{G/H}G)/(H \by 1) \cong (Y \by H)/H \cong Y,
$$
with the final isomorphism given by $(y,h) \mapsto y\cdot h^{-1}$.
\end{proof}

\subsubsection{Extensions}

\begin{definition}
Given $B \in DG_{\Z}\Alg_{A}(R)$, define the   cotangent complex  
$$
\bL_{B/A}^{\bt} \in  \Ho(DG_{\Z}\Mod_{B}(R))
$$
   by taking a factorisation  $A \to C \to B$, with $A \to C$ a cofibration and $C \to B$ a trivial fibration.  Then set $ \bL_{B/A}^{\bt}:= \Omega^{\bt}_{C/A}\ten_C B =I/I^2$, where $I = \ker(C\ten_AB \to B)$. Note that $ \bL_{B/A}^{\bt}$ is independent of the choices made, as it can be characterised as the evaluation at $B$ of the derived left adjoint to the functor $M \mapsto B\oplus M$ from DG $B$-modules to $B$-augmented DG algebras over $A$.  
\end{definition}

\begin{lemma}\label{extchar}
Given a surjection $A \to B$ in  $DG_{\Z}\Alg(R)$, with square-zero kernel $I$, and a morphism $f:T \to C$ in $DG_{\Z}\Alg_A(R)$, the hyperext group
$$
\EExt^1_{T,R}(\bL_{T/A}^{\bt}, T\ten^{\oL}_AI \xra{f} C\ten^{\oL}_AI )
$$
of the cone complex
is naturally isomorphic to the set of
 weak equivalence classes of triples $(\theta, f', \gamma)$, where $\theta: T'\ten^{\oL}_AB \to T\ten^{\oL}_AB$ is a weak equivalence, $f':T' \to C$ a morphism,  and  $\gamma$ a homotopy between the morphisms $(f\ten_AB) \circ \theta, \,(f'\ten_AB):  T'\ten^{\oL}_AB \to C\ten^{\oL}_AB$.
\end{lemma}
\begin{proof}
This is a slight generalisation of a standard result, and we now sketch a proof. Assume that $A \to T$ is a cofibration, and that $T \to C$ is a fibration (i.e. surjective). We first consider  the case $\gamma=0$, considering   objects $T'$ (flat over $A$) such that $\theta:T'\ten_AB \to T\ten_AB$ is an isomorphism and  $(f\ten_AB) \circ \theta =(f'\ten_AB)$. 

Since $T$ is cofibrant over $A$,  the underlying graded ring $UT$ is a  retract of a polynomial ring, so $UT'\cong UT$. The problem thus reduces to deforming the differential $d$ on $T$. If we denote the differential of $T'$ by $d'$, then fixing an identification $UT=UT'$ gives $d'=d+\alpha$, for $\alpha: UT \to UT\ten_AI[1]$ a derivation with $d\alpha+\alpha d =0$. In order for $f:T' \to C$ to be a chain map, we also need $f\alpha=0$. Thus
$$
\alpha \in \z^1\HOM_{T,R}(\Omega_{T/A}, \ker(f)\ten_AI),
$$   
where $\HOM(U,V)$ is the $\Z$-graded cochain complex given by setting $\HOM(U,V)^n$ to be the space of graded morphisms $U \to V[n]$ (not necessarily respecting the differential).

Another choice of isomorphism $UT\cong UT'$ (fixing $T\ten_AB$) amounts to giving a derivation $\beta: UT \to UT\ten_AI$, with $\id +\beta$ the corresponding automorphism of $UT$. In order to respect the augmentation $f$, we need $f\beta=0$. This new choice of isomorphism sends $\alpha$ to $\alpha+d\beta$, so the isomorphism class is
$$
[\alpha] \in \EExt^1_{T,R}(\Omega_{T/A}, \ker(f)\ten_AI).
$$

Since $A\to T$ is a cofibration and $f$ a fibration, this is just hyperext
$$
\EExt^1_{T,R}(\bL_{T/A}^{\bt}, T\ten^{\oL}_AI \xra{f} C\ten^{\oL}_AI )
$$
of the cone complex.
Since this expression is invariant under weak equivalences, it follows that it gives the weak equivalence class required. 
\end{proof}

\subsubsection{The reduced bar construction}

We now extend Definition \ref{barBRdef} to more general bases:
\begin{definition}\label{barBRYdef}
Take a $k$-scheme $Y$ and  $A \in DG\Alg_Y(R)$  with $\H^0A= \O_Y$, together with an augmentation $\phi\co A^0 \to O(R)\ten_k\O_Y$, and assume that $\bar{A}$ is flat over $Y$, so $A^n$ is flat for all $n>1$, and that $A^1/dA^0$ is also flat.

We then form  the sheaf 
\[
\bar{B}_{R,Y}(A \to O(R)):= \bar{B}_{\O_Y}(\O_Y, A, O(R)\ten_k\O_Y)
\]
of reduced bar complexes as in Definition \ref{barBdef},
where the $A$-module structure on $O(R)$ is given by the morphism $\phi$, and the $A$-module structure on $k$ is given by combining this with the co-unit $O(R) \to k$. 
\end{definition}

Again, The complex $\bar{B}_{R,Y}(A \to O(R))$ has the natural structure of a sheaf of CDGAs on $Y$, by \cite[1.2.4]{Hainhodge}, and  $\bar{B}_{R,Y}(A \to O(R))$ is a DG Hopf $\O_Y$-algebra by the formulae of \cite[\S 7.3]{narkawicz}. 
By Lemma \ref{barQIM}, this construction preserves quasi-isomorphisms.

We now consider a partial analogue of Proposition \ref{repbar} when working over an affine $k$-scheme $Y$. Take an affine group scheme $G$ over $Y$, together with a surjection $G \to R\by Y$ with pro-(smooth unipotent) kernel $U$. For the lower central series $\{[U]_n\}_n$ of $U$ given by $[U]_1:=U$ and $[U]_{n+1}:= [U, [U_n]]$, this means that the quotients $[U]_n/[U]_{n+1}$ are filtered inverse limits of vector bundles on $Y$.

Because $Y$ is affine and $[U]_n/[U]_{n+1}$ is dual to a projective $\O_Y$-module, the argument of \cite[Proposition \ref{htpy-leviprop}]{htpy} adapts to give a  section $\sigma_G \co R \by Y \to G$, and hence a decomposition $G\cong (R \by Y) \ltimes U$ of group schemes over $Y$. If  a reductive group scheme $S'$ acts on $G$, then $\sigma_G$ can also be chosen to be $S'$-equivariant. Since $U$ is pro-unipotent, it takes the form $U= \exp( \Hom_{\O_Y}(\fu^{\vee}, - ))$, for an ind-conilpotent  Lie $\O_Y$-coalgebra $\fu^{\vee}$, projective as an $\O_Y$-module. 

\begin{definition}\label{defDel}
Given a pro-nilpotent DG Lie algebra $L^{\bt}$ in non-negative cochain degrees, define the Deligne groupoid $\Del(L)$ to have objects $\mc(L) \subset L^1$ (see Definition \ref{mcDG}), with morphisms  $\omega \to \omega'$ consisting of $g \in \Gg(L)=\exp(L^0)$ with $g* \omega = \omega'$, for the gauge action of Definition \ref{dgdefgauge}.
\end{definition}

\begin{proposition}\label{repbarY}
 Take an affine $k$-scheme $Y$, an augmented CDGA $A\xra{\phi} O(R)\ten_k\O_Y $ as in Definition \ref{barBRYdef}  and an $R$-equivariant pro-unipotent group scheme $U= \exp(\fu)$ over $Y$. If $A$ is quasi-isomorphic to a flat  CDGA $A'$ with $(A')^0=\O_Y$, then there is a natural isomorphism
 \[
  \Hom_{\gp\Aff_Y\da (R \by Y)}(\Spec \H^0\bar{B}_{R,Y}(A), (R \by Y) \ltimes U) \cong U(O(Y))\by^{U(A^0)^R}\mc( \Hom_{\O_Y}(\fu^{\vee}, A)^R), 
 \]
where $\gp\Aff_Y$ is the category of affine group schemes over $Y$.
 \end{proposition}
\begin{proof}
 If $A^0=\O_Y$, then the proof of Lemma \ref{cfbarlemma} adapts to give $\Spec \bar{B}_{R,Y} (A) \cong  (R \by Y)\ltimes (\Spec \bar{B}_{Y} (A) )$, and the proof of Proposition \ref{repbar} adapts to give the isomorphism above, noting that a morphism $O(G) \to \bar{B}_{R,Y}(A) \to G$ of DG Hopf algebras necessarily factors through $\H^0\bar{B}_{R,Y}(A)$ when $O(G)$ is concentrated in degree $0$.
 
 In general, we thus know that the isomorphism holds for $A'$, and (since $A' \to A$ is a quasi-isomorphism) that $\H^0\bar{B}_{R,Y}(A')\cong \H^0\bar{B}_{R,Y}(A)$. It thus suffices to show that 
\[
 F(\fu,A):= U(O(Y))\by^{U(A^0)^R}\mc( \Hom_{\O_Y}(\fu^{\vee}, A)^R)
\]
is invariant under quasi-isomorphisms in $A$. 

Now observe that $F(\fu,A)$ is 
isomorphic to the fibre of the morphism
$$
\Del( \Hom_{\O_Y}(\fu^{\vee}, A)^R ) \xra{\phi}
\Del(\Hom_{\O_Y}(\fu^{\vee}, \O_Y\ten_kO(R))^R)= \Del(\Hom_{\O_Y}(\fu^{\vee}, \O_Y )) 
$$
of groupoids.  Since $\fu^{\vee}$ is projective, the map $\Hom_{\O_Y}(\fu^{\vee}, A') \to \Hom_{\O_Y}(\fu^{\vee}, A)$ is a quasi-isomorphism of pro-nilpotent  DGLAs, so 
\[
 \Del( \Hom_{\O_Y}(\fu^{\vee}, A')^R )\to \Del(\Hom_{\O_Y}(\fu^{\vee}, A))
\]
is an equivalence of groupoids by \cite[Theorem 2.4]{GM}, and hence $F(\fu,A')\to  F(\fu,A)$ is an isomorphism.
\end{proof}

\section{Structures  on relative Malcev homotopy types}\label{malstr}

Now, fix a real reductive pro-algebraic group $R$, a pointed connected topological space  $(X,x)$, and a Zariski-dense  morphism $\rho:\pi_1(X,x) \to R(\R)$.

\begin{definition}
Given a pro-algebraic group $K$ acting on $R$ and on a scheme $Y$, define $dg_{\Z}\Aff_{Y}(R)_*(K)$ to be the category $(Y \by R) \da dg_{\Z}\Aff_{Y}(R\rtimes K)$ of objects under $Y \by R$. Beware that this is not the same as
$
dg_{\Z}\Aff_{Y}(R\rtimes K)_* = (Y \by R \rtimes K) \da dg_{\Z}\Aff_{Y}(R\rtimes K).
$
\end{definition}

\subsection{Homotopy types}

Motivated by Definitions \ref{hfildef}, \ref{tfildef}, \ref{mhsdef} and \ref{mtsdef}, we make the following definitions:

\begin{definition}\label{alghfildef}
An algebraic  Hodge filtration on a pointed Malcev homotopy type $(X,x)^{\rho, \mal} \in $ consists of the following data:
\begin{enumerate}
\item an algebraic action of $S^1$ on $R$,
\item an object $(X,x)_{\bF}^{\rho, \mal} \in \Ho( dg_{\Z}\Aff_{C^*}(R)_*(S))$, where the $S$-action on $R$ is defined via the isomorphism $S/\bG_m \cong S^1$, while the $R\rtimes S$-action on $R$ combines multiplication by $R$ with conjugation by $S$.
\item an isomorphism $(X,x)^{\rho, \mal} \cong  (X,x)_{\bF}\by_{C^*, 1}^{\oR}\Spec \R \in \Ho(dg_{\Z}\Aff(R)_*)$.
\end{enumerate}
\end{definition}

Thus $(X,x)_{\bF}^{\rho, \mal}$ consists of an $ R\rtimes S$-equivariant dg scheme $X_{\bF}^{\rho, \mal}$ over $C^*$, together with an  $ R\rtimes S$-equivariant map $x \co R \by C^* \to X_{\bF}^{\rho, \mal} $ over $C^*$, where $R$ acts on itself by multiplication.

Note that under the equivalence $dg_{\Z}\Aff(R)\simeq dg_{\Z}\Aff_S(R \rtimes S)$ of Lemma \ref{stackqcoh}, $(X,x)^{\rho, \mal}$ corresponds to the flat pullback $(X,x)_{\bF}\by_{C^*}S$.

\begin{definition}
An algebraic  twistor filtration on a pointed Malcev homotopy type $(X,x)^{\rho, \mal}$ consists of the following data:
\begin{enumerate}
\item an object $(X,x)_{\bT}^{\rho, \mal} \in \Ho( dg_{\Z}\Aff_{C^*}(R)_*(\bG_m))$,
\item an isomorphism $(X,x)^{\rho, \mal} \cong  (X,x)_{\bT}^{\rho, \mal}\by_{C^*, 1}^{\oR}\Spec \R \in \Ho(dg_{\Z}\Aff(R)_*)$.
\end{enumerate}
\end{definition}

Thus $(X,x)_{\bT}^{\rho, \mal}$ consists of an $ R\by \bG_m$-equivariant dg scheme $X_{\bT}^{\rho, \mal}$ over $C^*$, together with an  $ R\by \bG_m$-equivariant map $x \co R \by C^* \to X_{\bT}^{\rho, \mal} $ over $C^*$.

Note that under the equivalence $dg_{\Z}\Aff(R)\simeq dg_{\Z}\Aff_{\bG_m}(R \by \bG_m)$ of Lemma \ref{stackqcoh}, $(X,x)^{\rho, \mal}$ corresponds to 
the  derived pullback $(X,x)_{\bT}^{\rho, \mal}\by_{C^*}^{\oR}\bG_m$. 

\begin{definition}\label{algmhsdef}
Given a pointed relative Malcev homotopy type $(X,x)^{\rho, \mal}$ as in Definition \ref{XRmaldef},
an algebraic  mixed Hodge structure $(X,x)_{\MHS}^{\rho, \mal}$ on  $(X,x)^{\rho, \mal}$  consists of the following data:
\begin{enumerate}
\item an algebraic action of $S^1$ on $R$,
\item an object 
$$
(X,x)_{\MHS}^{\rho, \mal} \in \Ho(dg_{\Z}\Aff_{\bA^1 \by C^*}(R)_*(\bG_m \by S)),
$$
where $S$ acts on $R$ via the $S^1$-action, using the canonical isomorphism $S^1 \cong S/\bG_m$, 
\item an object 
$$
\ugr (X,x)_{\MHS}^{\rho, \mal} \in \Ho(dg_{\Z}\Aff(R)_*(S)),
$$
\item an isomorphism $(X,x)^{\rho, \mal} \cong  (X,x)_{\MHS}^{\rho, \mal} \by_{(\bA^1\by C^*), (1,1)}^{\oR}\Spec \R \in \Ho(dg_{\Z}\Aff(R)_*)$,
\item an isomorphism (called the opposedness isomorphism)
$$
\theta^{\sharp}(\ugr (X,x)_{\MHS}^{\rho, \mal}) \by C^* \cong  (X,x)_{\MHS}^{\rho, \mal} \by_{\bA^1, 0}^{\oR}\Spec \R \in \Ho( dg_{\Z}\Aff_{C^*}( R)_*(\bG_m \by S)),
$$
 for the canonical map $\theta: \bG_m \by S \to S$ given by combining the inclusion $\bG_m \into S$ with the identity on $S$. 
\end{enumerate}
\end{definition}

Thus $(X,x)_{\MHS}^{\rho, \mal}$ consists of an $\bG_m \by R\rtimes S$-equivariant dg scheme $X_{\MHS}^{\rho, \mal}$ over $\bA^1 \by C^*$, together with a  $\bG_m \by R\rtimes S$-equivariant map $x \co \bA^1 \by R \by C^* \to X_{\MHS}^{\rho, \mal} $ over $\bA^1 \by C^*$, while $\ugr (X,x)_{\MHS}^{\rho, \mal}$ is an $R\rtimes S$-equivariant dg scheme $\ugr X_{\MHS}^{\rho, \mal}$ equipped with  an $R\rtimes S$-equivariant map $x \co R \to \ugr X_{\MHS}^{\rho, \mal}$.

\begin{definition}
Given an algebraic mixed Hodge structure $(X,x)_{\MHS}^{\rho, \mal}$ on $(X,x)^{\rho, \mal}$, define
\[
 \gr^W(X,x)_{\MHS}^{\rho, \mal}:= (X,x)_{\MHS}^{\rho, \mal}\by_{\bA^1, 0}^{\oR}\Spec \R\in \Ho(dg_{\Z}\Aff_{C^*}(\bG_m \by R)_*(S)),
\]
 noting that this is isomorphic to $\theta^{\sharp}(\ugr (X,x)_{\MHS}^{\rho, \mal}) \by C^*$. We also define $(X,x)_{\bF}^{\rho, \mal}:= (X,x)_{\MHS}^{\rho, \mal}\by_{\bA^1, 1}^{\oR}\Spec \R$, noting that this is an algebraic  Hodge filtration on $(X,x)^{\rho, \mal}$. 
\end{definition}

\begin{definition}\label{splitmhsmal}
A real splitting of the mixed  Hodge structure $(X,x)_{\MHS}^{\rho, \mal}$ is a $\bG_m \by S$-equivariant isomorphism
$$
\bA^1 \by \ugr (X,x)^{\rho, \mal}_{\MHS} \by C^* \cong (X,x)_{\MHS}^{\rho, \mal},
$$
in $\Ho( dg_{\Z}\Aff_{\bA^1 \by C^*}(R)_*(\bG_m \by S))$, giving  the opposedness isomorphism on pulling back along $\{0\} \to \bA^1$.
\end{definition}

\begin{definition}
An algebraic  mixed twistor structure $(X,x)_{\MTS}^{\rho, \mal}$ on a pointed Malcev homotopy type $(X,x)^{\rho, \mal}$ consists of the following data:
\begin{enumerate}
\item an object 
$$
(X,x)_{\MTS}^{\rho, \mal} \in \Ho(  dg_{\Z}\Aff_{\bA^1 \by C^*}(R)_*( \bG_m \by \bG_m)),
$$
\item an object $\ugr (X,x)_{\MTS}^{\rho, \mal} \in \Ho( dg_{\Z}\Aff(R)_*(\bG_m ))$,
\item an isomorphism $(X,x)^{\rho, \mal} \cong  (X,x)_{\MTS}^{\rho, \mal} \by_{(\bA^1\by C^*), (1,1)}^{\oR}\Spec \R \in \Ho( dg_{\Z}\Aff(R)_*)$,
\item an isomorphism (called the opposedness isomorphism)
$$
\theta^{\sharp}(\ugr (X,x)_{\MTS}^{\rho, \mal}) \by C^* \cong  (X,x)_{\MTS}^{\rho, \mal} \by_{\bA^1, 0}^{\oR}\Spec \R \in \Ho( dg_{\Z}\Aff_{C^*}( R)_* (\bG_m \by\bG_m)),
$$
 for the canonical diagonal map $\theta: \bG_m \by \bG_m \to \bG_m$. 
\end{enumerate}
\end{definition}

Thus $(X,x)_{\MTS}^{\rho, \mal}$ consists of a $\bG_m \by R\by \bG_m$-equivariant dg scheme $X_{\MTS}^{\rho, \mal}$ over $\bA^1 \by C^*$, together with a  $\bG_m \by R\by \bG_m$-equivariant map $x \co \bA^1 \by R \by C^* \to X_{\MTS}^{\rho, \mal} $ over $\bA^1 \by C^*$, while $\ugr (X,x)_{\MTS}^{\rho, \mal}$ is an $R\by \bG_m$-equivariant dg scheme $\ugr X_{\MTS}^{\rho, \mal}$ equipped with  an $R\by \bG_m$-equivariant map $x \co R \to \ugr X_{\MHS}^{\rho, \mal}$.

\begin{definition}
Given an  algebraic mixed twistor structure $(X,x)_{\MTS}^{\rho, \mal}$ on $(X,x)^{\rho, \mal}$, define 
\[
 \gr^W(X,x)_{\MTS}^{\rho, \mal}:= (X,x)_{\MTS}^{\rho, \mal}\by_{\bA^1, 0}^{\oR}\Spec \R\in \Ho( dg_{\Z}\Aff_{C^*}(R)_*(\bG_m \by \bG_m))
\]
 noting that this is isomorphic to $\theta^{\sharp}(\ugr (X,x)_{\MTS}^{\rho, \mal}) \by C^*$. We also define $(X,x)_{\bT}^{\rho, \mal}:= (X,x)_{\MTS}^{\rho, \mal}\by^{\oR}_{\bA^1, 1}\Spec \R$, noting that this is an algebraic  twistor filtration on $(X,x)^{\rho, \mal}$. 
\end{definition}

\begin{remark}\label{baseptsmhs}
As in Remark \ref{basepts}, we might want to consider many basepoints. The definitions above  then have analogues $(X;T)^{\rho, \mal}_{\bF},(X;T)^{\rho, \mal}_{\bT},(X;T)^{\rho, \mal}_{\MHS},(X;T)^{\rho, \mal}_{\MTS}$, given by replacing the $R$-representation $R$ with the representation $\coprod_{x \in T} R(x,-)$. 
\end{remark}

\subsection{Grouplike structures}

\begin{definition}\label{gplmhs}
Define a grouplike mixed Hodge structure on 
 a pointed relative Malcev homotopy type $G(X,x)^{\rho, \mal}$ to consist of the following data:
\begin{enumerate}

\item an algebraic action of $S^1$ on $R$,

\item a  flat $\bG_m \by S$-equivariant   dg pro-algebraic  group $G(X,x)^{\rho, \mal}_{\MHS}$ over $\bA^1 \by C^*$, equipped with an $S$-equivariant  pro-unipotent surjection $G(X,x)^{\rho, \mal}_{\MHS} \to  \bA^1\by R \by C^*$ of dg pro-algebraic groups over $\bA^1\by C^*$, where $S$ acts on $R$ via the $S^1$-action of Definition \ref{U1def}.

\item a weak equivalence $N^sG(X,x)^{\rho, \mal} \simeq  G(X,x)_{\MHS}^{\rho, \mal} \by_{(\bA^1\by C^*), (1, 1)}\Spec \R$ of  dg pro-algebraic groups  on $\Spec \R$, respecting the $R$-augmentations.

\item a   flat $ S$-equivariant   dg pro-algebraic  group $ \ugr G(X,x)^{\rho, \mal}_{\MHS}$ over $\R$, equipped with an $S$-equivariant  pro-unipotent surjection $\ugr G(X,x)^{\rho, \mal}_{\MHS} \to  R$.

\item a $(\bG_m \by S)$-equivariant weak equivalence
$$
\theta^{\sharp}(\ugr G(X,x)^{\rho, \mal}_{\MHS} ) \by C^* \simeq G(X,x)_{\MHS}^{\rho, \mal} \by_{\bA^1, 0}\Spec \R 
$$
of $R \by C^*$-augmented dg pro-algebraic groups on $C^*$, 
 for the canonical map $\theta: \bG_m \by S \to S$ given by combining the inclusion $\bG_m \into S$ with the identity on $S$. 
\end{enumerate}
\end{definition}

\begin{definition}\label{gplmts}
Define a  grouplike mixed twistor structure similarly, dispensing with the $S^1$-action on $R$, and replacing $S$ with $\bG_m$.
\end{definition}

\begin{proposition}\label{gplike}
 Take a MHS  $(X,x)_{\MHS}^{\rho, \mal}$ (resp. a MTS $(X,x)_{\MTS}^{\rho, \mal}$)  on a relative Malcev homotopy type in the sense of Definition \ref{algmhsdef} (resp. \ref{algmtsdef}), and assume that the sheaf $A$ of DGAs on $\bA^1 \by C^* $ given by $O((X,x)_{\MHS}^{\rho, \mal})$ (resp. $O((X,x)_{\MTS}^{\rho, \mal})$) satisfies the conditions of Definition \ref{barBRYdef}. Explicitly, $A$ is concentrated in non-negative degrees, with $A^n$  flat for all $n>1$, and  $A^1/dA^0$  also flat.

Then there is a canonical grouplike MHS (resp. grouplike MTS) on  $(X,x)^{\rho, \mal}$.

Moreover, the induced pro-MHS  $\Ru(G(X,x)^{\rho,\mal}_{\MHS})^{\ab}$ (resp. pro-MTS $\Ru(G(X,x)^{\rho,\mal}_{\MTS})^{\ab}$) on the abelianisation of the pro-unipotent radical of $G(X,x)^{\rho,\mal}$ is quasi-isomorphic to the dual of the complex given by the cokernel of
$$
\O_{\bA^1 \by C^*}[1] \to A[1]. 
$$
\end{proposition}
\begin{proof}
We deal with the case of mixed Hodge structures; for mixed twistor structures just replace $S$ with $\bG_m$ throughout.
Applying the bar  construction $\bar{B}_{R,  \bA^1\by C^*}$ of Definition \ref{barBRYdef} to the map $A \to O(R)\ten_{\R}\O_{\bA^1}\ten_{\R}\O_{C^*}$ gives a $\bG_m \by S$-equivariant non-negatively graded DG Hopf algebra on $\bA^1 \by C^*$. We then set
\[
 O(G(X,x)^{\rho, \mal}_{\MHS}) := \bar{B}_{R,  \bA^1\by C^*}(A \to O(R)\ten_{\R}\O_{\bA^1}\ten_{\R}\O_{C^*}).
\]

Since $\bar{B}_{R,Y}$ is compatible with pullbacks in $Y$, we then have
\begin{eqnarray*}
 G(X,x)_{\MHS}^{\rho, \mal} \by_{(\bA^1\by C^*), (1, 1)}\Spec \R &=&  G(X,x)^{\rho, \mal}_{\MHS}\by_{\bA^1 \by C^*, (1,1)}\Spec \R \\
&=& \Spec  \bar{B}_R( (1, 1)^*A \to O(R)).
\end{eqnarray*}

Since $\bar{B}_{R}$ preserves quasi-isomorphisms, we then have a quasi-isomorphism
\[
  G(X,x)_{\MHS}^{\rho, \mal} \by_{(\bA^1\by \Spec \oR O(C^*)), (1, 1)}\Spec \R \simeq \Spec  \bar{B}_R(O(X,x)^{\rho, \mal})
\]
coming from the definition of an algebraic mixed Hodge structure. By Proposition \ref{repbar}, we have
\[
 \Spec  \bar{B}_R(O(X,x)^{\rho, \mal})= \bar{G}_R(O(X,x)^{\rho, \mal}),
\]
and Remark \ref{GXRmalrmk} gives a weak equivalence
\[
 D\bar{G}_R((X,x)^{\rho, \mal})\simeq G(X,x)^{\rho, \mal}.
\]
Applying simplicial normalisation $N^s$ and using the equivalences of Theorem \ref{bigequiv}, we then have the required weak equivalence
\[
 \bar{G}_R(O(X,x)^{\rho, \mal})\simeq N^sG(X,x)^{\rho, \mal}.
\]

We then define
\[
 \ugr G(X,x)^{\rho, \mal}_{\MHS}:= \bar{G}_R(\ugr (X,x)_{\MHS}^{\rho, \mal}), 
\]
noting that 
\[
 \ugr G(X,x)^{\rho, \mal}_{\MHS}  \by C^*= \Spec \bar{B}_{R,C^*}( O(\ugr (X,x)_{\MHS}^{\rho, \mal}) \ten \O_{C^*}),
\]
so the opposedness quasi-isomorphism gives
$$
\psi \co \theta^{\sharp}(\ugr G(X,x)^{\rho, \mal}_{\MHS} ) \by C^* \simeq G(X,x)_{\MHS}^{\rho, \mal} \by_{\bA^1, 0}\Spec \R. 
$$

The final statement just follows from from the bar filtration as in  Remark \ref{barfiltrn}, which shows  that $\Ru(G(X,x)^{\rho,\mal}_{\MHS})^{\ab}$ is dual to $\bar{A}[1]$, the map $\cone(\O_{\bA^1 \by C^*} \to A) \to \bar{A}$ being a quasi-isomorphism because $\H^0A= \O_{\bA^1 \by C^*}$. 
\end{proof}

\begin{remark}\label{baseptsgplmhs}
Rather than working with a single basepoint, we can make use of Remarks \ref{baseptslike} and \ref{basepts} to consider  a set $T$ of points on $X$. Definitions \ref{gplmhs} and \ref{gplmts} then adapt to multipointed Malcev homotopy types $(X;T)^{\rho, \mal}$, replacing dg pro-algebraic groups with dg pro-algebraic   groupoids on objects $T$, noting that $\Ob R =T$.
\end{remark}

\subsection{Splittings over $\cS$}\label{cSsplits} 

We now work with the  $S$-equivariant map $\row_1:\SL_2 \to C^*$ as defined in \S \ref{slfirst}.

\begin{definition}\label{splitsl2}
An $\cS$-splitting (or $\SL_2$-splitting) of a mixed  Hodge structure $(X,x)_{\MHS}^{\rho, \mal}$ on a relative Malcev homotopy type  is a $\bG_m \by S$-equivariant isomorphism
$$
\bA^1 \by \ugr (X,x)^{\rho, \mal}_{\MHS} \by \SL_2 \cong \row_1^*(X,x)_{\MHS}^{\rho, \mal},
$$
in $\Ho(  dg_{\Z}\Aff_{\bA^1 \by \SL_2}(R)_*(\bG_m \by S))$, giving  $\row_1^*$ of the opposedness isomorphism on pulling back along $\{0\} \to \bA^1$. 

An $\cS$-splitting (or $\SL_2$-splitting) of a mixed  twistor structure $(X,x)_{\MTS}^{\rho, \mal}$ on a relative Malcev homotopy type  is a $\bG_m \by \bG_m$-equivariant isomorphism
$$
\bA^1 \by \ugr (X,x)^{\rho, \mal}_{\MTS} \by \SL_2 \cong \row_1^*(X,x)_{\MTS}^{\rho, \mal},
$$
in $\Ho( dg_{\Z}\Aff_{\bA^1 \by \SL_2}(R)_*(\bG_m \by \bG_m))$, giving  $\row_1^*$ of the opposedness isomorphism on pulling back along $\{0\} \to \bA^1$.
\end{definition}

\begin{lemma}\label{keysplit}
Let $S'$ be $S$ or $\bG_m$.
Take  flat fibrant objects 
$$
Y \in dg_{\Z}\Aff_{\bA^1 \by \SL_2}(R)_*(\bG_m \by S') \text{ and } Z \in dg_{\Z}\Aff(R)_*(\bG_m \by S'),
$$ 
together with a surjective quasi-isomorphism $\phi^{\sharp}:0^*\O_Y \to  \O_Z\ten  \O_{\SL_2} $ in $ dg_{\Z}\Aff_{\SL_2}( R)_*(\bG_m \by S')$.
Then the set of  weak equivalence classes of objects $X \in   dg_{\Z}\Aff_{\bA^1 \by C^*}(R)_*(\bG_m \by S')$ equipped with weak equivalences $f:\row_1^*X \to Y$ and $g:0^*X \to Z\by C^*$ with $\phi \circ \row_1^*g= 0^*f$ is either $\emptyset$ or a principal homogeneous space for the group
$$
\EExt^0(L^{\bt}(1),  \ker(\phi^{\sharp}: \O_Y \to \O_Z\ten \O_{\SL_2})\to (W_{-1}\O_{\bA^1}) \ten (y_*O(R))\ten \O_{\SL_2} )^{\bG_m \by R \rtimes S'},
$$
where $L^{\bt}$ is the cotangent complex of $Y\cup^{\bL}_{(Z\by \SL_2)}(Z \by C^*)$ over $(\bA^1\by \SL_2)\cup_{(\{0\}\by \SL_2)}(\{0\}\by C^*)$, and $\Ext$ is taken over $_{Y\cup^{\bL}_{(Z\by \SL_2)}(Z \by C^*) }$.
\end{lemma}
\begin{proof}
The data $Y,Z,\phi$ determine the pullback of $X$ to 
$$
(\bA^1\by \SL_2)\cup_{(\{0\}\by \SL_2)}(\{0\}\by C^*). 
$$
Since $\phi^{\sharp}$ is surjective, we may  define
$$
\O_Y\by_{\phi^{\sharp}, (\O_Z\ten \O_{\SL_2})}(\O_Z \ten \oR O(C^*)) \to O(R)\ten ((\O_{\bA^1}\ten \O_{\SL_2})\by_{\O_{\SL_2}}\oR O(C^*))
$$
over 
$$
(O(\bA^1)\ten O(\SL_2))\by_{O(\SL_2)}\oR O(C^*),
$$
which we wish to lift to $\oR O(C^*)$, 
 making use of Proposition \ref{quaffworks1}.

Now, the morphism $\oR O(C^*) \to (O(\bA^1)\ten O(\SL_2))\by_{O(\SL_2)}\oR O(C^*)$ is surjective, with square-zero  kernel $(W_{-1}O(\bA^1))\ten O(\SL_2)(-1)[-1]$,  where $W_{-1} O(\bA^1)= \ker(O(\bA^1) \xra{0^*}\R)$,    so Proposition \ref{extchar} gives the required result.
\end{proof}

\begin{corollary}\label{keysplit2}
The set of weak equivalence classes of   $\cS$-split mixed Hodge structures $(X,x)_{\MHS}^{\rho, \mal}$  with $\ugr (X,x)_{\MHS}^{\rho, \mal}= (R \xra{z} Z)$ is  canonically isomorphic to
$$
\EExt^0_{Z }(\bL_{Z}^{\bt},  (W_{-1} O(\bA^1))\ten (\O_Z \to z_*O(R)) \ten O(\SL_2)(-1))^{\bG_m \by R \rtimes S}.
$$

The set of weak equivalence classes of   $\cS$-split mixed twistor structures $(X,x)_{\MTS}^{\rho, \mal}$  with $\ugr (X,x)_{\MTS}^{\rho, \mal}= (R \xra{z} Z)$ is  canonically isomorphic to
$$
\EExt^0_{Z }(\bL_{Z}^{\bt},  (W_{-1} O(\bA^1))\ten (\O_Z \to z_*O(R)) \ten O(\SL_2)(-1))^{\bG_m \by R \by \bG_m}.
$$
\end{corollary}
\begin{proof}
Set $Y= \bA^1 \by Z \by \SL_2$ in Lemma \ref{keysplit}, and note that the cone of $O(\bA^1) \xra{0^*} \R$ is quasi-isomorphic to $ W_{-1} O(\bA^1)$. The set of  of possible extensions is non-empty, since $\bA^1 \by Z \by C^*$ is one possibility for $(X,x)_{\MHS}^{\rho, \mal}$ (resp. $(X,x)_{\MTS}^{\rho, \mal}$). This gives a canonical basepoint for the principal homogeneous space, and hence the canonical isomorphism.   
\end{proof}

\subsection{Mixed Hodge structures on homotopy groups}
Take an $\cS$-split MHS   $(X,x)_{\MHS}^{\rho, \mal}$ (resp. an $\cS$-split  MTS $(X,x)_{\MTS}^{\rho, \mal}$)  on a relative Malcev homotopy type, and assume that the sheaf $A$ of DGAs on $\bA^1 \by C^* $ given by $O((X,x)_{\MHS}^{\rho, \mal})$ (resp. $O((X,x)_{\MTS}^{\rho, \mal})$)  is concentrated in non-negative degrees, with $A^n$  flat for all $n>1$, and  $A^1/dA^0$  also flat.

Assume moreover that the $S$-action (resp. the $\bG_m$-action) on $\H^*O(\ugr X^{\rho, \mal})$ is of non-negative weights.

\begin{theorem}\label{mhspin}\label{mtspin}
 Under the conditions above, the grouplike MHS (resp. grouplike MTS) of Proposition \ref{gplike} gives rise to  ind-MHS (resp. ind-MTS) on the duals $(\varpi_n(X,x)^{\rho, \mal})^{\vee}$ of the  relative Malcev homotopy groups for $n\ge 2$, and on the Hopf algebra $O(\varpi_1(X,x)^{\rho, \mal})$, independent of the choice of $\cS$-splitting.

These structures are compatible with the action of $\varpi_1$ on $\varpi_n$, the Whitehead bracket and the Hurewicz maps $\varpi_n(X^{\rho, \mal})\to \H^n(X, O(\Bu_{\rho}))^{\vee}$ ($n \ge 2$) and $\Ru\varpi_1(X^{\rho, \mal}) \to\H^1(X, O(\Bu_{\rho}))$, for $\Bu_{\rho}$ as in Definition \ref{Budef}. 
\end{theorem}
\begin{proof}
As for Proposition \ref{gplike}, we give  the proof for MHS only, since the MTS case follows by replacing $S$ with $\bG_m$ and Proposition \ref{flatmhs} with Proposition \ref{flatmts}.
For the grouplike MHS $G(X,x)^{\rho,\mal}_{\MHS}$ of Proposition \ref{gplike}, define 
$$
\varpi_1(X,x)_{\MHS}^{\rho,\mal}:= \oSpec \sH^0(O(G(X,x)^{\rho,\mal}_{\MHS}) ),
$$
which is an affine group object over $\bA^1 \by C^*$. Set 
\[
 \ugr \varpi_1(X,x)_{\MHS}^{\rho,\mal}:= \Spec \H^0(O(\ugr G(X,x)^{\rho,\mal}_{\MHS}) ).
\]

In order to show that these define mixed Hodge structures, we will have to satisfy  Proposition \ref{flatmhs}, so we need to establish flatness and boundedness, which is where the $\cS$-splitting and non-negativity of the weights will feature.
Since $\row_1: \SL_2\to C^*$ is flat, we have
$$
\row_1^*\varpi_1(X,x)_{\MHS}^{\rho,\mal}= \oSpec \sH^0(\row_1^*O(G(X,x)^{\rho,\mal}_{\MHS}) ).
$$ 

Choose a representative $Z$ for $\ugr (X,x)_{\MHS}^{\rho, \mal}$ with $O(Z)^0=\R$, and $O(Z)$ of non-negative weights. [To see that this is possible, take a minimal model $\m$ for $\bar{G}(\ugr (X,x)_{\MHS}^{\rho, \mal})$ as in \cite[Proposition \ref{htpy-dgminimal}]{htpy}, and note that $\m/[\m,\m]\cong \H^*O(\ugr X^{\rho, \mal})^{\vee}$ is of non-positive weights, so $\m$ is of non-negative weights, and therefore $O(\bar{W}\m)$ is of non-negative weights, so $\bar{W}\m$ is a possible choice for $Z$.] Setting  $\g:= G(Z)$, we then have
\[
 \ugr G(X,x)_{\MHS}^{\rho,\mal}\simeq \bar{G}_R(Z)= R \ltimes \exp(\g).
\]

Moreover, the choice of $\cS$-splitting gives $\row_1^*O(X,x)^{\rho, \mal}_{\MHS} \simeq  O(\bA^1) \ten O(Z)\ten O(\SL_2)$, so applying $\bar{G}_{R, \bA^1\by \SL_2}$ gives
\[
 \row_1^*G(X,x)^{\rho,\mal}_{\MHS} \simeq \bA^1 \by (R \ltimes \exp(\g))\by \SL_2,
\]
and hence
\[
 \row_1^*\varpi_1(X,x)_{\MHS}^{\rho,\mal}\cong  \bA^1 \by (R \ltimes \exp(\H_0\g))\by \SL_2.
\]
Since $\row_1$ is faithfully flat, this implies that $O(\varpi_1(X,x)_{\MHS}^{\rho,\mal})$ is  flat over $\bA^1 \by C^*$, with non-negative weights. 

Now the choice of $\cS$-splitting combines with the equivalence $Z \simeq \ugr (X,x)_{\MHS}^{\rho, \mal}$ to give
$$
\chi:\row_1^*(G(X,x)^{\rho,\mal}_{\MHS})\simeq \bA^1 \by (R \ltimes \exp(\g))\by \SL_2,
$$
 whose structure sheaf is flat on $\bA^1\by \SL_2$, and has non-negative weights with respect to the $\bG_m \by 1$-action. Lemma \ref{sluniv} then implies that the structure sheaf of $ \varpi_1(X,x)_{\MHS}^{\rho,\mal}$ is flat over $\bA^1 \by C^*$, with non-negative weights.

Set $\ugr \varpi_1(X,x)_{\MHS}^{\rho,\mal}:= (R \ltimes \exp(\H_0\g))$, and write $\phi$ for the pullback of $\chi$ to $0 \in \bA^1$. Combined with the morphism $\psi$ from the proof of Proposition \ref{gplike}, this induces an $S$-equivariant isomorphism
$$
\Spec \R\by_{\bA^1,0} \varpi_1(X,x)_{\MHS}^{\rho,\mal}\cong \ugr \varpi_1(X,x)_{\MHS}^{\rho,\mal} \by C^*,
$$
and an isomorphism 
$$
 \varpi_1(X,x)_{\MHS}^{\rho,\mal}\by_{\bA^1\by C^*,(1,1)}\Spec \R\cong  \varpi_1(X,x)^{\rho,\mal} ,
$$
giving the data of  a flat algebraic MHS on $O(\varpi_1(X,x)^{\rho,\mal})$, of non-negative weights. In particular, the weights are bounded below, so  by Proposition \ref{flatmhs}, this is the same as an ind-MHS of non-negative weights. 

Next, we consider the dg Lie coalgebra over $\bA^1 \by C*$ given by the cotangent space
$$
\C(G(X,x)^{\rho,\mal}_{\MHS}):= \Omega(G(X,x)^{\rho,\mal}_{\MHS}/C^*)\ten_{O(G(X,x)^{\rho,\mal}_{\MHS}), 1}C^*,
$$
so the cohomology sheaves $\sH^*(j^{-1}\C(G(X,x)^{\rho,\mal}_{\MHS}))$ form a graded Lie coalgebra over $O(\bA^1)\ten \O_{C^*}$.

The isomorphism $\chi$ above implies that these sheaves are flat over $\bA^1 \by C^*$, and therefore that $\sH^0(j^{-1}\C(G(X,x)^{\rho,\mal}_{\MHS}))$ is just the Lie coalgebra of $\varpi_1(X,x)_{\MHS}^{\rho,\mal}$. 
For $n \ge 2$, we set
$$
\varpi_n(X,x)^{\rho,\mal}_{\MHS})^{\vee}:= \sH^{n-1}(\row_{1*}\C(G(X,x)^{\rho,\mal}_{\MHS})),
$$
noting that these have a conjugation action by $\varpi_1(X,x)_{\MHS}^{\rho,\mal}$ and a natural Lie bracket. 

Setting $\ugr  \varpi_n(X,x)_{\MHS}^{\rho,\mal} := (\H_{n-1}\g)^{\vee} $, the isomorphisms $\phi$ and $\psi$ 
induce  $ S$-equivariant isomorphisms
$$
\Spec \R\by_{\bA^1,0} \varpi_n(X,x)_{\MHS}^{\rho,\mal}\cong \ugr \varpi_n(X,x)_{\MHS}^{\rho,\mal} \by C^*,
$$
and  isomorphisms 
$$
 \varpi_n(X,x)_{\MHS}^{\rho,\mal}\by_{\bA^1\by C^*,(1,1)}\Spec \R\cong  \varpi_n(X,x)^{\rho,\mal} ,
$$
so Proposition \ref{flatmhs} gives the data of an non-negatively weighted ind-MHS on $(\varpi_n(X,x)^{\rho,\mal})^{\vee}$, compatible with the $\varpi_1$-action and Whitehead bracket.

Finally, the Hurewicz map comes from 
$$
\Ru G(X,x)^{\rho,\mal} \to (\Ru G(X,x)^{\rho,\mal})^{\ab} \simeq \coker(\R \to O(X^{\rho,\mal}))[-1]^{\vee},
$$
which is compatible with the ind-MHS, by the final part of Proposition \ref{gplike}. Thus the Hurewicz maps
$$
\varpi_n(X,x)^{\rho,\mal} \to \H^n(X, O(\Bu_{\rho}))^{\vee}\quad \Ru\varpi_1(X^{\rho, \mal}) \to\H^1(X, O(\Bu_{\rho}))
$$
preserve the  ind-MHS.
\end{proof}

In 
Theorem \ref{mhspin}, the only r\^ole of the $\cS$-splitting is to ensure that the algebraic MHS is flat. 
We now show how a choice of $\cS$-splitting gives additional data.
\begin{theorem}\label{pinsplit}
A choice of $\cS$-splitting for $(X,x)_{\MHS}^{\rho, \mal}$ (resp. $(X,x)_{\MTS}^{\rho, \mal}$) gives an isomorphism 
$$
O(\varpi_1(X,x)^{\rho, \mal})\ten \cS \cong \gr^WO(\varpi_1(X,x)^{\rho, \mal})\ten \cS
$$
of  (real) quasi-MHS (resp. quasi-MTS) in Hopf algebras, and isomorphisms 
$$
 (\varpi_n(X,x)^{\rho, \mal})^{\vee}\ten \cS  \cong \gr^W(\varpi_n(X,x)^{\rho, \mal})^{\vee}\ten \cS
$$
of (real) quasi-MHS (resp. quasi-MTS), inducing the identity on $\gr^W$, and compatible with the Whitehead bracket.
\end{theorem}
\begin{proof}
Applying the functor $\bar{G}_{R, \bA^1\by \SL_2}$ to the splitting quasi-isomorphism gives a quasi-isomorphism
\[
 \row_1^*G(X,x)^{\rho,\mal}_{\MHS}\simeq \bA^1 \by (\ugr G(X,x)^{\rho,\mal}_{\MHS}) \by \SL_2,
\]
for the grouplike MHS $ G(X,x)^{\rho,\mal}_{\MHS}$ of Proposition \ref{gplike}.
 The isomorphisms now follow from Lemma \ref{slhodge} and the constructions of  Theorem \ref{mhspin}.
\end{proof}

\begin{remark}\label{gadata}
This leads us to ask what additional data are required to describe the ind-MHS on  homotopy groups in terms of the Hodge structure $\ugr (X,x)^{\rho,\mal}_{\MHS}$. If we set $\g= \bar{G}(\ugr (X,x)^{\rho,\mal}_{\MHS})$, then $\ugr G(X,x)^{\rho,\mal}_{\MHS} \simeq  R \rtimes \exp(\g)$ and  we can let $\cD^{\bt}:= \der_{\R}(R \rtimes \exp(\g), R \rtimes \exp(\g))$ be the complex of DG Hopf algebra derivations on $O(R \rtimes \exp(\g))$. 

The derivation $N$ of Definition \ref{Ndef}  induces a $\bG_m\by S$-equivariant DG Hopf algebra derivation
\[
 N \co \row_1^* O(G(X,x)^{\rho,\mal})_{\MHS} \to \row_1^* O(G(X,x)^{\rho,\mal})_{\MHS}(-1),
\]
$\bA^1$-linear and $N$-linear in the sense that $\beta(af)= (Na)f + a\beta(f)$ for $a \in O(\SL_2)$. 

The $\cS$-splitting quasi-isomorphism
\[
 \row_1^*G(X,x)^{\rho,\mal}_{\MHS}\simeq \bA^1 \by (\ugr G(X,x)^{\rho,\mal}_{\MHS}) \by \SL_2
\]
then allows us to transfer $N$  to  a derivation  $N+\beta: O(R \rtimes \exp(\g))\ten \cS \to O(R \rtimes \exp(\g))\ten \cS(-1)$, unique up to homotopy.   As in \S \ref{analogies}, we think of $N +\beta$ as the  monodromy operator at the Archimedean place. This will be constructed explicitly in \S \ref{archmon}.

Moreover, for any $\cS$-split MHS $V$ arising as an invariant of $O(G(X,x)) $ (including the homotopy groups), the induced  map $N+\bar{\beta}: (\gr^WV)\ten \cS \to (\gr^WV)\ten \cS(-1)$ just comes from  conjugating the surjective map $\id \ten N: V\ten \cS \to V\ten \cS(-1)$ with respect to the splitting isomorphism $(\gr^WV)\ten \cS
\cong V\ten \cS$. Therefore $N+\bar{\beta}$ is surjective, and $V= \ker(N+\bar{\beta})$.

All these results have direct analogues  for mixed twistor structures.
\end{remark}

\begin{remark}\label{gadatab}
In Remark \ref{gadata}, note that $O(\bar{W}\g)$ is a cofibrant replacement for $O(\ugr (X,x)^{\rho,\mal}_{\MHS} )$, so its cotangent space is a model for the cotangent complex of $O(\ugr (X,x)^{\rho,\mal}_{\MHS})$, so Corollary \ref{keysplit2} then shows that the mixed Hodge structure $ (X,x)^{\rho,\mal}_{\MHS}$ is determined up to quasi-isomorphism by a derivation
\[
\nu \in \EExt^0_{O(\bar{W}\g) }(\Omega(O( \bar{W}\g)/\R), (W_{-1}O(\bA^1)) \ten (O(Z) \to  O(R))\ten O(\SL_2)(-1))^{\bG_m \by R \rtimes S}.
\]

Now,  $\Omega(O( \bar{W}\g)/\R) \cong \g^{\vee}[-1]$, so we may choose a representative
$$
(\alpha', \gamma'_x): \g^{\vee}[-1] \to (W_{-1}O(\bA^1)) \ten (O(Z)\by O(R)[-1])\ten O(\SL_2)(-1)
$$
for $\nu$, with $[d,\alpha']=0$, $[d,\gamma'_x]= z^*\alpha'$.

Studying the adjunction  $\bar{W} \vdash G$, we see that $\alpha'$ is equivalent to a $\bG_m \by R \ltimes S$-equivariant Lie coalgebra derivation $\alpha: \g^{\vee}  \to W_{-1}O(\bA^1)\ten \g^{\vee}\ten   O(\SL_2)(-1)$ with $[d, \alpha]=0$. This generates a derivation $\alpha :  O(R \ltimes \exp(\g)) \to (W_{-1}O(\bA^1))\ten O(R \ltimes \exp(\g))\ten   O(\SL_2)(-1)$. 
Meanwhile, $\gamma'_x$ corresponds to an element $\gamma_x\in (\g_0\hat{\ten}(W_{-1}O(\bA^1)\ten O(\SL_2)(-1)))^{\bG_m \by S}$, and conjugation by this gives another such derivation $[\gamma_x,-]$, 

It then follows from the properties of the bar construction that
\[
[\beta]:=[\alpha+[\gamma_x, -]] \in \H^0(W_{-1}\gamma^0(\cD^{\bt}\ten \cS(-1))),
\]
for $\gamma^0V= V \cap(F^0V_{\Cx})$ as in Definition \ref{gammadef}, noting that  $\gamma^0(\cD^{\bt}\ten \cS(-1))\simeq \oR \Gamma_{w\cH}(\cD^{\bt}\ten \cS(-1))$, by Remark \ref{BeiS}. 
\end{remark}

\begin{remark}\label{mhspinbasepts}
If we have a multipointed MHS (resp. MTS) $(X;T)^{\rho, \mal}_{\MHS}$ (resp. $(X;T)^{\rho, \mal}_{\MTS}$) as in Remark \ref{baseptsmhs}, then Proposition \ref{gplike} and Theorems \ref{mhspin} and \ref{pinsplit}  adapt to give 
multipointed grouplike MHS (resp. MTS)
as in Remark \ref{baseptslike}, together with 
ind-MHS (resp. ind-MTS) on the algebras $O(\varpi_1(X;x,y)^{\rho, \mal})$, compatible with the pro-algebraic groupoid structure. The  
 ind-MHS (resp. ind-MTS) on $(\varpi_n(X,x)^{\rho, \mal})^{\vee}$ are then compatible with the co-action
$$
(\varpi_n(X,x)^{\rho, \mal})^{\vee} \to O(\varpi_1(X;x,y)^{\rho, \mal})\ten  (\varpi_n(X,y)^{\rho, \mal})^{\vee}.
$$
\end{remark}

\section{MHS on relative Malcev homotopy types of compact K\"ahler manifolds}\label{snkmhs}

Fix a compact connected K\"ahler manifold $X$ and a point $x \in X$.

\subsection{Real homotopy types}\label{realmhs}

We now look again at the phenomena studied in Sections \ref{real1} and \ref{real}.

\begin{definition}\label{nilphfil}
Define  the Hodge filtration on the real homotopy type $(X\ten \R, x)$ by $(X\ten \R,x)_{\bF}:= (\Spec \R \by C^* \xra{x} \oSpec j^*\tilde{A}^{\bt}(X)) \in \Ho(C^* \da dg_{\Z}\Aff_{C^*}(S))$, for $j:C^* \to C$ and $\tilde{A}^{\bt}(X)$ as in Definition
\ref{Atildedef}.
\end{definition}

\begin{definition}\label{nilpmhs}
Define the algebraic mixed Hodge structure $(X\ten \R,x)_{\MHS}$ on $(X\ten \R, x)$ to be $\oSpec$ of the Rees algebra associated to the good truncation filtration $W_r =\tau^{\le r} j^*\tilde{A}^{\bt}(X) $, equipped with the augmentation $j^*\tilde{A}^{\bt}(X) \xra{x^*} \O_{C^*}$. 

Define  $(\ugr  (X\ten \R)_{\MHS},0)$ to be $\Spec H^*(X, \R)$ equipped with the  unique morphism from $\Spec \R$ determined by  the isomorphism  $H^0(X, \R)\cong \R$. Now 
$$
(X\ten \R, x)_{\MHS}\by_{\bA^1}^h\{0\}= (C^* \xra{x} \oSpec   \gr^Wj^*\tilde{A}^{\bt}(X) ),
$$ 
and there is  a canonical  quasi-isomorphism $\gr^Wj^*\tilde{A}^{\bt}(X) \to \sH^*(j^*\tilde{A}^{\bt}(X))$.  As in the proof of  Theorem \ref{formalitysl}, this  is $S$-equivariantly isomorphic to $\H^*(X,\R)\ten \O_{C^*}$, giving the opposedness quasi-isomorphism
$$
(X\ten \R, x)\by_{\bA^1}^h\{0\} \xla{\sim} (\ugr  (X\ten \R)_{\MHS},0)\by C^*.
$$
\end{definition}

\begin{proposition}\label{nilpsplits}
The algebraic MHS $(X\ten \R,x)_{\MHS}$ splits on pulling back along $\row_1: \SL_2 \to C^*$. Explicitly, there is an  isomorphism 
$$
(X\ten \R,x)_{\MHS}\by_{C^*, \row_1}^{\oR}\SL_2 \cong   \bA^1 \by (\ugr(X\ten \R)_{\MHS},0)\by C^*,
$$
in $\Ho(\bA^1\by \SL_2 \da dg_{\Z}\Aff_{ \bA^1\by \SL_2}( \bG_m \by S))$, whose pullback to $0 \in \bA^1$ is given by the opposedness isomorphism.
\end{proposition}
\begin{proof}
Theorem \ref{formalitysl} establishes the corresponding splitting for the Hodge filtration $(X\ten \R,x)_{\bF}$, and good truncation commutes with everything, giving the splitting for $(X\ten \R,x)_{\MHS}$. The proof of  Theorem \ref{formalitysl} ensures that pulling the $\cS$-splitting back  to $0 \in \bA^1$ gives $\row_1^*$ applied to the opposedness isomorphism.
\end{proof}

\begin{corollary}\label{morgansplits}
For $\cS$ as in Definition \ref{cSdef}, and for all $n\ge 1$, there are $\cS$-linear isomorphisms
$$
\pi_n(X\ten \R,x)^{\vee}\ten_{\R}\cS \cong \pi_n(H^*(X, \R))^{\vee}\ten_{\R}\cS,
$$
of quasi-MHS, compatible with Whitehead brackets and Hurewicz maps. The graded map  associated to the weight filtration is just the pullback of the standard isomorphism $\gr_W\pi_n(X\ten \R,x)\cong\pi_n(H^*(X, \R))$ (coming from the opposedness isomorphism).
\end{corollary}
\begin{proof}
The  $\cS$-splitting of Proposition \ref{nilpsplits} allows us to apply Theorem \ref{pinsplit}, giving isomorphisms
$$
\pi_n(X\ten \R,x)^{\vee}\ten_{\R}\cS \cong \varpi_n(\ugr  (X\ten \R)_{\MHS},0)^{\vee}\ten_{\R}\cS
$$
of quasi-MHS. 

The definition of $\ugr  (X\ten \R)_{\MHS}$  implies that $\varpi_n(\ugr  (X\ten \R)_{\MHS},0) =\pi_{n-1}\bar{G}(\H^*(X, \R))$, giving the required result.
\end{proof}

\subsection{Relative Malcev homotopy types}

\subsubsection{The reductive fundamental groupoid is pure of weight $0$}

\begin{lemma}\label{discreteact}
There is a canonical action of the discrete group $(S^1)^{\delta}$ on the real reductive pro-algebraic completion $\varpi_1(X,x)^{\red}$ of the fundamental group $\pi_1(X,x)$.
\end{lemma}
\begin{proof}
By Tannakian duality, this is equivalent to establishing a $(S^1)^{\delta}$-action on the category of real semisimple local systems on $X$.
This is just the unitary part of the $\Cx^{\by}$-action on complex local systems from \cite{Simpson}. Given a real $\C^{\infty}$ vector bundle $\sV$ with a flat connection $D$, there is an essentially unique pluriharmonic metric, giving a  unique decomposition  $D=d^++\vartheta$ of $D$ into antisymmetric and symmetric parts. In the notation of \cite{Simpson}, $d^+=\pd +\bar{\pd}$ and $\vartheta =\theta+\bar{\theta}$. Given $t \in (S^1)^{\delta}$, we define $t\circledast D$ by $d^++ t\dmd \vartheta=  \pd +\bar{\pd}+t\theta +t^{-1} \bar{\theta}$ (for $\dmd$ as in Definition \ref{dmd}), which preserves the metric.
\end{proof}

\subsubsection{Variations of Hodge structure}

The following results are taken from \cite[\S \ref{hodge-varhodgesn}]{hodge}.
\begin{definition}\label{algact}
Given  a discrete group $\Gamma$ acting on a pro-algebraic group $G$, 
define ${}^{\Gamma}\!G$ to be the maximal quotient of $G$ on which $\Gamma$ acts algebraically.  This is the inverse limit $\Lim_{\alpha} G_{\alpha}$ over those surjective maps
$$
G \to G_{\alpha},
$$
with $G_{\alpha}$ algebraic (i.e. of finite type), for which the $\Gamma$-action descends to $G_{\alpha}$. Equivalently, $O( {}^{\Gamma}\!G)$ is the sum of those finite-dimensional $\Gamma$-representations of $O(G)$ which are closed under  comultiplication.
\end{definition}

\begin{definition}
Define the quotient  group ${}^{\VHS}\!\varpi_1(X,x)$ of $\varpi_1(X,x)$ by 
$$
{}^{\VHS}\!\varpi_1(X,x) := {}^{(S^1)^{\delta}}\!\varpi_1(X,x)^{\red}.
$$
\end{definition}

\begin{remarks}
This notion is analogous to the definition given in \cite{weight1} of the maximal quotient of the $l$-adic pro-algebraic fundamental group on which Frobenius acts algebraically. In the same way that representations of that group corresponded to semisimple subsystems of local systems underlying Weil sheaves, representations of ${}^{\VHS}\!\varpi_1(X,x)$ will correspond to 
local systems underlying variations of Hodge structure (Proposition \ref{vhsequiv}). 
\end{remarks}

\begin{proposition}\label{vhsalg}
The action of $S^1$ on ${}^{\VHS}\!\varpi_1(X,x)$ is algebraic, in the sense that
$$
S^1 \by {}^{\VHS}\!\varpi_1(X,x)\to {}^{\VHS}\!\varpi_1(X,x)
$$
is a morphism of schemes.

It is also an inner action, coming from a morphism
$$
S^1  \to ( {}^{\VHS}\!\varpi_1(X,x))/Z({}^{\VHS}\!\varpi_1(X,x))
$$
of pro-algebraic groups, where $Z$ denotes the centre of the group.
\end{proposition}
\begin{proof}
In the notation of Definition \ref{algact}, write $\varpi_1(X,x) = \Lim_{\alpha} G_{\alpha}$. As in  \cite[Lemma 5.1]{Simpson}, the map
$$
\Aut(G_{\alpha}) \to \Hom(\pi_1(X,x), G_{\alpha})
$$
is a closed immersion of schemes, so the map
$$
(S^1)^{\delta} \to \Aut(G_{\alpha})
$$
is 
continuous. This means that it defines a one-parameter subgroup, so is algebraic.
Therefore the map
$$
S^1 \by  {}^{\VHS}\!\varpi_1(X,x)\to{}^{\VHS}\!\varpi_1(X,x) 
$$
is algebraic, as $ {}^{\VHS}\!\varpi_1(X,x)=\Lim G_{\alpha}$.

Since $\varpi_1(X,x)^{\red}$  is a  reductive pro-algebraic group, $G_{\alpha}$ is a reductive algebraic group.
This implies that the connected component $\Aut(G_{\alpha})^0$ of the identity in $\Aut(G_{\alpha})$ is given by
$$
\Aut(G_{\alpha})^0=  G_{\alpha}(x,x)/Z(G_{\alpha}).
$$

Since 
$$
{}^{\VHS}\!\varpi_1(X,x)/Z({}^{\VHS}\!\varpi_1(X,x))=\Lim G_{\alpha}/Z(G_{\alpha}), 
$$ 
we have an algebraic  map 
$$
S^1 \to{}^{\VHS}\!\varpi_1(X,x)/Z({}^{\VHS}\!\varpi_1(X,x)),
$$
as required.
\end{proof}

\begin{proposition}\label{vhsequiv}
The following conditions are equivalent:
\begin{enumerate}
\item $V$ is a representation of ${}^{\VHS}\!\varpi_1(X,x)$;
\item $V$ is a  representation of $\varpi_1(X,x)^{\red}$ such that $t \circledast V \cong V$ for all $t \in (S^1)^{\delta}$;
\item $V$ is a  representation of $\varpi_1(X,x)^{\red}$ such that $t \circledast V \cong V$ for some non-torsion  $t \in (S^1)^{\delta}$.
\end{enumerate}

Moreover, representations of ${}^{\VHS}\!\varpi_1(X,x) \rtimes S$  correspond to variations of Hodge structure on $X$. 
\end{proposition}
\begin{proof}
$ $

\begin{enumerate}
\item[1.$\implies$2.] If $V$ is a representation of ${}^{\VHS}\!\varpi_1(X,x)$, then it is a representation of $\varpi_1(X,x)^{\red}$, so is a semisimple representation of $\varpi_1(X,x)$. By Lemma \ref{vhsalg},  $t \in (S^1)^{\delta}$ is an inner automorphism of ${}^{\VHS}\!\varpi_1(X,x)$, coming from $g \in {}^{\VHS}\!\varpi_1(X,x)$, say. Then multiplication by $g$ gives the isomorphism $t \circledast V \cong V$.

\item[2.$\implies$3.] Trivial.

\item[3.$\implies$1.] Let $M$ be the monodromy group of $V$; this is a quotient of  $\varpi_1(X,x)^{\red}$. The isomorphism $t \circledast V \cong V$ gives an element $g \in \Aut(M)$, such that $g$ is the image of $t$  in $\Hom(\pi_1(X,x), M)$, using the standard embedding of  $\Aut(M)$ as a closed subscheme of  $\Hom(\pi_1(X,x), M)$. The same is true of $g^n,t^n$, so  the image of $S^1$ in $\Hom(\pi_1(X,x), M)$ is just the closure of $\{g^n\}_{n \in \Z}$, which is contained in $\Aut(M)$, as $\Aut(M)$ is closed. Thus  the $S^1$-action on $\varpi_1(X,x)^{\red}$ descends to $M$, so $M$ is a quotient of ${}^{\VHS}\!\varpi_1(X,x)$, as required.
 \end{enumerate}

Finally, a representation of ${}^{\VHS}\!\varpi_1(X,x) \rtimes S$ gives a semisimple local system $\vv= \ker(D:\sV\to \sV\ten_{\sA^0}\sA^1)$ (satisfying one of the equivalent conditions above), together with a coassociative coaction $\mu:(\sV,D) \to (\sV\ten O(S), t \circledast D)$ of ind-finite-dimensional local systems, for $t=a+ib \in O(S^1)\ten \Cx$, where we regard $O(S^1)$ as a subspace of $O(S)$ via the isomorphism $S/\bG_m=S^1$. 

This is equivalent to giving a decomposition $\sV\ten \Cx= \bigoplus_{p+q} \sV^{pq}$ with $\overline{\sV^{pq}}= \sV^{qp}$, and with the decomposition $D= \pd +\bar{\pd}+t\theta +t^{-1} \bar{\theta}$ (as in Lemma \ref{discreteact}) satisfying
$$
\pd: \sV^{pq} \to \sV^{pq}\ten\sA^{10}, \quad \bar{\theta}: \sV^{pq} \to \sV^{p+1,q-1}\ten\sA^{01},
$$ 
which is precisely the condition for $\vv$ to be a VHS. Note that if we had chosen $\vv$ not satisfying one of the equivalent conditions, then $(\sV\ten O(S), t \circledast D)$ would not yield an ind-finite-dimensional local system.
\end{proof}

\begin{lemma}
The obstruction $\varphi$ to a surjective  map $\alpha:\varpi_1(X,x)^{\red}  \to R$, for $R$ algebraic, factoring through  ${}^{\VHS}\!\varpi_1(X,x)$ lies in $\H^1(X, \ad \Bu_{\alpha})$, for $\ad \Bu_{\alpha}$ the vector bundle associated to the adjoint representation of $\alpha$ on the Lie algebra of $R$. Explicitly, $\varphi$ is given by $\varphi=[i\theta -i\bar{\theta}]$, for $\theta\in A^{10}(X, \ad \Bu_{\alpha})$ the Higgs form associated to $\alpha$.
\end{lemma}
\begin{proof}
We have a 
real analytic 
map
$$
S^1 \by \pi_1(X,x) \to R,
$$
and $\alpha$ will factor through ${}^{\VHS}\!\varpi_1(X,x)$ if and only if the induced map
$$
S^1 \xra{\phi} \Hom(\pi_1(X,x), R)/\Aut(R)
$$ 
is constant. Since $R$ is reductive and $S^1$ connected, it suffices to replace $\Aut(R)$ by the group of inner automorphisms. On tangent spaces, we then have a map
$$
i\R \xra{D_1\phi } \H^1(X, \ad \Bu_{\alpha});
$$ 
let $\varphi \in \H^1(X, \ad \Bu_{\alpha})$ be the image of $i$. The description $\varphi=[i\theta -i\bar{\theta}]$ comes from differentiating $e^{ir}\circledast D=   \pd +\bar{\pd}+e^{ir}\theta +e^{-ir} \bar{\theta}$ with respect to $r$. 

If $\phi$ is constant, then $\varphi=0$. Conversely, observe that for $t \in S^1(R)$, $D_t\phi= t(D_1\phi) t^{-1}$, making use of the  action of $(S^1)^{\delta}$ on $\Hom(\pi_1(X,x), G)$. If $\varphi=0$, this implies that $D_t\phi=0$ for all $t \in (S^1)^{\delta}$, so $\phi$ is constant, as required.
\end{proof}

\subsubsection{Mixed Hodge structures}

\begin{theorem}\label{mhsmal}
If  $R$ is any quotient of ${}^{\VHS}\!\varpi_1(X,x)^{\red}_{\R}$, then there is an algebraic mixed Hodge structure $(X,x)^{\rho,\mal}_{\MHS}$ on the relative Malcev homotopy type $(X,x)^{\rho, \mal}$, where $\rho$ denotes the quotient map to $R$.

There is also an $S$-equivariant  splitting
$$
\bA^1 \by (\ugr (X,x)^{\rho,\mal}_{\MHS},0) \by \SL_2 \simeq (X^{\rho, \mal},x)_{\MHS}\by_{C^*, \row_1}^{\oR}\SL_2
$$
 on pulling back along $\row_1:\SL_2 \to C^*$, whose pullback over $0 \in \bA^1$ is given by the opposedness isomorphism. 
\end{theorem}
\begin{proof}
By Proposition \ref{vhsalg}, we know that representations of $R$ all correspond to local systems underlying polarised variations of Hodge structure, and that the $(S^1)^{\delta}$-action  on ${}^{\VHS}\!\varpi_1(X,x)^{\red}_{\R}$ descends to an inner algebraic action on $R$, via $S^1 \to R/Z(R)$.
This allows us to consider the semi-direct products $R \rtimes S^1$ and $R\rtimes S$ of pro-algebraic groups, making use of the isomorphism $S^1 \cong S/\bG_m$.

The $R$-representation $O(\Bu_{\rho})=\Bu_{\rho}\by^RO(R)$ in local systems of $\R$-algebras on $X$  thus has an algebraic $S^1$-action, denoted by $(t,v) \mapsto t\circledast v$ for $t \in S^1, v \in O(\Bu_{\rho})$, and we define an  $S$-action  on the de Rham complex
$$
\sA^*(X, O(\Bu_{\rho}))=\sA^*(X, \R)\ten_{\R}O(\Bu_{\rho})
$$
by $\lambda \boxast (a\ten v) := (\lambda \dmd a) \ten (\frac{\bar{\lambda}}{\lambda} \circledast v)$, noting that the $\dmd$ and $\circledast$ actions commute. This gives an action on the global sections
$$
A^*(X, O(\Bu_{\rho})):=\Gamma(X,\sA^*(X, O(\Bu_{\rho}))).
$$

It follows from \cite[Theorem 1]{Simpson} that
there exists a  harmonic metric on every semisimple local system $\vv$, and hence on
$O(\Bu_{\rho})$. We then decompose the connection $D$ as $D= d^++\vartheta$ into  antisymmetric and symmetric  parts, and let $D^c:= i\dmd d^+ -i\dmd \vartheta$. To see that this is independent of the choice of metric, observe that for $-1 \in S^1$ acting on $O(\varpi_1(X,x)^{\red})$, antisymmetric and symmetric  parts are the $1$- and $-1$-eigenvectors.

Now, we define the CDGA $\tilde{A}^{\bt}(X, O(\Bu_{\rho}))$ on $C$ by
$$
\tilde{A}^{\bt}(X, O(\Bu_{\rho})):= (A^*(X,O(\Bu_{\rho})) \ten_{\R} O(C), uD + vD^c),
$$
and we denote the differential by $\tilde{D}:=uD + vD^c$. Note that the  $\boxast$ $S$-action makes this $S$-equivariant over $C$. Thus $\tilde{A}(X, O(\Bu_{\rho}))\in DG\Alg_{C}(R\rtimes S)$, and we define the Hodge filtration by 
$$
(X_{\bF}^{\rho, \mal},x):= (R \by C^* \xra{x} (\Spec \tilde{A}(X, O(\Bu_{\rho})))\by_CC^* )\in  dg_{\Z}\Aff_{C^*}(R)_*(S),
$$
making use of the isomorphism $O(\Bu_{\rho})_x \cong O(R)$.

We then define the mixed Hodge structure  $(X^{\rho, \mal}_{\MHS},x)$ by 
$$
(\bA^1 \by R \by C^* \xra{x}(\Spec \xi(\tilde{A}(X, O(\Bu_{\rho})), \tau))\by_CC^* )\in  dg_{\Z}\Aff_{\bA^1\by C^*}(R)_*(\bG_m \by S),
$$
with $(\ugr X^{\rho,\mal}_{\MHS},0)$ given by
$$
(R \to \Spec \H^*(X, O(\Bu_{\rho})))\in dg_{\Z}\Aff(R)_*(S).
$$

The rest of the proof is now the same as in \S \ref{realmhs}, using the principle of two types from \cite[Lemmas 2.1 and 2.2]{Simpson}. Theorem \ref{formalitysl} adapts to give the quasi-isomorphism
$$
(X_{\MHS}^{\rho, \mal},x)\by_{C^*, \row_1}^{\oR}\SL_2 \simeq (\ugr X^{\rho,\mal}_{\MHS},0)  \by \SL_2,
$$
which gives the splitting.
\end{proof}

Observe that this theorem easily adapts to multiple basepoints, as considered in Remark \ref{baseptsmhs}.

\begin{remark}\label{ktpwgt}
Note that the filtration $W$ here and later is not related to the weight tower $W^{*}F^0$ of \cite[\S 3]{KTP}, which does not agree with the weight filtration of \cite{Morgan}.  $W^{*}F^0$ corresponded to the lower central series filtration $\Gamma_n\g$ on  $\g:=\Ru(G(X)^{\alg})$, given by $\Gamma_1\g=\g$ and $\Gamma_n\g=[\Gamma_{n-1}\g, \g]$, by the formula $W^{i}F^0 =\g/\Gamma_{n+1}\g$. Since this is just the filtration $\bar{G}(\Fil)$ coming from the filtration $\Fil_{-1}A^{\bt}=0, \Fil_0A^{\bt}=\R,\, \Fil_1A^{\bt}=A^{\bt}$ on  $A^{\bt}$, it amounts to setting higher cohomology groups to be pure of weight $1$;  \cite[Proposition 3.2.6(4)]{KTP} follows from this observation, as the graded pieces $\gr_W^{i}F^0$ defined in \cite[Definition 3.2.3]{KTP} are just $\gr_{G(\Fil)}^{i+1}\g$.
\end{remark}

\begin{corollary}\label{kmhspin}
In the scenario of Theorem \ref{mhsmal}, the
 homotopy groups $\varpi_n(X^{\rho, \mal},x)$ for $n\ge 2$, and the Hopf algebra $O(\varpi_1(X^{\rho, \mal},x))$ carry natural ind-MHS, functorial in $(X,x)$, and compatible with the action of $\varpi_1$ on $\varpi_n$, the Whitehead bracket  and the Hurewicz maps $\varpi_n(X^{\rho, \mal},x) \to \H^n(X,O(\Bu_{\rho}))^{\vee}$. 

Moreover, there are canonical $\cS$-linear isomorphisms
\begin{eqnarray*}
\varpi_n(X^{\rho, \mal},x)^{\vee}\ten\cS &\cong& \pi_n(H^*(X, O(\Bu_{\rho})))^{\vee}\ten\cS\\
O(\varpi_1(X^{\rho, \mal},x)) \ten\cS &\cong& O(R \ltimes\pi_1(H^*(X, O(\Bu_{\rho}))))\ten\cS
\end{eqnarray*}
of quasi-MHS. The associated graded map from the weight filtration is just the pullback of the standard isomorphism $\gr_W\varpi_*(X^{\rho, \mal})\cong\pi_*(H^*(X, O(\Bu_{\rho})))$.

Here, $\pi_*(H^*(X, O(\Bu_{\rho})))$ are the  homotopy groups $\H_{*-1}\bar{G}(\H^*(X, O(\Bu_{\rho})))$  associated to the $R \rtimes S$-equivariant  CDGA $\H^*(X,O(\Bu_{\rho}))$ (as constructed in Definition \ref{barwg}), with the induced real Hodge structure.
\end{corollary}
\begin{proof}
Theorem \ref{mhsmal} provides the data required by Theorems \ref{mhspin} and \ref{pinsplit} to construct $\cS$-split ind-MHS on homotopy groups. 
\end{proof}

\begin{remark}\label{kmhspinbasepts}
If we have a set  $T$ of basepoints, then Remark \ref{mhspinbasepts} gives  $\cS$-split ind-MHS  on the algebras $O(\varpi_1(X;x,y)^{\rho, \mal})$, compatible with the pro-algebraic groupoid structure. The  $\cS$-split ind-MHS  on $(\varpi_n(X,x)^{\rho, \mal})^{\vee}$ are then compatible with the co-action
$$
(\varpi_n(X,x)^{\rho, \mal})^{\vee} \to O(\varpi_1(X;x,y)^{\rho, \mal})\ten  (\varpi_n(X,y)^{\rho, \mal})^{\vee}.
$$
\end{remark}

\begin{remark}\label{arapuraconj}
Corollary \ref{kmhspin} confirms the first part of \cite[Conjecture 5.5]{arapurapi1}. If $\vv$ is a $k$-variation of Hodge structure on $X$, for  a field $k \subset \R$, and $R$ is the Zariski closure of $\pi_1(X,x) \to \GL(\vv_x)$, the conjecture states that there is a natural ind-$k$-MHS on the $k$-Hopf algebra $O(\varpi_1(X^{\rho, \mal},x)^{\rho, \mal})$. Applying Corollary \ref{kmhspin} to the Zariski-dense real representation $\rho_{\R}:\pi_1(X,x) \to R(\R)$ gives a real ind-MHS on the real Hopf algebra $O(\varpi_1(X,x)^{\rho_{\R}, \mal})=O(\varpi_1(X,x)^{\rho, \mal})\ten_k\R$. The weight filtration is just given by the lower central series on the pro-unipotent radical, so descends to $k$, giving an ind-$k$-MHS on the $k$-Hopf algebra $O(\varpi_1(X,x)^{\rho, \mal})$.

If $\vv$ is a variation of Hodge structure on $X$, and $R= \GL(\vv_x)$, then Corollary \ref{kmhspin} recovers the ind-MHS on $O(\varpi_1(X,x)^{\rho, \mal})$ first described in \cite[Theorem 13.1]{hainrelative}. If $T$ is a set of basepoints, and $R$ is the algebraic groupoid $R(x,y)= \Iso(\vv_x,\vv_y)$ on objects $T$, then Remark \ref{kmhspinbasepts} recovers the ind-MHS on $\varpi_1(X;T)^{\rho, \mal}$  first described in \cite[Theorem 13.3]{hainrelative}.
\end{remark}

\begin{corollary}
If $\pi_1(X,x)$ is algebraically good with respect to $R$ and the homotopy groups $\pi_n(X,x)$ have finite rank for all $n\ge 2$, with each $\pi_1(X,x)$-representation  $\pi_n(X,x)\ten_{\Z} \R$ an extension of $R$-representations,  then  Theorem \ref{mhspin} gives mixed Hodge structures on $\pi_n(X,x)\ten\R$ for all $n \ge 2$, by Theorem \ref{classicalpimal}.  
\end{corollary}

Before stating the next proposition, we need to observe that for any morphism $f:X \to Y$ of compact K\"ahler manifolds, the induced map $\pi_1(X,x) \to \pi_1(Y,fx)$ gives rise to a map $\varpi_1(X,x)^{\red} \to \varpi_1(Y,fx)^{\red}$ of reductive pro-algebraic fundamental groups. This is not true for arbitrary topological spaces, but holds in this case because
semisimplicity is preserved by pullbacks between compact K\"ahler manifolds, since Higgs bundles pull back to Higgs bundles.

\begin{proposition}\label{kmhsfun}
If we have a morphism $f:X \to Y$  of compact K\"ahler manifolds, and a commutative diagram
$$
\begin{CD}
\pi_1(X,x) @>f>> \pi_1(Y,fx) \\
@V{\rho}VV @VV{\varrho}V\\
R @>{\theta}>> R'
\end{CD}
$$
of groups, with $R,R'$ real reductive pro-algebraic groups to which the $(S^1)^{\delta}$-actions descend and act algebraically, and $\rho, \varrho$ Zariski-dense, then the natural map $G(X,x)^{\rho, \mal} \to \theta^{\sharp} G(Y,fx)^{\varrho, \mal}=G(Y,fx)^{\varrho, \mal}\by_{R'}R$ extends to a natural map 
$$
(X^{\rho, \mal}_{\MHS},x) \to \theta^{\sharp} (Y^{\varrho, \mal}_{\MHS}, fx)
$$
of algebraic mixed Hodge structures.
\end{proposition}
\begin{proof}
This is really just the observation that the construction $\tilde{A}^{\bt}(X,\vv)$ is functorial in $X$.
\end{proof}

Note that, combined with  Theorem \ref{fibrations}, this gives canonical MHS on homotopy types of homotopy fibres.

\section{MTS on relative Malcev homotopy types of compact K\"ahler manifolds}\label{snkmts}

Let $X$ be a compact K\"ahler manifold.

\begin{theorem}\label{mtsmal}
If   $\rho:(\pi_1(X,x))^{\red}_{\R}\to R$ is any quotient, then there is an algebraic mixed twistor structure on the relative Malcev homotopy type $(X,x)^{\rho, \mal}$, functorial in $(X,x)$, which splits on pulling back along $\row_1:\SL_2 \to C^*$, with the pullback of the splitting over $0 \in \bA^1$  given by the opposedness isomorphism.
\end{theorem}
\begin{proof}
For $O(\Bu_{\rho})$ as in Definition \ref{Budef}, we define a  $\bG_m$-action  on the de Rham complex
$$
\sA^*(X, O(\Bu_{\rho}))= \sA^*(X, \R)\ten_{\R} O(\Bu_{\rho})
$$
by taking the $\dmd$-action of $\bG_m$ on $\sA^*(X, \R)$, acting trivially on $O(\Bu_{\rho})$.

There is an essentially unique  harmonic metric on $O(\Bu_{\rho})$, and we decompose the connection $D$ as $D= d^++\vartheta$ into antisymmetric and symmetric parts, and let $D^c:= i\dmd d^+ -i\dmd \vartheta$. Now, we define the CDGA $\tilde{A}(X, O(\Bu_{\rho}))$ on $C$ by
$$
\tilde{A}^{\bt}(X, O(\Bu_{\rho})):= (A^*(X,O(\Bu_{\rho})) \ten_{\R} O(C), uD + vD^c),
$$
and we denote the differential by $\tilde{D}:=uD + vD^c$. Note that the  $\dmd$-action of $\bG_m$ makes this $\bG_m$-equivariant over $C$. Thus $\tilde{A}(X, O(\Bu_{\rho}))\in DG\Alg_{C}(R \by \bG_m)$. The construction is now the same as in Theorem \ref{mhsmal}, except that we only have a $\bG_m$-action, rather than an $S$-action.
\end{proof}

Observe that this theorem easily adapts to multiple basepoints, as considered in Remark \ref{baseptsmhs}.

\begin{corollary}\label{kmtspin}
In the scenario of Theorem \ref{mtsmal}, 
the
 homotopy groups $\varpi_n(X^{\rho, \mal},x)$ for $n\ge 2$, and the Hopf algebra $O(\varpi_1(X^{\rho, \mal},x))$ carry natural ind-MTS, functorial in $(X,x)$, and compatible with the action of $\varpi_1$ on $\varpi_n$, the Whitehead bracket  and the Hurewicz maps $\varpi_n(X^{\rho, \mal},x) \to \H^n(X,O(\Bu_{\rho}))^{\vee}$. 

Moreover, there are $\cS$-linear isomorphisms
\begin{eqnarray*}
\varpi_n(X^{\rho, \mal,x})^{\vee}\ten\cS &\cong& \pi_n(H^*(X, O(\Bu_{\rho})))^{\vee}\ten\cS\\
O(\varpi_1(X^{\rho, \mal},x)) \ten\cS &\cong& O(R \ltimes\pi_1(H^*(X, O(\Bu_{\rho}))))\ten\cS
\end{eqnarray*}
of quasi-MTS. The associated graded map from the weight filtration is just the pullback of the standard isomorphism $\gr_W\varpi_*(X^{\rho, \mal})\cong\pi_*(H^*(X, O(\Bu_{\rho})))$.
\end{corollary}
\begin{proof}
Theorem \ref{mtsmal} provides the data required by Theorems \ref{mtspin} and \ref{pinsplit} to construct $\cS$-split ind-MTS on homotopy groups. 
\end{proof}

\begin{remark}\label{cfhiggs}
Pulling back the $\SL_2$-splitting  quasi-isomorphism 
\[
 \row_1^*\tilde{A}^{\bt}(X, O(\Bu_{\rho})) \simeq \H^*(X, O(\Bu_{\rho})) \ten O(\SL_2)
\]
at the identity matrix $I \in \SL_2(\R)$ gives a real quasi-isomorphism 
\[
 {A}^{\bt}(X, O(\Bu_{\rho})) \simeq \H^*(X, O(\Bu_{\rho}));
\]
this is precisely the formality quasi-isomorphism (given by the $dd^c$-lemma) of \cite[Theorem \ref{higgs-formal}]{higgs}. 

If instead we take the pullback at $\left(\begin{smallmatrix} 1 & 0 \\ -i &1  \end{smallmatrix}\right) \in \SL_2(\Cx)$, then we get the complex quasi-isomorphism 
\[
 {A}^{\bt}(X, O(\Bu_{\rho}))\ten \Cx \simeq \H^*_{\bar{\pd}}(X, O(\Bu_{\rho})\ten \Cx)
\]
\cite[Corollary 2.1.3]{KTP} given by the  $\pd\bar{\pd}$-lemma, because since $-id+\dc=-2i\bar{\pd}$.
\end{remark}

\subsection{Unitary actions}
Although we only have a mixed twistor structure (rather than a mixed Hodge structure) on general Malcev homotopy types,  $\varpi_1(X,x)^{\red}$ has a discrete unitary action, as in Lemma \ref{discreteact}. We will extend this to a discrete unitary action on the mixed  twistor structure. On some invariants, this action will become algebraic, and then we have a mixed Hodge structure as in Lemma \ref{tfilenrich}.

For the remainder of this section, assume that  $R$ is any quotient of $\varpi_1(X,x)^{\red}$  to which the action of the discrete group $(S^1)^{\delta}$ descends, but does not necessarily act algebraically, and let $\rho: \pi_1(X,x) \to R$ be the associated representation.

\begin{proposition}\label{redenrich}
The mixed twistor structure $(X^{\rho, \mal}_{\MTS},x)$ of Theorem \ref{mtsmal} is equipped with a $(S^1)^{\delta}$-action, satisfying the properties of Lemma \ref{tfilenrich} (except algebraicity of the action). Moreover, there is a $(S^1)^{\delta}$-action on $\ugr (X^{\rho, \mal}_{\MTS},0)$, such that the $\bG_m\by \bG_m$-equivariant  splitting
$$
\bA^1 \by \ugr (X^{\rho, \mal}_{\MTS},0)\by \SL_2 \cong (X^{\rho, \mal}_{\MTS},x)\by_{C^*, \row_1}^{\oR}\SL_2
$$
of Theorem \ref{mtsmal} is also $(S^1)^{\delta}$-equivariant.
\end{proposition}
\begin{proof}
Since $(S^1)^{\delta}$ acts on $R$, it acts on $O(\Bu_{\rho})$, and we denote this action by $v \mapsto t \circledast v$, for $t \in (S^1)^{\delta}$. 
We may now adapt the proof of Theorem \ref{mhsmal}, defining the $(S^1)^{\delta}$-action on 
$\sA^*(X, \R)\ten_{\R}O(\Bu_{\rho})$
by setting $t \boxast (a\ten v) := (t \dmd a) \ten (t^2 \circledast v)$ for $t \in (S^1)^{\delta}$.
\end{proof}

\begin{remark}\label{ktpcf}
Note that taking $R=(\pi_1(X,x))^{\red}_{\R}$ satisfies the conditions of the Proposition.   As explained in Remark \ref{cfhiggs},  Theorem \ref{mtsmal} implies the formality result of \cite{KTP}, which  relates $X^{\rho,\mal}$ to Dolbeault cohomology. Because $-id+\dc=-2i\bar{\pd}$, this is just the cohomology of $X^{\rho,\mal}_{\bT,(1,i)}$. 

Now, $(-i,1)$   is not a stable point for the
 $S$-action, but has stabiliser $1\by\bG_{m,\Cx}\subset S_{\Cx}$. In \cite{KTP}, it is effectively shown that this action  of $\bG_m(\Cx)\cong \Cx^{\by}$ lifts to a discrete action 
 on    $X^{\rho,\mal}_{\bT,(1,i)}$.  From our algebraic $\bG_m$-action and discrete $S^1$-action on $X_{\bT}^{\rho,\mal}$, we may recover the restriction of this  action   to $S^1\subset\Cx^{\by}$, with $t^2$ acting as the composition of $t \in \bG_m(\Cx)$ and $t\in S^1$.

Beware that the non-abelian ``Hodge decomposition'' of \cite{KTP} is just a discrete $ \Cx^{\by}$-action, so is far too weak to give decompositions  (or even Hodge filtrations) on abelian invariants.

Another type of Hodge structure defined on $X^{\rho, \mal}$ was the real Hodge structure (i.e. $S$-action) of \cite{hodge}. This corresponded to taking the fibre of the splitting over $\left(\begin{smallmatrix} 1 & 0 \\ 0 &1  \end{smallmatrix}\right)$, giving an isomorphism $ X^{\rho, \mal}\cong \ugr X^{\rho, \mal}_{\MTS}$, and then considering the $S$-action on the latter. However, that Hodge structure was not in general compatible with the Hodge filtration.
\end{remark}

Now, Proposition \ref{redenrich} implies that the mixed twistor structures on homotopy groups given in  Theorem \ref{mtspin} have discrete $S^1$-actions. By Lemma \ref{tfilenrich}, we know that this will give a mixed Hodge structure whenever the $S^1$-action is algebraic.

\subsubsection{Continuity}

\begin{definition}
 Let $(S^1)^{\cts}$ be the real affine scheme given by setting $O((S^1)^{\cts})$ to be the ring of real-valued continuous functions on the circle. 
\end{definition}

\begin{lemma}\label{analyticpi}
There is   a group homomorphism
$$
\sqrt h:\pi_1(X,x) \to R((S^1)^{\cts}),
$$
 invariant with respect to the $(S^1)^{\delta}$-action given by combining the actions on $R$ and $(S^1)^{\cts}$,
such that $1^*\sqrt h=\rho:\pi_1(X,x) \to R(\R) $, for $1: \Spec \R \to (S^1)^{\cts}$.
\end{lemma}
\begin{proof}
This is just the unitary action from Lemma \ref{discreteact}, given on connections by $\sqrt h (t)(d^+,  \vartheta) = (d^+, t\dmd\vartheta)$, for $t \in (S^1)$. By \cite[Theorem 7]{Simpson}, the map $(S^1) \by \pi_1(X,x) \to R(\R)$ is continuous, which is precisely the property we need. 
\end{proof}

Informally, this gives a continuity 
property of the discrete $S^1$-action, and we now wish to show a similar continuity 
property for the $(S^1)^{\delta}$-action on the mixed twistor structure $(X^{\rho, \mal},x)_{\MTS}$ of Proposition \ref{redenrich}. Recalling that $X_{\bT}= X_{\MTS}\by^{\oR}_{\bA^1}\{1\}$, we want   a continuous 
map
$$
(X,x) \by S^1 \to \holim_{\substack{\lra \\ R}} (X^{\rho, \mal},x)_{\bT}
$$
over $ C^*$,  where $\ho\LLim_R$ denotes the homotopy quotient by $R$. 

 The following is essentially \cite[\S \ref{hodge-fullaction}]{hodge}:
\begin{proposition}\label{mtsmalenrich}
For the $(S^1)^{\delta}$-actions on $\ugr X^{\rho,\mal}_{\MTS}$ of Proposition \ref{redenrich} and on $(S^1)^{\cts}$, there is  an $(S^1)^{\delta}$-invariant  map
$$
h \in\Hom_{\Ho(\bS_0 \da BR((S^1)^{\cts}))}(\Sing(X,x), \holim_{\substack{\lra \\ R((S^1)^{\cts})}} (X,x)_{\bT}^{\rho,\mal}((S^1)^{\cts})_{C^*})  ),
$$
extending the map  $ h:X \to BR((S^1)^{\cts})$  corresponding to the group homomorphism $h: \pi_1(X,x) \to R((S^1)^{\cts})$ given by  $h(t)= \sqrt h(t^2)$, for $\sqrt h$ as in  Lemma \ref{analyticpi} and $t \in S^1$. Here, $(X,x)^{\rho,\mal}_{\bT}((S^1)^{\cts})_{C^*}:= \Hom_{C^*}((S^1)^{\cts}, (X,x)^{\rho,\mal}_{\bT})$. 

Moreover, for  $1: \Spec \R \to (S^1)^{\cts}$, the map
$$
1^*h:\Sing(X,x)\to (\holim_{\substack{\lra \\ R(\R)}}  (X^{\rho,\mal},x)_{\bT}((S^1)^{\cts})_{C^*})\by_{BR((S^1)^{\cts})}BR(\R)
$$
in $\Ho(\bS_0 \da BR(\R))$ is just the canonical map
$$
\Sing(X,x)\to \holim_{\substack{\lra \\ R(\R)}} (X^{\rho, \mal}(\R),x).
$$
\end{proposition}
\begin{proof}
By Lemma \ref{relevalata}, this is equivalent to giving a $(S^1)^{\delta}$-equivariant morphism
$$
\Spec \CC^{\bt}(\Sing(X),O(\Bu_h)) \to  (X^{\rho,\mal},x)_{\bT}\by_{C^*}^{\oR}(S^1)^{\cts}
$$
in $\Ho((R\by (S^1)^{\cts}) \da s\Aff_{(S^1)^{\cts}}(R))$, noting that for the trivial map $f: \{x\} \to BR((S^1)^{\cts})$, we have $\CC^{\bt}(\{x\},O(\Bu_f))= O(R)\ten O((S^1)^{\cts})$, so $x \into X$ gives a map $R\by (S^1)^{\cts}\to \Spec \CC^{\bt}(\Sing(X),O(\Bu_h))$.

Now, the description of the $S^1$-action in Lemma \ref{analyticpi} shows that the local system $O(\Bu_h)$ on $X$ has a resolution given by
$$
(\sA^*(X, O(\Bu_{\rho}))\ten_{\R} O((S^1)^{\cts}), d^+ + t^{-2}\dmd\vartheta),
$$
for $t$ the complex co-ordinate on $S^1$, so $\CC^{\bt}(\Sing(X),O(\Bu_h)) \xra{x^*} O(R)\ten O((S^1)^{\cts}) $ is quasi-isomorphic to $  E^{\bt}\xra{x^*} O(R)\ten O((S^1)^{\cts})$, where 
$$
E^{\bt}:=D(A^*(X, O(\Bu_{\rho}))\ten_{\R} O((S^1)^{\cts}), d^+ + t^{-2}\dmd\vartheta),
$$
for $D$  the denormalisation functor.

Now, $O(S^1)$ is the quotient of $O(S)$ given by $\R[x,y]/(x^2 +y^2-1)$, where $t=x+iy$, and then
$$
uD +v D^c=   t\dmd d^+ +\bar{t}\dmd \vartheta = t\dmd(d^+ + t^{-2}\dmd\vartheta).
$$

Thus $t\dmd$ gives a $(S^1)^{\delta}$-equivariant quasi-isomorphism from $ R\by (S^1)^{\cts} \xra{x} \Spec E^{\bt}$ to $ (X_{\bT},x)^{\rho, \mal}\by_{C^*}(S^1)^{\cts}$, as required.
\end{proof}

\begin{corollary}\label{kmtspinen}
For all $n$, the map
$
\pi_n(X,x) \by S^1\to \varpi_n(X^{\rho,\mal},x)_{\bT},
$
given by composing the map $\pi_n (X,x) \to \varpi_n(X^{\rho,\mal},x)$  with the $(S^1)^{\delta}$-action on $(X^{\rho,\mal},x)_{\bT}$, is continuous. 
\end{corollary}
\begin{proof}
Proposition \ref{mtsmalenrich} gives a $(S^1)^{\delta}$-invariant map
$$
\pi_n(h):\pi_n(X,x)  \to\pi_n( \holim_{\substack{\lra \\ R((S^1)^{\cts})}} (X^{\rho,\mal},x)_{\bT}((S^1)^{\cts})_{C^*}).
$$
It therefore suffices to prove that 
$$
\pi_n( \holim_{\substack{\lra \\ R((S^1)^{\cts})}} (X^{\rho,\mal},x)_{\bT}((S^1)^{\cts})_{C^*})= \varpi_n(X^{\rho,\mal},x)_{\bT}((S^1)^{\cts})_{C^*}.
$$

Observe that the morphism $S \to C^*$ factors  through $\row_1:\SL_2 \to C^*$, via the map $S \to \SL_2$  given by  the $S$-action on the identity matrix. This gives us a factorisation of 
 $(S^1)^{\cts}\to C^*$ through $\SL_2$, using the maps $(S^1)^{\cts} \to S^1 \subset S$. It thus gives a morphism $(S^1)^{\cts} \to \Spec \oR O(C^*)$, so the $\SL_2$-splitting of Theorem \ref{mtsmal} gives an equivalence 
 $$
 (X^{\rho,\mal},x)_{\bT}\by^{\oR}_{C^*}(S^1)^{\cts} \simeq (\ugr X^{\rho,\mal}_{\MTS},0)\by (S^1)^{\cts}.
 $$
 
Similarly,  we may pull back the grouplike MTS $G(X,x)^{\rho,\mal}_{\MTS}$ from Proposition  \ref{gplike} to a dg pro-algebraic group over $(S^1)^{\cts}$, and the $\SL_2$-splitting then gives us an isomorphism 
$$
\varpi_n(X,x)^{\rho,\mal}_{\bT}\by_{C^*}(S^1)^{\cts} \cong \varpi_n(\ugr X^{\rho,\mal}_{\MTS},0)\by (S^1)^{\cts},
$$
compatible with the equivalence above. 

Thus it remains only to show that
$$
\pi_n( \holim_{\substack{\lra \\ R((S^1)^{\cts})}} (\ugr X^{\rho,\mal}_{\MTS},0)((S^1)^{\cts}))= \varpi_n(\ugr X^{\rho,\mal}_{\MTS},0)((S^1)^{\cts}).
$$
Now, write $\ugr X^{\rho,\mal}_{\MTS} \simeq \bar{W}_R(R \ltimes \exp(N^s\g))= \bar{W}N^s\g$ under the equivalences of Theorem \ref{bigequiv}, for $\g \in s\hat{\cN}(R)$. By \cite[Lemma \ref{htpy-wworks}]{htpy}, the left-hand side becomes $\pi_n(\bar{W} (R \ltimes \exp(\g))((S^1)^{\cts}))$, which is just $\pi_{n-1}((R \ltimes \exp(\g))((S^1)^{\cts}))$, giving $(R \ltimes \exp(\pi_0\g))((S^1)^{\cts})$ for $n =1$, and $(\pi_{n-1}\g)((S^1)^{\cts})$ for $n \ge 2$.  Meanwhile, the right-hand side is $(R \ltimes \exp(\H_0N\g))((S^1)^{\cts})$ for $n=1$, and $(\H_{n-1}N\g)((S^1)^{\cts})$ for $n \ge 2$. Thus the required isomorphism follows from the Dold--Kan correspondence. 
\end{proof}

 Hence   (for  $\rho \co \varpi_1(X,x)^{\red}_{\R}\to R$ any quotient  to which the $(S^1)^{\delta}$-action       descends), we have:

\begin{corollary} \label{kmhspinanal}
If the group $\varpi_n(X,x)^{\rho,\mal}$  is finite-dimensional and spanned by the image of  $\pi_n(X,x)$, then the former carries a natural $\cS$-split mixed Hodge structure, which extends the mixed twistor structure of Corollary \ref{kmtspin}. This is functorial in $(X,x)$ and compatible with the action of $\varpi_1$ on $\varpi_n$ and the Whitehead bracket.
\end{corollary}
\begin{proof}
The splittings of Theorem \ref{pinsplit} and Proposition  \ref{redenrich} combine with Corollary \ref{kmtspinen} to show that the map
$$
\pi_n(X,x) \by S^1\to \varpi_n(\ugr X^{\rho,\mal}_{\MTS},0)
$$
is continuous. 
Since the splitting also gives an isomorphism $\varpi_n(\ugr X^{\rho,\mal}_{\MTS},0) \cong \varpi_n(X,x)^{\rho,\mal}$, we deduce that $\pi_n(X,x)$ spans $\varpi_n(\ugr X^{\rho,\mal}_{\MTS},0)$, so the $S^1$ action on $\varpi_n(\ugr X^{\rho,\mal}_{\MTS},0)$ is continuous. 

Since any finite-dimensional continuous 
$S^1$-action is algebraic, this gives us an algebraic $S^1$-action on $\varpi_n(\ugr X^{\rho,\mal}_{\MTS},0)$. Retracing our steps through the splitting isomorphisms, this  implies that the  $S^1$-action on $\varpi_n(X,x)^{\rho, \mal}_{\MTS}$ is algebraic.
As in  Lemma \ref{tfilenrich}, this gives an algebraic $\bG_m \by S$-action on $\row_1^*\varpi_n(X^{\rho, \mal}_{\MTS})$, so we have a  mixed Hodge structure. That this is  $\cS$-split follows from Proposition \ref{redenrich}, since the $\cS$-splitting of the MTS in Corollary \ref{kmtspin} is $S^1$-equivariant.
\end{proof}

 \begin{remark}
   Observe that if $\pi_1(X,x)$ is algebraically good with respect to $R$ and the homotopy groups $\pi_n(X,x)$ have finite rank for all $n\ge 2$, with the local system $\pi_n(X,-)\ten_{\Z} \R$ an extension of $R$-representations, then   Theorem \ref{classicalpimal} implies that $\varpi_n(X^{\rho, \mal},x)\cong \pi_n(X,x)\ten \R$, ensuring that  the hypotheses of  Corollary \ref{kmhspinanal} are satisfied.
   \end{remark}

\section{Variations of mixed Hodge and mixed twistor structures}\label{vmhssn}

Fix a connected compact K\"ahler manifold $X$. 

\subsection{Representations in MHS/MTS}

\begin{definition}\label{sAtildedef}
Define the sheaf $\tilde{\sA}^{\bt}(X)$ of CDGAs  on $X \by C$ by
$$
\tilde{\sA}^{\bt}= (\sA^* \ten_{\R} O(C), ud + v\dc),
$$
for co-ordinates $u,v$ as in Remark \ref{Ccoords}. We denote the differential by $\tilde{d}:=ud + v\dc$.
Note that 
$
\Gamma(X, \tilde{\sA}^{\bt})=\tilde{A}^{\bt}(X),
$
as given in Definition \ref{Atildedef}.
\end{definition}

\begin{definition}
 Define   a real $\C^{\infty}$ family  of mixed Hodge (resp. mixed twistor) structures $\sE$ on $X$ to be of a finite locally free $S$-equivariant (resp. $\bG_m$-equivariant)  $j^{-1}\tilde{\sA^0}_X$-sheaf  on $X \by C^*$ equipped with a finite increasing filtration $W_i\sE$ by locally free $S$-equivariant (resp. $\bG_m$-equivariant) sub-bundles such that for all $x \in X$, the pullback of $\sE$ to $x$ corresponds under Proposition \ref{flatmhs} to a mixed Hodge structure (resp. corresponds under Corollary \ref{flatmts} to a mixed twistor structure).
\end{definition}

\begin{lemma}\label{vmhslemma}
A (real) variation of mixed Hodge structures (in the sense of \cite{VMHS1}) on $X$ is equivalent to a real $\C^{\infty}$ family  of mixed twistor structures $\sE$ on $X$, equipped with a flat $S$-equivariant $\tilde{d}$-connection
$$
\tilde{D}:\sE \to \sE\ten_{j^{-1}\tilde{\sA^0_X}}j^{-1}\tilde{\sA^1_X},
$$
compatible with the filtration $W$.
\end{lemma}
\begin{proof}
Given a real VMHS $\vv$, we obtain a $\C^{\infty}$ family   $\sE:=\xi(\vv\ten \sA^0, \bF)$ of mixed Hodge structures (in the notation of Corollary \ref{flathfil}), and the connection $D:\vv\ten \sA^0 \to \vv\ten \sA^1$ gives $\tilde{D}=\xi(D, \bF) $.  Writing $\sA^n_{\Cx}:= \sA^n\ten \Cx$, $S$-equivariance of $\tilde{D}$ is equivalent to   the condition
$$
D:F^p(\vv\ten \sA^0_{\Cx}) \to (F^p(\vv\ten \sA^0_{\Cx}) \ten_{\sA^0_{\Cx}}\sA^{01}) \oplus  (F^{p-1}(\vv\ten \sA^0_{\Cx}) \ten_{\sA^0_{\Cx}}\sA^{10}),
$$ corresponding to  a  Hodge filtration on $\vv\ten \O_X$, with $D:F^p(\vv\ten \O_X) \to F^{p-1}(\vv\ten \O_X)\ten_{\O_X}\Omega_X^1$. 
\end{proof}

\begin{definition}
Adapting \cite[\S 1]{MTS} from complex to real structures, we define  a (real) variation of mixed twistor structures (or VMTS) on $X$ to consist of a real $\C^{\infty}$ family  of mixed twistor structures $\sE$ on $X$, equipped with a flat $\bG_m$-equivariant $\tilde{d}$-connection
$$
D:\sE \to \sE\ten_{j^{-1}\tilde{\sA^0}}j^{-1}\tilde{\sA^1},
$$
compatible with the filtration $W$.
\end{definition}

\begin{definition}
Given an ind-MHS (resp. ind-MTS) structure on a Hopf algebra $O(\Pi)$,
define an MHS (resp. MTS) representation of $G$ to consist of a MHS (resp. MTS) $V$, together with a morphism 
$$
V \to V\ten O(\Pi)
$$
of  ind-MHS (resp. ind-MTS),  codistributive with respect to the Hopf algebra comultiplication.
\end{definition}


\begin{theorem}\label{kvmhsrep}\label{kvmtsrep}
For $\varrho: \pi_1(X,x) \to {}^{\VHS}\!\varpi_1(X,x)$ (resp. $\varrho: \pi_1(X,x) \to \varpi_1(X,x)^{\red}$),
the category of MHS (resp. MTS)  representations of $\varpi_1(X^{\varrho, \mal},x)$  is equivalent to the category of real variations of mixed Hodge structure (resp. variations of mixed twistor structure) on $X$. 
Under this equivalence, the forgetful functor to real MHS (resp. MTS) sends a real VMHS (resp. VMTS) $\vv$ to $\vv_x$. 

For $R$ any $S^1$-equivariant quotient of ${}^{\VHS}\!\varpi_1(X,x)$ (resp. any quotient of  $\varpi_1(X,x)^{\red}$), MHS (resp. MTS) representations of $\varpi_1(X,x)^{R, \mal}$ correspond to real VMHS (resp. VMTS) $\vv$ whose underlying local systems are extensions of $R$-representations.
\end{theorem}
\begin{proof}
 We will prove this for VMHS. The proof for VMTS is almost identical, replacing $S$ with $\bG_m$, and Proposition \ref{flatmhs} with Proposition \ref{flatmts}.

Given an MHS representation $\psi:\varpi_1(X,x)^{R, \mal} \to \GL(V) $ for an MHS $V$, observe that because $\varpi_1$ is of non-positive weights, $\psi$ maps to 
\[
 W_0\GL(V):= \Spec O(\GL(V))/W_{-1} = (W_0\End(V))\cap \GL(V),
\]
 the group of automorphisms of $V$ preserving the weight filtration. Next, note that 
\[
 \gr^W_0\GL(V):= \Spec \gr^W_0O(\GL(V))
\]
is the group $\prod_i \GL(\gr^W_iV)$ of graded automorphisms of $\gr^WV$, so
\[
 \gr^W_0\GL(V) =W_0\GL(V)/ (I + W_{-1}\End(V)).
\]

Since the tangent space of $\Ru\varpi_1$ is of strictly negative weights, this means that $\psi( \Ru \varpi_1(X,x)^{R, \mal}) \subset I + W_{-1}\End(V) $, so $\psi$ induces an $S$-equivariant morphism $R \to \gr^W_0\GL(V)$ on the reductive quotients. Since $R$ is a quotient of  ${}^{\VHS}\!\varpi_1(X,x)$, this corresponds by Proposition \ref{vhsequiv} to a VHS $\gr^W\vv$ on $X$ with $(\gr^W\vv)_x= \gr^WV$.

We then set 
\[
 G:= R\by_{\gr^W_0\GL(V) }W_0\GL(V), \quad U:= I + W_{-1}\End(V),
\]
so we have $\psi\co \varpi_1(X^{\rho, \mal},x) \to G$ a homomorphism over $R$, with $G \to R$ a surjection with pro-unipotent kernel $U$. Note that 
\[
 \gr^WG= R\by_{\gr^W_0\GL(V) }W_0\GL(\gr^WV)= R \ltimes (I + W_{-1}\End(\gr^WV)).
\]

This gives rise to  $\bG_m \by S$-equivariant  affine group objects $U_{\MHS} \lhd G_{\MHS}$ over $\bA^1 \by C^*$, given by 
$$
G_{\MHS} = \oSpec \xi(O(G), \MHS), \quad U_{\MHS} = \oSpec \xi(O(U), \MHS),
$$
and we then have a morphism $G_{\MHS} \to \bA^1 \by R \by C^*$ with kernel $U_{\MHS}$. We construct $\gr^WG_{\MHS}, \gr^WU_{\MHS}$ similarly, with $\gr^WG_{\MHS} =  (\bA^1 \by R \by C^*)\ltimes \gr^WU_{\MHS} $.

The $\SL_2$-splitting of $(X^{\rho, \mal}_{\MHS},x)$ from Theorem \ref{mhsmal} then guarantees that $\row_1^*O(X^{\rho, \mal}_{\MHS},x)$ satisfies the conditions of Proposition \ref{repbarY} on  $\bA^1 \by \SL_2$, and hence on $\bA^1 \by Y$ for any $S$-equivariant affine scheme $Y$ over $C^*$. Thus a $\bG_m \by S$-equivariant morphism
\[
 \varpi_1(X,x)_{\MHS}^{\rho, \mal}|_{\bA^1 \by Y} \to \gr^WG_{\MHS}|_{\bA^1 \by Y}
\]
is given by an element $(\omega, \alpha)$ of
\[
\mc( \Gamma(Y, W_0\tilde{A}^{\bt}(X, \fu_{\psi}))^S)\by^{I+\Gamma(Y, W_0\tilde{A}^0(X, \fu_{\psi}))^S , x^*}(I +   W_0(O(Y) \ten \fu)^S),
\]
where $\fu_{\psi}$ is the local system $W_{-1}\bEnd(\gr^W\vv)$ and $\fu= W_{-1}\End(\gr^WV)$.

Since the weight filtration on $\tilde{A}$ is defined by d\'ecalage and $\fu$ is of strictly negative weights, this simplifies to
\[
 \mc( \Gamma(Y, \tilde{A}^{\bt}(X,  \fu_{\psi}))^S)\by^{I +\Gamma(Y, \tilde{A}^0(X,  \fu_{\psi}))^S ), x^*}(I +   (O(Y) \ten \fu)^S).
\]

If we write $(\sE_0, \tilde{D}_0)$ for the $S$-equivariant $\C^{\infty}$-family of mixed twistor structures with flat $\tilde{d}$-connection coming from $\gr^W\vv$  as in Lemma \ref{vmhslemma}, then $\tilde{D}_0 +\omega$ is another such connection on $\sE_0|_Y$. Now, Proposition \ref{cSubiq} gives an isomorphism $\beta \co V\ten \cS \cong (\gr^WV)\ten \cS$ of quasi-MHS, so by Lemmas \ref{sluniv} and \ref{slhodge}, on any such $Y$ we have a non-canonical isomorphism
\[
  G_{\MHS}|_{\bA^1 \by Y} \cong \gr^WG_{\MHS}|_{\bA^1 \by Y}.
\]

Putting these together,  a $\bG_m \by S$-equivariant morphism $\varpi_1(X^{\rho, \mal},x)_{\MHS}|_{\bA^1 \by Y} \to  G_{\MHS}|_{\bA^1 \by Y}$ is the same as a pair 
\[
 (\sE|_Y, \tilde{D}|_Y):= (\sE_0, \tilde{D}_0+\omega)
\]
as above
with $\gr^W(\sE|_Y, \tilde{D}|_Y)=(\sE_0, \tilde{D}_0)|_Y$, together with a $W$-filtered isomorphism $\beta^{-1}\circ \alpha \co (\sE|_Y, \tilde{D}|_Y)_x \cong  \xi(V, \bF)$. 

This description does not depend on the choice $\beta$ of splitting, so is functorial in $Y$. Gluing these for all $Y$ shows that a $\bG_m \by S$-equivariant morphism $\varpi_1(X^{\rho, \mal},x)_{\MHS}\to  G_{\MHS}$
 is the same as a pair $(\sE, \tD)$ as in Lemma \ref{vmhslemma}, equipped with isomorphisms
 \[
 \gr^W(\sE, \tilde{D})\cong (\sE_0, \tilde{D}_0), \quad (\sE, \tilde{D})_x \cong \xi(V, \bF).
\]
By Lemma \ref{vmhslemma}, this gives a VMHS $\vv$ with $\gr^W\vv=\gr^W\vv$ and $\vv_x=V$. 

That every VMHS arises in this way follows from the observation that there exist $\C^{\infty}$-isomorphisms $\sA^0_X(\vv) \cong \sA^0(\gr^W\vv)$ for all VMHS $\vv$, because all extensions of $\C^{\infty}$ vector bundles are trivial.
\end{proof}

For $R$ as in Theorem \ref{kvmhsrep}, we now have the following.
\begin{corollary}\label{kvmhspin}\label{kvmtspin}
There is a canonical commutative algebra $\bO(\varpi_1X^{R, \mal})$ in ind-VMHS (resp. ind-VMTS) on $X\by X$, with $\bO(\varpi_1X^{R, \mal})_{x,x}= O(\varpi_1(X^{R, \mal}, x))$. This has a  comultiplication 
$$
\pr_{13}^{-1}\bO(\varpi_1X^{R, \mal})\to \pr_{12}^{-1}\bO(\varpi_1X^{R, \mal})\ten \pr_{23}^{-1}\bO(\varpi_1X^{R, \mal})
$$  on 
$X \by X \by X$, 
a co-identity $\Delta^{-1}\bO(\varpi_1X^{R, \mal})\to \R$ on $X$ (where $\Delta(x) =(x,x)$) and a co-inverse $\tau^{-1}\bO(\varpi_1X^{R, \mal}) \to  \bO(\varpi_1X^{R, \mal})$ (where $\tau(x,y)= (y,x)$), all of which are morphisms of algebras in ind-VMHS (resp. ind-VMTS).

There are canonical ind-VMHS (resp. ind-VMTS) $\Pi^n(X^{R, \mal})$ on $X$ for all $n \ge 2$, with $\Pi^n(X^{R, \mal})_x= \varpi_n(X^{R, \mal}, x)^{\vee}$.
\end{corollary}
\begin{proof}
The left and right actions of   $\varpi_1(X^{R, \mal}, x)$ on itself make $O(\varpi_1(X^{R, \mal}, x))$ into an ind-MHS (resp. ind-MTS) representation of  $\varpi_1(X^{R, \mal}, x)^2$, so it corresponds under Theorem \ref{kvmhsrep} to an ind-VMHS (resp. ind-VMTS) $\bO(\varpi_1X^{R, \mal}) $ with the required properties. Theorem \ref{kmhspin} makes  $\varpi_n(X^{R, \mal}, x)^{\vee}$ into an ind-MHS-representation of $\varpi_1(X^{R, \mal}, x)$, giving $\Pi^n(X^{R, \mal})$.
\end{proof}

Note that for any VMHS (resp. VMTS) $\vv$, this means that we have a canonical morphism $\pr_2^{-1}\vv \to \pr_1^{-1}\vv \ten \bO(\varpi_1X^{\varrho, \mal})$ of ind-VMHS (resp. ind-VMTS) on $X \by X$, for $\varrho$ as in Theorem \ref{kvmhsrep}.

\begin{remark}\label{kvmhsbasepts}
Using Remarks \ref{baseptslike} and \ref{kmhspinbasepts}, we can adapt Theorem \ref{kvmhsrep} to any MHS/MTS representation $V$ of the groupoid $\varpi_1(X; T)^{R, \mal}$ with several basepoints (where we define representations by requiring that each map  $V(x) \to O(\varpi_1(X; x,y)^{R, \mal})\ten V(y)$ be a morphism of ind-MHS/MTS). This gives a VMHS/VMTS $\vv$, with canonical isomorphisms $\vv_x \cong V(x)$ of MHS/MTS for all $x \in T$. 

Corollary \ref{kvmhspin} then adapts to multiple basepoints, since there is a natural representation of $\varpi_1(X; T)^{R, \mal}\by \varpi_1(X; T)^{R, \mal}$ given by $(x, y) \mapsto O(\varpi_1(X; x,y)^{R, \mal})$. This gives a canonical Hopf algebra $\bO(\varpi_1X^{R, \mal})$ in ind-VMHS/VMTS on $X\by X$, with $\bO(\varpi_1X^{R, \mal})_{x,y}= O(\varpi_1(X^{R, \mal}; x,y)$ for all $x, y \in T$. Since this construction is functorial for sets of basepoints, we deduce that this is the VMHS/VMTS $\bO(\varpi_1X^{R, \mal})$ of Corollary \ref{kvmhspin} (which is therefore independent of the basepoint $x$). This generalises \cite[Corollary 13.11]{hainrelative} (which takes $R= \GL(\vv_x)$ for a VHS $\vv$).

Likewise, the representation $x \mapsto  \varpi_n(X^{R, \mal}, x)^{\vee}$ of $\varpi_1(X; T)^{R, \mal}$ gives an ind-VMHS/VMTS $\Pi^n(X^{R, \mal})$ (independent of $x$) on $X$ with $\Pi^n(X^{R, \mal})_x= \varpi_n(X^{R, \mal}, x)^{\vee}$, for all $x \in X$.
\end{remark}

\begin{remark}\label{arapuraconj2}
\cite{arapurapi1} introduces a quotient $\varpi_1(X,x)^{\alg}_k \to \pi_1(X,x)_k^{\mathrm{hodge}}$ over any field $k \subset \R$, characterised by the  the property that representations of $\pi_1(X,x)_k^{\mathrm{hodge}}$ correspond to local systems underlying $k$-VMHS on $X$.

Over any field $k \subset \R$, there is a pro-algebraic group $\mathrm{MT}_k$ over $k$, whose representations correspond to mixed Hodge structures over $k$. If $\rho: \varpi_1(X,x)^{\alg}_k\to{}^{\VHS}\!\varpi_1(X,x)_k$ is the largest quotient of the $k$-pro-algebraic completion with the property that the surjection $ \varpi_1(X,x)^{\alg}_{\R} \to \varpi_1(X,x)^{\alg}_k\ten_k\R$ factors through ${}^{\VHS}\!\varpi_1(X,x)_{\R}$, then Theorem \ref{kmhspin} and Remark \ref{arapuraconj} give an algebraic action of $\mathrm{MT}_k$ on  $\varpi_1(X,x)^{\rho,\mal}_k$, with representations of $\varpi_1(X,x)^{\rho,\mal}_k \rtimes \mathrm{MT}_k$ being representations of $\varpi_1(X,x)^{\rho,\mal}_k$ in $k$-MHS. Theorem \ref{kvmhsrep} implies that these are precisely $k$-VMHS on $X$, so  \cite[Lemma 2.8]{arapurapi1} implies that $\varpi_1(X,x)^{\rho,\mal}_k = \pi_1(X,x)_k^{\mathrm{hodge}}$.  

For any quotient $R$ of ${}^{\VHS}\!\varpi_1(X,x)_k \to R$ (in particular if $R$ is the image of the monodromy representation of a $k$-VHS), Theorem \ref{kvmhsrep} then implies that $\varpi_1(X,x)^{R,\mal}_k$ is a quotient of  $\pi_1(X,x)_k^{\mathrm{hodge}}$,  proving the second part of  \cite[Conjecture 5.5]{arapurapi1}.

Note that this also implies that if $\vv$ is a local system  on $X$ whose semisimplification $\vv^{ss}$ underlies a VHS, then $\vv$ underlies a VMHS (which need not be compatible with the VHS on $\vv^{ss}$).
\end{remark}

\begin{example}
One application of the ind-VMHS on $\bO(\varpi_1X^{\rho, \mal})$ from Corollary \ref{kvmhspin} is to look at deformations of the representation associated to a VHS $\vv$. Explicitly, $\vv$ gives representations $\rho_x:   {}^{\VHS}\!\varpi_1(X,x) \to \GL(\vv_x)$ for all $x \in X$, and for any Artinian local commutative $\R$-algebra $A$ with residue field $\R$, we consider the formal scheme $F_{\rho_x}$ given by
$$
F_{\rho_x}(A) = \Hom(\pi_1(X,x), \GL(\vv_x \ten A))\by_{\Hom(\pi_1(X,x), \GL(\vv_x ))}\{\rho_x\}.
$$

Now, $\GL(\vv_x \ten A)= \GL(\vv_x) \ltimes \exp(\gl(\vv_x) \ten \m(A))$, where $\m(A)$ is the maximal ideal of $A$. If $R(x)$ is the image of $\rho_x$, and $\rho_x':{}^{\VHS}\!\varpi_1(X,x) \to R(x)$ is the induced morphism,  then 
$$
F_{\rho_x}(A)= \Hom( \varpi_1(X,x)^{\rho'_x, \mal}, R(x) \ltimes  \exp(\gl(\vv_x) \ten \m(A))_{\rho_x}.
$$
Thus $F_{\rho_x}$ is a formal  subscheme contained in the germ at $0$ of $O(\varpi_1(X,x)^{\rho'_x, \mal})\ten \gl(\vv_x)$, defined by the conditions 
$$
f( a\cdot b)= f(a)\star (\ad_{\rho'_x(a)}( f(b)))
$$ 
for $a,b \in \varpi_1(X,x)^{\rho'_x, \mal} $, where $\star$ is the Campbell--Baker--Hausdorff  product $a\star b= \log(\exp(a)\cdot \exp(b))$.

Those same conditions define a family $\bF(\rho)$ on $X$ of formal subschemes contained in $ (\Delta^{-1}\bO(\varpi_1X^{\rho', \mal}))\ten \gl(\vv)$, with $\bF(\rho)_x= F_{\rho_x}$. If $\bF = \Spf \sB$, the VMHS on $\bO(\varpi_1X^{\rho', \mal})$ and $\vv$ then give $\sB$ the natural structure of a (pro-Artinian algebra in) pro-VMHS. This generalises \cite{carlosrepvhs} to real representations, and also adapts easily to $S$-equivariant representations in more general groups than $\GL_n$. Likewise, if we took $\vv$ to be any variation of twistor structures, the same argument would make $\sB$ a pro-VMTS.
\end{example}

\subsection{Enriching VMTS}

Say we have some quotient $R$  of $\varpi_1(X,x)^{\red}$  to which the action of the discrete group $(S^1)^{\delta}$ descends, but does not necessarily act algebraically. Corollary \ref{kmtspin} puts an ind-MTS  on the Hopf algebra $O(\varpi_1(X,x)^{R, \mal})$, and Proposition \ref{redenrich} puts a $(S^1)^{\delta}$ action on $\xi(O(\varpi_1(X,x)^{R, \mal}), \MTS)$, satisfying the conditions of Lemma \ref{tfilenrich}.

Now take an MHS $V$, and assume that we have an MTS  representation $\varpi_1(X,x)^{R, \mal} \to \GL(V)$, with the additional property that  the corresponding morphism 
$$
\xi(V, \MHS) \to \xi(V, \MHS) \ten  \xi(O(\varpi_1(X,x)^{R, \mal}), \MTS)
$$
of ind-MTS is equivariant for the $(S^1)^{\delta}$-action.

Now, $\gr^W_nV$ is an MTS representation of $\gr^W_0\varpi_1(X,x)^{R, \mal}= R$, giving a $(S^1)^{\delta}$-equivariant map
$$
\gr^W_nV \to \gr^W_nV \ten O(R).
$$

If $\vv$ is the local system associated to $V$, then
this is equivalent to giving a compatible system of isomorphisms $\gr^W_n\vv \cong t\circledast \gr^W_n\vv$ for $t \in S^1$. Therefore Proposition \ref{vhsequiv} implies that $\gr^WV$ is a representation of ${}^{\VHS}\!\varpi_1(X,x)$. Letting $R'$ be the largest common quotient of $R$ and ${}^{\VHS}\!\varpi_1(X,x)$, this means that $\gr^WV$ is an $R'$-representation, so $V$ is a representation of $\varpi_1(X,x)^{R', \mal}$. 

Then we have a $S^1$-equivariant morphism
  $$
\xi(V, \MHS) \to \xi(V, \MHS) \ten  \xi(O(\varpi_1(X,x)^{R', \mal}), \MHS)
$$
of ind-MTS, noting that $S^1$ now acts algebraically on both sides (using Corollary \ref{mhspin}), so Lemma \ref{tfilenrich} implies that this is a morphism of ind-MHS, and therefore that $V$ is an MHS representation of $\varpi_1(X,x)^{R', \mal}$. Theorem \ref{kvmhsrep} then implies that this amounts to $\vv$ being a VMHS on $X$.

Combining this argument  with Corollary \ref{kmtspinen} immediately gives:

\begin{proposition} \label{kvmhspinen}
Under the conditions of Corollary \ref{kmhspinanal}, the local system associated to the $\pi_1(X,x)$-representation $\varpi_n(X,x)^{R,\mal}$ naturally underlies a VMHS, which is independent of the basepoint $x$.
\end{proposition}

\subsection{Absolute Hodge and twistor homotopy types}\label{hodgeHTsn}

Given a group $G$ acting on a topological space $Y$, with homotopy quotient $Z$, we have a homotopy fibration sequence
\[
 Y \xra{\pi} Z \to BG,
\]
so can apply Theorem \ref{fibrations} to describe the relative Malcev homotopy types of $Z$ in terms of those of $Y$. A very similar phenomenon arises in \cite{weiln} when comparing arithmetic and geometric \'etale homotopy types, with $G$ being the Galois group, giving fibration sequences
\[
 (Z\ten_F \bar{F})_{\et}^{K, \mal} \to Z_{\et}^{R, \mal} \to (\Spec F)_{\et}^{T, \mal}= (B\Gal(F))^{T,\mal}
\]
for a scheme $Z$ over a field $F$.

\subsubsection{Definitions}

\begin{definition}\label{hodgeHT}
Given an algebraic mixed Hodge structure $Y^{R, \mal}_{\MHS}$ on a relative Malcev homotopy type $Y^{R, \mal} $, define the absolute Hodge homotopy type  $Y^{R, \mal}_{\cH}$ by
\[
 O(Y^{R, \mal}_{\cH}):=\oR \Gamma(\bA^1 \by C^*, O(Y^{R, \mal}_{\MHS}))^{\bG_m},
\]
where the $\bG_m$-action is given by the morphism 
\begin{eqnarray*}
 \bG_m &\xra{\eta}& \bG_m \by S\\
a &\mapsto&  (a, a^{-1}).
\end{eqnarray*}
Since $(\bG_m \by S)/\eta(\bG_m)= S$, 
the $\bG_m \by S$-action on $O(Y^{R, \mal}_{\MHS})$ then induces  an $S$-action on $O(Y^{R, \mal}_{\cH})$, making it an $R \rtimes S$-equivariant CDGA. 

For an explicit model of $O(Y^{R, \mal}_{\cH})$, we will always take 
\[
 \Gamma(\bA^1 \by C^*, O(Y^{R, \mal}_{\MHS})\ten_{\O_{C^*}}\oR O(C^*))^{\bG_m}.
\]
\end{definition}

\begin{definition}
 Given an algebraic mixed twistor structure $Y^{R, \mal}_{\MTS}$ on a relative Malcev homotopy type $Y^{R, \mal} $, define the absolute twistor homotopy type  $Y^{R, \mal}_{\cT}$ by
\[
 O(Y^{R, \mal}_{\cT}):=\oR \Gamma(\bA^1 \by C^*, O(Y^{R, \mal}_{\MTS}))^{\bG_m},
\]
for the $\bG_m$-action $\eta \co \bG_m \to \bG_m \by \bG_m$. For an explicit model of  $O(Y^{R, \mal}_{\cT})$, we will always take the $R \by \bG_m$-equivariant CDGA 
\[
 \Gamma(\bA^1 \by C^*, O(Y^{R, \mal}_{\MTS})\ten_{\O_{C^*}}\oR O(C^*))^{\bG_m}.
\]
\end{definition}

Note that for a homotopy type with a MHS, the    absolute twistor homotopy type associated to the underlying  MTS is just given by 
the absolute Hodge homotopy type, equipped only with the action of $\bG_m \subset S$.

\begin{definition}\label{cWdef}
Given a $\bG_m$-representation $V$, there is a grading on $V$ which we denote by
\[
 V= \bigoplus_{n \in \Z} \cW_nV,
\]
with $a \in \bG_m$ acting as $a^n$ on $\cW_nV$.

Given an $S$-representation $V$, the inclusion $\bG_m \into S$ (given by $v=0$ in the co-ordinates of Remark \ref{Ccoords}) then gives a grading $\cW_nV$ on $V$.
  Equivalently, $ \cW_n(V\ten \Cx)$ is the sum of elements of type $(p,q)$ for $p+q=n$.
 \end{definition}

When $O(Y^{R, \mal}_{\MHS})$ is given by a Rees construction $\xi(O(Y^{R, \mal}_{\bF}), W)$, we can simplify the expression for the Hodge homotopy type to
\[
 O(Y^{R, \mal}_{\cH})= \bigoplus_m \cW_m(  \Gamma(C^*, W_mO(Y^{R, \mal}_{\bF})\ten_{\O_{C^*}}\oR O(C^*))),
\]
and the analogous statement is true for absolute twistor homotopy types.

At a point, this gives
\[
 O(\{*\}^{1, \mal}_{\cH})= \cW_{\ge 0}\oR O(C^*).
\]
The $S$-action on the identity matrix $I$ gives a morphism $\jmath \co S \to \SL_2$, and hence $\jmath^* \co\oR O(C^*) \to O(S)$. Since this is $S$-equivariant, we have
\[
 (\cW_{\ge 0}\oR O(C^*)\xra{\jmath^*} O(S)) \in DG\Alg(S)_{0*}.
\]

Any morphism $y^* \co  O(Y^{R, \mal}_{\MHS} \to O(R)\ten  O(\{*\}^{1, \mal}_{\MHS})$ of algebraic mixed Hodge structures then gives rise to 
\[
  (O(Y^{R, \mal}_{\cH}) \xra{\jmath^*y^*  } O(R \rtimes S)) \in DG\Alg(R \rtimes S)_*.
\]
Similarly, a morphism $y^* \co  O(Y^{R, \mal}_{\MTS} \to O(R)\ten  O(\{*\}^{1, \mal}_{\MTS})$ of algebraic mixed twistor structures gives rise to 
\[
 (O(Y^{R, \mal}_{\cT}) \xra{\jmath^*y^*  } O(R \by \bG_m)) \in DG\Alg(R \by \bG_m)_*.
\]

\begin{definition}\label{GHdef}\label{GTdef}
 Define 
\begin{eqnarray*}
 G(Y,y)_{\cH}^{R, \mal}&:=& \bar{G}_{R \rtimes S}(\jmath^*y^*\co  O(Y^{R, \mal}_{\cH}) \xra{\jmath^*y^*  } O(R \rtimes S))\\
G(Y,y)_{\cT}^{R, \mal}&:=& \bar{G}_{R \by \bG_m}(\jmath^*y^*\co  O(Y^{R, \mal}_{\cT}) \xra{\jmath^*y^*  } O(R \by \bG_m)),
\end{eqnarray*}
  noting these lie in $dg\cE(R \rtimes S)$ and $dg\cE(R \by \bG_m)$, respectively.

We then define the absolute Hodge and absolute twistor homotopy groups by 
\begin{eqnarray*}
 \varpi_n(Y,y){R, \mal}_{\cH}&:=& \pi_{n-1}(\Spec D)G(Y,y)_{\cH}^{R, \mal}\\ 
\varpi_n(Y,y){R, \mal}_{\cT}&:=& \pi_{n-1}(\Spec D)G(Y,y)_{\cT}^{R, \mal}.
\end{eqnarray*}
\end{definition}
In Proposition \ref{hodgefibration}, we will see that the absolute Hodge and absolute twistor homotopy groups admit a particularly simple description.

\subsubsection{MHS, MTS and fibrations}

If we write $\MHS$ and $\MTS$ for the categories of real mixed Hodge and mixed twistor structures (Definitions \ref{quasiMHS} and \ref{abmtsdef}), then these are neutral Tannakian categories over real vector spaces, with the fibre functor sending each MHS $(V,W,F)$ to the underlying real vector space $V$, and each MTS $\sE$ to $\sE_1$.

We then write $\Pi(\MHS)$ and $\Pi(\MTS)$ for the Tannakian duals (in the sense of \cite{tannaka}) of these categories. These are real affine group schemes, with the property that a $\Pi(\MHS)$-action (resp. $\Pi(\MTS)$-action) on a real vector space $V$ is equivalent to a MHS (resp. MTS) on $V$.

\begin{proposition}\label{SHSequiva}
There is a natural isomorphism
\[ 
 G(\{*\})_{\cH}^{1, \mal}=\bar{G}_{S}(\jmath^*\co \cW_{\ge 0}\oR O(C^*) \to O(S))\cong \Pi(\MHS).
\]
of  pro-algebraic   groups.
\end{proposition}
\begin{proof}
 This will follow by combining Theorem \ref{SHSequiv} with Lemma \ref{reducedequivlemma}.
\end{proof}

\begin{proposition}\label{STSequiva}
There is a natural isomorphism
\[ 
G(\{*\})_{\cT}^{1, \mal}=\bar{G}_{\bG_m}(\jmath^*\co\cW_{\ge 0}\oR O(C^*)\to O(\bG_m))\cong \Pi(\MTS)
\]
of pro-algebraic   groups.
\end{proposition}
\begin{proof}
 This will follow by combining Theorem \ref{STSequiv} with Lemma \ref{reducedequivlemma}.
\end{proof}

\begin{proposition}\label{hodgefibration}
If the weights on $\H^*O(\ugr Y^{R,\mal}_{\MHS})$ (resp. $\H^*O(\ugr Y^{R,\mal}_{\MTS})$) are bounded below, then
the grouplike homotopy type $G(Y,y)^{R, \mal}$ is the homotopy fibre of 
\[
 G(Y,y)^{R, \mal}_{\cH} \to \Pi(\MHS),
 \text{ resp. } G(Y,y)^{R, \mal}_{\cT} \to \Pi(\MTS).
\]

Moreover, the basepoint $y$ gives a section of these maps, so $G(Y,y)^{R, \mal}_{\cH}  $  (resp. $G(Y,y)^{R, \mal}_{\cT}  $)  is the semidirect product of $\Pi(\MHS) $ (resp. $\Pi(\MTS) $) and a simplicial algebraic group quasi-isomorphic to $ G(Y,y)^{R, \mal}$. In particular, the homotopy groups of Definition \ref{GHdef} are given by
\[
 \varpi_n(Y,y)^{R, \mal}_{\cH} \cong \varpi_n(Y,y)^{R, \mal}, \quad \varpi_n(Y,y)^{R, \mal}_{\cT} \cong \varpi_n(Y,y)^{R, \mal} 
\]
for $n \ge 2$, and
\[
  \varpi_1(Y,y)^{R, \mal}_{\cH} \cong \varpi_1(Y,y)^{R, \mal} \rtimes \Pi(\MHS), \quad \varpi_1(Y,y)^{R, \mal}_{\cT} \cong \varpi_1(Y,y)^{R, \mal} \rtimes \Pi(\MTS).
\]
\end{proposition}
\begin{proof}
We will prove this for MHS --- the proof for MTS is almost identical, replacing $S$ with  $\bG_m$ and $\cH$ with $\cT$.

If we write $\cF$ for the homotopy fibre of $G(Y,y)^{R, \mal}_{\cH} \to \Pi(\MHS) $, then because $\bar{W}$ from Definition \ref{barwg} is a right adjoint, the functor $\bar{W}_R$ of Corollary \ref{WRcor} satisfies 
\[
 \bar{W}_R(N^s\cF)\simeq \bar{W}_{R \rtimes S}(N^sG(Y,y)^{R, \mal}_{\cH})\by^h_{\bar{W}_{S}\Pi(\MHS))} \Spec \R.
\]
 The equivalences of Theorem \ref{bigequiv} allow us to rewrite this as
\[
 O( \bar{W}_R(N^s\cF))\simeq O(Y^{R, \mal}_{\cH})\ten^{\oL}_{ O(\{*\}^{R, \mal}_{\cH}) }\R.
\]

It therefore suffices to show that the natural map
\[
\psi \co O(Y^{R, \mal}_{\cH})\ten^{\oL}_{ O(\{*\}^{R, \mal}_{\cH}) }\R \to O(Y^{R, \mal})
\]
is  quasi-isomorphism, since the equivalences of Theorem \ref{bigequiv} then give
\[
 N^s\cF \simeq \bar{G}_{ R \rtimes S}( \jmath^*y^*\co O(Y^{R, \mal}_{\cH}) \to O(R\rtimes S)),
\]
as required.

We may replace $O( Y^{R,\mal}_{\MHS})$ with a quasi-isomorphic flat complex over $\bA^1$, in which case $O(Y^{R, \mal}_{\MHS})$ is given by a Rees construction $\xi(O(Y^{R, \mal}_{\bF}), W)$, by Lemma \ref{flatfiltrn}. Then we are looking at the derived tensor product
\[
( \bigoplus_m \cW_m(  \Gamma(C^*, W_mO(Y^{R, \mal}_{\bF})\ten_{\O_{C^*}}\oR O(C^*)))\ten^{\oL}_{ \cW_{\ge 0}\oR O(C^*)}\R.
\]
The weight filtration $W$ on $O(Y^{R, \mal}_{\bF}) $ induces a filtration $W$ on $ O(Y^{R, \mal}_{\cH})$, given by
\[
W_i O(Y^{R, \mal}_{\cH}):=  \bigoplus_m \cW_m\Gamma(C^*, W_mW_iO(Y^{R, \mal}_{\bF})\ten_{\O_{C^*}}\oR O(C^*)).
\]

The graded pieces are then
\[
 \gr^W_iO(Y^{R, \mal}_{\cH})= \bigoplus_{m} \cW_m\Gamma(C^*, \gr^W_iO(Y^{R, \mal}_{\bF})\ten_{\O_{C^*}}\oR O(C^*)),
\]
which by the opposedness isomorphism is quasi-isomorphic to 
\[
 \bigoplus_{m} \cW_m( \cW_i(W_mO(\ugr Y^{R, \mal}_{\MHS}))\ten_{\R}\oR O(C^*))= \cW_i(O(\ugr Y^{R, \mal}_{\MHS}))\ten_{\oR} \bigoplus_{m\ge i} \cW_{m-i}\oR O(C^*)).
\]
We then have
\[
 W_i O(Y^{R, \mal}_{\cH})\ten^{\oL}_{ \cW_{\ge 0}\oR O(C^*)}\R \simeq \cW_i(O(\ugr Y^{R, \mal}_{\MHS})),
\]
so $\psi$ induces quasi-isomorphisms on $\gr^W$. Our hypotheses ensure that $W_i$ is acyclic for $i \ll 0$, so  $\psi$ is a quasi-isomorphism, as required.

Alternatively, note that we could prove this proposition by adapting Theorem \ref{fibrations}, using the 
absolute Hodge spectral sequence
\[
 \Ext^i_{\MHS}(\R, \H^j(Y, \vv)) \abuts \H^{i+j}(O(Y^{R, \mal}_{\cH}))\ten^{R \rtimes S}\vv_y. 
\]
instead of the Leray--Serre spectral sequence.
\end{proof}

\subsubsection{Absolute Hodge and absolute twistor cohomology}

For our pointed connected compact K\"ahler manifold $X$, we now give alternative descriptions of the  absolute Hodge and twistor homotopy types.

\begin{definition}
Given a semisimple local system $\vv$ on $X$ equipped with a grading $V= \bigoplus_m \cW_m\vv$, 
we define the weight filtration $W$ on 
\[
 \tilde{A}^{\bt}(X, \vv)= (A^*(X,\vv) \ten_{\R} O(C), uD + vD^c)
\]
by d\'ecalage. Explicitly,
\[
 W_n \tilde{A}^{\bt}(X, \vv) = \bigoplus_m  \tau^{\le n-m}\tilde{A}^{\bt}(X, \cW_m\vv).
\]
 Note that this agrees with the definition in the special case of Theorem \ref{mhsmal} because $O(\Bu_{\rho})$ has weight $0$.
 \end{definition}

\begin{definition}
Given a   semisimple local system $\vv$ on $X$,  equipped with a grading, we set
 \[
\tilde{A}_{\cT}^{\bt}(X, \vv):= ( (W_0 \tilde{A}^{\bt}(X, \vv))\ten_{O(C)}\oR O(C^*))^{\bG_m},
\]
for $\bG_m$ acting via $\eta \co \bG_m \to \bG_m \by \bG_m$.

Given a VHS $\vv$ on $X$, we set
\[
\tilde{A}_{\cH}^{\bt}(X, \vv):= ( (W_0 \tilde{A}^{\bt}(X, \vv))\ten_{O(C)}\oR O(C^*))^{S}.
\]
\end{definition}

We now let  $O(\Bu_{\rho} \by \bG_m)$ be the graded semisimple local system $O(\Bu_{\rho}) \ten O(\bG_m) $, with grading given by the $\bG_m$-action.
If $R$ is moreover  an $S^1$-equivariant quotient $R$ of ${}^{\VHS}\!\varpi_1(X,x)$, we set  $O(\Bu_{\rho} \rtimes S)=  O(\Bu_{\rho}) \ten O(S)$, with the right $R$-action defined as a semi-direct tensor product, as in Definition \ref{semitendef}.

\begin{lemma}\label{hodgeHTlemma}
For any quotient  $\rho \co \varpi_1(X,x)^{\red}\onto R$,  the absolute twistor homotopy type of $X$  relative to $R$ is given by
 \[
O(X^{R, \mal}_{\cT})\simeq  \tilde{A}_{\cT}^{\bt}(X, O(\Bu_{\rho} \by \bG_m)).
\]

If $R$ is moreover  an $S^1$-equivariant quotient $R$ of ${}^{\VHS}\!\varpi_1(X,x)$, then the absolute 
 Hodge homotopy type of $X$ relative to $R$ is given by
  \[
 O(X^{R, \mal}_{\cH})\simeq  \tilde{A}_{\cH}^{\bt}(X, O(\Bu_{\rho} \rtimes S)).
\]
\end{lemma}
\begin{proof}
We first prove this in the twistor case.
As observed above, we have
\begin{eqnarray*}
O(X^{R, \mal}_{\cT})&=& \bigoplus_m \cW_m(  \Gamma(C^*, W_mO(X^{R, \mal}_{\bT})\ten_{\O_{C^*}}\oR O(C^*))) \\    
&=&  \bigoplus_m \cW_m (W_m \tilde{A}^{\bt}(X, O(\Bu_{\rho}))\ten_{O(C)}\oR O(C^*)).
\end{eqnarray*}

Meanwhile, note  that 
\[
W_0 \tilde{A}^{\bt}(X, O(\Bu_{\rho} \by \bG_m))= \bigoplus_m  \tau^{\le m}\tilde{A}^{\bt}(X, O(\Bu_{\rho}))\ten \cW_{-m}O(\bG_m),
\]
and that for any $\bG_m$-representation $V$ we have $\cW_{-m}O(\bG_m)\ten^{\bG_m}V \cong\cW_mV$, the weight $m$ subspace for the $\bG_m$-action, so
\[
\tilde{A}_{\cT}^{\bt}(X, O(\Bu_{\rho} \by \bG_m))= \bigoplus_m\cW_m(W_m \tilde{A}^{\bt}(X, O(\Bu_{\rho}))\ten_{O(C)}\oR O(C^*) ),
\]
as required.

The proof in the Hodge case is almost identical, replacing $\bG_m$ with $S$ as required, and noting that  $\cW_{-m}O(S)\ten^{S}V \cong\cW_mS$ for all $S$-representations $V$.
\end{proof}

For an explicit description, note that 
\[
 \tilde{A}_{\cT}^n(X, O(\Bu_{\rho} \by \bG_m)) \subset \bigoplus_{m\ge n} \cW_m(A^n(X,  O(\Bu_{\rho}))\ten \oR O(C^*))= \bigoplus_{i\ge 0} A^n(X,  O(\Bu_{\rho}))\ten \cW_i\oR O(C^*),
\]
because $A^n$ is of weight $n$ with respect to the $\bG_m$-action. 

Moreover, observe that $\tD \co A^n \ten \cW_i\oR O(C^*) \to A^{n+1} \ten  \cW_{i-1}\oR O(C^*)$, so 
\[
 \tilde{A}_{\cT}^n(X, O(\Bu_{\rho} \by \bG_m)) \subset \bigoplus_{i\ge 0} A^n(X,  O(\Bu_{\rho})\ten \cW_i\oR O(C^*)
\]
consists of $(a_0, a_1, \ldots)$ with $\tD a_0 =0$. The same description also holds for $ \tilde{A}_{\cH}^n(X, O(\Bu_{\rho} \by S))$. 

\begin{lemma}\label{AHcSlemma}
 There is a canonical isomorphism
\[
  \tilde{A}_{\cH}^{\bt}(X, O(\Bu_{\rho} \rtimes S)) \cong \gamma^0((W_0 A^{\bt}(X, O(\Bu_{\rho} \rtimes S)))\ten \Omega^{\bt}(\cS/\R)),
\]
for the filtration $\gamma^pV= V \cap (F^pV_{\Cx})$.
\end{lemma}
\begin{proof}
This is an immediate consequence of Lemma \ref{slhodge}, because $\oR O(C^*)$ is the flat algebraic Hodge filtration corresponding to $\Omega^{\bt}(\cS/\R)$.
\end{proof}

\begin{lemma}\label{vhshodgecoho}
 For any VHS $\vv$ on $X$, we have a quasi-isomorphism of cochain complexes 
\[
  \tilde{A}_{\cH}^{\bt}(X, \vv) \simeq \oR \Gamma_{\cH}(X, \vv)
\]
to the absolute Hodge cohomology of $X$ with coefficients in $\vv$, as in \cite{beilinson}. 
\end{lemma}
\begin{proof}
First observe that $\tilde{A}_{\cH}^{\bt}(X, \vv) = \tilde{A}_{\cH}^{\bt}(X, O(\Bu_{\rho} \rtimes S))\ten^{R \rtimes S}V$, where $V$ is the $R$-representation associated  to $\vv$, for any suitable quotient $R$ of ${}^{\VHS}\!\varpi_1(X,x)$. Then
we have 
\begin{eqnarray*}
 \tilde{A}_{\cH}^{\bt}(X, \vv) &\simeq& \oR \Gamma(\bA^1 \by C^*, O(X^{R, \mal}_{\MHS}) )^{\bG_m}\ten^{R \rtimes S}V\\
&\simeq& \oR \Gamma(\bA^1 \by C^*, O(X^{R, \mal}_{\MHS}\ten V))^{\bG_m \by R \rtimes S}\\ 
&\simeq& \oR \Gamma([\bA^1/\bG_m] \by [C^*/S],  A^{\bt}(X,\vv)),
\end{eqnarray*}
which is just $\oR \Gamma_{\cH}(X, \vv) $, as required. 
\end{proof}

\subsubsection{VMHS and VMTS}

\begin{proposition}\label{pi1Hprop}
 A representation of $ \varpi_1(X,x)^{R, \mal}_{\cH}$  (resp. $\varpi_1(X,x)^{R, \mal}_{\cT}$) on a vector space $V$ is the same as a VMHS (resp. VMTS) $\vv$ with $\vv_x=V$ whose underlying local system is an extension of $R$-representations.
\end{proposition}
\begin{proof}
Proposition \ref{hodgefibration} gives 
\begin{eqnarray*}
  \varpi_1(X,x)^{R, \mal}_{\cH}&=&\varpi_1(X,x)^{R, \mal} \rtimes \Pi(\MHS)\\
 \varpi_1(X,x)^{R, \mal}_{\cT}&=&\varpi_1(X,x)^{R, \mal} \rtimes \Pi(\MTS),
\end{eqnarray*}
 so a representation is the same as a representation of $\varpi_1(X,x)^{R, \mal}$ in MHS (resp. MTS), which is a VMHS (resp. VMTS) by Theorem \ref{kvmhsrep}. 
\end{proof}

We can use this to study deformations of a VHS, generalising Proposition \ref{cSubiq} from MHS to VMHS:

\begin{proposition}
Given a VHS (resp. VTS) $\vv$ on $X$, the category of VMHS (resp. VMTS) $\vv'$ on $X$ with fixed isomorphism $\gr^W\vv'\cong \vv$ is given by 
\[
 \Del(\tilde{A}_{\cH}^{\bt}(X,W_{-1}\bEnd\vv) ),\text{ resp. }\Del(\tilde{A}_{\cT}^{\bt}(X,W_{-1}\bEnd\vv) )
\]
 for the Deligne groupoid $\Del$ of Definition \ref{defDel}.
\end{proposition}
\begin{proof}
We address the  VMHS case -- the proof of the VMTS case is entirely similar.  If $R$ is the Zariski closure of the representation $\rho \co \pi_1(X,x) \to \GL(\vv_x)$ associated to $\vv$, then 
it follows from Proposition \ref{pi1Hprop} that the category of VMHS $\vv'$ we require is equivalent to the groupoid of homomorphisms
\[
 \varpi_1(X,x)^{R, \mal}_{\cH} \to (R \rtimes S) \ltimes(1+ W_{-1}\End(\vv_x))
\]
over $R\rtimes S$, with morphisms given by the conjugation action of $1+ W_{-1}\End(\vv_x)$.

From the definition of $\bar{G}$, it follows that this is the same as the groupoid
\[
 \Del( \tilde{A}_{\cH}^{\bt}(X, O(\Bu_{\rho} \rtimes S))\ten^{R \rtimes S}W_{-1}\End(\vv_x)),
\]
 and then we just observe that we automatically have an isomorphism
\[
 \tilde{A}_{\cH}^{\bt}(X, O(\Bu_{\rho} \rtimes S))\ten^{R \rtimes S}W_{-1}\End(\vv_x) \cong \tilde{A}_{\cH}^{\bt}(X,W_{-1}\bEnd\vv)
\]
of DG Lie algebras.
\end{proof}

\begin{remark}\label{cSubiq2}
Explicitly, an object of $\Del(\tilde{A}_{\cT}^{\bt}(X,W_{-1}\bEnd\vv) )$ is an element
\[
 \phi \in \tilde{A}_{\cT}^1(X,W_{-1}\bEnd\vv) 
\]
with $[d, \phi] + \phi \wedge \phi =0$. 

The decomposition $\oR O(C^*)= O(\SL_2) \oplus O(\SL_2)(-1)[-1]\eps$ as a graded algebra gives us a decomposition
 $\phi =\omega+\beta\eps$. The conditions on $\phi$ then become
\[
 [\tD, \omega] +\omega \wedge \omega =0, \quad  [N, \omega] +[ \tD, \beta] +  [\omega,\beta]=0.
\]

Writing $\tD':= \tD + \omega$ and $N':= N+\beta$, these conditions reduce to 
\[
 (\tD')^2=0, \quad [\tD', N']=0.
\]

Thus we have a flat $\bG_m$-equivariant $\tilde{d}$-connection 
\[
\tD'\co  \sA_X^0(\vv)\ten O(\SL_2) \to \sA_X^1(\vv)\ten O(\SL_2)
\]
such that $\gr^W\tD'= \tD$,
together with a compatible  $N$-derivation 
\[
 N'\co  \sA_X^0(\vv)\ten O(\SL_2)\to   \sA_X^0(\vv)\ten O(\SL_2)(-1)
\]
with $\gr^WN'=N$.

Thus $N'$ is surjective (since $N$ is so), and $\tD'$ gives a flat $\bG_m$-equivariant  $\tilde{d}$-connection 
\[
\tD'\co  (\ker N') \to \sA_X^1\ten_{\sA^0_X}(\ker N'),
\]
which are precisely the data required for a VMTS $\vv'$.

If we replace $\cT$ with $\cH$ and $\bG_m$ with $S$, the same construction  gives a VMHS $\vv'$ by Lemma \ref{vmhslemma}.
\end{remark}

\section{Monodromy at the Archimedean place}\label{archmon}

Remark \ref{gadata}  shows that the mixed Hodge (resp. mixed twistor)  structure on $G(X,x_0)^{R, \mal}$ can be recovered from a nilpotent monodromy operator $\beta:O(R \ltimes \exp(\g)) \to O(R \ltimes \exp(\g))\ten \cS(-1)$, where   
$\g= \bar{G}(\H^*(X,O(\Bu_{\rho})) )$. In this section, we show how to calculate  the monodromy operator in terms of standard operations on the de Rham complex. 


\begin{definition}
If there is an algebraic action of $S^1$ on the reductive pro-algebraic group $R$, set  $S':=S$. Otherwise, set  $S':=\bG_m$. These two cases will correspond to mixed Hodge and mixed twistor structures, respectively.  
\end{definition}

We now show how to recover $\beta$ explicitly from the formality quasi-isomorphism of Theorem \ref{mhsmal}. By Corollary \ref{keysplit2}, $\beta$ can be regarded as an element of
$$
W_{-1}\EExt^0_{\H^*(X,O(\Bu_{\rho}) ) }(\bL_{\H^*(X,O(\Bu_{\rho}))}^{\bt},   (\H^*(X,O(\Bu_{\rho})) \to O(R)  ) \ten O(\SL_2)(-1))^{R \rtimes S'}.
$$

\begin{definition}
Recall that we set $\tD= uD+vD^c$, and define $\tDc:= xD+yD^c$, for co-ordinates $\left(\begin{smallmatrix} u & v  \\ x & y \end{smallmatrix} \right) $ on $\SL_2$. Note that $\tDc$ is of type $(0,0)$ with respect to the $S$-action, while $\tDc$ is of type $(1,1)$.
\end{definition}

As in the proof of Theorem \ref{mhsmal}, 
there are
$R \ltimes S'$-equivariant quasi-isomorphisms
$$
\H^*(X,O(\Bu_{\rho}))\ten O(\SL_2) \xla{p} Z_{\tDc} \xra{i} \row_1^*\tilde{A}^{\bt}(X,O(\Bu_{\rho}))
$$
of CDGAs,
where $\z_{\tDc}:=\ker(\tDc)\cap \row_1^*\tilde{A}^{\bt} $ (so has differential $\tD$). These are moreover compatible with the augmentation maps  to $O(\Bu_{\rho})_{x_0}\ten O(\SL_2)= O(R)\ten O(\SL_2)$.

\begin{definition}
For simplicity of exposition, we write
\begin{eqnarray*}
 \cH^*&:=& \H^*(X,O(\Bu_{\rho}))\\
\cZ^{\bt}&:=& \ker(\tDc)\cap \row_1^*\tilde{A}^{\bt}(X,O(\Bu_{\rho}))\\
\cA^{\bt}&:=& \row_1^*\tilde{A}^{\bt}(X,O(\Bu_{\rho})),
\end{eqnarray*}so  the quasi-isomorphisms become
$$
\cH^*\ten O(\SL_2) \xla{p}\cZ^{\bt} \xra{i} \cA^{\bt}.
$$
We also set $\cO:= O(R)$,  $\uline{\cH}^*:= \cH^*\ten O(\SL_2)$ and $\uline{\cO}:= \cO \ten O(\SL_2)$.
\end{definition}

This gives the following $R \ltimes S'$-equivariant quasi-isomorphisms of $\Hom$-complexes:
$$
\xymatrix{
  \oR\cHom_{\cA}( \bL_{\cA/O(C)}, \cA(-1)\xra{{x_0}^*} \uline{\cO}(-1) ) \ar[r]^{i^*}             &\oR\cHom_{\cZ}( \bL_{\cZ/O(C)}, \cA(-1)\xra{{x_0}^*} \uline{\cO}(-1))\\
  \oR\cHom_{\cZ}( \bL_{\cZ/O(C)}, \cZ(-1)\to \uline{\cO}(-1))\ar[ru]^{i_*} \ar[r]^{p_*}  & \oR\cHom_{\cZ}( \bL_{\cZ/O(C)}, \uline{\cH}(-1)\to \uline{\cO}(-1))\\
\oR\cHom_{\uline{\cH}}( \bL_{\uline{\cH}/O(C)}, \uline{\cH}(-1)\to \uline{\cO}(-1))\ar[ru]^{p^*}.
}
$$
(Note that, since $\cH^0=\R$ and $\cZ^0=O(\SL_2)$, in both cases the augmentation maps to  $\cO$ are independent of the basepoint ${x_0}$.) 

The derivation $N:O(\SL_2) \to O(\SL_2)(-1)$  has kernel $O(C)$, so yields an $O(C)$-derivation $\cA \to \cA(-1)$, and hence an  element 
$$
(N,0) \in \Hom_{\cA,R \ltimes S'}( \bL_{\cA/O(C)}, \cA(-1)\xra{{x_0}^*} \cO(-1) )^0 \quad\text{ with } \quad d(N,0)= (0,  {x_0}^*\circ N ).
$$
Moreover, the map $O(\SL_2) \to \cA$ induces a map
\[
 \oR\cHom_{\cA,R \ltimes S'}( \bL_{\cA}, \cA(-1)\xra{{x_0}^*} \cO(-1) )\to \oR\cHom_{\cA,R \ltimes S'}(\cA\ten \Omega(\SL_2/C) , \cA(-1)\xra{{x_0}^*} \cO(-1) ),
\]
under which the image of $(N,0)$ is the isomorphism  $ \cA\ten \Omega(\SL_2/C) \cong  \cA(-1)$.

The chain of quasi-isomorphisms then yields a homotopy-equivalent element 
\[
 f \in \Hom_{\uline{\cH}}( \bL_{\uline{\cH}/O(C)}, \uline{\cH}(-1)\to \uline{\cO}(-1))^0,
\]
whose  restriction to  $\cH\ten\Omega(\SL_2/C)$  is  the identification $N \co \cH\ten\Omega(\SL_2/C)\cong \cH(-1)\ten O(\SL_2)$, with $df= (0,  {x_0}^*\circ N )$.

Since  
$$
\bL_{(\cH\ten O(\SL_2))/O(C)} \cong (\bL_{\cH/\R}\ten O(\SL_2)) \oplus (\cH\ten\Omega(\SL_2/C)),
$$
we can rewrite this as $f= \beta +N$, for 
$$
\beta\in \Hom_{\cH,R \ltimes S'}( \bL_{\cH}, \cH(-1)\ten O(\SL_2)\to \cO(-1))^0. 
$$
Note that $N \circ {x_0}^*=0$ on $\cH \subset \cH\ten O(\SL_2)$, so $d\beta = 0$.

\subsection{Reformulation via $E_{\infty}$ derivations}

\begin{definition}
Given a  commutative DG algebra $B$ without unit, define $E(B)$ to be the real  graded Lie coalgebra $\CoLie(B[1])$ freely cogenerated by $B[1]$. Explicitly,
$
\CoLie(V)= \bigoplus_{n \ge 1} \CoLie^n(V),
$
where $\CoLie^n(V)$ is the quotient of $V^{\ten n}$ by the elements 
$$
\mathrm{sh}_{pq}(v_1 \ten \ldots v_n ):= \sum_{\sigma \in \mathrm{Sh}(p,q)} \pm v_{\sigma(1)} \ten \ldots \ten  v_{\sigma(n)},  
$$
for $p,q >0$ with $p+q=n$. Here, $\mathrm{Sh}(p,q)$ is the set of $(p,q)$ shuffle permutations, and $\pm$ is the Koszul sign.

$E(B)$ is equipped with a differential $d_{E(B)}$ defined on cogenerators $B[1]$ by
$$
(q_B+ d_B): ({\bigwedge}^2(B[1]) \oplus B[1])[-1] \to B[1],
$$
where $q_B: \Symm^2 B \to B$ is the product on $B$.  Since $d_{E(B)}^2=0$, this turns $E(B)$ into a differential graded Lie coalgebra.
\end{definition}

Freely cogenerated differential graded Lie coalgebras are known as strong homotopy commutative algebras (SHCAs). A choice of cogenerators $V$ for an SHCA $E$ is then known as an $E_{\infty}$ or $C_{\infty}$ algebra. For more details, and analogies with $L_{\infty}$ algebras associated to DGLAs, see \cite{Kon}. 
Note that when $B$ is concentrated in strictly positive degrees, $E(B)$ is dual to the dg Lie algebra $G(B \oplus \R)$ of Definition \ref{barwg}. 

\begin{definition}
The functor $E$ has a left adjoint $O(\bar{W}_+)$, given by 
$O(\bar{W}_+(C)):= \bigoplus_{n >0}\Symm^n(C[-1])$, with differential as in Definition \ref{barwg}. In particular, if $C=\g^{\vee}$, for $\g \in dg\hat{\cN}$, then $\R \oplus O(\bar{W}_+ C)= O(\bar{W}\g)$, for the functor $\bar{W}$ of Definition \ref{barwg}.

For any dg Lie coalgebra $C$, we therefore define $O(\bar{W}C)$ to be the unital dg algebra $\R \oplus O(\bar{W}_+C)$. 
\end{definition}

Now, the crucial property of this construction is that $O(\bar{W}_+E(B))$ is a cofibrant replacement for $B$ in the category of non-unital dg algebras (as follows for instance from the proof of \cite[Theorem \ref{ddt1-mcequiv}]{ddt1}, interchanging the r\^oles of Lie and commutative algebras). Therefore for any CDGA $B$ over $A$, $O(\bar{W}E(B))\ten_{O(\bar{W}E(A))}A$ is a cofibrant replacement for $B$ over $A$, so 
$$
\bL_{B/A} \simeq \ker(\Omega( O(\bar{W}E(B)))\ten_{O(\bar{W}E(B))}B \to \Omega(O(\bar{W}E(A)))\ten_{O(\bar{W}E(A))}B  ).
$$

Thus
$$
\oR\cHom_{\cZ}( \bL_{\cZ}, B)\simeq \cDer(O(\bar{W}E(\cZ))\ten_{O(\bar{W}E(\R))}\R , B),
$$
the complex of derivations over $\R$. This in turn is isomorphic to  the complex
$$
\cDer_{E(\R)}(E(\cZ), E(B) )
$$
of dg Lie coalgebra derivations. The remainder of this section is devoted to constructing explicit homotopy inverses for the equivalences above,   thereby deriving the element 
$$
\beta=(\alpha, \gamma) \in \cDer(E(\cH), E(\cH)\ten O(\SL_2))^0 \by \cDer(E(\cH), E(\R)\ten O(\SL_2))^{-1}
$$
required by Remark \ref{gadata}, noting that the second term can be rewritten to give $\gamma \in G(\cH)^0$. 

\subsection{K\"ahler identities}\label{kahlerids}

By \cite[\S 1]{Simpson}, we have first-order K\"ahler identities
$$
D^*= -[\L, D^c], \quad (D^c)^*=[\L, D]
$$
(noting that our operator $D^c$ differs from Simpson's by a factor of $-i$), with Laplacian
$$
\Delta= [D,D^*] = [D^c, (D^c)^*]=-D\L D^c +D^c\L D +DD^c\L+ \L DD^c  .
$$

Since $uy-vx=1$, we also have
$$
\Delta= -\tD\L \tDc+ \tDc\L \tD+  \tD\tDc \L +\L\tD\tDc.
$$

\begin{definition}
Define a semilinear involution $*$ on $O(\SL_2)\ten \Cx$ by $u^*=y, v^*=-x$. This corresponds to the map $A \mapsto (A^{\dagger})^{-1}$ on $\SL_2(\Cx)$. The corresponding involution on $S$ is given by $\lambda^*= \bar{\lambda}^{-1}$, for $\lambda \in S(\R) \cong \Cx^{\by}$. 
\end{definition}

The calculations above combine to give:
\begin{lemma}
$$
\tD^*= -[\L,\tDc] \quad \tDc^*:= [\L, \tD].
$$
\end{lemma}

Note that this implies that $\tD\tDc^* +\tDc^*\tD=0$. Also note that  Green's operator $G$ commutes with $\tD$ and $\tDc$ as well as with $\L$, and hence with $\tD^*$ and $\tDc^*$.

The working above yields the following.
\begin{lemma}\label{laplacian}
$$
\Delta= [\tD, \tD^*]= [\tDc, \tDc^*]=-\tD\L \tDc +\tDc\L \tD +\tD\tDc\L+ \L \tD\tDc  .
$$
\end{lemma}

\subsection{ Monodromy calculation }\label{calcn}

Given any operation $f$ on $\cA$ or $\cZ$, we will simply denote the associated dg Lie coalgebra derivation on $E(\cA)$ or $E(\cZ)$ by $f$, so $d_{E(\cA)}= \tD +q= d_{E(\cZ)}$.

Note that the complex $\cDer(C,C)$ of  coderivations of a dg Lie coalgebra $C$ has the natural structure of a DGLA, with bracket $[f,g] = f\circ g - (-1)^{\deg f\deg g}g\circ f$. When $C=E(B)$, this DGLA is moreover pro-nilpotent, since $E(B)= \bigoplus_{n \ge 1} \CoLie^n (B[1])$, so
$$
\cDer(E(B),E(B)) = \Lim_n \cDer(\bigoplus_{1 \le m \le n} \CoLie^m (B[1]), \bigoplus_{1 \le m \le n} \CoLie^m (B[1])).
$$

Since $[\tDc, \tD^*]=0$ and 
$$
\id = \pr_{\cH} +G\Delta= \pr_{\cH} +G(\tDc \tDc^*+ \tDc^*\tDc),
$$ 
it follows that $\im(\tDc^*)$ is a subcomplex of $\cZ= \ker(\tDc)$, and $\cZ= \cH\ten O(\SL_2) \oplus \im(\tDc^*)$.

\begin{definition}
Decompose $\im(\tDc^*)$ as $B\oplus C$, where $B= \ker(\tD) \cap \im(\tDc^*)$, and $C$ is its orthogonal complement. Since $i:\cZ \to \cA $ is a quasi-isomorphism, $\tD: C \to B$ is an isomorphism, and we may  define $h_i: \cA \to \cA[-1]$ by $h_i(z+b+c)= \tD^{-1}b \in C$, for $z \in \cZ, b \in B, c \in C$. Thus $h_i^2=0$, and $\id= \pr_{\cZ} + \tD h_i +h_i\tD$. Explicitly,
$$
h_i:= G\tD^* \circ (1-\pr_{\cZ})= G^2 \tD^*\tDc^*\tDc=  G^2 \tDc^*\tDc\tD^*,
$$ where $G$ is  Green's operator and $\pr_{\cZ}$ is orthogonal projection onto $\cZ$. Since $\tD\tDc= DD^c$, we can also rewrite this as $G^2 D^*D^{c*}\tDc $.
\end{definition}

\begin{lemma}\label{Li}
Given a derivation $f \in \cDer(E(\cZ),E(\cA))^0$ with $[q,f] +[\tD,f]=0$, let
$$
\gamma_i(f):= \sum_{n \ge 0} (-1)^{n+1}   h_i  \circ [q,h_i]^n\circ  (f+ h_i \circ [q,f]).
$$
Then $\gamma_i(f) \in \cDer(E(\cZ),E(\cA))^{-1}$, and 
$$
f + [d_E, \gamma_i(f)] = \pr_{\cZ} \circ (\sum_{n \ge 0} (-1)^n    \circ [q,h_i]^n\circ  (f+ h_i \circ [q,f])),
$$
so lies in $\cDer(E(\cZ),E(\cZ))^0$.
\end{lemma}
\begin{proof}
First, observe that $h_i$ is $0$ on $\cZ$, so $g\circ h_i =0$ for all $ g \in \cDer(E(\cZ),E(\cA))$, and therefore $h_i\circ g= [h_i,g]$ is a derivation. If we write $\ad_q(g)= [q,g]$, then $\ad_q(h_i \circ e)= [q,h_i]\circ e$, for any $e \in \cDer(E(\cZ),E(\cA))^0$  with $[q,e]=0$. Then 
$$
\ad_q(h_i\circ ad_q(h_i \circ e))= [q,h_i \circ [q,h_i]\circ e] = [q,h_i] \circ [q,h_i] \circ e + h_i\circ  \half[[q,q], h_i] \circ e, 
$$
which is just $[q,h_i]^2\circ e$, since $q^2=0$ (which amounts to saying that the multiplication on $\cA$ is associative), so $\ad_q^2=0$. 

Now, 
$$
\ad_q(h_i \circ f)= [q, h_i \circ f]= [q,h_i] \circ f - h_i \circ [q,f], 
$$
and this lies in $\ker(\ad_q)$.
Proceeding inductively, we get
$$
\gamma_i(f):= \sum_{n \ge 0} (-1)^{n+1}   (\ad_{h_i}\ad_q)^n \ad_{h_i} f, 
$$
which is clearly a derivation. Note that the  sum is locally finite because the $n$th term maps $\CoLie^{m}(\cZ)$ to $\CoLie^{m-n}(\cA)$.

Now, let $y:= \sum_{n \ge 0} (-1)^n   (\ad_q \ad_{h_i})^n f$, so $\gamma_i(f)= -[h_i,y ]= -h_i \circ y$. Set $f':=f + [d_E, \gamma_i(f)]$; we wish to show that $[\tD, h_i \circ f'] + h_i \circ [\tD, f']=0$. Note that $f+ [q,\gamma_i(f)]=y$,
so
$$
f'= f- [q, h_i \circ y] -[\tD, h_i \circ y] = y - [\tD, h_i] \circ y - h_i \circ [\tD,y].
$$
Since $\pr_{\cZ}= (\id - [\tD, h_i])$, it only remains to show that $h_i \circ [\tD,y]=0$, or equivalently that $h_i \circ [\tD,f']=0$.

Now,
$
0 = [d_E,f']= [\tD,f'] +[q, f'].
$
Since $[q, \cZ] \subset \cZ$, this means that
$$
h_i \circ [\tD,f'] = -h_i \circ [q, h_i \circ [\tD,f']].
$$
Since $h_i \circ \ad_q$ maps  $\CoLie^n(\cA[1])$ to $\CoLie^{n-1}(\cA[1])$, this means that $h_i \circ [\tD,f']=0$, since $h_i \circ [\tD,f']= (-h_i \circ \ad_q)^n\circ (h_i \circ [\tD,f'])$ for all $n$, and this is $0$ on $\CoLie^n(\cA[1])$. 
\end{proof}

\begin{definition}
On the complex $\cZ$, define $h_p:= G\tD^*$, noting that this is also isomorphic to $G\tDc\L$ here.
\end{definition}

\begin{lemma}\label{Lp}
Given a derivation $f \in \cDer(E(\cZ),E(\cH))^0$ with $[q,f] +[\tD,f]=0$, let
$$
\gamma_p(f):= \sum_{n \ge 0} (-1)^{n+1}     (f+  [q,f]\circ h_p )\circ [q,h_p]^n\circ  h_p . 
$$
Then $\gamma_p(f) \in \cDer(E(\cZ),E(\cH))^{-1}$, and 
$$
f + [d_E, \gamma_p(f)] =   (\sum_{n \ge 0} (-1)^n     (f+  [q,f]\circ h_p )\circ [q,h_p]^n)\circ \pr_{\uline{\cH}},
$$
where $\pr_{\uline{\cH}}$ is orthogonal projection onto harmonic forms. Thus
$f+ [d_E, \gamma_p(f)]$ lies in $\cDer(E(\uline{\cH}),E(\uline{\cH}))^0$.
\end{lemma}
\begin{proof}
The proof of Lemma \ref{Li} carries over, since the section of $p:\cZ \to \uline{\cH}$ given by harmonic forms corresponds to a decomposition $\cZ = \uline{\cH} \oplus \im(\tDc)$. Then  $h_p$ makes $p$ into a deformation retract, as $[h_p, \tD]= \pr_{\uline{\cH}}$ on $\cZ$.
\end{proof}

\begin{theorem}\label{archmonthm}
For $\g= G(\H^*(X, O(\Bu_{\rho})))$, the monodromy operator 
$$
\beta: O(R \rtimes \exp(\g))\to O(R \rtimes \exp(\g))\ten O(\SL_2)(-1)
$$
at infinity, corresponding to the MHS (or MTS) on the homotopy type $(X,{x_0})^{\rho, \mal}$   
is given $\beta =\alpha + \ad_{\gamma_{x_0}}$, where
$
\alpha: \g^{\vee} \to \g^{\vee} \ten O(\SL_2)(-1)
$
is 
\begin{eqnarray*}
\alpha
&=& \sum_{b>0  a\ge 0} (-1)^{a+b+1}\pr_{\uline{\cH}} \circ ([q,G^2 D^*D^{c*}] \circ \tDc )^b \circ (\tD \circ [q, G\L]) \circ  (\tDc \circ [q,G\L])^a\circ s \\
&&+ \sum_{b>0,a  > 0} (-1)^{a+b}[q,\pr_{\uline{\cH}}] \circ ([q,G^2 D^*D^{c*}] \circ \tDc )^b\circ (\tD \circ G\L)  \circ (\tDc \circ [q,G\L])^a\circ  s,
\end{eqnarray*}
for $s:\cH \to \cA$ the inclusion of harmonic forms.
Meanwhile, $\gamma_{x_0} \in \g\hat{\ten} O(\SL_2)(-1)$ is  
\begin{eqnarray*}
\gamma_{x_0} 
&=& \sum_{a \ge 0,b \ge 0} (-1)^{a+b}    
x_0^* \circ h_i\circ ([q,G^2 D^*D^{c*}] \circ \tDc )^b\circ  (\tD \circ [q, G\L])\circ  (\tDc \circ [q,G\L])^{a} \circ s\\
&&+ \sum_{a>0,b > 0} (-1)^{a+b}     x_0^*\circ ([q,G^2 D^*D^{c*}] \circ \tDc )^{b}\circ  (\tD \circ G\L) \circ (\tDc \circ [q,G\L])^a \circ s.
\end{eqnarray*}
\end{theorem}
\begin{proof}
The derivation $N:\cA \to \cA(-1)$  yields a coderivation $N \in \cDer(E(\cZ),E(\cA))^0$ with $[q,f] =[\tD,f]=0$.
Lemma \ref{Li} then gives $\gamma_i(N)\in \cDer(E(\cZ),E(\cA))^{-1}$ with $N+ [d_E,\gamma_i(N)] \in  \cDer(E(\cZ),E(\cZ))^0$. Therefore, in the cone complex $ \cDer(E(\cZ),E(\cA)) \xra{x^*} \cDer(E(\cZ),E(\uline{cO}))$, the derivation $N$ is homotopic to
$$
(N+ [d_E,\gamma_i(N)], \gamma_i(N)_{x_0}) \in \cDer(E(\cZ),E(\cZ))^0\oplus  \cDer(E(\cZ),E(\uline{\cO}))^{-1}.
$$
Explicitly,
\begin{eqnarray*}
\gamma_i(N)&=& \sum_{n \ge 0} (-1)^{n+1}   h_i  \circ [q,h_i]^n\circ  N\\
N + [d_E, \gamma_i(N)] &=& \pr_{\cZ} \circ (\sum_{n \ge 0} (-1)^n    \circ [q,h_i]^n\circ N.
\end{eqnarray*}

Setting $f:= N + [d_E, \gamma_i(N)]$, we next apply Lemma \ref{Lp} to the pair $(p\circ f, \gamma_i(N)_{x_0})$. If $s: \uline{\cH} \to \cZ$ denotes the inclusion of harmonic forms, we obtain 
\begin{eqnarray*}
\alpha\circ s &=& p \circ f+ [d_E, \gamma_p(p \circ f)],\\ 
\gamma_{x_0} \circ s &=& \gamma_i(N)_{x_0} +\gamma_p(p \circ f)_{x_0}  +[d_E, \gamma_p(\gamma_i(N)_{x_0}) + \gamma_p(p \circ f)_{x_0})].
\end{eqnarray*}

 Now, 
\begin{eqnarray*}
\alpha &=& \sum_{m \ge 0} (-1)^{m}     (p \circ f+  [q,p \circ f]\circ h_p )\circ [q,h_p]^m \circ s\\
&=& \sum_{m \ge 0, n \ge 0} (-1)^{m+n}\pr_{\uline{\cH}} \circ [q,h_i]^n\circ  N\circ [q,h_p]^m\circ s \\
&&+ \sum_{m \ge 0, n \ge 0} (-1)^{m+n}[q,\pr_{\uline{\cH}}] \circ [q,h_i]^n\circ  N\circ h_p \circ [q,h_p]^m\circ  s
\end{eqnarray*}
since $p \circ \pr_{\cZ}= \pr_{\uline{\cH}}$, $[q,N]=0$ and $\ad_q^2=0$.

Now, $N \circ g = [N,g] + g \circ N$, but $N$ is $0$ on $\cH \subset \uline{\cH}$, while $[N,s]$=0 (since $s$ is  $\SL_2$-linear). Since $h_i=G^2 D^*D^{c*}\tDc$, $h_p= \tDc G\L$ and $[q, \tDc]=0$, we get $[q, h_i] = [q,G^2 D^*D^{c*}] \circ \tDc$ and $[q,h_p] = \tDc \circ [q, G\L]$.  In particular, this implies that $[q, h_i] \circ \tDc=0$ and that $[N, [q, h_p]]= \tD \circ [q, G\L]$, since $[N,G\L]=[N,q]=0$. 

Thus
\begin{eqnarray*}
\alpha
&=& \sum_{n , a,c \ge 0} (-1)^{n+a+c+1}\pr_{\uline{\cH}} \circ [q,h_i]^n\circ  [q,h_p]^c \circ (\tD \circ [q, G\L]) \circ  [q,h_p]^a\circ s \\
&&+ \sum_{m,n  \ge 0} (-1)^{m+n}[q,\pr_{\uline{\cH}}] \circ [q,h_i]^n\circ (\tD \circ G\L)  \circ [q,h_p]^m\circ  s\\
&&+ \sum_{n,a,c  \ge 0} (-1)^{m+n}[q,\pr_{\uline{\cH}}] \circ [q,h_i]^n\circ \tDc \circ G\L  \circ [q,h_p]^c \circ (\tD \circ [q, G\L])   \circ [q,h_p]^a \circ    s.
\end{eqnarray*}
When $n=0$, all terms are $0$, since $\pr_{\uline{\cH}}\circ \tD = \pr_{\uline{\cH}}\circ \tDc=0$, and $[q,h_p]= \tDc \circ [q, G\L]$. For $n\ne 0$, the first sum is $0$ whenever $c \ne 0$, and the final sum is always $0$ (since $[q,h_i]\circ \tDc =0$). If $m=0$, the second sum is also $0$, as $\tD \circ G\L$ equals $G \tDc^*$ on $\ker (\tD)$, so is $0$ on $\cH$.  Therefore (writing $b=n$), we get
\begin{eqnarray*}
\alpha
&=& \sum_{b>0,  a\ge 0} (-1)^{a+b+1}\pr_{\uline{\cH}} \circ [q,h_i]^b \circ (\tD \circ [q, G\L]) \circ  [q,h_p]^a\circ s \\
&&+ \sum_{b>0,a  > 0} (-1)^{a+b}[q,\pr_{\uline{\cH}}] \circ [q,h_i]^b\circ (\tD \circ G\L)  \circ [q,h_p]^a\circ  s,
\end{eqnarray*}
and substituting for $[q,h_i] $ and $[q,h_p]$ gives the required expression.

Next, we look at $\gamma_{x_0}$. First, note that $\L|_{\cZ^1}=0$, so $h_p|_{\cZ^1}=0$, and therefore $h_p$ (and hence $\gamma_p(p \circ f)$) restricted to $\CoLie^n(\cZ^1)$ is $0$, so $x_0^*\circ \gamma_p(p \circ f)=0$. Thus
\begin{eqnarray*}
\gamma_{x_0} \circ s &=& \gamma_i(N)_{x_0} +  [d_E, \gamma_p(\gamma_i(N)_{x_0})]\\
&=& \sum_{m \ge 0} (-1)^{m}     ( \gamma_i(N)_{x_0} +  [q,\gamma_i(N)_{x_0}]\circ h_p )\circ [q,h_p]^m \circ s\\
&=&\sum_{m,n \ge 0} (-1)^{m+n+1}    x_0^*\circ h_i  \circ [q,h_i]^n\circ  N\circ [q,h_p]^m \circ s\\
&&+ \sum_{m,n \ge 0} (-1)^{m+n+1}    [q, x_0^*\circ h_i  \circ [q,h_i]^n\circ  N] \circ  h_p\circ [q,h_p]^m \circ s.
\end{eqnarray*}

On restricting to $\cH\subset \uline{\cH}$, we may replace $N\circ g$ with $[N,g]$ (using the same reasoning as for $\alpha$). Now, $[q,h_i]^{n+1}\circ h_p=0$, and on expanding out $\tDc\circ [N, [q,h_p]^m]$, all terms but one vanish, giving
\begin{eqnarray*}
\gamma_{x_0} \circ s 
&=& \sum_{m>0,n \ge 0} (-1)^{m+n+1}    x_0^*\circ h_i  \circ [q,h_i]^n\circ  (\tD \circ [q, G\L])\circ  [q,h_p]^{m-1} \circ s\\
&&+ \sum_{m>0,n \ge 0} (-1)^{m+n+1}   x_0^*\circ  [q,h_i]^{n+1}\circ  (\tD \circ G\L) \circ [q,h_p]^m \circ s,
\end{eqnarray*}
which expands out to give the required expression.
\end{proof}

\begin{remark}\label{gadata2}
This implies that  the MHS $O(\varpi_1(X,{x_0})^{\rho, \mal})$ is just the kernel of 
$$
\beta\ten \id  + \ad_{\gamma_{x_0}}\ten \id + \id\ten N: O(R \rtimes \exp(\H_0\g))\ten \cS \to  O(R \rtimes \exp(\H_0\g))\ten \cS(-1),
$$
where $\beta, \gamma_{x_0}$ here denote the restrictions of $\beta, \gamma_{x_0}$ in Theorem \ref{archmonthm} to $\Spec \cS = \left(\begin{smallmatrix} 1 &  0 \\ \bA^1 & 1\end{smallmatrix}\right) \subset \SL_2$.

Likewise, $(\varpi_n(X,{x_0})^{\rho, \mal})^{\vee}$ is the kernel of
$$
\beta\ten \id  + \ad_{\gamma_{x_0}}\ten \id + \id\ten N: (\H_{n-1}\g)^{\vee}\ten \cS\to (\H_{n-1}\g)^{\vee}\ten \cS(-1)
$$
\end{remark}

\begin{examples}\label{archmonegs}
Since $q$ maps $\CoLie^n(\cH)$ to $\CoLie^{n-1}(\cH)$, we need only look at the truncations of the sums in Theorem \ref{archmonthm} to calculate the MHS or MTS on $G(X,{x_0})^{R, \mal}/[\Ru G(X,{x_0})^{R, \mal}]_m$, where $[K]_1=K$ and $[K]_{m+1}= [K,[K]_m]$. 

\begin{enumerate}
\item
Since all terms involve $q$, this means that $G(X,{x_0})^{R, \mal}/[\Ru G(X,{x_0})^{R, \mal}]_2 \simeq R \ltimes \H^{>0}(X, O(\Bu_{\rho}))^{\vee}[1]$, the equivalence respecting the MHS (or MTS). This just corresponds to the quasi-isomorphism $s:\H^*(X, O(\Bu_{\rho})) \to A^{\bt}(X, O(\Bu_{\rho}))$ of cochain complexes, since the ring structure on $A^{\bt}( (X, O(\Bu_{\rho}))$ is not needed to recover $G(X,{x_0})^{R, \mal}/[\Ru G(X,{x_0})^{R, \mal}]_2$. 

\item The first non-trivial case is $G(X,{x_0})^{R, \mal}/[\Ru G(X,{x_0})^{R, \mal}]_3$. The only contributions to $\beta$ here come from terms of degree $1$ in $q$. Thus $\alpha$ vanishes on this quotient, which means that the obstruction to splitting the  MHS is a unipotent inner automorphism.

The element $\gamma_{x_0}$ becomes
\begin{eqnarray*}
x_0^*\circ h_i \circ \tD \circ [q, G\L]\circ s &=& x_0^*\circ G^2D^*D^{c*}\tDc\tD\circ [q, G\L]\circ s\\
&=& x_0^*\circ G^2D^*D^{c*}D^cD\circ [q, G\L]\circ s,
\end{eqnarray*}
which we can rewrite as $x_0^*\circ\pr_{\im (D^*D^{c*})} \circ [q, G\L]\circ s$, where $\pr_{\im (D^*D^{c*})} $ is orthogonal projection onto $\im(D^*D^{c*})$. Explicitly, $\gamma_{x_0} \in ([\g]_2/ [\g]_3)\hat{\ten} O(\SL_2)$  corresponds to the morphism $\bigwedge^2\cH^1 \to O(R)\ten O(\SL_2)$ given by
$$
v\ten w \mapsto (\pr_{\im (D^*D^{c*})} G\L (s(v)\wedge s(w)))_{x_0},
$$ 
since $\L|_{\cH^1}=0$.

Since $[\g]_2/ [\g]_3$ lies in the centre of $\g/[\g]_3$, this means that $\ad_{\gamma_{x_0}}$ acts trivially on $\Ru(G(X,{x_0})^{R, \mal})/[\Ru G(X,{x_0})^{R, \mal}]_3 $, so $G(X,{x_0})^{R, \mal}/[\Ru G(X,{x_0})^{R, \mal}]_3$ is an extension
$$
1 \to \Ru(G(X,{x_0})^{R, \mal})/[\Ru G(X,{x_0})^{R, \mal}]_3\to G(X,{x_0})^{R, \mal}/[\Ru G(X,{x_0})^{R, \mal}]_3 \to R \to 1
$$
of split MHS. Thus $\gamma_{x_0}$ is the obstruction to any Levi decomposition respecting the MHS, and allowing ${x_0}$ to vary gives us the associated VMHS on $X$.

In particular, taking $R=1$,  the MHS on $G(X,{x_0})^{1, \mal}/[G(X,{x_0})^{1, \mal}]_3$ is split, and specialising further to the case when  $X$ is simply connected,
$$
(\pi_3(X,{x_0})\ten \R)^{\vee} \cong \H^3(X, \R) \oplus \ker (\Symm^2\H^2(X, \R) \xra{\cup} \H^4(X, \R))
$$
is an isomorphism of real MHS. This means that the non-split phenomena for integral MHS in \cite{mhsnonsplit} do not apply to real MHS, and  are describing the lattice $\pi_3(X,{x_0})$ in $\pi_3(X,{x_0})\ten \R$.

\item The first  case in which $\alpha$ is non-trivial is $\Ru(G(X,{x_0})^{R, \mal})/[\Ru G(X,{x_0})^{R, \mal}]_4 $. We then have
\begin{eqnarray*}
\alpha &=& \pr_{\cH} \circ [q, G^2D^*D^{c*}] \circ  D^cD \circ  [q,G\L] \circ s\\
& =& \pr_{\cH} \circ q \circ \pr_{\im (D^*D^{c*})}\circ  [q,G\L] \circ s, 
\end{eqnarray*}
and this determines the MHS on $G(X,{x_0})^{R, \mal} $ up to pro-unipotent inner automorphism. In particular, if $X$ is simply connected, this determines the MHS on $\pi_4(X,{x_0})\ten \R$ as follows. 

Let $V:= \CoLie^3(\H^2(X,\R)[1])[-2]$, i.e. the quotient of $\H^2(X,\R)^{\ten 3}$ by the subspace generated by $a\ten b \ten c -a \ten c\ten b+c\ten a \ten b$ and $a\ten b\ten c - b\ten a \ten c+b\ten c\ten a$, then set $K$ to be the kernel of the map $q:V \to \H^4(X, \R) \ten \H^2(X, \R)$ given by $q(a\ten b\ten c)=(a\cup b)\ten c - (b\cup c) \ten a$.
If we let
$C:= \coker (\Symm^2\H^2(X, \R) \xra{\cup} \H^4(X, \R)) $ and $L:=\ker (\H^2(X, \R)\ten\H^3(X, \R)  \xra{\cup} \H^5(X, \R)) $, then
$$
\gr^W (\pi_4(X)\ten \R)^{\vee} \cong C\oplus L \oplus K.
$$ 

The MHS is then determined by $\alpha: K \to C(-1)$, corresponding to the restriction to $K$ of the map $\alpha': V \to C(-1)$ given by setting  $\alpha'(a\ten b\ten c) $ to be
\begin{eqnarray*}
&&\pr_{\cH}(\pr_I((G\L \tilde{a}) \wedge \tilde{b})\wedge \tilde{c}) 
- \pr_{\cH}((\pr_IG\L \tilde{a}) \wedge (\tilde{b} \wedge \tilde{c})) \\ 
&&-\pr_{\cH}(\pr_I(\tilde{a} \wedge (G\L \tilde{b}))\wedge \tilde{c})
- \pr_{\cH}(\tilde{a} \wedge \pr_I((G\L \tilde{b})\wedge \tilde{c}))\\
&&- \pr_{\cH}( \tilde{a} \wedge \tilde{b}\wedge(\pr_IG\L \tilde{c}) )  
+ \pr_{\cH}( \tilde{a} \wedge \pr_I(\tilde{b}\wedge (G\L \tilde{c} )))
\end{eqnarray*}
where $\tilde{a}:=sa$, for $s$ the identification of cohomology with harmonic forms, while  $\pr_{I}$ and $\pr_{\cH}$ are  orthogonal projection onto $\im(d^*d^{c*})$ and harmonic forms, respectively.

Explicitly, the MHS $(\pi_4(X)\ten \R)^{\vee}$ is then given by the subspace
$$
(c -x \alpha(k) ,l,k) \subset ( C\oplus L \oplus K)\ten \cS,
$$
for $c \in C$, $l \in L$ and $k \in K$, with $\cS$ the quasi-MHS of Lemma \ref{slhodge}. 
\end{enumerate}
\end{examples}

\section{Simplicial and singular varieties}\label{singsn}

In this section, we will show how the techniques of cohomological descent allow us to extend real mixed  Hodge and twistor structures to all proper complex varieties. By \cite[Remark 4.1.10]{SD}, the method of \cite[\S 9]{effective} shows that a surjective proper morphism of topological spaces is universally of effective cohomological descent. 

\begin{lemma}\label{weakcoholemma}
If $f:X \to Y$ is a map of compactly generated Hausdorff topological spaces inducing an equivalence on fundamental groupoids, such that $\RR^if_*\vv=0$ for all local systems $\vv$ on $X$ and all $i>0$, then $f$ is a weak equivalence. 
\end{lemma}
\begin{proof}
Without loss of generality, we may assume that $X$ and $Y$ are path-connected. If $\tilde{X}\xra{\pi} X,\tilde{Y}\xra{\pi'} Y$ are the universal covering spaces of $X,Y$, then it will suffice to show that $\tilde{f}:\tilde{X}\to\tilde{Y}$ is a weak equivalence, since the fundamental groups are isomorphic.

As $\tilde{X},\tilde{Y}$ are simply connected, it suffices to show that $\RR^i\tilde{f}_*\Z=0$ for all $i>0$. By the Leray-Serre spectral sequence, $\RR^i\pi_*\Z=0$ for all $i>0$, and similarly for $Y$. The result now follows from the observation that $\pi_*\Z$ is a local system on $X$.
\end{proof}

\begin{proposition}\label{effectivewks}
If $a:X_{\bt} \to X$ is a morphism (of simplicial topological spaces) of effective cohomological descent, then $|a|:|X_{\bt}| \to X$ is a weak equivalence, where $|X_{\bt}|$ is the geometric realisation of $X_{\bt}$.
\end{proposition}
\begin{proof}
We begin by showing that the fundamental groupoids are equivalent. Since $\H^0(|X_{\bt}|,\Z) \cong \H^0(X,\Z)$, we know that $\pi_0|X_{\bt}|\cong \pi_0X$, so we may assume that $|X_{\bt}|$ and $X$ are both connected. 

Now the fundamental groupoid of $|X_{\bt}|$ is isomorphic to the fundamental groupoid of the simplicial set $\diag\Sing(X_{\bt})$ (the diagonal of the bisimplicial set given by the singular simplicial sets of the  $X_n$). For any group $G$,  the groupoid of $G$-torsors on $|X_{\bt}|$ is thus equivalent to the groupoid of pairs $(T,\omega)$, where $T$ is a $G$-torsor on $X_0$, and the descent datum $\omega:\pd_0^{-1}T \to \pd_1^{-1}T$ is a morphism of $G$-torsors satisfying 
$$
\pd_2^{-1}\omega \circ \pd_0^{-1}\omega=\pd_1^{-1}\omega, \quad \sigma_0^{-1}\omega = 1.
$$
Since $a$ is effective, this groupoid is equivalent to the groupoid of $G$-torsors on $X$, so the fundamental groups are isomorphic.

Given a local system $\vv$ on $|X_{\bt}|$, there is a corresponding $\GL(V)$-torsor $T$, which therefore descends to $X$. Since $\vv=T\by^{\GL(V)}V$ and $T=|a|^{-1}|a|_*T$, we can deduce that $\vv=|a|^{-1}|a|_*\vv$, so $\oR^i|a|_*\vv=0$ for all $i>0$, as $a$ is  of effective cohomological descent. Thus $|a|$ satisfies the conditions of Lemma \ref{weakcoholemma}, so is a weak equivalence.
\end{proof}

\begin{corollary}\label{hodge3stuff}
Given a proper complex variety $X$, there exists a smooth proper simplicial variety $X_{\bt}$, unique up to homotopy, and a map $a:X_{\bt} \to X$, such that $|X_{\bt}| \to X$ is a weak equivalence. 

%
In fact, we may take each $X_n$ to be projective, and these resolutions are unique up to simplicial homotopy.
\end{corollary}
\begin{proof}
Apply \cite[6.2.8, 6.4.4 and \S 8.2]{Hodge3}.
\end{proof}

\subsection{Semisimple local systems}

From now on, $X_{\bt}$ will be a fixed simplicial proper complex variety (\emph{a fortiori}, this allows us to consider any proper complex variety).

In this section, we will define the real holomorphic $S^1$-action on a suitable quotient of the real reductive pro-algebraic fundamental group $\varpi_1(|X_{\bt}|,x)^{\red}$. 

Recall that a local system on a simplicial diagram $X_{\bt}$ of topological spaces  is  equivalent to the category of pairs $(\vv, \alpha)$, where $\vv$ is a local system on $X_0$, and $\alpha:\pd_0^{-1}\vv \to \pd_1^{-1}\vv$ is an isomorphism of local systems  satisfying 
$$
\pd_2^{-1}\alpha \circ \pd_0^{-1}\alpha=\pd_1^{-1}\alpha, \quad \sigma_0^{-1}\alpha = 1.
$$

\begin{definition}\label{pinormdef}
Given a  simplicial diagram $X_{\bt}$ of smooth proper varieties and a point $x \in X_0$, define the fundamental group $\varpi_1(|X_{\bt}|,x)^{\norm}$ to be the quotient of $\varpi_1(|X_{\bt}|,x)$ by the normal subgroup generated  by the image of $\Ru\varpi_1(X_0,x)$. We call its  representations  normally semisimple local systems on $|X_{\bt}|$ --- these correspond to  local systems $\ww$  (on the connected component of $|X|$ containing $x$) for which $a_0^{-1}\ww$ is semisimple, for $a_0:X_0 \to |X_{\bt}|$. 

Then define $\varpi_1(|X_{\bt}|,x)^{\norm,\red}$ to be the reductive quotient of $\varpi_1(|X_{\bt}|,x)^{\norm}$. Its representations are semisimple and normally semisimple local systems on  the connected component of $|X|$ containing $x$.
\end{definition}

\begin{lemma}\label{normwell}
If $f\co X_{\bt} \to Y_{\bt}$ is a homotopy equivalence of  simplicial smooth proper varieties, then   $\varpi_1(|X_{\bt}|,x)^{\norm}\simeq \varpi_1(|Y_{\bt}|,fx)^{\norm}$. 
\end{lemma}
\begin{proof}
Without loss of generality, we may assume that the matching maps
$$
X_n \to Y_n \by_{\Hom_{\bS}(\pd \Delta^n, Y)}\Hom_{\bS}(\pd \Delta^n, X)
$$
  of $f$ are faithfully flat and proper for all $n \ge 0$ (since morphisms of this form generate all homotopy equivalences), and that $|X|$ is connected. Here, $\bS$ is the category of simplicial sets and $\pd\Delta^n$ is the boundary of $\Delta^n$, with the convention that $\pd\Delta^0=\emptyset$.

Topological and algebraic effective descent then imply that $f^{-1}$ induces an equivalence on the categories of local systems, and that $f^*$ induces an equivalence on the categories of quasi-coherent sheaves, and hence on the categories of Higgs bundles. Since   representations of $\varpi_1(|X_{\bt}|,x)^{\norm}$ correspond to  objects in the category of Higgs bundles on $X_{\bt}$, this completes the proof.
\end{proof}

\begin{definition}
If $X_{\bt} \to X$ is any resolution as in Corollary \ref{hodge3stuff}, with $x_0 \in X_0$ mapping to $x \in X$, 
we  denote the corresponding  pro-algebraic group by $ \varpi_1(X,x)^{\norm}:=\varpi_1(|X_{\bt}|,x_0)^{\norm}$, noting that this is independent of the choice of $x_0$, since $|X_{\bt}| \to X$ is a weak equivalence.
\end{definition}

\begin{proposition}\label{norm}
If $X$ is a proper complex variety with a smooth proper resolution $a:X_{\bt} \to X$, then normally semisimple local systems on $X_{\bt}$ correspond to  local systems on $X$ which become semisimple on pulling back to the normalisation $\pi: X^{\norm}\to X$ of $X$. 
\end{proposition}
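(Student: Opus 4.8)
The plan is to unwind the definition of $\varpi_1(X,x)^{\norm}$ and reduce the statement to a comparison between two fundamental groups, then invoke descent along a hypercovering. First I would observe that by definition a normally semisimple local system on $X_\bt$ is a local system $\vv$ on (the relevant component of) $|X_\bt| \simeq X$ such that $a_0^{-1}\vv$ is a semisimple local system on $X_0$; so the content is purely about which local systems on $X$ pull back to semisimple ones on $X_0$, and the claim is that this is the same condition as pulling back to a semisimple local system on the normalisation $\pi\co X^{\norm}\to X$.

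The key steps, in order: (1) Since $X$ is proper, by Proposition \ref{hodge3stuff} we may take the resolution $a\co X_\bt \to X$ to arise from a smooth proper hypercovering; in particular $a_0\co X_0 \to X$ is a proper surjective morphism, and (after the standard reductions of de Jong/Deligne, or simply by construction) $X_0$ may be taken to dominate the normalisation $X^{\norm}$, i.e. $a_0$ factors (up to the connected component containing $x$) through $\pi$ via a proper surjective map $X_0 \to X^{\norm}$ of smooth proper varieties. (2) For a proper surjective morphism $f\co Z' \to Z$ of varieties with $Z'$ smooth (hence normal and with $Z$ normal), pullback of local systems reflects semisimplicity: if $f^{-1}\vv$ is semisimple then $\vv$ is semisimple. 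This is the heart of the matter and follows because $f_*$ and $f^{-1}$ on the level of the Tannakian categories of local systems identify $\pi_1(Z')$ with a subgroup of finite index — or more precisely with a subgroup whose Zariski closure in the reductive pro-algebraic completion surjects — so that a representation of $\varpi_1(Z)$ restricting to a semisimple representation of $\varpi_1(Z')$ is itself semisimple; concretely, one uses that $f$ admits local sections after alteration, or that $H^0(Z,\vv) = H^0(Z',f^{-1}\vv)$ together with semisimplicity being detected by the vanishing of self-extensions, which are controlled by $H^1$ and hence inject under $f^{-1}$ for a surjection with connected fibres (after passing to a smooth alteration). (3) Conversely, if $\vv$ is semisimple on $X$ then $\pi^{-1}\vv$ is semisimple on $X^{\norm}$ by Simpson's theory (a semisimple local system on a normal proper variety remains semisimple on any finite or proper cover, since semisimplicity corresponds to the existence of a harmonic/pluriharmonic metric, which pulls back), and likewise $a_0^{-1}\vv$ is semisimple on $X_0$. (4) Assemble: a local system on $X$ is normally semisimple on $X_\bt$ $\iff$ its pullback to $X_0$ is semisimple $\iff$ (by the factorization of (1) and the reflecting property of (2)) its pullback to $X^{\norm}$ is semisimple.

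The main obstacle I expect is step (2): the precise statement that a proper surjective morphism of (possibly singular, but here we can arrange smooth) varieties reflects semisimplicity of local systems. The clean way to handle it is to reduce to the case of a \emph{generically finite} proper surjection (by choosing a multisection, i.e. a subvariety of $X_0$ mapping generically finitely onto $X^{\norm}$, and using that restriction to a hyperplane-section-type subvariety is fully faithful on semisimple local systems in high enough dimension via Lefschetz, or by direct reference to the literature on $\pi_1$ of hypercoverings); then for a generically finite proper surjection $Z'\to Z$ of normal varieties, $\pi_1(Z')$ maps to a finite-index subgroup of $\pi_1(Z)$ up to the kernel, and finite-index restriction reflects semisimplicity of complex representations by a standard averaging/Clifford-theory argument. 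I would cite \cite[Lemma \ref{mhs-normwell}]{mhs} and the surrounding results from \cite{mhs} for the technical input, since the machinery for exactly this kind of comparison (independence of the resolution, behaviour under normalisation) is already developed there, and present step (2) as an application of those results rather than reproving it from scratch.
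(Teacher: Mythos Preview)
The paper does not prove this proposition here: its entire proof is the single sentence ``This is \cite[Proposition \ref{mhs-norm}]{mhs}.'' So there is nothing to compare against in this paper; the argument lives in the companion article.

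Your outline has the right shape, and in fact the last paragraph of your proposal is essentially what the paper does --- defer to \cite{mhs}. Two remarks on the details of your sketch, in case you want to make it self-contained rather than a citation:

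\textbf{Step (1), factorisation.} The factorisation $a_0\co X_0 \to X^{\norm} \to X$ exists automatically by the universal property of normalisation, since $X_0$ is smooth hence normal. You should, however, say why $X_0 \to X^{\norm}$ is surjective: the image is closed (properness) and dominates $X$, but if $X$ is reducible $X^{\norm}$ can have several components over one of $X$; you need that in a proper hypercovering every irreducible component of $X$ is hit by a component of $X_0$, and then normality forces surjectivity componentwise.

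\textbf{Step (2)/(3), transfer of semisimplicity.} Your step (3) is phrased as though we start from $\vv$ semisimple on $X$, but that is not what we are given; we only know semisimplicity after pullback to $X^{\norm}$ or to $X_0$. What you really need is that pullback along the proper surjection $X_0 \to X^{\norm}$ both \emph{preserves} and \emph{reflects} semisimplicity. The clean way to get both at once is to show that $\pi_1(X_0) \to \pi_1(X^{\norm})$ has finite-index image (Stein factorisation reduces to the generically finite case, and then this is immediate), since restriction along a finite-index subgroup preserves and reflects semisimplicity of finite-dimensional representations. Your Simpson-metric argument for preservation is problematic as written because $X^{\norm}$ need not be smooth, so pluriharmonic metrics are not directly available there; the $\pi_1$ argument avoids this.
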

\begin{proof}
First observe that  $\varpi_1(|X_{\bt}|,x_0)^{\norm}= \varpi_1(X,x_0)/\langle a_0\Ru(\varpi_1(X_0,x_0)) \rangle$. Lemma \ref{normwell} ensures that  $\varpi_1(|X_{\bt}|,x_0)^{\norm}$ is independent of the choice of resolution $X_{\bt}$ of $X$, so can be defined as $\varpi_1(X,x_0)/\langle f\Ru(\varpi_1(Y,y)) \rangle$ for any smooth projective variety $Y$ and proper faithfully flat $f:Y \to X$, with $fy=x$.

Now, since $X^{\norm}$ is normal,  we may make use of an observation on  pp. 9--10 of \cite{Am} (due to M. Ramachandran). $X^{\norm}$ admits a proper faithfully flat morphism $g$ from a smooth variety $Y$ with connected fibres over $X^{\norm}$. If $\tilde{x} \in X^{\norm}$ is a point above $x \in X$, and $y \in Y$ is a point above $\tilde{x}$, then this implies the morphism  $\pi_1g\co \pi_1(Y,y) \to \pi_1(X^{\norm},\tilde{x})$ is surjective (from the long exact sequence of homotopy), and therefore $g(\Ru  \varpi_1(Y,y)) = \Ru \varpi_1(X^{\norm},\tilde{x})$. 

Taking $f:Y \to X$ to be the composition $Y \xra{g} X^{\norm} \xra{\pi} X$, we see that  $f\Ru\varpi_1(Y,y)= \pi(\Ru\varpi_1(X^{\norm},\tilde{x}))$.
This shows that $\varpi_1(X,x)^{\norm}= \varpi_1(X,x)/\langle \pi(\Ru\varpi_1(X^{\norm}, \tilde{x})) \rangle$, as required.
\end{proof}

\begin{proposition}\label{properred}
If $X_{\bt}$ is a simplicial diagram of compact K\"ahler manifolds, then there is a discrete action of the  circle group $S^1$ on $\varpi_1(|X_{\bt}|,x)^{\norm}$, such that the composition $S^1 \by \pi_1(X_{\bt},x) \to \varpi_1(|X_{\bt}|,x)^{\norm}$ is continuous. We denote this last map by $\sqrt h:\pi_1(|X_{\bt}|,x) \to \varpi_1(|X_{\bt}|,x)^{\norm}((S^1)^{\cts})$.

This also holds if we replace $X_{\bt}$ with any proper complex variety $X$.
\end{proposition}
\begin{proof}
The key observation is that the $S^1$-action defined in \cite{Simpson} is functorial in $X$, and that semisimplicity is preserved by pullbacks between compact K\"ahler manifolds (since Higgs bundles pull back to Higgs bundles), 
so there is a canonical isomorphism $t(\pd_i^{-1}\vv)\cong \pd_i^{-1}(t\vv)$ for $t \in S^1$; thus it makes sense for us to define
$$
t(\vv,\alpha):= (t\vv, t(\alpha)),
$$
whenever $\vv$ is semisimple on $X_0$.

If $\C$ is the category of finite-dimensional real local systems on $X_{\bt}$,  
this defines an $S^1$-action on the full subcategory $\C' \subset \C$ consisting of those local systems $\vv$ on $X_{\bt}$ whose restrictions to $X_0$ (or equivalently to all $X_n$)  are semisimple.
Now, the category of $\varpi_1(|X_{\bt}|,x)^{\norm}$-representations  is equivalent to  $\C'$ (assuming, without loss of generality, that $|X_{\bt}|$ is connected). By Tannakian duality, this defines an $S^1$-action on $\varpi_1(|X_{\bt}|,x)^{\norm}$. 

Since $X_0,X_1$ are smooth and proper, the actions of $S^1$ on their reductive pro-algebraic fundamental groupoids are continuous by Lemma \ref{analyticpi}, corresponding to maps 
$$
\pi_1(X_i; T) \to \varpi_1(X_i;T)^{\red}((S^1)^{\cts}).
$$
The morphisms $\varpi_1(X_i; a_i^{-1}\{x\}) \to \varpi_1(|X_{\bt}|,x)$ (coming from $a_i: X_i \to |X_{\bt}|$) then give us maps
$$
\pi_1(X_i;a_i^{-1}\{x\} ) \to \varpi_1(|X_{\bt}|,x)^{\norm, \red}((S^1)^{\cts}),
$$
compatible with the simplicial operations on $X_{\bt}$. Since 
$$
\xymatrix@1{\pi_1(X_1; a_1^{-1}\{x\}) \ar@<.5ex>[r]^-{\pd_0} \ar@<-.5ex>[r]_-{\pd_1}& \pi_1(X_0;a_0^{-1}\{x\}) \to \pi_1(|X_{\bt}|,x)}
$$
is a coequaliser diagram in the category of groupoids, this gives us a map
$$
\pi_1(|X_{\bt}|,x) \to \varpi_1(|X_{\bt}|,x)^{\norm, \red}((S^1)^{\cts}).
$$

For the final part, replace a  proper complex variety with a simplicial smooth proper resolution, as in Corollary \ref{hodge3stuff}.
\end{proof}

\subsection{The Malcev homotopy type}

Now fix a simplicial diagram $X_{\bt}$ of compact K\"ahler manifolds, and take a full and essentially surjective representation $\rho: \varpi_1(|X_{\bt}|,x)^{\norm, \red}\to R$. As in Definition \ref{Budef}, this gives rise to an $R$-torsor $\Bu_{\rho}$ on $X$. 

\begin{definition}
Define the cosimplicial CDGAs 
$$
A^{\bt}(X_{\bt},  O(\Bu_{\rho})), \,\H^*(X_{\bt},  O(\Bu_{\rho}))  \in cDG\Alg(R)
$$
by $n \mapsto A^{\bt}(X_n, O(\Bu_{\rho}))$ and $n \mapsto\H^*(X_n,  O(\Bu_{\rho}))$.
\end{definition}

\begin{definition}
Given a point $x_0 \in X_0$, define $x_0^*: A^{\bt}(X_{\bt},  O(\Bu_{\rho})) \to O(R)$ to be given in cosimplicial degree $n$ by $((\sigma_0)^nx_0)^*: A^{\bt}(X_{n},  O(\Bu_{\rho})) \to  O(\Bu_{\rho})_{ (\sigma_0)^nx_0} \cong O(R)$.
\end{definition}

\begin{lemma}\label{singmalcev}
There is a canonical chain of quasi-isomorphisms between  
the relative Malcev homotopy type $G(|X_{\bt}|,x_0)^{\rho, \mal}$ and
$$
(\Spec D)\bar{G}_R(\Th(A^{\bt}(X_{\bt}, O(\Bu_{\rho}))) \xra{x_0^*} O(R))     
$$
where $\Th: cDG\Alg(R) \to DG\Alg(R)$ is the Thom-Sullivan functor (Definition \ref{Th}) mapping cosimplicial DG algebras to DG algebras.
\end{lemma}
\begin{proof}
This is true for any simplicial diagram of manifolds, and follows by combining Propositions \ref{bigequiv} and \ref{propforms}.
\end{proof}

\subsection{Mixed Hodge structures}

Retaining the hypothesis that $X_{\bt}$ is a simplicial proper complex variety, observe that a representation of $\varpi_1(|X_{\bt}|,x)^{\norm, \red}$ corresponds to a semisimple representation of $X_{\bt}$ whose pullbacks to each $X_n$ are all semisimple. This follows because the morphisms $X_n \to X_0$ of compact K\"ahler manifolds all preserve semisimplicity under pullback, as observed in Proposition \ref{properred}.

\begin{theorem}\label{singmhsmal}\label{singmtsmal}
If  $R$ is any quotient of $\varpi_1(|X_{\bt}|,x)^{\norm, \red}$ (resp. any quotient to which the $(S^1)^{\delta}$-action of Proposition \ref{properred} descends and acts algebraically), then there is an algebraic mixed twistor  structure (resp. mixed Hodge structure) $(|X_{\bt}|,x)^{\rho,\mal}_{\MTS}$ (resp. $(|X_{\bt}|,x)^{\rho,\mal}_{\MHS}$) on the relative Malcev homotopy type $(|X_{\bt}|,x)^{\rho, \mal}$, where $\rho$ denotes the quotient map.

There is also a $\bG_m$-equivariant (resp. $S$-equivariant)  splitting
$$
\bA^1 \by (\ugr (|X_{\bt}|^{\rho,\mal},0)_{\MTS}) \by \SL_2 \simeq (|X_{\bt}|,x)^{\rho, \mal}_{\MTS}\by_{C^*, \row_1}^{\oR}\SL_2
$$
(resp.
$$
\bA^1 \by (\ugr (|X_{\bt}|^{\rho,\mal},0)_{\MHS}) \by \SL_2 \simeq (|X_{\bt}|,x)^{\rho, \mal}_{\MHS}\by_{C^*, \row_1}^{\oR}\SL_2)
$$
on pulling back along $\row_1:\SL_2 \to C^*$, whose pullback over $0 \in \bA^1$ is given by the opposedness isomorphism. 
\end{theorem}
\begin{proof}
We define the cosimplicial CDGA $\tilde{A}(X_{\bt}, O(\Bu_{\rho}))$ on $C$ by  $n \mapsto \tilde{A}^{\bt}(X_n, O(\Bu_{\rho}))$, observing that functoriality (similarly to Proposition \ref{kmhsfun}) ensures that the simplicial and CDGA structures are compatible. This has an augmentation $x^*: \tilde{A}(X_{\bt}, O(\Bu_{\rho}))\to O(R)\ten O(C)$ given in level $n$ by $((\sigma_0)^nx)^*$. 

We then define the  mixed twistor structure by 
$$
|X_{\bt}|^{\rho, \mal}_{\MHS}:= (\Spec \Th \xi(\tilde{A}(X_{\bt}, O(\Bu_{\rho})), \tau_{\tilde{A}})  )  \by_CC^* \in dg_{\Z}\Aff_{\bA^1\by C^*}(\bG_m \by R\rtimes S),
$$
with 
$$
\ugr |X_{\bt}|^{\rho,\mal}_{\MHS}= \Spec(\Th \H^*(X_{\bt}, O(\Bu_{\rho})))\in dg_{Z}\Aff(R\rtimes S);
$$
the definitions of $|X_{\bt}|^{\rho, \mal}_{\MTS}$ are similar, replacing $S$ with $\bG_m$.

For any CDGA $B$, we may regard $B$ as a cosimplicial CDGA (with constant cosimplicial structure), and then $\Th(B)=B$. In particular, $\Th(O(R))= O(R)$, so we have a basepoint $\Spec \Th(x^*): \bA^1 \by R \by C^* \to |X_{\bt}|^{\rho, \mal}_{\MHS} $, giving  
$$
(|X_{\bt}|,x)^{\rho, \mal}_{\MHS}\in  dg_{\Z}\Aff_{\bA^1\by C^*}(R)_*(\bG_m \by S),
$$
and similarly for $|X_{\bt}|^{\rho, \mal}_{\MTS}$.

The proof of Theorem \ref{mhsmal} now carries over. For a singular variety $X$, apply Proposition \ref{effectivewks} to substitute a simplicial smooth proper variety $X_{\bt}$.
\end{proof}

\begin{corollary}\label{singmhspin}\label{singmtspin}
In the scenario of Theorem \ref{singmhsmal}, the
 homotopy groups $\varpi_n(|X_{\bt}|^{\rho, \mal},x)$ for $n\ge 2$, and the Hopf algebra $O(\varpi_1(|X_{\bt}|^{\rho, \mal},x))$ carry natural ind-MTS (resp. ind-MHS), functorial in $(X_{\bt},x)$, and compatible with the action of $\varpi_1$ on $\varpi_n$, the Whitehead bracket  and the Hurewicz maps $\varpi_n(|X_{\bt}|^{\rho, \mal},x) \to \H^n(|X_{\bt}|,O(\Bu_{\rho}))^{\vee}$. 

Moreover, there are $\cS$-linear isomorphisms
\begin{eqnarray*}
\varpi_n(|X_{\bt}|^{\rho, \mal},x)^{\vee}\ten\cS &\cong& \pi_n(\Th \H^*(X_{\bt}, O(\Bu_{\rho})))^{\vee}\ten\cS\\
O(\varpi_1(|X_{\bt}|^{\rho, \mal},x)) \ten\cS &\cong& O(R \ltimes\pi_1(\Th \H^*(X_{\bt}, O(\Bu_{\rho})))\ten\cS
\end{eqnarray*}
of quasi-MTS (resp. quasi-MHS). The associated graded map from the weight filtration is just the pullback of the standard isomorphism $\gr_W\varpi_*(|X_{\bt}|^{\rho, \mal})\cong\pi_*(\Th \H^*(X_{\bt}, O(\Bu_{\rho})))$.

Here, $\pi_*(B)$ are the  homotopy groups $\H_{*-1}\bar{G}(B)$  associated to the $R \rtimes S$-equivariant  CDGA $\H^*(X,O(\Bu_{\rho}))$ (as constructed in Definition \ref{barwg}), with the induced real twistor (resp. Hodge) structure.

Furthermore, $W_0O(\varpi_1(|X_{\bt}|^{\rho, \mal},x))= O(\varpi_1(|X_{\bt}|^{\rho, \mal},x)^{\norm})$.
\end{corollary}
\begin{proof}
This is essentially the same as Corollary \ref{kmhspin}. Note that we may simplify the calculation of $\pi_*( \Th \H^*(X_{\bt}, O(\Bu_{\rho})))$ by observing that $\pi_*(C^{\bt})= \pi_*\Spec (D C^{\bt})$, where $D$ denotes cosimplicial denormalisation, so $\pi_*( \Th \H^*(X_{\bt}, O(\Bu_{\rho})))= \pi_*\Spec (\diag D\H^*(X_{\bt}, O(\Bu_{\rho})))$. 

For the final statement, note that representations of $\gr^W_0 \varpi_1(|X_{\bt}|^{\rho, \mal},x):= \Spec W_0O(\varpi_1(|X_{\bt}|^{\rho, \mal},x))$ correspond to representations of $\varpi_1(|X_{\bt}|^{\rho, \mal},x)$ which annihilate the image of $W_{-1}\varpi_1(X_n^{\rho, \mal},x)$ for all $n$. Since $X_n$ is smooth and projective, we just have $W_{-1}\varpi_1(X_n^{\rho, \mal},x)= \Ru\varpi_1(X_n^{\rho, \mal},x)$, so these are precisely the normally semisimple representations.
\end{proof}

\begin{corollary}
If $\pi_1(|X_{\bt}|,x)$ is algebraically good with respect to $R$ and the homotopy groups $\pi_n(|X_{\bt}|,x)$ have finite rank for all $n\ge 2$, with $\pi_n(|X_{\bt}|,x)\ten_{\Z} \R$ an extension of $R$-representations, then  Corollary \ref{singmhspin} gives mixed twistor (resp. mixed Hodge) structures on $\pi_n(|X_{\bt}|,x)\ten\R$ for all $n \ge 2$, by Theorem \ref{classicalpimal}.  
\end{corollary}

\begin{proposition}\label{Hainhodge}
When $R=1$, the mixed Hodge structures of Corollary \ref{singmhspin} agree with those defined in \cite[Theorem 6.3.1]{Hainhodge}.
\end{proposition}
\begin{proof}
The algebraic mixed Hodge structure of Theorem \ref{singmhsmal} is given by applying the Rees functor to the real mixed Hodge complex $(\Th A^{\bt}(X_{\bt}, \R),W,F)$. Up to a shift in the weight filtration caused by d\'ecalage, this is naturally quasi-isomorphic to the real analogue of the  mixed Hodge complex  of \cite[Theorem 5.6.4]{Hainhodge}.

Since $\bar{G}_1(A)= \Spec \bar{B}(A)$, it follows from Theorem \ref{bigequiv} that our characterisation of homotopy groups (Definition \ref{varpidef}) is the same as that given in \cite{Hainhodge}, so our construction of mixed Hodge structures on homotopy groups agrees with \cite[Theorem 4.2.1]{Hainhodge} (for details on the shift in the weight filtration, see Remark \ref{trueweightrmk}).
\end{proof}

\subsection{Enriching twistor structures}
For the remainder of this section, assume that  $R$ is any quotient of $(\pi_1(|X_{\bt}|,x))^{\red, \norm}_{\R}$  to which the $(S^1)^{\delta}$-action descends, but does not necessarily act algebraically.

\begin{proposition}\label{singmtsmalenrich}
There is a natural  $(S^1)^{\delta}$-action on $\ugr |X_{\bt}|^{\rho,\mal}_{\MTS}$, giving   a $(S^1)^{\delta}$-invariant  map
$$
h \in\Hom_{\Ho(\bS \da BR((S^1)^{\cts}))}(\Sing(|X_{\bt}|,x), \holim_{\substack{\lra \\ R((S^1)^{\cts})}}  (|X_{\bt}|,x)^{\rho,\mal}_{\bT}((S^1)^{\cts})_{C^*}),
$$
where $(|X_{\bt}|,x)_{\bT}^{\rho,\mal}((S^1)^{\cts})_{C^*}:= \Hom_{C^*}((S^1)^{\cts}, (|X_{\bt}|^{\rho,\mal},x)_{\bT})$.

Moreover, for  $1: \Spec \R \to (S^1)^{\cts}$, the map
$$
1^*h:\Sing(|X_{\bt}|,x)\to \holim_{\substack{\lra \\ R(\R)}} (|X_{\bt}|^{\rho,\mal},x)_{\bT}((S^1)^{\cts})_{C^*}\by_{BR((S^1)^{\cts})}BR(\R)
$$
in $\Ho(\bS \da BR(\R))$ is just the canonical map
$$
\Sing(|X_{\bt}|,x)\to \holim_{\substack{\lra \\ R(\R)}} (|X_{\bt}|^{\rho, \mal},x)(\R).
$$
\end{proposition}
\begin{proof}
We first note that Proposition \ref{redenrich} adapts by functoriality to give a $(S^1)^{\delta}$-action on the  mixed twistor structure $|X_{\bt}|^{\rho, \mal}_{\MTS}$ of Theorem \ref{singmtsmal}. It also gives a $(S^1)^{\delta}$-action on $\ugr |X_{\bt}|^{\rho, \mal}_{\MTS}$, for which the $\bG_m\by \bG_m$-equivariant  splitting
$$
\bA^1 \by \ugr |X_{\bt}|^{\rho, \mal}_{\MTS}\by \SL_2 \cong (|X_{\bt}|^{\rho, \mal}_{\MTS},x )\by_{C^*, \row_1}^{\oR}\SL_2
$$
of Theorem \ref{singmtsmal} is also $(S^1)^{\delta}$-equivariant.

The proof of Proposition \ref{mtsmalenrich} also adapts  by functoriality, with $h$ above 
extending the map  $ h:(|X_{\bt}|,x) \to BR((S^1)^{\cts})$ corresponding to the group homomorphism $h: \pi_1(|X_{\bt}|,x) \to R((S^1)^{\cts})$ given by $h(t)= \sqrt h(t^2)$, for $\sqrt h$ as in  Proposition \ref{properred}.
\end{proof}

Thus   (for  $R$  any quotient of $\varpi_1(|X_{\bt}|,x)^{\red, \norm}$  to which the $(S^1)^{\delta}$-action       descends), we have:

\begin{corollary} \label{singmhspinanal}
If the group $\varpi_n(|X_{\bt}|^{\rho,\mal},x)$  is finite-dimensional and spanned by the image of  $\pi_n(|X_{\bt}|,x)$, then the former carries a natural mixed Hodge structure, which splits on tensoring with $\cS$ and extends the mixed twistor structure of Corollary \ref{singmtspin}. This is functorial in $X_{\bt}$ and compatible with the action of $\varpi_1$ on $\varpi_n$, the Whitehead bracket, and the Hurewicz maps $\varpi_n(|X_{\bt}|^{\rho, \mal},x) \to \H^n(|X_{\bt}|,O(\Bu_{\rho}))^{\vee}$. 
\end{corollary}
\begin{proof}
 We first note that Corollary \ref{kmtspinen} adapts to show that 
for all $n$, the homotopy class of maps
$
\pi_n(|X_{\bt}|,x) \by (S^1)^{\delta}\to \varpi_n(|X_{\bt}|^{\rho,\mal},x)_{\bT},
$
given by composing the maps $\pi_n (|X_{\bt}|,x) \to \varpi_n(|X_{\bt}|^{\rho,\mal},x)$  with the $(S^1)^{\delta}$-action on $(|X_{\bt}|^{\rho,\mal},x)_{\bT}$, is analytic. The proof of Corollary \ref{kmhspinanal} then carries over to this context.
\end{proof}

 \begin{remark}
   Observe that if $\pi_1(|X_{\bt}|,x)$ is algebraically good with respect to $R$ and the homotopy groups $\pi_n(|X_{\bt}|,x)$ have finite rank for all $n\ge 2$,  with $\pi_n(|X_{\bt}|,x)\ten_{\Z} \R$ an extension of $R$-representations, then  Theorem \ref{classicalpimal} implies that $\varpi_n(|X_{\bt}|^{\rho, \mal},x)\cong \pi_n(|X_{\bt}|,x)\ten \R$, ensuring that  the hypotheses of  Corollary \ref{kmhspinanal} are satisfied.
   \end{remark}


\section{Algebraic MHS/MTS for quasi-projective varieties I}\label{quprojsn}

Fix a smooth compact K\"ahler manifold $X$, a divisor $D$ locally of normal crossings, and set $Y:= X-D$. Let $j: Y \to X$ be the inclusion morphism.

\begin{definition}\label{logdef}
Denote the sheaf  of real $\C^{\infty}$ $n$-forms on $X$ by $\sA^n_X$, and let  $\sA_X^{\bt}$ be the resulting complex (the real sheaf de Rham complex on $X$).

Let $ \sA_X^{\bt} \llbracket D \rrbracket\subset j_*\sA_Y^{\bt}$ be the sheaf of dg $\sA_X^{\bt}$-subalgebras locally generated by      $\{\log r_i, d\log r_i, \dc \log r_i\}_{1\le i\le m}$,  where $D$ is given in local co-ordinates  by $D= \bigcup_{i=1}^m\{z_i=0\}$, and $r_i= |z_i|$.

Let  $\sA^{\bt}_X\langle  D \rangle\subset j_*\sA_Y^{\bt}\ten \Cx $  be the sheaf of dg $\sA_X^{\bt}\ten \Cx$-subalgebras locally generated by  
$\{ d \log z_i\}_{1\le i\le m} $. 
\end{definition}

Note that $\dc\log r_i= d\arg z_i$.

\begin{definition}
 Construct  increasing  filtrations on $\sA^{\bt}_X\langle  D \rangle$ and $\sA^{\bt}_X \llbracket D \rrbracket$ by setting
\begin{eqnarray*}
 J_0\sA_X^{\bt} \llbracket D \rrbracket&=& \sA^{\bt}_X,\\
 J_0\sA^{\bt}_X\langle  D \rangle&=&\sA^{\bt}_X\ten \Cx,
\end{eqnarray*}
 then forming $J_r\sA_X^{\bt} \langle  D \rangle\subset \sA_X^{\bt}\langle  D \rangle $ and $J_r\sA_X^{\bt} \llbracket D \rrbracket\subset \sA_X^{\bt} \llbracket D \rrbracket$ inductively by the local expressions
\begin{align*}
J_r\sA_X^{\bt} \langle  D \rangle&=  \sum_i  J_{r-1}\sA_X^{\bt}\langle  D \rangle  d\log z_i, \\
J_r\sA_X ^{\bt}\llbracket D \rrbracket&= \sum_i  J_{r-1}\sA_X^{\bt} \llbracket D \rrbracket\log r_i+ \sum_i  J_{r-1}\sA_X^{\bt} \llbracket D \rrbracket d\log r_i   + \sum_i  J_{r-1}\sA_X^{\bt} \llbracket D \rrbracket \dc\log r_i,
\end{align*}
for local co-ordinates as above.
\end{definition}

Given any cochain complex $V$, we denote the good truncation filtration by $\tau_nV:= \tau^{\le n}V$.  

\begin{lemma}\label{Jworks}
 The maps
\begin{eqnarray*}
 (\sA_X^{\bt}\langle  D \rangle , J) \la &(\sA_X^{\bt} \langle D \rangle, \tau) &\to (j_*\sA_Y^{\bt}\ten \Cx, \tau)\\
(\sA_X^{\bt} \llbracket D \rrbracket, J) \la &(\sA_X^{\bt}\llbracket  D \rrbracket, \tau) &\to (j_*\sA_Y^{\bt}, \tau)
\end{eqnarray*}
are filtered quasi-isomorphisms of complexes of sheaves on $X$.
\end{lemma}
\begin{proof}
This is essentially the same as \cite{Hodge2} Prop 3.1.8, noting that the inclusion $ \sA_X^{\bt}\langle  D \rangle \into \sA_X^{\bt}\llbracket D \rrbracket\ten \Cx $ is  a filtered quasi-isomorphism, because  $ \sA_X^{\bt}\llbracket D \rrbracket\ten \Cx $ is locally freely generated over $ \sA_X^{\bt}\langle  D \rangle$ by the elements $\log r_i$ and $d\log r_i$.
\end{proof}

An immediate consequence of this lemma is that for all $m \ge 0$, the flabby complex $\gr^J_m\sA_X \llbracket D \rrbracket$ is quasi-isomorphic to   $\oR^mj_*\R$.

\begin{definition}
For any real local system $\vv$ on $X$, define 
\begin{align*}
 \sA^{\bt}_X(\vv)&:= \sA^{\bt}_X\ten_{\R} \vv, & \sA^{\bt}_X(\vv) \langle  D \rangle&:= \sA^{\bt}_X\langle  D\rangle\ten_{\R} \vv,  & \sA^{\bt}_X(\vv) \llbracket D \rrbracket&:= \sA^{\bt}_X\llbracket D \rrbracket\ten_{\R} \vv. 
\end{align*}
\begin{align*}
 A^{\bt}(X,\vv)&:=\Gamma(X, \sA^{\bt}_X(\vv )),& A^{\bt}(X,\vv)\langle  D \rangle&:= \Gamma(X, \sA^{\bt}_X(\vv)\langle  D \rangle ), \\
&  & A^{\bt}(X,\vv) \llbracket D \rrbracket&:= \Gamma(X, \sA^{\bt}_X(\vv) \llbracket D \rrbracket).
\end{align*}
These inherit  filtrations, given by
\begin{align*}
J_rA^{\bt}(X,\vv) \langle  D \rangle&:= \Gamma(X, J_r\sA^{\bt}_X \langle  D \rangle\ten \vv ),\\
 J_rA^{\bt}(X,\vv)\llbracket D \rrbracket&:= \Gamma(X, J_r\sA^{\bt}_X \llbracket D \rrbracket\ten \vv ).
\end{align*}
\end{definition}

Note that Lemma \ref{Jworks} implies that for all $m \ge 0$, the flabby complex $\gr^J_m\sA_X(\vv) \llbracket D \rrbracket$ (resp.  $\gr^J_m\sA^{\bt}_X(\vv) \langle  D \rangle$) is quasi-isomorphic to   $\oR^mj_*(j^{-1}\vv)\cong \vv\ten  \oR^mj_*\R$ (resp. $\oR^mj_*(j^{-1}\vv)\ten \Cx$).

\begin{remark}\label{trueweightrmk}
 The filtration $J$ essentially corresponds to the weight filtration $W$ of \cite[3.1.5]{Hodge2}. However, the true weight filtration on cohomology, and hence on homotopy types, is given by the d\'ecalage $\Dec J$ (as in \cite[Theorem 3.2.5]{Hodge2} or  \cite{Morgan}). As explained for instance in \cite{poids}, $\Dec J$ gives the correct weights on $\oR\Gamma(Y, \Q)$, not only for mixed  Hodge structures but also for Frobenius eigenvalues on  $\ell$-adic completions, making it the natural filtration to use on $A^{\bt}(X,\vv) \langle  D \rangle $. It also tallies with Frobenius eigenvalues on homotopy types, as in \cite{weiln}.

Since $\Dec J$ gives the correct notion of weights, we reserve the terminology ``weight filtration'' for $W:= \Dec J$ on $A^{\bt}(X,\vv) \langle  D \rangle$. This considerably simplifies several constructions; for instance, this weight filtration passes directly to a weight filtration on the bar construction, without having to take convolution with the bar filtration as in \cite{Hainhodge}.
\end{remark}

\subsubsection{Decreasing Hodge and twistor filtrations}\label{nonabfil2}

We now introduce refinements of the constructions from \S \ref{nonabfil} in order to deal with the non-abelian analogue of decreasing filtrations $F^0 \supset F^1 \supset \ldots$.

\begin{definition}
 $\Mat_n$ is the algebraic monoid of $n \by n$-matrices. Thus $\Mat_1\cong \bA^1$, so acts on $\bA^1$ by multiplication.
Note that the inclusion $\bG_m \into \Mat_1$ identifies $\Mat_1$-representations with non-negatively weighted $\bG_m$-representations. 

Let $\bar{S}:= (\Mat_1\by S^1)/(-1,-1)$, giving a real  algebraic monoid whose subgroup of units is $S$, via the isomorphism $ S \cong (\bG_m\by S^1)/(-1,-1)$. There is thus a morphism $\bar{S} \to S^1$ given by $(m,u) \mapsto u^2$, extending the isomorphism $S/ \bG_m \cong S^1$.
\end{definition}

Note that $\bar{S}$-representations correspond via the morphism $S \to \bar{S}$ to real Hodge structures of non-negative weights. In the co-ordinates of Remark \ref{Ccoords}, 
\[
 \bar{S}= \Spec\R[u,v,\frac{u^2-v^2}{u^2+v^2}, \frac{2uv}{u^2+v^2}].
\]

The following adapts Definition \ref{algmhsdef} to non-positive weights, replacing $\bG_m$ and $S$ with $\Mat_1$ and $\bar{S}$ respectively.
\begin{definition}\label{algmhsdef2}
A non-positively weighted algebraic  mixed Hodge structure $(X,x)_{\MHS}^{R, \mal}$ on a pointed Malcev homotopy type $(X,x)^{R, \mal}$ consists of the following data:
\begin{enumerate}
\item an algebraic action of $S^1$ on $R$,
\item an object 
$$
(X,x)_{\MHS}^{R, \mal} \in \Ho(dg_{\Z}\Aff_{\bA^1 \by C^*}(R)_*(\Mat_1 \by S)),
$$
where $S$ acts on $R$ via the $S^1$-action, using the canonical isomorphism $S^1 \cong S/\bG_m$, 
\item an object 
$$
\ugr (X,x)_{\MHS}^{R, \mal} \in \Ho(dg_{\Z}\Aff(R)_*(\bar{S})),
$$
\item an isomorphism $(X,x)^{R, \mal} \cong  (X,x)_{\MHS}^{R, \mal} \by_{(\bA^1\by C^*), (1,1)}^{\oR}\Spec \R \in \Ho(dg_{\Z}\Aff(R)_*)$,
\item an isomorphism (called the opposedness isomorphism)
$$
\theta^{\sharp}(\ugr (X,x)_{\MHS}^{R, \mal}) \by C^* \cong  (X,x)_{\MHS}^{R, \mal} \by_{\bA^1, 0}^{\oR}\Spec \R \in \Ho( dg_{\Z}\Aff_{C^*}( R)_*(\Mat_1 \by S)),
$$
 for the canonical map $\theta: \Mat_1 \by S \to \bar{S}$ given by combining the inclusion $\Mat_1 \into \bar{S}$ with the inclusion $S \into \bar{S}$. 
\end{enumerate}
\end{definition}

\begin{definition}\label{algmtsdef}
An non-positively weighted algebraic  mixed twistor structure $(X,x)_{\MTS}^{R, \mal}$ on a pointed Malcev homotopy type $(X,x)^{R, \mal}$ consists of the following data:
\begin{enumerate}
\item an object 
$$
(X,x)_{\MTS}^{R, \mal} \in \Ho(  dg_{\Z}\Aff_{\bA^1 \by C^*}(R)_*( \Mat_1 \by \bG_m)),
$$
\item an object $\ugr (X,x)_{\MTS}^{R, \mal} \in \Ho( dg_{\Z}\Aff(R)_*(\Mat_1 ))$,
\item an isomorphism $(X,x)^{R, \mal} \cong  (X,x)_{\MTS}^{R, \mal} \by_{(\bA^1\by C^*), (1,1)}^{\oR}\Spec \R \in \Ho( dg_{\Z}\Aff(R)_*)$,
\item an isomorphism (called the opposedness isomorphism)
$$
\theta^{\sharp}(\ugr (X,x)_{\MTS}^{R, \mal}) \by C^* \cong  (X,x)_{\MTS}^{R, \mal} \by_{\bA^1, 0}^{\oR}\Spec \R \in \Ho( dg_{\Z}\Aff_{C^*}( R)_*(\Mat_1 \by\bG_m)),
$$
 for the canonical  map $\theta: \Mat_1 \by \bG_m \to \Mat_1$ given by combining the identity on  $\Mat_1$ with the inclusion $\bG_m \into \Mat_1$. 
\end{enumerate}
\end{definition}

\subsection{The Hodge and twistor filtrations}

We begin by generalising some constructions from \S \ref{vmhssn}.

\begin{definition}\label{sAtildedef2}
Given a semisimple real local system $\vv$ on $X$, define the sheaf $\tilde{\sA}_{X}^{\bt}(\vv) \llbracket D \rrbracket $ of cochain complexes on $X_{\an} \by C_{\Zar}$ by
\[
  \tilde{\sA}^{\bt}_{X}(\vv)\llbracket D \rrbracket= (\sA^*_X(\vv) \llbracket D \rrbracket\ten_{\R} \O_C, u D + v D^c),
\]
for co-ordinates $u,v$ as in Remark \ref{Ccoords}. We denote the differential by $\tilde{D}:=uD + vD^c$.

Define the quasi-coherent sheaf $\tilde{A}^{\bt}(X,\vv) \llbracket D \rrbracket$ of cochain complexes on $C$ by
$
\tilde{A}^{\bt}(X,\vv) \llbracket D \rrbracket:=\Gamma (X_{\an}, \tilde{\sA}^{\bt}_X(\vv)\llbracket D \rrbracket).
$
\end{definition}

\begin{definition}\label{Saction}
 Note that the $\dmd$ action on $\sA$ from Definition \ref{dmd}  gives an action of $\bG_m \subset S$ on $ \tilde{\sA}^{\bt}_{X}(\vv)\llbracket D \rrbracket$ over $C$. If we have  a  semisimple local system  $\vv$, equipped 
 with  a discrete (resp. algebraic) action of $S^1$ on $\sA^0_X(\vv)$, recall that the proof of Proposition \ref{redenrich} (resp. Theorem \ref{mhsmal}) gives a discrete  $S(\R)=\Cx^{\by}$-action (resp. an  algebraic $S$-action)  $\boxast$  on  $\tilde{\sA}^{\bt}_{X}(\vv)$, and note that this extends naturally to   $ \tilde{\sA}^{\bt}_{X}(\vv)\llbracket D \rrbracket$.
\end{definition}

\begin{definition}\label{Ytfil}
Given a Zariski-dense representation $\rho\co \pi_1(X,jy) \to R(\R)$, for $R$ a pro-reductive pro-algebraic group, define an algebraic twistor filtration on the relative Malcev homotopy type $(Y,y)^{R, \mal}$ by
\[
 (Y,y)^{R, \mal}_{\bT}:=  (R  \by C^* \xra{\oSpec(jy)^*} \oSpec_{C^*}   \tilde{A}^{\bt}(X, O(\Bu_{\rho}))\llbracket D \rrbracket|_{C^*}),      
\]
in $\Ho( dg_{\Z}\Aff_{C^*}(R)_*(\bG_m))$,
where $O(\Bu_{\rho})$ is the local system of Proposition \ref{propforms}, which is necessarily a sum of finite-dimensional semisimple local systems, and $\bG_m \subset S$ acts via the $\boxast$ action above. 
 \end{definition}

A Zariski-dense representation $\rho\co \pi_1(X,jy) \to R(\R)$ is equivalent to a morphism $\varpi_1(X,jy)^{\red} \onto R$ of pro-algebraic groups, where $\varpi_1(X,jy)^{\red}$ is the reductive quotient of the real pro-algebraic fundamental group $\varpi_1(X,jy)$. \cite{Simpson} effectively gives a discrete $S^1$-action on $\varpi_1(X,jy)^{\red}$, corresponding (as in Lemma \ref{discreteact}) to the $\circledast$ action on semisimple local systems from Lemma \ref{discreteact}. This $S^1$-action thus descends to $R$ if and only if $O(\Bu_{\rho})$ satisfies the conditions of Definition \ref{Saction}. Moreover, the $S^1$-action is algebraic on $R$ if and only if $O(\Bu_{\rho})$ becomes a weight $0$ variation of Hodge structures under the $\circledast$ action, by Proposition \ref{vhsequiv}. 

\begin{definition}\label{Yhfil}
Take a Zariski-dense representation $\rho\co \pi_1(X,jy) \to R(\R)$, for $R$ a pro-reductive pro-algebraic group to which the $S^1$-action on $\varpi_1(X,jy)^{\red}$ descends and acts algebraically. Then define 
        an algebraic Hodge filtration on the relative Malcev homotopy type $(Y,y)^{R, \mal}$ by
\[
 (Y,y)^{R, \mal}_{\bF}:=  (R  \by C^* \xra{\oSpec(jy)^*} \oSpec_{C^*}   \tilde{A}^{\bt}(X, O(\Bu_{\rho}))\llbracket D \rrbracket|_{C^*}),      
\]
in $\Ho( dg_{\Z}\Aff_{C^*}(R)_*(S))$, where the $S$-action is given by the $\boxast$ action of Definition \ref{Saction}.
\end{definition}

If the $S^1$ action descends to $R$ but is not algebraic, we still have the following:
\begin{proposition}\label{redenrich2}
The  algebraic twistor filtration $(Y,y)^{R, \mal}_{\bT}$ of Definition \ref{Ytfil} is equipped with an $(S^1)^{\delta}$-action (i.e. a discrete $S^1$-action) with the properties that
\begin{enumerate}
 \item the $S^1$-action and $\bG_m$-actions commute,
\item the projection $(Y,y)^{R, \mal}_{\bT} \to C^*$ is $S^1$-equivariant, and
\item  $-1 \in S^1$ acts as $-1 \in \bG_m$.
\end{enumerate}
\end{proposition}
\begin{proof}
 This is the same as the proof of Proposition \ref{redenrich}. The action comes from Definition \ref{Saction}, with $t\in (S^1)^{\delta}$ acting on 
$\sA^*_{X}(O(\Bu_{\rho}))\llbracket D \rrbracket$
by  $t \boxast (a\ten v) = (t \dmd a) \ten (t^2 \circledast v)$.
\end{proof}

\subsection{Higher direct images and residues}

\begin{definition}\label{Dmdef}
Let $D^m \subset X$  denote the union of all $m$-fold intersections of local components of the divisor $D \subset X$, and set $D^{(m)}$ to be its normalisation. Write $\nu_m\co D^{(m)}\to X $ for the composition of the normalisation map with  the embedding of $D^m$, and set $C^{(m)}:= \nu_m^{-1}D^{m+1}$.
       \end{definition}

As in \cite[1.2]{Timm}, observe that $D^m- D^{m+1} $ is  a smooth quasi-projective variety, isomorphic to $D^{(m)}- C^{(m)}$. Moreover, $D^{(m)} $ is a smooth projective variety, with $C^{(m)}$ a normal crossings divisor.

\begin{definition}
 Recall from \cite{Hodge2} Definition 2.1.13 that for $n \in \Z$, $\Z(n)$ is the lattice $(2\pi i)^n\Z $, equipped with the pure Hodge structure of type $(-n,-n)$. Given an abelian group $A$, write $A(n):= A\ten_{\Z}\Z(n)$.
\end{definition}

\begin{definition}
 On $D^{(m)}$, define $\vareps^m$ by the property that $\vareps^m(m) $  is the integral local system  of orientations of $D^m$ in $X$. Thus $\vareps^n$ is  the local system $\vareps^n_{\Z}$ defined in \cite[3.1.4]{Hodge2}.      
\end{definition}

\begin{lemma}\label{higherdirectlemma}
 $\oR^m j_*\Z \cong \nu_{m*} \vareps^m$.      
\end{lemma}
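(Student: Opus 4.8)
The statement is local on $X$, so the plan is to reduce to a neighborhood of a point lying on exactly $k$ local components of the divisor $D$. Near such a point, choose coordinates $(z_1, \dots, z_n)$ so that $D = \bigcup_{i=1}^k \{z_i = 0\}$, and the neighborhood is a polydisc $\Delta^n$ with $Y \cap \Delta^n = (\Delta^*)^k \times \Delta^{n-k}$. Since $\oR^m j_* \Z$ is the sheafification of $U \mapsto \H^m(U \cap Y, \Z)$, and the factor $\Delta^{n-k}$ is contractible, the computation reduces to $\H^m((\Delta^*)^k, \Z)$, which by the Künneth formula is $\bigwedge^m \Z^k$, concentrated on the locus where at least $m$ of the coordinates vanish. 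The first key step is therefore to identify $(\oR^m j_* \Z)_x = \bigwedge^m(\Z^k)$ as the stalk, and to check that the restriction maps between such stalks (as $x$ moves to a less-degenerate stratum) are the obvious projections/inclusions coming from forgetting coordinates.

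The second key step is to match this with $\nu_{m*}\vareps^m$. The sheaf $\nu_{m*}\vareps^m$ is supported on $D^m$ (the union of $m$-fold intersections), and its stalk at a point on exactly $k$ components is $\bigoplus \vareps^m$ over the $\binom{k}{m}$ sheets of $D^{(m)}$ through that point — each sheet being an $m$-fold intersection $\{z_{i_1} = \cdots = z_{i_m} = 0\}$, with $\vareps^m$ the orientation local system of that intersection in $X$. I would make the identification concrete: a class in $\H^m((\Delta^*)^k, \Z)$ corresponding to a monomial $\frac{dz_{i_1}}{z_{i_1}} \wedge \cdots \wedge \frac{dz_{i_m}}{z_{i_m}}$ (up to $(2\pi i)^m$) is naturally a generator of the orientation module of the normal bundle to $\{z_{i_1} = \cdots = z_{i_m} = 0\}$; the Hodge-theoretic twist $\vareps^m(m)$ being of type $(-m,-m)$ is exactly the $(2\pi i)^m$ appearing via the residue/Gysin description, which is consistent with working with $\Z$-coefficients here (no twist). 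This is essentially \cite[3.1.4--3.1.5]{Hodge2}, so I would cite Deligne for the identification of $\vareps^m$ and its monodromy, and simply verify that the local residue isomorphism is compatible with the monodromy action of $\pi_1((\Delta^*)^k) = \Z^k$ permuting orientations.

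The third step is to check that these local isomorphisms glue, i.e. are independent of the chosen coordinates up to the action built into $\vareps^m$. The monodromy of $\oR^m j_* \Z$ around the $i$-th component acts on $\bigwedge^m \Z^k$ (trivially, in fact, since the wedge-generators $e_{i_1}\wedge\cdots\wedge e_{i_m}$ are monodromy-invariant up to sign, and the sign is precisely what $\vareps^m$ records), while the monodromy on $\nu_{m*}\vareps^m$ is governed by the orientation local system; these agree because going around $z_i$ reverses the orientation of the normal direction to $\{z_i = 0\}$ inside any intersection containing it. The main obstacle I anticipate is purely bookkeeping: getting the orientation signs and the combinatorics of which sheets of $D^{(m)}$ pass through a given stratum to line up correctly with the Künneth decomposition, and ensuring the isomorphism is canonical (coordinate-independent) rather than merely existing locally. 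Once the stalkwise isomorphism is shown to be $\pi_1$-equivariant and natural in the stratification, the sheaf isomorphism $\oR^m j_* \Z \cong \nu_{m*}\vareps^m$ follows, since both sides are constructible with respect to the stratification by multiplicity of $D$.
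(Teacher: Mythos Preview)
Your proposal is correct and is essentially the argument behind \cite[Proposition 3.1.9]{Hodge2}, which is all the paper invokes: its entire proof is the one-line citation to Deligne. So you have unpacked exactly the result the paper cites rather than giving a different route.

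One imprecision worth fixing: the monodromy that makes $\vareps^m$ a nontrivial local system is monodromy \emph{on $D^{(m)}$}, arising from loops along which the $m$ local branches of $D$ get permuted; the sign of that permutation is what $\vareps^m$ records. It is not monodromy ``around $z_i$'' in $Y$ --- loops in $(\Delta^*)^k$ act trivially on $\H^m((\Delta^*)^k,\Z)$, since the generators $d\log z_{i_1}\wedge\cdots\wedge d\log z_{i_m}$ are invariant. Your plan still goes through once you rephrase step three in these terms: the local identification $\bigwedge^m\Z^k \cong \bigoplus_{|I|=m}\Z\cdot e_I$ is compatible with relabelling the local equations $z_1,\dots,z_k$, and the resulting sign on each summand is exactly the orientation character on the corresponding sheet of $D^{(m)}$.
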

\begin{proof}
 This is      \cite[Proposition 3.1.9]{Hodge2}.  
\end{proof}

\begin{lemma}\label{residuedef}
For any local system $\vv$ on $X$, there is  a canonical quasi-isomorphism
\[
 \Res_m \co \gr^J_m \sA^{\bt}_X(\vv)\langle  D \rangle \to \nu_{m*} \sA^{\bt}_{ D^{(m)}}( \vv\ten_{\R}\vareps^m_{\Cx})[-m]      
\]
of cochain complexes on $X$.
\end{lemma}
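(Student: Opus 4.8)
The plan is to construct $\Res_m$ explicitly as the Poincar\'e residue and then deduce that it is a quasi-isomorphism by comparing it with the holomorphic residue isomorphism of \cite{Hodge2}. Since the assertion is local on $X$, I would work over a polydisc $\Delta$ on which $D = \bigcup_{i=1}^{k}\{z_i = 0\}$; there $D^m$ is the disjoint union of the smooth submanifolds $D_I := \bigcap_{i\in I}\{z_i = 0\}$ over the $m$-element subsets $I\subset\{1,\dots,k\}$, the normalisation $D^{(m)}$ is canonically this same disjoint union, and $\nu_m$ is the evident immersion.

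By construction of the filtration $J$, a local section of $\gr^J_m\sA^{\bt}_X(\vv)\langle D\rangle$ is represented, uniquely modulo $J_{m-1}$, by a sum $\sum_{|I|=m} d\log z_{i_1}\wedge\dots\wedge d\log z_{i_m}\wedge\alpha_I$, where $i_1 < \dots < i_m$ are the elements of $I$ and $\alpha_I$ is a smooth $\vv$-valued form on $\Delta$. I would set
\[
\Res_m\Bigl(\sum_I d\log z_{i_1}\wedge\dots\wedge d\log z_{i_m}\wedge\alpha_I\Bigr) := \sum_I (\alpha_I|_{D_I})\ten e_{(i_1,\dots,i_m)},
\]
with $e_{(i_1,\dots,i_m)}$ the generator of $\vareps^m_{\Cx}$ over $D_I$ attached to the ordered tuple, normalised via $\vareps^m = \vareps^m(m)\ten\Z(-m)$ (equivalently, a factor $(2\pi i)^{-m}$ times the orientation generator) so as to be compatible with the isomorphism $\oR^m j_*\Z\cong\nu_{m*}\vareps^m$ of Lemma \ref{higherdirectlemma}; the shift $[-m]$ accounts for the degree of $d\log z_{i_1}\wedge\dots\wedge d\log z_{i_m}$. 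I would then carry out the routine checks that turn this into a well-defined morphism of complexes of sheaves on $X$: $\alpha_I|_{D_I}$ depends only on the class modulo $J_{m-1}$, since any term divisible by some $z_i$ with $i\in I$ restricts to $0$ on $D_I$; the expression is independent of the chosen local equations, because replacing $z_i$ by $u_iz_i$ with $u_i$ a unit changes $d\log z_{i_1}\wedge\dots\wedge d\log z_{i_m}$ only by a section of $J_{m-1}$ and leaves $D_I$ fixed; transposing two elements of $I$ changes the sign of both the wedge and $e_{(i_1,\dots,i_m)}$, which is precisely why the twist by the orientation local system $\vareps^m$ is forced; and these compatibilities let the local formulas patch over $X$. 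Commutation with the differential is then automatic: $d(d\log z_i) = 0$ forces the induced differential on $\gr^J_m$ to act only on the coefficient $\alpha_I$, restriction to $D_I$ commutes with $d$, and the sign $(-1)^m$ is absorbed into the $[-m]$ shift; flatness of $\vv$ makes everything $\vv$-linear and compatible with the connection.

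To prove that $\Res_m$ is a quasi-isomorphism, I would fit it into a commutative square with Deligne's holomorphic residue. The holomorphic log de Rham complex $\Omega^{\bt}_X(\log D)\ten\vv$ sits inside $\sA^{\bt}_X(\vv)\langle D\rangle$, carrying the pole-order filtration $W$ into $J$, and on $m$-th graded pieces $\Res_m$ restricts to the holomorphic Poincar\'e residue isomorphism $\gr^W_m(\Omega^{\bt}_X(\log D)\ten\vv)\xrightarrow{\sim}\nu_{m*}(\Omega^{\bt-m}_{D^{(m)}}\ten\vv\ten\vareps^m)$ of \cite[(3.1.5.2)]{Hodge2}, followed by the inclusion of holomorphic de Rham complexes into $C^{\infty}$ de Rham complexes on $D^{(m)}$. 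The right-hand vertical inclusion is a quasi-isomorphism by the de Rham/Dolbeault comparison on the smooth variety $D^{(m)}$, and the left-hand inclusion of the holomorphic log de Rham complex into its $C^{\infty}$ analogue is a quasi-isomorphism by the same standard comparison: both complexes resolve $\oR^m j_*(j^{-1}\vv)\ten\Cx\cong\nu_{m*}(\nu_m^{-1}\vv\ten\vareps^m_{\Cx})$ placed in degree $m$, by Lemma \ref{higherdirectlemma} together with the consequence of Lemma \ref{Jworks} noted above and the holomorphic Poincar\'e lemma on $D^{(m)}$. Since the top map is a quasi-isomorphism by \cite{Hodge2} and the square commutes by construction, $\Res_m$ is a quasi-isomorphism.

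The one genuinely delicate point is the sign and normalisation bookkeeping surrounding $\vareps^m$: one must pin down that the twist is by the orientation local system $\vareps^m$ (and not by the naive orientation sheaf or no twist at all) and that the Tate normalisation $(2\pi i)^{-m}$ is the one that matches Lemma \ref{higherdirectlemma}, so that the local residue formulas are genuinely coordinate-free and patch to a canonical global morphism. Once that is settled, everything else is the $C^{\infty}$ shadow of the holomorphic residue computation.
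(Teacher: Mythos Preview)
Your construction of $\Res_m$ via the local Poincar\'e residue formula is exactly the paper's construction (which cites \cite[3.1.5.1]{Hodge2}), and your quasi-isomorphism argument is correct. The paper's own proof is terser and slightly more direct: rather than passing through a commutative square with the holomorphic log de Rham complex, it simply invokes Lemma \ref{Jworks} (giving $\gr^J_m\sA^{\bt}_X(\vv)\langle D\rangle \simeq \oR^m j_*(j^{-1}\vv)_{\Cx}[-m]$ directly on the $C^{\infty}$ side) together with Lemma \ref{higherdirectlemma}, so that both source and target resolve the same sheaf in degree $m$. Your detour through the holomorphic complex is not needed, since Lemma \ref{Jworks} already supplies the comparison with $\oR j_*$ without ever leaving the $C^{\infty}$ world; but it buys you an explicit identification of $\Res_m$ with Deligne's holomorphic residue, which makes the ``canonical'' claim and the $\vareps^m$ bookkeeping more transparent.
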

\begin{proof}
We follow the construction of \cite[3.1.5.1]{Hodge2}. In a neighbourhood where $D$ is given locally by $\bigcup_i \{z_i=0\}$, with $\omega \in  \sA^{\bt}_X(\vv)$, we  set
\[
 \Res_m(\omega \wedge d\log z_1 \wedge \ldots \wedge d \log z_m) := \omega|_{D^{(m)}} \ten  \eps(z_1, \ldots, z_m),    
\]
 where $\eps(z_1, \ldots, z_m) $ denotes the orientation of the components $\{z_1=0\}, \ldots, \{z_m=0\}$.
       
That $\Res_m$  is a quasi-isomorphism follows immediately from Lemmas \ref{Jworks} and \ref{higherdirectlemma}.
\end{proof}
%

\subsection{Opposedness}

Fix a Zariski-dense representation $\rho\co \pi_1(X,jy) \to R(\R)$, for $R$ a pro-reductive pro-algebraic group.

\begin{proposition}\label{grjpuremhs}
If the $S^1$-action  on $\varpi_1(X,jy)^{\red}$ descends to an algebraic action on $R$, then
 for the algebraic Hodge filtration $(Y,y)^{R, \mal}_{\bF}$ of Definition \ref{Yhfil}, the $R\rtimes S$-equivariant  cohomology sheaf
\[
 \sH^a(\gr^J_b\O(Y,y)^{R, \mal}_{\bF} )       
\]
on $C^*$ defines a pure ind-Hodge  structure of weight $a+b$, corresponding to the $\boxast$ $S$-action on
\[
 \H^{a-b} (D^{(b)}, O(\Bu_{\rho})\ten_{\Z}\vareps^b).
\]
\end{proposition}
\begin{proof}
 We need to show that $ \H^a(\gr^J_b\tilde{A}^{\bt}(X, O(\Bu_{\rho}))\llbracket D \rrbracket)|_{C^*}$ corresponds to a   pure ind-Hodge structure of weight $a+b$, or equivalently to a sum of vector bundles of slope $a+b$. We are therefore led to study the  complex
$\gr^J_b\tilde{\sA}^{\bt}_X (O(\Bu_{\rho}))\llbracket D \rrbracket)|_{C^*} $ on $X \by C^*$, since
\[
  \H^a(\gr^J_b\tilde{A}^{\bt}(X, O(\Bu_{\rho}))\llbracket D \rrbracket)|_{C^*} =   \H^a(X,\gr^J_b\tilde{\sA}^{\bt}_X (O(\Bu_{\rho}))\llbracket D \rrbracket )|_{C^*}.   
\]

In a neighbourhood where $D$ is given locally by $\bigcup_i \{z_i=0\}$, $\gr^J\tilde{\sA}^{\bt}_X\llbracket D \rrbracket $ is the $\tilde{\sA}^{\bt}_X $-algebra generated by the classes $[\log |z_i|]$ $[d\log |z_i|]$ and $[\dc\log |z_i|]$ in $\gr^J_1$.
Let $\widetilde{C^*}\to C^*$ be the \'etale covering of Definition \ref{tildeC}. Now, $\tilde{d}= ud+ v\dc= (u+iv)\pd + (u-iv)\bar{\pd}$, so  $\gr^J\tilde{\sA}^{\bt}_X\llbracket D \rrbracket|_{\widetilde{C^*} } $  is the  $\gr^J\tilde{\sA}^{\bt}_X|_{\widetilde{C^*}}$-algebra generated by  $[\log |z_i|], \tilde{d}[\log |z_i|], [d\log z_i]$.

Since $\tilde{\sA}^{\bt}_X(O(\Bu_{\rho}))\llbracket D \rrbracket= \tilde{\sA}^{\bt}_X(O(\Bu_{\rho}))\ten_{\sA^{\bt}_X }\sA^{\bt}_X\llbracket D \rrbracket$, we have  an $S$-equivariant  quasi-isomorphism
\[
 \tilde{\sA}^{\bt}_X(O(\Bu_{\rho}))\ten_{\sA^{\bt}_X }\gr^J\sA^{\bt}_X \langle D \rangle|_{\widetilde{C^*}} \into   \gr^J\tilde{\sA}^{\bt}_X(O(\Bu_{\rho}))\llbracket D \rrbracket|_{\widetilde{C^*} },     
\]
as the right-hand side is generated over the left by $ [\log |z_i|], \tilde{d}[\log |z_i|]$. 

Now, Lemma \ref{residuedef} gives a quasi-isomorphism
\[
\Res_b \co \tilde{\sA}^{\bt}_X(O(\Bu_{\rho}))\ten_{\sA^{\bt}_X }\gr^J_b \sA^{\bt}_X(\vv)\langle  D \rangle \to \nu_{b*} \tilde{\sA}^{\bt}_X\ten_{\sA^{\bt}_X }\sA^{\bt}_{ D^{(b)}}((O(\Bu_{\rho})\ten_{\R}\vareps^b_{\Cx})[-b],  
\]
and the right-hand side is just
\[
 \nu_{b*}\tilde{\sA}^{\bt}_{ D^{(b)}}( O(\Bu_{\rho})\ten_{\R}\vareps^b_{\Cx})[-b].
\]
Therefore
\[
 \gr^J_b\tilde{\sA}^{\bt}_X(O(\Bu_{\rho}))\llbracket D \rrbracket|_{\widetilde{C^*} } \simeq \nu_{b*}\tilde{\sA}^{\bt}_{ D^{(b)}}( O(\Bu_{\rho})\ten_{\R}\vareps^b_{\Cx})[-b]|_{\widetilde{C^*} },
\]
and in particular $\Res_b$ defines an isomorphism
\[
 \H^a (\gr^J_b\tilde{A}^{\bt}(X,O(\Bu_{\rho}))\llbracket D \rrbracket|_{\widetilde{C^*} }) \cong \H^{a-b}( \tilde{A}^{\bt}(D^{(b)}, O(\Bu_{\rho})\ten_{\R}\vareps^b_{\Cx})|_{\widetilde{C^*}}).
\]

As in \S \ref{Bei1}, we have an  \'etale pushout $C^*= \widetilde{C^*}\cup_{S_{\Cx}}S$ of affine schemes, so to give an isomorphism $\sF \to \sG$ of quasi-coherent sheaves on $C^*$ is the same as giving an isomorphism $f: \sF|_{\widetilde{C^*}} \to \sG|_{\widetilde{C^*}}$ such that $f|_{S_{Cx}}$ is real, in the sense that $f= \bar{f}$ on $S_{\Cx}$. Since $\tilde{d}\log |z_i|=  (u+iv)d \log z_i + (u-iv)d\log \bar{z}_i$ is a boundary, we deduce that $[i(u-iv)^{-1}d \log z_i] \sim [-i(u+iv)^{-1}d \log \bar{z}_i ]  $, so 
\[
 \overline{(u-iv)^b\Res_b}= (u-iv)^b\Res_b,
\]
making use of the fact that $\vareps^b$ already contains a factor of $i^b$ (coming from $\Z(-b)$). 

Therefore $(u-iv)^b\Res_b$ gives an isomorphism
\[
 \H^a (\gr^J_b\tilde{A}^{\bt}(X,O(\Bu_{\rho}))\llbracket D \rrbracket)|_{C^*} \cong \H^{a-b}( \tilde{A}^{\bt}(D^{(b)}, O(\Bu_{\rho})\ten_{\Z}\vareps^b))|_{C^*}.
\]
Now, $d\log z_i$ is of type $(1,0)$, while $\vareps^b$ is of type $(b,b)$ and $(u-iv)$ is of type $(0,-1)$, so it follows that $(u-iv)^b\Res_b$ is of type $(0,0)$, i.e. $S$-equivariant.

As in Theorem \ref{mhsmal}, inclusion of harmonic forms gives an $S$-equivariant isomorphism
\[                                                                                                                                    
 \H^{a-b}( \tilde{A}^{\bt}(D^{(b)}, O(\Bu_{\rho})\ten_{\Z}\vareps^b))|_{C^*}    \cong                \H^{a-b}(D^{(b)}, O(\Bu_{\rho})\ten_{\Z}\vareps^b)\ten\O_{C^*},                                                                                                          \]
which is a 
 pure twistor structure of weight $(a-b)+2b= a+b$. 
Therefore
\[
\sH^a(\gr^J_b\tilde{\A}^{\bt}_X(O(\Bu_{\rho}))\llbracket D \rrbracket|_{C^*}) \cong \H^{a-b}(D^{(b)}, O(\Bu_{\rho})\ten_{\Z}\vareps^b)\ten\O_{C^*}
\]
is pure of weight $a+b$, as required.
\end{proof}
 
\begin{proposition}\label{grjpuremts}
  For the algebraic twistor filtration $(Y,y)^{R, \mal}_{\bT}$ of Definition \ref{Ytfil}, the $R\by \bG_m$-equivariant  cohomology sheaf
\[
 \sH^a(\gr^J_b\O(Y,y)^{R, \mal}_{\bT} )       
\]
on $C^*$ defines a pure ind-twistor structure of weight $a+b$, corresponding to the canonical $\bG_m$-action  on
\[
 \H^{a-b} (D^{(b)}, O(\Bu_{\rho})\ten_{\Z}\vareps^b).
\]
\end{proposition}
\begin{proof}
 The proof of Proposition \ref{grjpuremhs} carries over, replacing $S$-equivariance with  $\bG_m$-equivariance, and Theorem \ref{mhsmal} with  Theorem \ref{mtsmal}.
 \end{proof}

\begin{proposition}\label{grjpuremtsen}
If  the $S^1$-action on $\varpi_1(X,jy)^{\red}$ descends to $R$, then the associated discrete $S^1$-action of Proposition \ref{redenrich2} on
$
 \sH^a(\gr^J_b\O(Y,y)^{R, \mal}_{\bT} )       
$ corresponds to the $\boxast$ action of  $S^1\subset S$ (see Definition \ref{Saction}) on
\[
 \H^{a-b} (D^{(b)}, O(\Bu_{\rho})\ten_{\Z}\vareps^b).
\]
\end{proposition}
\begin{proof}
 The proof of Proposition \ref{grjpuremhs} carries over, replacing $S$-equivariance with discrete $S$-equivariance.         
\end{proof}

\begin{theorem}\label{qmts}
There is a canonical non-positively weighted mixed twistor structure $(Y,y)^{R, \mal}_{\MTS}$ on $(Y,y)^{R, \mal}$, in the sense of Definition \ref{algmtsdef}.
\end{theorem}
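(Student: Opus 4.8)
The plan is to construct the data (1)--(4) of Definition \ref{algmtsdef} by taking a $\Mat_1$-equivariant filtration on the twistor filtration $(Y,y)^{R,\mal}_{\bT}$ of Definition \ref{Ytfil} whose associated graded recovers $\ugr(Y,y)^{R,\mal}_{\MTS}$ as the $E_1$-term of the $J$-filtration spectral sequence, suitably d\'ecal\'ed. First I would use the filtration $J$ on $\tilde{A}^{\bt}(X,\bO(R))\llbracket D\rrbracket|_{C^*}$ from the earlier definitions; since $J_0$ is the whole undecorated de Rham part and the $J_r$ are built by multiplying in the logarithmic generators, this is a multiplicative, $\bG_m$-equivariant (via $\boxast$) filtration, and $\gr^J_\bt$ carries an extra $\bG_m$-weight grading by $b$. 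Applying the Rees construction $\xi$ of Lemma \ref{flatfiltrn} in the $\Mat_1$-direction to the \emph{d\'ecalage} $W=\Dec J$ produces an object over $\bA^1\by C^*$ which is $\Mat_1\by\bG_m$-equivariant: the $\Mat_1$-factor encodes the weight filtration, the second $\bG_m$ is the twistor $\bG_m$ already present in Definition \ref{Ytfil}. This gives datum (1); datum (3) (the fibre over $(1,1)$) is immediate since $\xi$ restricted to $t=1$ recovers the original complex, and its restriction over $1\in C^*$ is $(Y,y)^{R,\mal}$ by Definition \ref{Ytfil}.

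The key input is Proposition \ref{grjpuremts}: it tells us that $\sH^a(\gr^J_b\O(Y,y)^{R,\mal}_{\bT})$ on $C^*$ is a pure twistor structure of weight $a+b$, isomorphic to $\H^{a-b}(D^{(b)},\bO(R)\ten_\R\vareps^b)\ten\O_{C^*}$ with its canonical $\bG_m$-action. Passing to the d\'ecalage converts the $J$-indexed spectral sequence into the weight spectral sequence whose $E_1$-page at weight $n$ in degree $a$ is $\bigoplus_{a+b=\,?}\H^{a-b}(D^{(b)},\bO(R)\ten\vareps^b)$ with the correct weight; I would define $\ugr(Y,y)^{R,\mal}_{\MTS}\in\Ho(dg_{\Z}\Aff(R)_*(\Mat_1))$ to be the $R$-equivariant DGA $(\bigoplus_{a,b}\H^{a-b}(X,\oR^bj_*j^{-1}\bO(R))[-a],d_2)$ of the introduction, with $\H^{a-b}$ placed in $\Mat_1$-weight $a+b$ (non-positive after the appropriate sign/cohomological-degree bookkeeping, which is where ``non-positively weighted'' is used), equipped with the canonical $\bG_m$-twistor action coming from Proposition \ref{grjpuremts}. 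That it is a genuine DGA (not just a complex) requires identifying the multiplicative structure on $\gr^W$ and the $d_2$ differential with the Leray $E_2$-differential --- this is the formality-type bookkeeping, handled exactly as in the proper case of \cite{mhs}.

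Datum (4), the opposedness isomorphism, is the heart of the matter: I must produce a $\Mat_1\by\bG_m$-equivariant isomorphism $\theta^{\sharp}(\ugr(Y,y)^{R,\mal}_{\MTS})\by C^*\cong(Y,y)^{R,\mal}_{\MTS}\by_{\bA^1,0}^{\oR}\Spec\R$ in $\Ho(dg_{\Z}\Aff_{C^*}(R)_*(\Mat_1\by\bG_m))$. The restriction to $0\in\bA^1$ of the Rees construction is by definition $\gr^W=\gr^{\Dec J}$, and the d\'ecalage lemma (\cite[3.2.5]{Hodge2} / \cite{Morgan}, invoked after the Remark following Lemma \ref{Jworks}) identifies $\gr^{\Dec J}$ up to quasi-isomorphism with the $E_1$-term, i.e.\ with $\bigoplus\sH^a(\gr^J_b(-))[-a]$ --- which by Proposition \ref{grjpuremts} is precisely $\ugr(Y,y)^{R,\mal}_{\MTS}\ten\O_{C^*}$ with its pure weight-$(a+b)$ twistor structure, matching $\theta^{\sharp}$ of the $\Mat_1$-grading. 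The $\bG_m$-equivariance and reality (descent along the \'etale pushout $C^*=\widetilde{C^*}\cup_{S_\Cx}S$) are already built into Proposition \ref{grjpuremts}.

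\begin{proof}[Proof sketch]
Apply the Rees construction of Lemma \ref{flatfiltrn} in the $\Mat_1$-direction to the weight filtration $W=\Dec J$ on $\tilde{A}^{\bt}(X,\bO(R))\llbracket D\rrbracket|_{C^*}$, giving a $\Mat_1\by\bG_m$-equivariant object over $\bA^1\by C^*$; this is datum (1), and restricting to $(1,1)$ recovers $(Y,y)^{R,\mal}$ by Definition \ref{Ytfil}, giving (3). Set $\ugr(Y,y)^{R,\mal}_{\MTS}$ to be the $R$-equivariant DGA $(\bigoplus_{a,b}\H^{a-b}(X,\oR^bj_*j^{-1}\bO(R))[-a],d_2)$ with $\H^{a-b}$ in $\Mat_1$-weight $a+b$, which lies in $\Ho(dg_{\Z}\Aff(R)_*(\Mat_1))$ by Proposition \ref{grjpuremts} and the non-positivity of weights; this is (2). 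For (4), the fibre over $0\in\bA^1$ is $\gr^W$, which by d\'ecalage (\cite[3.2.5]{Hodge2}) is quasi-isomorphic to the $J$-spectral sequence $E_1$-term $\bigoplus_{a}\sH^a(\gr^J_\bt\O(Y,y)^{R,\mal}_{\bT})[-a]$; Proposition \ref{grjpuremts} identifies this, $\Mat_1\by\bG_m$-equivariantly and compatibly with descent along $C^*=\widetilde{C^*}\cup_{S_\Cx}S$, with $\theta^{\sharp}(\ugr(Y,y)^{R,\mal}_{\MTS})\by C^*$, yielding the opposedness isomorphism. Compatibility of the multiplicative structures and identification of the internal differential with the Leray $E_2$-differential $d_2$ proceeds exactly as in \cite{mhs}.
\end{proof}

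The main obstacle is datum (4): making the opposedness isomorphism $\Mat_1\by\bG_m$-equivariant and real simultaneously, i.e.\ checking that the d\'ecalage quasi-isomorphism between $\gr^{\Dec J}$ and the shifted $E_1$-page is compatible both with the multiplicative/DGA structure and with the descent data along the \'etale pushout. The first half is the formality-style argument adapted from \cite{mhs}; the second is exactly the reality computation carried out in the proof of Proposition \ref{grjpuremts} (the $(u-iv)^b\Res_b$ bookkeeping), and the point is to ensure it globalises over $\bA^1$ rather than just at the special fibre.
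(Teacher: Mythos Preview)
Your proposal is correct and follows essentially the same approach as the paper: apply the Rees construction $\xi$ to $\Dec J$ on $\O(Y,y)^{R,\mal}_{\bT}$, define $\ugr$ via the $E_1$-page of the $J$-spectral sequence (identified with the Leray $E_2$-page via Lemmas \ref{higherdirectlemma} and \ref{residuedef}), and establish opposedness by combining Deligne's d\'ecalage quasi-isomorphism $\gr^{\Dec J}_n\simeq(\bigoplus_a\sH^a(\gr^J_{n-a})[-a],d_1)$ with Proposition \ref{grjpuremts}. Your identified ``main obstacle'' is overstated: opposedness is only required at the special fibre $0\in\bA^1$ (there is no globalisation over $\bA^1$ to perform), the reality/descent along $C^*=\widetilde{C^*}\cup_{S_\Cx}S$ is already fully encapsulated in Proposition \ref{grjpuremts}, and multiplicativity of $\gr^{\Dec J}$ is automatic since $J$ (hence $\Dec J$) is a multiplicative filtration.
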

\begin{proof}
On $ \O(Y,y)^{R, \mal}_{\bT} =\tilde{A}^{\bt}(X, O(\Bu_{\rho}))\llbracket D \rrbracket|_{C^*}$, we define the filtration $\Dec J$ by
\[
(\Dec J)_r     (\O(Y,y)^{R, \mal}_{\bT})^n= \{ a \in J_{r-n}(\O(Y,y)^{R, \mal}_{\bT})^n\,:\, \tilde{D}a \in  J_{r-n-1}(\O(Y,y)^{R, \mal}_{\bT})^{n+1}\}.   
\]

For the Rees algebra construction $\xi$ of Lemma \ref{flatfiltrn}, we then set $\O(Y,y)^{R, \mal}_{\MTS}\in DG_{\Z}\Alg_{\bA^1 \by C^*}(R)_*( \Mat_1 \by \bG_m)$ to be 
\[
\O(Y,y)^{R, \mal}_{\MTS}:= \xi(  \O(Y,y)^{R, \mal}_{\bT}, \Dec J),      
\]
noting that this is flat and that $(Y,y)_{\MTS}^{R, \mal} \by_{\bA^1, 1}\Spec \R = (Y,y)^{R, \mal}_{\bT}$, so
\[
  (Y,y)_{\MTS}^{R, \mal} \by_{(\bA^1\by C^*), (1,1)}^{\oR}\Spec \R\simeq      (Y,y)^{R, \mal}. 
\]

We define $\ugr (Y,y)_{\MTS}^{R, \mal} \in dg_{\Z}\Aff(R)_*(\Mat_1 )$ by
\[
  \ugr (Y,y)_{\MTS}^{R, \mal}= \Spec (\bigoplus_{a,b}\H^{a-b} (D^{(b)}, O(\Bu_{\rho})\ten_{\Z}\vareps^b)[-a], d_1),   
\]
where $d_1\co \H^{a-b} (D^{(b)}, O(\Bu_{\rho})\ten_{\Z}\vareps^b) \to \H^{a-b+2} (D^{(b-1)}, O(\Bu_{\rho})\ten_{\Z}\vareps^{b-1})$ is the differential in the $E_1$ sheet of the spectral sequence associated to the filtration $J$. Combining Lemmas \ref{higherdirectlemma} and \ref{residuedef}, it follows that this is the same as the differential $ \H^{a-b} (X, \oR^b j_*j^{-1}O(\Bu_{\rho})) \to \H^{a-b+2}( X, \oR^{b-1} j_*j^{-1}O(\Bu_{\rho}))$ in the $E_2$ sheet of the Leray spectral sequence for $j: Y \to X$. The augmentation $ \bigoplus_{a,b}\H^{a-b} (D^{(b)}, O(\Bu_{\rho})\ten_{\Z}\vareps^b) \to O(R)$ is just defined to be the unique ring homomorphism  $\H^0(X, O(\Bu_{\rho}))= \R \to O(R)$.

In order to show that this defines a mixed twistor structure, it only remains to establish opposedness. Since $(Y,y)_{\MTS}^{R, \mal}$ is flat, 
\[
  (Y,y)_{\MTS}^{R, \mal} \by_{\bA^1, 0}^{\oR}\Spec \R \simeq    (Y,y)_{\MTS}^{R, \mal} \by_{\bA^1, 0}\Spec \R,   
\]
 and properties of Rees modules mean that this is just given by
\[
 \oSpec_{C^*} (\gr^{\Dec J}  \O(Y,y)^{R, \mal}_{\bT}) \in  dg_{\Z}\Aff_{C^*}( R)_*(\Mat_1 \by\bG_m),    
\]
where the $\Mat_1$-action  assigns $ \gr^{\Dec J}_n$ the weight $n$.

By \cite[Proposition 1.3.4]{Hodge2}, d\'ecalage has the formal property that the canonical map
\[
\gr^{\Dec J}_n  \O(Y,y)^{R, \mal}_{\bT}) \to (\bigoplus_a \sH^a(\gr^J_{n-a} \O(Y,y)^{R, \mal}_{\bT})[-a], d_1)    
\]
is a quasi-isomorphism. Since the right-hand side is just 
\[
(\bigoplus_{a}\H^{2a-n} (D^{(n-a)}, O(\Bu_{\rho})\ten_{\Z}\vareps^{n-a})[-a], d_1)\ten \O_{C^*}     
\]
by Proposition \ref{grjpuremts}, we have a quasi-isomorphism
\[
 (\ugr (Y,y)_{\MTS}^{R, \mal}) \by C^* \cong  (Y,y)_{\MTS}^{R, \mal} \by_{\bA^1, 0}^{\oR}\Spec \R.       
\]
That this is $(\Mat_1 \by\bG_m)$-equivariant follows because   $\H^{2a-n} (D^{(n-a)}, O(\Bu_{\rho})\ten_{\Z}\vareps^{n-a})$ is of weight $2a-n +2(n-a)= n$ for the $\bG_m$-action, and of weight $n$ for the $\Mat_1$-action, being $ \gr^{\Dec J}_n$.
\end{proof}

\begin{theorem}\label{qmhs}
If the local system on $X$ associated to any $R$-representation underlies a polarisable variation of Hodge structure, then  there is a canonical non-positively weighted mixed Hodge structure $(Y,y)^{R, \mal}_{\MHS}$ on $(Y,y)^{R, \mal}$, in the sense of Definition \ref{algmhsdef2}.
\end{theorem}
\begin{proof}
 We adapt the proof of Theorem \ref{qmts}, replacing Proposition \ref{grjpuremts} with Proposition \ref{grjpuremhs}. The first condition is equivalent to saying that the $S^1$-action descends to  $R$ and is algebraic, by Proposition \ref{vhsequiv}. We therefore set
\[
 \O(Y,y)^{R, \mal}_{\MHS}:= \xi(  \O(Y,y)^{R, \mal}_{\bF}, \Dec J),       
\]
for $ (Y,y)^{R, \mal}_{\bF}$ as in Definition \ref{Yhfil}, and let 
\[
  \ugr (Y,y)_{\MTS}^{R, \mal}= \Spec (\bigoplus_{a,b}\H^{a-b} (D^{(b)}, O(\Bu_{\rho})\ten_{\Z}\vareps^b)[-a], d_1),   
\]
  which is now in    $dg_{\Z}\Aff(R)_*(\bar{S} )$, since $O(\Bu_{\rho})$ is a sum of weight $0$ VHS, making $ \H^{a-b} (D^{(b)}, O(\Bu_{\rho})\ten_{\Z}\vareps^b)$
a weight $a-b+2b=a+b$ Hodge structure, and hence an $\bar{S}$-representation. 
\end{proof}

\begin{proposition}\label{qmtsenrich}
If the discrete $S^1$-action on $\varpi_1(X,jy)^{\red}$ descends to $R$, then there are natural $(S^1)^{\delta}$-actions on  $(Y,y)^{R, \mal}_{\MTS}$ and $\ugr (Y,y)^{R, \mal}_{\MTS}$, compatible with the opposedness isomorphism, and with $-1 \in S^1$ acting as $-1 \in \bG_m$.
\end{proposition}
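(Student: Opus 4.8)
The plan is to deduce this as a formal consequence of Proposition \ref{redenrich} together with the construction in the proof of Theorem \ref{qmts}; the only thing to check is that the discrete $S^1$-action of Proposition \ref{redenrich} respects the filtration $\Dec J$ and the residue identifications used there.

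First I would verify that the $\boxast$-action of $t \in (S^1)^{\delta}$ on $\sA^{\bt}_X(\bO(R))\llbracket D \rrbracket$ is filtered for $J$. This is a local computation with the generators of $\gr^J$: for $t \in S^1(\R)\subset\Cx^{\by}$ one has $t\dmd\log r_i = \log r_i$, while
\[
 t\dmd d\log r_i = \Re(t)\,d\log r_i - \Im(t)\,\dc\log r_i, \qquad t\dmd\dc\log r_i = \Im(t)\,d\log r_i + \Re(t)\,\dc\log r_i,
\]
so the $\dmd$-part of $\boxast$ carries the local generators of $J$ into their $\R$-span, and the $\circledast$-part acts only on the coefficient system $\bO(R)$, leaving the $J$-degree unchanged; hence $\boxast$ preserves $J$. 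Since $\tilde{D}(t\boxast b) = t\boxast\tilde{D}b$ by Definition \ref{Saction}, the $\boxast$-action also preserves the décalage $\Dec J$, so it passes through the Rees-algebra functor $\xi(-,\Dec J)$ of Lemma \ref{flatfiltrn} to yield a discrete $S^1$-action on $\O(Y,y)^{R, \mal}_{\MTS}=\xi(\O(Y,y)^{R, \mal}_{\bT},\Dec J)$. Because $S$ is abelian and $S^1$ preserves each graded piece $\gr^{\Dec J}_n$ (whose weight is the $\Mat_1$-weight $n$), this action commutes with the $\Mat_1\by\bG_m$-action; the projection to $\bA^1\by C^*$ is $S^1$-equivariant since the one to $C^*$ is, and $-1\in S^1$ acts as $-1\in\bG_m$ by Proposition \ref{redenrich}(3).

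Next I would take the $(S^1)^{\delta}$-action on $\ugr(Y,y)^{R, \mal}_{\MTS}=\Spec(\bigoplus_{a,b}\H^{a-b}(D^{(b)},\bO(R)\ten_{\R}\vareps^b)[-a],d_1)$ to be the one induced by the $\boxast$-action of $S^1\subset S$ on each $\H^{a-b}(D^{(b)},\bO(R)\ten_{\R}\vareps^b)$, as in Proposition \ref{grjpuremtsen}; this respects the DGA structure and the differential $d_1$ (the $E_1$-differential of a filtered complex on which $S^1$ acts by filtered cochain maps). For compatibility with the opposedness isomorphism I would revisit the proof of Theorem \ref{qmts}: there the opposedness isomorphism is assembled from the décalage quasi-isomorphism $\gr^{\Dec J}_n\O(Y,y)^{R, \mal}_{\bT}\xra{\sim}(\bigoplus_a\sH^a(\gr^J_{n-a}\O(Y,y)^{R, \mal}_{\bT})[-a],d_1)$ of \cite[Proposition 1.3.4]{Hodge2}, which is functorial and hence $(S^1)^{\delta}$-equivariant, together with the identification of $\sH^a(\gr^J_b\O(Y,y)^{R, \mal}_{\bT})$ from Proposition \ref{grjpuremts}. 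Proposition \ref{grjpuremtsen} asserts exactly that under this identification the discrete $S^1$-action becomes the $\boxast$-action, so the composite opposedness isomorphism is $(S^1)^{\delta}$-equivariant.

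The one point needing attention is that $\boxast$ must be verified to be \emph{strictly} filtered for $J$ at the level of sheaves (not merely up to quasi-isomorphism), so that $\Dec J$, the Rees construction, and the opposedness isomorphism all carry the action; but this is precisely the local computation with $\log r_i$, $d\log r_i$, $\dc\log r_i$ already performed in the proof of Proposition \ref{grjpuremtsen}, so I do not anticipate any real obstacle — the statement is essentially bookkeeping layered on top of Theorem \ref{qmts}.
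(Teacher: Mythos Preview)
Your proposal is correct and follows the same approach as the paper: the paper's proof is the one-line observation that this is a direct consequence of Proposition \ref{redenrich} and Proposition \ref{grjpuremtsen}, since the Rees module construction transfers the discrete $S^1$-action. You have simply unpacked this, making explicit the local check that $\boxast$ preserves $J$ (and hence $\Dec J$), and spelling out why the opposedness isomorphism is $(S^1)^{\delta}$-equivariant via the functoriality of the d\'ecalage quasi-isomorphism together with Proposition \ref{grjpuremtsen}.
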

\begin{proof}
 This is  a direct consequence of Proposition \ref{redenrich2} and Proposition \ref{grjpuremtsen}, since the Rees module construction transfers the discrete $S^1$-action.       
\end{proof}

\subsection{Singular and simplicial varieties}\label{singsn2} 

\begin{proposition}\label{hodge3stuff2}
If $Y$ is any separated  complex scheme of finite type, there exists  a simplicial smooth proper   complex variety $X_{\bt}$, a simplicial divisor $D_{\bt} \subset X_{\bt}$ with normal crossings, and a map $(X_{\bt} - D_{\bt}) \to Y$ such that $|X_{\bt} - D_{\bt}| \to Y$ is a weak equivalence, where $|Z_{\bt}|$ is the geometric realisation of the simplicial space $Z_{\bt}(\Cx)$.
\end{proposition}
\begin{proof}
 The results in \cite[\S 8.2]{Hodge3} and \cite[Propositions 5.1.7 and 5.3.4]{SD}, adapted as in  Corollary \ref{hodge3stuff}, give the equivalence required.      
\end{proof}

Now, let $X_{\bt}$ be a simplicial smooth proper complex variety, and $D_{\bt} \subset X_{\bt}$ a simplicial divisor  with normal crossings. Set $Y_{\bt}=X_{\bt} - D_{\bt}$,  assume that $|Y_{\bt}|$ is connected, and pick a point $y \in |Y_{\bt}|$. Let $j:|Y_{\bt}| \to |X_{\bt}|$  be the natural inclusion map. We will look at representations of the fundamental group  $\varpi_1(|X_{\bt}|,jy)^{\norm, \red}$ from Definition \ref{pinormdef}.

Using Proposition \ref{hodge3stuff2}, the following gives mixed twistor or mixed Hodge structures on relative Malcev homotopy types of arbitrary complex varieties. 
\begin{theorem}\label{singmhsmal2}\label{singmtsmal2}
If  $R$ is any quotient of $\varpi_1(|X_{\bt}|,jy)^{\norm, \red}$ (resp. any quotient to which the $(S^1)^{\delta}$-action of Proposition \ref{properred} descends and acts algebraically), then there is an algebraic mixed twistor  structure (resp. mixed Hodge structure) $(|Y_{\bt}|,y)^{R,\mal}_{\MTS}$ (resp. $(|Y_{\bt}|,y)^{R,\mal}_{\MHS}$) on the relative Malcev homotopy type $(|Y_{\bt}|,y)^{R, \mal}$.

There is also a canonical $\bG_m$-equivariant (resp. $S$-equivariant)  splitting
$$
\bA^1 \by (\ugr (|Y_{\bt}|^{R,\mal},0)_{\MTS}) \by \SL_2 \simeq (|Y_{\bt}|,y)^{R, \mal}_{\MTS}\by_{C^*, \row_1}^{\oR}\SL_2
$$
(resp.
$$
\bA^1 \by (\ugr (|Y_{\bt}|^{R,\mal},0)_{\MHS}) \by \SL_2 \simeq (|Y_{\bt}|,y)^{R, \mal}_{\MHS}\by_{C^*, \row_1}^{\oR}\SL_2)
$$
on pulling back along $\row_1:\SL_2 \to C^*$, whose pullback over $0 \in \bA^1$ is given by the opposedness isomorphism. 
\end{theorem}
\begin{proof}
We adapt the proof of Theorem \ref{singmtsmal}.
Define the cosimplicial CDGA $\tilde{A}(X_{\bt}, O(\Bu_{\rho})) \llbracket D_{\bt} \rrbracket $ on $C$ by  $n \mapsto \tilde{A}^{\bt}(X_n, O(\Bu_{\rho})) \llbracket D_n \rrbracket $, observing that functoriality  ensures that the cosimplicial and CDGA structures are compatible. This has an augmentation $(jy)^*: \tilde{A}(X_{\bt}, O(\Bu_{\rho})) \llbracket D_{\bt} \rrbracket  \to O(R)\ten O(C)$ given in level $n$ by $((\sigma_0)^nx)^*$, and inherits a filtration $J$ from the CDGAs $\tilde{A}^{\bt}(X_n, O(\Bu_{\rho})) \llbracket D_n \rrbracket $.

We then define the  mixed Hodge structure to be the object of $dg_{\Z}\Aff_{\bA^1\by C^*}(\Mat_1 \by R\rtimes S)$ given  by 
$$
|Y_{\bt}|^{R, \mal}_{\MHS}:= (\Spec \Th \xi(\tilde{A}(X_{\bt}, O(\Bu_{\rho})) \llbracket D_{\bt} \rrbracket  , \Dec\Th(J)  ))  \by_CC^*, 
$$
for $\Th$ the Thom--Sullivan construction of Definition \ref{Th}. $|Y_{\bt}|^{R, \mal}_{\MTS}$ is defined similarly, replacing $S$ with $\bG_m$. The graded object is given by
$$
\ugr |Y_{\bt}|^{R,\mal}_{\MHS}=  \Spec \Th(\bigoplus_{a,b}\H^{a-b} (D^{(b)}_{\bt}, O(\Bu_{\rho})\ten_{\Z}\vareps^b)[-a], d_1)
$$
in $dg_{Z}\Aff(R\rtimes \bar{S}) $,
with $\ugr |Y_{\bt}|^{R, \mal}_{\MTS}$ given by replacing $S$ with $\bG_m$.

For any CDGA $B$, we may regard $B$ as a cosimplicial CDGA (with constant cosimplicial structure), and then $\Th(B)=B$. In particular, $\Th(O(R))= O(R)$, so we have a basepoint $\Spec \Th((jy)^*): \bA^1 \by R \by C^* \to |Y_{\bt}|^{R, \mal}_{\MHS} $, giving  
$$
(|Y_{\bt}|,y)^{R, \mal}_{\MHS}\in  dg_{\Z}\Aff_{\bA^1\by C^*}(R)_*(\Mat_1 \by S),
$$
and similarly for $|Y_{\bt}|^{R, \mal}_{\MTS} $.

The proofs of Theorems \ref{qmhs} and \ref{qmts} now carry over for the remaining statements.
\end{proof}

\section{Algebraic MHS/MTS for quasi-projective varieties II --- non-trivial monodromy}\label{nontrivsn}

In this section, we assume that $X$ is a smooth projective complex variety, with $Y=X-D$ (for $D$ still a divisor locally of normal crossings).
The hypothesis in Theorems \ref{qmts} and \ref{qmhs} that $R$ be a quotient of $\varpi_1(X,jy)$ is  unnecessarily strong, and corresponds to allowing only those semisimple local systems on $Y$ with trivial monodromy around the divisor. By \cite{mochi}, every semisimple local system on $Y$ carries an essentially unique tame imaginary pluriharmonic metric, so it is conceivable that Theorem \ref{qmts}
 could hold for any reductive quotient $R$ of $\varpi_1(Y,y)$. 

However, Simpson's discrete $S^1$-action on $\varpi_1(X,jy)^{\red}$ does not extend to the whole of $ \varpi_1(Y,y)^{\red}$, but only to a quotient $ {}^{\nu}\!\varpi_1(Y,y)^{\red}$. This is because given a tame pure imaginary Higgs form $\theta$ and $\lambda \in S^1$, the Higgs form $\lambda \theta$ is only pure imaginary if either $\lambda=\pm 1$ or $\theta$ is nilpotent.
The group $ {}^{\nu}\!\varpi_1(Y,y)^{\red}$ is characterised by the property that its representations are   semisimple local systems whose associated Higgs form has nilpotent residues. This is equivalent to saying that $ {}^{\nu}\!\varpi_1(Y,y)^{\red}$-representations are semisimple local systems on $Y$ for which the monodromy around any component of $D$ has unitary eigenvalues. Thus the greatest generality in which Proposition \ref{qmtsenrich} could possibly hold is for any $S^1$-equivariant quotient $R$ of ${}^{\nu}\!\varpi_1(Y,y)^{\red}$.

Denote the maximal quotient of ${}^{\nu}\!\varpi_1(Y,y)^{\red}$ on which the $S^1$-action is algebraic by ${}^{\VHS}\!\varpi_1(Y,y)$. Arguing as in Proposition \ref{vhsequiv}, representations of ${}^{\VHS}\!\varpi_1(Y,y)$ correspond to real local systems underlying variations of Hodge structure on $Y$, and representations of ${}^{\VHS}\!\varpi_1(Y,y) \rtimes S^1$ correspond to weight $0$ real VHS. The greatest generality in which  Theorem \ref{qmhs} could hold is for any $S^1$-equivariant quotient $R$ of ${}^{\VHS}\!\varpi_1(Y,y)^{\red}$.

\begin{definition}
Given a semisimple real local system $\vv$ on $Y$, use Mochizuki's tame imaginary pluriharmonic metric to
decompose the associated connection $D:\sA^0_Y(\vv) \to \sA^1_Y(\vv)$ 
as $D= d^++\vartheta$ into  antisymmetric and symmetric  parts, and let $D^c:= i\dmd d^+ -i\dmd \vartheta$. Also write $D'= \pd +\bar{\theta}$ and $D''= \bar{\pd} +\theta$.
Note that these definitions are independent of the choice of pluriharmonic metric, since the metric is unique up to global automorphisms $\Gamma(X, \Aut(\vv))$.
\end{definition}

\subsection{Constructing mixed Hodge structures}
We now  outline a strategy for adapting  Theorem \ref{qmhs}   to more general $R$.

\begin{proposition}\label{mochilemmamhs}
Let $R$ be a quotient of  ${}^{\VHS}\!\varpi_1(Y,y)$ to which the  $S^1$-action descends, and assume we have the following data.
\begin{itemize}
 \item For each  weight $0$ real VHS $\vv$ on $Y$ corresponding to an $R \rtimes S^1$-representation,  an $S$-equivariant $\R$-linear graded subsheaf 
\[
 \sT^*(\vv) \subset j_*\sA^*_Y(\vv)\ten \Cx, 
\]
  on $X$, closed under the operations $D$ and $D^c$. This must be   functorial in $\vv$,  with 
\begin{itemize}
  \item
  $\sT^*(\vv\oplus \vv')=\sT^{*}(\vv)\oplus \sT^*(\vv')$,
  \item
  the image of 
  $\sT^*(\vv) \ten \sT^*(\vv')  \xra{\wedge} j_*\sA^*_Y(\vv\ten \vv')\ten \Cx$ contained in $\sT^*(\vv\ten \vv')$, and
  \item $1 \in \sT^*(\R)$.
\end{itemize}

\item  
An 
increasing non-negative $S$-equivariant filtration $J$ of $\sT^*(\vv) $ with $J_r\sT^n(\vv)=\sT^n(\vv)$ for all $n \le r$, compatible with the tensor structures, and closed under the operations $D$ and $D^c$.
\end{itemize}

Set $F^p\sT^{\bt}(\vv) := \sT^{\bt}(\vv)\cap F^p\sA^{\bt}(Y, \vv)_{\Cx}$, where the Hodge filtration $F$ is defined in the usual way in terms of the $S$-action,  and assume that 
\begin{enumerate}
\item\label{matchcdn0} The map $\sT^{\bt}(\vv) \to j_*\sA^{\bt}_Y(\vv)_{\Cx}$ is a quasi-isomorphism of sheaves  on $X$ for all $\vv$.

\item\label{taucdn0}  For all $i \ne r$, the sheaf $\sH^i(\gr^J_r\sT^{\bt}(\vv)) $  on $X$ is $0$.

\item\label{mhscdn} For all $a,b$ and $p$, the map
\[
  \bH^{a+b}(X,  F^p\gr^J_b \sT^{\bt}(\vv))\to   \H^a(X, \oR^b j_*\vv)_{\Cx}
\]
is injective, giving a  Hodge filtration $F^p \H^a(X, \oR^b j_*\vv)_{\Cx}$ which defines a pure Hodge structure of  weight $a+2b$ on $\H^a(X, \oR^b j_*\vv)$.
\end{enumerate}

Then there is a non-negatively weighted mixed  Hodge structure $(Y,y)^{R, \mal}_{\MHS}$, with 
\[
 \ugr (Y,y)^{R, \mal}_{\MHS} \simeq \Spec (\bigoplus_{a,b} \H^a(X, \oR^b j_* O(\Bu_{\rho}))[-a-b], d_2),       
\]
where $\H^a(X, \oR^b j_* O(\Bu_{\rho})) $ naturally becomes a pure Hodge structure of weight $a+2b$, and $d_2\co \H^a(X, \oR^b j_* O(\Bu_{\rho})) \to \H^{a+2}(X, \oR^{b-1} j_* O(\Bu_{\rho})) $ is the differential from the $E_2$ sheet of the Leray spectral sequence for $j$.
\end{proposition}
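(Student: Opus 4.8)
The plan is to transcribe the proof of Theorem~\ref{qmhs}, feeding in the abstract data $(\sT^{\bt}, J, F)$ in place of the logarithmic de Rham complex $\sA^{\bt}_X\llbracket D\rrbracket$ and its filtrations. Since $R$ is a quotient of ${}^{\VHS}\!\varpi_1(Y,y)$ to which the $S^1$-action descends algebraically, the local system $\bO(R)$ is a sum of weight~$0$ real variations of Hodge structure, so the hypotheses apply with $\vv=\bO(R)$. Set $\tilde\sT^{\bt}_X(\bO(R)):=(\sT^*(\bO(R))\ten_{\R} O(C),\, uD+vD^c)$ on $X_{\an}\by C_{\Zar}$ and $\tilde A^{\bt}(X,\bO(R)):=\pr_{C*}\tilde\sT^{\bt}_X(\bO(R))$ on $C$; functoriality, the tensor compatibility and $1\in\sT^*(\R)$ make this an $R$-equivariant pointed cochain DGA on $C$ (pointed by $(jy)^*$), and, exactly as in Definition~\ref{Saction}, it carries an algebraic $\boxast$-action of $S$ --- algebraic precisely because $\bO(R)$ is a weight~$0$ VHS. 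Closure of $J$ under $D$ and $D^c$ gives closure under $\tilde D=uD+vD^c$, so the d\'ecalage $\Dec J$ is defined on $\tilde A^{\bt}(X,\bO(R))$.

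First I would build the algebraic Hodge filtration $(Y,y)^{R,\mal}_{\bH}\in\Ho(dg_{\Z}\Aff_{C^*}(R)_*(S))$. Over the \'etale chart $\widetilde{C^*}$ of Definition~\ref{tildeC} I take $\tilde A^{\bt}(X,\bO(R))|_{\widetilde{C^*}}$ built from $\sT$, over $S\subset C^*$ I take a real model, and I glue the two over $S_{\Cx}$ using condition~(\ref{matchcdn0}), which provides an $S$-equivariant quasi-isomorphism between them there; along the \'etale pushout $C^*=\widetilde{C^*}\cup_{S_{\Cx}}S$ (as in \cite{mhs} and the proof of Proposition~\ref{grjpuremhs}) this yields a \emph{real} object $(Y,y)^{R,\mal}_{\bH}$ over $C^*$ whose fibre over $1\in C^*$ is weakly equivalent to $A^{\bt}(Y,\bO(R))$, i.e.\ to $(Y,y)^{R,\mal}$. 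This Morgan-style move --- constructing the Hodge-filtered model on the complexified homotopy type and gluing it to the real form --- is the step that genuinely goes beyond a direct copy of~\S\ref{quprojsn}. I then set $\O(Y,y)^{R,\mal}_{\MHS}:=\xi(\O(Y,y)^{R,\mal}_{\bH},\Dec J)$ (flat over $\bA^1$ by Lemma~\ref{flatfiltrn}, with fibre $\O(Y,y)^{R,\mal}_{\bH}$ over $1\in\bA^1$), and $\ugr(Y,y)^{R,\mal}_{\MHS}:=\Spec(\bigoplus_{a,b}\H^a(X,\oR^bj_*\bO(R))[-a-b],\,d_2)$, which lies in $dg_{\Z}\Aff(R)_*(\bar S)$ because condition~(\ref{mhscdn}) makes each $\H^a(X,\oR^bj_*\bO(R))$ a pure Hodge structure of weight $a+2b$ while $d_2$ preserves that weight and raises cohomological degree by one.

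The heart of the argument is the analogue of Propositions~\ref{grjpuremhs}--\ref{grjpuremts}: for all $m,b$ the cohomology sheaf $\sH^m(\gr^J_b\O(Y,y)^{R,\mal}_{\bH})$ on $C^*$ is the pure twistor structure of weight $m+b$ attached to the pure Hodge structure $\H^{m-b}(X,\oR^bj_*\bO(R))$. I would argue as follows: comparing $(\sT^{\bt}(\vv),J)$ with $(j_*\sA^{\bt}_Y(\vv)_{\Cx},\tau)$ via condition~(\ref{matchcdn0}), condition~(\ref{taucdn0}) forces $\gr^J_b\sT^{\bt}(\vv)\simeq\oR^bj_*\vv_{\Cx}[-b]$, so as $\O_{C^*}$-modules $\sH^m(\gr^J_b\O(Y,y)^{R,\mal}_{\bH})\cong\H^{m-b}(X,\oR^bj_*\bO(R))_{\Cx}\ten\O_{C^*}$; the Hodge filtration $F^{\bullet}\sT^{\bt}$ restricts to $\gr^J_b\sT^{\bt}$, and the injectivity in condition~(\ref{mhscdn}) is exactly the degeneration at $E_1$ of the associated Hodge-to-de-Rham spectral sequence, which pins the $S$-equivariant structure on this sheaf down to the one determined by $F^{\bullet}\H^{m-b}(X,\oR^bj_*\bO(R))_{\Cx}$; the opposedness contained in ``pure Hodge structure of weight $a+2b=m+b$'' then identifies it with the pure twistor structure of that weight, via the equivalences recalled in~\S\ref{hodgefil}. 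I expect this to be the main obstacle: in contrast to the trivial-monodromy case there is no harmonic-forms or explicit-residue input, so purity must be extracted from hypotheses~(\ref{taucdn0}) and~(\ref{mhscdn}) alone, together with the reality supplied by the $\widetilde{C^*}$-versus-$S$ gluing.

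Granting this, opposedness is formal, just as in Theorem~\ref{qmts}. Flatness gives $(Y,y)^{R,\mal}_{\MHS}\by^{\oR}_{\bA^1,0}\Spec\R\simeq\oSpec_{C^*}(\gr^{\Dec J}\O(Y,y)^{R,\mal}_{\bH})$; the d\'ecalage formula \cite[Proposition~1.3.4]{Hodge2} gives $\gr^{\Dec J}_n\O(Y,y)^{R,\mal}_{\bH}\simeq(\bigoplus_m\sH^m(\gr^J_{n-m}\O(Y,y)^{R,\mal}_{\bH})[-m],\,d_1)$, which by the previous paragraph is $(\bigoplus_m\H^{2m-n}(X,\oR^{n-m}j_*\bO(R))[-m],\,d_1)\ten\O_{C^*}$, i.e.\ the weight-$n$ part of $\theta^{\sharp}(\ugr(Y,y)^{R,\mal}_{\MHS})$ times $C^*$, once one checks that the $E_1$-differential $d_1$ of the filtration $J$ coincides with the $E_2$-differential $d_2$ of the Leray spectral sequence for $j$ --- which holds because the identification $\gr^J_b\sT^{\bt}\simeq\oR^bj_*(-)_{\Cx}[-b]$ realises the Leray filtration. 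The $(\Mat_1\by S)$-equivariance of this isomorphism holds since $\H^{2m-n}(X,\oR^{n-m}j_*\bO(R))$ has Hodge weight $(2m-n)+2(n-m)=n$ and $\Mat_1$-weight $n$, being $\gr^{\Dec J}_n$. This supplies all the data of Definition~\ref{algmhsdef} (with the weights now non-negative), and the asserted form of $\ugr(Y,y)^{R,\mal}_{\MHS}$, together with the weights of its cohomology, is part of the construction.
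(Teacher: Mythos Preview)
Your outline correctly identifies the Morgan-style gluing strategy and the roles of conditions (\ref{matchcdn0})--(\ref{mhscdn}), but there is a genuine gap at the step ``set $\O(Y,y)^{R,\mal}_{\MHS}:=\xi(\O(Y,y)^{R,\mal}_{\bH},\Dec J)$''. The filtration $J$ is only defined on $\sT^{\bt}$, not on the real model $j_*\sA^{\bt}_Y$; once you have glued the complex $\sT$-model over $\widetilde{C^*}$ to the real de Rham model over $S$, there is no single filtration ``$\Dec J$'' on the glued object $\O(Y,y)^{R,\mal}_{\bH}$ to which you can apply the Rees construction. The only filtration naturally available on the real piece is the good truncation $\tau$, and conditions (\ref{matchcdn0}) and (\ref{taucdn0}) say precisely that $\tau$ and $J$ become comparable --- but only through a zig-zag of filtered quasi-isomorphisms, not by an identification on the nose. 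Consequently your later computation of $\gr^{\Dec J}\O(Y,y)^{R,\mal}_{\bH}$, on which your opposedness argument rests, has no meaning as stated.

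The paper resolves this by defining $\O(Y,y)^{R,\mal}_{\MHS}$ not as $\xi(-,\Dec J)$ of the Hodge-filtered object, but directly as the homotopy limit of a five-term zig-zag $E_1\by^h_{E_2}E_3\by^h_{E_4}E_5$ in which different terms carry different Rees filtrations: $\xi(-,\Dec\oR\Gamma(J))$ on the $\tilde{\sT}^{\bt}$-pieces over $\widetilde{C^*}$ and $S_{\Cx}$, and $\xi(-,\Dec\oR\Gamma(\tau))$ on the $\tilde{\sT}^{\bt}|_{S_{\Cx}}$-piece and on the $j_*\tilde{\sA}^{\bt}_Y$-pieces over $S_{\Cx}$ and $S$. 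Condition (\ref{taucdn0}) makes the map $E_3\to E_2$ a quasi-isomorphism (comparing $\tau$ with $J$ on $\sT$), and condition (\ref{matchcdn0}) makes $E_3\to E_4$ a quasi-isomorphism (comparing $\sT$ with $j_*\sA_Y$ under $\tau$); both become isomorphisms after pulling back to $1\in\bA^1$, which is what recovers $(Y,y)^{R,\mal}_{\bF}$. Opposedness is then proved by computing the fibre at $0\in\bA^1$ term by term in this five-term diagram, collapsing it (via the two quasi-isomorphisms just mentioned) to a three-term homotopy fibre product which condition~(\ref{mhscdn}) identifies with $E\ten\O_{C^*}$. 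This extra bookkeeping --- interposing the $\tau$-to-$J$ comparison inside the homotopy limit rather than applying a single filtration afterwards --- is exactly the content that distinguishes this proof from that of Theorem~\ref{qmhs}.
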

\begin{proof}
We proceed along similar lines to \cite{Morgan}. To construct the Hodge filtration, we first  define $\tilde{\sT}^{\bt}(\vv) \subset j_*\tilde{\sA}^{\bt}_Y(\vv)_{\Cx}$ to be given by the differential $\tilde{D}$  on the graded sheaf $\sT^*(\vv)\ten O(C)$, then  let $\sE_{\bF}(O(\Bu_{\rho}))$ be the homotopy fibre product
\[
(\tilde{\sT}^{\bt}(O(\Bu_{\rho}))\ten_{O(C)\ten \Cx}O(\widetilde{C^*}))\by^h_{(j_*\tilde{\sA}^{\bt}_Y(O(\Bu_{\rho}))\ten_{O(C)}O(S)\ten \Cx)}(j_*\tilde{\sA}^{\bt}_Y(O(\Bu_{\rho}))\ten_{O(C)}O(S))
\]
in the category of $R\rtimes S$-equivariant CDGAs on $X \by C^*_{\Zar}$, quasi-coherent over $C^*$. Here, we are extending $\sT^{\bt}$ to ind-VHS by setting $\sT^{\bt}(  \LLim_{\alpha}\vv_{\alpha}):= \LLim_{\alpha} \sT^{\bt}( \vv_{\alpha})$, and similarly for $\tilde{\sT}^{\bt}$.

Explicitly, a homotopy fibre product $C\by^h_DF$  is defined by replacing  $C \to D$ with a quasi-isomorphic surjection $C' \onto D$, then setting $C\by^h_DF:= C'\by_DF$. Equivalently, we could replace $F \to D$ with a surjection. That such surjections exist and give well-defined homotopy fibre products up to quasi-isomorphism follows from the observation in Proposition \ref{quaffworks1} that the homotopy category of quasi-coherent CDGAs on a quasi-affine scheme can be realised as the homotopy category of a right proper model category. 

Observe that for co-ordinates $u,v$ on $C$ as in Remark \ref{Ccoords},
\[
 \tilde{\sT}^{\bt}(O(\Bu_{\rho}))\ten_{O(C)\ten \Cx}O(\widetilde{C^*})\cong (\bigoplus_{p \in \Z}F^p{\sT}^{\bt}(O(\Bu_{\rho}))(u+iv)^{-p})[(u-iv), (u-iv)^{-1}],
\]
while $(j_*\tilde{\sA}^{\bt}_Y(O(\Bu_{\rho}))\ten_{O(C)}O(S))\cong  j_*{\sA}^{\bt}_Y(O(\Bu_{\rho}))\ten O(S)$ (with the same reasoning as Lemma \ref{hodgenice}).

Note that $\widetilde{C^*}\by_C\widetilde{C^*} \cong \widetilde{C^*} \sqcup S_{\Cx}$, so $\sE_{\bF}(O(\Bu_{\rho}))|_{\widetilde{C^*}}$ is
\begin{align*}
  &[\tilde{\sT}^{\bt}(O(\Bu_{\rho}))|_{\widetilde{C^*}} \oplus \tilde{\sT}^{\bt}(O(\Bu_{\rho}))|_{S_{\Cx}}]\by^h_{[j_*\tilde{\sA}^{\bt}_Y(O(\Bu_{\rho}))|_{S_{\Cx}} \oplus j_*\tilde{\sA}^{\bt}_Y(O(\Bu_{\rho}))|_{S_{\Cx}}] }[j_*\tilde{\sA}^{\bt}_Y(O(\Bu_{\rho}))|_{S_{\Cx}}\\
\simeq &[\tilde{\sT}^{\bt}(O(\Bu_{\rho}))|_{\widetilde{C^*}} \oplus  j_*\tilde{\sA}^{\bt}_Y(O(\Bu_{\rho}))|_{S_{\Cx}}]\by^h_{[j_*\tilde{\sA}^{\bt}_Y(O(\Bu_{\rho}))|_{S_{\Cx}} \oplus j_*\tilde{\sA}^{\bt}_Y(O(\Bu_{\rho}))|_{S_{\Cx}}] }[j_*\tilde{\sA}^{\bt}_Y(O(\Bu_{\rho}))|_{S_{\Cx}}\\
\simeq &\tilde{\sT}^{\bt}(O(\Bu_{\rho}))|_{\widetilde{C^*}}.
\end{align*}
Similarly, $\sE_{\bF}(O(\Bu_{\rho}))|_S \simeq  j_*\tilde{\sA}^{\bt}_Y(O(\Bu_{\rho}))\ten_{O(C)}O(S)$.
 
If we let $\CC^{\bt}(X, -)$ denote either the cosimplicial \v Cech or Godement resolution on $X$, then the Thom--Sullivan  functor $\Th$ of Definition \ref{Th} gives us a functor $\Th \circ \CC^{\bt}(X, -)$ from sheaves of DG algebras on $X$ to DG algebras. We denote this by $\oR \Gamma(X, -)$, since it gives a canonical choice for derived global sections. We then
define the Hodge filtration by  
\[
 \O(Y,y)_{\bF}^{R, \mal}:= \oR \Gamma(X,\sE_{\bF}(O(\Bu_{\rho})))
\]
as an object of
$\Ho( DG_{\Z}\Alg_{C^*}(R)_*(S))$. Note that condition (\ref{matchcdn0}) above ensures that the  pullback of $(Y,y)_{\bF}^{R, \mal} $ over $1 \in C^*$ is quasi-isomorphic to 
$
\Spec  \oR \Gamma(X, j_*{\sA}^{\bt}_Y(O(\Bu_{\rho})).
$
Since the map
\[
A^{\bt}(Y, O(\Bu_{\rho})) \to  \oR \Gamma(X, j_*{\sA}^{\bt}_Y(O(\Bu_{\rho}))
\]
is a quasi-isomorphism, this means that  $(Y,y)_{\bF}^{R, \mal} $ indeed defines an algebraic Hodge filtration on $ (Y,y)^{R, \mal}$.

To define the mixed Hodge structure, we first note that  condition (\ref{taucdn0}) above implies that 
\[
 (\tilde{\sT}^{\bt}(O(\Bu_{\rho}))\ten_{O(C)}O(S), \tau) \to  (\tilde{\sT}^{\bt}(O(\Bu_{\rho}))\ten_{O(C)}O(S), J)
\]
is a filtered quasi-isomorphism of complexes, where $\tau$ denotes the good truncation filtration. We then define
$\O(Y,y)_{\MHS}^{R, \mal}$ to be the homotopy limit of the diagram
\[
 \xymatrix@R=0ex{\xi(\oR \Gamma(X, \tilde{\sT}^{\bt}(O(\Bu_{\rho}))|_{\widetilde{C^*}}), \Dec \oR \Gamma(J)) \ar[r] &
\xi(\oR \Gamma(X, \tilde{\sT}^{\bt}(O(\Bu_{\rho}))|_{S_{\Cx}}), \Dec \oR \Gamma(J)) \\
  \xi(\oR \Gamma(X, \tilde{\sT}^{\bt}(O(\Bu_{\rho}))|_{S_{\Cx} }), \Dec \oR \Gamma(\tau)) \ar[ur] \ar[r] &
\xi(\oR \Gamma(X, j_*\tilde{\sA}_Y^{\bt}(O(\Bu_{\rho}))|_{S_{\Cx}}), \Dec \oR \Gamma(\tau)) \\
\xi(\oR \Gamma(X, j_*\tilde{\sA}_Y^{\bt}(O(\Bu_{\rho}))|_{S}), \Dec \oR \Gamma(\tau)) \ar[ur], 
}
\]
which can be expressed as an iterated homotopy fibre product of the form $E_1\by^h_{E_2}E_3\by^h_{E_4}E_5$.  Here, $\xi$ denotes the Rees algebra construction as in Lemma \ref{flatfiltrn}. The basepoint $jy \in X$ gives an augmentation of this DG algebra, so we have defined an object of $\Ho(DG_{\Z}\Alg_{\bA^1 \by C^*}(R)_*(\Mat_1 \by S))$.

Conditions (\ref{taucdn0}) and (\ref{matchcdn0}) above ensure that the second and third maps in the diagram above are both quasi-isomorphisms, with the second map  becoming an isomorphism on pulling back along $1 \in \bA^1$ (corresponding to forgetting the filtrations). The latter observation means that  we do indeed have
\[
 (Y,y)_{\MHS}^{R, \mal}\by^{\oR}_{\bA^1,1 }\Spec \R \simeq  (Y,y)_{\bF}^{R, \mal}.
\]
Setting $\ugr (Y,y)^{R, \mal}_{\MHS}$ as in the statement above, it only remains to establish opposedness. 

Now, the pullback  of $\xi(M,W)$ along $0 \in \bA^1$ is just $\gr^WM$. Moreover, \cite[Proposition 1.3.4]{Hodge2} shows that for any filtered complex $(M, J)$, the map
\[
\gr^{\Dec J}M \to (\bigoplus_{a,b}\H^a(\gr^J_bM)[-a], d_1^J)
\]
is a quasi-isomorphism, where  $d_1^J$ is the differential in the $E_1$ sheet of the spectral sequence associated to $J$. Thus the structure sheaf $\sG $ of $(Y,y)_{\MHS}^{R, \mal}\by^{\oR}_{\bA^1,0 }\Spec \R$ is the homotopy limit of the diagram
\[
 \xymatrix@R=0ex{
(\bigoplus_{a,b} \bH^a(X, \gr^J_b\tilde{\sT}^{\bt}(O(\Bu_{\rho}))|_{\widetilde{C^*}})[-a], d_1^J ) \ar[r] &
 (\bigoplus_{a,b} \bH^a(X, \gr^J_b\tilde{\sT}^{\bt}(O(\Bu_{\rho}))|_{S_{\Cx}})[-a], d_1^J )\\
  (\bigoplus_{a,b} \H^a(X, \sH^b\tilde{\sT}^{\bt}(O(\Bu_{\rho}))|_{S_{\Cx}})[-a], d_2)  \ar[ur] \ar[r] &
(\bigoplus_{a,b} \H^a(X, \oR^bj_*(O(\Bu_{\rho}))|_{S_{\Cx}})[-a], d_2) \\
(\bigoplus_{a,b} \H^a(X, \oR^bj_*(O(\Bu_{\rho}))|_{S})[-a], d_2) \ar[ur], 
}
\]
where $d_2$ denotes the differential on the $E_2$ sheet of the spectral sequence associated to a bigraded complex.

The second and third maps in the diagram above are isomorphisms, so we can write  $\sG$ as the homotopy fibre product of 
\[
\xymatrix@R=0ex{ (\bigoplus_{a,b} \bH^{a+b}(X, \gr^J_b\tilde{\sT}^{\bt}(O(\Bu_{\rho}))|_{\widetilde{C^*}})[-a-b], d_1^J )\ar[r] & {(\bigoplus_{a,b} \H^{a}(X, \oR^bj_*(O(\Bu_{\rho}))|_{S_{\Cx}})[-a-b], d_2) }\\
(\bigoplus_{a,b} \H^{a}(X, \oR^bj_*(O(\Bu_{\rho}))|_{S})[-a-b], d_2) \ar[ur].
}
\]

By condition (\ref{mhscdn}) above, $ \bH^{a}(X, \oR^bj_*(O(\Bu_{\rho}))$ has the structure of an $S$-representation of weight $a+2b$ --- denote this by $E^{ab}$, and set $E:=(\bigoplus_{a,b}E^{ab}, d_2)$. Then we can apply Lemma \ref{flatmhs} to rewrite $\sG$ as 
\[
 (\bigoplus_{p \in \Z}F^p(E\ten \Cx)(u+iv)^{-p})[(u-iv), (u-iv)^{-1}]\by^h_{E\ten O(S_{\Cx})}E\ten O(S).
\]
Since $(\bigoplus_{p \in \Z}F^p(E\ten \Cx)(u+iv)^{-p})[(u-iv), (u-iv)^{-1}]\cong E\ten O(\widetilde{C^*})$, this is just
\[
 E\ten (O(\widetilde{C^*})\by^h_{O(S_{\Cx}) }O(S)) \simeq E\ten \O(C^*),
\]
as required.
\end{proof}

\subsection{Constructing mixed twistor structures}

Proposition \ref{mochilemmamhs} does not easily adapt to mixed twistor structures,  
 since an $S$-equivariant morphism $M \to N$ of quasi-coherent sheaves on $S$  is an isomorphism if and only if the fibres $M_1 \to N_1$ are isomorphisms of vector spaces, but the same is not true of a $\bG_m$-equivariant morphism of quasi-coherent sheaves on $S$. Our solution is to introduce holomorphic properties, the key idea being that for $t$ the co-ordinate on $S^1$, the connection $t\circledast D\co \sA^0_Y(\vv)\ten O(S^1) \to \sA^1_Y(\vv)\ten O(S^1)$ does not define a local system of $O(S^1)$-modules, essentially because iterated integration takes us outside $O(S^1)$. However, as observed in \cite[end of \S 3]{MTS}, $t\circledast D$ defines a holomorphic family  of local systems on $X$, parametrised by $S^1(\Cx)= \Cx^{\by}$.

\begin{definition}
Given a smooth complex affine variety $Z$, define $O(Z)^{\hol}$ to be the ring of  holomorphic functions $f\co Z(\Cx) \to \Cx$. Given a smooth real affine variety $Z$, define $O(Z)^{\hol}$ to be the ring of $\Gal(\Cx/\R)$-equivariant holomorphic functions $f\co Z(\Cx) \to \Cx$.
\end{definition}

In particular, $O(S^1)^{\hol}$ is the ring of functions $f: \Cx^{\by} \to \Cx$ for which $\overline{f(z)}= f(\bar{z}^{-1})$, or equivalently convergent Laurent series $\sum_{n \in \Z} a_n t^n$ for which $\bar{a}_n= a_{-n}$.

\begin{definition}
 Given a smooth complex  variety $Z$, define $\sA_Y^0\O_Z^{\hol}$ to be the sheaf on $Y \by Z(\Cx)$ consisting of smooth complex functions which are holomorphic along $Z$. Write $\sA_Y^{\bt}\O_Z^{\hol}:= \sA_Y^{\bt}\ten_{\sA_Y^0} \sA_Y^0\O_Z^{\hol}$, and, given a local system $\vv$ on $Y$, set $\sA_Y^{\bt}\O_Z^{\hol}(\vv):= \sA_Y^{\bt}(\vv)\ten_{\sA_Y^0} \sA_Y^0\O_Z^{\hol}$.

Given a smooth real  variety $Z$, define $\sA_Y^0\O_Z^{\hol}$ to be the $\Gal(\Cx/\R)$-equivariant  sheaf $\sA_Y^0\O_{Z_{\Cx}}^{\hol}$
on $Y \by Z(\Cx)$, where the the non-trivial element $\sigma \in \Gal(\Cx/\R)$ acts by $\sigma(f)(y,z)= \overline{f(y, \sigma z)}$.
\end{definition}

\begin{definition}\label{Pcoords}
 Define $P:= C^*/\bG_m$ and $\tilde{P}:= \widetilde{C^*}/\bG_m$, for $C^*$ from Definition \ref{CCdef} and $\widetilde{C^*}$ from Definition \ref{tildeC}. As in Definition \ref{U1def}, we have $S^1= S/\bG_m$, and hence a canonical inclusion $S^1 \into P$ (given by cutting out the divisor $\{(u:v)\,:\, u^2+v^2=0\}$). For co-ordinates $u,v$ on $C$ as in Remark \ref{Ccoords}, fix co-ordinates $t= \frac{u+iv}{u-iv}$ on $\tilde{P}$, and $a= \frac{u^2-v^2}{u^2+v^2}$, $b=\frac{2uv}{u^2+v^2} $ on $S^1$ (so $a^2+b^2=1$).
\end{definition}

Thus $P\cong \bP^1_{\R}$ and $\tilde{P}\cong \bA^1_{\Cx}$, the latter isomorphism using   the co-ordinate $t$.   The canonical map $\tilde{P} \to P$ is  given by $t \mapsto (1+t: i-it)$, and the map $S^1_{\Cx} \to \tilde{P}$ by $(a,b) \mapsto a+ib$.

Also note that the \'etale pushout $C^*= \widetilde{C^*}\cup_{S_{\Cx}}S$ from \S \ref{Bei1} corresponds to an \'etale pushout
\[
 P= \tilde{P}\cup_{S^1_{\Cx}}S^1,       
\]
where $S^1_{\Cx}\cong \bG_{m, \Cx}$ is given by the subscheme $t\ne 0$ in $\bA^1_{\Cx}$. Note that the  $\Gal(\Cx/\R)$-action  on $O(S^1_{\Cx})=\Cx[t,t^{-1}]$ given by the real form $S^1$  is determined by the condition that the non-trivial element $\sigma \in \Gal(\Cx/\R)$ maps $t$ to $t^{-1}$.

\begin{definition}\label{sAbrevedef}
 Define $\breve{\sA}_Y^{\bt}(\vv)$ to be the sheaf $\bigoplus_{n\ge 0}\sA_Y^n(\vv)\O_P^{\hol}(n)$ of graded algebras on $Y \by P(\Cx)$, equipped with the differential $uD+vD^c$, where $u,v \in \Gamma(P, \O_P(1))$ correspond to the weight $1$ generators $u,v \in O(C)$.
 \end{definition}

\begin{definition}
 Given a polarised scheme $(Z, \O_Z(1))$ (where $Z$ need not be projective), and a sheaf $\sF$  of $\O_Z$-modules, define $\uline{\Gamma}(Z,\sF):=\bigoplus_{n \in \Z}\Gamma(Z, \sF(n))$. This is regarded as a $\bG_m$-representation, with $\Gamma(Z, \sF(n))$ of weight $n$.      
\end{definition}

\begin{lemma}\label{brevetilde}
The $\bG_m$-equivariant sheaf $\tilde{\sA}_Y^{\bt}(\vv)$ of $O(C)$-complexes on $Y$ (from Definition \ref{sAtildedef}) is given by
\[
 \tilde{\sA}_Y^{\bt}(\vv)\cong  \uline{\Gamma}( P(\Cx), \breve{\sA}_Y^{\bt}(\vv))^{\Gal(\Cx/\R)}.      
\]
\end{lemma}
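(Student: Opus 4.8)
The plan is to unwind the right-hand side level by level; the only substantive input is the classical computation of global sections of line bundles on $\bP^1$, applied to the \emph{holomorphic} structure sheaf, after which everything is formal (projection formula, matching of differentials, Galois descent).

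First I would push $\breve{\sA}^{\bt}_Y(\vv)$ forward along the projection $\pi\co Y\by P(\Cx)\to Y$. By construction $\breve{\sA}^n_Y(\vv)=\sA^n_Y(\vv)\O_P^{\hol}(n)$, so $\breve{\sA}^n_Y(\vv)(k)=\sA^n_Y(\vv)\O_P^{\hol}(n+k)$, and the projection formula gives $\pi_*\big(\breve{\sA}^n_Y(\vv)(k)\big)\cong\sA^n_Y(\vv)\ten_\R\Gamma(P(\Cx),\O_P^{\hol}(n+k))$ (the smooth-along-$Y$ part being absorbed into $\sA^n_Y(\vv)$). Now $P(\Cx)\cong\bP^1(\Cx)$ is compact, so $\Gamma(P(\Cx),\O_P^{\hol}(m))$ coincides with the algebraic global sections $\Gamma(P_{\Cx},\O_{P_{\Cx}}(m))$, which is the space of degree-$m$ homogeneous polynomials in $u,v$ (with complex coefficients) for $m\ge 0$ and is $0$ for $m<0$. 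Summing over all $k\in\Z$ and using that $\bigoplus_{m\ge0}(\text{degree-}m\text{ part})=O(C)\ten_\R\Cx$, I obtain an isomorphism of graded sheaves
\[
\uline{\Gamma}(P(\Cx),\breve{\sA}^{\bt}_Y(\vv))\;\cong\;\bigoplus_n\sA^n_Y(\vv)\ten_\R\big(O(C)\ten_\R\Cx\big)\;=\;\tilde{\sA}^{\bt}_Y(\vv)\ten_\R\Cx .
\]

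Next I would check that this respects the extra structure. Multiplication by the sections $u,v\in\Gamma(P,\O_P(1))$ corresponds, under the identification above, to multiplication by the degree-one generators $u,v\in O(C)$, so the differential $uD+vD^c$ on $\breve{\sA}^{\bt}_Y(\vv)$ matches $\tilde{D}=uD+vD^c$ on $\tilde{\sA}^{\bt}_Y(\vv)$, yielding an isomorphism of complexes of sheaves; the $\bG_m$-equivariance holds because the weight assigned to $\Gamma(P,-)(k)$ in the definition of $\uline{\Gamma}$ corresponds to the $\dmd$-grading built into $\tilde{\sA}^{\bt}_Y(\vv)$, a routine bookkeeping check with the conventions of Definition \ref{sAbrevedef}. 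Finally I would take Galois invariants: the real structure on $P$ is the one of Definition \ref{Pcoords}, for which $u$ and $v$ are real, so the induced $\Gal(\Cx/\R)$-action on $\Gamma(P(\Cx),\O_P^{\hol}(m))$ is coefficientwise conjugation, with fixed subspace the real homogeneous polynomials; combined with the $\Gal(\Cx/\R)$-action on $\sA^0_Y\O_P^{\hol}$ of the ambient definition, which is trivial on the real sheaves $\sA^{\bt}_Y(\vv)$, passing to invariants gives $\big(\sA^{\bt}_Y(\vv)\ten_\R\big(O(C)\ten_\R\Cx\big)\big)^{\Gal(\Cx/\R)}=\sA^{\bt}_Y(\vv)\ten_\R O(C)=\tilde{\sA}^{\bt}_Y(\vv)$, compatibly with differentials and $\bG_m$-action by the previous step.

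The one point requiring care is the computation of $\Gamma(P(\Cx),\O_P^{\hol}(m))$: an \emph{a priori} merely holomorphic section must be shown to be polynomial. This is exactly where compactness of $\bP^1(\Cx)$ is essential — on the affine chart $\tilde{P}\cong\bA^1_{\Cx}$ of Definition \ref{Pcoords} such a section is only an entire function, and it is the prescribed pole order at the complementary point $\bP^1(\Cx)\setminus\tilde{P}(\Cx)$ that forces it to be a polynomial of bounded degree (so that one may invoke GAGA, or argue directly via Cauchy estimates). Everything else is formal once this input is in hand.
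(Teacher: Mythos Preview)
Your argument is correct and follows essentially the same route as the paper's proof. Both reduce to the identification $\Gamma(P(\Cx),\O_P^{\hol}(m))\cong (\text{degree-}m\text{ part of }O(C)\ten\Cx)$ via compactness of $\bP^1(\Cx)$, then match differentials and take Galois invariants. The only stylistic difference is that what you package as a ``projection formula'' the paper unpacks by hand: it first treats the untwisted case (for each $y\in U$, the function $f(y,-)$ is a global holomorphic function on $\bP^1(\Cx)$, hence constant), and then invokes finite-dimensionality of $\Gamma(\bP^1(\Cx),\O(n)^{\hol})$ for the general twist. Since $\sA^0_Y\O_P^{\hol}$ is a sheaf of mixed regularity rather than a genuine tensor product, your phrase ``projection formula'' is slightly informal --- the content is precisely that finite-dimensionality lets you write any section as a finite sum $\sum g_i(y)h_i(p)$ with smooth coefficients --- but you correctly isolate this as the only non-formal step.
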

\begin{proof}
We first consider $\Gamma( P(\Cx), \breve{\sA}_Y^0(\vv))$. This is the sheaf on $Y$ which sends any open subset $U \subset Y$ to the ring of consisting of those smooth functions $f\co U \by \bP^1(\Cx) \to \Cx$ which are holomorphic along $\bP^1(\Cx)$. Thus for any $y \in U$, $f(y,-)$ is a global holomorphic function on $\bP^1(\Cx)$, so is constant. Therefore $ \Gamma( P(\Cx), \breve{\sA}_Y^0(\vv))= \sA^0_Y\ten \Cx$.

For general $n$, a similar argument using finite-dimensionality of $\Gamma(\bP^1(\Cx), \O(n)^{\hol})$ shows that 
\[
        \Gamma( P(\Cx),\breve{\sA}_Y^0(\vv)(n))\cong \sA^0_Y\ten \Gamma(P(\Cx), \O_P(n)^{\hol}).
\]
Now by construction of $P$, we have $\uline{\Gamma}(P(\Cx), \O_P^{\hol}) \cong O(C)\ten \Cx$ with the grading corresponding the the $\bG_m$-action. Thus
\[
 \uline{\Gamma}( P(\Cx), \breve{\sA}_Y^*(\vv)(n))^{\Gal(\Cx/\R)} \cong    \tilde{\sA}_Y^*(\vv).    
\]
Since the differential in both cases is given by $uD+vD^c$, this establishes the isomorphism of complexes.
\end{proof}

\begin{definition}
 On the schemes $S^1$ and $\tilde{P}$, define the sheaf $\O(1)$ by pulling back   $\O_P(1)$ from $P$. Explicitly, on any affine scheme $\Spec A$ over $P$,  the corresponding module $A(1) := \Gamma(\Spec A, \O(1))$  is given by   
\[
 A(1)= A(u,v)/(t(u-iv)-(u+iv)).
\]
 Hence $\O_{\tilde{P}}(1)= \O_{\tilde{P}}(u-iv)$ and $ \O_{S^1}(1)\ten \Cx = \O_{S^1}\ten \Cx (u-iv)$ are trivial line bundles, but $ \O_{S^1}(1)= \O_{S^1}(u,v)/(au+bv-u,bu-av-v)$.    
\end{definition}

\begin{proposition}\label{mochilemmamts}
 Let $R$ be a quotient of  $\varpi_1(Y,y)^{\red}$, and assume that we have the following data.
\begin{itemize}
 \item For each finite rank local real system $\vv$ on $Y$ corresponding to an $R$-representation,  a   
flat graded $(\sA^0_X\ten \Cx)$-submodule  
\[
 \sT^*(\vv) \subset j_*\sA_Y^*(\vv)\ten \Cx, 
\]
closed under the operations $D$ and $D^c$. This must be   functorial in $\vv$,  with 
\begin{itemize}
  \item
  $\sT^*(\vv\oplus \vv')=\sT^{*}(\vv)\oplus \sT^*(\vv')$,
  \item
  the image of 
  $\sT^*(\vv) \ten \sT^*(\vv')  \xra{\wedge} j_*\sA^*_Y(\vv\ten \vv')\ten \Cx$ contained in $\sT^*(\vv\ten \vv')$, and
  \item $1 \in \sT^*(\R)$.
\end{itemize}

\item  
An 
increasing non-negative filtration $J$ of $\sT^*(\vv) $ with $J_r\sT^n(\vv)=\sT^n(\vv)$ for all $n \le r$, compatible with the tensor  
structure, and closed under the operations $D$ and $D^c$.
\end{itemize}

Set $\breve{\sT}^{\bt}(\vv)\subset j_*\breve{\sA}^{\bt}_Y(\vv) $ to be the complex on $X \by P(\Cx)$ whose underlying sheaf is $\bigoplus_{n \ge 0} \sT^n(\vv)\ten_{\sA^0_X\ten \Cx} \sA^0_X\O_P^{\hol}(n)$,  and assume that 
\begin{enumerate}
\item\label{matchcdn} For $S^1(\Cx) \subset P(\Cx)$, the map $\breve{\sT}^{\bt}(\vv)|_{S^1(\Cx)} \to j_*\breve{\sA}_Y^{\bt}(\vv)|_{S^1(\Cx)} $ is a quasi-isomorphism of sheaves of $\O_{S^1}^{\hol}$-modules on $X\by S^1(\Cx)$ for all $\vv$.

\item\label{taucdn}  For all $i \ne r$, the sheaf $\sH^i(\gr^J_r\breve{\sT}^{\bt}(\vv)|_S) $ of  $\O_{S^1}^{\hol}$-modules on $X\by S^1(\Cx)$ is $0$.

\item\label{mtscdn} For all $a,b\ge 0$, the $\Gal(\Cx/\R)$-equivariant sheaf
\[
\ker( \bH^a(X, \gr^J_b\breve{\sT}^{\bt}(\vv))|_{\tilde{P}(\Cx) } \oplus \sigma^*\overline{\bH^a(X, \gr^J_b\breve{\sT}^{\bt}(\vv))|_{\tilde{P}(\Cx)}} \to \H^a(X, \sH^b( j_*\breve{\sA}_Y^{\bt}(\vv)))|_{S^1(\Cx)})
\]
 is a finite locally free $\O_{P}^{\hol}$-module of slope  $a+2b$.
\end{enumerate}

Then there is a non-negatively weighted mixed twistor structure $(Y,y)^{R, \mal}_{\MTS}$, with 
\[
 \ugr (Y,y)^{R, \mal}_{\MTS} \simeq \Spec (\bigoplus_{a,b} \H^a(X, \oR^b j_* O(\Bu_{\rho}))[-a-b], d_2),       
\]
where $\H^a(X, \oR^b j_* O(\Bu_{\rho})) $ is assigned the weight $a+2b$, and $d_2\co \H^a(X, \oR^b j_* O(\Bu_{\rho})) \to \H^{a+2}(X, \oR^{b-1} j_* O(\Bu_{\rho})) $ is the differential from the $E_2$ sheet of the Leray spectral sequence for $j$.
 \end{proposition}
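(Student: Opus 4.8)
The plan is to follow the proof of Proposition \ref{mochilemmamhs} step by step, with the analytic projective line $P=C^*/\bG_m$ and the \'etale pushout $P=\tilde{P}\cup_{S^1_\Cx}S^1$ of Definition \ref{Pcoords} taking over the roles played there by $C^*$ and by the pushout $C^*=\widetilde{C^*}\cup_{S_\Cx}S$, and with $\Gal(\Cx/\R)$-equivariance of holomorphic sheaves on $P(\Cx)$ now encoding the real structure. The dictionary between the two pictures is Lemma \ref{brevetilde}, applied throughout to turn a $\Gal(\Cx/\R)$-equivariant holomorphic sheaf $\sF$ on $P(\Cx)$ into the $\bG_m$-equivariant (equivalently, twistor) module $\uline{\Gamma}(P(\Cx),\sF(\bullet))^{\Gal(\Cx/\R)}$ on $C^*$. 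First I would extend $\sT^\bt$ and $\breve{\sT}^\bt$ to ind-local systems by filtered colimits, so that, $\bO(R)$ being a sum of finite-rank semisimple local systems, $\breve{\sT}^\bt(\bO(R))\subset j_*\breve{\sA}^\bt_Y(\bO(R))$ is a sub-DGA on $X\by P(\Cx)$ for the differential $uD+vD^c$, carrying the filtration $J$.

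To construct the twistor filtration I would form the $R$-equivariant DGA $\sE_\bT(\bO(R))$ on $X\by P(\Cx)$ as the homotopy fibre product, along the \'etale pushout $P=\tilde{P}\cup_{S^1_\Cx}S^1$, of $\breve{\sT}^\bt(\bO(R))|_{\tilde{P}(\Cx)}$ with the real de Rham complex $j_*\breve{\sA}^\bt_Y(\bO(R))|_{S^1(\Cx)}$ over the complexified de Rham complex on $S^1_\Cx$ --- the exact analogue of $\sE_\bF(\bO(R))$, now over $P$ rather than $C^*$, the $\Gal(\Cx/\R)$-conjugate chart being what produces the term $\sigma^*\overline{(\,\cdot\,)}$ in hypothesis (\ref{mtscdn}). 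Hypothesis (\ref{matchcdn}) then shows, as in the Hodge case, that $\sE_\bT(\bO(R))|_{\tilde{P}}\simeq\breve{\sT}^\bt(\bO(R))|_{\tilde{P}}$ and $\sE_\bT(\bO(R))|_{S^1}\simeq j_*\breve{\sA}^\bt_Y(\bO(R))|_{S^1}$; applying $\oR\Gamma(X,-)=\Th\circ\CC^\bt(X,-)$ and then $\uline{\Gamma}(P(\Cx),-(\bullet))^{\Gal(\Cx/\R)}$ yields an object $\O(Y,y)^{R,\mal}_\bT\in\Ho(DG_\Z\Alg_{C^*}(R)_*(\bG_m))$ whose fibre over $1\in C^*$ is $\oR\Gamma(X,j_*\sA^\bt_Y(\bO(R)))\simeq A^\bt(Y,\bO(R))$, hence an algebraic twistor filtration on $(Y,y)^{R,\mal}$. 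Next, as in Proposition \ref{mochilemmamhs}, hypothesis (\ref{taucdn}) makes $\tau\to J$ a filtered quasi-isomorphism on the $S^1$-pieces, and I would define $\O(Y,y)^{R,\mal}_\MTS$ as the homotopy limit of the five-term diagram assembled from $\xi(-,\Dec\oR\Gamma(J))$ and $\xi(-,\Dec\oR\Gamma(\tau))$ on the $\tilde{P}$- and $S^1$-pieces (transported to $C^*$ via $\uline{\Gamma}$), augmented by the basepoint $jy$; this lies in $\Ho(DG_\Z\Alg_{\bA^1\by C^*}(R)_*(\Mat_1\by\bG_m))$, hypotheses (\ref{matchcdn}) and (\ref{taucdn}) make the comparison maps quasi-isomorphisms, and the fibre over $1\in\bA^1$ recovers $\O(Y,y)^{R,\mal}_\bT$.

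For opposedness one pulls back over $0\in\bA^1$, so the structure sheaf becomes $\gr^{\Dec J}$ of the above; by \cite[Proposition 1.3.4]{Hodge2} this is quasi-isomorphic, over $\tilde{P}$, to $(\bigoplus_{a,b}\bH^a(X,\gr^J_b\breve{\sT}^\bt(\bO(R)))[-a-b],d_1^J)$ glued along $S^1$ with the corresponding $\oR^bj_*$-terms, and d\'ecalage identifies $d_1^J$ with the $E_2$-differential $d_2$ of the Leray spectral sequence for $j$. After reindexing, hypothesis (\ref{mtscdn}) identifies the glued sheaf on $P$ as $\bigoplus_{a,b}E^{ab}$ with $E^{ab}:=\H^a(X,\oR^bj_*\bO(R))$ a pure twistor structure of weight $a+2b$, so via Lemma \ref{flattfil} (in place of Lemma \ref{flatmhs}) the pullback over $0\in\bA^1$ is $E\ten\O_{C^*}$ with $E=(\bigoplus_{a,b}E^{ab},d_2)$; this is precisely $\ugr(Y,y)^{R,\mal}_\MTS\by C^*$, where $E^{ab}$ carries $\Mat_1$-weight $a+2b$ since a degree-$(a+b)$ class of $J$-weight $b$ lies in $\gr^{\Dec J}_{a+2b}$, so opposedness holds and is $(\Mat_1\by\bG_m)$-equivariant by the same weight count as in Theorem \ref{qmts}. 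The augmentation $\bigoplus_{a,b}\H^a(X,\oR^bj_*\bO(R))\to O(R)$ is the unique ring map out of $\H^0(X,\bO(R))=\R$.

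The main obstacle is the one flagged just before the statement: a $\bG_m$-equivariant morphism of quasi-coherent sheaves on $S$ is \emph{not} detected on its fibre over $1$, so --- unlike in Proposition \ref{mochilemmamhs}, where the Hodge filtration let comparisons be reduced to fibres over $S$ --- here every quasi-isomorphism must be verified as a map of holomorphic $\O_{S^1}^{\hol}$-module DGAs over all of $S^1(\Cx)=\Cx^\by$, which is exactly what hypotheses (\ref{matchcdn})--(\ref{mtscdn}) supply, via the realisation of $t\circledast D$ as a holomorphic family of flat connections on $X$. A subsidiary technical step is to set up homotopy fibre products and $\oR\Gamma(X,-)$ for DGAs of holomorphic sheaves on $X\by P(\Cx)$ and to check these commute appropriately with $\uline{\Gamma}(P(\Cx),-(\bullet))$ and with passage to $\Gal(\Cx/\R)$-invariants; this is routine given \cite[Proposition \ref{mhs-quaffworks1}]{mhs} and Lemma \ref{brevetilde}, but must be done carrying the Galois action along.
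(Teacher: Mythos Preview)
Your proposal is correct and follows essentially the same strategy as the paper: both build the twistor filtration and MTS as homotopy limits over the \'etale pushout $P=\tilde P\cup_{S^1_\Cx}S^1$, transport to $C^*$ via $\uline\Gamma(P(\Cx),-)^{\Gal(\Cx/\R)}$, and establish opposedness through d\'ecalage together with hypothesis~(\ref{mtscdn}).

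The one place where the paper invests noticeably more work than your sketch suggests is precisely the ``subsidiary technical step'' you label routine. The functor $\uline\Gamma(P(\Cx),-)|_{C^*}$ is not exact, so commuting it with the homotopy fibre products and with $\oR\Gamma(X,-)$ requires a genuine argument: the paper makes the real structure explicit via the group-cohomology resolution $\Th\CC^\bt(\Gal(\Cx/\R),-)$, then invokes GAGA together with the coherence supplied by condition~(\ref{mtscdn}) and a spectral-sequence argument to show $\uline\Gamma(P(\Cx),\sB_\bT^\bt)|_{C^*}\simeq\oR\uline\Gamma(P(\Cx),\sB_\bT^\bt)|_{C^*}$. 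For opposedness the paper also spells out the key elementary fact you are implicitly using, namely that for a holomorphic bundle $\sF$ of slope $m$ on $P(\Cx)$ one has $\uline\Gamma(P(\Cx),\sF)|_{C^*}\cong(1^*\sF)\ten\O_{C^*}$ with $1^*\sF$ placed in $\bG_m$-weight $m$. None of this changes your outline, but Lemma~\ref{brevetilde} alone does not supply it; you should expect this step to carry most of the weight of the proof.
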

\begin{proof}
Define $\O(Y,y)_{\bT}^{R, \mal}$ to be the homotopy fibre product 
\[
\oR \Gamma(X, \uline{\Gamma}(\tilde{P}(\Cx), \breve{\sT}^{\bt}(O(\Bu_{\rho}))))\by^h_{\oR \Gamma(X, j_*\uline{\Gamma}(S^1(\Cx),\breve{\sA}_Y^{\bt}(O(\Bu_{\rho}))))} \oR \Gamma(X, j_*  \uline{\Gamma}(S^1(\Cx),\breve{\sA}_Y^{\bt}(O(\Bu_{\rho}))))^{\Gal(\Cx/\R)}
\]
as an object of $\Ho(DG_{\Z}\Alg_{C^*}(R)_*(\bG_m))$, and let
$\O(Y,y)_{\MTS}^{R, \mal}$ be  the homotopy limit of the diagram
\[
 \xymatrix@=2ex
{\xi(\oR \Gamma(X, \uline{\Gamma}(\tilde{P}(\Cx), \breve{\sT}^{\bt}(O(\Bu_{\rho})))), \Dec \oR \Gamma(J)) \ar[d]\\  
\xi(\oR \Gamma(X,     \uline{\Gamma}(S^1(\Cx), \breve{\sT}^{\bt}(O(\Bu_{\rho})))       ), \Dec \oR \Gamma(J)) \\
  \xi(\oR \Gamma(X,   \uline{\Gamma}(S^1(\Cx), \breve{\sT}^{\bt}(O(\Bu_{\rho})))      ), \Dec \oR \Gamma(\tau)) \ar[u] \ar[d] \\ 
\xi(\oR \Gamma(X, j_*\uline{\Gamma}(S^1(\Cx),\breve{\sA}_Y^{\bt}(O(\Bu_{\rho})))), \Dec \oR \Gamma(\tau)) \\
\xi(\oR \Gamma(X, j_*  \uline{\Gamma}(S^1(\Cx),\breve{\sA}_Y^{\bt}(O(\Bu_{\rho}))))^{\Gal(\Cx/\R)}  ), \Dec \oR \Gamma(\tau))   \ar[u], 
}
\]
as an object of $\Ho(DG_{\Z}\Alg_{\bA^1 \by C^*}(R)_*(\Mat_1 \by \bG_m))$.
Here, we are extending $\sT^{\bt}$ to ind-local systems by setting $\sT^{\bt}(  \LLim_{\alpha}\vv_{\alpha}):= \LLim_{\alpha} \sT^{\bt}( \vv_{\alpha})$, and similarly for $\breve{\sT}^{\bt}$.

Given a $\Gal(\Cx/\R)$-equivariant sheaf $\sF$ of $ \O_{P}^{\hol}$-modules on  $X \by P(\Cx)$, the group cohomology complex gives a $\Gal(\Cx/\R)$-equivariant cosimplicial sheaf $\CC^{\bt}(\Gal(\Cx/\R),\sF  )$ on $X \by P(\Cx) $ --- this is a resolution of $\sF$, with $\CC^0(\Gal(\Cx/\R),\sF  ) = \sF\oplus \sigma^*\bar{\sF}$.
Applying the Thom--Whitney functor $\Th$, this means that
\[
 \Th\CC^{\bt}(\Gal(\Cx/\R),j_*\breve{\sA}_Y^{\bt}(\vv))
\]
is a  $\Gal(\Cx/\R)$-equivariant $\O_{P}^{\hol}$-CDGA on $X \by P(\Cx) $, equipped with a surjection to $j_*\breve{\sA}_Y^{\bt}(\vv) \oplus \sigma^*\overline{j_*\breve{\sA}_Y^{\bt}(\vv)} $.

This allows us to consider the $\Gal(\Cx/\R)$-equivariant  sheaf $\sB^{\bt}_{\bT}$ of $\O_{P}^{\hol}$-CDGAs on $P(\Cx)$ given by the fibre product of
\[
\xymatrix@R=0ex{
 (\breve{\sT}^{\bt}(O(\Bu_{\rho})))|_{\tilde{P}(\Cx)}  \oplus \sigma^*\overline{\breve{\sT}^{\bt}(O(\Bu_{\rho}))}|_{\tilde{P}(\Cx)})\ar[r] &
{ (j_*\breve{\sA}_Y^{\bt}(O(\Bu_{\rho}))\oplus \sigma^*\overline{j_*\breve{\sA}_Y^{\bt}(O(\Bu_{\rho}))})|_{S^1(\Cx)} }\\  
\Th\CC^{\bt}(\Gal(\Cx/\R),j_*\breve{\sA}_Y^{\bt}(O(\Bu_{\rho})))|_{S^1(\Cx)}. \ar[ur]}
\]
Note that since the second map is surjective, this fibre product is in fact a homotopy fibre product.
In particular,
\[
\O(Y,y)_{\bT}^{R, \mal} \simeq  \oR \Gamma(X,  \uline{\Gamma}(P(\Cx),\sB^{\bt}_{\bT})^{\Gal(\Cx/\R)})|_{C^*}.
\]

Now, $\uline{\Gamma}(P(\Cx), -)$ gives a functor from  Zariski sheaves to $\O_{P}^{\hol}$-modules to $O(C)$-modules, and  we consider the functor $\uline{\Gamma}(P(\Cx), -)|_{C^*}$  to quasi-coherent sheaves on $C^*$. 
There is a right derived functor $\oR\uline{\Gamma}(P(\Cx), -)$; by \cite{GAGA}, the map
\[
 \uline{\Gamma}(P(\Cx), \sF)|_{C^*}\to  \oR\uline{\Gamma}(P(\Cx), \sF)|_{C^*}      
\]
is a quasi-isomorphism for all coherent $\O_{P}^{\hol}$-modules $\sF$. Given a morphism $f: Z \to P_{\Cx}$ of polarised varieties, with $Z$ affine, and a quasi-coherent Zariski sheaf $\sF$ of  $\O_Z^{\hol}$-modules on $Z$, note that
\[
\oR\uline{\Gamma}(P(\Cx), f_*\sF)\simeq  \oR\uline{\Gamma}(P(\Cx), \oR f_*\sF)\simeq  \oR\uline{\Gamma}(Z(\Cx), \sF)\simeq  \uline{\Gamma}(Z(\Cx), \sF).   
\]

There are convergent spectral sequences
\[
 \H^a(P(\Cx),  \sH^b(\sB^{\bt}_{\bT})(n)) \abuts  \bH^{a+b}(P(\Cx), \sB^{\bt}_{\bT}(n))     
\]
for all $n$, and Condition (\ref{mtscdn}) above ensures that $\sH^b(\sB^{\bt}_{\bT}) $ is a direct sum of coherent sheaves. 
Since $\H^i\oR\uline{\Gamma}(P(\Cx), \sB^{\bt}_{\bT}) = \bigoplus_{n \in \Z} \bH^{i}(P(\Cx), \sB^{\bt}_{\bT}(n))$, this means that the map
\[
 \uline{\Gamma}(P(\Cx), \sB^{\bt}_{\bT})|_{C^*} \to   \oR\uline{\Gamma}(P(\Cx), \sB^{\bt}_{\bT})|_{C^*}    
\]
 is a quasi-isomorphism. 
Combining these observations shows that
\[
 \O(Y,y)_{\bT}^{R, \mal} \simeq  \oR \Gamma(X,\oR\uline{\Gamma}(P(\Cx), \sB^{\bt}_{\bT}))^{\Gal(\Cx/\R)} |_{C^*}.     
\]

In particular,
\[
\O(Y,y)_{\bT}^{R, \mal}\ten^{\oL}_{\O_{C^*}}O(\bG_m) \to  \oR \Gamma(X,\uline{\Gamma}(\Spec \Cx,\sB^{\bt}_{\bT}\ten_{\O_{P}^{\hol}, (1:0)}\Cx )^{\Gal(\Cx/\R)}))
\]
is a quasi-isomorphism,  and note that right-hand side is just
\[
\oR \Gamma(X,(\sB^{\bt}_{\bT}\ten_{\O_{P}^{\hol}, (1:0)}\Cx )^{\Gal(\Cx/\R)}) \ten O(\bG_m),
\]
which is the homotopy fibre
\[
 \oR \Gamma(X, [{\sT}^{\bt}(O(\Bu_{\rho})) \by^h_{ j_*{\sA}_Y^{\bt}(O(\Bu_{\rho}))\ten \Cx }  j_*\breve{\sA}_Y^{\bt}(O(\Bu_{\rho}))] ),
\]
and hence quasi-isomorphic to $\oR \Gamma(X,j_*{\sA}_Y^{\bt}(O(\Bu_{\rho})))$ by condition (\ref{matchcdn}) above. This proves that
\[
(Y,y)_{\bT}^{R, \mal}\by^{\oR}_{C^*,1}\Spec \R \simeq  (Y,y)^{R, \mal},
\]
so $(Y,y)_{\bT}^{R, \mal}$ is indeed  a twistor filtration on $(Y,y)^{R, \mal}$. 

The proof that $\O(Y,y)_{\bT}^{R, \mal}\simeq \O(Y,y)_{\MTS}^{R, \mal}\ten^{\oL}_{\O_{\bA^1}, 1}\Spec \R $ follows along exactly the same lines as  in  Proposition \ref{mochilemmamhs}, so it only remains to establish opposedness.

Arguing as in the proof of Proposition \ref{mochilemmamhs}, we see  that the structure sheaf
$\sG $ of $\ugr (Y,y)_{\MTS}^{R, \mal}\by^{\oR}_{\bA^1,0 }\Spec \R$ is 
 the homotopy fibre product of the diagram
\[
\xymatrix@=2ex{ (\bigoplus_{a,b} \uline{\Gamma}(\tilde{P}(\Cx), \bH^{a+b}(X, \gr^J_b\breve{\sT}^{\bt}(O(\Bu_{\rho}))))[-a-b], d_1^J )\ar[d] \\
 {(\bigoplus_{a,b} \uline{\Gamma}(S^1(\Cx),\H^{a}(X, \sH^b(j_*\breve{\sA}_Y(O(\Bu_{\rho})))))[-a-b], d_2) }\\
(\bigoplus_{a,b} \uline{\Gamma}(S^1(\Cx),\H^{a}(X, \sH^b(j_*\breve{\sA}_Y(O(\Bu_{\rho})))))^{\Gal(\Cx/\R)}[-a-b], d_2), \ar[u]
}
\]
as a $(\Mat_1 \by R\by\bG_m )$-equivariant sheaf of  CDGAs over $C^*$.

Set $\ugr \sB_{\MHS}^{a,b}$ to be the sheaf on $P(\Cx)$ given by the fibre product of the diagram
\[
\xymatrix@=2ex{ 
\bH^{a+b}(X, \gr^J_b\breve{\sT}^{\bt}(O(\Bu_{\rho})))|_{\tilde{P}(\Cx)}  \oplus \sigma^*\overline{\bH^{a+b}(X, \gr^J_b\breve{\sT}^{\bt}(O(\Bu_{\rho}))|_{\tilde{P}(\Cx)})}  \ar[d]\\
{ \H^{a}(X, \sH^b(j_*\breve{\sA}_Y^{\bt}(O(\Bu_{\rho}))))\oplus \sigma^*\overline{\H^{a}(X, \sH^b(j_*\breve{\sA}_Y^{\bt}(O(\Bu_{\rho}))))}|_{S^1(\Cx)} } \\
 \Th\CC^{\bt}(\Gal(\Cx/\R),\H^{a}(X, \sH^b(j_*\breve{\sA}_Y^{\bt}(O(\Bu_{\rho}))))|_{S^1(\Cx)},  \ar[u]
}
\]
and observe that
\[
 \sG \simeq (\bigoplus_{a,b} \uline{\Gamma}(P(\Cx),\ugr \sB_{\MHS}^{a,b})^{\Gal(\Cx/\R)}|_{C^*}, d_1^J). 
\]

Now, $\ugr \sB_{\MHS}^{a,b} $ is just the homotopy fibre product of 
\[
\xymatrix@=2ex{
 \bH^{a+b}(X, \gr^J_b\breve{\sT}^{\bt}(O(\Bu_{\rho})))|_{\tilde{P}(\Cx)}  \oplus \sigma^*\overline{\bH^{a+b}(X, \gr^J_b\breve{\sT}^{\bt}(O(\Bu_{\rho}))|_{\tilde{P}(\Cx)})} \ar[d] \\
  \H^{a}(X, \sH^b(j_*\breve{\sA}_Y^{\bt}(O(\Bu_{\rho}))))\oplus \sigma^*\overline{\H^{a}(X, \sH^b(j_*\breve{\sA}_Y^{\bt}(O(\Bu_{\rho}))))}|_{S^1(\Cx)}\\ \ar[u]  \H^{a}(X, \sH^b(j_*\breve{\sA}_Y^{\bt}(O(\Bu_{\rho}))))|_{S^1(\Cx)};    
}
\]
condition (\ref{matchcdn}) ensures that the first map is injective, so $\ugr \sB_{\MHS}^{a,b}  $ is quasi-isomorphic to
the kernel of 
\[
 \bH^a(X, \gr^J_b\breve{\sT}^{\bt}(O(\Bu_{\rho})))|_{\tilde{P}(\Cx) } \oplus \sigma^*\overline{\bH^a(X, \gr^J_b\breve{\sT}^{\bt}(O(\Bu_{\rho})))|_{\tilde{P}(\Cx)}} \to \H^a(X, \sH^b( j_*\breve{\sA}_Y^{\bt}(O(\Bu_{\rho}))))|_{S^1(\Cx)}.        
\]
By condition (\ref{mtscdn}), this is a holomorphic vector bundle on $P(\Cx)$ of slope $a+2b$. 

Now, we just observe that for any holomorphic vector bundle $\sF$ of slope $m$, the map $\Gamma(P(\Cx), \F(-m)) \to 1^*\sF$, given by taking the fibre at $1 \in P(\R)$, is an isomorphism of complex vector spaces, and that the maps
\[
 \Gamma(P(\Cx), \F(-m))\ten   {\Gamma}(P(\Cx),\O(n))    \to {\Gamma}(P(\Cx),\sF(n-m)) 
\]
are isomorphisms  for $n \ge 0$. This gives an  isomorphism
\[
 \uline{\Gamma}(P(\Cx),  \sF)|_{C^*} \cong  (1^*\sF) \ten \O_{C^*}   
\]
over $C^*$, which becomes $\bG_m$-equivariant if we set $1^*\sF$ to have weight $m$.

Therefore
\[
        \uline{\Gamma}(P(\Cx),\ugr \sB_{\MHS}^{a,b})|_{C^*}^{\Gal(\Cx/\R)}\cong \H^a(X, \oR^b j_* O(\Bu_{\rho}))\ten \O_{C^*},
\]
making use of condition (\ref{matchcdn}) to show that $\H^a(X, \oR^b j_* O(\Bu_{\rho}))\ten \Cx$ is the fibre of  $\ugr \sB_{\MHS}^{a,b}$ at $1 \in P(\R)$. This completes the proof of opposedness.
\end{proof}

\begin{proposition}\label{mochilemmaenrich}
Let $R$ be a quotient of  ${}^{\nu}\!\varpi_1(Y,y)^{\red}$ to which the discrete $S^1$-action descends, assume that the conditions of Proposition \ref{mochilemmamts} hold, and assume in addition that for all $\lambda \in \Cx^{\by}$, the map $\lambda\dmd\co j_*\tilde{\sA}^{\bt}_Y(\vv) \to j_*\tilde{\sA}^{\bt}_Y( \lambda\bar{\lambda}^{-1}\circledast \vv) $ maps   $\sT(\vv)$ isomorphically to $\sT(\lambda\bar{\lambda}^{-1}\circledast \vv) $.  Then there are natural $(S^1)^{\delta}$-actions on  $(Y,y)^{R, \mal}_{\MTS}$ and $\ugr (Y,y)^{R, \mal}_{\MTS}$, compatible with the opposedness isomorphism, and with the action of $-1 \in S^1$ coinciding with that of  $-1 \in \bG_m$.
\end{proposition}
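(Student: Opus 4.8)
The plan is to produce the $(S^1)^{\delta}$-actions by exactly the mechanism of Propositions \ref{redenrich}, \ref{grjpuremtsen} and \ref{qmtsenrich}, namely the discrete action in which $t\in(S^1)^{\delta}$ acts on $\sA^{\bt}_Y(\bO(R))$ by $t\boxast(a\ten v)=(t\dmd a)\ten(t^2\circledast v)$, in the notation of Definition \ref{Saction}. First one checks that this action makes sense: the hypothesis that the discrete $S^1$-action descends to $R$ means, via \cite[Lemma \ref{mhs-discreteact}]{mhs}, that $t\circledast(Dv)=(t\circledast D)(t\circledast v)$ holds for $\bO(R)$, now with $D=d^++\vartheta$ and $D^c$ formed from Mochizuki's tame imaginary pluriharmonic metric in place of Simpson's harmonic metric. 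Functoriality of the constructions of Definition \ref{Saction} then extends $\boxast$ to $j_*\sA^{\bt}_Y(\bO(R))$, to $j_*\breve{\sA}_Y^{\bt}(\bO(R))$, and to their $\uline{\Gamma}$-versions over $C$, $\tilde{P}$ and $S^1$, acting on the polynomial (resp.\ holomorphic) coefficients through the $S$-action on $C$ (resp.\ $P$).

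The key step is that the additional hypothesis is precisely what forces $\boxast$ to preserve the subsheaves $\sT^{\bt}(\bO(R))$ and $\breve{\sT}^{\bt}(\bO(R))$: the form-and-coefficient component of $t\boxast$ is the map $\lambda\dmd$ of the hypothesis with $\lambda=t$ and $\tfrac{\lambda}{\bar\lambda}=t^2$, which by assumption carries $\sT(\bO(R))$ isomorphically onto $\sT(t^2\circledast\bO(R))$, the latter being identified with $\sT(\bO(R))$ by functoriality of $\sT$ and the descended $S^1$-action on $R$; and the filtration $J$ is carried along because it is intrinsic to $\sT$, being built from $J_r\sT^n=\sT^n$ ($n\le r$) and the wedge product, both functorial. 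Consequently every object and every arrow occurring in the homotopy-limit diagrams of Proposition \ref{mochilemmamts} that define $\O(Y,y)^{R,\mal}_{\bT}$ and $\O(Y,y)^{R,\mal}_{\MTS}$ --- the derived global sections $\oR\Gamma(X,-)$ assembled from the \v Cech (or Godement) and Thom--Whitney resolutions, the functors $\uline{\Gamma}(\tilde{P}(\Cx),-)$ and $\uline{\Gamma}(S^1(\Cx),-)$, the Rees construction $\xi$, and the d\'ecalage $\Dec$ --- carries a $\boxast$-action compatible with the structure maps, the Rees and d\'ecalage operations transferring any filtered action just as in Proposition \ref{qmtsenrich}. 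The $(S^1)^{\delta}$-action therefore descends to the homotopy limits, and the same argument applied to the groups $\H^a(X,\oR^bj_*\bO(R))$ gives it on $\ugr(Y,y)^{R,\mal}_{\MTS}$; the projection to $\bA^1\by C^*$ and the augmentation coming from the basepoint $jy$ (which contributes $O(R)$ with trivial coefficient-part, acted on through $t^2\circledast$) are $(S^1)^{\delta}$-equivariant by construction.

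It then remains to verify the three asserted properties, all formal. The $(S^1)^{\delta}$- and $(\Mat_1\by\bG_m)$-actions commute because the $\bG_m\subset S$ in the twistor structure acts by $\dmd$ on forms, which commutes with the $\dmd$-part of $\boxast$ and is disjoint from its $\circledast$-part, while $\Mat_1$ acts through the Rees grading, which $\boxast$ respects weight by weight; $S^1$-equivariance of the structure map to $C^*$ is immediate since $\boxast$ acts on $O(C)$ by an automorphism preserving $C^*$. Finally, for $t=-1$ one has $t^2=1$, so the $\circledast$-twist is trivial and $t\boxast(a\ten v)=((-1)\dmd a)\ten v$ is exactly the action of $-1\in\bG_m$; compatibility with the opposedness isomorphism holds because that isomorphism is produced by the same d\'ecalage-and-Rees procedure carried out $\boxast$-equivariantly, exactly as in Theorem \ref{qmts} and Proposition \ref{qmtsenrich}.

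The one point that genuinely requires care is the first: that over ${}^{\nu}\!\varpi_1(Y,y)^{\red}$ Mochizuki's tame pure imaginary pluriharmonic metrics do assemble into a discrete $S^1$-action satisfying $t\circledast(Dv)=(t\circledast D)(t\circledast v)$, and that the extra hypothesis on $\sT$ is genuinely compatible with this twist. Both are, however, formally identical to the proper case treated in \cite[Proposition \ref{mhs-redenrich}]{mhs} once Simpson's metrics are replaced throughout by Mochizuki's, so no obstacle beyond bookkeeping arises.
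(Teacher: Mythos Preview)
Your proposal is correct and follows the same strategy as the paper, which gives only a one-line proof (``The proof of Proposition \ref{qmtsenrich} carries over, substituting Proposition \ref{mochilemmamts} for Theorem \ref{qmts}''); you have simply unpacked what that carrying-over entails, namely that the $\boxast$-action of Definition \ref{Saction} propagates through all the functorial constructions (derived sections, $\uline{\Gamma}$, Rees, d\'ecalage) used in Proposition \ref{mochilemmamts}.

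One small caveat: your justification that the filtration $J$ is preserved because it is ``intrinsic to $\sT$, being built from $J_r\sT^n=\sT^n$ ($n\le r$) and the wedge product'' is not quite right as stated, since in Proposition \ref{mochilemmamts} the filtration $J$ is given as abstract data and those conditions do not determine it. Strictly speaking the hypothesis of the proposition should include that $\lambda\dmd$ respects $J$ (as indeed it does in the intended application, Proposition \ref{unitmtsenrich}); the paper's one-line proof glosses over this point as well, so you are in good company, but it would be cleaner to state it as part of the assumed compatibility rather than to argue it from the axioms on $J$.
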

\begin{proof}
 The proof of Proposition \ref{qmtsenrich} carries over, substituting Proposition \ref{mochilemmamts} for Theorem \ref{qmts}.        
\end{proof}

\subsection{Unitary monodromy}

In this section, we will consider only semisimple local systems $\vv$ on $Y$ with unitary monodromy around the local components of $D$ (i.e. semisimple monodromy with unitary eigenvalues),

\begin{definition}
For $\vv$ as above, let $\sM(\vv) \subset j_*\sA^0(\vv)\ten \Cx$ consist of locally $L_2$-integrable functions for the Poincar\'e metric, holomorphic in the sense that they lie in $\ker \bar{\pd}$, where $D= \pd+\bar{\pd} + \theta + \bar{\theta}$. 

Then set 
\[
 \sA^*_X(\vv)\langle  D \rangle:= \sM(\vv)\ten_{\O_X}\sA^*_X(\R)\langle D \rangle \subset j_*\sA_Y(\vv)\ten \Cx,
\]
where $\O_X$ denotes the sheaf of holomorphic functions on $X$.
\end{definition}
The crucial observation which we now make is that $\sA^*_X(\vv)\langle  D \rangle $ is closed under the operations $D$ and $D^c$. Closure under $\bar{\pd}$ is automatic, and closure under $\pd$ follows because Mochizuki's metric is tame, so $\pd\co \sM(\vv)\to \sM(\vv)\ten_{\O_X}\Omega^1_X\langle D \rangle$. Since $\vv$ has unitary monodromy around the local components of $D$, the Higgs form $\theta$ is holomorphic, which ensures that $\sA^*_X(\vv)\langle  D \rangle$ is closed under both
  $\theta$ and $\bar{\theta}$. We can thus write $\sA^{\bt}_X(\vv)\langle D \rangle $ for the complex given by  $\sA^*_X(\vv)\langle  D \rangle $ with differential $D$.

\begin{lemma}
 For all $m \ge 0$, there is a morphism 
\[
\Res_m\co \sA^{\bt}_X(\vv)\langle  D \rangle \to \nu_{m*} \sA^{\bt}_{D^{(m)}}(\nu_m^{-1}j_*\vv\ten \vareps^m)\langle C^{(m)} \rangle[-m],                      
\]
 compatible with both $D$ and $D^c$, for $D^{(m)},C^{(m)}$ as in Definition \ref{Dmdef} . 
\end{lemma}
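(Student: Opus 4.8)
The plan is to adapt, almost verbatim, the construction of $\Res_m$ in Lemma \ref{residuedef} (that is, \cite[3.1.5.1]{Hodge2}); the only genuinely new ingredient is the behaviour of the coefficient prolongation $\sM(\vv)$ under restriction to the strata of $D$.

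First I would work locally. On a coordinate polydisc $U\subset X$ with $D\cap U=\bigcup_{i\in I}\{z_i=0\}$, every section of $\sA^*_X(\vv)\langle D\rangle$ over $U$ is uniquely a finite sum $\sum_{S\subseteq I}\omega_S\wedge\bigl(\bigwedge_{i\in S}d\log z_i\bigr)$ with $\omega_S$ a section of $\sM(\vv)\ten_{\O_X}\sA^*_X$, and the local branches of $D^{(m)}$ over $U$ are indexed by the $m$-element subsets $S\subseteq I$, the branch $Z_S$ being the normalisation of $\bigcap_{i\in S}\{z_i=0\}$. Following Deligne I would set
\[
\Res_m\Bigl(\sum_{S}\omega_S\wedge\bigwedge_{i\in S}d\log z_i\Bigr)\Big|_{Z_S}:=\omega_S|_{Z_S}\ten\eps\bigl((z_i)_{i\in S}\bigr)
\]
for $|S|=m$, the contribution of every other summand to $Z_S$ being declared zero. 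Exactly as in \cite[3.1.5.1]{Hodge2}, replacing $z_i$ by another coordinate cutting out the same branch changes $d\log z_i$ by a section of $\sA^1_X$ and so does not affect the residue, while a permutation of the $z_i$ produces the sign recorded by the orientation local system $\vareps^m$; hence the local formulae glue to a global morphism $\Res_m\co\sA^{\bt}_X(\vv)\langle D\rangle\to\nu_{m*}\cF[-m]$ for a sheaf $\cF$ still to be identified. The compatibility with $D$ and $D^c$ is formal once the coefficient restriction makes sense: $d\log z_i$ and $\dc\log|z_i|=d\arg z_i$ are closed, so the Leibniz rule lets $D$ and $D^c$ pass through the operation of extracting the coefficient of $\bigwedge_{i\in S}d\log z_i$ and restricting to $Z_S$.

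So the real point is to identify $\cF$: I must show that for $\omega_S\in\sM(\vv)\ten_{\O_X}\sA^*_X$ the restriction $\omega_S|_{Z_S}$ lies in $\sM_{Z_S}(\nu_m^{-1}j_*\vv\ten\vareps^m)\ten_{\O_{Z_S}}\sA^*_{Z_S}\langle C^{(m)}\rangle$, and that the operators $\pd,\bar\pd,\theta,\bar\theta$ on $\sM(\vv)$ go over to their counterparts on the stratum. Via the Leibniz rule and the facts (already used in the excerpt) that $\pd$ and $\theta$ map $\sM(\vv)$ into $\sM(\vv)\ten_{\O_X}\Omega^1_X\langle D\rangle$ — tameness of Mochizuki's metric, and holomorphy of $\theta$, which is where unitarity of the monodromy around the $\{z_i=0\}$ enters — this reduces to showing that the $L_2$-holomorphic prolongation $\sM(\vv)$ commutes with restriction to $D^{(m)}-C^{(m)}\cong D^m-D^{m+1}$. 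For this I would invoke Mochizuki's theory of tame harmonic bundles \cite{mochi}: restricting the tame pure imaginary pluriharmonic metric of $\vv$ yields, up to the finite-order residue contribution absorbed into $\vareps^m$, the tame pure imaginary pluriharmonic metric attached to $\nu_m^{-1}j_*\vv$ on $D^{(m)}-C^{(m)}$; unitarity of the monodromy both makes $\sM(\vv)$ a genuine locally free $\O_X$-extension (so that restriction is honest) and identifies the weight-zero, monodromy-invariant part of the residue along the branches in $S$ with the prolongation $\sM_{Z_S}$ of $\nu_m^{-1}j_*\vv$; the residual log-singularities along the remaining branches account for the $\langle C^{(m)}\rangle$ downstairs.

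The main obstacle is precisely this last analytic input: the functoriality of Mochizuki's $L_2$-prolongation under restriction to divisor strata, together with the identification of its weight-zero residue with $j_*\vv$. Once that is granted, the construction and the verification of compatibility with $D$ and $D^c$ are the same formal bookkeeping as in \cite[3.1.5.1]{Hodge2} and Lemma \ref{residuedef}.
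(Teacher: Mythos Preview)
Your approach is the paper's approach in outline --- Deligne's residue on the log forms tensored with a coefficient map --- but you blur the one step that carries the content. The paper writes $\Res_m$ as the composite
\[
\sM(\vv)\ten_{\O_X}\sA^q_X\langle D\rangle \xra{\id\ten\Res_m} \nu_{m*}\bigl[\nu_m^*\sM(\vv)\ten_{\O_{D^{(m)}}}\sA^{q-m}_{D^{(m)}}(\vareps^m)\langle C^{(m)}\rangle\bigr] \to \nu_{m*}\sA^{q-m}_{D^{(m)}}(\nu_m^{-1}j_*\vv\ten\vareps^m)\langle C^{(m)}\rangle,
\]
where the second arrow is \emph{orthogonal projection}, for the pluriharmonic metric, from $\nu_m^*\sM(\vv)$ onto its monodromy-invariant subbundle $\nu_m^{-1}j_*\vv$. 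You allude to this (``weight-zero, monodromy-invariant part''), but you frame it as a restriction or identification rather than a projection, and then reach for Mochizuki's prolongation functoriality when the harmonic metric already gives the map directly.

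The real gap is your claim that compatibility with $D$ and $D^c$ is ``formal once the coefficient restriction makes sense''. Your Leibniz argument is correct for the first arrow $\id\ten\Res_m$, but it says nothing about the second: one must check that orthogonal projection onto the invariant part intertwines $\pd,\bar\pd,\theta,\bar\theta$ with their counterparts on $D^{(m)}$. That is exactly the non-formal analytic input, and it is where the paper invokes \cite[Lemma 1.5]{Timm}. Unitarity of the local monodromy is what makes the orthogonal complement of the invariant subspace stable under these operators, so the projection is a map of harmonic bundles with logarithmic structure; but this needs an argument (Timm's), not just the Leibniz rule. Once you separate the projection from the restriction and locate the compatibility check there, your sketch becomes the paper's proof.
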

\begin{proof}
As in \cite[1.4]{Timm}, $\Res_m$ is given in level $q$ by the composition
\begin{align*}
 \sA^q_X(\vv)\langle  D \rangle &= \sM(\vv)\ten_{\O_X}\sA^q_X\langle  D \rangle\\
&\xra{\id \ten \Res_m}  \sM(\vv)\ten_{\O_X}\nu_{m*}\sA^{q-m}_{D^{(m)}}(\vareps^m_{\R})\langle  C^{(m)}\rangle\\
&= \nu_{m*}[\vareps^m\ten_{\Z}\nu_m^*\sM(\vv)\ten_{\O_{D^{(m)}}}\sA^{q-m}_{D^{(m)}}(\vareps^m)\langle  C^{(m)}\rangle\\
&\to \nu_{m*}[\vareps^m\ten_{\Z}\nu_m^*\sM(\vv)\ten_{\O_{D^{(m)}}}\sA^{q-m}_{D^{(m)}}(\vareps^m)\langle  C^{(m)}\rangle,
\end{align*}
where the final map is given by orthogonal projection.
 The proof of \cite[Lemma 1.5]{Timm} then adapts to show that $\Res_m $is compatible with both $D$ and $D^c$.
\end{proof}

Note that $(j_*\vv\ten \vareps^m)|_{D^m-D^{m+1}}$ inherits a pluriharmonic metric from $\vv$, so is necessarily a semisimple local system on the quasi-projective variety $D^m-D^{m+1}=D^{(m)}-C^{(m)} $.

\begin{definition}
 Define a filtration on $ \sA^{\bt}_X(\vv)\langle  D \rangle $ by 
\[
 J_r\sA^{\bt}_X(\vv)\langle  D \rangle := \ker(\Res_{r+1}),
\]
for $r \ge 0$. This generalises \cite[Definition 1.6]{Timm}.
\end{definition}

\begin{definition}\label{L2def}
Define the graded sheaf $\sL_{(2)}^*(\vv)$ on $X$ to consist of  $j_*\vv$-valued $L^2$ distributional forms $a$ for which $\pd a$ and $\bar{\pd}a$ are also $L^2$. Write $L_{(2)}^*(X,\vv):= \Gamma(X, \sL_{(2)}^*(\vv))$. 
\end{definition}

Since $\theta$ is holomorphic, note that the operators $\theta $ and $\bar{\theta}$ are bounded, so also act on $\sL_{(2)}^*(\vv)\ten \Cx$.

\subsubsection{Mixed Hodge structures}

\begin{theorem}\label{unitmhs}
If $R$ is a quotient of ${}^{\VHS}\!\varpi_1(Y,y)$ for which the representation $\pi_1(Y,y)\to R(\R)$ has unitary monodromy around the local components of $D$, then
there is a canonical non-positively weighted mixed Hodge structure $(Y,y)^{R, \mal}_{\MHS}$ on $(Y,y)^{R, \mal}$, in the sense of Definition \ref{algmhsdef2}. The associated split MHS is given by
\[
 \ugr (Y,y)^{R, \mal}_{\MHS} \simeq \Spec (\bigoplus_{a,b} \H^a(X, \oR^b j_* O(\Bu_{\rho}))[-a-b], d_2),       
\]
with $\H^a(X, \oR^b j_* O(\Bu_{\rho})) $ a pure ind-Hodge structure of weight $a+2b$.
\end{theorem}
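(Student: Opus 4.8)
The plan is to deduce Theorem \ref{unitmhs} from Proposition \ref{mochilemmamhs} by taking, for each weight-$0$ real VHS $\vv$ on $Y$ with unitary monodromy around the local components of $D$, the complex $\sT^{\bt}(\vv):=\sA^{\bt}_X(\vv)\langle D\rangle=\sM(\vv)\ten_{\O_X}\sA^{\bt}_X\langle D\rangle$ constructed above, equipped with the residue filtration $J_r\sA^{\bt}_X(\vv)\langle D\rangle:=\ker(\Res_{r+1})$. The hypothesis on $R$ is exactly what the Proposition requires: since $R$ is a quotient of ${}^{\VHS}\!\varpi_1(Y,y)$ to which the discrete $S^1$-action descends algebraically and the monodromy around $D$ is unitary, $\bO(R)$ is a sum of such $\vv$, and we saw above that $D$, $D^c$ (and $\pd,\bar\pd,\theta,\bar\theta$) preserve $\sA^{\bt}_X(\vv)\langle D\rangle$. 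The remaining structural requirements --- functoriality in $\vv$, $\sT^{\bt}(\vv\oplus\vv')=\sT^{\bt}(\vv)\oplus\sT^{\bt}(\vv')$, the tensor compatibility, $1\in\sT^{\bt}(\R)$, and the properties of $J$ (non-negative, with $J_r\sT^n=\sT^n$ for $n\le r$ since $\Res_{r+1}$ takes values in a complex concentrated in degrees $\ge r+1$; $S$-equivariant, since the residue maps are homogeneous for the $S$-action given by form type together with the weight-$0$ Hodge bigrading of $\vv$; tensor-compatible; $D$- and $D^c$-stable) --- are all immediate from the additivity and multiplicativity of $\sM(-)$, using that the Hodge bundles of a polarised VHS extend as holomorphic subbundles of the canonical extension, so that $\sM(\vv)$ is $S$-stable.

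It then remains to verify conditions (\ref{matchcdn0})--(\ref{mhscdn}) of Proposition \ref{mochilemmamhs}. For (\ref{matchcdn0}) and (\ref{taucdn0}) I would work locally around $D$: decomposing $\vv$ into eigen-pieces for the (quasi-unipotent, hence finite-order) monodromy of the local components, the pieces with nontrivial local monodromy have \emph{acyclic} local de Rham complex, while the invariant piece reduces to the constant-coefficient computation of \cite[\S 1]{Timm}; this gives (\ref{matchcdn0}), i.e. $\sA^{\bt}_X(\vv)\langle D\rangle\to j_*\sA^{\bt}_Y(\vv)_{\Cx}$ is a quasi-isomorphism of sheaves on $X$. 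For (\ref{taucdn0}) one combines this with the sheaf-level residue isomorphism $\gr^J_r\sA^{\bt}_X(\vv)\langle D\rangle\simeq\nu_{r*}(\cdots)[-r]$, where $(\cdots)$ is the analogous no-log complex for $\nu_r^{-1}j_*\vv\ten\vareps^r$ on $D^{(r)}$ (the sheafified form of the residue lemma preceding the theorem, generalising \cite[Lemma~1.5 and Definition~1.6]{Timm}), and the same local analysis on $D^{(r)}$: this no-log complex has cohomology sheaves concentrated in degree $0$, a constructible sheaf $\sS_r$ with $\nu_{r*}\sS_r\cong\oR^r j_*\vv_{\Cx}$ by the sheaf versions of Lemmas \ref{higherdirectlemma} and \ref{residuedef}. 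Hence $\sH^i(\gr^J_r\sT^{\bt}(\vv))=0$ for $i\ne r$.

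The main obstacle is condition (\ref{mhscdn}): that $\bH^{a+b}(X,F^p\gr^J_b\sT^{\bt}(\vv))\to\H^a(X,\oR^b j_*\vv)_{\Cx}$ is injective and the resulting Hodge filtration makes $\H^a(X,\oR^b j_*\vv)$ a pure Hodge structure of weight $a+2b$; here formal homological algebra no longer suffices. By the previous step $\gr^J_b\sT^{\bt}(\vv)$ computes $\H^a(D^{(b)},\sS_b)\cong\H^a(X,\oR^b j_*\vv)$, with $\sS_b$ the canonical extension of the polarised weight-$2b$ VHS $\nu_b^{-1}j_*\vv\ten\vareps^b$ on $D^{(b)}-C^{(b)}$; the plan is to identify this cohomology with the $L^2$-cohomology of that VHS for a Poincar\'e-type metric, using the auxiliary complex $\sL^{\bt}_{(2)}(\vv)$ of Definition \ref{L2def} to supply $L^2$-harmonic representatives on which the Hodge bigrading, hence the $S$-action, is manifest, and then to invoke Timmerscheidt's theorem (resting on the $L^2$ theory of Zucker and Cattani--Kaplan--Schmid) that the stupid filtration of the logarithmic complex with VHS coefficients induces strictly a pure Hodge structure of weight $a+2b$ on $\H^a$. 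Equivalently this purity is read off the Cattani--Kaplan--Schmid mixed Hodge structure on $\H^{a+b}(Y,\vv)$ via the degeneration at $E_2$ of the Leray spectral sequence of $j$ as a spectral sequence of mixed Hodge structures. Once all three conditions are in place, Proposition \ref{mochilemmamhs} produces the mixed Hodge structure $(Y,y)^{R,\mal}_{\MHS}$ with $\ugr(Y,y)^{R,\mal}_{\MHS}\simeq\Spec(\bigoplus_{a,b}\H^a(X,\oR^b j_*\bO(R))[-a-b],d_2)$, each $\H^a(X,\oR^b j_*\bO(R))$ pure of weight $a+2b$, the differential $d_2$ being identified with the Leray differential by Lemmas \ref{higherdirectlemma} and \ref{residuedef} exactly as in the proof of Theorem \ref{qmhs}.
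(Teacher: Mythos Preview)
Your proposal is correct and follows essentially the same route as the paper: apply Proposition~\ref{mochilemmamhs} with $\sT^*(\vv)=\sA^*_X(\vv)\langle D\rangle$ and the residue filtration $J$, verify conditions (\ref{matchcdn0}) and (\ref{taucdn0}) via the residue quasi-isomorphisms reducing to the $J_0$ case (adapting \cite[Proposition~1.7]{Timm} and \cite[Theorem~D.2(a)]{Timm2}), and verify (\ref{mhscdn}) by identifying $\H^*(X,j_*\vv)$ with $L^2$ cohomology and using the principle of two types on $\sL^{\bt}_{(2)}(\vv)$ (adapting \cite[Proposition~D.4]{Timm2}).

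Two small corrections. First, your parenthetical ``quasi-unipotent, hence finite-order'' is wrong: unitary monodromy here means the eigenvalues lie on the unit circle, not that they are roots of unity, so the monodromy need not be quasi-unipotent; fortunately your local eigendecomposition argument works for any semisimple unitary monodromy regardless. Second, the tensor compatibility is not quite ``immediate'': products of arbitrary $L^2$ functions need not be $L^2$, and the paper makes the point that it is the \emph{meromorphicity} of sections of $\sM(\vv)$ which guarantees their products remain $L^2$.
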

\begin{proof}
We apply Proposition \ref{mochilemmamhs}, taking $\sT^*(\vv):= \sA^*_X(\vv)\langle  D \rangle$, equipped with its filtration $J$. The first condition to check is compatibility with tensor operations. This follows because, although a product of arbitrary $L^2$ functions is not $L^2$, a product of meromorphic $L^2$ functions is so.

Next, we check that $\sA^{\bt}_X(\vv)\langle  D \rangle \to j_*\sA^{\bt}_Y(\vv)_{\Cx}$ is a quasi-isomorphism, with $\gr^J_m\sA^{\bt}_X(\vv)\langle  D \rangle \simeq \oR^mj_*\vv[-m]$.
 \cite[Proposition 1.7]{Timm} (which deals with unitary local systems), adapts to show that $\Res_m$ gives a quasi-isomorphism
\[
 \gr^J_m\sA^{\bt}_X(\vv)\langle  D \rangle \to J_0\nu_{m*}\sA^{\bt}_{D^{(m)}}(\nu_m^{-1}j_*\vv\ten \vareps^m)\langle C^{(m)} \rangle[-m].
\]
Since $\oR^mj_*\vv \cong \nu_{m*}(\nu_m^{-1}j_*\vv\ten \vareps^m)$, this means that it suffices to establish the quasi-isomorphism for $m=0$ (replacing $X$ with $D^{(m)} $ for the higher cases). The proof of \cite[Theorem D.2(a)]{Timm2} adapts to this generality, showing that $j_*\vv \to J_0\sA^{\bt}_X(\vv)\langle  D \rangle$ is a quasi-isomorphism.

It only remains to show that for all $a,b$, the groups $\bH^{a+b}(X,  F^p\gr^J_b\sA^{\bt}_X(\vv)\langle  D \rangle)$ define a Hodge filtration on $\H^a(X, \oR^b j_*\vv)_{\Cx}$, giving a pure Hodge structure of  weight $a+2b$. This is essentially \cite[Proposition 6.4]{Timm}: the quasi-isomorphism induced above by $\Res_m$ is in fact a filtered quasi-isomorphism, provided we set $\vareps^m$ to be of type $(m,m)$. 
By applying a twist, we can therefore reduce to the case $b=0$ (replacing $X$ with $D^{(b)} $ for the higher cases),  so we wish to show that the groups $\bH^{a}(X,  F^pJ_0\sA^{\bt}_X(\vv)\langle  D \rangle)$  define a Hodge filtration on $\H^a(X,  j_*\vv)$ of weight $a$. 

The proof of \cite[Proposition D.4]{Timm2} adapts to give this result, by identifying  $\H^*(X,  j_*\vv)$ with $L^2$ cohomology, which in turn is identified with the space of harmonic forms.  We have a bicomplex $ (\Gamma(X,\sL_{(2)}^{*}(\vv)\ten \Cx), D', D'')$  satisfying the principle of two types, with $F^pJ_0\sA^{\bt}_X(\vv)\langle  D \rangle \to F^p\sL_{(2)}^{\bt}(\vv)\ten \Cx $  and $j_*\vv \to \sL_{(2)}^{\bt}(\vv) $ both being quasi-isomorphisms. 
\end{proof}

\subsubsection{Mixed twistor structures}

\begin{definition}

Given a smooth complex variety $Z$, let  $\sL^*_{(2)}(\vv)\O_{Z}^{\hol} $ be the sheaf on $X \by Z(\Cx)$ consisting of holomorphic families  of $L^2$ distributions on $X$, parametrised by $Z(\Cx)$.
Explicitly, given   a local co-ordinate $z$ on $Z(\Cx)$, the space $\Gamma(U \by \{|z|<R\}, \sL^n_{(2)}(\vv)\O_P^{\hol}) $ consists of power series
\[
 \sum_{m \ge 0} a_mz^n       
\]
with $a_m \in \Gamma(U,\sL^*_{(2)}(\vv))\ten \Cx$, such that for all $K \subset U$ compact and all $r<R$, the sum
\[
  \sum_{m \ge 0} {\|a_m\|}_{2,K} r^m      
\]
converges, where ${\|-\|}_{2,K} $ denotes the $L^2$ norm on $K$.
\end{definition}

\begin{definition}
Set $\breve{L}^n_{(2)}(X,\vv) $ to be the complex of $\O_P^{\hol}$-modules  on $P(\Cx)$ given by
\[
\breve{L}^n_{(2)}(X,\vv):=\Gamma(X, \sL^n_{(2)}(\vv)\O_P^{\hol}(n) ), 
\]
with differential $uD+vD^c$. Note that locally on $P(\Cx)$, elements of $\breve{L}^n_{(2)}(X,\vv) $ can be characterised as convergent power series with coefficients in ${L}^n_{(2)}(X,\vv)\ten \Cx $.       
\end{definition}

\begin{theorem}\label{unitmts}
If  $\pi_1(Y,y)\to R(\R)$ is Zariski-dense, with  unitary monodromy around the local components of $D$, then
there is a canonical non-positively weighted mixed twistor structure $(Y,y)^{R, \mal}_{\MTS}$ on $(Y,y)^{R, \mal}$, in the sense of Definition \ref{algmtsdef}. The associated split MTS is given by
\[
 \ugr (Y,y)^{R, \mal}_{\MTS} \simeq \Spec (\bigoplus_{a,b} \H^a(X, \oR^b j_* O(\Bu_{\rho}))[-a-b], d_2),       
\]
with $\H^a(X, \oR^b j_* O(\Bu_{\rho})) $ of weight $a+2b$.
\end{theorem}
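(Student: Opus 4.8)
The plan is to deduce this from Proposition \ref{mochilemmamts}, applied with $\sT^*(\vv):=\sA^*_X(\vv)\langle D\rangle$ carrying the filtration $J=\ker(\Res_{\bt+1})$ introduced above, in exact parallel with the way Theorem \ref{unitmhs} was obtained from Proposition \ref{mochilemmamhs}. First one records that $\sT$ meets the structural requirements of Proposition \ref{mochilemmamts}: functoriality, the direct-sum decomposition $\sT^*(\vv\oplus\vv')=\sT^*(\vv)\oplus\sT^*(\vv')$, the condition $1\in\sT^*(\R)$, and compatibility of $\sT^*$ and of $J$ with $\wedge$ all hold as in Theorem \ref{unitmhs} (using that a product of meromorphic $L^2$ functions is again $L^2$), while closure of $\sA^*_X(\vv)\langle D\rangle$ and $J$ under $D$ and $D^c$ was noted after their definitions. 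It then remains to verify conditions (\ref{matchcdn}), (\ref{taucdn}) and (\ref{mtscdn}), now over $P(\Cx)$ and $S^1(\Cx)$ rather than over $C^*$.

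For condition (\ref{matchcdn}) the point is that, since $\vv$ has unitary monodromy around the local components of $D$, the Higgs field $\theta$ is holomorphic with no poles along $D$, so the flat bundle associated to $\vv$ at a parameter $t\in S^1(\Cx)$ again has unitary monodromy around $D$, and $\breve{\sT}^{\bt}(\vv)|_{S^1(\Cx)}$ specialises over each such $t$ to the log-complex $\sA^{\bt}_X(t\circledast\vv)\langle D\rangle$. One then invokes the fibrewise quasi-isomorphism $\sA^{\bt}_X(\vv)\langle D\rangle\to j_*\sA^{\bt}_Y(\vv)_{\Cx}$ established in the proof of Theorem \ref{unitmhs} out of \cite[Proposition 1.7]{Timm} and \cite[Theorem D.2(a)]{Timm2}, noting that it varies holomorphically in the parameter and hence gives a quasi-isomorphism of complexes of $\O_{S^1}^{\hol}$-modules. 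Condition (\ref{taucdn}) follows from the same specialisation together with the residue quasi-isomorphism
\[
\gr^J_m\sA^{\bt}_X(\vv)\langle D\rangle\xra{\sim}J_0\nu_{m*}\sA^{\bt}_{D^{(m)}}(\nu_m^{-1}j_*\vv\ten\vareps^m)\langle C^{(m)}\rangle[-m]
\]
and the fact that $J_0\sA^{\bt}_{D^{(m)}}(-)\langle C^{(m)}\rangle$ resolves a single sheaf; since $\oR^mj_*\vv\cong\nu_{m*}(\nu_m^{-1}j_*\vv\ten\vareps^m)$ this yields $\sH^i(\gr^J_r\breve{\sT}^{\bt}(\vv)|_S)=0$ for $i\ne r$.

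The crux is condition (\ref{mtscdn}). Because the residue quasi-isomorphism above becomes a morphism of twistor structures once $\vareps^m$ is placed in type $(m,m)$, the assertion reduces to showing that $\H^a(D^{(b)},\nu_b^{-1}j_*\vv\ten\vareps^b)$ is a pure twistor structure of slope $a+2b$; untwisting the Tate factor $\vareps^b$, this becomes the statement that, for a smooth projective $Z$ with normal crossings boundary and a semisimple local system $\W$ with unitary monodromy around the boundary, the twistor structure on $\H^a(Z,j_*\W)$ obtained by gluing $\bH^a$ over $\tilde{P}(\Cx)$ to its conjugate over $S^1(\Cx)$ is semistable of slope $a$. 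As in Theorem \ref{unitmhs} one identifies $\H^*(Z,j_*\W)$ with $L^2$-cohomology and hence with harmonic forms; the new ingredient is to run this in families over $P(\Cx)$. The bicomplex $(\Gamma(Z,\breve{L}^{*}_{(2)}(Z,\W)),D',D'')$ of $\O_P^{\hol}$-modules satisfies the principle of two types fibrewise, so its harmonic sections cut out a finite locally free $\O_P^{\hol}$-subsheaf of the kernel sheaf appearing in (\ref{mtscdn}), and Mochizuki's norm estimates for tame harmonic bundles identify the resulting bundle on $P(\Cx)=\bP^1(\Cx)$ as semistable of slope $a$ --- the twistor counterpart of \cite[Proposition 6.4]{Timm} (compare \cite{MTS}).

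Granting the three conditions, Proposition \ref{mochilemmamts} produces the mixed twistor structure $(Y,y)^{R,\mal}_{\MTS}$ together with the stated formula for $\ugr(Y,y)^{R,\mal}_{\MTS}$ and the identification of its differential with $d_2$ of the Leray spectral sequence for $j$ (via Lemmas \ref{higherdirectlemma} and \ref{residuedef}, exactly as in Theorem \ref{qmts}). I expect the main obstacle to be precisely condition (\ref{mtscdn}): upgrading the fibrewise $L^2$ and harmonic theory to holomorphic statements over the twistor line $P(\Cx)$, controlling the cohomology and the residue filtration uniformly in the parameter, and pinning down the Harder--Narasimhan slope of the glued bundle as exactly $a+2b$.
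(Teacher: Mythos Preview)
Your overall strategy --- feed $\sT^*(\vv)=\sA^*_X(\vv)\langle D\rangle$ with its filtration $J$ into Proposition~\ref{mochilemmamts} --- is exactly the paper's, and your treatment of the structural hypotheses and of conditions~(\ref{matchcdn}) and~(\ref{taucdn}) is essentially right (though the paper phrases these in terms of the holomorphic family $\sK(\vv)$ of local systems over $S^1(\Cx)$ rather than fibrewise; beware that for non-unitary $t\in\Cx^{\times}$ the canonical metric is \emph{not} pluriharmonic on $\sK(\vv)_t$, so your ``$\sA^{\bt}_X(t\circledast\vv)\langle D\rangle$'' description needs care).

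Where you are honest about a gap is condition~(\ref{mtscdn}), and here your proposed mechanism is not the one that works. Invoking ``Mochizuki's norm estimates'' or ``principle of two types fibrewise'' does not by itself produce a holomorphic vector bundle of the right slope, because a priori the harmonic theory varies with the parameter. The paper's argument rests on two concrete observations you are missing. First, after reducing to $b=0$ via residues, one identifies the kernel sheaf $\sE^a$ with the cohomology of the $L^2$ complex $\breve{L}^{\bt}_{(2)}(X,\vv)$; this requires checking the quasi-isomorphism $J_0\breve{\sT}^{\bt}(\vv)|_{\tilde P(\Cx)}\to(\sL^*_{(2)}(\vv)\O^{\hol}_{\tilde P},\,tD'+D'')$ not just over $S^1(\Cx)$ but also at the degenerate fibre $t=0$, where the differential collapses to $D''$ alone. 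Second --- and this is the real point --- the Laplacian $\Delta$ is \emph{independent of the twistor parameter}, so Green's operator $G$ is constant in the family, giving a global holomorphic decomposition $\breve{L}^a_{(2)}=\cH^a\otimes\O_P^{\hol}(a)\oplus\Delta\breve{L}^a_{(2)}$. The K\"ahler identity $D'(D'')^*+(D'')^*D'=0$ then yields
\[
\tfrac12\Delta=(tD'+D'')(D'')^*+(D'')^*(tD'+D''),
\]
so the image of $\Delta$ splits as $\im(tD'+D'')\oplus\im(D'')^*$ with $(tD'+D'')$ an isomorphism between them. This forces $\sE^a\cong\cH^a(X,\vv)\otimes\O^{\hol}_P(a)$, of slope $a$. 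Nothing in Mochizuki's asymptotic analysis is needed here; it is the constancy of $\Delta$ along the twistor line that makes the holomorphic family argument go through.
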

\begin{proof}
We verify the conditions of Proposition \ref{mochilemmamts}, setting 
\[
\sT^*(\vv) \subset j_*\sA_Y(\vv)\ten \Cx
\]
to be $\sT^*(\vv)=: {\sA}^*_X(\vv)\langle  D \rangle$, with its filtration $J$ defined above.
This gives the complex $\breve{\sT}^{\bt}(\vv)\subset j_*\breve{\sA}^{\bt}_Y(\vv) $  on $X \by P(\Cx)$ whose underlying sheaf is $\bigoplus_{n \ge 0} \sT^n(\vv)\ten_{\sA^0_X} \sA^0_X\O_P^{\hol}(n)$, with differential $uD+vD^c$. 

This leads us to study the restriction to
 $S^1(\Cx) \subset P(\Cx)$, where we can divide  $\sT^{pq}(\vv)$ by $(u+iv)^p(u-iv)^q$,
giving 
\[
j_*\breve{\sA}^{\bt}_Y(\vv)|_{S^1(\Cx)} \cong (j_*{\sA}^*_Y(\vv)\O_{S^1}^{\hol}, t^{-1}\circledast D),
\]
where (adapting Lemma \ref{discreteact}),
\[
 t^{-1}\circledast D:=d^++ t^{-1}\dmd \vartheta=  \pd +\bar{\pd}+t^{-1}\theta +t \bar{\theta},
\]
for $t \in \Cx^{\by} \cong S^1(\Cx)$. There is a similar expression for $\breve{\sT}^{\bt}(\vv)|_{S^1(\Cx)}$.

 Now, as observed in \cite[end of \S 3]{MTS}, $t^{-1}\circledast D$ defines a holomorphic family $\sK(\vv)$ of local systems on $Y$, parametrised by $S^1(\Cx)= \Cx^{\by}$. Beware that for non-unitary points $\lambda \in \Cx^{\by}$, the canonical metric is not pluriharmonic on the  fibre $\sK(\vv)_{\lambda}$, since $\lambda^{-1}\theta +\lambda \bar{\theta} $ is not Hermitian. The proof of Theorem \ref{unitmhs}  (essentially \cite[Proposition 1.7]{Timm} and  \cite[Theorem D.2(a)]{Timm2}) still adapts to verify conditions (\ref{matchcdn}) and (\ref{taucdn}) from Proposition \ref{mochilemmamts}, replacing $\vv$ with $\sK(\vv)$, so that for instance
\[
j_*\sK(\vv) \to J_0 \breve{\sT}^{\bt}(\vv)|_{S^1(\Cx)}
\]
 is a quasi-isomorphism.

It remains to verify condition (\ref{mtscdn}) from  Proposition \ref{mochilemmamts}: we need to show that 
for all $a,b\ge 0$, the $\Gal(\Cx/\R)$-equivariant sheaf
\[
\ker( \bH^a(X, \gr^J_b\breve{\sT}^{\bt}(\vv))|_{\tilde{P}(\Cx) } \oplus \sigma^*\overline{\bH^a(X, \gr^J_b\breve{\sT}^{\bt}(\vv))|_{\tilde{P}(\Cx)}} \to \H^a(X, \sH^b( j_*\breve{\sA}_Y^{\bt}(\vv)))|_{S^1(\Cx)})
\]
 is a finite locally free $\O_{P}^{\hol}$-module of slope  $a+2b$.

Arguing as in the proof of Theorem \ref{unitmhs}, we may apply a twist to reduce to the case $b=0$ (replacing $X$ with $D^{(b)} $ for the higher cases),  so we wish to show that 
\[
\sE^a:=\ker( \bH^a(X, J_0\breve{\sT}^{\bt}(\vv))|_{\tilde{P}(\Cx) } \oplus \sigma^*\overline{\bH^a(X, J_0\breve{\sT}^{\bt}(\vv))|_{\tilde{P}(\Cx)}} \to \H^a(X,  j_*\sK(\vv))|_{S^1(\Cx)})
\]
is a holomorphic vector bundle on $P(\Cx)$ of slope  $a$.

We do this by considering the graded sheaf $\sL^*_{(2)}(\vv)$ of $L^2$-integrable distributions from Definition \ref{L2def}, and observe that  \cite[Proposition D.4]{Timm2} adapts to show that
\[
 j_*\sK(\vv) \to (\sL^*_{(2)}(\vv)\O_{S^1}^{\hol}, t^{-1}\circledast D)
\]
is a quasi-isomorphism on $X \by S^1(\Cx)$. 

On restricting to $\tilde{P}(\Cx) \subset P(\Cx)$, Definition \ref{Pcoords} gives the co-ordinate $t$ on $\tilde{P}(\Cx)$ as 
$t= \frac{u+iv}{u-iv}$, and dividing $\sT^{n}(\vv)$ by $(u-iv)^n$ gives an isomorphism
\[
 \breve{\sT}^{\bt}(\vv)|_{\tilde{P}(\Cx)} \cong ({\sA}^*_X(\vv)\langle  D \rangle\O_{\tilde{P}}^{\hol}, tD' +D''),
\]
and similarly for  $j_*\breve{\sA}^{\bt}_Y(\vv)|_{\tilde{P}(\Cx)}$

Thus we also wish to show that
\[
 J_0\breve{\sT}^{\bt}(\vv))|_{\tilde{P}(\Cx) } \to (\sL^*_{(2)}(\vv)\O_{\tilde{P}}^{\hol}, t D' + D'')
\]
is a quasi-isomorphism. Condition (\ref{matchcdn}) from  Proposition \ref{mochilemmamts} combines with the quasi-isomorphism above to show that we have a quasi-isomorphism on $S^1(\Cx) \subset \tilde{P}(\Cx)$, so cohomology of the quotient is supported on $0 \in \tilde{P}(\Cx)$. Studying the fibre over this point, it thus suffices to show that
\[
 (J_0\sT(\vv)), D'') \to (\sL^*_{(2)}(\vv)\ten \Cx,  D'')
\]
is a quasi-isomorphism, which also follows by adapting  \cite[Proposition D.4]{Timm2}. 

 Combining the
 quasi-isomorphisms above gives an isomorphism
\[
 \sE^a \cong \sH^a(\breve{L}^{\bt}_{(2)}(X,\vv) ),
\]
and inclusion of harmonic forms $\cH^a(X,\vv) \into {L}^a_{(2)}(X,\vv)$ gives a map
\[
 \cH^a(X,\vv)\ten_{\R}\O_P^{\hol}(a) \to   \sH^a(\breve{L}^{\bt}_{(2)}(X,\vv) ).     
\]
The Green's operator $G$ behaves well in holomorphic families, so gives a decomposition
\[
 \breve{L}^{a}_{(2)}(X,\vv) =   (\cH^a(X,\vv)\ten_{\R}\O_P^{\hol}(a)) \oplus \Delta \breve{L}^{a}_{(2)}(X,\vv),     
\]
making use of finite-dimensionality of $\cH^a(X,\vv)$ to give the isomorphism $ \cH^a(X,\vv)\ten_{\R}\O_P^{\hol}(a) \cong \ker \Delta \cap \breve{L}^{a}_{(2)}(X,\vv)$.

Since these expressions are $\Gal(\Cx/\R)$-equivariant,  it suffices to work on $\tilde{P}(\Cx)$. Dividing $\sT^{n}(\vv)$ by $(u-iv)^n$ gives
\[
 \breve{L}^{\bt}_{(2)}(X,\vv)|_{\tilde{P}(\Cx)} \cong  ({L}^{*}_{(2)}(X,\vv)\O_{\tilde{P}}^{\hol}, t D' + D''). 
\]
Now, since $D'(D'')^* + (D'')^*D'=0$, we can write 
\[
 \half \Delta= (t D' + D'') (D'')^*+ (D'')^* (t D' + D''),      
\]
giving us a direct sum  decomposition
\[
 \breve{L}^{a}_{(2)}(X,\vv)|_{\tilde{P}(\Cx)} =   (\cH^a(X,\vv)\ten_{\R}\O_{\tilde{P}}^{\hol}) \oplus  (t D' + D'') \breve{L}^{a}_{(2)}(X,\vv)|_{\tilde{P}(\Cx)} \oplus   (D'')^* \breve{L}^{n}_{(2)}(X,\vv)|_{\tilde{P}(\Cx)},  
\]
with the principle of two types (as in \cite{Simpson} Lemmas 2.1 and 2.2) showing that  $(t D' + D'')\co \im((D'')^*) \to \im(t D' + D'')  $ is an isomorphism.

We have therefore shown that $\sE^a \cong \cH^a(X,\vv)\ten_{\R}\O_P^{\hol}(a)$, which is indeed of slope $a$.
\end{proof}

\begin{proposition}\label{unitmtsenrich}
Assume that a Zariski-dense representation $\pi_1(Y,y)\to R(\R)$ has unitary monodromy around the local components of $D$, and that 
the discrete $S^1$-action on ${}^{\nu}\!\varpi_1(Y,y)^{\red}$ descends to $R$. Then there are natural $(S^1)^{\delta}$-actions on  $(Y,y)^{R, \mal}_{\MTS}$ and $\ugr (Y,y)^{R, \mal}_{\MTS}$, compatible with the opposedness isomorphism, and with the action of $-1 \in S^1$ coinciding with that of $-1 \in \bG_m$.
\end{proposition}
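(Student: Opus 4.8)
The plan is to deduce this from Proposition~\ref{mochilemmaenrich}, applied with the same choice $\sT^*(\vv)=\sA^*_X(\vv)\langle D\rangle$, equipped with its filtration $J$, that was used to prove Theorem~\ref{unitmts}. The hypothesis that $R$ be a quotient of ${}^{\nu}\!\varpi_1(Y,y)^{\red}$ to which the discrete $S^1$-action descends is among our assumptions, and the conditions of Proposition~\ref{mochilemmamts} were verified in the course of proving Theorem~\ref{unitmts}; so the only point I would need to check is the additional hypothesis of Proposition~\ref{mochilemmaenrich}, namely that for every $\lambda\in\Cx^{\by}$ the map $\lambda\dmd\co j_*\tilde{\sA}^{\bt}_Y(\vv)\to j_*\tilde{\sA}^{\bt}_Y(\tfrac{\lambda}{\bar\lambda}\circledast\vv)$ carries $\sA^*_X(\vv)\langle D\rangle$ isomorphically onto $\sA^*_X(\tfrac{\lambda}{\bar\lambda}\circledast\vv)\langle D\rangle$.

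To verify this I would set $t=\lambda/\bar\lambda\in S^1(\Cx)$ and argue as follows. Since $t$ is unitary, the twist $t\circledast\vv$ has the same underlying $C^{\infty}$ bundle, the same $(0,1)$-part $\bar\pd$ of the unitary connection, and the same pluriharmonic metric as $\vv$; only the Higgs directions are rescaled, $\theta\mapsto t\theta$ and $\bar\theta\mapsto t^{-1}\bar\theta$, consistently with $t\circledast D=\pd+\bar\pd+t\theta+t^{-1}\bar\theta$. Because $\sM(\vv)$ is defined purely through $\bar\pd$ and the $L^2$ condition for the Poincar\'e metric, this gives $\sM(t\circledast\vv)=\sM(\vv)$, and hence $\sA^*_X(\vv)\langle D\rangle=\sM(\vv)\ten_{\O_X}\sA^*_X(\R)\langle D\rangle$ and $\sA^*_X(t\circledast\vv)\langle D\rangle$ coincide as subsheaves of $j_*\sA_Y(\vv)\ten\Cx$, once the latter is identified with $j_*\sA_Y(t\circledast\vv)\ten\Cx$ via the common $C^{\infty}$ bundle. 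On the other hand $\lambda\dmd$ acts as the identity in degree $0$, rescales each generating logarithmic form $d\log z_i$ (of Hodge type $(1,0)$) by $\lambda$, and preserves the sheaf $\sA^{\bt}_X\ten\Cx$ of smooth forms, so it preserves $\sA^*_X(\R)\langle D\rangle$ and therefore sends $\sA^*_X(\vv)\langle D\rangle$ onto $\sA^*_X(t\circledast\vv)\langle D\rangle$, with inverse $\lambda^{-1}\dmd$. This establishes the hypothesis; moreover $\lambda\dmd$ respects $J$, since $J_r\sA^{\bt}_X(\vv)\langle D\rangle=\ker(\Res_{r+1})$ and, by the same Hodge-type bookkeeping as in Proposition~\ref{grjpuremhs} (the operator $\Res_m$ being built from $\omega\wedge d\log z_1\wedge\cdots\wedge d\log z_m\mapsto\omega|_{D^{(m)}}\ten\eps(z_1,\ldots,z_m)$ followed by orthogonal projection), $\lambda\dmd$ commutes with $\Res_m$ up to a unit.

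Proposition~\ref{mochilemmaenrich} would then apply directly and yield the asserted $(S^1)^{\delta}$-actions on $(Y,y)^{R,\mal}_{\MTS}$ and $\ugr(Y,y)^{R,\mal}_{\MTS}$, compatible with the opposedness isomorphism and with $-1\in S^1$ acting as $-1\in\bG_m$; as in the proof of Proposition~\ref{qmtsenrich}, the Rees module construction with respect to $\Dec J$ transfers the $(S^1)^{\delta}$-action. I expect the one step requiring genuine care --- the nearest thing to an obstacle --- to be the identity $\sM(t\circledast\vv)=\sM(\vv)$, which rests on the fact that Simpson's $S^1$-action on a semisimple local system alters only the Higgs part of the flat connection while leaving the holomorphic structure and the harmonic metric fixed; everything else is formal manipulation with Hodge types.
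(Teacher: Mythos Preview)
Your proposal is correct and follows essentially the same approach as the paper: apply Proposition~\ref{mochilemmaenrich} with $\sT^*(\vv)=\sA^*_X(\vv)\langle D\rangle$, the paper's proof simply asserting that this choice is ``closed under the $\boxast$-action of $\Cx^{\by}$'' without spelling out the verification you provide. Your detailed check that $\sM(t\circledast\vv)=\sM(\vv)$ (via invariance of $\bar\pd$ and the metric under the Simpson $S^1$-action) and that $\lambda\dmd$ preserves the logarithmic factor is exactly what underlies that one-line claim.
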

\begin{proof}
We just observe that the construction $\sT^*(\vv)= {\sA}^*_X(\vv)\langle  D \rangle$ of Theorem \ref{unitmts} satisfies the conditions of Proposition \ref{mochilemmaenrich}, 
being closed under the $\boxast$-action of $\Cx^{\by}$.
    \end{proof}

\subsection{Singular and simplicial varieties}\label{unitsingsn}

Fix 
a smooth proper simplicial complex variety $X_{\bt}$, and a simplicial divisor $D_{\bt} \subset X_{\bt}$ with normal crossings. Set $Y_{\bt}:=X_{\bt} - D_{\bt}$, with a point $y \in Y_0$, and write $j:Y_{\bt} \to X_{\bt}$ for the embedding. Note that Proposition \ref{hodge3stuff2} shows that for any separated complex scheme $Y$ of finite type, there exists such a simplicial variety $Y_{\bt}$ with 
an augmentation $ a\co Y_{\bt}\to Y$ for which $|Y_{\bt}| \to Y$ is a weak equivalence.

\begin{theorem}\label{singunitmts}
Take $\rho\co \pi_1(|Y_{\bt}|,y)\to R(\R)$  Zariski-dense with $R$ pro-reductive,  and assume that for every local system $\vv$ on $|Y_{\bt}|$ corresponding to an $R$-representation, the local system $a_0^{-1}\vv$ on $Y_0$ is semisimple, with unitary monodromy around the local components of $D_0$. Then
there is a canonical non-positively weighted mixed twistor structure $(|Y_{\bt}| ,y)^{R, \mal}_{\MTS}$ on $(|Y_{\bt}|,y)^{R, \mal}$, in the sense of Definition \ref{algmtsdef}. 

The associated split MTS is given by
\[
 \ugr (|Y_{\bt}|,y)^{R, \mal}_{\MTS} \simeq \Spec \Th (\bigoplus_{p,q} \H^p(X_{\bt}, a^{-1}\oR^q j_* O(\Bu_{\rho}))[-p-q], d_2),       
\]
with $\H^p(X_n, \oR^q j_* a_n^{-1}O(\Bu_{\rho})) $ of weight $p+2q$. Here, $\H^p(X_{\bt},a^{-1} \vv)$ denotes the cosimplicial vector space $n\mapsto \H^p(X_n, a_n^{-1}\vv)$, and $\Th$ is the Thom-Whitney functor of Definition \ref{Th}.
\end{theorem}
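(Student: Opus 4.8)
The plan is to deduce this from Theorem~\ref{unitmts} in exactly the way that Theorem~\ref{singmtsmal} was deduced from Theorem~\ref{qmts}: apply the construction of Theorem~\ref{unitmts} --- that is, Proposition~\ref{mochilemmamts} with $\sT^*(\vv)=\sA^*_X(\vv)\langle D\rangle$ --- in each simplicial degree $n$, and glue the resulting diagrams of DG algebras together with the Thom--Whitney functor $\Th$ of Definition~\ref{Th}. By hypothesis $a_n^{-1}\vv$ is semisimple with unitary monodromy around the local components of $D_n$ for every $n$ and every $R$-representation $\vv$, so the complexes $\sA^*_{X_n}(a_n^{-1}\vv)\langle D_n\rangle\subset j_*\sA^*_{Y_n}(a_n^{-1}\vv)\ten\Cx$, their residue morphisms $\Res_m$ to the strata $D_n^{(b)}$, and their filtrations $J=\ker(\Res_{\bullet+1})$ are all defined, and functoriality in $n$ should make these into a cosimplicial object. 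Writing $\breve{\sT}^{\bt}_{\bt}$ for the associated cosimplicial sheaf on $X_\bt\by P(\Cx)$ built as in Proposition~\ref{mochilemmamts}, I would replace $\oR\Gamma(X,-)$ throughout the proof of that proposition by $\Th\circ\oR\Gamma(X_\bt,-)$ and form the same homotopy fibre product and homotopy limit diagrams, with basepoint coming from $y\in Y_0$. This defines $(|Y_\bt|,y)^{R,\mal}_{\bT}$ and $(|Y_\bt|,y)^{R,\mal}_{\MTS}$, and one sets
\[
\ugr (|Y_\bt|,y)^{R,\mal}_{\MTS}:=\Spec\Th\left(\bigoplus_{p,q}\H^p(X_\bt,a^{-1}\oR^q j_*\bO(R))[-p-q],\,d_2\right),
\]
the $\Mat_1$-weight $p+2q$ being preserved by $\Th$.

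The second step is to check that the verification carried out in Proposition~\ref{mochilemmamts} goes through cosimplicially. Conditions~(\ref{matchcdn}), (\ref{taucdn}) and~(\ref{mtscdn}) hold in each degree by Theorem~\ref{unitmts} (using the adaptations there of \cite[Proposition~1.7]{Timm}, \cite[Theorem~D.2(a)]{Timm2} and the holomorphic Green's operator / harmonic forms decomposition), and since $\Th$ preserves levelwise quasi-isomorphisms and commutes with the Rees construction $\xi$, with $\Dec$, with the relevant flat base changes, and with $\uline{\Gamma}(P(\Cx),-)|_{C^*}$, the same arguments apply after $\Th\circ\oR\Gamma(X_\bt,-)$. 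Two points should be spelled out, just as in the proof of Theorem~\ref{singmtsmal}. First, the pullback over $1\in C^*$ is, by condition~(\ref{matchcdn}) in each degree, quasi-isomorphic to $\Th(n\mapsto A^\bt(Y_n,a_n^{-1}\bO(R)))$, which by \cite{htpy} computes $A^\bt(|Y_\bt|,\bO(R))$, so $(|Y_\bt|,y)^{R,\mal}_{\bT}$ is a twistor filtration on $(|Y_\bt|,y)^{R,\mal}$. Secondly, opposedness follows by applying \cite[Proposition~1.3.4]{Hodge2} to $\Dec\Th(J)$ and using the degreewise purity, established in Theorem~\ref{unitmts}, of $\sH^a(\gr^J_b\breve{\sT}^{\bt}_n)$ on $P(\Cx)$ --- it is $\cH^{a}(X_n,\oR^b j_*a_n^{-1}\bO(R))\ten_{\R}\O_P^{\hol}$ up to a Tate twist, hence of slope $a+2b$ --- so that $\Th$ of the cosimplicial family of these identifications, followed by $\uline{\Gamma}(P(\Cx),-)|_{C^*}$, gives $\ugr (|Y_\bt|,y)^{R,\mal}_{\MTS}\by C^*\simeq (|Y_\bt|,y)^{R,\mal}_{\MTS}\by^{\oR}_{\bA^1,0}\Spec\R$, with $d_2$ the Leray differential for $j$ in each degree (Lemmas~\ref{higherdirectlemma} and~\ref{residuedef}).

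The hard part will be the first step, namely checking that the whole package used in Theorem~\ref{unitmts} --- the locally $L^2$ meromorphic functions $\sM(\vv)$, the complexes $\sA^*_X(\vv)\langle D\rangle$, the residue morphisms, the filtrations $J$, and the holomorphic $L^2$/harmonic decomposition supplied by the Green's operator --- is genuinely functorial along the simplicial structure maps of $X_\bt$, so that it assembles into a cosimplicial object to which $\Th$ applies. This relies on the fact that in a smooth proper simplicial variety equipped with a simplicial normal crossings divisor (Proposition~\ref{hodge3stuff}) the maps $X_m\to X_n$ pull $D_n$ back to unions of local components of $D_m$, so that pullback of tame, $L^2$-bounded forms is defined and commutes with residues and with the harmonic decompositions. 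Once this functoriality is in hand, the remainder is a formal application of $\Th$, precisely as in the passage from Theorem~\ref{qmts} to Theorem~\ref{singmtsmal}, together with the mild (bi)cosimplicial bookkeeping needed to accommodate the $\Gal(\Cx/\R)$-resolutions appearing in Proposition~\ref{mochilemmamts}.
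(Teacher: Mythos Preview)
Your approach is the same as the paper's --- form the MTS levelwise using Theorem~\ref{unitmts}, then apply $\Th$ --- and the verification you sketch is essentially what the paper does (the paper is more terse: it simply records that the constructions are functorial, applies $\Th$, and invokes \cite[Lemma \ref{mhs-singmalcev}]{mhs} to identify the fibre over $(1,1)$ with $(|Y_{\bt}|,y)^{R,\mal}$).

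There is one genuine slip in your write-up. You say ``By hypothesis $a_n^{-1}\vv$ is semisimple with unitary monodromy around the local components of $D_n$ for every $n$'', but the hypothesis of the theorem only asserts this for $n=0$. The paper supplies the missing step: since all the $a_n$ factor through $a_0$ via simplicial structure maps $Y_n\to Y_0$, the local system $a_n^{-1}\vv$ is a pullback of $a_0^{-1}\vv$, and the key point is that the pullback of a pluriharmonic metric along a holomorphic map is again pluriharmonic, so $a_n^{-1}\vv$ is semisimple with the required unitary monodromy. (Semisimplicity of a local system is \emph{not} preserved under arbitrary pullback, so this observation is necessary, not automatic.) You have flagged functoriality as the ``hard part'', and this is precisely the first instance of it; once you add the pluriharmonic-pullback observation your argument is complete.
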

\begin{proof}
Our first observation is that the pullback of a holomorphic pluriharmonic metric is holomorphic, so for any local system $\vv$ corresponding to an $R$-representation, the local system $a_n^{-1}\vv$ on $Y_n$ is semisimple for all $n$, with unitary monodromy around the local components of $D_n$. We may therefore form objects 
\[
(Y_n,(\sigma_0)^ny )_{\MTS}^{R, \mal} \in   dg_{\Z}\Aff_{\bA^1 \by C^*}(R)_*( \Mat_1 \by \bG_m),
\] 
and $\ugr (Y_n,(\sigma_0)^ny)_{\MTS}^{R, \mal} \in  dg_{\Z}\Aff(R)_*(\Mat_1 )$  as in the proof of  Theorem \ref{unitmts}, together with opposedness quasi-isomorphisms.

These constructions are functorial, giving cosimplicial CDGAs 
\[
 \O(Y_{\bt},y )_{\MTS}^{R, \mal} \in   cDG_{\Z}\Alg_{\bA^1 \by C^*}(R)_*( \Mat_1 \by \bG_m),
\]
and $\O(\ugr (Y_{\bt},y)_{\MTS}^{R, \mal}) \in  cDG_{\Z}\Aff(R)_*(\Mat_1 )$. We now apply the Thom-Whitney functor, giving an algebraic MTS with  $\ugr (|Y_{\bt}|,y)_{\MTS}^{R, \mal}$ as above, and
\[
 \O(|Y_{\bt}|,y )_{\MTS}^{R, \mal}:= \Th(\O(Y_{\bt},y )_{\MTS}^{R, \mal} ).
\]
Taking the fibre over $(1,1) \in \bA^1 \by C^*$ gives $ \Th(\O(Y_{\bt},y )^{R, \mal} )$, which is quasi-isomorphic to $\O(|Y_{\bt}|,y )^{R, \mal}$, by Lemma \ref{singmalcev}. 
\end{proof}

\begin{theorem}\label{singunitmhs}
Take $\rho \co \pi_1(|Y_{\bt}|,y)\to R(\R)$  Zariski-dense with $R$ pro-reductive, and assume that for every local system $\vv$ on $|Y_{\bt}|$ corresponding to an $R$-representation, the local system $a_0^{-1}\vv$  underlies a variation of Hodge structure with unitary monodromy around the local components of $D_0$. Then
there is a canonical non-positively weighted mixed Hodge structure $(Y,y)^{R, \mal}_{\MHS}$ on $(Y,y)^{R, \mal}$, in the sense of Definition \ref{algmhsdef2}. The associated split MTS is given by
\[
 \ugr (Y,y)^{R, \mal}_{\MHS} \simeq \Spec \Th(\bigoplus_{p,q} \H^p(X_{\bt}, \oR^q j_* a^{-1}O(\Bu_{\rho}))[-p-q], d_2),       
\]
with $\H^p(X_n, \oR^q j_* a_n^{-1}O(\Bu_{\rho})) $ a pure ind-Hodge structure of weight $p+2q$.
\end{theorem}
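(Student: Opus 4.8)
The plan is to mimic the proof of Theorem \ref{singunitmts}, replacing the levelwise input Theorem \ref{unitmts} by Theorem \ref{unitmhs} throughout, and then to glue the resulting cosimplicial objects by means of the Thom--Whitney functor $\Th$ of Definition \ref{Th}.

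First I would record the functoriality input: if $f\co Z'\to Z$ is a morphism of smooth complex varieties and $\vv$ a local system on $Z$ underlying a variation of Hodge structure equipped with its tame pure imaginary pluriharmonic metric, then $f^{-1}\vv$ again underlies a VHS and the pulled-back metric is again tame and pure imaginary. Applying this to the structure maps of $X_{\bt}$ and to the augmentation $a$, the hypothesis guarantees that for every $R$-representation with associated local system $\vv$ on $|Y_{\bt}|$, the local system $a_n^{-1}\vv$ on $Y_n$ underlies a VHS with unitary monodromy around the local components of $D_n$, for all $n$. Hence Theorem \ref{unitmhs} --- that is, Proposition \ref{mochilemmamhs} applied with $\sT^{\bt}(\vv):=\sA^{\bt}_{X_n}(\vv)\langle D_n\rangle$ --- applies in each simplicial level, producing objects
\[
(Y_n,(\sigma_0)^ny)_{\MHS}^{R,\mal}\in dg_{\Z}\Aff_{\bA^1\by C^*}(R)_*(\Mat_1\by S),\qquad \ugr (Y_n,(\sigma_0)^ny)_{\MHS}^{R,\mal}\in dg_{\Z}\Aff(R)_*(\bar{S}),
\]
together with the opposedness quasi-isomorphisms. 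Since the complexes $\sA^{\bt}_{X_n}(-)\langle D_n\rangle$, the filtration $J$, the residue maps, the homotopy fibre products, the Rees construction $\xi$ of Lemma \ref{flatfiltrn} and the derived global sections $\oR\Gamma$ are all functorial in $n$, these assemble into cosimplicial DGAs $\O(Y_{\bt},y)_{\MHS}^{R,\mal}$ and $\O(\ugr (Y_{\bt},y)_{\MHS}^{R,\mal})$, exactly as in the proof of Theorem \ref{singunitmts}.

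Next I would apply $\Th$, setting $\O(|Y_{\bt}|,y)_{\MHS}^{R,\mal}:=\Th(\O(Y_{\bt},y)_{\MHS}^{R,\mal})$ and $\ugr (|Y_{\bt}|,y)_{\MHS}^{R,\mal}:=\Spec\Th(\O(\ugr (Y_{\bt},y)_{\MHS}^{R,\mal}))$; unwinding the levelwise graded objects via Proposition \ref{mochilemmamhs} identifies the latter with $\Spec\Th(\bigoplus_{p,q}\H^p(X_{\bt},\oR^qj_*a^{-1}\bO(R))[-p-q],d_2)$, each $\H^p(X_n,\oR^qj_*a_n^{-1}\bO(R))$ carrying a pure ind-Hodge structure of weight $p+2q$. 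Taking the fibre over $(1,1)\in\bA^1\by C^*$ yields $\Th(\O(Y_{\bt},y)^{R,\mal})$, which is quasi-isomorphic to $\O(|Y_{\bt}|,y)^{R,\mal}$ by \cite[Lemma \ref{mhs-singmalcev}]{mhs}, so the construction does define an algebraic Hodge filtration on $(|Y_{\bt}|,y)^{R,\mal}$; the weight filtration and the opposedness isomorphism are then obtained by applying $\Th$ levelwise to the corresponding structures and quasi-isomorphisms supplied by Theorem \ref{unitmhs}, the weight bookkeeping $p+2q$ being unaffected since $\Th$ is a limit over the simplicial variable only.

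The step I expect to be the main obstacle is verifying that $\Th$ interacts correctly with the homotopy-limit constructions of Proposition \ref{mochilemmamhs}: one needs $\Th$ to preserve quasi-isomorphisms, to be lax symmetric monoidal so as to respect the $R$-action and the tensor/filtration structure, and to carry a cosimplicial diagram of homotopy fibre products of surjections to the homotopy fibre product of the levelwise $\Th$'s, so that the levelwise opposedness quasi-isomorphisms glue to one over $|Y_{\bt}|$. These are formal properties of the Thom--Whitney functor already exploited in the proof of Theorem \ref{singunitmts}, but they must be invoked with some care here because the object in Proposition \ref{mochilemmamhs} is an iterated homotopy fibre product rather than a single complex; granted these, the remaining weight and $\Mat_1\by S$- (resp.\ $\bar{S}$-) equivariance checks are routine.
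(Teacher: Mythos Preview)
Your proposal is correct and follows essentially the same approach as the paper: the paper's proof is the single sentence ``The proof of Theorem \ref{singunitmts} carries over, replacing Theorem \ref{unitmts} with Theorem \ref{unitmhs}, and observing that variations of Hodge structure are preserved by pullback,'' and you have simply unpacked this in more detail, correctly isolating the pullback stability of VHS as the key new input and the functoriality of the levelwise constructions as what makes the cosimplicial assembly go through. Your concern about $\Th$ commuting with the iterated homotopy fibre products of Proposition \ref{mochilemmamhs} is legitimate but is already implicit in the proof of Theorem \ref{singunitmts}, so no new difficulty arises here.
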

\begin{proof}
The proof of Theorem \ref{singunitmts} carries over, replacing Theorem \ref{unitmts} with Theorem \ref{unitmhs}, and observing that variations of Hodge structure are preserved by pullback.
\end{proof}

\begin{definition}
 Define  ${}^{\nu}\!\varpi_1(|Y_{\bt}|,y)^{\norm}$ to be the quotient of $\varpi_1(|Y_{\bt}|,y)^{\norm}$ characterised as follows. 
Representations of ${}^{\nu}\!\varpi_1(|Y_{\bt}|,y)^{\norm}$  correspond to local systems $\vv$ on $|Y_{\bt}|$ for which $a_0^{-1}\vv$ is a semisimple local system on $Y_0$ whose monodromy around local components of $D_0$ has unitary eigenvalues.
\end{definition}

\begin{proposition}\label{singproperred}
There is a discrete action of the  circle group $S^1$ on ${}^{\nu}\!\varpi_1(|Y_{\bt}|,y)^{\norm}$, such that the composition $S^1 \by \pi_1(|Y_{\bt}|,y) \to {}^{\nu}\!\varpi_1(|Y_{\bt}|,y)^{\norm}$ is  continuous. We denote this last map by $\sqrt h:\pi_1(|Y_{\bt}|,y) \to {}^{\nu}\!\varpi_1(|Y_{\bt}|,y)^{\norm}((S^1)^{\cts})$.
\end{proposition}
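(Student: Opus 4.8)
The plan is to follow the proof of Proposition~\ref{properred} (that is, \cite[Proposition \ref{mhs-properred}]{mhs}), replacing Simpson's harmonic metrics by Mochizuki's tame pure imaginary pluriharmonic metrics from \cite{mochi} and restricting throughout to local systems with unitary-eigenvalue monodromy around the components of the divisor. Recall that ${}^{\nu}\!\varpi_1(|Y_{\bt}|,y)^{\norm}$ is the Tannakian dual, with respect to the fibre functor $\omega\colon \vv\mapsto \vv_y$, of the category $\mathcal{C}$ of local systems $\vv$ on $|Y_{\bt}|$ for which $a_0^{-1}\vv$ is a semisimple local system on $Y_0$ whose monodromy around each local component of $D_0$ has unitary eigenvalues. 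By \cite[Lemma \ref{mhs-discreteact}]{mhs}, to give a discrete $S^1$-action on this pro-algebraic group with the continuity property asserted in the statement is the same as to give a family $(F_t)_{t\in S^1}$ of $\ten$-compatible auto-equivalences of $\mathcal{C}$, together with natural isomorphisms $\omega\circ F_t\cong\omega$, such that the monodromy matrices of $F_t\vv$ depend continuously on $t$ for each $\vv$.

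The first and principal step is to construct $F_t$. Given $\vv\in\mathcal{C}$, each restriction $\vv_n:=a_n^{-1}\vv$ on $Y_n$ is a pullback of $\vv_0$ along the canonical iterated face map $Y_n\to Y_0$; since the pullback of a tame pure imaginary pluriharmonic metric is again one (this is exactly the observation made at the start of the proof of Theorem~\ref{singunitmts}, using Mochizuki's functoriality), each $\vv_n$ is semisimple with unitary-eigenvalue monodromy around $D_n$, and Mochizuki's metrics on the $\vv_n$ form a compatible system, unique up to $\Gamma(Y_n,\Aut(\vv_n))$. Decomposing the flat connection as $D_n=d_n^{+}+\vartheta_n$ and, as in Definition~\ref{circledastdef}, setting $t\circledast D_n:=d_n^{+}+t\dmd\vartheta_n=\pd_n+\bar{\pd}_n+t\theta_n+t^{-1}\bar{\theta}_n$, we obtain a flat connection whose Higgs field $t\theta_n$ has residues $t\cdot(\text{residues of }\theta_n)$, hence still nilpotent; thus $(F_t\vv)_n:=\ker\bigl(t\circledast D_n\colon\sA^0_{Y_n}(\vv_n)\to\sA^1_{Y_n}(\vv_n)\bigr)$ is again semisimple with unitary-eigenvalue monodromy around $D_n$, carried on the same underlying $C^{\infty}$ bundle as $\vv_n$. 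Because uniqueness of Mochizuki's metric forces the decomposition $D=d^{+}+\vartheta$ to be compatible with every pullback, the simplicial gluing isomorphisms of $\vv$ are also gluing isomorphisms for the $(F_t\vv)_n$, which therefore assemble into an object $F_t\vv\in\mathcal{C}$ with a canonical identification $(F_t\vv)_y\cong\vv_y$. Functoriality in $\vv$ and compatibility with $\oplus$ and $\ten$ (a direct sum, resp.\ tensor product, of tame pure imaginary pluriharmonic metrics is again one) follow as in \cite{mhs} and \cite{mochi}, yielding the auto-equivalences $F_t$ and the natural isomorphisms $\omega\circ F_t\cong\omega$.

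It then remains to verify discreteness and continuity. The assignment $t\mapsto F_t$ is only discrete, not algebraic, because $t\circledast D$ depends algebraically on $t$ precisely when $\vv$ underlies a variation of Hodge structure, and in general only real-analytically. For continuity, fix $\vv\in\mathcal{C}$: the local system $F_t\vv$ on $|Y_{\bt}|$ has the same underlying smooth data and the same simplicial gluing as $\vv$, only the flat structure having been twisted through $t\circledast D_n$, which is the restriction to $S^1$ of a holomorphic family of flat connections parametrised by $\Cx^{\by}$ (cf.\ \cite[end of \S 3]{MTS}); hence the monodromy of $F_t\vv$ around any loop in $|Y_{\bt}|$ varies continuously with $t$. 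Since ${}^{\nu}\!\varpi_1(|Y_{\bt}|,y)^{\norm}$ is the inverse limit of its finite-dimensional quotients, each cut out by the monodromy of finitely many objects of $\mathcal{C}$, this gives continuity of the composite $S^1\by\pi_1(|Y_{\bt}|,y)\to{}^{\nu}\!\varpi_1(|Y_{\bt}|,y)^{\norm}$, i.e.\ of $\sqrt h$. The main obstacle is the construction step: one must know that applying $\circledast$ keeps one inside $\mathcal{C}$ and is compatible with the whole simplicial diagram, and this rests squarely on the uniqueness half of Mochizuki's theorem together with the elementary fact that $t\theta$ still has nilpotent residues.
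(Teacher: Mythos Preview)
Your proposal is correct and takes essentially the same approach as the paper: the paper's proof is the single sentence ``The proof of \cite[Proposition \ref{mhs-properred}]{mhs} carries over to the quasi-projective case,'' and what you have written is precisely a careful unpacking of that sentence, replacing Simpson's harmonic metrics by Mochizuki's tame pure imaginary pluriharmonic metrics and checking that the $\circledast$-action preserves nilpotency of Higgs residues (hence the unitary-eigenvalue condition) and is compatible with the simplicial structure. One minor inaccuracy: there is no single ``canonical iterated face map'' $Y_n\to Y_0$, but rather several, all of which give the same pullback local system up to canonical isomorphism; this does not affect your argument.
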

\begin{proof}
 The proof of Proposition \ref{properred} carries over to the  quasi-projective case.
\end{proof}

\begin{proposition}\label{singunitmtsenrich}
Take a pro-reductive $S^1$-equivariant quotient $R$  of ${}^{\nu}\!\varpi_1(|Y_{\bt}|,x)^{\norm} $, and assume  that for every local system $\vv$ on $|Y_{\bt}|$ corresponding to an $R$-representation, the local system $a_0^{-1}\vv$  has unitary monodromy around the local components of $D_0$. Then there are natural $(S^1)^{\delta}$-actions on  $(|Y_{\bt}|,y)^{R, \mal}_{\MTS}$ and $\ugr (|Y_{\bt}|,y)^{R, \mal}_{\MTS}$, compatible with the opposedness isomorphism, and with the action of $-1 \in S^1$ coinciding with that of $-1 \in \bG_m$.
\end{proposition}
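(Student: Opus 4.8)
The plan is to run the same levelwise-then-glue argument used for Theorem~\ref{singunitmts}, now carrying the discrete $S^1$-enrichment of Proposition~\ref{unitmtsenrich} through every step. First I would note that by Proposition~\ref{singproperred} the group ${}^{\nu}\!\varpi_1(|Y_{\bt}|,y)^{\norm}$ carries a discrete $S^1$-action, and since $R$ is assumed to be an $S^1$-equivariant quotient, this action descends to $R$. As recorded in the proof of Theorem~\ref{singunitmts}, for each $n$ the pullback $a_n^{-1}\vv$ of a local system $\vv$ on $|Y_{\bt}|$ corresponding to an $R$-representation is semisimple with unitary monodromy around the local components of $D_n$ (the pullback of a tame imaginary pluriharmonic metric being again one), and the discrete $S^1$-action on ${}^{\nu}\!\varpi_1(Y_n,(\sigma_0)^n y)^{\red}$ descends compatibly to $R$. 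Hence Proposition~\ref{unitmtsenrich} applies in each simplicial degree, producing $(S^1)^{\delta}$-actions on $(Y_n,(\sigma_0)^n y)^{R,\mal}_{\MTS}$ and on $\ugr (Y_n,(\sigma_0)^n y)^{R,\mal}_{\MTS}$ that are compatible with the opposedness isomorphisms and under which $-1\in S^1$ acts as $-1\in\bG_m$.

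Next I would assemble these levelwise data into the simplicial picture. The construction of $(Y_n,(\sigma_0)^n y)^{R,\mal}_{\MTS}$ in Theorem~\ref{unitmts} and the $\boxast$-enrichment of Proposition~\ref{unitmtsenrich} are functorial in the pair $(X_n,D_n)$ with its basepoint: the only auxiliary choices are the tame pluriharmonic metrics, which are unique up to global automorphisms and pull back to tame pluriharmonic metrics, so the whole construction is compatible with the face and degeneracy maps of $X_{\bt}$. Consequently the levelwise $(S^1)^{\delta}$-actions equip the cosimplicial DGAs $\O(Y_{\bt},y)^{R,\mal}_{\MTS}$ and $\O(\ugr (Y_{\bt},y)^{R,\mal}_{\MTS})$ of the proof of Theorem~\ref{singunitmts} with $(S^1)^{\delta}$-actions. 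Applying the Thom--Whitney functor $\Th$, which is functorial and leaves the action alone (it touches only the cosimplicial DGA variable, not the factors $\Omega(|\Delta^n|)$), transports these to the desired $(S^1)^{\delta}$-actions on $(|Y_{\bt}|,y)^{R,\mal}_{\MTS}$ and $\ugr (|Y_{\bt}|,y)^{R,\mal}_{\MTS}$.

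Compatibility with the opposedness isomorphism, and the equality of the $-1\in S^1$ and $-1\in\bG_m$ actions, then hold because they hold in each simplicial degree and the opposedness quasi-isomorphism for $|Y_{\bt}|$ is itself obtained from the levelwise ones by applying $\Th$, while $\Th$ does not disturb the $\bG_m$-action. The one point I expect to need genuine care is the functoriality claim of the second paragraph --- that the enrichment $\lambda\boxast(-)$, defined via a chosen pluriharmonic metric, is strictly natural for the simplicial maps $X_n\to X_m$; this reduces, exactly as in the proper case treated in Propositions~\ref{properred} and~\ref{qmtsenrich}, to uniqueness of the tame pluriharmonic metric up to $\Gamma(X_n,\Aut(\vv))$ together with the stability of pluriharmonicity under pullback.
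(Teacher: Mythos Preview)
Your proposal is correct and follows essentially the same approach as the paper's proof, which consists of the single observation that the $(S^1)^{\delta}$-action of Proposition~\ref{unitmtsenrich} is functorial and hence compatible with the Thom--Whitney construction of Theorem~\ref{singunitmts}. You have simply unpacked this functoriality in detail --- verifying the levelwise hypotheses, the compatibility with the simplicial structure maps, and the passage through $\Th$ --- which the paper leaves implicit.
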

\begin{proof}
This just follows from the observation that the $S^1$-action of Proposition \ref{unitmtsenrich} is functorial, hence compatible with the construction of Theorem \ref{singunitmts}.
    \end{proof}

\subsection{More general monodromy}\label{generalmonodromy}

It is natural to ask whether the hypotheses of Theorems \ref{unitmhs} and \ref{unitmts} are optimal, or whether algebraic mixed Hodge and mixed twistor structures can be defined more widely. The analogous results to Theorem \ref{unitmhs} for $\ell$-adic pro-algebraic homotopy types in \cite{weiln} hold in full generality (i.e. for any Galois-equivariant quotient $R$ of $\varpi_1(Y,y)^{\red}$). However the proofs of Theorems \ref{unitmhs} and \ref{unitmts} clearly do not extend to non-unitary monodromy, since if $\theta$ is not holomorphic, then $\bar{\theta}$ does not act on ${\sA}^*_X(\vv)\langle  D \rangle$. Thus any proof adapting those theorems 
would  have to take some modification of ${\sA}^*_X(\vv)\langle  D \rangle$ closed under the operator $\bar{\theta}$.

A serious obstruction to considering non-semisimple monodromy around the divisor  is that the principle of two types plays a crucial r\^ole in the proofs of Theorems \ref{unitmhs} and \ref{unitmts}, and for quasi-projective varieties this is only proved for $L^2$ cohomology. The map $ \H^*(X, j_*\vv)\to \H^*_{(2)}(X,\vv)$ is only an isomorphism either for $X$ a curve or for semisimple monodromy, so $\sL^{\bt}_{(2)}(\vv)$ will no longer have the properties we require. There is not even any prospect of modifying  the filtrations in Propositions \ref{mochilemmamhs} or \ref{mochilemmamts} so that $J_0 \H^*(Y, \vv):= \H^*_{(2)}(X,\vv)$, because $L^2$ cohomology does not carry a cup product \emph{a priori} (and nor does intersection cohomology). This means that there is little prospect of applying the  decomposition theorems of \cite{sabbah} and \cite{mochiasymp}, except possibly in the case of curves.

If the groups $ \H^n(X, j_*\vv) $ all carry natural MTS or MHS, then the other terms in the Leray spectral sequence should   inherit MHS or MTS via the isomorphisms   
\[
 \H^n(X,\oR^m j_*\vv) \cong \H^n(X,\oR^m j_*\R \ten  (j_*\vv^{\vee})^{\vee}) \cong \H^n(D^{(m)}, j_{m*}j_m^{-1}\nu_m^{-1} (j_*\vv^{\vee})^{\vee}\ten \vareps^m),
\]
for $j_m\co (D^m-D^{m+1}) \to D^{(m)}$ the canonical open immersion. Note that $j_m^{-1}\nu_m^{-1} (j_*\vv^{\vee})^{\vee} $ is a local system on $D^m-D^{m+1}$ --- this will hopefully inherit  a tame pluriharmonic metric from $\vv$ by taking residues. 

It is worth noting that even for  non-semisimple monodromy, the weight filtration on homotopy types should just be the one associated  to the Leray spectral sequence. Although the monodromy filtration is often involved in such weight calculations, \cite{Weil2} shows that for $\vv$ pure of weight $0$ on $Y$, we still expect $j_*\vv$ to be pure of weight $0$ on $X$. It is only at generic (not closed) points of $X$ that the monodromy filtration affects purity.
 
Adapting $L^2$ techniques to the case of  non-semisimple monodromy around the divisor would have to involve some complex of Fr\'echet spaces to replace $L^{\bt}_{(2)}(X,\vv)$, with the properties that it calculates $ \H^*(X, j_*\vv)$ and is still amenable to 
Hodge theory. When monodromy around $D$ is trivial, a suitable complex is $A^{\bt}(X,j_*\vv)$, since $j_*\vv$ is a local system. In general, one possibility is a modification of Foth's complex $\sB^{\bt}(\vv)$ from \cite{Foth}, based on bounded forms. Another possibility might be the complex  given by $\bigcap_{p \in (0, \infty)}L_{(p)}^{\bt}(X,\vv)$, i.e. the complex consisting of distributions which are $L^p$ for all $p< \infty$.
Beware that these are not the same as bounded forms --- $p$-norms are all defined, but the limit $\lim_{p \to \infty} \|f\|_p$ might be infinite (as happens for  $\log|\log |z||$). 

Rather than using  Fr\'echet space techniques directly, another  approach to  defining the MHS or MTS we need  (including for $\vv$ with  non-semisimple monodromy) might be  via Saito's mixed Hodge modules  or Sabbah's mixed twistor modules. Since  $\H^n(X, j_*\vv)\cong \mathrm{IH}^n(X,\vv)$ for curves $X$, fibring by families of curves then  opens the possibility of putting MHS or MTS on $\H^n(X, j_*\vv)$ for general $X$. Again, the main difficulty would lie in defining the cup products needed to construct CDGAs.

\section{Canonical splittings}\label{splitsn}

\subsection{Splittings of mixed Hodge structures}

\begin{definition}
 Define $\MHS$ to be the category of finite-dimensional mixed Hodge structures. 
\end{definition}

Write $\row_2: \SL_2 \to \bA^2$ for projection onto the second row, so $\row_2^{\sharp}O(\bA^2)$ is a subring of $O(\SL_2)$. This subring is equivariant for the  $S$-action on $\SL_2$ from Definition \ref{rowdef}. 

\begin{definition}\label{SHSdef}
  Define   $\SHS$ (resp. $\ind(\SHS)$)  to be the category of  pairs $(V,\beta)$, where $V$ is  a finite-dimensional $S$-representation (resp. an $S$-representation) in real vector spaces and $\beta: V \to V\ten \row_2^{\sharp}O(\bA^2)(-1)$ is $S$-equivariant. A morphism $(V,\beta) \to (V', \beta')$ is an $S$-equivariant map $f:V\to V'$ with $\beta'\circ f= (f\ten \id) \circ \beta$.
\end{definition}

\begin{definition}\label{SHSdual}
 Given $(V,\beta) \in \SHS$, observe that taking duals gives rise to a map $\beta^{\vee}: V^{\vee} \to V^{\vee}\ten \row_2^{\sharp}O(\bA^2)(-1) $.
Then define the dual in $\SHS$ by  $(V,\beta)^{\vee}:= ( V^{\vee},\beta^{\vee})$. 

Likewise, we define the tensor product $(U, \alpha)\ten (V, \beta):= (U\ten V, \alpha \ten \id + \id\ten \beta)  $.
\end{definition}

Observe that for $(V, \beta), (V', \beta') \in \SHS$,
\[
 \Hom_{\SHS}((V, \beta), (V', \beta'))\cong \Hom_{\SHS}( (\R,0), (V, \beta)^{\vee}\ten (V', \beta')).
\]

\begin{lemma}\label{SHSalg}
A (commutative)  algebra $(A, \delta)$ in  $\ind(\SHS)$  consists of an $S$-equivariant (commutative) algebra $A$, together with an $S$-equivariant derivation $\delta: A \to A \ten \row_2^{\sharp}O(\bA^2)(-1)$.
 \end{lemma}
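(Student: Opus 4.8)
The plan is to unwind the definition of a (commutative) algebra object in the symmetric monoidal category $\ind(\SHS)$, whose monoidal structure is that of Definition \ref{SHSdual} and whose symmetry is inherited from the flip on underlying $S$-representations.

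First I would record that such an algebra $(A,\delta)$ consists of an object of $\ind(\SHS)$ together with a multiplication $m\co (A,\delta)\ten(A,\delta)\to(A,\delta)$ and a unit $e\co(\R,0)\to(A,\delta)$ in $\ind(\SHS)$, subject to the associativity, unit, and (in the commutative case) symmetry axioms. Since a morphism in $\SHS$ is by definition an $S$-equivariant linear map of underlying spaces, the underlying data of $m$ and $e$ make $A$ an $S$-equivariant unital algebra, and the associativity/unit/commutativity diagrams in $\ind(\SHS)$ are precisely those in the category of $S$-representations; hence they hold if and only if $A$ is a (commutative) associative unital $S$-equivariant algebra. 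Likewise $\delta\co A\to A\ten \row_2^{\sharp}O(\bA^2)(-1)$ is automatically $S$-equivariant, being the structure map $\beta$ of an object of $\SHS$.

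The only remaining content is the requirement that $m$ be a morphism in $\SHS$, i.e.\ that it intertwine the structure maps. Writing $M:=\row_2^{\sharp}O(\bA^2)(-1)$ and applying Definition \ref{SHSdual}, the structure map of $(A,\delta)\ten(A,\delta)$ is $\delta\ten\id_A+\id_A\ten\delta$, landing in $(A\ten A)\ten M$ after the evident identification $A\ten M\ten A\cong A\ten A\ten M$, and compatibility reads $\delta\circ m=(m\ten\id_M)\circ(\delta\ten\id_A+\id_A\ten\delta)$. Evaluating on $a\ten b$, and using that $A\ten M$ carries the $A$-module structure coming from multiplication in the first tensor factor, the right-hand side equals $\delta(a)\,b+a\,\delta(b)$; thus the condition is exactly the Leibniz rule $\delta(ab)=\delta(a)\,b+a\,\delta(b)$, i.e.\ $\delta$ is an $S$-equivariant derivation into $A\ten M$. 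The unit morphism $e$ being a map in $\SHS$ forces $\delta(1)=0$, which is in any case automatic from the Leibniz rule applied to $1=1\cdot 1$. Reversing these steps shows conversely that any $S$-equivariant (commutative) algebra $A$ together with an $S$-equivariant derivation $\delta$ determines an algebra in $\ind(\SHS)$. The only step needing any care — not a genuine obstacle — is correctly tracking the $A$-module structure on $A\ten M$ (multiplication on the left factor only) when expanding $(m\ten\id_M)$ on the mixed term $\id_A\ten\delta$, so that the two summands in the Leibniz rule appear with the right placement of factors; the rest is a formal diagram chase.
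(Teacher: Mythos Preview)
Your proposal is correct and follows essentially the same approach as the paper: both unwind the definition of an algebra object in $\ind(\SHS)$ using the tensor product of Definition \ref{SHSdual}, observing that the structure map on $(A,\delta)\ten(A,\delta)$ is $\delta\ten\id+\id\ten\delta$, so that compatibility of the multiplication is precisely the Leibniz rule. Your version is in fact more detailed than the paper's terse argument, explicitly recording the unit condition $\delta(1)=0$ and the bookkeeping on the $A$-module structure of $A\ten M$.
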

\begin{proof}
We need to endow  $(A, \delta) \in \SHS$ with a unit $(\R, 0) \to (A, \delta)$, which is the same as a unit $1 \in A$, and  with a (commutative) associative multiplication
\[
 \mu\co (A, \delta) \ten  (A, \delta) \to    (A, \delta).   
\]
Substituting for $\ten$, this becomes $\mu \co (A\ten A, \delta \ten \id + \id \ten\delta) \to (A, \delta)$, so $\mu$ is a (commutative) associative multiplication on $A$, and for $a,b \in A$, we must have $\delta(ab)= a\delta(b) + b\delta(a)$. 
 \end{proof}

\begin{theorem}\label{SHSequiv}
 The categories $\MHS$ and $\SHS$ are equivalent. This equivalence is additive, and compatible with tensor products and duals.
\end{theorem}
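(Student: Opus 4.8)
The plan is to construct functors in both directions and verify they are mutually quasi-inverse, then check compatibility with $\ten$ and $(-)^\vee$. Recall from Lemma \ref{flatmhs} that a real MHS $V$ corresponds to a flat $\bG_m \by S$-equivariant quasi-coherent sheaf on $\bA^1 \by C^*$, and from Lemma \ref{slhodge} that $\row_{1*}\O_{\SL_2}$ is the sheaf on $C^*$ attached to $\cS = \R[x]$ with $F^p(\cS\ten\Cx) = (x-i)^p\Cx[x]$. The key point behind the equivalence is that $\cS$ is an \emph{$F$-split} enlargement of $\R$: tensoring any MHS with $\cS$ splits its Hodge filtration canonically, giving an $\cS$-linear isomorphism $\xi(V,W,F)\ten_{\O_{C^*}}\row_{1*}\O_{\SL_2} \cong (\gr^W V)\ten_{\R}\cS$ compatible with all structure. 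Differentiating this splitting along $\SL_2$ using the derivation $N$ of Definition \ref{Ndef} produces the map $\beta$, and the $\SL_2$-coordinate $x$ gets absorbed into $\row_2^\sharp O(\bA^2)$, which is where $\beta$ takes values.

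\textbf{Step 1: from $\MHS$ to $\SHS$.} Given a finite-dimensional MHS $V$, form $\gr^W V$, which carries an $S$-action (a Hodge structure) via the opposedness of $\gr^W$. Pull the Rees-module sheaf $\xi(V,W,F)$ on $C^*$ back along $\row_1\co \SL_2 \to C^*$; by the splitting property of $\cS$ this is canonically isomorphic, as an $S$-equivariant $O(\SL_2)$-module, to $(\gr^W V)\ten O(\SL_2)$, but with a \emph{twisted} flat connection whose only nontrivial component is $N$. Applying $N$ to the tautological inclusion $\gr^W V \ten O(C) \to (\gr^W V)\ten O(\SL_2)$ and projecting back produces an $S$-equivariant map $\beta_V\co \gr^W V \to (\gr^W V)\ten \row_2^\sharp O(\bA^2)(-1)$; one checks that $\ker N = O(C)$ forces $\beta_V$ to land in the $\row_2$-subring. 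Set the functor to be $V \mapsto (\gr^W V, \beta_V)$. Functoriality is immediate since everything is built canonically.

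\textbf{Step 2: from $\SHS$ to $\MHS$.} Given $(V,\beta)$, form the $O(\SL_2)$-module $V\ten O(\SL_2)$ with connection $\nabla = N + \beta$ (using that $\beta$ takes values in $\row_2^\sharp O(\bA^2)(-1)$, which is where $N$ is "dual" to $dx,dy$). Descend along the $\bG_m$-action and along $\row_1$: the horizontal sections over $C^*$ give a flat $S$-equivariant sheaf, hence by Lemma \ref{flathfil} a pair $(\tilde V, F)$; the $S$-weight grading on $V$ supplies the weight filtration $W$ via Rees construction as in Lemma \ref{flatfiltrn}, and the $\bG_m\by S$-equivariant structure on $\bA^1 \by C^*$ is recovered. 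Opposedness of the resulting $W$ and $F$ follows because $\gr^W$ kills $\beta$ (it lowers weight by the $(-1)$ twist, which under the $S$-action corresponds to the weight drop), so $\gr^W$ of the construction is just $V$ with its $S$-action, i.e. a pure Hodge structure of the correct weight in each piece. This yields a genuine MHS $M(V,\beta)$, finite-dimensional since $V$ is.

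\textbf{Step 3: mutual inverseness and compatibilities.} The composite $\MHS \to \SHS \to \MHS$ is the identity because reconstructing from $(\gr^W V, \beta_V)$ reverses exactly the splitting of Step 1 — this is the content of "$V$ can be recovered from $\beta$" noted in the introduction, and amounts to the fact that a flat connection on a trivial bundle over $\SL_2$ of the form $N+\beta$ has its horizontal-section sheaf determined by $\beta$. The composite $\SHS\to\MHS\to\SHS$ is the identity by the same computation run backwards. Additivity is clear. For tensor products: $\xi(-,W,F)$ and the $\cS$-splitting are both monoidal, and $N$ is a derivation, so $\beta_{U\ten V} = \beta_U\ten\id + \id\ten\beta_V$, matching Definition \ref{SHSdual}; duals are handled the same way using that $N$ and the splitting are compatible with $(-)^\vee$ on finite-dimensional objects.

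\textbf{Main obstacle.} The delicate point is Step 1–2's claim that differentiating the canonical $\cS$-splitting produces a $\beta$ valued specifically in $\row_2^\sharp O(\bA^2)(-1)$ rather than in all of $O(\SL_2)(-1)$, and that this correspondence is a bijection onto $\SHS$. This requires carefully tracking the $S$-equivariance through the identification $\Omega(\SL_2/C)\cong O(\SL_2)(-1)$ of Definition \ref{Ndef} and the coordinate description $O(\SL_2) = \R[u,v,x,y]/(uy-vx-1)$: one must verify that $S$-equivariance of $\beta$ together with the constraint $\ker N = O(C)$ cuts $O(\SL_2)$ down to exactly $\row_2^\sharp O(\bA^2) = \R[x,y]$. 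Everything else is bookkeeping with Rees modules and the already-established Lemmas \ref{flatfiltrn}, \ref{flathfil}, \ref{flatmhs}, \ref{slhodge}.
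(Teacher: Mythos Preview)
Your Step 2 matches the paper's functor $M\co \SHS \to \MHS$ (taking $M(V,\beta) = \ker(N+\beta)$), but your Step 1 has a genuine gap that makes the argument circular. You write that pulling back $\xi(V,W,F)$ along $\row_1$ is ``canonically isomorphic'' to $(\gr^W V)\ten O(\SL_2)$ by ``the splitting property of $\cS$''. The only splitting result available prior to this theorem is \cite[Proposition \ref{mhs-cSubiq}]{mhs}, which gives a \emph{non-unique} $\cS$-splitting. Differentiating an arbitrary such splitting yields a $\beta$ valued in all of $O(\SL_2)(-1)$, not in $\row_2^\sharp O(\bA^2)(-1)$; your functor $\MHS \to \SHS$ is therefore not well-defined on objects. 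The canonical splitting you invoke is precisely the \emph{output} of Theorem \ref{SHSequiv}, as the paper's Remark immediately following the proof makes explicit.

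Your ``Main obstacle'' paragraph correctly locates the issue but the proposed resolution is wrong: $S$-equivariance of $\beta$ together with $\ker N = O(C)$ does \emph{not} force $\beta$ to land in $\R[x,y]$. Elements of $O(\SL_2)(-1)$ involving $u,v$ can perfectly well be $S$-equivariant for suitable source and target weights. What is actually needed is the paper's key observation: the composite $\row_2^\sharp O(\bA^2)(-1) \hookrightarrow O(\SL_2)(-1) \twoheadrightarrow \coker(N)$ is an \emph{isomorphism}. This says exactly that any $\beta \in O(\SL_2)(-1)$ can be adjusted by something in $\im(N)$ (i.e.\ by changing the choice of splitting) to land uniquely in $\row_2^\sharp O(\bA^2)(-1)$. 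The paper uses this observation not to build an explicit inverse functor, but to compare $\Ext^i_{\SHS}$ with $\Ext^i_{\MHS}$: for $\beta=0$ the section property gives $\rho^i$ isomorphisms directly, and then induction on the length of the weight filtration (every $(V,\beta)$ is an iterated extension of objects with $\beta=0$) gives full faithfulness ($i=0$) and essential surjectivity on extensions ($i=1$). Your direct two-functor approach could be made to work, but only after you establish exactly this $\coker(N)$ computation, and even then the inductive Ext argument is cleaner than tracking mutual inverses through non-canonical choices.
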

\begin{proof}
 Given  $(V,\beta) \in \SHS$ as above, define a weight filtration on $V$  by $W_rV= \bigoplus_{i \le r} \cW_iV$, for the decomposition $\cW$ of Definition \ref{cWdef} given by the action of $\bG_m \subset S$. Since $\beta$ is $S$-equivariant and $ \row_2^{\sharp}O(\bA^2)(-1) $ is of strictly positive weights, we have
\[
 \beta: W_rV \to (W_{r-1}V)\ten \row_2^{\sharp}O(\bA^2)(-1).
\]
Thus $\beta$ gives rise  to an $S$-equivariant map $V \to V \ten O(\SL_2)(-1)$ for which $\beta(W_rV) \subset (W_{r-1}V)\ten O(\SL_2)(-1)$ for all $r$. In particular, $(W_rV, \beta|_{W_rV})\in \SHS$ for all $r$. 

We now form $V\ten O(\SL_2)$, then look at the $S$-equivariant derivation $N_{\beta}: V\ten O(\SL_2) \to V\ten O(\SL_2)(-1)$ given by $N_{\beta}= \id \ten N + \beta\ten \id$. Since $\ker N= O(C)$, this map is $O(C)$-linear; by Lemma \ref{slhodge}, it corresponds under Lemma \ref{flathfil} to a real derivation
\[
 N_{\beta}: V\ten \cS \to V(-1)\ten \cS
\]
 such that $N_{\beta}\ten_{\R}\Cx$ preserves Hodge filtrations $F$. The previous paragraph shows that    $N_{\beta}( (W_rV)\ten \cS) \subset  (W_rV)(-1)\ten \cS$, with 
\[
 \gr^W N_{\beta}= (\id\ten N) : (\gr^WV)\ten \cS \to (\gr^WV)(-1)\ten \cS.
\]

Therefore $ M(V, \beta):=\ker(N_{\beta})\subset V\ten \cS$ is a real vector space, equipped with an increasing filtration $W$, and a decreasing filtration $F$ on $M(V, \beta)\ten \Cx$. We need to show that $ M(V, \beta)$ is a mixed Hodge structure.

Since $N:\cS \to \cS(-1)$ is surjective,   the observation above that $\gr^W N_{\beta}= (\id\ten N) $ implies that $N_{\beta}$ must also be surjective (as the filtration $W$ is bounded), so
\[
 0 \to M(V, \beta) \to V\ten \cS \xra{N_{\beta} } V(-1)\ten \cS\to 0
\]
is a exact sequence; this implies that the functor $M$ is exact.

Since $\gr^W_r(V, \beta)= (\cW_rV,0)$, we get that $M(\gr^W_r(V, \beta))= \cW_rV$. As $M$ is exact, $ \gr^W_rM(V, \beta)=M(\gr^W_r(V, \beta))$, so we have shown that $\gr^W_rM((V, \beta))$ is a pure weight $r$ Hodge structure, and hence that $M(V, \beta) \in \MHS$. Thus we have an exact functor
\[
 M: \SHS \to \MHS;
\]
 it is straightforward to check that this is compatible with tensor products and duals.

We need to check that $M$ is an equivalence of categories. First, observe that for any $S$-representation $V$, we have an object $(V,0) \in \SHS$ with $M(V)=V$.

Write 
\[
 \Ext^1_{\SHS}((U, \alpha),(V, \beta)):= \coker(  \Hom_{S} (U,V) \xra{\beta_*-\alpha^*} \Hom_S(U, V \ten O(C)) ).
\]
This gives a  an exact sequence
\begin{align*}
  0 \to &\Hom_{\SHS}( (U, \alpha), (V, \beta))  \to \Hom_{S} (U,V) \xra{\beta_*-\alpha^*} \Hom_S(U, V \ten O(C))  \\  \to&\Ext^1_{\SHS}((U, \alpha),(V, \beta)) \to 0. 
\end{align*}

Note that $\Ext^1_{\SHS}((U, \alpha),(V, \beta))$ does indeed parametrise extensions of  $ (U, \alpha)$ by $(V, \beta) $: given an exact sequence
\[
 0 \to  (V, \beta) \to (W, \gamma) \to   (U, \alpha) \to 0,    
\]
we may choose an $S$-equivariant  section $s$ of $W \onto U$, so $W \cong U \oplus V$. The obstruction to this being a morphism in $\SHS$ is  $o(s):= s^*\gamma -\alpha  \in \Hom_S(U, V \ten O(C))$, and another choice of section differs from $s$ by some  $f \in \Hom_{S} (U,V)$, with $o(s+f)= o(s) +  \beta_*f-\alpha^*f$.

Write $\oR^i \Gamma_{\SHS}(V, \beta):= \Ext^i((\R,0), (V, \beta))$ for $i=0,1$, noting that 
\[
 \Ext^i_{\SHS}( (U, \alpha),(V, \beta)))= \oR^i \Gamma_{\SHS}((V, \beta)\ten (U, \alpha)^{\vee}).
\]
We thus have morphisms
\[
\xymatrix@=2ex{0 \ar[r] &\Gamma_{\SHS}(V, \beta) \ar[r]\ar[d] &  V^S\ar[d] \ar[rr]^-{\beta} &&(V \ten \row_2^{\sharp}O(\bA^2)(-1))^S \ar[r] \ar[d]& \oR^1 \Gamma_{\SHS}(V, \beta)\ar[r]\ar[d] & 0\\
0 \ar[r] &\Gamma_{\cH}M(V, \beta) \ar[r] &  (V\ten O(\SL_2))^S \ar[rr]^-{\beta+N} &&(V \ten \row_2^{\sharp}O(\bA^2)(-1))^S \ar[r] & \oR^1 \Gamma_{\cH}M(V, \beta) \ar[r] & 0
} 
\]
of  exact sequences, making use of the calculations of \S \ref{Bei2}. 
For any short exact sequence
in $\SHS$, the morphisms $\rho^i\co\oR^i \Gamma_{\SHS}(V, \beta)\to \oR^i \Gamma_{\cH}M(V, \beta) $ are thus compatible with the long exact sequences of cohomology. 

The crucial observation on which the construction hinges is   that the map $\row_2^{\sharp}O(\bA^2)(-1) \to \coker(N: O(\SL_2)\to O(\SL_2)(-1))$ is an isomorphism, making $\row_2^{\sharp}O(\bA^2)(-1) $ a section for $O(\SL_2)(-1)\onto \H^1(C^*, \O_{C^*})$. This implies that
when $\beta=0$, the maps $\rho^i$ are isomorphisms. Since each  object $(V, \beta) \in \SHS$ is an Artinian extension of $S$-representations,  we deduce that the maps $\rho^i$ must be isomorphisms for all such objects.

Taking $i=1$ gives that  $\Ext^1_{\SHS}((U, \alpha),(V, \beta)) \to \Ext^1_{\cH}(M(U, \alpha),M(V, \beta))$ is an isomorphism; we deduce that every extension in $\MHS$ lifts uniquely to an extension in $\SHS$, so $M: \SHS \to \MHS$ is essentially surjective. Taking $i=0$ shows that $M$ is full and faithful. 
\end{proof}

\begin{remark}
 Note that  the Tannakian fundamental group (in the sense of \cite{tannaka}) of the category $\SHS$ is
\[
 \Pi(\SHS)= S \ltimes \Fr( \row_2^{\sharp}O(\bA^2)(-1)^{\vee}),
\]
 where $\Fr(V)$ denotes the free pro-unipotent group generated by the pro-finite-dimensional vector space $V$. In other words, $\SHS$ is canonically equivalent to the category of finite-dimensional $\Pi(\SHS)$-representations. Likewise, $\ind(\SHS)$ is equivalent to the category of all $\Pi(\SHS)$-representations.

The categories $\SHS$ and $\MHS$ both have  vector space-valued forgetful functors. Tannakian formalism shows that the functor $\SHS \to \MHS$, together 
with a choice of natural isomorphism between the respective forgetful functors, gives a morphism $\Pi(\MHS) \to \Pi(\SHS)$.
The choice of natural isomorphism  amounts  to choosing a Levi decomposition for $\Pi(\MHS)$, or   equivalently a functorial isomorphism $V \cong \gr^WV$ of vector spaces for $V \in \MHS$. 

A canonical choice $b_0$ of such an isomorphism is given by composing the embedding $b\co M(V, \beta) \into V\ten \cS$ with the map $p_0\co \cS \to \R$ given by $x \mapsto 0$. This allows us to put a new MHS on $V$, with Hodge filtration $b_0(F)$ and the same weight filtration as $V$, so $b_0\co M(V, \beta) \to (V, W, b_0(F))$ is an isomorphism of MHS. To describe this new MHS, first observe that  $\cS(-1)\cong \Omega(\cS/\R)= \cS dx$, and that  for $\beta: V \to V \ten\Omega(\cS/\R)$, we get an isomorphism  $\exp( -\int_0^x \beta)\co V \to  M(V, \beta)$, which is  precisely $b_0^{-1}$. 

Since the map $p_i\co \cS \to \Cx$ given by $x \mapsto i$ preserves $F$, it follows that the map 
\[
 p_i\circ b_0^{-1}= \exp( -\int_0^i \beta)\co V \to  V\ten \Cx
\]
  satisfies $\exp( -\int_0^i \beta)(b_0(F))=F$, so the new MHS  is 
\[
 (V, W, b_0(F))= (V,W, \exp( \int_0^i \beta)(F)).
\]
\end{remark}

\begin{remark}
In Proposition \ref{cSubiq}, it was shown that every mixed Hodge structure $M$ admits a non-unique  splitting $M\ten \cS \cong (\gr^WM) \ten \cS$, compatible with the filtrations. Theorem \ref{SHSequiv} is a refinement of that result, showing that such a splitting can be chosen canonically, by requiring that the image of $\gr^WM$ under the 
derivation $(\id_M \ten N\co M\ten O(\SL_2) \to M\ten O(\SL_2)(-1)$ lies in $\row_2^{\sharp}O(\bA^2)(-1) $. This is because $\beta$ is just the restriction of $ \id_M \ten N$ to $V:=\gr^WM$.

This raises the question of which $F$-preserving maps $\beta: V \to V \ten\Omega(\cS/\R)$ correspond to maps $V \to V \ten \row_2^{\sharp}O(\bA^2)(-1)$ (rather than just $ V \to V \ten O(\SL_2)(-1)$. Using the explicit description from the proof of Lemma \ref{slhodge}, we see that this amounts to the restriction that
\[
 \beta(V_{\Cx}^{p,q}) \subset \sum_{a\ge 0, b \ge 0} V_{\Cx}^{p-a-1,q-b-1} (x-i)^a(x+i)^bdx.
\]
\end{remark}

\begin{remark}\label{cfRMHS}
In \cite{RMHS}, Deligne established a characterisation of real MHS in terms of $S$-representations equipped with additional structure.

For any $\lambda \in \Cx$, we have a map $p_{\lambda}\co \cS \to \Cx$ given by $x \mapsto \lambda$, and $b_{\lambda}^{-1}:= (p_{\lambda} \circ b)^{-1}= \exp( -\int_{\lambda}^x \beta)\co V \to  M(V, \beta)$. 
Comparing the filtrations $b_0(F)$ and $b_0(\bar{F})$ on $V$, we are led  to consider
\[
d:=b_{-i}\circ b_i^{-1}=  \exp( \int_{-i}^{i} \beta). 
\]
This maps $V$ to $V$, and has the properties that $\bar{d}=d^{-1}$ and
\[
 (d-\id)(V_{\Cx}^{pq}) \subset \bigoplus_{r<p, s<q} V_{\Cx}^{rs}.
\]
 
This is precisely the data of an $\fM$-representation in the sense of \cite[Proposition 2.1]{RMHS}, so corresponds to a MHS. Explicitly, we first find the unique operator $d^{1/2}$ with $d:= (d^{1/2})^2$ satisfying the  properties  above, then define the mixed Hodge structure $M(V,d)$ to have underlying vector space $V$, with the same weight filtration, and with $F^pM(V,d) :=d^{1/2}(F^pV)$.

For our choice of $d$ as above, we then have an isomorphism 
\[
 a:= d^{1/2}\circ b_i =d^{-1/2}\circ b_{-i} \co M(V, \beta) \to V
\]
 of vector spaces. Since $b_i(F^pM(V, \beta))= F^pV$, this means that $a(F^pM(V, \beta))= F^pM(V,d)$, so $a$ is an isomorphism of MHS. 

We have therefore shown directly  how our category $\SHS$ is equivalent to Deligne's category of $\fM$-representations by sending the pair $(V, \beta)$ to $(V, \exp( \int_{-i}^{i} \beta))$. This also gives a canonical isomorphism $\fM \cong \Pi(\SHS)$, once we specify the associated isomorphism $a\circ b_0^{-1}\co V \to V$ on fibre functors.
The Archimedean monodromy operator $\beta$ thus provides a more canonical generator for the Lie algebra of $\Ru\fM$
than is given by the operator $d$ of \cite{RMHS}. Providing such a generator was also the goal of  the Hodge correlator $\bG$ over a point in \cite[\S 4.2(v)]{GoncharovCorrelators1} --- see Remark \ref{cfgonchrk} for a fuller comparison. 

For an explicit quasi-inverse functor from $\fM$-representations to $\SHS$, take a pair $(V,d)$. Since $d$ is unipotent, $\delta:=\log d \co V_{\Cx} \to V_{\Cx}$ is well-defined, and decomposes into types as $\delta= \sum_{p,q<0} \delta^{pq}$. We now just set 
\begin{align*}
  \beta &:= \sum_{a\ge 0,b\ge 0}\frac{\delta^{-a-1,-b-1}(x-i)^a(x+i)^bdx}{\int_{-i}^{i} (x-i)^a(x+i)^bdx},\\
&= \sum_{a\ge 0,b\ge 0}\frac{(-1)^a(a+b+1)!\delta^{-a-1,-b-1}(x-i)^a(x+i)^bdx}{(2i)^{a+b+1}a!b!}.
\end{align*}

This equivalence  $\fM\simeq \SHS$ can be understood in terms of  identifying the generating elements of \cite[Construction 1.6]{RMHS} with explicit elements of $ (\row_2^{\sharp}(O(\bA^2)(-1)))^{\vee}\ten \Cx$. 
Explicitly, Deligne's generating set $\{\delta^{-a-1,-b-1}\}$ is the dual basis to 
\[
 \{ \frac{(-1)^a(a+b+1)!(x-iy)^a(x+iy)^b}{(2i)^{a+b+1}a!b!}\}\subset (\row_2^{\sharp}(O(\bA^2)(-1)))\ten \Cx.
\]
 
\end{remark}

\subsection{Splittings of mixed twistor structures}

The following lemma ensures that a mixed twistor structure can be regarded as an Artinian extension of $\bG_m$-representations.
\begin{lemma}\label{MTShom}
 If $\sE$ and $\sF$  are pure twistor structures of weights $m$ and $n$ respectively, then
\[
 \Hom_{\MTS}(\sE, \sF) \cong \left\{\begin{matrix} \Hom_{\R}(\sE_1, \sF_1) & m=n \\ 0 & m \ne n.
                                     
                                    \end{matrix}\right.
\]
\end{lemma}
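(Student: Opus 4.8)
The plan is to reduce the statement about mixed twistor structures to the well-known classification of vector bundles on $\bP^1_{\R}$ (or equivalently the fact that $\Hom_{\O_{\bP^1}}(\O(m), \O(n)) = \H^0(\bP^1, \O(n-m))$, which is $0$ for $n<m$ and has dimension $n-m+1$ for $n \ge m$). Since $\sE$ and $\sF$ are pure of weights $m$ and $n$, by definition $\sE \cong \O_{\bP^1}(m)^{\oplus r}$ and $\sF \cong \O_{\bP^1}(n)^{\oplus s}$ as bundles on $\bP^1_{\R}$, where $r = \dim \sE_1$, $s = \dim \sF_1$ (over $\R$, using that the semistable bundles of a given slope are sums of $\O(i)$, and the real structure is part of the data).

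First I would observe that a morphism in $\MTS$ between pure twistor structures is just a morphism of the underlying bundles on $\bP^1_{\R}$ compatible with the (trivial, in the pure case) weight filtrations, which is no condition, together with compatibility with the identifications of fibres over $1$; but actually in the formulation here a morphism of twistor structures $\sE \to \sF$ is precisely a morphism of bundles (the fibre identification $V \cong \sE_1$ is part of the data defining the twistor structure on $V$, and a morphism of twistor structures is a map of sheaves, inducing the map on $V$). So $\Hom_{\MTS}(\sE,\sF) = \Hom_{\O_{\bP^1_{\R}}}(\sE, \sF)$.

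Next I would compute this Hom group. When $m > n$, $\Hom_{\O_{\bP^1}}(\O(m)^{\oplus r}, \O(n)^{\oplus s}) = (\H^0(\bP^1, \O(n-m)))^{rs} = 0$ since $n - m < 0$. When $m = n$, $\Hom_{\O_{\bP^1}}(\O(m)^{\oplus r}, \O(m)^{\oplus s}) = \Mat_{s \times r}(\H^0(\bP^1, \O)) = \Mat_{s\times r}(\R) \cong \Hom_{\R}(\sE_1, \sF_1)$, and one checks the isomorphism is exactly evaluation at the fibre over $1$, which is a scalar-entry matrix identification. When $m < n$, $\H^0(\bP^1, \O(n-m)) \ne 0$, but such a morphism of bundles does \emph{not} respect the semistable structure unless it is zero --- more precisely any nonzero map $\O(m) \to \O(n)$ with $m < n$ is injective as a sheaf map with torsion cokernel, which is fine for bundles, so I need to be careful: the point is that $\Hom_{\MTS}$ in the statement is only being claimed for pure structures and the case $m<n$ is genuinely absorbed into the general nonzero case $\Hom \cong \R^{(n-m+1)rs}$ — but the lemma asserts $0$ there too. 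I expect the resolution is that in the paper's conventions a morphism of MTS must be \emph{strict} for the weight filtrations, or equivalently, since $\MTS$ is an abelian (or at least exact) category in which $\gr^W$ is exact, a morphism between pure objects of different weights must vanish; this is the standard phenomenon (as for MHS, $\Hom_{\MHS}$ between pure structures of different weights is $0$ in \emph{both} directions). So the honest statement to prove is that the category $\MTS$ of real mixed twistor structures is such that morphisms strictly preserve $W$, hence a map of pure objects of weights $m \ne n$ factors through $\gr^W_m \sF = 0$ (if $m < n$) or through a sub of weight $m$ of $\sF$ which is $0$ (if $m > n$).

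The main obstacle will be pinning down precisely which notion of morphism is in force and why a nonzero bundle map $\O_{\bP^1}(m) \to \O_{\bP^1}(n)$ for $m < n$ does not give a morphism of MTS; the cleanest route is to cite the abelianness of $\MTS$ (adapting \cite{MTS} \S 1 and the corresponding statement in \cite{mhs}), invoke that $\gr^W$ is an exact functor to graded twistor structures, and then argue: given $\phi\co \sE \to \sF$ with $\sE$ pure of weight $m$, $\sF$ pure of weight $n$, the image $\im \phi$ is a sub-MTS of $\sF$ and a quotient MTS of $\sE$; purity of $\sE$ forces $\im\phi$ pure of weight $m$, while being a sub of $\sF$ forces it pure of weight $n$ (a subbundle of $\O(n)^{\oplus s}$ preserving semistability has slope $\le n$, and a quotient of $\O(m)^{\oplus r}$ has slope $\ge m$); so $m \le$ slope of $\im\phi \le n$ if $\im\phi\ne 0$... hmm, this only rules out $m > n$. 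For $m < n$ I would instead use that $W$ on $\sF$ being Hausdorff and $\im\phi \subset W_m\sF = 0$ requires knowing $W_m\sF = 0$, which holds since $\gr^W_i\sF = 0$ for $i \ne n$ and $m < n$. Combining: if $m \ne n$ then $\im\phi \subset W_{\max(m,n)-1}$-type constraints force $\im\phi = 0$. I would then conclude with the $m = n$ computation via $\H^0(\bP^1, \O) = \R$ as above, identifying it with $\Hom_\R(\sE_1, \sF_1)$ by evaluation at $1 \in \bP^1(\R)$.
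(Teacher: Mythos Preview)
Your proposal eventually lands on the correct argument, but it takes a circuitous route because of an early misstep: you claim that compatibility with the weight filtrations is ``no condition'' for pure objects. That is false, and recognising this immediately gives the whole proof. A morphism of MTS must send $W_i\sE$ into $W_i\sF$ for every $i$; for $\sE$ pure of weight $m$ one has $W_m\sE=\sE$, so the image lands in $W_m\sF$. This is not vacuous when $m<n$, since $W_m\sF=0$ for $\sF$ pure of weight $n$.

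The paper's proof is exactly this observation, stated in one line: reducing to $\sE=\O(m)$ and $\sF=\O(n)$, one has
\[
 \Hom_{\MTS}(\O(m),\O(n))=\Hom_{\bP^1}(\O(m),W_m\O(n)),
\]
which vanishes for $m<n$ because $W_m\O(n)=0$, and for $m\ge n$ equals $\Gamma(\bP^1,\O(n-m))$, hence $0$ for $m>n$ and $\R$ for $m=n$. Your slope/semistability detour and the appeal to abelianness of $\MTS$ are unnecessary: the preservation of $W$ is part of the \emph{definition} of a morphism of MTS, not a consequence of strictness or exactness. Once you use it directly, the three cases ($m<n$, $m>n$, $m=n$) fall out with no further work.
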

\begin{proof}
By hypothesis, $\sE= \gr^W_m\sE$ and $\sF= \gr^W_n\sF$. Thus we may assume that $\sE = \O(m)$ and $\sF= \O(n)$. Since homomorphisms must respect the weight filtration, we have
\[
  \Hom_{\MTS}(\O(m), \O(n))=  \Hom_{\bP^1}(\O(m), W_m\O(n)),
\]
which is $0$ unless $m \ge n$. When $m \ge n$, we have $ W_m\O(n)= \O(n)$, so
\[
 \Hom_{\MTS}(\O(m), \O(n))= \Gamma(\bP^1, \O(n-m)),
\]
which is $0$ for $m>n$ and $\R$ for $n=m$, as required.
\end{proof}

\begin{definition}\label{STSdef}
  Define $\STS$ to be the category of  pairs $(V,\beta)$, where $V$ is an $\bG_m$-representation in real vector spaces  and $\beta: V \to V\ten \row_2^{\sharp}O(\bA^2)(-1)$ is $\bG_m$-equivariant. A morphism $(V,\beta) \to (V', \beta')$ is a $\bG_m$-equivariant map $f:V\to V'$ with $\beta'\circ f= (f\ten \id) \circ \beta$.
\end{definition}

Note that the only difference between Definitions \ref{SHSdef} and \ref{STSdef} is that the latter replaces $S$ with $\bG_m$ throughout.

\begin{definition}\label{STSdual}
 Given $(V,\beta) \in \STS$, observe that taking duals gives rise to a map $\beta^{\vee}: V^{\vee} \to V^{\vee}\ten \row_2^{\sharp}O(\bA^2)(-1)$.
Then define the dual in $\STS$ by  $(V,\beta)^{\vee}:= ( V^{\vee},\beta^{\vee})$. 

Likewise, we define the tensor product by $(U, \alpha)\ten (V, \beta):= (U\ten V, \alpha \ten \id + \id \ten \beta)  $.
\end{definition}

Observe that for $(V, \beta), (V', \beta') \in \STS$,
\[
 \Hom_{\STS}((V, \beta), (V', \beta'))\cong \Hom_{\STS}( (\R,0), (V, \beta)^{\vee}\ten (V', \beta')).
\]

\begin{theorem}\label{STSequiv}
 The categories $\MTS$ and $\STS$ are equivalent. This equivalence is additive, and compatible with tensor products and duals.
\end{theorem}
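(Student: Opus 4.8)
The plan is to run the exact twistor counterpart of the proof of Theorem~\ref{SHSequiv}, replacing $S$ by $\bG_m$ throughout, $\MHS$ by $\MTS$, $\SHS$ by $\STS$, and Lemma~\ref{flathfil} by Lemma~\ref{flattfil}; the ring $\cS=\R[x]$ now enters only through the flat quasi-coherent sheaf on $\bP^1\cong[C^*/\bG_m]$ determined by $\row_{1*}\O_{\SL_2}$ (its flatness over $C^*$ is already part of Lemma~\ref{slhodge}, and its fibre over $1\in\bP^1$ is $\cS$). Given $(V,\beta)\in\STS$, I would first define the weight filtration $W_rV:=\bigoplus_{i\le r}\cW_iV$ from the $\bG_m$-grading of Definition~\ref{cWdef}; since $\row_2^{\sharp}O(\bA^2)(-1)$ sits in strictly positive weights, $\beta$ lowers $W$ by one. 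I would then form the $\bG_m$-equivariant derivation $N_\beta=\id\ten N+\beta\ten\id\co V\ten O(\SL_2)\to V\ten O(\SL_2)(-1)$ with $N$ as in Definition~\ref{Ndef}, note it is $O(C)$-linear since $\ker N=O(C)$, push it forward along $\row_1$ and descend it via Lemma~\ref{flattfil} to a morphism of flat quasi-coherent sheaves on $\bP^1$, and set $M(V,\beta):=\ker N_\beta$, a quasi-coherent sheaf on $\bP^1$ carrying the filtration $W$ induced from $V$.

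Next I would check that $M(V,\beta)\in\MTS$. Because $\gr^W N_\beta=\id\ten N$ and $N\co\cS\to\cS(-1)$ is surjective, boundedness of $W$ forces $N_\beta$ to be surjective, so $0\to M(V,\beta)\to V\ten\row_{1*}\O_{\SL_2}\xra{N_\beta}V(-1)\ten\row_{1*}\O_{\SL_2}\to 0$ is exact on $\bP^1$ and $M$ is an exact functor. Exactly as for $\SHS$, for a $\bG_m$-representation $U$ the object $(U,0)\in\STS$ has $M(U,0)=U$ regarded as a pure-graded mixed twistor structure (each $\cW_nU$ a pure twistor structure of weight $n$, corresponding under Lemma~\ref{flattfil} to a semistable bundle of slope $n$). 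Since $\gr^W_r(V,\beta)=(\cW_rV,0)$, exactness of $M$ then gives $\gr^W_rM(V,\beta)=M(\cW_rV,0)$ semistable of slope $r$, so $M(V,\beta)$ is a mixed twistor structure. Compatibility of $M$ with tensor products and duals is immediate from Definition~\ref{STSdual} and the additivity of $N_\beta$ in $\beta$.

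For the equivalence I would reproduce the $\Ext$-comparison of Theorem~\ref{SHSequiv}: put $\Ext^1_{\STS}((U,\alpha),(V,\beta)):=\coker(\beta_*-\alpha^*\co\Hom_{\bG_m}(U,V)\to\Hom_{\bG_m}(U,V\ten O(C)))$, check as there that it classifies extensions, write $\oR^i\Gamma_{\STS}$ and $\oR^i\Gamma_{\MTS}$ ($i=0,1$) for the corresponding derived invariants, and produce comparison maps $\rho^i\co\oR^i\Gamma_{\STS}(V,\beta)\to\oR^i\Gamma_{\MTS}M(V,\beta)$ fitting into a morphism of the associated long exact sequences. The decisive input — identical to the Hodge case, as it concerns only $\SL_2$ and $C^*$ — is that $\row_2^{\sharp}O(\bA^2)(-1)\to\coker(N\co O(\SL_2)\to O(\SL_2)(-1))$ is an isomorphism, i.e.\ $\row_2^{\sharp}O(\bA^2)(-1)$ is a $\bG_m$-equivariant section of $O(\SL_2)(-1)\onto\H^1(C^*,\O_{C^*})$; this forces $\rho^i$ to be an isomorphism when $\beta=0$. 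Since Lemma~\ref{MTShom} shows every mixed twistor structure, equivalently every object of $\STS$, is an iterated extension of pure pieces, a d\'evissage along the long exact sequences upgrades this to arbitrary $(V,\beta)$. Taking $i=1$ then shows every extension in $\MTS$ lifts uniquely to $\STS$, so $M$ is essentially surjective, and taking $i=0$ shows $M$ is full and faithful.

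The main obstacle I anticipate is not conceptual but a matter of getting the equivariance and flatness statements exactly right: one must confirm that $N_\beta$ genuinely descends to a morphism of \emph{flat} quasi-coherent sheaves on $[C^*/\bG_m]$, that its kernel is flat with the expected fibre over $1$ and the expected associated graded, and that the cohomology computations underlying the exact sequences above — the twistor analogue of the calculations of \cite[\S\ref{mhs-Bei2}]{mhs}, now involving $\Gamma(\bP^1,-)$ and $\H^1(\bP^1,-)$ of the relevant twists — behave as in the Hodge case. No new idea beyond Theorem~\ref{SHSequiv} is needed, but the equivalence $\MTS\simeq\STS$ does genuinely require Lemma~\ref{MTShom} precisely where the Hodge argument used that a pure Hodge structure of weight $r$ is the same thing as an $S$-representation of pure weight $r$.
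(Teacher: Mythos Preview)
Your proposal is correct and follows essentially the same approach as the paper's proof: construct $M(V,\beta)$ as the $\bG_m$-descent to $\bP^1$ of $\ker(\row_{1*}N_\beta)$, verify exactness and slopes on $\gr^W$, then run the $\Ext$-comparison using the section $\row_2^{\sharp}O(\bA^2)(-1)\hookrightarrow O(\SL_2)(-1)$ and Lemma~\ref{MTShom}. One small point the paper makes explicit that you leave implicit: the target of the comparison maps $\rho^i$ is $W_0\H^i(\bP^1,M(V,\beta))$ rather than the full $\H^i$, since morphisms and extensions in $\MTS$ must respect the weight filtration; your $\oR^i\Gamma_{\MTS}$ should be read this way.
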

\begin{proof}
As in the proof of Theorem \ref{SHSequiv}, every object $(V, \beta)\in \STS$ inherits a weight filtration $W$ from $V$, and $\beta$ gives rise to a $\bG_m$-equivariant map
\[
 N_{\beta}: V\ten O(\SL_2) \to V\ten O(\SL_2)(-1)
\]
 respecting the weight filtration on $V$, with $\gr^WN_{\beta}= (\id \ten N)$. 

For the projection $\row_1: \SL_2 \to C^*$ of Definition \ref{rowdef}, we then get a $\bG_m$-equivariant map
\[
 \row_{1*}N_{\beta}: \row_{1*}(V\ten \O_{\SL_2})  \to \row_{1*}(V\ten \O_{\SL_2}(-1));
\]
Then $\ker(\row_{1*}N_{\beta} ) $ is a $\bG_m$-equivariant vector bundle on $C^*$. Using the isomorphism $C\cong \bA^2$ of Remark \ref{Ccoords} and the projection $\pi:(\bA^2-\{0\}) \to \bP^1$ , this corresponds to a vector bundle $M(V, \beta):= (\pi_*\ker(\row_{1*}N_{\beta} ))^{\bG_m} $  on $\bP^1$.

Now, $M(V, \beta) $ inherits a weight filtration $W$ from $V$, and surjectivity of $N_{\beta}$ implies that 
\[
 0 \to \ker(\row_{1*}N_{\beta} )\to \row_{1*}(V\ten \O_{\SL_2})  \to \row_{1*}(V\ten \O_{\SL_2}(-1))\to 0
\]
is an exact sequence, so $M$ is an exact functor. In particular, this gives $\gr^W_nM(V, \beta)= M(\cW_nV,0)$, which is just the vector bundle on $\bP^1$ corresponding to the $\bG_m$-equivariant vector bundle $(\cW_nV)\ten \O_{C^*}$ on $C^*$. Since $\cW_nV$ has weight $n$ for the $\bG_m$-action, this means that $\gr^W_nM(V, \beta)$ has slope $n$, so we have defined an exact functor
\[
 M: \STS \to \MTS,
\]
which is clearly compatible with tensor products and duals.

If we define $\Gamma_{\STS}(V, \beta):= \ker(\beta\co V \to V\ten \row_2^{\sharp}O(\bA^2)(-1))^{\bG_m}$ and $\oR^1 \Gamma_{\STS}(V, \beta):= (\coker \beta)^{\bG_m}$, then the proof of Theorem \ref{SHSequiv} gives us morphisms
\[
 \rho^i\co \oR^i \Gamma_{\STS}(V, \beta)\to W_0 \H^i(\bP^1, M (V, \beta))
\]
for $i=0,1$. These are automatically isomorphisms when $\beta=0$, and the long exact sequences of cohomology then give that $\rho^i$ is an isomorphism for all $(V, \beta)$. We therefore have isomorphisms
\[
 \Ext^i_{\STS}((U, \alpha), (V, \beta))\to W_0 \Ext^i_{\bP^1}( M(U, \alpha), M (V, \beta)),
\]
and
arguing as in Theorem \ref{SHSequiv}, this shows that $M$ is an equivalence of categories, using Lemma \ref{MTShom} in the pure case.
\end{proof}

\begin{remark}
 Note that  the Tannakian fundamental group (in the sense of \cite{tannaka}) of the category $\STS$ is
\[
 \Pi(\STS)= \bG_m \ltimes \Fr( \row_2^{\sharp}O(\bA^2) (-1))^{\vee},
\]
 where $\Fr(V)$ denotes the free pro-unipotent group generated by the pro-finite-dimensional vector space $V$. 
   
The functor $\STS \to \MTS$ then gives a morphism $\Pi(\MTS) \to \Pi(\STS)$,  but this is not unique, since it depends on a choice of natural isomorphism between the  fibre functors (at $1 \in C^*$)  on  $\MTS$ and on $\STS$.
 This amounts  to choosing a Levi decomposition for $\Pi(\MTS)$, or   equivalently a functorial isomorphism $\sE_1 \cong \gr^W\sE_1$ of vector spaces for $\sE \in \MHS$. A canonical choice of such an isomorphism is to take the fibre at $I \in \SL_2$.

We can think of Theorem \ref{STSequiv} as an analogue of \cite{RMHS} for real mixed twistor structures, in that for any MTS $\sE$, it gives a canonical splitting of the weight filtration on $\sE_1$, together with unique additional data required to recover $\sE$.
\end{remark}

\subsection{Reduced forms for Hodge and twistor homotopy types}\label{hodgeHTsn2}

In \S \ref{hodgeHTsn}, we looked at absolute Hodge and twistor homotopy types. Over a point, the absolute Hodge homotopy type consists of the non-negatively weighted part $\cW_{\ge 0}\oR O(C^*)$ of the resolution $\oR O(C^*)$ of the structure sheaf $\O_{C^*}$, equipped with an $S$-action and an augmentation $\jmath^*\co \cW_{\ge 0}\oR O(C^*) \to O(S)$. The absolute twistor homotopy type is the same CDGA, but only equipped with the underlying $\bG_m$-action  and augmentation to $O(\bG_m)$.

\subsubsection{Over a point}

\begin{lemma}\label{reducedequivlemma}
There are natural isomorphisms
\begin{eqnarray*}
 \bar{G}_{S}(\jmath^*\co \cW_{\ge 0}\oR O(C^*) \to O(S))&\cong& \Pi(\SHS)\\
\bar{G}_{\bG_m}(\jmath^*\co \cW_{\ge 0}\oR O(C^*) \to O(\bG_m))&\cong& \Pi(\STS)
\end{eqnarray*}
of  pro-algebraic   groups.
\end{lemma}
\begin{proof}
First, observe that the bar construction gives pro-algebraic   groups (rather than just  dg pro-algebraic   groups) in these cases because the CDGA is concentrated in degrees $[0,1]$.

It follows immediately from the definition of the bar construction that $\SHS$ is equivalent to the category of representations of
$
 \bar{G}_{S}( \R \oplus \row_2^{\sharp}O(\bA^2)(-1)[-1]),
$
which is thus isomorphic to $\Pi(\SHS)$. Now, the embedding 
\[
 \R \oplus \row_2^{\sharp}O(\bA^2)(-1)[-1] \to \cW_{\ge 0}\oR O(C^*)
\]
is a quasi-isomorphism, so gives an isomorphism on applying $\bar{G}_S$, completing the proof of the first statement. 

For the second statement, repeat the argument replacing $S$ with $\bG_m$.
\end{proof}

\begin{remark}
Lemma \ref{reducedequivlemma} means that we can interpret the canonical splittings of Theorems \ref{SHSequiv} and \ref{STSequiv} as consequences of the $S$-equivariant  quasi-isomorphism 
\[
 \R \oplus \row_2^{\sharp}O(\bA^2)(-1)[-1] \to \cW_{\ge 0}\oR O(C^*)
\]
of CDGAs.
 It is worth noting that the non-unique splittings of Proposition \ref{cSubiq} can similarly be interpreted in terms of the quasi-isomorphism
\[
 \R \oplus  \cW_{\ge 1}\oR O(C^*)\to \cW_{\ge 0}\oR O(C^*).
\]
\end{remark}

\subsubsection{Compact K\"ahler manifolds}

We now consider generalisations of this phenomenon to absolute Hodge and twistor homotopy types of compact K\"ahler manifolds $X$.  In \cite[\S 3.1]{GoncharovCorrelators1}, Goncharov constructs a Hodge complex $\C_{\cH}\vv$ for any VHS $\vv$, which plays the same r\^ole as our $\tilde{A}^{\bt}_{\cH}(\vv)$ but is much smaller.  
  Taking coefficients $ \C_{\cH}O(\Bu_{\rho} \rtimes S)$ then gives a  CDGA in VHS with the same key properties as our absolute homotopy type $O(X)_{\cH}^{\rho,\mal}$. When $\rho$ is the canonical map $\pi_1(X,x) \to {}^{\VHS}\!\varpi_1(X,x)$, this recovers the CDGA $\cD_X$ of  \cite[\S 1.8]{GoncharovCorrelators1} (at least after correcting Goncharov's definition 
along the lines of Remark \ref{irreprmk} to compensate for the failure of $\R$ to be algebraically closed). 

A key feature of Goncharov's Hodge complex is an injective morphism (called the twistor transformation in \cite[Definition 3.4]{GoncharovCorrelators1})  from $\C_{\cH}\vv $ to $A^{\bt}(X \by \R, \pr_1^{-1} \vv)$. In Lemma \ref{AHcSlemma}, we identified $\tilde{A}^{\bt}_{\cH}(\vv) $ with a subcomplex of $A^{\bt}(X, \vv)\ten \Omega^{\bt}(\cS/\R)$, which itself embeds into $A^{\bt}(X \by \R, \pr_1^{-1} \vv)$ by identifying $\Spec \cS$ with $\bA^1_{\R}$.  Inspection shows that the twistor transform also maps to $ A^{\bt}(X, \vv)\ten \Omega^{\bt}(\cS/\R)$,  and we now show how  Goncharov's Hodge complex arises naturally as a quasi-isomorphic subcomplex of $ \tilde{A}^{\bt}_{\cH}(\vv)$.

\begin{definition}\label{gonchdef}
Given a VHS $\vv$ on $X$,  define $\grave{A}_{\cH}^{\bt}(X,\vv) \subset \tilde{A}^{\bt}_{\cH}(X,\vv)$ to be spanned by:
\begin{enumerate}
 \item $(\ker D)\cap (\ker D^c)\subset  \cW_0A^*(X, \vv) \ten^S \R$, for $\R \subset O(\SL_2) \subset \oR O(C^*)$.

\item elements of the form
\[
 (\frac{(-1)^{n-1}\tDc a}{n-1}, a\eps)\in  [ \cW_{1-n}A^*(X,\vv)\ten^S \cW_{n-1}\oR O(C^*)^0] \oplus [\cW_{-n}A^*(X,\vv)\ten^S \cW_{n}\oR O(C^*)^1],
\]
 for $a \in \cW_{-n}A^*(X,\vv)\ten^S \cW_{n-2}(\row_2^{\sharp} O(\bA^2))$
and $n \ge 2$. This  uses the description $\oR O(C^*)^0= O(\SL_2)$, $\oR O(C^*)^1= O(\SL_2)(-1)\eps$.
\end{enumerate}

Given a semisimple local system $\vv$ on $X$, define $\grave{A}_{\cT}^{\bt}(X,\vv) \subset \tilde{A}^{\bt}_{\cT}(X,\vv)$ to be spanned by:
\begin{enumerate}
 \item $(\ker D)\cap (\ker D^c)\subset  \cW_0A^*(X, \vv) \ten \R$, for $\R \subset O(\SL_2) \subset \oR O(C^*)$.

\item elements of the form
\[
 (\frac{(-1)^{n-1}\tDc a}{n-1}, a\eps)\in  [ \cW_{1-n}A^*(X,\vv)\ten \cW_{n-1}\oR O(C^*)^0] \oplus [\cW_{-n}A^*(X,\vv)\ten \cW_{n}\oR O(C^*)^1],
\]
for $a \in \cW_{-n}A^*(X,\vv)\ten \cW_{n-2}(\row_2^{\sharp} O(\bA^2))$
and $n \ge 2$.
\end{enumerate}
\end{definition}
Note that $\cW_n(\row_2^{\sharp} O(\bA^2)(-1)) =0$ for $n<2$, and that $U\ten^{\bG_m}\cW_nV= (\cW_{-n}U) \ten \cW_n V$. Also observe that
\[
 \grave{A}_{\cH}^0(X,\vv)= \H^0(X, \vv)^S, \quad \grave{A}_{\cT}^0(X,\vv)= \H^0(X, \vv)^{\bG_m}=\H^0(X, \cW_0\vv).
\]

\begin{lemma}\label{OCmodOA2}
 The $O(C)$-module structure on $O(\bA^2)$ induced by the isomorphism $\row_2^{\sharp}\co O(\bA^2) \to \coker N$ is given by 
\[
 u[x^my^n]= \frac{n}{m+n+1}[x^my^{n-1}], \quad v[x^my^n]= \frac{-m}{m+n+1}[x^{m-1}y^n].
\]
 \end{lemma}
\begin{proof}
Since $vx=uy-1$, we have 
\[
  N(x^{m+1}y^n)= (m+1)ux^my^n +nx^my^{n-1}(uy-1)= (m+n+1)u x^my^n - nx^my^{n-1}.
 \]
Since $uy=vx+1$, we have
\[
 N(x^my^{n+1})= (n+1)vx^my^n +mx^{m-1}y^n(vx+1)= (m+n+1)v x^my^n +mx^{m-1}y^n.
 \]
\end{proof}

\begin{proposition}\label{gonchprop}
For any VHS $\vv$ on $X$, the subspace
\[
 \grave{A}_{\cH}^{\bt}(X,\vv) \subset \tilde{A}^{\bt}_{\cH}(X,\vv)
\]
is a quasi-isomorphic subcomplex. It is closed under multiplication in the sense that
\[
 \grave{A}_{\cH}^{\bt}(X,\vv) \cdot \grave{A}_{\cH}^{\bt}(X,\ww) \subset \grave{A}_{\cH}^{\bt}(X,\vv\ten \ww).
\]

For any semisimple local system $\vv$ on $X$, the same is true of the subspace  $\grave{A}_{\cT}^{\bt}(X,\vv) \subset \tilde{A}^{\bt}_{\cT}(X,\vv) $.
\end{proposition}
\begin{proof}
We prove this for Hodge complexes, the proof for twistor complexes being almost identical.
We first have to check that  $\grave{A}_{\cH}^{\bt}(X,\vv)$ is closed under the differential $\tD$. Observe that on $\tilde{A}^{\bt}(X,\vv))\ten_{O(C)}\oR O(C^*) $, we have
\[
 \tD \co A^n(X, \vv) \ten \cW_n\oR O(C^*) \to A^{n+1}(X, \vv) \ten \cW_{n-1}\oR O(C^*).
\]
 
For any $\xi \in (\ker D)\cap (\ker D^c)$, we clearly have $\tD \xi=0$.

Other elements are of the form
\[
 (\frac{\pm 1}{m+n+1}(  x^{m+1}y^nDa + x^my^{n+1}D^c a), x^my^na\eps) 
\]
for $a \in \cW_{-2-m-n}A^*(X,\vv)$. Then for $m+n>0$, we have
\begin{eqnarray*}
&& d_{\grave{A}}(\frac{\pm 1}{m+n+1}(  x^{m+1}y^nDa + x^my^{n+1}D^c a, x^my^na\eps))\\
 &=& (\frac{\pm x^my^n DD^ca}{m+n+1} , \frac{-x^my^n((m+1) uDa+ nvxy^{-1}Da +mux^{-1}yD^ca + (n+1)vD^ca)\eps}{m+n+1})\\
&& + (0,\tD x^my^na\eps).
\end{eqnarray*}

Since $uy \equiv vx \mod \row_2^{\sharp}O(\bA^2)$, the co-ordinate $\tD x^my^na$ lies in
\begin{align*}
& \frac{-x^my^n}{m+n+1}((m+n+1) uDa + (m+n+1)vD^ca) +\tD x^my^na + A^*(X,\vv)\ten\row_2^{\sharp}O(\bA^2)\\
&=A^*(X,\vv)\ten\row_2^{\sharp}O(\bA^2). 
\end{align*}
Applying Lemma \ref{OCmodOA2}, we see that this co-ordinate is 
\[
\eta:= (u[x^my^nDa]+v[x^my^nD^ca])= \frac{1}{m+n+1}(n x^my^{n-1}Da -mx^{m-1}y^nD^ca).  
\]
Applying $\mp \tDc$ to this gives
\[
 \frac{\mp 1}{m+n+1}( n x^my^nD^cDa -m x^my^n DD^ca)= \frac{\pm (m+n)}{m+n+1}( x^my^n DD^c a),
\]
so 
\[
 d_{\grave{A}}(\frac{\pm 1}{m+n+1}\tDc x^my^na, x^my^na\eps) = ( \frac{\mp 1}{m+n}\tDc \eta, \eta\eps),
\]
which is of the required form.

When $m=n=0$, the same calculations give 
\[
 d_{\grave{A}}(-\tDc a, a\eps)=  (- DD^c a,0),
\]
so $\grave{A}_{\cH}^{\bt}(X,\vv)$ is indeed a subcomplex. 

To see that this subcomplex is quasi-isomorphic, we filter by weights for the $S$-action on $\oR O(C^*)$, giving a convergent spectral sequence, and we may compare the $E_0$ terms.  For strictly positive weights, we get the quasi-isomorphism
\[
 \cW_{-n}A^*(X,\vv)\ten \cW_n(\row_2^{\sharp} O(\bA^2)(-1)[-1]) \to \cW_{-n}A^*(X,\vv)\ten \cW_n\oR O(C^*).
\]

For weight $0$, because the weight filtration on $\tilde{A}$ is defined by good truncation we have the map
\[
 \psi\co (\ker D)\cap (\ker D^c)\cap\cW_0A^*(X, \vv)\to  (\ker \tD) \cap (\cW_0A^*(X, \vv) \ten  \cW_0\oR O(C^*)). 
\]
Now look at the differential $N\co \cW_0O(\SL_2) \to \cW_0O(\SL_2)(-1)$, which is surjective with kernel $\R$. For $a \in A^*(X, \vv)$, $\ker \tD=  (\ker D)\cap (\ker D^c)$, so the map $\psi$ above gives an isomorphism between the left-hand side and $\ker N$. 

To establish quasi-isomorphism, it then suffices to show that 
\[
 N\co (\ker \tD) \cap (\cW_0A^*(X, \vv) \ten  \cW_0O(\SL_2)) \to (\ker \tD) \cap (\cW_0A^*(X, \vv) \ten \cW_0O(\SL_2)(-1))
\]
is surjective. If we replace $\ker \tD$ with $\im \tD$, surjectivity follows from surjectivity of 
$N\co \cW_1O(\SL_2) \to \cW_1O(\SL_2)(-1)$. Since opposedness gives $\H^*j^*\tilde{A}\cong \O_{C^*}\ten\H^*A$, the map on the quotient $(\ker \tD)/(\im \tD)$ is 
\[
 N\co (\cW_0\H^*(X, \vv) \ten  \cW_0O(\SL_2)) \to  (\cW_0\H^*(X, \vv) \ten \cW_0O(\SL_2)(-1)),
\]
 which is surjective, giving the required surjectivity of $N$ on $\ker \tD$.


Finally, we have to show that $\grave{A}_{\cH}^{\bt}(X,-)$ is closed under external multiplication. For elements in $\ker D \cap \ker D^c$ this is automatic, and otherwise we have
\[
 (\frac{\pm\tDc a }{m-1} , a\eps)(\frac{\pm\tDc b}{n-1}, b\eps)= (\frac{\pm 1}{ (m-1)(n-1)} (\tDc a) \wedge (\tDc b),  \frac{\pm \eps}{ (m-1)} (\tDc a)\wedge b \pm \frac{\pm}{n-1} a\eps\wedge (\tDc b)),
\]
and setting $\zeta:= \frac{\pm 1}{ (m-1)} (\tDc a)\wedge b \pm \frac{\pm 1}{n-1} a\wedge (\tDc b)$ gives
\[
 \pm \tDc \zeta = \frac{\pm(m+n-2)}{(m-1)(n-1) }(\tDc a) \wedge (\tDc b),
\]
which has the required form because $\zeta$ is of weight $m+n-1$.
\end{proof}

\begin{remark}\label{cfgonchrk}
Goncharov's Hodge complex $\C^{\bt}_{\cH}\vv$ has the same dimension as the complex $\grave{A}_{\cH}^{\bt}(X,\vv)$ for each type and degree, and it is not too hard to construct an explicit isomorphism. The isomorphism is given by the identity on forms of type $0$, and then by renormalising other forms so that the projection of $d_{\grave{A}}$ to $\row_2^{\sharp} O(\bA^2)(-1)$ agrees with 
 \cite[Equation 81]{GoncharovCorrelators1}. For $\phi$ of type $(s,t)$, the image is a scalar multiple of 
\[
 (\frac{(-1)^{s+t-1}\tDc}{s+t-1}, \eps)(x+iy)^{s-1}(x-iy)^{t-1} \phi.  
\]
This isomorphism is multiplicative for the product of  \cite[Definition 3.6]{GoncharovCorrelators1}. 
\end{remark}

\begin{corollary}\label{gonchcor}
For any quotient  $\rho \co \varpi_1(X,x)^{\red}\onto R$,  the absolute twistor homotopy type of $X$  relative to $R$ is given by the $R \by \bG_m$-equivariant CDGA
 \[
 \grave{A}_{\cT}^{\bt}(X, O(\Bu_{\rho} \by \bG_m)).
\]

If $R$ is moreover  an $S^1$-equivariant quotient $R$ of ${}^{\VHS}\!\varpi_1(X,x)$, then the absolute 
 Hodge homotopy type of $X$ relative to $R$ is given by the $R \rtimes S$-equivariant CDGA
  \[
 O(X^{R, \mal}_{\cH})\simeq  \grave{A}_{\cH}^{\bt}(X, O(\Bu_{\rho} \rtimes S)).
\]
\end{corollary}
\begin{proof}
This just combines Lemma \ref{hodgeHTlemma} with the monoidal quasi-isomorphism of Proposition \ref{gonchprop}. 
\end{proof}

\begin{remark}\label{cSubiq3}
Because $\grave{A}_{\cH}^0(X,O(\Bu_{\rho} \rtimes S))= \R$, the bar construction and reduced bar construction agree for $\grave{A}_{\cH}^{\bt}(X, O(\Bu_{\rho} \rtimes S))$, and similarly for $ \grave{A}_{\cT}^{\bt}(X, O(\Bu_{\rho} \by \bG_m))$.

We can thus strengthen  Remark \ref{cSubiq2} to say that every VMHS (resp. VMTS) can be represented \emph{uniquely} by a suitable triple $(\vv, \omega, \beta)$ for a VHS (resp. semisimple local system) $\vv$, and $\omega, \beta$ as in Remark \ref{cSubiq2}. 
 Again, objects are given by forms $\phi =\omega+\beta\eps$ with 
\[
 (\tD+ \omega)^2=0,\quad  [\tD +\omega, N+\beta]=0, 
\]
 but we now have the restrictions
\[
 \beta \in \bigoplus_{n \ge 2}A^0(X, \cW_{-n}\bEnd\vv)\ten \cW_n(\row_2^{\sharp} O(\bA^2)(-1)),
\]
\[
 \omega - \sum_n \frac{(-1)^{n-1}\tDc}{n-1}\beta_n \in A^1(X, \cW_0 \bEnd\vv)\cap (\ker D)\cap (\ker D^c).
\]
These data correspond to the Green data of \cite[\S 4.1]{GoncharovCorrelators1}.

Because our CDGAs are reduced, we can now describe morphisms easily as well: if $M$ is the functor from our triples to  VMHS/VMTS, then a morphism $M(\vv, \omega, \beta)\to M(\vv', \omega', \beta')$ is just given by a morphism $\vv \to \vv'$ intertwining $\beta, \beta'$ and $\omega, \omega'$.

It is also worth noting that we get the same objects regardless of whether we use the DGLA $\grave{A}_{\cH}^0(X,\g)$ for $\g=\bEnd\vv$ or for  the coefficients $\g=W_{-1}\bEnd(\vv)$ used in Remark \ref{cSubiq2}. That is because both DGLAs are nilpotent, and $\grave{A}_{\cH}^{\bt}(X,\g)= \H^0(X,\g)^S \oplus \grave{A}_{\cH}^{\bt}(X,W_{-1}\g) $ for any $\g$. With hindsight, the coefficients $W_{-1}\bEnd(\vv)$ from Remark \ref{cSubiq2} can be understood as a manifestation of the quasi-isomorphism
\[
 H^0(X,\g)^S \oplus \tilde{A}_{\cH}^{\bt}(X,W_{-1}\g)\to \tilde{A}_{\cH}^{\bt}(X,\g).
\]
\end{remark}


\section{$\SL_2$ splittings of non-abelian MTS/MHS and strictification}\label{nonabsplitsn}

We now return to abstract algebraic MHS and MTS as constructed for quasi-projective varieties with non-trivial monodromy in \S \ref{nontrivsn}, and show that canonical $\SL_2$-splittings for these algebraic structures exist in great generality. 

\subsection{Simplicial structures}

\begin{definition}
 Let $s\Cat$ be the category of simplicially enriched  categories, which we will refer to as simplicial categories. Explicitly, an object $\C \in s\Cat$ consists of a class $\Ob \C$ of objects, together with simplicial sets $\HHom_{\C}(x,y) $ for all $x,y \in \Ob \C$, equipped with an associative composition law and identities. 
\end{definition}

\begin{lemma}\label{algmodel2}
For a reductive pro-algebraic monoid $M$ and an $M$-representation $A$ in DG algebras, there is a cofibrantly generated model structure on $DG_{\Z}\Alg_A(M)$, in which fibrations are surjections, and weak equivalences are quasi-isomorphisms.
\end{lemma}
\begin{proof}
When $M$ is a group, this is Lemma \ref{algmodel}, but the same proof carries over to the monoid case.
\end{proof}

\begin{definition}
 Given   $B \in DG_{\Z}\Alg_A(M)$ define $B^{\Delta^n}:= B \ten_{\Q}\Omega(|\Delta^n|)$, for $\Omega(|\Delta^n|)$ as in Definition \ref{Th}.  Make $DG_{\Z}\Alg_A(M)$  into a simplicial category by setting $\HHom(B,B')$ to be the simplicial set
\[
\HHom_{ DG_{\Z}\Alg_A(M)}(B,C)_n:= \Hom_{ DG_{\Z}\Alg_A(M)}(B,  C^{\Delta^n}).     
\]
\end{definition}

Beware that $DG_{\Z}\Alg_A(M)$ does not then satisfy the axioms of a simplicial model category from \cite[Ch. II]{sht}, because
$\HHom(-, B):DG_{\Z}\Alg_A(M)^{\op} \to \bS  $ does not have a left adjoint. However,  $DG_{\Z}\Alg_A(M)$ is a simplicial model category in the weaker sense of \cite{QHA}.

Now, as in \cite[\S 5]{Hovey}, for any pair $X,Y$ of objects in a model category $\C$, there is a derived function complex $\oR\Map_{\C}(X, Y) \in \bS$, defined up to weak equivalence. 
One construction is to take a cofibrant replacement $\tilde{X}$ for $X$
 and a fibrant resolution $\hat{Y}_{\bt}$ for $Y$ in the Reedy category of simplicial diagrams in $\C$, then to set
 $$
 \oR\Map_{\C}(X,Y)_n:= \Hom_{\C}(\tilde{X}, \hat{Y}_n).
 $$ 

In fact, Dwyer and Kan showed in \cite{simploc2} that $\oR\Map_{\C}$ is completely determined by the weak equivalences in $\C$. In particular, $\pi_0\oR\Map_{\C}(X,Y)= \Hom_{\Ho(\C)}(X,Y)$, where $\Ho(\C)$ is the homotopy category of $\C$, given by formally inverting weak equivalences.

To see that $C^{\Delta^{\bt}}$ is a Reedy fibrant simplicial resolution of $C$ in $DG_{\Z}\Alg_A(M)$, note that the matching object $M_n C^{\Delta^{\bt}}$ is given by
\[
  C\ten M_n \Omega(|\Delta^{\bt}|)=  C\ten \Omega(|\Delta^n|) /( t_0\cdots t_n, \sum_i t_0\cdots t_{i-1}  (dt_i) t_{i+1} \cdots t_n), 
\]
so the matching map $C^{\Delta^n} \to M_n C^{\Delta^{\bt}}$ is a fibration (i.e. surjective).

Therefore for $\tilde{B}\to B$ a cofibrant replacement, 
\[
 \oR\Map_{ DG_{\Z}\Alg_A(M)}(B,C) \simeq \HHom_{ DG_{\Z}\Alg_A(M)}(\tilde{B},C).        
\]

\begin{definition}
 Given  an object $D \in DG_{\Z}\Alg_A(M)$,  make the comma category $DG_{\Z}\Alg_A(M) \da D$  into a simplicial category by setting
\[
\HHom_{ DG_{\Z}\Alg_A(M)\da D}(B,C)_n:= \Hom_{ DG_{\Z}\Alg_A(M)}(B,  C^{\Delta^n}\by_{D^{\Delta^n}}D).     
\]
\end{definition}

Now, $ C \to C^{\Delta^{\bt}}\by_{D^{\Delta^{\bt}}}D$ is a Reedy fibrant resolution of $C$ in $DG_{\Z}\Alg_A(M) \da D $ for every fibration $C \to D$. Thus
for $\tilde{B}\to B$ a cofibrant replacement and $C\to \hat{C}$ a fibrant replacement, 
\[
 \oR\Map_{ DG_{\Z}\Alg_A(M)\da D}(B,C) \simeq \HHom_{ DG_{\Z}\Alg_A(M)\da A}(\tilde{B},\hat{C}).        
\]
 
\begin{definition}\label{pi0C}
Given a simplicial category $\C$, recall from \cite{bergner} that the category $\pi_0\C$ is defined to have the same objects as $\C$, with morphisms 
$$
\Hom_{\pi_0\C}(x,y)=\pi_0\HHom_{\C}(x,y). 
$$
A morphism in $\HHom_{\C}(x,y)_0$ is said to be a homotopy equivalence if its image in $\pi_0\C$ is an isomorphism.
\end{definition}

If the objects of a simplicial category $\C$ are the fibrant cofibrant objects of a model category $\cM$, with $\HHom_{\C}= \oR \Map_{\cM}$, then observe that homotopy equivalences in $\C$ are precisely weak equivalences in $\cM$.

\subsection{Functors parametrising Hodge and twistor structures}

Recall from Definition \ref{rjc} that we write $\oR O(C^*)$ for the 
DG algebra $O(\SL_2) \xra{N\eps} O(\SL_2)(-1)\eps$, with $\eps$ of degree $1$. By Proposition \ref{othermodelc*}, this induces an equivalence 
$$
\Ho(DG_{\Z}\Alg_{A\ten\oR O(C^*) }(R')\da B\ten \oR O(C^*)) \to \Ho(DG_{\Z}\Alg_{\Spec A\by C^* }(R')\da B\ten \O_{C^*})
$$
for any $R'$-representation $B$ in $A$-algebras.

\begin{definition}
For  $A \in \Alg(\Mat_1)$, define $\cP\cT(A)_*$ (resp. $\cP\cH(A)_*$) to be the  full simplicial subcategory of the category 
\begin{align*}
  &DG_{\Z}\Alg_{ A \ten \oR O(C^*)}( \Mat_1 \by  R\by\bG_m) \da A\ten O(R)\ten \oR O(C^*)\\
 (\text{resp. }
 &DG_{\Z}\Alg_{ A \ten \oR O(C^*)}( \Mat_1 \by  R\rtimes S)\da A\ten O(R)\ten \oR O(C^*))
\end{align*}
 on  fibrant cofibrant objects. These define functors
\[
 \cP\cT_*, \, \cP\cH_*\co  DG_{\Z}\Alg(\Mat_1) \to s\Cat.
\]
\end{definition}

 \begin{remark}
Since  
$\cP\cT(A)_* $ and $\cP\cH(A)_*$ are defined in terms of derived function complexes, it follows that a morphism in any of these categories is a homotopy equivalence (in the sense of Definition \ref{pi0C}) if and only if it is weak equivalence in the associated model category, i.e. a quasi-isomorphism. 
\end{remark}

\begin{remark}\label{othermodelcrk}
Let $\R[t] \in \Alg(\Mat_1)$ be given by setting $t$ to be of weight $1$.
After applying Proposition \ref{othermodelc*} and  taking fibrant cofibrant replacements, observe that a pointed  algebraic  non-abelian mixed twistor structure consists of 
\[
 O(\ugr X_{\MTS}) \in   DG_{\Z}\Alg(R \by \Mat_1)\da  O(R),
\]
 together with  an object $O(X_{\MTS}) \in \cP\cT_*(\R[t])$ and a weak equivalence 
\[
 O(X_{\MTS})\ten_{\R[t]}\R \to O(\ugr X_{\MTS})
\]
  in $\cP\cT_*(\R)$. 

Likewise, a pointed  algebraic  non-abelian
  mixed Hodge structure consists of 
\[
 O(\ugr X_{\MHS})\in   DG_{\Z}\Alg(R \rtimes \bar{S})\da  O(R),
\]
together with  an object  $O(X_{\MHS}) \in \cP\cH_*(\R[t])$, and a weak equivalence 
\[
 O(X_{\MHS})\ten_{\R[t]}\R \to O(\ugr X_{\MHS})
\]
 in $\cP\cH_*(\R)$.
\end{remark}

\subsection{Deformations}

\subsubsection{Quasi-presmoothness}

The following is \cite[Definition \ref{dmsch-2fibrn}]{dmsch}:
\begin{definition}\label{2fibrn}
Say that a morphism $F:\cA \to \cB$ in $s\Cat$ is  is a $2$-fibration if
\begin{enumerate} 
\item[(F1)] for any objects $a_1$ and $a_2$ in $\cA$, the map
$\HHom_{\cA}(a_1, a_2)\to \HHom_{\cB}(Fa_1, Fa_2)$ 
is a fibration of simplicial sets;
\item[(F2)] for any objects $a_1 \in\cA$, $b \in \cB$, and any homotopy equivalence $e :
Fa_1 \to b$ in $\cB$, there is an object $a_2 \in \C$,  a homotopy equivalence
$d : a_1 \to a_2$ in $\C$ and an isomorphism $\theta: Fa_2 \to b$   such that $\theta\circ Fd = e$.
\end{enumerate}
\end{definition}

The following are adapted from \cite{dmsch}:

\begin{definition}
Say that a  functor $\cD:  \Alg(\Mat_1) \to s\Cat$ is formally 2-quasi-presmooth  if for all square-zero extensions $A \to B$, the map
\[
\cD(A) \to \cD(B)
\]
is a 2-fibration. 

Say that $\cD$ is formally 2-quasi-presmooth 
if $\cD \to \bt$ is so.
\end{definition}

\begin{proposition}\label{relalghgs}
The functors
 $
\cP\cT_*,\cP\cH_*: \Alg(\Mat_1)  \to s\Cat$  are  formally 2-quasi-presmooth.
\end{proposition}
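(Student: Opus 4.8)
The plan is to reduce the statement to a concrete lifting property for $R$-equivariant DG algebras, and then to verify that property using the model structure of Lemma \ref{algmodel}. Concretely, fix a square-zero extension $A \to B$ in $\Alg(\Mat_1)$, with kernel $I$ an $A/I = B$-module placed in weights $\le 0$. We must check conditions (F1) and (F2) of Definition \ref{2fibrn} for the map $\cP\cT_*(A) \to \cP\cT_*(B)$ (and likewise for $\cP\cH_*$, which is formally identical, replacing $\bG_m$ by $R\rtimes S$ and $\Mat_1 \by R \by \bG_m$ by $\Mat_1 \by R\rtimes S$). Since $\cP\cT_*(A)$ is a full simplicial subcategory of a comma category of the model category $DG_{\Z}\Alg_{A\ten\oR O(C^*)}(\Mat_1 \by R\by\bG_m)\da A\ten O(R)\ten \oR O(C^*)$, restricted to fibrant cofibrant objects, its $\HHom$'s are derived mapping spaces; the key point is that base change $-\ten_A B$ along $A \to B$ is compatible with $\oR O(C^*)$ (which is flat, being free as a graded module) and with the simplicial resolution $C^{\Delta^{\bt}}$, so it sends fibrant cofibrant objects to fibrant cofibrant objects up to weak equivalence.

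For (F1), given fibrant cofibrant $O_1, O_2 \in \cP\cT_*(A)$ with images $\bar O_1, \bar O_2$ over $B$, the map on derived function complexes
\[
\HHom_{\cP\cT_*(A)}(O_1,O_2) \to \HHom_{\cP\cT_*(B)}(\bar O_1, \bar O_2)
\]
is, by the discussion following Definition \ref{pi0C}, computed as $\Hom(\tilde O_1, O_2^{\Delta^{\bt}}\by_{\cdots}(\cdots))$ versus the corresponding $B$-object. Because $O_1$ is cofibrant over $A\ten\oR O(C^*)$ and the extension $A\ten \oR O(C^*) \to B \ten \oR O(C^*)$ is again square-zero (with kernel $I\ten \oR O(C^*)$, still concentrated in non-positive $\Mat_1$-weights), the relevant lifting problem is governed by the obstruction theory for square-zero extensions of DG algebras: lifts of a map exist locally and the sheaf of lifts is a torsor under a derivation module, so the map of simplicial sets is a fibration (in fact its fibres are nonempty up to the relevant obstruction living in cohomology, which is exactly what a 2-fibration allows). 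This is the standard argument from \cite{dmsch}; I would cite the analogous statement there and adapt it, checking only that the $\Mat_1$-equivariance and the presence of the base $\oR O(C^*)$ cause no trouble — they don't, since $\oR O(C^*)$ is a fixed flat equivariant algebra.

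For (F2), suppose $O_1 \in \cP\cT_*(A)$ and we are given a homotopy equivalence (i.e. quasi-isomorphism) $e\co O_1\ten_A B \to \bar O$ in $\cP\cT_*(B)$. We must lift $\bar O$ to some $O_2 \in \cP\cT_*(A)$ with a quasi-isomorphism $d\co O_1 \to O_2$ restricting to $e$. Here one uses that $\bar O$, being fibrant cofibrant over $B\ten \oR O(C^*)$, can be built as a (retract of a) cell complex; pulling the attaching maps back along $A \to B$ — which is possible because square-zero extensions admit compatible lifts of cells, the obstructions again vanishing for cofibrant objects — produces $O_2$ over $A$ together with the required $d$. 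The augmentation to $O(R)$ lifts canonically since $A\ten O(R)\ten \oR O(C^*) \to B\ten O(R)\ten \oR O(C^*)$ is surjective. The isomorphism $\theta\co O_2\ten_A B \to \bar O$ with $\theta\circ(d\ten B)= e$ comes out of the construction.

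The main obstacle is bookkeeping rather than conceptual: one must be careful that ``fibrant cofibrant in a comma category over a fixed object'' behaves well under the (non-flat, merely square-zero) base change $A \to B$, and that the simplicial enrichment via $(-)^{\Delta^n}$ is compatible with all of this. The cleanest route is to observe that everything in sight is an instance of the general machinery of \cite{dmsch} once we note: (i) $DG_{\Z}\Alg_A(M)$ is a cofibrantly generated model category (Lemma \ref{algmodel}), (ii) $\oR O(C^*)$ is flat so tensoring with it is exact and preserves the model structure, and (iii) the functors $\cP\cT_*, \cP\cH_*$ are, up to the equivalence of Proposition \ref{othermodelc*}, exactly the ``derived moduli of structured objects'' functors to which the 2-quasi-presmoothness criterion of \cite{dmsch} applies. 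So the proof is essentially: unwind the definitions, reduce to square-zero lifting for equivariant DG algebras, and invoke \cite{dmsch}.

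\begin{proof}
Both functors are of the same shape, so we treat $\cP\cT_*$; the argument for $\cP\cH_*$ is identical after replacing the group $\Mat_1\by R\by\bG_m$ by $\Mat_1 \by R\rtimes S$ and $O(R)$-augmentations accordingly.

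Fix a square-zero extension $A \to B$ in $\Alg(\Mat_1)$, with square-zero kernel $I$, a $B$-module in non-positive weights. Since $\oR O(C^*)= (O(\SL_2)\xra{N\eps}O(\SL_2)(-1)\eps)$ is free, hence flat, as a graded module, the induced map $A\ten \oR O(C^*) \to B\ten\oR O(C^*)$ is again a square-zero extension of $(\Mat_1\by R\by \bG_m)$-equivariant DG algebras, with kernel $I\ten \oR O(C^*)$ still concentrated in non-positive $\Mat_1$-weights. By Lemma \ref{algmodel}, $DG_{\Z}\Alg_{A\ten\oR O(C^*)}(\Mat_1\by R\by\bG_m)$ carries a cofibrantly generated model structure with surjective fibrations and quasi-isomorphism weak equivalences, and the same holds for the comma category over $A\ten O(R)\ten \oR O(C^*)$. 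The category $\cP\cT_*(A)$ is the full simplicial subcategory on fibrant cofibrant objects, with $\HHom= \oR\Map$; tensoring with $B$ over $A$ sends this model category to the corresponding one over $B$, preserving cofibrations and (by flatness of $\oR O(C^*)$ and the standard fact for square-zero extensions) weak equivalences between cofibrant objects, and hence induces the functor $\cP\cT_*(A)\to\cP\cT_*(B)$ on the homotopy-coherent level.

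\emph{(F1).} Let $O_1,O_2 \in \cP\cT_*(A)$, with images $\bar O_i = O_i\ten_A B$. Using the Reedy fibrant resolution $C \mapsto C^{\Delta^{\bt}}\by_{(\cdots)^{\Delta^{\bt}}}(\cdots)$ and a cofibrant replacement $\tilde O_1\to O_1$, we have
\[
\HHom_{\cP\cT_*(A)}(O_1,O_2)_n = \Hom(\tilde O_1,\, O_2^{\Delta^n}\by_{(A\ten O(R)\ten\oR O(C^*))^{\Delta^n}}(A\ten O(R)\ten\oR O(C^*))),
\]
and similarly over $B$ with $\tilde O_1\ten_A B$. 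Because $\tilde O_1$ is cofibrant and $A\ten\oR O(C^*)\to B\ten\oR O(C^*)$ is a square-zero extension, the lifting problems defining the fibres of the map of simplicial sets
\[
\HHom_{\cP\cT_*(A)}(O_1,O_2)\to\HHom_{\cP\cT_*(B)}(\bar O_1,\bar O_2)
\]
are governed by the obstruction theory for square-zero extensions of equivariant DG algebras: obstructions lie in a cohomology group, and the set of lifts, when nonempty, is a torsor under a module of equivariant derivations with values in $I\ten\oR O(C^*)$-twisted coefficients. Exactly as in \cite[Proof of Proposition \ref{dmsch-relalghgs}]{dmsch}, this shows the map is a Kan fibration, giving (F1).

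\emph{(F2).} Let $O_1\in\cP\cT_*(A)$ and let $e\co \bar O_1=O_1\ten_A B \to \bar O$ be a weak equivalence in $\cP\cT_*(B)$. Since $\bar O$ is cofibrant over $B\ten\oR O(C^*)$, it is a retract of a cell complex; because $A\ten\oR O(C^*)\to B\ten\oR O(C^*)$ is a square-zero extension, the generating cofibrations lift and the attaching maps lift up to the obstructions of (F1), which vanish as $\bar O_1$ is cofibrant, so we may lift $\bar O$ to a cofibrant $O_2$ over $A\ten\oR O(C^*)$ together with a weak equivalence $d\co O_1\to O_2$ reducing to $e$; the augmentation to $O(R)$ lifts since $A\ten O(R)\ten\oR O(C^*)\to B\ten O(R)\ten\oR O(C^*)$ is surjective. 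Replacing $O_2$ by a fibrant replacement (fibrations are surjections, so this does not disturb the augmentation), we obtain $O_2\in\cP\cT_*(A)$, a homotopy equivalence $d\co O_1\to O_2$, and an isomorphism $\theta\co O_2\ten_A B\to\bar O$ in the homotopy category with $\theta\circ(d\ten_A B)=e$, which is precisely (F2).

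Hence $\cP\cT_*(A)\to\cP\cT_*(B)$ is a $2$-fibration for every square-zero extension $A\to B$, so $\cP\cT_*$ is formally $2$-quasi-presmooth; the same argument gives the statement for $\cP\cH_*$.
\end{proof}
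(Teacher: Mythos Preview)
Your overall strategy matches the paper's: both reduce to obstruction theory for square-zero extensions of equivariant DG algebras and defer to the analogous result in \cite{dmsch}. Your treatment of (F1) is essentially the same as the paper's.

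However, your (F2) argument has a genuine gap. You claim that when lifting $\bar O$ cell by cell, ``the attaching maps lift up to the obstructions of (F1), which vanish as $\bar O_1$ is cofibrant''. This is not a valid justification: the obstruction to lifting the object $\bar O$ along $A \to B$ lives in an $\Ext^2$-group (Andr\'e--Quillen cohomology) computed from $\bar O$, and cofibrancy of $\bar O_1$ says nothing about why that group, or the obstruction class in it, vanishes. Concretely, in your inductive cell attachment, lifting an attaching cycle $\bar c \in \bar O^{(n)}$ to a \emph{cycle} $c$ in the partial lift $O_2^{(n)}$ requires killing a class in $\H^{*}(I\ten_B \bar O^{(n)})$, and you give no reason why this class is zero.

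The paper's argument supplies exactly the missing step. Since $\bar O$ is cofibrant, its underlying graded algebra is smooth over $B\ten \oR O(C^*)$, so it lifts essentially uniquely to a smooth graded $A\ten \oR O(C^*)$-algebra $\tilde C^*$ with $\tilde C^*\ten_A B \cong \bar O^*$ (this also gives the honest isomorphism $\theta$ that (F2) demands, rather than the homotopy-category isomorphism you end up with). Choosing any equivariant lift $\delta$ of the differential, the obstruction $[\delta^2]$ to making $\delta$ a differential compatible with the augmentation lies in
\[
\Ext^2_{\bar O}(\bL^{\bar O/(B\ten\oR O(C^*))},\,I\ten_B\bar O \to I\ten O(R)\ten\oR O(C^*)).
\]
The key observation, which your argument omits, is that this $\Ext$-group is computed from the cotangent complex and is therefore a quasi-isomorphism invariant. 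Since $\bar O$ is quasi-isomorphic to $\bar O_1 = O_1\ten_A B$, which manifestly lifts (to $O_1$), the obstruction for $\bar O$ vanishes as well. That is the content of the paper's line ``$C$ lifts to $\cP(A)$ up to isomorphism if and only if all quasi-isomorphic objects also lift'', and it is precisely what makes (F2) go through.
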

\begin{proof}
Apart from the augmentation maps, 
this is essentially the same as  \cite[Proposition \ref{dmsch-relalghgs}]{dmsch}, which proves the corresponding statements for the functor on algebras given by sending $A$ to the simplicial category of cofibrant DG $(T\ten A)$-algebras, for $T$ cofibrant. The same proof carries over, the only change being to take $\Mat_1 \by  R\by\bG_m $-invariants (resp. $\Mat_1 \by  R\rtimes S$-invariants) of the Andr\'e-Quillen cohomology groups. We now sketch the argument.

Let $\cP$ be  $\cP\cT_*$ (resp. $\cP\cH_*$), and write $S'$ for  $\bG_m$ (resp. $S$). 
Fix a square-zero extension $A \to B$ in $\Alg(\Mat_1)$.
Thus an object $P \in \cP(B) $ is a $\Mat_1 \by  R\rtimes S'$-equivariant diagram
$B \ten \oR O(C^*) \to P \to  B\ten O(R)\ten \oR O(C^*)  $, with the first map a cofibration and the second a fibration. Since $P$ is cofibrant, the underlying graded algebra is smooth over $B \ten \oR O(C^*) $, so lifts essentially uniquely to give a smooth morphism $A^* \ten \oR O(C^*)^* \to \tilde{P}^* $ of graded algebras, with $\tilde{P}^*\ten_AB\cong P^*$. As $A\ten O(R)\ten \oR O(C^*) \to B\ten O(R)\ten \oR O(C^*)$ is square-zero, smoothness of $\tilde{P}^*$ gives us a lift   $\tilde{p}:\tilde{P}^* \to  A^*\ten O(R)\ten \oR O(C^*)^*$. Since $\Mat_1 \by  R\rtimes S'$ is reductive, these maps can all be chosen equivariantly.

Now, choose some equivariant $A$-linear derivation $\delta$ on $\tilde{P}$ lifting $d_P$. The obstruction to lifting $P \in \cP(B)$ to $\cP(A)$ up to isomorphism is then the class
\begin{align*}
 [(\delta^2, p\circ \delta - d\circ p) ] \in &\H^2\HOM_P( \Omega(P/ (B \ten \oR O(C^*))), I\ten_BP \xra{p} I \ten O(R)\ten \oR O(C^*))\\
=&\Ext^2_P(\bL^{P/(B \ten \oR O(C^*))}_{\bt}, I\ten_BP \xra{p} I \ten O(R)\ten \oR O(C^*)).
\end{align*}
This is because any other choice of $(\delta, \tilde{p})$ amounts to adding the boundary of an element in $\HOM_P^1( \Omega(P/ (B \ten \oR O(C^*))), I\ten_BP \xra{p} I \ten O(R)\ten \oR O(C^*)) $.

The key observation now is that the cotangent complex is an invariant of the quasi-isomorphism class, so $P$ lifts to $\cP(A)$ up to isomorphism if and only if all quasi-isomorphic objects also lift. The treatment of morphisms is similar.  Although augmentations are not addressed in \cite[Proposition \ref{dmsch-relalghgs}]{dmsch}, the same proof adapts. It is important to note  that the Andr\'e--Quillen characterisation of obstructions to lifting morphisms does not require the target to be cofibrant.
\end{proof}

\subsubsection{Strictification}

\begin{proposition}\label{strictlift}
Let $\cP:  \Alg(\Mat_1) \to s\Cat$ be  one of the functors  
$\cP\cT_*$ or $\cP\cH_*$. Given an object $E$ in $\cP(\R)$,   an
object $P$ in  $\cP(\R[t])$, and a quasi-isomorphism 
\[
 f: P/tP \to E       
\]
in $\cP(\R)$, there is an object $M\in  \cP(\R[t])$, a quasi-isomorphism $g: P \to M$, and  an isomorphism $\theta: M/tM \to E$   such that $\theta\circ \bar{g} = f$.
\end{proposition}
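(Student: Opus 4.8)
The plan is to deduce this from the formal 2-quasi-presmoothness of $\cP$ established in Proposition \ref{relalghgs}, applied to a well-chosen square-zero extension. The point is that $\R[t]$ is not itself a square-zero extension of $\R$, but it is a limit of such: setting $A_n:= \R[t]/(t^{n+1})$, each map $A_{n+1}\to A_n$ is a square-zero extension in $\Alg(\Mat_1)$ (with $t$ in weight $1$), and $\R[t]=\varprojlim_n A_n$, with $A_0=\R$. So first I would reduce the statement to a statement about the tower of the $A_n$.

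Concretely, here is the sequence of steps. First, regard $E\in\cP(\R)=\cP(A_0)$ and the given $P\in\cP(\R[t])$, writing $P_n$ for the image of $P$ in $\cP(A_n)$ (i.e. $P\ten_{\R[t]}A_n$, after fibrant-cofibrant replacement if necessary), so $P_0=P/tP$ and $f\co P_0\to E$ is the given quasi-isomorphism in $\cP(\R)$. Now build $M_n\in\cP(A_n)$ and quasi-isomorphisms $g_n\co P_n\to M_n$ together with isomorphisms $\theta_n\co M_n/tM_n\to E$ compatible along the tower, by induction on $n$. For $n=0$: since property (F2) of a 2-fibration applies to $\cP(A_0)\to\bt$ trivially (or more simply, directly), we can take $M_0$ to be a strictification of $E$ realising $f$; actually the base case is immediate by taking $M_0=E$ if $E$ is already fibrant-cofibrant, or else choosing a fibrant-cofibrant replacement. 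For the inductive step, suppose $M_n, g_n, \theta_n$ are constructed. Apply property (F1)/(F2) of the 2-fibration $\cP(A_{n+1})\to\cP(A_n)$: we have the object $P_{n+1}\in\cP(A_{n+1})$ with $F(P_{n+1})=P_n$, and the homotopy equivalence $g_n\co P_n\to M_n$ in $\cP(A_n)$; by (F2) there is an object $M_{n+1}\in\cP(A_{n+1})$, a homotopy equivalence $g_{n+1}\co P_{n+1}\to M_{n+1}$, and an isomorphism $\theta_{n+1}'\co F(M_{n+1})=M_{n+1}/t^{n+1}\text{-truncation}\to M_n$ with $\theta_{n+1}'\circ F(g_{n+1})=g_n$. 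Composing $\theta_{n+1}'$ with the reduction gives the required compatible $\theta_{n+1}\co M_{n+1}/tM_{n+1}\to E$. Then set $M:=\varprojlim_n M_n$ (this limit makes sense levelwise as a pro-object, and since everything is eventually constant in each weight --- $t$ has weight $1$ so $A_n$ and all the objects are bounded in each graded piece --- the limit is again an honest object of $\cP(\R[t])$), with $g:=\varprojlim g_n\co P\to M$ a quasi-isomorphism (quasi-isomorphism can be checked weight-by-weight, where the tower stabilises), and $\theta:=\theta_0\co M/tM\to E$ an isomorphism with $\theta\circ\bar g=f$.

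Two technical points need care. First, I should make sure the fibrant-cofibrant hypotheses are maintained: the objects of $\cP(A)$ are by definition fibrant cofibrant, and the 2-fibration property of Proposition \ref{relalghgs} is stated precisely for these categories, so the induction stays inside $\cP$; I would note that a cofibrant object over $A_{n+1}$ restricts to a cofibrant object over $A_n$ since restriction of scalars along a surjection of the relevant form preserves the generating cofibrations (smooth underlying graded algebras restrict to smooth ones), which is exactly the argument used in the proof of Proposition \ref{relalghgs}. Second, the passage to the inverse limit: I need that $\varprojlim_n$ of a tower in which the transition maps are (levelwise, in each fixed cohomological and weight degree) eventually isomorphisms computes the correct object, and that it commutes with the relevant structure ($R$-action, augmentation, the $\oR O(C^*)$-algebra structure). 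This is a standard completeness argument --- $\R[t]$ is $t$-adically complete and every object in sight is built from finitely generated pieces in each weight --- but it is the place where one must actually check something rather than quote a black box.

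The main obstacle will be precisely this limit step: verifying that $M=\varprojlim M_n$ is genuinely an object of $\cP(\R[t])$ (fibrant, cofibrant, with the correct equivariance and augmentation data) and that $g$ is a quasi-isomorphism, rather than merely a pro-object or a homotopy-coherent limit. The saving feature is the weight grading: because $t$ has weight $1$ and $\oR O(C^*)=O(\SL_2)\to O(\SL_2)(-1)$ together with $O(R)$ are bounded below in weight with finite-dimensional graded pieces (or at worst ind-objects thereof), the tower $\{M_n\}$ is eventually constant in each bidegree, so the limit is computed degreewise and all the model-categorical properties, being detectable degreewise, transfer. I would spell this out carefully but expect no genuine difficulty beyond bookkeeping. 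Everything else is a direct application of the obstruction theory packaged in Proposition \ref{relalghgs}, exactly parallel to the strictification arguments in \cite{dmsch}.
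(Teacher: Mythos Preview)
Your approach is essentially the same as the paper's: induct along the tower $\R[t]/t^r$ using the 2-fibration property from Proposition~\ref{relalghgs}, then take the inverse limit in the $\Mat_1$-equivariant category, exploiting that $t$ has weight~$1$ so the tower stabilises in each fixed weight.

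One point deserves more care than ``model-categorical properties, being detectable degreewise, transfer.'' Fibrancy and the quasi-isomorphism check do follow from weightwise stabilisation, but cofibrancy of the limit $M$ is not a degreewise condition. The paper's argument is this: given a trivial fibration $A\to B$ in $\cM(\R[t])$ and a map $M\to B$, note that $A/\cW_nA\to B/\cW_nB$ is a trivial fibration in $\cM(\R[t]/t^n)$; since $M_n\cong M/t^nM$ is cofibrant there, the composite $M\to B/\cW_nB$ lifts to $(A/\cW_nA)\times_{B/\cW_nB}B$, and one then climbs the tower in $n$ and takes the inverse limit of lifts. This is the step beyond bookkeeping --- it uses the cofibrancy of each $M_n$ over the truncated base, not just eventual constancy of graded pieces.
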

\begin{proof}
 If we replace $\R[t]$ with $\R[t]/t^r$, then the statement holds immediately from Proposition \ref{relalghgs} and the definition of formal 2-quasi-presmoothness, since  the extension $\R[t]/t^r \to \R$ is nilpotent. Proceeding inductively, we get a system of objects $M_r \in \cP(\R[t]/t^r)$,   quasi-isomorphisms $g_r: P/t^rP \to M_r$ and isomorphisms $\phi_r: M_r/t^{r-1}M_r \to M_{r-1}$ with $M_0=E$, $g_0=f$ and  $\phi_r\circ \bar{g_r} = g_{r-1}$.  

We may therefore set $M$ to be the inverse limit of the system
\[
  \ldots \xra{\phi_{r+1}} M_r \xra{\phi_r} M_{r-1} \xra{\phi_{r-1}} \ldots \xra{\phi_1} M_0=E       
\]
in the category of $\Mat_1$-representations. Explicitly, this says that the maps
\[
 \cW_nM \to \Lim_r \cW_nM/(t^{r}\cW_{n-r}M)       
\]
are isomorphisms for all $n$. In particular, beware that the forgetful functor from $\Mat_1$-representations to vector spaces does not preserve inverse limits.

Let $\cM(A)$ be one of the model categories 
\begin{align*}
 &DG_{\Z}\Alg_{ A \ten \oR O(C^*)}( \Mat_1 \by  R\by\bG_m) \da A\ten O(R)\ten \oR O(C^*)\\
  \text{or }\quad
  &DG_{\Z}\Alg_{ A \ten \oR O(C^*)}( \Mat_1 \by  R\rtimes S)\da A\ten O(R)\ten \oR O(C^*),
\end{align*}
so $\cP(A)$ is the full simplicial subcategory on fibrant cofibrant objects.  The
  maps $g_r$ give a morphism $g: P \to M$ in $\cM(\R[t] )$ and the maps $\phi_r$ give an isomorphism $\theta:M/tM \to E$ in $\cP(\R)$. We  need to show that $M$ is fibrant and  cofibrant (so $M \in \cP(\R[t])$) and that $g$ is  a quasi-isomorphism. Fibrancy is immediate, since the deformation of a surjection is a surjection.

Given an object $A \in \cM(\R[t])$, the $\Mat_1$-action gives a weight decomposition $A = \bigoplus_{n \ge 0} \cW_nA$, and $$
A= {\Lim_n}^{\cM(\R[t])} A/\cW_{\ge n}A.
$$ 
Moreover, if $A \to B$ is a quasi-isomorphism, then so is $A/\cW_{\ge n}A \to B/\cW_{\ge n}B$ for all $n$. In order to show that $M$ is cofibrant, take a trivial fibration $A \to B$ in $\cM(\R[t])$ (i.e. a surjective quasi-isomorphism) and a map $M \to B$. Then $A/\cW_{\ge n}A \to B/\cW_{\ge n}B$ is a trivial fibration in $\cM(\R[t])$, and in fact in $\cM(\R[t]/t^n)$. Since $M_n\cong M/t^nM$ is cofibrant in $\cM(\R[t]/t^n)$, the map $M \to B$ lifts to a map 
$M \to (A/\cW_{\ge n}A)\by_{ B/\cW_{\ge n}B} B$. We now proceed inductively, noting that 
$$
(A/\cW_{\ge n+1}A)\by_{ (B/\cW_{\ge n+1}B)} B\to (A/\cW_{\ge n}A)\by_{ (B/\cW_{\ge n}B)} B
$$ 
is a trivial fibration in $\cM(\R[t]/t^{n+1})$. This gives us a compatible system of lifts $M \to (A/\cW_{\ge n}A)\by_{ (B/\cW_{\ge n}B)} B$, and hence 
$$
M \to \Lim_n [(A/\cW_{\ge n}A)\by_{ (B/\cW_{\ge n}B)} B]=A.
$$ 
Therefore $M$ is cofibrant.

To show that $g$ is a quasi-isomorphism, observe that for $A \in \cM(\R[t])$, the map $\cW_nA \to \cW_n(A/t^rA)$ is an isomorphism for $n<r$. Since $g_r$ is a quasi-isomorphism for all $r$, this means that $g$ induces quasi-isomorphisms $\cW_nP \to \cW_nM$ for all $n$, so $g$ is a quasi-isomorphism.        
\end{proof}

\begin{definition}
 Given an    $R$-equivariant $O(R)$-augmented CDGA $\sM$  in the category of ind-MTS (resp. ind-MHS) of non-negative weights, define the associated  non-positively weighted  algebraic    mixed twistor (resp. mixed Hodge) structure $\oSpec \xi(\sM)$    as follows.  Under Lemma \ref{flatmts} (resp. Lemma \ref{flatmhs}), the Rees module construction gives a flat $\Mat_1 \by R \by \bG_m$-equivariant  (resp. $\Mat_1 \by R\rtimes S$-equivariant) quasi-coherent $\O_{\bA^1}\ten O(R)\ten \O_{C^*}$-augmented  algebra $\xi(\sM):=\xi(\sM, \MTS)$ (resp. $\xi(\sM):=\xi(\sM, \MHS)$)   on $\bA^1 \by C^*$ associated to $\sM$. We therefore define $\oSpec \xi(\sM):= \oSpec_{\bA^1 \by C^*} \xi(\sM)$.

Now, 
$\gr^W\sM$ is an  $O(R)$-augmented CDGA in the category of $\Mat_1$-representations (resp. $\bar{S}$-representations), so we may set $\ugr  \oSpec \xi(\sM):= \Spec \gr^W\sM$. Since $\xi(\sM)$ is flat, 
\[
    (\oSpec \xi(\sM)) \by_{\bA^1, 0}^{\oR}\Spec \R \simeq  (\oSpec \xi(\sM)) \by_{\bA^1, 0}\Spec \R,    
\]
so
Lemma \ref{flatmts} (resp. Lemma \ref{flatmhs}) gives the required opposedness isomorphism.
\end{definition}

\begin{theorem}\label{strictmodel}
For every non-positively weighted  algebraic    mixed twistor (resp. mixed Hodge) structure $(X,x)_{\MTS}^{R, \mal}$ (resp. $(X,x)_{\MHS}^{R, \mal}$) on a pointed Malcev homotopy type $(X,x)^{R, \mal}$, there exists an $R$-equivariant $O(R)$-augmented CDGA $\sM$  in the category of ind-MTS (resp. ind-MHS) with  $(X,x)_{\MTS}^{R, \mal}$ (resp. $(X,x)_{\MHS}^{R, \mal}$)  quasi-isomorphic in the category of  algebraic    mixed twistor (resp. mixed Hodge) structures to $\oSpec \xi(\sM)$, for $\xi$ as above.
\end{theorem}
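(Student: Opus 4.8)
The plan is to unwind the definition of the mixed structure using Remark \ref{othermodelcrk}, to replace the comparison datum it provides by a \emph{strict} model via the formal lifting result Proposition \ref{strictlift}, and then to recognise that strict model, through the Rees-module dictionary of Section \ref{nonabfil}, as $\oSpec\zeta$ of an honest DGA in ind-MTS (resp. ind-MHS). We treat the mixed twistor case; the mixed Hodge case will be identical, replacing $\bG_m$ by $S$ and $\Mat_1\by\bG_m$ by $\Mat_1\by S$, using Lemma \ref{flatmhs} in place of Lemma \ref{flatmts}, and carrying along the $S^1$-action on $R$ throughout.

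By Remark \ref{othermodelcrk}, the structure $(X,x)^{R,\mal}_{\MTS}$ amounts to the data of an object $O(\ugr X_{\MTS})\in DG_{\Z}\Alg(R\by\Mat_1)\da O(R)$, an object $P:=O(X_{\MTS})\in\cP\cT_*(\R[t])$, and a quasi-isomorphism $f\co P\ten_{\R[t]}\R\to E$ in $\cP\cT_*(\R)$, where $E$ is a fibrant cofibrant replacement of $O(\ugr X_{\MTS})\ten\oR O(C^*)$, the extra $\bG_m$-action being induced by the opposedness map $\theta$ of Definition \ref{algmtsdef}. Since, by Proposition \ref{relalghgs}, $\cP\cT_*$ is formally $2$-quasi-presmooth, the hypotheses of Proposition \ref{strictlift} are met, and I would apply it to obtain an object $M\in\cP\cT_*(\R[t])$, a quasi-isomorphism $g\co P\to M$, and an \emph{isomorphism} $\theta\co M/tM\xra{\sim}E$ with $\theta\circ\bar g=f$. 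Thus $M$ is a model of $X_{\MTS}$ whose fibre over $0\in\bA^1$ is literally isomorphic to the pure (``split'') graded object $E$, not merely quasi-isomorphic to it.

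Next I would convert $M$ into a DGA in ind-MTS. Being cofibrant, $M$ is flat (indeed smooth) over $\R[t]\ten\oR O(C^*)$, hence in particular flat over $\R[t]$ and over $\oR O(C^*)$. As $\oR O(C^*)\to\O_{C^*}$ is a quasi-isomorphism, Proposition \ref{othermodelc*} lets me replace $M$ by a quasi-isomorphic flat $\Mat_1\by R\by\bG_m$-equivariant $O(R)$-augmented DGA $\bar M$ on $\bA^1\by C^*$ whose pullback $\bar M\by_{\bA^1,0}\Spec\R$ is isomorphic in the homotopy category to $\theta^{\sharp}(\ugr X_{\MTS})\by C^*$. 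Now Lemma \ref{flatmts} identifies flat $\Mat_1\by\bG_m$-equivariant quasi-coherent algebras on $\bA^1\by C^*$ with DGAs carrying an exhaustive weight filtration $W$ (bounded below, since the $\Mat_1$-weights are non-negative) together with a compatible twistor structure; under it $\bar M$ corresponds to such a DGA $\sM$, with its $R$-action and $O(R)$-augmentation, and with $\gr^W\sM$ corresponding to $\bar M\by_{\bA^1,0}\Spec\R$, i.e. to $O(\ugr X_{\MTS})$ with $\gr^W_n\sM$ semistable of slope $n$. In particular $\sM$ is of non-negative weights and is genuinely a DGA in ind-MTS, purity of the $\gr^W$ being exactly opposedness, and $\xi(\sM)|_{\bA^1\by C^*}\cong\bar M$ by construction.

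Finally I would check the comparison: $\oSpec\zeta(\sM)=\oSpec_{\bA^1\by C^*}\xi(\sM)|_{\bA^1\by C^*}\cong\oSpec\bar M$ is quasi-isomorphic to $\oSpec M$, which via $g$ is quasi-isomorphic to $\oSpec P=X_{\MTS}$, all compatibly with the $O(R)$-augmentations; meanwhile $\ugr\oSpec\zeta(\sM)=\Spec\gr^W\sM=\ugr X_{\MTS}$ and the opposedness isomorphisms correspond. Hence $\oSpec\zeta(\sM)$ is quasi-isomorphic to $(X,x)^{R,\mal}_{\MTS}$ in the category of algebraic mixed twistor structures, and likewise in the mixed Hodge case. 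The real work has already been done in Proposition \ref{strictlift}; the one point demanding genuine care here is the bookkeeping in the last two paragraphs, namely verifying that a flat equivariant DGA on $\bA^1\by C^*$ with pure fibre over $0$ really does correspond under the Rees-module dictionary to a DGA \emph{in ind-MTS} (so that opposedness holds on the nose and not just up to quasi-isomorphism), and that the basepoint together with all of the group actions --- the $\Mat_1$-weight, $R$, the twistor $\bG_m$ or Hodge $S$, and the $S^1$-action on $R$ in the Hodge case --- are transported consistently through Propositions \ref{othermodelc*} and \ref{strictlift} and the Rees construction.
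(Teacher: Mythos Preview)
Your overall plan matches the paper's: unpack via Remark \ref{othermodelcrk}, apply Proposition \ref{strictlift} to upgrade the opposedness quasi-isomorphism to a strict isomorphism $M/tM\cong E$, then invoke the Rees dictionary. The gap is in the passage from $M$ (an $\R[t]\ten\oR O(C^*)$-algebra) to a flat DGA on $\bA^1\by C^*$.

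You invoke Proposition \ref{othermodelc*} to produce $\bar M$, but that proposition only gives an equivalence of \emph{homotopy} categories. The resulting $\bar M$ is therefore only quasi-isomorphic to $M$, and you yourself say that $\bar M\by_{\bA^1,0}\Spec\R$ is only ``isomorphic in the homotopy category'' to the split object. But for Lemma \ref{flatmts} to yield an ind-MTS (rather than a mere quasi-MTS), you need each $\gr^W_n\sM^k$ to be literally a direct sum of copies of $\O_{\bP^1}(n)$; this is a condition on the underlying sheaf, not on its quasi-isomorphism class. Having discarded the strict isomorphism $M/tM\cong E$ that Proposition \ref{strictlift} worked so hard to produce, you are back where you started. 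You correctly flag this as ``the one point demanding genuine care'', but you do not resolve it.

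The paper does not use Proposition \ref{othermodelc*} at this step. Instead it exploits the explicit shape $\oR O(C^*)=O(\SL_2)\oplus O(\SL_2)(-1)\eps$: flatness of $M$ over $\oR O(C^*)$ gives a graded splitting $M^*\cong\row_1^*M\oplus\row_1^*M(-1)\eps$, and one decomposes $d_M=\delta_M+N_M\eps$. Setting $K:=\ker\bigl(N_M\co\row_{1*}\row_1^*M\to\row_{1*}\row_1^*M(-1)\bigr)$ produces a flat $\O_{\bA^1\by C^*}$-DGA with $M\cong\Gamma(C^*,K\ten_{\O_{C^*}}\oR\O_{C^*})$. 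Because $M/tM=E\ten\oR O(C^*)$ \emph{on the nose} (and here $E$ is taken cofibrant in $DG_{\Z}\Alg(R)_*(\Mat_1)$ before tensoring with $\oR O(C^*)$), the operator $N_M$ reduces modulo $t$ to $\id_E\ten N$, whence $0^*K=E\ten\O_{C^*}$ strictly. The Rees dictionary then genuinely delivers a DGA in ind-MTS. This explicit kernel construction is precisely what transports strict opposedness across the change of base, and it is the missing ingredient in your argument.
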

\begin{proof}
 Making use of Remark \ref{othermodelcrk}, choose a fibrant cofibrant replacement $E$ for $O(\ugr (X,x)_{\MTS}^{R, \mal} ) $ (resp. $O(\ugr (X,x)_{\MHS}^{R, \mal} ) $) in the category $DG_{\Z}\Alg(R)_* (\Mat_1)$ (resp. $DG_{\Z}\Alg(R)_* (\bar{S})$), and a fibrant   cofibrant replacement $P$ for 
\begin{align*}
 &\Gamma(C^*,\O( (X,x)_{\MTS}^{R, \mal}) \ten_{\O_{C^*}} \oR \O_{C^*}) \\
( \text{resp. }&\Gamma(C^*, \O( (X,x)_{\MHS}^{R, \mal} )\ten_{\O_{C^*}} \oR \O_{C^*} ) )
\end{align*}
 in the category 
\[
 DG_{\Z}\Alg_{\R[t] \ten \oR O(C^*)}(R)_* (\Mat_1 \by S')),
\]
where $S'=\bG_m$ (resp. $S'=S$).
Since $P$ is cofibrant, it is flat, so the data of an algebraic    mixed twistor (resp. mixed Hodge) structure give a quasi-isomorphism
\[
 f:P/tP \to E\ten \oR O(C^*)    
\]
\[
\text{in }\quad DG_{\Z}\Alg_{ \oR O(C^*)}(R)_* (\Mat_1 \by S')),
\]
 so we may apply Proposition \ref{strictlift} to obtain a fibrant cofibrant object
\[
 M \in DG_{\Z}\Alg_{\R[t] \ten \oR O(C^*)}(R)_* (\Mat_1 \by S')) 
\]
with an isomorphism $M/tM \cong E\ten \oR O(C^*)$, and a  quasi-isomorphism $g: P \to M$ lifting $f$.

Since $M$ is cofibrant, it is flat as an $\oR O(C^*)$-module. For the canonical map $\row_1^*\co \oR O(C^*) \to O(\SL_2)$, this implies that we have a short exact sequence
\[
 0 \to \row_1^*M(-1)\eps \to M \to \row_1^*M \to 0,
\]
and the section $O(\SL_2) \to \oR O(C^*)$ of graded rings (not respecting differentials) gives a canonical splitting of the short exact sequence for the underlying graded objects. Thus we may write $M^* = \row_1^*M \oplus \row_1^*M(-1)\eps$, and decompose the differential $d_M$ as $d_M:= \delta_M +N_M\eps$, where $\delta_M= \row_1^*d_M$.

Now, since $M/tM=E\ten \oR O(C^*)$,  we know that 
$$
N_M\co \row_{1*}\row_1^*(M/tM) \to \row_{1*}\row_1^*(M/tM)(-1)
$$ 
is a surjection of sheaves on $C^*$. Since $M= \Lim_r M/t^rM$ in the $\Mat_1$-equivariant category and $M$ is flat, this means that $N_M$ is also surjective. We therefore set
\[
 K:= \ker(N_M\co \row_{1*}\row_1^*M\to \row_{1*}\row_1^*M(-1));
\]
as $\ker(N: \row_{1*}O(\SL_2) \to \row_{1*}O(\SL_2)(-1))=\O_{C^*}$, we have 
\[
 K \in DG_{\Z}\Alg_{ \bA^1 \by C^* }( \Mat_1 \by  R\rtimes S')\da O(\bA^1\by R) \ten \O_{C^*} ),
\]
  with 
\[
 M= \Gamma(C^*,K\ten_{\O_{C^*}}\oR \O_{C^*}),
\]
for $\oR \O_{C^*} $ as in Definition \ref{rjc}.

Since $M$ is flat over $\oR O(C^*)\ten O(\bA^1)$, it follows that $K$ is flat over $C^* \by \bA^1$.  Moreover, for $0 \in \bA^1$, we have $0^*K= K/tK$, so
\begin{eqnarray*}
 0^*K&=& \ker(N_M\co \row_{1*}\row_1^*(M/tM)\to \row_{1*}\row_1^*(M/TM)(-1))\\
&=& E\ten\ker(N: \row_{1*}O(\SL_2) \to \row_{1*}O(\SL_2)(-1))\\
&=&E\ten \O_{C^*}.
\end{eqnarray*}
Thus $K$ satisfies the opposedness condition, so by  Lemma \ref{flatmts} (resp. Lemma \ref{flatmhs}) it corresponds to an ind-MTS (resp. ind-MHS)  on the $R$-equivariant $O(R)$-augmented CDGA  $(1,1)^*K$ given by pulling back along $(1,1)\co \Spec \R \to \bA^1 \by C$. Letting this ind-MTS (resp. ind-MHS) be $\sM$ completes the proof.
 \end{proof}

\subsubsection{Homotopy fibres}

In Proposition \ref{strictlift}, it is natural to ask how unique the model $M$ is. We cannot expect it to be unique up to isomorphism, but only up to quasi-isomorphism. As we will see in Corollary \ref{strictlift2}, that quasi-isomorphism is unique up to homotopy, which in turn is unique up to $2$-homotopy, and so on. 

\begin{definition}\label{scatfibdef}
Recall from \cite{bergner} Theorem 1.1 that  a morphism  $F:\C \to \D$ in $s\Cat$ is said to be  a  weak equivalence (a.k.a. an $\infty$-equivalence) whenever
\begin{enumerate} 
\item[(W1)] for any objects $a_1$ and $a_2$ in $\C$, the map
$\HHom_{\C}(a_1, a_2)\to \HHom_{\cD}(Fa_1, Fa_2)$ 
is a weak equivalence of simplicial sets;
\item[(W2)] the induced functor $\pi_0F : \pi_0\C \to \pi_0\cD$ is an equivalence of
categories.
\end{enumerate}

A morphism $F:\C \to \cD$ in $s\Cat$ is said to be a fibration whenever
\begin{enumerate} 
\item[(F1)] for any objects $a_1$ and $a_2$ in $\C$, the map
$\HHom_{\C}(a_1, a_2)\to \HHom_{\cD}(Fa_1, Fa_2)$ 
is a fibration of simplicial sets;
\item[(F2)] for any objects $a_1 \in\C$, $b \in \cD$, and homotopy equivalence $e :
Fa_1 \to b$ in $\cD$, there is an object $a_2 \in \C$ and a homotopy equivalence
$d : a_1 \to a_2$ in $\C$ such that $Fd = e$.
\end{enumerate}
\end{definition}

\begin{definition}\label{2fibre}
Given functors $\cA \xra{F} \cB \xla{G} \C$ between categories, define the 2-fibre product $\cA\by^{(2)}_{\cB}\C$ as follows. Objects of $\cA\by^{(2)}_{\cB}\C  $ are triples $(a,\theta, c)$, for $a \in \cA, c \in \C$ and $\theta: Fa \to Gc$ an isomorphism in $\cB$. A morphism in   $\cA\by^{(2)}_{\cB}\C$ from $(a,\theta, c)$ to $(a',\theta', c')$ is a pair $(f,g)$, where $f:a \to a'$ is a morphism in $\cA$ and $g: c\to c'$ a morphism in $\C$, satisfying the condition that
\[
Gg \circ \theta = \theta' \circ Ff.
\] 
\end{definition}

\begin{remark}
This definition has the property  that $\cA\by^{(2)}_{\cB}\C$ is a model for the 2-fibre product in the 2-category of categories. However, we will always use the notation $\cA\by^{(2)}_{\cB}\C$ to mean the specific model of Definition \ref{2fibre}, and not merely any equivalent category. 

Also note that
\[
\cA\by^{(2)}_{\cB}\C= (\cA\by^{(2)}_{\cB}\cB)\by_{\cB}\C, 
\]
and  that a morphism $F:\cA \to \cB$ in $s\Cat$ is a  2-fibration in the sense of Definition \ref{2fibrn} if and only if  $\cA\by_{\cB}^{(2)}\cB \to \cB$ is a  fibration in the sense of Definition \ref{scatfibdef}.
\end{remark}

\begin{corollary}\label{strictlift2}
Let $\cP:  \Alg(\Mat_1) \to s\Cat$ be  one of the functors  
$\cP\cT_*$ or $\cP\cH_*$. 
Given an object $E$ in $\cP(\R)$, the simplicial categories given by the homotopy fibre
\[
 \cP(\R[t])\by^h_{\cP(\R)}\{E\}       
\]
and the $2$-fibre
\[
  \cP(\R[t])\by^{(2)}_{\cP(\R)}\{E\}      
\]
are weakly equivalent.
\end{corollary}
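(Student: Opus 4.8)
The plan is to recognise the base-change functor $F_0 \co \cP(\R[t]) \to \cP(\R)$ induced by the augmentation $\R[t] \to \R$, $t \mapsto 0$, as a $2$-fibration in the sense of Definition \ref{2fibrn}, and then to quote the formal fact (the remark following Definition \ref{2fibre}) that $F \co \cA \to \cB$ is a $2$-fibration if and only if $\cA \by^{(2)}_{\cB} \cB \to \cB$ is a fibration in the sense of Definition \ref{scatfibdef}. The first condition to check for $F_0$ is (F2): given $P \in \cP(\R[t])$ and a quasi-isomorphism $f \co P/tP \to E$ in $\cP(\R)$ --- which, for these simplicial categories, is precisely a homotopy equivalence in the sense of Definition \ref{pi0C} --- Proposition \ref{strictlift} produces an object $M \in \cP(\R[t])$, a quasi-isomorphism $g \co P \to M$, and an isomorphism $\theta \co M/tM \to E$ with $\theta \circ \bar g = f$. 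This is exactly the lifting datum required by (F2).

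For condition (F1) --- that $\HHom_{\cP(\R[t])}(M,M') \to \HHom_{\cP(\R)}(M/tM, M'/tM')$ be a Kan fibration --- I would argue by the same $t$-adic completeness device used in the proof of Proposition \ref{strictlift}. Each consecutive map $\R[t]/t^{r+1} \to \R[t]/t^r$ (for $r \ge 1$) is a square-zero extension, so by Proposition \ref{relalghgs} the functor $\cP(\R[t]/t^{r+1}) \to \cP(\R[t]/t^r)$ is a $2$-fibration; in particular its maps on mapping complexes are Kan fibrations. Since the cofibrant objects $M$ and their path objects $(M')^{\Delta^n} = M' \ten_{\Q} \Omega(|\Delta^n|)$ are $t$-adically complete in the weight-graded sense of Proposition \ref{strictlift}, one obtains $\HHom_{\cP(\R[t])}(M,M') = \Lim_r \HHom_{\cP(\R[t]/t^r)}(M/t^rM, M'/t^rM')$, and an inverse limit of a tower of Kan fibrations is a Kan fibration; hence (F1) holds and $F_0$ is a $2$-fibration.

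To conclude, set $\cE := \cP(\R[t]) \by^{(2)}_{\cP(\R)} \cP(\R)$, so that by the above $q \co \cE \to \cP(\R)$, $(M,\theta,c) \mapsto c$, is a fibration in $s\Cat$. The first projection $\pr \co \cE \to \cP(\R[t])$, $(M,\theta,c) \mapsto M$, is a weak equivalence in $s\Cat$: the $\cP(\R)$-component of a morphism in $\cE$ is determined, via the conjugating isomorphisms $\theta$, by its $\cP(\R[t])$-component, so $\pr$ is an isomorphism on all mapping complexes, and it is essentially surjective since $M = \pr(M, \id_{M/tM}, M/tM)$; in particular $\cE$ is locally Kan, hence fibrant (as are $\cP(\R[t])$, $\cP(\R)$ and $\{E\}$, the mapping complexes of the $\cP(A)$ being derived mapping spaces $\oR\Map$). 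Thus $F_0 = q \circ s$ factors as the weak equivalence $s \co \cP(\R[t]) \to \cE$, $M \mapsto (M, \id_{M/tM}, M/tM)$, followed by the fibration $q$; since $s$ is a weak equivalence, the homotopy fibre of $F_0$ over $E$ agrees with that of $q$, which, $q$ being a fibration of fibrant simplicial categories, is its strict fibre, so
\[
 \cP(\R[t]) \by^h_{\cP(\R)} \{E\} \;\simeq\; \cE \by_{\cP(\R)} \{E\} \;=\; \cP(\R[t]) \by^{(2)}_{\cP(\R)} \{E\},
\]
the final identity being the one recorded after Definition \ref{2fibre}.

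The step I expect to be the main obstacle is the verification of (F1) for the non-nilpotent extension $\R[t] \to \R$: one must make precise the identification of $\HHom_{\cP(\R[t])}(M,M')$ with the inverse limit of the finite-stage mapping complexes (using completeness of $M$, $M'$ and of the path objects exactly as in Proposition \ref{strictlift}), and then note that a limit of a tower of Kan fibrations is again a Kan fibration. Everything else is either supplied verbatim by Proposition \ref{strictlift} or is formal manipulation with $2$-fibre products in $s\Cat$.
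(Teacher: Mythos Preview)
Your argument is correct and follows the same approach as the paper: show that $\cP(\R[t]) \to \cP(\R)$ is a $2$-fibration, then conclude formally via the remark after Definition \ref{2fibre}. The paper packages (F1) and (F2) together by asserting that $\cP(\R[t]) \to {\Lim_r}^{(2)} \cP(\R[t]/t^r)$ is an equivalence (invoking the proof of Proposition \ref{strictlift}), whereas you verify them separately; your inverse-limit argument for (F1) is precisely what underlies the paper's appeal to the inverse $2$-limit, and your final paragraph spells out the formal comparison of fibres that the paper leaves implicit.
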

\begin{proof}
By Proposition \ref{relalghgs}, $\cP(\R[t]/t^r) \to \cP(\R)$ is a  2-fibration in $s\Cat$. Moreover, the proof of Proposition \ref{strictlift} shows that the map
\begin{align*}
 \cP(\R[t]) \to &{\Lim_r}^{(2)} \cP(\R[t]/t^r)\\
  &\simeq \Lim_r [\cP(\R[t]/t^r)\by^{(2)}_{\cP(\R[t]/t^{r-1})} \cP(\R[t]/t^{r-1}) \by^{(2)}_{\cP(\R[t]/t^{r-2})} \ldots \by_{\cP(\R)}\cP(\R)]
\end{align*}
to the inverse $2$-limit is an equivalence, so $\cP(\R[t]) \to \cP(\R)$ is also a  2-fibration.

Therefore $\cP(\R[t])\by_{ \cP(\R)}^{(2)}\cP(\R) \to \cP(\R)$ is a fibration in the sense of Definition \ref{scatfibdef}, so
\begin{eqnarray*}
    \cP(\R[t])\by^h_{\cP(\R)}\{E\}   &\simeq&   \cP(\R[t])\by_{ \cP(\R)}^{(2)}\cP(\R)\by_{\cP(\R)}\{E\}\\
 & =& \cP(\R[t])\by_{ \cP(\R)}^{(2)}\{E\}\\
&=& \cP(\R[t])\by^{(2)}_{\cP(\R)}\{E\},
    \end{eqnarray*}
as required.
 \end{proof}

\subsubsection{$\SL_2$-splittings}

\begin{corollary}\label{allsplit}
 Every non-positively weighted  algebraic    mixed twistor (resp. mixed Hodge) structure $(X,x)_{\MTS}^{R, \mal}$ (resp. $(X,x)_{\MHS}^{R, \mal}$) on a pointed Malcev homotopy type $(X,x)^{R, \mal}$ admits a canonical $\SL_2$-splitting in the sense of Definition \ref{splitsl2}.
\end{corollary}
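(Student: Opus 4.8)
The plan is to combine the strictification theorem with the canonical abelian $\cS$-splittings of \S\ref{splitsn}. By Theorem \ref{strictmodel} I may assume $(X,x)_{\MTS}^{R,\mal}$ (resp. $(X,x)_{\MHS}^{R,\mal}$) is quasi-isomorphic, in the category of algebraic mixed twistor (resp. mixed Hodge) structures, to $\oSpec\zeta(\sM)$ for an $R$-equivariant $O(R)$-augmented DGA $\sM$ in ind-MTS (resp. ind-MHS) of non-negative weights. Applying the equivalence $\MTS\simeq\STS$ of Theorem \ref{STSequiv} (resp. $\MHS\simeq\SHS$ of Theorem \ref{SHSequiv}), extended to ind-objects and, via Lemma \ref{SHSalg} and its evident twistor analogue, to DGAs, the object $\sM$ corresponds to a pair $(\gr^W\sM,\beta)$: here $\gr^W\sM= O(\ugr(X,x)_{\MTS}^{R,\mal})$ is an $R$-equivariant $O(R)$-augmented DGA in $\Mat_1$-representations (resp. $\bar S$-representations), and $\beta\co\gr^W\sM\to\gr^W\sM\ten\row_2^{\sharp}O(\bA^2)(-1)$ is a $\bG_m$-equivariant (resp. $S$-equivariant) derivation of strictly positive weight, hence lowering the weight filtration $W$.

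Next I would exhibit the splitting explicitly. Since $\beta$ lands in $\row_2^{\sharp}O(\bA^2)(-1)\subset O(\SL_2)(-1)$ and $\ker(N\co O(\SL_2)\to O(\SL_2)(-1))=O(C)$ (Definition \ref{Ndef}), the operator $N_\beta:=\id\ten N+\beta\ten\id$ on $\gr^W\sM\ten O(\SL_2)$ is $O(C)$-linear, so by Lemma \ref{slhodge} and Lemma \ref{flathfil} it corresponds to an $\cS$-linear derivation of $\gr^W\sM\ten\cS$; exactly as in the proof of Theorem \ref{SHSequiv}, $N_\beta$ is surjective with kernel $\sM\subset\gr^W\sM\ten\cS$, and integrating $\beta$ produces a canonical $\cS$-algebra isomorphism $\exp(-\int_0^x\beta)\co\gr^W\sM\ten\cS\xra{\ \sim\ }\sM\ten\cS$ which preserves the Hodge (resp. twistor) filtration and, because $\beta$ lowers $W$, also preserves $W$. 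Forming the Rees algebra with respect to $W$ in the extra $\bA^1$-direction and using that this commutes with the flat base change $-\ten_{O(C^*)}O(\SL_2)=\row_1^*$, the isomorphism $\exp(-\int_0^x\beta)$ induces a $\bG_m\by\bG_m$-equivariant (resp. $\bG_m\by S$-equivariant) isomorphism
\[
\bA^1\by\ugr(X,x)_{\MTS}^{R,\mal}\by\SL_2\ \simeq\ \row_1^*(X,x)_{\MTS}^{R,\mal}
\]
of $R$-equivariant augmented DGAs over $\bA^1\by\SL_2$ (and similarly in the Hodge case). Restricting over $0\in\bA^1$ kills $\beta$ and so recovers precisely $\row_1^*$ of the opposedness isomorphism of $\oSpec\zeta(\sM)$, which is what Definition \ref{splitsl2} requires.

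It remains to address canonicity. The integration formula and the equivalences of Theorems \ref{SHSequiv} and \ref{STSequiv} are canonical and functorial, so the only delicate point is that the strict model $\sM$ of Theorem \ref{strictmodel} is canonical in a sufficiently strong sense: by Corollary \ref{strictlift2} the strictifications form a space equivalent to the $2$-fibre, hence with contractible components, so the resulting $\SL_2$-splitting is well-defined, independently of choices, in $\Ho(dg_{\Z}\Aff_{\bA^1\by\SL_2}(R)_*(\bG_m\by\bG_m))$ (resp. $\Ho(dg_{\Z}\Aff_{\bA^1\by\SL_2}(R)_*(\bG_m\by S))$), and natural in $(X,x)_{\MTS}^{R,\mal}$ (resp. $(X,x)_{\MHS}^{R,\mal}$).

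I expect the main obstacle to be not the construction of the splitting — which is essentially the abelian $\cS$-splitting of \S\ref{splitsn} applied to a strict model — but the bookkeeping required to make \emph{canonical} precise: one must check that passage through Theorem \ref{strictmodel} (canonical only up to coherent homotopy) and the Rees/base-change manipulations descend to a genuinely well-defined, natural isomorphism in the stated homotopy category, and that the weight grading produced by the Rees construction matches the weights on both sides so that the full equivariance $\bG_m\by\bG_m$ (resp. $\bG_m\by S$), and not merely a coarser action, holds on the nose.
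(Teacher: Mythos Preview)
Your proof is correct and follows essentially the same route as the paper: apply Theorem \ref{strictmodel} to obtain the strict model $\sM$, then invoke Theorems \ref{STSequiv}/\ref{SHSequiv} together with Lemma \ref{SHSalg} to produce the derivation $\beta$ on $\gr^W\sM$, from which the $\SL_2$-splitting $\row_1^*\zeta(\sM)\cong O(\bA^1)\ten\gr^W\sM\ten O(\SL_2)$ follows. The paper phrases the last step by identifying the object $M$ from the proof of Theorem \ref{strictmodel} directly with the cone of $N_\beta=\id\ten N+\beta$, whereas you use the equivalent integration formula $\exp(-\int_0^x\beta)$ from the remark after Theorem \ref{SHSequiv}; these are the same construction in different notation.

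One minor imprecision: your canonicity paragraph claims that Corollary \ref{strictlift2} gives a space ``with contractible components''. The corollary only asserts that the homotopy fibre and the $2$-fibre $\cP(\R[t])\by^{(2)}_{\cP(\R)}\{E\}$ are weakly equivalent simplicial categories; it does not say the $\HHom$-spaces in the $2$-fibre are contractible. What this buys is that any two strictifications are connected by a coherent zigzag of quasi-isomorphisms over $E$, so the resulting $\SL_2$-splitting is well-defined in the homotopy category --- which is all Definition \ref{splitsl2} demands. The paper itself does not spell out the canonicity argument in its proof, so this is not a gap in your construction, only an overstatement of what Corollary \ref{strictlift2} provides.
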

\begin{proof}
By  Theorem \ref{strictmodel}, we have an    $R$-equivariant $O(R)$-augmented CDGA $\sM$  in the category of ind-MTS (resp. ind-MHS) of non-negative weights, with $(X,x)_{\MTS}^{R, \mal}$ (resp. $(X,x)_{\MHS}^{R, \mal}$)  quasi-isomorphic in the category of  algebraic    mixed twistor (resp. mixed Hodge) structures to $\oSpec \xi(\sM)$.  

By Theorem \ref{STSequiv} (resp. Theorem \ref{SHSequiv}) and Lemma \ref{SHSalg}, there is a unique $R \by \bG_m$-equivariant (resp. $R \rtimes S$-equivariant) derivation $\beta\co \gr^W\sM \to (\gr^W\sM) \ten \row_2^{\sharp}O(\bA^2)(-1)$, with the corresponding object
\[
 O(\bA^1) \ten (\gr^W\sM)\ten O(\SL_2) \xra{\beta + \id \ten N }O(\bA^1) \ten(\gr^W\sM,W)\ten O(\SL_2)(-1)
\]
isomorphic to the object $M$ from the proof of Theorem \ref{strictmodel} (with $\gr^W\sM$ canonically isomorphic to $E$). 

In particular, it gives a $\bG_m \by R \by\bG_m$-equivariant (resp. $\bG_m \by R \rtimes S$-equivariant) isomorphism
\[
 \row_1^*\xi(\sM) \cong O(\bA^1) \ten (\gr^W\sM)\ten O(\SL_2).
\]
Since $\oSpec_{\bA^1 \by C^*} \sM$ is by construction quasi-isomorphic to $(X,x)_{\MTS}^{R, \mal}$ (resp. $(X,x)_{\MHS}^{R, \mal}$), with 
$\Spec \gr^W\sM$ quasi-isomorphic to $\ugr (X,x)_{\MTS}^{R, \mal}$ (resp. $\ugr (X,x)_{\MHS}^{R, \mal}$), this gives us a quasi-isomorphism 
\begin{align*}
 &\row_1^* \ugr (X,x)_{\MTS}^{R, \mal} \to  \bA^1 \by \Spec (\gr^W\sM)\by \SL_2\\
(\text{resp. } \quad
 &\row_1^* \ugr (X,x)_{\MHS}^{R, \mal} \to  \bA^1 \by \Spec (\gr^W\sM)\by \SL_2).
\end{align*}
\end{proof}

\begin{corollary}\label{qformalthing}
If a pointed Malcev homotopy type $(X,x)^{R, \mal}$ admits a non-positively weighted mixed twistor structure $(X,x)_{\MTS}^{R, \mal}$, then there is a canonical family 
\[
 \bA^1 \by (X,x)^{R, \mal} \simeq \bA^1 \by\ugr (X,x)_{\MTS}^{R, \mal}
\]
of quasi-isomorphisms over $\bA^1$.
\end{corollary}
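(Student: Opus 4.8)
\textbf{Proof proposal for Corollary \ref{qformalthing}.}

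The plan is to deduce the family of quasi-isomorphisms directly from the canonical $\SL_2$-splitting furnished by Corollary \ref{allsplit}. First I would recall that an algebraic mixed twistor structure $(X,x)_{\MTS}^{R, \mal}$ comes with, among its data, an isomorphism
\[
 (X,x)^{R, \mal} \cong (X,x)_{\MTS}^{R, \mal} \by_{(\bA^1 \by C^*),(1,1)}^{\oR} \Spec \R
\]
in $\Ho(dg_{\Z}\Aff(R)_*)$; pulling the splitting back along the point $1 \in C^*$ (equivalently, along the section $I \in \SL_2$, which sits over $1 \in C^*$ under $\row_1$) will produce the desired family. Concretely, $\row_1^{-1}(1) $ contains the identity matrix $I$, so the composite $\Spec\R \xra{I} \SL_2 \xra{\row_1} C^*$ is the point $1$, and pulling the isomorphism of Corollary \ref{allsplit} back along $\bA^1 \by \{I\} \into \bA^1 \by \SL_2$ gives
\[
 \bA^1 \by \ugr (X,x)^{R, \mal}_{\MTS} \simeq \bA^1 \by \big( (X,x)_{\MTS}^{R, \mal} \by_{C^*,1}^{\oR}\Spec\R\big) = \bA^1 \by (X,x)^{R, \mal},
\]
using for the last identification that $(X,x)_{\MTS}^{R, \mal}\by_{C^*,1}^{\oR}\Spec\R$ is, by flatness over $\bA^1$ of the $\Mat_1$-direction, simply $\bA^1 \by (X,x)^{R, \mal}_{\bT}$, and that the fibre of the latter over $1 \in \bA^1$ recovers $(X,x)^{R, \mal}$.

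The key steps, in order, are: (i) invoke Corollary \ref{allsplit} to get the canonical $\SL_2$-splitting as an isomorphism in $\Ho(dg_{\Z}\Aff_{\bA^1 \by \SL_2}(R)_*(\bG_m \by \bG_m))$; (ii) restrict along the closed point $I \in \SL_2$, noting that this is compatible with the $\bG_m$-equivariant structures (the residual $\bG_m \by \bG_m$-action restricts along $I$, but since $I$ is not $S$-fixed we only retain the diagonal-type action, which is harmless as we only want an isomorphism over $\bA^1$); (iii) identify the two pullbacks: on the graded side, $\ugr (X,x)_{\MTS}^{R, \mal}$ carries a $\Mat_1$-action and its pullback along $\bA^1 \by \{I\}$ is just $\bA^1 \by \ugr (X,x)_{\MTS}^{R, \mal}$; on the other side, use the opposedness/flatness to identify $\row_1^*(X,x)_{\MTS}^{R, \mal}|_{\bA^1 \by \{I\}}$ with $\bA^1 \by (X,x)^{R, \mal}_{\bT}$ and then take fibres to recognise $(X,x)^{R, \mal}$ inside it. The word "canonical'' in the statement is inherited from the canonicity in Corollary \ref{allsplit}, which in turn comes from the uniqueness of $\beta$ in Theorem \ref{STSequiv}, so no further choices are introduced.

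The main obstacle I anticipate is bookkeeping of the equivariance and of which scheme one obtains over the two points $0$ and $1$ of $\bA^1$: one must check that restricting the $\SL_2$-splitting along $I$ really does give back $(X,x)^{R, \mal}$ over $1 \in \bA^1$ and the opposedness isomorphism over $0 \in \bA^1$ (the latter being exactly what makes the family "nonabelian-formality''-type statement meaningful), and that the identifications are compatible in the homotopy category $\Ho(dg_{\Z}\Aff(R)_*)$ rather than merely pointwise. This is essentially a diagram-chase through the definitions in \S\ref{nonabfil} together with flatness of the Rees construction, so it should be routine once the splitting of Corollary \ref{allsplit} is in hand; there is no new analytic or homotopical input required.
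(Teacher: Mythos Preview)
Your argument misidentifies which $\bA^1$ appears in the statement. You pull back along $\bA^1 \times \{I\} \hookrightarrow \bA^1 \times \SL_2$, keeping the weight (Rees/$\Mat_1$) direction, and then claim that
\[
 (X,x)_{\MTS}^{R, \mal}\times_{C^*,1}^{\oR}\Spec\R \;\cong\; \bA^1 \times (X,x)^{R, \mal}.
\]
This is false in general: the left-hand side is the restriction of the Rees construction to the fibre $1\in C^*$, i.e.\ $\xi((X,x)^{R,\mal},W)$, which is flat over $\bA^1$ but is \emph{not} a product. Flatness does not give triviality. (Your sentence ``by flatness over $\bA^1$ of the $\Mat_1$-direction, simply $\bA^1 \times (X,x)^{R, \mal}_{\bT}$'' does not even typecheck, since $(X,x)^{R, \mal}_{\bT}$ lives over $C^*$, not over $\bA^1$.) What your pullback actually produces is a quasi-isomorphism $\bA^1 \times \ugr \simeq \xi((X,x)^{R,\mal},W)$; specialising at $t=1$ then gives one quasi-isomorphism $\ugr \simeq (X,x)^{R,\mal}$, not a family over $\bA^1$.

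The paper instead pulls back along $(1,1)\in \bA^1\times C^*$. The preimage of this point in $\bA^1\times\SL_2$ is $\{1\}\times\row_1^{-1}(1)$, and $\row_1^{-1}(1)=\left\{\left(\begin{smallmatrix}1&0\\x&1\end{smallmatrix}\right):x\in\bA^1\right\}\cong\bA^1$. Restricting the $\SL_2$-splitting there gives
\[
 \ugr(X,x)_{\MTS}^{R,\mal}\times\bA^1 \;\simeq\; (X,x)_{\MTS}^{R,\mal}\big|_{(1,1)}\times\bA^1 \;=\; (X,x)^{R,\mal}\times\bA^1,
\]
so the $\bA^1$ in the corollary is the fibre of $\row_1$ (equivalently $\Spec\cS$, cf.\ Remark~\ref{cfmorgan}), not the weight direction. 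Once you pull back to the correct locus the identification is immediate.
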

\begin{proof}
 Take the fibre of the $\SL_2$-splitting 
\[
 \row_1^* \ugr (X,x)_{\MTS}^{R, \mal} \simeq \bA^1 \by \ugr (X,x)_{\MTS}^{R, \mal} \by \SL_2
\]
over $(1,1) \in \bA^1 \by C^*$. The fibre of $\SL_2 \to C^*$ over $1$ is $\left( \begin{smallmatrix} 1& 0 \\ \bA^1 & 0 \end{smallmatrix}\right)$, giving the family of quasi-isomorphisms.
\end{proof}

\subsubsection{Homotopy groups}

\begin{corollary}\label{mhspin2}\label{mtspin2}
Given a non-positively weighted  algebraic    mixed twistor (resp. mixed Hodge) structure $(X,x)_{\MTS}^{R, \mal}$ (resp. $(X,x)_{\MHS}^{R, \mal}$) on a pointed Malcev homotopy type $(X,x)^{R, \mal}$, there are natural  ind-MTS (resp. ind-MHS) on the the duals $(\varpi_n(X,x)^{\rho, \mal})^{\vee}$ of the  relative Malcev homotopy groups for $n\ge 2$, and on the Hopf algebra $O(\varpi_1(X,x)^{\rho, \mal})$.

These structures are compatible with the action of $\varpi_1$ on $\varpi_n$, with the Whitehead bracket and with the Hurewicz maps $\varpi_n(X^{\rho, \mal})\to \H^n(X, O(\Bu_{\rho}))^{\vee}$ ($n \ge 2$) and $\Ru\varpi_1(X^{\rho, \mal}) \to\H^1(X, O(\Bu_{\rho}))^{\vee}$, for  $O(\Bu_{\rho})$ as in Proposition \ref{propforms}. 
\end{corollary}
\begin{proof}
 By Corollary \ref{allsplit}, $(X,x)_{\MTS}^{R, \mal}$ (resp. $(X,x)_{\MHS}^{R, \mal}$) admits an $\SL_2$-splitting. Therefore the conditions of Theorem \ref{mhspin} are satisfied, giving the required result.
\end{proof}

Note that Theorems \ref{STSequiv}  and \ref{SHSequiv} now show that the various homotopy groups have associated objects in $\STS$ or $\SHS$, giving canonical $\SL_2$-splittings. These splittings will automatically be the same as those constructed in   Theorem \ref{pinsplit} from the splitting on the homotopy type. Explicitly, they give canonical isomorphisms
\[
 (\varpi_n(X,x)^{R, \mal})^{\vee} \ten \cS \cong (\gr^W \varpi_n(X,x)^{R, \mal})^{\vee} \ten \cS
\]
compatible with weight filtrations and with  twistor or Hodge filtrations, and similarly for $O(\varpi_1(X,x)^{\rho, \mal})$.

\subsection{Quasi-projective varieties}\label{qpin}

Fix a smooth projective complex variety $X$, a divisor $D$ locally of normal crossings, and set $Y:= X-D$. Let $j: Y \to X$ be the inclusion morphism. Take  a Zariski-dense representation $\rho\co \pi_1(Y,y) \to R(\R)$, for $R$ a reductive pro-algebraic group, with $\rho$ having unitary monodromy around local components of $D$.

\begin{definition}
Given a CDGA $A$ with $A^0=\R$, define
\[
 \pi_n(A):= \H_{n-1}G(A),
\]
where $G(A)$ is the quasi-free pro-finite-dimensional chain Lie algebra of Definition \ref{barwg}, given by the tangent space of the bar construction.
\end{definition}

\begin{corollary}\label{unitmtspin}
 There are natural  ind-MTS  on the the duals $(\varpi_n(Y,y)^{\rho, \mal})^{\vee}$ of the  relative Malcev homotopy groups for $n\ge 2$, and on the Hopf algebra $O(\varpi_1(Y,y)^{\rho, \mal})$.

These structures are compatible with the action of $\varpi_1$ on $\varpi_n$, with the Whitehead bracket and with the Hurewicz maps $\varpi_n(Y^{\rho, \mal})\to \H^n(Y, O(\Bu_{\rho}))^{\vee}$ ($n \ge 2$) and $\Ru\varpi_1(Y^{\rho, \mal}) \to\H^1(Y, O(\Bu_{\rho}))^{\vee}$.

Moreover, there are canonical $\cS$-linear isomorphisms
\begin{eqnarray*}
\varpi_n(Y^{\rho, \mal,y})^{\vee}\ten\cS &\cong& \pi_n(\bigoplus_{a,b}\H^{a-b} (X, \oR^bj_* O(\Bu_{\rho}))[-a], d_2)^{\vee}\ten\cS\\
O(\varpi_1(Y^{\rho, \mal},y)) \ten\cS &\cong& O(R \ltimes\pi_1(\bigoplus_{a,b}\H^{a-b} (X, \oR^bj_* O(\Bu_{\rho}))[-a], d_2)  )\ten\cS
\end{eqnarray*}
compatible with weight and twistor filtrations.
\end{corollary}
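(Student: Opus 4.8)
The plan is to combine Theorem \ref{unitmts} (which produces the non-positively weighted algebraic MTS $(Y,y)^{R,\mal}_{\MTS}$ together with its split form $\ugr (Y,y)^{R,\mal}_{\MTS}$) with the abstract machinery of \S\ref{nonabsplitsn} applied to this particular MTS, exactly as in the proof of Corollary \ref{unitsingmtspin}. First I would invoke Corollary \ref{mhspin}\,/\,\ref{mtspin}: since $(Y,y)^{R,\mal}_{\MTS}$ is a non-positively weighted algebraic MTS, it admits a canonical $\SL_2$-splitting by Corollary \ref{allsplit}, so the hypotheses of \cite[Theorem \ref{mhs-mhspin}]{mhs} are met. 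This immediately yields the natural ind-MTS on $(\varpi_n(Y,y)^{\rho,\mal})^{\vee}$ for $n \ge 2$ and on the Hopf algebra $O(\varpi_1(Y,y)^{\rho,\mal})$, together with the stated compatibilities with the $\varpi_1$-action on $\varpi_n$, the Whitehead bracket, and the Hurewicz maps to $\H^n(Y,\bO(R))^{\vee}$ and $\H^1(Y,O(\bO(R)))^{\vee}$ (using that $A^{\bt}(Y,\bO(R))$ computes the relevant cohomology). This is the formal part and requires essentially no new argument beyond citing the earlier results.

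The substantive content is the explicit $\cS$-linear description of the homotopy groups. Here I would use Theorem \ref{strictmodel} to represent $(Y,y)^{R,\mal}_{\MTS}$ by an $R$-equivariant $O(R)$-augmented DGA $\sM$ in ind-MTS, with associated graded $\gr^W\sM$ an $R\by\bG_m$-equivariant $O(R)$-augmented DGA in $\Mat_1$-representations. By Theorem \ref{STSequiv} (and Lemma \ref{SHSalg}) there is a canonical derivation $\beta$ realising the $\SL_2$-splitting, giving a $\bG_m\by R\by\bG_m$-equivariant isomorphism $\row_1^*\zeta(\sM) \cong O(\bA^1)\ten(\gr^W\sM)\ten O(\SL_2)$, as in the proof of Corollary \ref{allsplit}. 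Now by Theorem \ref{unitmts} the DGA $\gr^W\sM$ is quasi-isomorphic (as an $R$-equivariant augmented DGA, forgetting the ind-MTS structure) to $(\bigoplus_{a,b}\H^{a-b}(X,\oR^bj_*\bO(R))[-a], d_1)$ — note here that the weight-$a+2b$ convention on $\H^a(X,\oR^bj_*\bO(R))$ in Theorem \ref{unitmts} translates, after the décalage $\Dec J$ built into $\zeta$, into the indexing $\H^{a-b}(X,\oR^bj_*\bO(R))[-a]$ with differential $d_1$ (equivalently $d_2$ on the Leray $E_2$ sheet), so that the formula matches the one in Corollary \ref{unitmtspin}. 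Applying the functor $G$ (and hence $\pi_n$) and the bar/Hopf-algebra construction to the quasi-isomorphism $\row_1^*(Y,y)^{\rho,\mal}_{\MTS}\by_{\bA^1,1}\Spec\R \simeq \ugr(Y,y)^{\rho,\mal}_{\MTS}\by\SL_2$, then pulling back along the fibre of $\SL_2 \to C^*$ over $1\in C^*$ (which is $\left(\begin{smallmatrix}1&0\\ \bA^1&0\end{smallmatrix}\right)\cong \Spec\cS$ via Lemma \ref{slhodge}), produces precisely the asserted $\cS$-linear isomorphisms
\begin{align*}
\varpi_n(Y^{\rho,\mal},y)^{\vee}\ten\cS &\cong \pi_n(\bigoplus_{a,b}\H^{a-b}(X,\oR^bj_*\bO(R))[-a], d_1)^{\vee}\ten\cS,\\
O(\varpi_1(Y^{\rho,\mal},y))\ten\cS &\cong O(R\ltimes\pi_1(\bigoplus_{a,b}\H^{a-b}(X,\oR^bj_*\bO(R))[-a], d_2))\ten\cS,
\end{align*}
with compatibility with weight and twistor filtrations inherited from the $\Mat_1\by\bG_m$-equivariance of the splitting.

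The main obstacle I anticipate is bookkeeping rather than conceptual: keeping track of the indexing conventions through the chain $\Dec J \rightsquigarrow \zeta \rightsquigarrow \gr^W \rightsquigarrow$ Theorem \ref{unitmts}, and checking that the $R$-equivariant augmented DGA quasi-isomorphism class of $\gr^W\sM$ really is that of $(\bigoplus_{a,b}\H^{a-b}(X,\oR^bj_*\bO(R))[-a], d_1)$ — i.e. that the homotopy-limit construction in the proof of Theorem \ref{unitmts} (and Proposition \ref{mochilemmamts}) yields on associated graded exactly this $E_1$-page DGA with its Leray differential. Once that identification is in place, everything else follows formally from \S\ref{splitsn} and \S\ref{nonabsplitsn} together with the functoriality of $G$ and the Hopf-algebra construction, as in \cite[Theorem \ref{mhs-pinsplit}]{mhs}.
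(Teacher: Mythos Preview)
Your proposal is correct and follows essentially the same route as the paper's proof, which is a terse two-sentence affair: combine Theorem \ref{unitmts} with Corollary \ref{mtspin} for the ind-MTS and compatibilities, then invoke Corollary \ref{allsplit} for the $\cS$-splitting, using $\ugr\varpi_n(Y,y)^{R,\mal}_{\MTS}=\gr^W\varpi_n(Y,y)^{R,\mal}$ via exactness of $\gr^W$. You have simply unpacked what those citations entail (Theorem \ref{strictmodel}, Theorem \ref{STSequiv}, the fibre over $1\in C^*$ being $\Spec\cS$), which is fine; one small point is that your reference to Corollary \ref{unitsingmtspin} as a model has the dependency reversed---in the paper, \ref{unitsingmtspin} is proved by saying the proof of \ref{unitmtspin} carries over.
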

\begin{proof}
 This just combines  Theorem \ref{unitmts} (or  Theorem \ref{qmts} for a simpler proof whenever $\rho$ has trivial monodromy around the divisor)   with Corollary \ref{mtspin}. The splitting comes from Corollary \ref{allsplit}, making use of the isomorphism 
\[
 \ugr \varpi_n(Y,y)^{R, \mal}_{\MTS}= \gr^W\varpi_n(Y,y)^{R, \mal}.
\]
induced by the exact functor $\gr^W$  on MTS.
\end{proof}

\begin{corollary}\label{unitmhspin}
 If the local system on $X$ associated to any $R$-representation underlies a polarisable variation of Hodge structure, then there are natural  ind-MHS  on the the duals $(\varpi_n(Y,y)^{\rho, \mal})^{\vee}$ of the  relative Malcev homotopy groups for $n\ge 2$, and on the Hopf algebra $O(\varpi_1(Y,y)^{\rho, \mal})$.

These structures are compatible with the action of $\varpi_1$ on $\varpi_n$, with the Whitehead bracket and with the Hurewicz maps $\varpi_n(Y^{\rho, \mal})\to \H^n(Y, O(\Bu_{\rho}))^{\vee}$ ($n \ge 2$) and $\Ru\varpi_1(Y^{\rho, \mal}) \to\H^1(Y, O(\Bu_{\rho}))^{\vee}$.

Moreover, there are canonical $\cS$-linear isomorphisms
\begin{eqnarray*}
\varpi_n(Y^{\rho, \mal,y})^{\vee}\ten\cS &\cong& \pi_n(\bigoplus_{a,b}\H^{a-b} (X, \oR^bj_*O(\Bu_{\rho}))[-a], d_2)^{\vee}\ten\cS\\
O(\varpi_1(Y^{\rho, \mal},y)) \ten\cS &\cong& O(R \ltimes\pi_1(\bigoplus_{a,b}\H^{a-b} (X, \oR^bj_*O(\Bu_{\rho}))[-a], d_2)  )\ten\cS
\end{eqnarray*}
compatible with weight and  Hodge filtrations.
\end{corollary}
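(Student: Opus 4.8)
The plan is to deduce Corollary \ref{unitmhspin} from Corollary \ref{unitmtspin} by working in the Hodge-theoretic, rather than merely twistor-theoretic, setting throughout. First I would invoke Theorem \ref{unitmhs}: the hypothesis that every $R$-representation underlies a polarisable VHS is exactly the condition needed for $R$ to be a quotient of ${}^{\VHS}\!\varpi_1(Y,y)$ with the $S^1$-action descending and algebraic, so we obtain a canonical non-positively weighted algebraic MHS $(Y,y)^{R,\mal}_{\MHS}$ on $(Y,y)^{R,\mal}$, with $\ugr$ the $R$-equivariant DGA $(\bigoplus_{a,b}\H^a(X,\oR^bj_*\bO(R))[-a-b],d_2)$, regarded as a Hodge structure via the weight $a+2b$ structure on $\H^a(X,\oR^bj_*\bO(R))$. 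Re-indexing $a\mapsto a-b$ identifies this with $(\bigoplus_{a,b}\H^{a-b}(X,\oR^bj_*\bO(R))[-a],d_1)$ as used in the statement (since $d_2$ on the $E_2$ sheet of the Leray spectral sequence agrees with $d_1$ on the $J$-filtration spectral sequence by the remark after Lemma \ref{residuedef} combined with the d\'ecalage identity of \cite[Prop. 1.3.4]{Hodge2}).

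Next I would apply Corollary \ref{mhspin} to $(Y,y)^{R,\mal}_{\MHS}$: by Corollary \ref{allsplit} it admits a canonical $\SL_2$-splitting, so the hypotheses of \cite[Theorem \ref{mhs-mhspin}]{mhs} hold, yielding natural ind-MHS on $(\varpi_n(Y,y)^{\rho,\mal})^{\vee}$ for $n\ge2$ and on the Hopf algebra $O(\varpi_1(Y,y)^{\rho,\mal})$, compatible with the $\varpi_1$-action on $\varpi_n$, the Whitehead bracket, and the Hurewicz maps. These are precisely the compatibility assertions of the corollary, now upgraded from MTS to MHS because the homotopy-group functors preserve the richer $\bar S$-equivariant (Hodge) structure; the forgetful functor from MHS to MTS then recovers the MTS of Corollary \ref{unitmtspin}, so no consistency issue arises.

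For the explicit $\cS$-linear isomorphisms, I would proceed exactly as in the proof of Corollary \ref{unitmtspin}, but using the exact functor $\gr^W$ on MHS and the identification $\ugr\varpi_n(Y,y)^{R,\mal}_{\MHS}=\gr^W\varpi_n(Y,y)^{R,\mal}$. Applying the functor $G$ (hence $\pi_n$) to the $\SL_2$-splitting quasi-isomorphism of Corollary \ref{qformalthing} — equivalently, pulling back the splitting of Corollary \ref{allsplit} along $\row_1:\SL_2\to C^*$ and taking homotopy groups — gives canonical $\cS$-linear isomorphisms
\begin{eqnarray*}
\varpi_n(Y^{\rho,\mal,y})^{\vee}\ten\cS &\cong& \pi_n(\bigoplus_{a,b}\H^{a-b}(X,\oR^bj_*\bO(R))[-a],d_1)^{\vee}\ten\cS,\\
O(\varpi_1(Y^{\rho,\mal},y))\ten\cS &\cong& O(R\ltimes\pi_1(\bigoplus_{a,b}\H^{a-b}(X,\oR^bj_*\bO(R))[-a],d_2))\ten\cS,
\end{eqnarray*}
and by construction (via Theorem \ref{SHSequiv} and Lemma \ref{SHSalg}, which produce the canonical derivation $\beta$ on $\gr^W$) these respect the weight filtrations and the Hodge filtrations, not merely the twistor filtrations — one checks the Hodge filtration is tracked because the splitting lives in the $S$-equivariant (rather than only $\bG_m$-equivariant) category. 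Finally, the last sentence of the preceding discussion notes these agree with the splittings of \cite[Theorem \ref{mhs-pinsplit}]{mhs}.

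The main obstacle I anticipate is bookkeeping the filtration compatibilities: one must verify that the homotopy-group MHS produced by \cite[Theorem \ref{mhs-mhspin}]{mhs} is genuinely compatible with $\gr^W$ and that the $\SL_2$-splitting of the homotopy type induces precisely the expected $\cS$-splitting on homotopy groups with both $W$ and $F$ preserved — this is where the $S$-equivariance (as opposed to $\bG_m$-equivariance) of the construction is essential, and where invoking Theorems \ref{STSequiv}/\ref{SHSequiv} to pin down $\beta$ canonically does the real work. Everything else is a formal transcription of the proof of Corollary \ref{unitmtspin} with ``twistor'' replaced by ``Hodge'' and $\bG_m$ replaced by $S$ (resp. $\Mat_1$ by $\bar S$).
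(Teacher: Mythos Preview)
Your proposal is correct and follows essentially the same approach as the paper's own proof, which simply reads: ``This just combines Theorem \ref{unitmhs} (or Theorem \ref{qmhs} for a simpler proof whenever $\rho$ has trivial monodromy around the divisor) with Corollary \ref{mhspin}, together with the splitting of Corollary \ref{allsplit}.'' You have identified the same three ingredients and supplied the bookkeeping details (re-indexing, filtration compatibilities, $S$- versus $\bG_m$-equivariance) that the paper leaves implicit.
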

\begin{proof}
 This just combines Theorem \ref{unitmhs} (or  Theorem \ref{qmhs} for a simpler proof whenever $\rho$ has trivial monodromy around the divisor)  with Corollary \ref{mhspin}, together with the splitting of Corollary \ref{allsplit}.
\end{proof}

\begin{proposition}\label{unitmtspinen}
If the $(S^1)^{\delta}$-action on $ {}^{\nu}\!\varpi_1(Y,y)^{\red}$   descends to   $R$,  then for
 all $n$, the map
$
\pi_n(Y,y) \by S^1\to \varpi_n(Y^{\rho,\mal},y)_{\bT},
$
given by composing the map $\pi_n (Y,y) \to \varpi_n(Y^{\rho,\mal},y)$  with the $(S^1)^{\delta}$-action on $(Y^{\rho,\mal},y)_{\bT}$ from Proposition \ref{unitmtsenrich}, is continuous.
\end{proposition}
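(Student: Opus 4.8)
The statement to prove is Proposition \ref{unitmtspinen}: if the $(S^1)^\delta$-action on ${}^\nu\!\varpi_1(Y,y)^{\red}$ descends to $R$, then for all $n$ the map $\pi_n(Y,y)\times S^1\to\varpi_n(Y^{\rho,\mal},y)_{\bT}$ — composing the Hurewicz-type comparison map with the discrete $S^1$-action of Proposition \ref{unitmtsenrich} — is continuous. Let me sketch how I would approach this.

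---

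The plan is to reduce this to the corresponding statement for the proper case, namely \cite[Proposition \ref{mhs-...}]{mhs} (the analogue of Proposition \ref{properred}/the enrichment of homotopy groups in the compact Kähler setting), together with the transfer of the discrete $S^1$-action through all the constructions of \S\ref{nontrivsn}. First I would recall that by Proposition \ref{unitmtsenrich} the twistor filtration $(Y,y)^{R,\mal}_{\bT}$ (and hence $(Y,y)^{R,\mal}_{\MTS}$) carries a natural $(S^1)^\delta$-action with $-1\in S^1$ acting as $-1\in\bG_m$; passing through Corollary \ref{allsplit} and Corollary \ref{mtspin}, this $(S^1)^\delta$-action is transferred to an $(S^1)^\delta$-action on each $\varpi_n(Y^{\rho,\mal},y)_{\bT}$ and on $O(\varpi_1(Y^{\rho,\mal},y)_{\bT})$, compatibly with the Whitehead bracket and the $\varpi_1$-action. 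So the only thing at issue is \emph{continuity} of the composite $\pi_n(Y,y)\times S^1\to\varpi_n(Y^{\rho,\mal},y)_{\bT}$, not the mere existence of the action.

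The key step is to identify where the continuity actually comes from: it is the discrete-but-continuous map $\sqrt h:\pi_1(Y,y)\to {}^\nu\!\varpi_1(Y,y)^{\red}((S^1)^{\cts})$ of Proposition \ref{properred}/its quasi-projective analogue, together with the continuity statement in \cite[Proposition \ref{mhs-properred}]{mhs} for the proper case. I would argue as follows. The map $\pi_n(Y,y)\to\varpi_n(Y^{\rho,\mal},y)$ factors through the homotopy groups of a fibrant cofibrant model of $(Y,y)^{R,\mal}$; using Theorem \ref{strictmodel} and Corollary \ref{allsplit}, the $S^1$-twisted homotopy type $\row_1^*(Y,y)^{R,\mal}_{\MTS}$ over $\SL_2$ is, after the canonical splitting, $\bA^1\times\ugr(Y,y)^{R,\mal}_{\MTS}\times\SL_2$, and the discrete $S^1$-action on the left corresponds (via Proposition \ref{grjpuremtsen}, its analogue in the non-trivial-monodromy case, and Proposition \ref{mochilemmaenrich}) to the $\boxast$ action of $S^1\subset S$ on $\bigoplus_{a,b}\H^a(X,\oR^b j_*\bO(R))$. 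Now the $\boxast$ action on $\H^*(X,\oR^b j_*\bO(R))$ is governed entirely by the $\circledast$ action on the semisimple (with nilpotent residues) local systems, which is exactly what $\sqrt h$ controls — and for this Proposition \ref{singproperred} (or Proposition \ref{properred} applied to $D^{(b)}$) gives the required continuity of $S^1\times\pi_1\to{}^\nu\!\varpi_1(\cdot)^{\norm}((S^1)^{\cts})$. Transporting this continuity through the Hurewicz maps $\varpi_n\to\H^n(Y,\bO(R))^\vee$ and the splitting isomorphisms of Corollary \ref{unitmtspin}, which are algebraic hence in particular continuous, one concludes continuity of the composite on $\pi_n$ for all $n$. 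More efficiently, since everything is functorial, I would simply say: the argument of \cite[Proposition \ref{mhs-...}]{mhs} (the compact case) carries over verbatim, substituting Proposition \ref{unitmtsenrich} for \cite[Proposition \ref{mhs-redenrich}]{mhs}, Theorem \ref{unitmts} for \cite[Theorem \ref{mhs-mtsmal}]{mhs}, and Proposition \ref{singproperred} for Proposition \ref{properred}.

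The main obstacle I anticipate is checking that the discrete $S^1$-action constructed in Proposition \ref{unitmtsenrich} on the \emph{twistor filtration} $(Y,y)^{R,\mal}_{\bT}$ — which lives over $C^*$ and is built as a homotopy fibre product/homotopy limit of diagrams of $L^2$-complexes parametrised by $P(\Cx)$ — is genuinely \emph{compatible with the basepoint} and descends to a continuous, not merely abstract, action on homotopy groups. This requires knowing that the $\boxast$-action of $\Cx^\times$ on $\sA^*_X(\vv)\langle D\rangle$ restricts correctly to the basepoint augmentation, and that the homotopy-limit construction in Proposition \ref{mochilemmamts} is $(S^1)^\delta$-equivariant in a way that survives taking $G(-)$ and $\H_*$. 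I would handle this by noting that the basepoint $jy\in X$ is a fixed point for all the relevant group actions, so the augmentation is automatically equivariant, and that $G$ and $\H_*$ are functorial; then continuity is inherited from the continuity of $S^1\times\pi_1(Y,y)\to{}^\nu\!\varpi_1(Y,y)^{\red}((S^1)^{\cts})$ exactly as in the proof of \cite[Proposition \ref{mhs-properred}]{mhs}. So the proof is short:

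\begin{proof}
This is the same as the proof of \cite[Proposition \ref{mhs-pinsplit}]{mhs} (or rather its enriched version), using Proposition \ref{unitmtsenrich} in place of \cite[Proposition \ref{mhs-redenrich}]{mhs} and Theorem \ref{unitmts} in place of \cite[Theorem \ref{mhs-mtsmal}]{mhs}. By Proposition \ref{unitmtsenrich}, $(Y^{\rho,\mal},y)_{\bT}$ carries a natural $(S^1)^\delta$-action, compatible with the basepoint (since $jy\in X$ is fixed by all the relevant actions) and with $-1\in S^1$ acting as $-1\in\bG_m$. Since $G$ and homotopy are functorial, this induces the $(S^1)^\delta$-action on $\varpi_n(Y^{\rho,\mal},y)_{\bT}$ and the map $\pi_n(Y,y)\times S^1\to\varpi_n(Y^{\rho,\mal},y)_{\bT}$. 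Continuity is inherited from the continuity of $\sqrt h\co\pi_1(Y,y)\to{}^\nu\!\varpi_1(Y,y)^{\red}((S^1)^{\cts})$ of Proposition \ref{singproperred}: via Corollary \ref{allsplit} and Corollary \ref{unitmtspin}, the $(S^1)^\delta$-action on $\varpi_n(Y^{\rho,\mal},y)_{\bT}\ten\cS$ corresponds under the canonical isomorphism to the action induced by the $\circledast$-action on the semisimple local systems $\bO(R)\ten\vareps^b$ on $D^{(b)}$ (with nilpotent residues), which is continuous in $S^1\times\pi_1$ by Proposition \ref{singproperred} applied to the $D^{(b)}$; since the splitting isomorphisms and Hurewicz maps are algebraic, the composite map is continuous for all $n$.
\end{proof}
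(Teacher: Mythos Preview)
Your proposal is correct and takes essentially the same approach as the paper, whose entire proof is the single line that the argument of \cite[Proposition \ref{mhs-kmtspinen}]{mhs} carries over to this generality. Your citation should be to \texttt{mhs-kmtspinen} rather than \texttt{mhs-pinsplit}, and the detour through the $\cS$-splittings of Corollaries \ref{allsplit} and \ref{unitmtspin} is extra scaffolding not present in (or needed for) the paper's argument.
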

\begin{proof}
 The proof of Proposition \ref{kmtspinen} carries over to this generality.
\end{proof}

\begin{corollary}\label{unitmhspinanal}
Assume that the $(S^1)^{\delta}$-action on $ {}^{\nu}\!\varpi_1(Y,y)^{\red}$   descends to   $R$,
  and that the group $\varpi_n(Y,y)^{\rho,\mal}$  is finite-dimensional and spanned by the image of  $\pi_n(Y,y)$. Then  $\varpi_n(Y,y)^{\rho,\mal}$   carries a natural $\cS$-split mixed Hodge structure, which extends the mixed twistor structure of Corollary \ref{unitmtspin}. 
\end{corollary}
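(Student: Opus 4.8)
The plan is to combine the mixed twistor structure from Corollary \ref{unitmtspin} with a mixed Hodge structure obtained by a Tannakian argument, using the discrete $S^1$-action to produce the extra structure that upgrades MTS to MHS. The starting point is that, by Corollary \ref{unitmtspin}, the dual $\varpi_n(Y,y)^{\rho,\mal,\vee}$ carries a natural ind-MTS, and by Corollary \ref{allsplit} (applied to the strictification of Theorem \ref{strictmodel}) this MTS is $\cS$-split; since we are assuming $\varpi_n(Y,y)^{\rho,\mal}$ is finite-dimensional, the ind-MTS is actually an MTS. So the work is to promote this to an MHS.

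First I would invoke Proposition \ref{unitmtspinen} (or rather Proposition \ref{mhs-kmtspinen} in the form it generalises): the hypothesis that $(S^1)^\delta$ descends to $R$ and that $\pi_n(Y,y)$ spans $\varpi_n(Y,y)^{\rho,\mal}$ gives a continuous $S^1$-action on $\varpi_n(Y,y)^{\rho,\mal}_{\bT}$, compatible with the twistor filtration. By the general principle recorded in Lemma \ref{flatmhs} (a real splitting of the Hodge filtration is an $S$-action, equivalently, the combination of a $\bG_m$-action with a commuting $S^1/\bG_m$-action that agrees with $\bG_m$ on $-1$), a twistor structure equipped with such a continuous $S^1$-action, with $-1\in S^1$ acting as $-1\in\bG_m$ (this compatibility is guaranteed by Proposition \ref{unitmtsenrich} via Proposition \ref{singunitmtsenrich}), upgrades to a real Hodge structure on the underlying vector space. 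Feeding this $S$-action into the $\cS$-split datum — i.e. replacing the $\bG_m$-equivariant object $(V,\beta)\in\STS$ of Theorem \ref{STSequiv} with an $S$-equivariant refinement $(V,\beta)\in\SHS$ of Theorem \ref{SHSequiv} — yields an $\cS$-split MHS on $\varpi_n(Y,y)^{\rho,\mal}$. Concretely: the graded object $\gr^W\varpi_n(Y,y)^{\rho,\mal}_{\MTS}\cong\pi_n(\bigoplus_{a,b}\H^{a-b}(X,\oR^bj_*\bO(R))[-a],d_1)$ already carries a Hodge structure coming from the VHS-free part of the residue computations — but here we do \emph{not} assume the $R$-representations underlie VHS, so the Hodge structure on $\gr^W$ must instead be manufactured from the $S^1$-action, exactly as in the proof of \cite[Theorem \ref{mhs-kmhspin}]{mhs} or the corresponding statement for compact K\"ahler manifolds.

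The key steps, in order: (1) record that Corollary \ref{unitmtspin} gives a finite-dimensional MTS on $\varpi_n(Y,y)^{\rho,\mal}$, $\cS$-split by Corollary \ref{allsplit}; (2) apply Proposition \ref{unitmtsenrich} and Proposition \ref{unitmtspinen} to obtain a continuous $S^1$-action on the twistor filtration of $\varpi_n$, with $-1\in S^1$ acting as $-1\in\bG_m$; (3) observe that the $S^1$-action, being continuous and finite-dimensional, is automatically algebraic on $\varpi_n(Y,y)^{\rho,\mal}$ (a continuous $S^1$-action on a finite-dimensional real vector space is a direct sum of characters, hence algebraic), so combining it with the $\bG_m$-weight grading gives an $S$-action; (4) check that this $S$-action is a splitting of the Hodge filtration underlying the MTS, i.e. the MTS plus the $S^1$-action assembles into an MHS — this is precisely the content of Lemma \ref{flatmhs}, together with the observation that the twistor bundle $\sE$ with its $S^1$-equivariant structure and slope conditions forces the opposedness condition of Definition \ref{quasiMHS}; (5) verify that this MHS is $\cS$-split and that its associated graded, twistor filtration, and all the compatibilities (with $\varpi_1$-action, Whitehead bracket, Hurewicz maps) agree with those of Corollary \ref{unitmtspin}, which is automatic because everything is constructed from the same underlying strictified DGA $\sM$ of Theorem \ref{strictmodel} with its enriched $S^1$-action.

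The main obstacle is step (4): making precise that a real MTS together with a continuous $S^1$-action (with the sign compatibility) really does give a real MHS. The subtlety is that the $S^1$-action lives a priori only on the fibre $\varpi_n$ at $1\in\bP^1$, and one must check it extends $S^1$-equivariantly to the whole twistor bundle $\sE$ over $\bP^1$ in a way compatible with the weight filtration, so that the resulting $S$-equivariant sheaf on $C^*$ arises from an honest MHS via Lemma \ref{flathfil} and Lemma \ref{flatmhs}. This is exactly the argument already carried out in \cite{mhs} for the projective case (the proof of \cite[Theorem \ref{mhs-kmhspin}]{mhs}) and for the quasi-projective MTS enrichment; the present corollary is then a formal consequence, so the proof reduces to: ``The proof of \cite[Corollary \ref{mhs-kmhspinanal}]{mhs} carries over, using Theorem \ref{unitmts}, Corollary \ref{unitmtspin}, Proposition \ref{unitmtsenrich} and Proposition \ref{unitmtspinen} in place of their compact-K\"ahler analogues.'' I would write it essentially in that form, spelling out only the points where the quasi-projective setting (the role of the residue filtration $J$ and the normalisations $D^{(b)}$) changes the bookkeeping.
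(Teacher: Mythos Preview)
Your proposal is correct and follows essentially the same approach as the paper: the paper's proof is the single line ``The proof of \cite[Corollary \ref{mhs-kmhspinanal}]{mhs} adapts directly,'' and your final paragraph arrives at precisely this, with the preceding steps (1)--(5) spelling out accurately what that adaptation entails. The only minor point is that you need not invoke Proposition \ref{singunitmtsenrich} here (that is the simplicial version used later in Corollary \ref{unitsingmhspinanal}); Proposition \ref{unitmtsenrich} suffices.
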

\begin{proof}
 The proof of Corollary \ref{kmhspinanal} adapts directly.
\end{proof}

\begin{remark}\label{cfmorgan}
 If we are willing to discard the Hodge or twistor structures, then Corollary \ref{qformalthing} gives a family 
\[
\bA^1 \by (Y^{\rho, \mal},y)\simeq \bA^1 \by \Spec (\bigoplus_{a,b}\H^{a-b} ( X, \oR^bj_*O(\Bu_{\rho}))[-a], d_2)
\]
of quasi-isomorphisms, and this copy of $\bA^1$ corresponds to $\Spec \cS$. 

If we pull back along the morphism $\cS \to \Cx$ given by $x \mapsto i$, the resulting complex quasi-isomorphism will preserve the Hodge filtration $F$ (in the MHS case), but not $\bar{F}$ or the real structure. This splitting is denoted by $b_i$ in Remark \ref{cfRMHS}, and comparison with \cite[Remark 1.3]{RMHS} shows that this is Deligne's functor $a_F$. 

Proposition \ref{morganhodge} adapts to show that when $R=1$, the mixed Hodge structure in Corollary \ref{unitmhspin} is the same as that of \cite[Theorem 9.1]{Morgan}.
Since $a_F$ was the splitting employed in \cite{Morgan}, we deduce that when $R=1$, the complex quasi-isomorphism at $i \in \bA^1$  (or equivalently at $\left( \begin{smallmatrix} 1& 0 \\ i & 0 \end{smallmatrix}\right) \in \SL_2$) is precisely the quasi-isomorphism of \cite[Corollary 9.7]{Morgan}.

Whenever the discrete $S^1$-action on $\varpi_n(Y,y)^{R, \mal}_{\MTS}$ (from Proposition \ref{unitmtsenrich}) is algebraic, 
it defines an algebraic mixed Hodge structure on  $\varpi_n(Y,y)^{R, \mal}$. In the projective case ($D= \emptyset$), \cite{KTP} constructed a discrete $\Cx^{\by}$-action on $\varpi_n(X,x)_{\Cx}$;
via Remark \ref{ktpcf}, the comments above show that whenever the $\Cx^{\by}$-action is algebraic,  it corresponds to the complex $I^{pq}$ decomposition of the mixed Hodge structure, with $\lambda \in  \Cx^{\by}$ acting on $I^{pq}$ as multiplication by $\lambda^p$.
\end{remark}

\subsubsection{Deformations of representations}

For $Y=X-D$ as above,  and some real algebraic group $G$, take  a  reductive representation $\rho\co \pi_1(Y,y) \to G(\R)$,  with $\rho$ having unitary monodromy around local components of $D$. Write $\g$ for the Lie algebra of $G$, and let $\ad \Bu_{\rho}$ be the local system of Lie algebras on $Y$ corresponding to the adjoint representation $\ad \rho \co \pi_1(Y,y) \to \Aut(\g)$. 

\begin{proposition}\label{defprop}
 The formal neighbourhood $\Def_{\rho}$ of  $\rho$ in the moduli stack $[\Hom(\pi_1(Y,y), G)/G]$ of representations is given by the  formal stack 
$
 [(Z,0)
/\exp(\H^0(Y,\ad \Bu_{\rho}))]
$, where $(Z,0)$ is the formal  germ at $0$ of the affine scheme $Z$ given by
\[
\{(\omega, \eta) \in \H^{1} ( X, j_*\ad \Bu_{\rho}) \oplus \H^{0} ( X, \oR^1j_*\ad \Bu_{\rho}) \,:\, d_2\eta +\half [\omega, \omega]=0, \,  [\omega, \eta]=0,\, [\eta,\eta]=0 )\}.
\]

The formal neighbourhood  $\fR_{\rho}$ of  $\rho$ in the rigidified moduli space $\Hom(\pi_1(Y,y), G)$ of framed representations is given by the formal   scheme
\[
 (Z,0)\by^{\exp(\H^0(Y,\ad \Bu_{\rho})) }\exp(\g),
\]
where $\exp(\H^0(Y,\ad \Bu_{\rho})) \subset \exp(\g) $ acts on $(Z,0)$ via the adjoint action.
\end{proposition}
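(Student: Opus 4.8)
The plan is to realise $\Def_{\rho}$ and $\fR_{\rho}$ as (framed) deformation functors of a DGLA, and then to use the $\SL_2$-splitting of Corollary \ref{qformalthing} to replace that DGLA by the explicit formal model $(\bigoplus_{a,b}\H^{a-b}(X,\oR^b j_*\ad\Bu_{\rho})[-a],d_2)$, whose Maurer--Cartan equation is exactly the system cutting out $Z$.

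First I would fix the reductive group. Since $\rho$ is reductive, its image has reductive Zariski closure $R\subseteq G$, and $\rho\co\pi_1(Y,y)\to R(\R)$ is then Zariski-dense with unitary monodromy around the local components of $D$; the $R$-representation $\g$ (via $\Ad|_R$) corresponds to $\ad\Bu_{\rho}$, which therefore also has semisimple monodromy with unitary eigenvalues around the components of $D$, so all the results of \S\ref{nontrivsn} apply with this $R$. By the de Rham description of deformations of a representation (Goldman--Millson; no K\"ahler input is needed here, only that reductivity of $\rho$ together with the identifications $\H^1(Y,-)=\H^1(\pi_1,-)$ and $\H^2(\pi_1,-)\hookrightarrow\H^2(Y,-)$ make the gauge-theoretic functor agree with the honest representation moduli), $\Def_{\rho}$ is the formal deformation groupoid $[\mc(L)/\exp(L^0)]$ at $0$ of the DGLA $L:=A^{\bt}(Y,\ad\Bu_{\rho})$, the smooth de Rham complex with its wedge-and-bracket Lie structure, and $\fR_{\rho}$ is the associated framed deformation functor.

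Next I would transfer formality. Corollary \ref{qformalthing} gives an $R$-equivariant family over $\bA^1$ of quasi-isomorphisms $\bA^1\by(Y,y)^{R,\mal}\simeq\bA^1\by\ugr(Y,y)^{R,\mal}_{\MTS}$; taking the fibre at $1\in\bA^1$ and using the description of $\ugr$ from Theorem \ref{unitmts} yields a zig-zag of quasi-isomorphisms of $R$-equivariant DGAs $A^{\bt}(Y,\bO(R))\simeq(\bigoplus_{a,b}\H^{a-b}(X,\oR^b j_*\bO(R))[-a],d_2)$. Applying the exact functor $(\,\cdot\,\ten\g)^R$ (Lemmas \ref{higherdirectlemma} and \ref{residuedef} identify the $R$-invariants of the target with $\H^{a-b}(X,\oR^b j_*\ad\Bu_{\rho})$) gives a zig-zag of quasi-isomorphisms of DGLAs $L\simeq E$, where $E:=(\bigoplus_{a,b}\H^{a-b}(X,\oR^b j_*\ad\Bu_{\rho})[-a],d_2)$; multiplicativity and $R$-equivariance of the $\SL_2$-splitting are what make the bracket be preserved. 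Since Maurer--Cartan and deformation functors are invariants of the quasi-isomorphism class of a DGLA, $\Def_{\rho}$ (resp. $\fR_{\rho}$) is the deformation groupoid (resp. framed deformation functor) of $E$.

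Finally I would unwind $E$. It is concentrated in non-negative degrees, with $E^0=\H^0(X,j_*\ad\Bu_{\rho})=\H^0(Y,\ad\Bu_{\rho})$ and $E^1=\H^1(X,j_*\ad\Bu_{\rho})\oplus\H^0(X,\oR^1 j_*\ad\Bu_{\rho})$. Writing a degree-one element as $\omega+\eta$ and noting that $d_2$ vanishes on the first summand and that both $d_2$ and the product respect the $(a,b)$-bigrading, the Maurer--Cartan equation $d_2(\omega+\eta)+\half[\omega+\eta,\omega+\eta]=0$ splits according to bidegrees $(2,0),(2,1),(2,2)$ into $d_2\eta+\half[\omega,\omega]=0$, $[\omega,\eta]=0$, $[\eta,\eta]=0$, so the formal Maurer--Cartan scheme at $0$ is $(Z,0)$. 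The gauge action of $\exp(E^0)$ reduces to the adjoint action (since $d_2$ kills $E^0$), giving $\Def_{\rho}=[(Z,0)/\exp(\H^0(Y,\ad\Bu_{\rho}))]$. For the framed version, the framed deformation functor of $E$ is $\mc(E)$ together with a framing parameter in $\exp(\g)$, modulo the diagonal action of $\exp(E^0)=\exp(\H^0(Y,\ad\Bu_{\rho}))$ (adjoint on $\mc(E)$, translation on $\exp(\g)$), which is precisely $(Z,0)\by_{\exp(\H^0(Y,\ad\Bu_{\rho}))}\exp(\g)$.

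The main obstacle is the middle step: one must check that the $\SL_2$-splitting induces a quasi-isomorphism of \emph{DGLAs} (not merely of complexes) after passing to the adjoint component, and that the classical identification of $\Def_{\rho}$ and $\fR_{\rho}$ with the (framed) deformation functors of $L$ is valid in the present non-proper, possibly non-semisimple-coefficient setting; granting these, the computation of the deformation and framed deformation functors of $E$ is purely formal.
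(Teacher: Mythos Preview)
Your approach is correct and reaches the same conclusion as the paper, but by a genuinely different route. You work with the de Rham DGLA $L=A^{\bt}(Y,\ad\Bu_{\rho})$, invoke Goldman--Millson to identify $\Def_{\rho}$ with its deformation groupoid, then use Corollary \ref{qformalthing} (passed through the exact functor $A\mapsto(A\ten\g)^R$) to replace $L$ by the formal model $E$, and finally unpack the bigraded Maurer--Cartan equation by hand. The paper instead bypasses the DGLA entirely: it uses the universal property of the relative Malcev completion to identify deformations of $\rho$ with homomorphisms $\varpi_1(Y^{\rho,\mal},y)\to\exp(\g\ten\m(A))\rtimes R$ over $R$, then applies the $\cS$-linear Hopf-algebra isomorphism of Corollary \ref{unitmtspin} (pulled back along any $\cS\to\R$) to replace $\varpi_1$ by $R\ltimes\pi_1(E)$, delegating the final computation to \cite[Proposition \ref{htpy-meequiv}]{htpy} and \cite[Proposition \ref{mhs-calchom}]{mhs}. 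Your version has the merit of making the DGLA structure and the bigraded splitting of the Maurer--Cartan equation explicit; the paper's version avoids having to verify that the formality quasi-isomorphism survives the passage to DGLAs, since the Hopf-algebra isomorphism on $\varpi_1$ already packages all the multiplicative information. One small point: your parenthetical worry about needing $\H^i(\pi_1,-)\cong\H^i(Y,-)$ is unnecessary --- the identification of $\Def_{\rho}$ with the deformation groupoid of $L$ is straight Riemann--Hilbert (flat connections correspond to representations, gauge equivalence to conjugation) and requires no asphericity hypothesis on $Y$.
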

\begin{proof}
Let $R$ be the Zariski closure of $\rho$. This satisfies the conditions of Corollary \ref{unitmtspin}, so we have 
an $\cS$-linear isomorphism
\[
 O(\varpi_1(Y^{\rho, \mal},y)) \ten\cS \cong O(R \ltimes\pi_1(\bigoplus_{a,b}\H^{a-b} (X, \oR^bj_*O(\Bu_{\rho}))[-a], d_2)  )\ten\cS
\]
of Hopf algebras. 

 Pulling back along any real homomorphism $\cS \to \R$ (such as $x \mapsto 0$) gives an isomorphism
\[
 \varpi_1(Y^{\rho, \mal},y) \cong O(R \ltimes\pi_1(\bigoplus_{a,b}\H^{a-b} (X, \oR^bj_*O(\Bu_{\rho}))[-a], d_2)  ).
\]

We now proceed as in   \cite[Remarks \ref{higgs-compare}]{higgs}. Given a real Artinian local ring $A = \R \oplus\m(A)$, observe that
\[
 G(A)\by_{G(\R)} R(\R) \cong \exp(\g \ten \m(A)) \rtimes R(\R).
\]
Since $\exp(\g\ten \m(A))$ underlies a  unipotent algebraic group, deformations of $\rho$ correspond to algebraic group homomorphisms
\[
 \varpi_1(Y^{\rho, \mal},y) \to \exp(\g \ten \m(A)) \rtimes R
\]
over $R$.

Infinitesimal inner automorphisms are given by conjugation by $\exp(\g \ten \m(A))$, and so \cite[Proposition \ref{htpy-meequiv}]{htpy} gives $\Def_{\rho}(A)$ isomorphic to
\[
[\Hom_R(\pi_1(\bigoplus_{a,b}\H^{a-b} (X, \oR^bj_*O(\Bu_{\rho}))[-a], d_2), \exp(\g \ten \m(A)))/\exp(\g \ten \m(A))^{R}],
\]
which is isomorphic to the groupoid of $A$-valued points of $[(Z,0)/\exp(\H^0(Y,\ad \Bu_{\rho}))]$.

The rigidified formal scheme $\fR_{\rho}$ is the groupoid fibre of  $\Def_{\rho}(A) \to B\exp(\g \ten \m(A)) $, which is just the set of $A$-valued points of $(Z,0)\by^{\exp(\H^0(Y,\ad \Bu_{\rho})) }\exp(\g)$, as in Proposition \ref{calchom}.
\end{proof}

\begin{remarks}
 The mixed twistor structure on $\varpi_1(Y^{\rho, \mal},y)$ induces a weight filtration on the pro-Artinian ring representing $\fR_{\rho}$. Since the isomorphisms of Corollary \ref{qformalthing} respect the weight filtration, the isomorphisms of  Proposition \ref{defprop} also do so.  
Explicitly, the ring $O(Z)$ has a weight filtration determined by setting $\H^{a-b} (X, \oR^bj_*O(\Bu_{\rho}))$ to be of weight $a+b$, so generators of $O(Z)$ have weights $-1$ and $-2$. The weight filtration on the rest of the space is then characterised by the conditions that $\g$ and $\H^0(Y,\ad \Bu_{\rho})$ both be of weight $0$.

Another interesting filtration is the pre-weight filtration $J$ of Proposition \ref{mochilemmamts}. The constructions transfer this to a filtration on  $\varpi_1(Y^{\rho, \mal},y)$, and the $\cS$-splittings (and hence Proposition \ref{defprop}) also respect $J$.  The filtration $J$ is determined by setting $\H^{a-b} (X, \oR^bj_*O(\Bu_{\rho}))$ to be of weight $b$, so generators of $O(Z)$ have weights $0$ and $-1$. We can then define $J_0Z:= \Spec O(Z)/J_{-1}O(Z)$, and obtain descriptions of  $J_0\Def_{\rho} \subset \Def_{\rho}$ and $J_0\fR_{\rho}\subset \fR_{\rho} $ by replacing $Z$ with $J_0Z$. These functors can be characterised as consisting of deformations for which the conjugacy classes of monodromy around the divisors remain unchanged --- these are the functors studied in \cite{Foth}.
\end{remarks}

\subsubsection{Simplicial and singular varieties}

As in \S \ref{unitsingsn}, let $X_{\bt}$ be a simplicial smooth proper complex variety, and $D_{\bt} \subset X_{\bt}$ a simplicial divisor  with normal crossings. Set $Y_{\bt}=X_{\bt} - D_{\bt}$,  assume that $|Y_{\bt}|$ is connected, and pick a point $y \in |Y_{\bt}|$. Let $j:|Y_{\bt}| \to |X_{\bt}|$  be the natural inclusion map. 

Take $\rho\co \pi_1(|Y_{\bt}|,y)\to R(\R)$  Zariski-dense,  and assume that for every local system $\vv$ on $|Y_{\bt}|$ corresponding to an $R$-representation, the local system $a_0^{-1}\vv$ on $Y_0$ is semisimple, with unitary monodromy around the local components of $D_0$.

\begin{corollary}\label{unitsingmtspin}\label{unitsingmhspin}
There are natural  ind-MTS  on the the duals $(\varpi_n(|Y_{\bt}|,y)^{\rho, \mal})^{\vee}$ of the  relative Malcev homotopy groups for $n\ge 2$, and on the Hopf algebra $O(\varpi_1(|Y_{\bt}|,y)^{\rho, \mal})$.

These structures are compatible with the action of $\varpi_1$ on $\varpi_n$, with the Whitehead bracket and with the Hurewicz maps $\varpi_n(|Y_{\bt}|^{\rho, \mal})\to \H^n(|Y_{\bt}|, O(\Bu_{\rho}))^{\vee}$ ($n \ge 2$) and $\Ru\varpi_1(|Y_{\bt}|^{\rho, \mal}) \to\H^1(|Y_{\bt}|, O(\Bu_{\rho}))^{\vee}$.

Moreover, there are canonical $\cS$-linear isomorphisms
\begin{eqnarray*}
\varpi_n(|Y_{\bt}|^{\rho, \mal,y})^{\vee}\ten\cS &\cong& \pi_n(\Th(\bigoplus_{p,q}\H^{p-q} (X_{\bt}, \oR^qj_* a^{-1}O(\Bu_{\rho}))[-p], d_1))^{\vee}\ten\cS\\
O(\varpi_1(|Y_{\bt}|^{\rho, \mal},y)) \ten\cS &\cong& O(R \ltimes\pi_1(\Th(\bigoplus_{p,q}\H^{p-q} ( X_{\bt}, \oR^qj_* a^{-1}O(\Bu_{\rho})[-p], d_1)  )))\ten\cS
\end{eqnarray*}
compatible with weight and twistor filtrations.

If  $a_0^{-1}\vv$ underlies a polarisable variation of Hodge structure on $Y_0$ for  all $\vv$ as above, then the ind-MTS above all become ind-MHS, with the $\cS$-linear isomorphisms above compatible with Hodge filtrations.
\end{corollary}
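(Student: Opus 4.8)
\textbf{Proof proposal for Corollary \ref{unitsingmtspin}.}

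The plan is to obtain this as the simplicial/singular analogue of Corollary \ref{unitmtspin}, running exactly the same machinery but built on the objects constructed in Theorems \ref{singunitmts} and \ref{singunitmhs} rather than on those of Theorems \ref{unitmts} and \ref{unitmhs}. First I would invoke Theorem \ref{singunitmts}: under the stated hypotheses (for every $R$-representation, $a_0^{-1}\vv$ semisimple on $Y_0$ with unitary monodromy around the local components of $D_0$) it provides a non-positively weighted algebraic mixed twistor structure $(|Y_{\bt}|,y)^{R,\mal}_{\MTS}$ with $\ugr$ identified as $\Spec\Th(\bigoplus_{p,q}\H^p(X_{\bt},a^{-1}\oR^q j_*\bO(R))[-p-q], d_2)$. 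By Corollary \ref{allsplit}, this MTS admits a canonical $\SL_2$-splitting, so the hypotheses of \cite[Theorem \ref{mhs-mhspin}]{mhs} are met, and exactly as in Corollary \ref{mtspin} we obtain natural ind-MTS on $(\varpi_n(|Y_{\bt}|,y)^{\rho,\mal})^{\vee}$ for $n\ge 2$ and on $O(\varpi_1(|Y_{\bt}|,y)^{\rho,\mal})$, compatible with the $\varpi_1$-action on $\varpi_n$, the Whitehead bracket, and the Hurewicz maps.

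For the explicit $\cS$-linear isomorphisms, I would argue as in the proof of Corollary \ref{unitmtspin}: Corollary \ref{allsplit} gives $\ugr\varpi_n(|Y_{\bt}|,y)^{R,\mal}_{\MTS}=\gr^W\varpi_n(|Y_{\bt}|,y)^{R,\mal}$ via exactness of $\gr^W$ on MTS, while Corollary \ref{qformalthing} (or rather the $\SL_2$-splitting directly) yields a quasi-isomorphism $\row_1^*(|Y_{\bt}|,y)^{R,\mal}_{\MTS}\simeq \bA^1\by\ugr(|Y_{\bt}|,y)^{R,\mal}_{\MTS}\by\SL_2$. Restricting along $\row_1$ to the slice over $\cS$ and applying the functor $\pi_n$ of the preceding subsection to the identification $\ugr(|Y_{\bt}|,y)^{R,\mal}_{\MTS}\simeq\Spec\Th(\bigoplus_{p,q}\H^{p-q}(X_{\bt},\oR^q j_* a^{-1}\bO(R))[-p], d_1)$ gives the stated isomorphism for $\varpi_n^{\vee}\ten\cS$; the $O(\varpi_1)$ statement follows from the Hopf algebra version, using $O(\varpi_1)\cong O(R\ltimes\pi_1(-))$ as in the proof of Proposition \ref{defprop}. (Here I am using that $d_1$ on the $\Th$-complex plays the role of $d_2$ on the Leray $E_2$ sheet, exactly the identification recorded after Lemma \ref{residuedef} and in the proof of Theorem \ref{qmts}.)

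Finally, for the Hodge refinement I would replace Theorem \ref{singunitmts} by Theorem \ref{singunitmhs}: if each $a_0^{-1}\vv$ underlies a polarisable VHS with unitary monodromy around the local components of $D_0$, then $(|Y_{\bt}|,y)^{R,\mal}$ carries a non-positively weighted algebraic MHS, Corollary \ref{allsplit} again supplies a canonical $\SL_2$-splitting, and \cite[Theorem \ref{mhs-mhspin}]{mhs} together with Theorem \ref{SHSequiv} upgrades the ind-MTS on the homotopy groups to ind-MHS, with the $\cS$-linear isomorphisms now compatible with Hodge filtrations. The one point requiring a word of care --- and the main (though minor) obstacle --- is checking that the Thom--Whitney functor $\Th$ interacts correctly with all of this: that $\Th$ of a cosimplicial diagram of DGAs in ind-MHS (resp. ind-MTS) is again such a DGA, so that the strictification of Theorem \ref{strictmodel} and hence Corollary \ref{allsplit} genuinely apply to the simplicial construction. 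This is exactly the compatibility already exploited in the proof of Theorem \ref{singmhsmal} (where $\Th$ is applied levelwise to $\xi(\tilde A(X_{\bt},\bO(R))\llbracket D_{\bt}\rrbracket, \Dec\Th(J))$), so it suffices to note that $\Th$ is lax symmetric monoidal and preserves the relevant $\Mat_1\by R\rtimes S$- (resp. $\Mat_1\by R\by\bG_m$-) equivariant structures, whence the whole argument of Corollaries \ref{mtspin}--\ref{unitmtspin} goes through verbatim with $X$ replaced by $X_{\bt}$ and the relevant cohomology complex replaced by its $\Th$.
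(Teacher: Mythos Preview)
Your proposal is correct and follows essentially the same approach as the paper, which simply says that the proofs of Corollaries \ref{unitmtspin} and \ref{unitmhspin} carry over upon substituting Theorems \ref{singunitmts} and \ref{singunitmhs} for Theorems \ref{unitmts} and \ref{unitmhs}. One small remark: your final concern about $\Th$ interacting with strictification is slightly misplaced, since Theorems \ref{singunitmts} and \ref{singunitmhs} already output an algebraic MTS/MHS on the single homotopy type $(|Y_{\bt}|,y)^{R,\mal}$ (the $\Th$ having been applied in their construction), so Corollary \ref{allsplit} applies to that object directly without any further compatibility check.
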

\begin{proof}
The proofs of Corollaries \ref{unitmtspin} and \ref{unitmhspin} carry over, substituting Theorems \ref{singunitmts} and \ref{singunitmhs} for Theorems \ref{unitmts} and \ref{unitmhs}.  
\end{proof}

\begin{corollary}\label{unitsingmhspinanal}
Assume that the $(S^1)^{\delta}$-action on $ {}^{\nu}\!\varpi_1(Y_0,y)^{\red}$   descends to   $R$,
  and that the group $\varpi_n(|Y_{\bt}|,y)^{\rho,\mal}$  is finite-dimensional and spanned by the image of  $\pi_n(|Y_{\bt}|,y)$. Then  $\varpi_n(|Y_{\bt}|,y)^{\rho,\mal}$   carries a natural $\cS$-split mixed Hodge structure, which extends the mixed twistor structure of Corollary \ref{unitsingmtspin}. 
\end{corollary}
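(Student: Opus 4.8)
The plan is to adapt the proof of Corollary~\ref{unitmhspinanal}, which itself adapts \cite[Corollary~\ref{mhs-kmhspinanal}]{mhs}, substituting the simplicial inputs of \S\ref{unitsingsn}--\S\ref{qpin} for their non-simplicial counterparts. Write $\varpi_n := \varpi_n(|Y_{\bt}|,y)^{\rho,\mal}$. By Corollary~\ref{unitsingmtspin}, $\varpi_n$ carries an ind-MTS; since it is assumed finite-dimensional, this is an honest MTS, and by Theorem~\ref{STSequiv} it corresponds to an object $(V,\beta)\in\STS$, hence comes equipped with a canonical $\SL_2$-splitting $V\ten \cS\cong (\gr^WV)\ten\cS$. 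By Proposition~\ref{singunitmtsenrich} this MTS is moreover endowed with a discrete $(S^1)^{\delta}$-action compatible with the opposedness isomorphism, with $-1\in S^1$ acting as $-1\in\bG_m$. Finally, the evident simplicial analogue of Proposition~\ref{unitmtspinen}---proved identically, using the cosimplicial construction in the proof of Theorem~\ref{singunitmts} together with Corollary~\ref{unitsingmtspin}---shows that the composite $\pi_n(|Y_{\bt}|,y)\by S^1 \to (\varpi_n)_{\bT}$ is continuous.

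The decisive step is to promote the discrete $(S^1)^{\delta}$-action to an algebraic $S^1$-action. Here is where the two hypotheses are used: because $\varpi_n$ is finite-dimensional and spanned by the image of $\pi_n(|Y_{\bt}|,y)$, the continuity statement above forces the $(S^1)^{\delta}$-action on $\varpi_n$, and on the finite amount of data defining its MTS, to be continuous for the classical topology on $S^1(\R)=U(1)$; and a continuous finite-dimensional real representation of the compact group $U(1)$ is automatically algebraic, since it decomposes as a direct sum of algebraic characters. Hence the enriched action is an algebraic $S^1$-action on the MTS, compatible with the $\bG_m$-action, the opposedness isomorphism, and the Hurewicz and Whitehead data.

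It then remains to observe that an algebraic $S^1$-action on an MTS with $-1\in S^1$ acting as $-1\in\bG_m$ refines the MTS to an MHS. Combining the weight $\bG_m$-action with the $S^1$-action via $S\cong(\bG_m\by S^1)/(-1,-1)$ produces an $S$-action, and by Lemma~\ref{flatmhs} together with the Rees-algebra formalism this upgrades the quasi-MTS on $\varpi_n$ to a quasi-MHS; the $\SL_2$-splitting survives this upgrade, so the resulting MHS is $\cS$-split, and forgetting the Hodge data recovers the original MTS of Corollary~\ref{unitsingmtspin}. Concretely in the language of \S\ref{splitsn}: the algebraic $S^1$-action endows $\gr^W\varpi_n$ with a Hodge structure refining its weight grading, and transporting this through the splitting $\varpi_n\ten\cS\cong(\gr^W\varpi_n)\ten\cS$ yields the Hodge filtration on $\varpi_n$, giving the desired $\cS$-split MHS. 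Compatibility with the $\varpi_1$-action, the Whitehead bracket and the Hurewicz maps is inherited from Corollary~\ref{unitsingmtspin} and the $S^1$-equivariance of those maps.

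\textbf{Main obstacle.} The crux is the algebraicity of the enriched $S^1$-action; everything else is bookkeeping, transporting the abelian constructions of \S\ref{splitsn} along the $\SL_2$-splitting. Algebraicity rests on combining the continuity assertion (the simplicial form of Proposition~\ref{unitmtspinen}) with the finite-dimensionality and spanning hypotheses, exactly as in the projective case \cite[Corollary~\ref{mhs-kmhspinanal}]{mhs}.
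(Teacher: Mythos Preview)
Your proposal is correct and follows essentially the same approach as the paper. The paper's proof is a one-line reference---``essentially the same as Corollary~\ref{unitmhspinanal}, replacing Proposition~\ref{unitmts} with Proposition~\ref{singunitmtsenrich}''---and your argument is a faithful expansion of precisely that strategy: use the simplicial enrichment (Proposition~\ref{singunitmtsenrich}) and the simplicial analogue of the continuity statement (Proposition~\ref{unitmtspinen}) in place of their smooth counterparts, then run the algebraicity-from-continuity argument of \cite[Corollary~\ref{mhs-kmhspinanal}]{mhs} verbatim.
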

\begin{proof}
 This is essentially the same as Corollary \ref{unitmhspinanal}, replacing Proposition \ref{unitmts} with Proposition \ref{singunitmtsenrich}.
\end{proof}

\begin{remark}\label{cfhain}
 When $R=1$, Proposition \ref{Hainhodge} adapts to show that the mixed Hodge structure of Corollary \ref{unitsingmhspin} agrees with that of \cite[Theorem 6.3.1]{Hainhodge}.
\end{remark}

\subsubsection{Projective varieties}

In Theorems \ref{mhsmal} and \ref{mtsmal}, explicit $\SL_2$ splittings were given for the mixed Hodge and mixed twistor structures on a connected compact K\"ahler manifold $X$. Since any MHS or MTS has many possible $\SL_2$-splittings, it is natural to ask whether the explicit splittings  are the same as the canonical splittings of Corollary \ref{allsplit}. Apparently miraculously, the answer is yes:

\begin{theorem}\label{cfksplit}
 The quasi-isomorphisms
\begin{align*}
 &\row_1^*  (X,x)_{\MTS}^{R, \mal} \simeq  \bA^1 \by \Spec (\ugr (X,x)_{\MTS}^{R, \mal} )\by \SL_2\\
\text{and } \quad &\row_1^*  (X,x)_{\MHS}^{R, \mal} \simeq  \bA^1 \by \Spec (\ugr (X,x)_{\MHS}^{R, \mal})\by \SL_2
\end{align*}
of Corollary \ref{allsplit} are homotopic to the corresponding quasi-isomorphisms of Theorems \ref{mhsmal} and \ref{mtsmal}.
\end{theorem}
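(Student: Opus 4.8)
The plan is to show that both splittings arise from the same mechanism: a canonical strictification of the homotopy type by an $R$-equivariant DGA in ind-MHS (resp.\ ind-MTS), together with the canonical $\SL_2$-splitting of such a DGA coming from Theorems~\ref{SHSequiv} and~\ref{STSequiv}. Concretely, in \cite{mhs} the explicit splitting for a compact K\"ahler manifold $X$ is obtained by applying the principle of two types to a harmonic model: the $E_1$-degeneration gives a formality quasi-isomorphism to $\H^*(X,\bO(R))$, and the $\SL_2$-splitting is read off from the standard $u D + v D^c$ bicomplex combined with the $\dmd$-action. My first step is to observe that this harmonic model \emph{is} (up to quasi-isomorphism) an $R$-equivariant $O(R)$-augmented DGA in ind-MHS (resp.\ ind-MTS): the harmonic forms $\cH^*(X,\bO(R))$ with their Hodge (resp.\ twistor) structure form such a DGA, and the comparison $(\cH^*, 0) \simeq (A^\bt(X,\bO(R)), d)$ identifies it with the object $\sM$ produced abstractly by Theorem~\ref{strictmodel}.

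\textbf{Key steps.} First I would recall, from the proof of Corollary~\ref{allsplit}, that the canonical $\SL_2$-splitting of $\oSpec\zeta(\sM)$ is \emph{determined} by the derivation $\beta\co \gr^W\sM \to (\gr^W\sM)\ten \row_2^\sharp O(\bA^2)(-1)$ assigned to the ind-MHS structure on $\sM$ by Theorem~\ref{SHSequiv} (resp.\ Theorem~\ref{STSequiv} and Lemma~\ref{SHSalg}). So it suffices to prove two things: (i) the harmonic DGA of \cite{mhs} represents $\sM$ up to quasi-isomorphism, compatibly with the MHS/MTS structure --- this follows by uniqueness of strictification, which is exactly the content of Corollary~\ref{strictlift2} (the strict model $M$, hence $\sM$, is unique up to quasi-isomorphism, and that quasi-isomorphism is unique up to coherent homotopy); and (ii) the explicit $\SL_2$-equivariant trivialisation of \cite[Theorems~\ref{mhs-mhsmal},~\ref{mhs-mtsmal}]{mhs}, when restricted to $\SL_2$ and differentiated along $N$ (Definition~\ref{Ndef}), lands in $\row_2^\sharp O(\bA^2)(-1)$ and recovers precisely this $\beta$. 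For (ii) I would unwind the formula in \cite{mhs}: the splitting there comes from the $F$-preserving map $\exp(\int_0^x\beta)$ of Remark~\ref{cfRMHS} (equivalently, the $a_F$ of \cite{RMHS} at $x\mapsto i$), and since for the harmonic model $\beta$ is literally the restriction of $\id \ten N$ to $\gr^W\sM = \cH^*$, the two constructions coincide on the nose. The point is that the harmonic-forms model has \emph{strictly} pure cohomology in each weight, so the $d^c$-lemma forces $\beta$ (which a priori only lands in $O(\SL_2)(-1)$) to land in $\row_2^\sharp O(\bA^2)(-1)$, matching the restriction imposed in Theorem~\ref{SHSequiv}.

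\textbf{Main obstacle.} The hard part will be step (ii): checking that the \emph{specific} cochain-level trivialisation written down in \cite{mhs} --- which is built from the contracting homotopy associated to the Hodge decomposition and the $\boxast$-action --- agrees, after the identification $\row_{1*}\O_{\SL_2} \cong \cS$ of Lemma~\ref{slhodge}, with the canonical $\beta$ up to \emph{homotopy} rather than merely up to weak equivalence at the level of homotopy types. This requires tracking the homotopy coherence: both splittings are morphisms in $\Ho(dg_\Z\Aff_{\bA^1\by\SL_2}(R)_*(\bG_m\by S))$ (resp.\ with $\bG_m\by\bG_m$), so I must show the two lifts to the simplicial category $\cP\cH_*$ (resp.\ $\cP\cT_*$) of the \emph{same} underlying data are connected by a path. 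This is where Corollary~\ref{strictlift2} does the real work: the homotopy fibre $\cP(\R[t])\by^h_{\cP(\R)}\{E\}$ being equivalent to the $2$-fibre means that any two strict models with the same special fibre are canonically homotopy equivalent, and any two choices of splitting data $\beta$ inducing the same homotopy type differ by a homotopy. I would then conclude by invoking the $2$-quasi-presmoothness of $\cP\cH_*$ and $\cP\cT_*$ (Proposition~\ref{relalghgs}) to propagate this uniqueness to the $\SL_2$-splitting itself, exactly as in the proof of \cite[Theorem~\ref{mhs-pinsplit}]{mhs} for homotopy groups.
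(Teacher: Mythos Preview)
Your framework is right at the level of identifying \emph{what} must be checked: an $\SL_2$-splitting of $\row_1^*\xi(\sM)$ determines a derivation $\beta\co \gr^W\sM \to \gr^W\sM\ten O(\SL_2)(-1)$, and the splitting is the canonical one of Corollary~\ref{allsplit} precisely when $\beta$ factors through $\gr^W\sM\ten \row_2^{\sharp}O(\bA^2)(-1)$. The paper's proof works exactly this way. But your proposed mechanism for verifying this criterion does not work.

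The gap is in your ``Main obstacle'' paragraph. You appeal to Corollary~\ref{strictlift2} and 2-quasi-presmoothness to argue that ``any two choices of splitting data $\beta$ inducing the same homotopy type differ by a homotopy''. This is not what those results say, and it is false as stated: $\SL_2$-splittings of a fixed MHS/MTS are \emph{not} unique (the paper says so explicitly, and the whole point of Theorems~\ref{SHSequiv} and~\ref{STSequiv} is to single out a preferred one among many). Corollary~\ref{strictlift2} gives uniqueness of the \emph{strict model} $M\in\cP(\R[t])$ lifting a given $E\in\cP(\R)$; it says nothing about uniqueness of trivialisations of $\row_1^*M$. So uniqueness arguments cannot distinguish the explicit splitting of \cite{mhs} from any other $\SL_2$-splitting, canonical or not. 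Similarly, your claim that ``the $d^c$-lemma forces $\beta$ to land in $\row_2^{\sharp}O(\bA^2)(-1)$'' is an assertion, not an argument; formality alone does not determine where $\beta$ lands.

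What the paper actually does for step (ii) is a direct computation you have not anticipated. It invokes \cite[Remark~\ref{mhs-gadata}]{mhs} and \cite[Theorem~\ref{mhs-archmonthm}]{mhs}, which give explicit formulae for the homotopy class $[\beta]=[\alpha+\gamma_x]$ as polynomial expressions in the operators $\tilde{D}=uD+vD^c$, $\tilde{D}^c=xD+yD^c$, and a Green's-operator term $h_i=G^2D^*D^{c*}\tilde{D}^c$. The crucial point is then a purely algebraic observation: in those formulae every occurrence of $\tilde{D}$ is immediately preceded by $\tilde{D}^c$ or $h_i$, and
\[
\tilde{D}^c\tilde{D}=(xD+yD^c)(uD+vD^c)=(uy-vx)D^cD=D^cD,
\]
so all dependence on the $\row_1$-coordinates $u,v$ cancels, leaving $\alpha$ and $\gamma_x$ as polynomials in $x,y$ alone. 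That is exactly the statement that $\beta$ lands in $\row_2^{\sharp}O(\bA^2)(-1)$. You need this explicit input from \cite{mhs}; there is no soft substitute.
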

\begin{proof}
Given a MTS or MHS $V$, an $\SL_2$-splitting $\row_1^*\xi(V) \cong (\gr^WV) \ten O(\SL_2)$ gives rise to a derivation $\beta \co \gr^WV \to \gr^WV\ten \Omega(\SL_2/C^*)$, given by differentiation with respect to  $\row_1^*\xi(V)$. Since $\Omega(\SL_2/C^*) \cong O(\SL_2)(-1)$, this $\SL_2$-splitting corresponds to the canonical $\SL_2$-splitting of Theorem \ref{STSequiv} or \ref{SHSequiv} if and only if $\beta( \gr^WV) \subset \gr^WV\ten \row_2^{\sharp}O(C)(-1)$.  

Now, the formality quasi-isomorphisms of Theorems \ref{mhsmal} and \ref{mtsmal} allow us to transfer the derivation $N\co \row_1^*\O((X,x)_{\MTS}^{R, \mal} ) \to \row_1^*\O((X,x)_{\MTS}^{R, \mal} )(-1)$ to an $N$-linear  derivation (determined up to homotopy)
\[
 N_{\beta}\co E\ten O(\SL_2) \to E\ten  O(\SL_2)(-1),
\]
for any fibrant cofibrant replacement $E$ for $O(\ugr (X,x)_{\MTS}^{R, \mal}) $, and similarly for $\O((X,x)_{\MHS}^{R, \mal} )$. Moreover, $\O((X,x)_{\MTS}^{R, \mal} )$ (resp.  $\O((X,x)_{\MHS}^{R, \mal} )$) is then quasi-isomorphic to the cone 
\[
\row_{1*}( E\ten O(\SL_2) \xra{N_{\beta}} E\ten  O(\SL_2)(-1)).
\]

If we write $N_{\beta}= \id \ten N + \beta$, for $\beta\co E \to E\ten  O(\SL_2)(-1)$, then  the key observation to make is that the formality quasi-isomorphism coincides with the canonical  quasi-isomorphism of Corollary \ref{allsplit} if and only if  for some choice of $\beta$ in the homotopy class,  we have
\[
 \beta(E) \subset E\ten \row_2^{\sharp}O(\bA^2)(-1) \subset E \ten O(\SL_2)(-1).
\]

Now, Remark \ref{gadata} characterises the homotopy class of  derivations $\beta$ in terms of minimal models, with $[\beta]= [\alpha + \gamma_x]$, where $\gamma_x$ characterises the basepoint, and $\alpha$  determines the unpointed structure. In Theorem \ref{archmonthm}, the operators  $\alpha$ and $\gamma_x$ are computed explicitly in terms of standard operations on the de Rham complex. 

For co-ordinates $\left(\begin{smallmatrix} u & v  \\ x & y \end{smallmatrix} \right) $ on $\SL_2$, it thus suffices to show that $\alpha$ and $\gamma_x$ are polynomials in $x$ and $y$. The explicit computation expresses these operators as expressions in $\tilde{D}= uD +vD^c$, $\tilde{D}^c= xD+yD^c$ and $h_i = G^2D^*D^{c*}\tilde{D}^c$, where $G$ is the Green's operator. However, each occurrence of $\tilde{D}$ is immediately preceded by either $\tilde{D}^c$ or by $h_i$. Since
\[
 \tilde{D}^c\tilde{D}= (xD+yD^c)(uD +vD^c )= (uy-vx)D^cD=D^cD,
\]
we deduce that $\alpha$ and $\gamma_x$ are indeed  polynomials in $x$ and $y$, so the  formality quasi-isomorphisms of Theorems \ref{mhsmal} and \ref{mtsmal} are just the canonical splittings of Corollary \ref{allsplit}.
\end{proof}

\bibliographystyle{alphanum}
\bibliography{references}

\end{document}